\newif\ifpersonal
\numberwithin{equation}{subsection}
\newcounter{equation-intro}[section]
\numberwithin{equation-intro}{section}
\theoremstyle{plain}
\newtheorem{thm}[equation]{Theorem}
\newtheorem{lem}[equation]{Lemma}
\newtheorem{prop}[equation]{Proposition}
\newtheorem{cor}[equation]{Corollary}
\newtheorem{assumption}[equation]{Assumption}
\newtheorem{thm-intro}[equation-intro]{Theorem}
\newtheorem{slogan-intro}[equation-intro]{Slogan}
\theoremstyle{definition}
\newtheorem{defin}[equation]{Definition}
\newtheorem{notation}[equation]{Notation}
\newtheorem{eg}[equation]{Example}
\newtheorem{rem}[equation]{Remark}
\newtheorem{variant}[equation]{Variant}
\newtheorem{observation}[equation]{Observation}
\newtheorem{recollection}[equation]{Recollection}
\newtheorem{warning}[equation]{Warning}
\newtheorem{construction}[equation]{Construction}
\newtheorem{defin-intro}[equation-intro]{Definition}
\newtheorem{eg-intro}[equation-intro]{Example}
\newtheorem{rem-intro}[equation-intro]{Remark}
\newcommand{\todo}[1]{\textcolor{red}{(Todo: #1)}}
\newcommand{\personal}[1]{\textcolor[rgb]{0,0,1}{(Personal: #1)}}
\newcommand{\discussion}[1]{\textcolor{violet}{(Discussion: #1)}}
\newcommand{\personal}[1]{\ignorespaces}
\newcommand{\discussion}[1]{\ignorespaces}
\newcommand{\Z}{\mathbb Z}
\newcommand{\cA}{\mathcal A}
\newcommand{\cB}{\mathcal B}
\newcommand{\cC}{\mathcal C}
\newcommand{\cD}{\mathcal D}
\newcommand{\cE}{\mathcal E}
\newcommand{\cI}{\mathcal I}
\newcommand{\cJ}{\mathcal J}
\newcommand{\cK}{\mathcal K}
\newcommand{\cL}{\mathcal L}
\newcommand{\cM}{\mathcal M}
\newcommand{\cO}{\mathcal O}
\newcommand{\cT}{\mathcal T}
\newcommand{\cX}{\mathcal X}
\newcommand{\cY}{\mathcal Y}
\newcommand{\cZ}{\mathcal Z}
\DeclareFontFamily{U}{BOONDOX-calo}{\skewchar\font=45 }
\DeclareFontShape{U}{BOONDOX-calo}{m}{n}{<-> s*[1.05] BOONDOX-r-calo}{}
\DeclareFontShape{U}{BOONDOX-calo}{b}{n}{<-> s*[1.05] BOONDOX-b-calo}{}
\DeclareMathAlphabet{\mathcalboondox}{U}{BOONDOX-calo}{m}{n}
\newcommand{\bbI}{\mathbb I}
\let\save@mathaccent\mathaccent
\newcommand*\if@single[3]{%
	\setbox0\hbox{${\mathaccent"0362{#1}}^H$}%
	\setbox2\hbox{${\mathaccent"0362{\kern0pt#1}}^H$}%
	\ifdim\ht0=\ht2 #3\else #2\fi
}
\newcommand*\rel@kern[1]{\kern#1\dimexpr\macc@kerna}
\newcommand*\widebar[1]{\@ifnextchar^{{\wide@bar{#1}{0}}}{\wide@bar{#1}{1}}}
\newcommand*\wide@bar[2]{\if@single{#1}{\wide@bar@{#1}{#2}{1}}{\wide@bar@{#1}{#2}{2}}}
\newcommand*\wide@bar@[3]{%
	\begingroup
	\def\mathaccent##1##2{%
		\let\mathaccent\save@mathaccent
		\if#32 \let\macc@nucleus\first@char \fi
		\setbox\z@\hbox{$\macc@style{\macc@nucleus}_{}$}%
		\setbox\tw@\hbox{$\macc@style{\macc@nucleus}{}_{}$}%
		\dimen@\wd\tw@
		\advance\dimen@-\wd\z@
		\divide\dimen@ 3
		\@tempdima\wd\tw@
		\advance\@tempdima-\scriptspace
		\divide\@tempdima 10
		\advance\dimen@-\@tempdima
		\ifdim\dimen@>\z@ \dimen@0pt\fi
		\rel@kern{0.6}\kern-\dimen@
		\if#31
		\overline{\rel@kern{-0.6}\kern\dimen@\macc@nucleus\rel@kern{0.4}\kern\dimen@}%
		\advance\dimen@0.4\dimexpr\macc@kerna
		\let\final@kern#2%
		\ifdim\dimen@<\z@ \let\final@kern1\fi
		\if\final@kern1 \kern-\dimen@\fi
		\else
		\overline{\rel@kern{-0.6}\kern\dimen@#1}%
		\fi
	}%
	\macc@depth\@ne
	\let\math@bgroup\@empty \let\math@egroup\macc@set@skewchar
	\mathsurround\z@ \frozen@everymath{\mathgroup\macc@group\relax}%
	\macc@set@skewchar\relax
	\let\mathaccentV\macc@nested@a
	\if#31
	\macc@nested@a\relax111{#1}%
	\else
	\def\gobble@till@marker##1\endmarker{}%
	\futurelet\first@char\gobble@till@marker#1\endmarker
	\ifcat\noexpand\first@char A\else
	\def\first@char{}%
	\fi
	\macc@nested@a\relax111{\first@char}%
	\fi
	\endgroup
}
\newcommand{\sSet}{\mathrm{sSet}}
\newcommand{\St}{\mathrm{St}}
\newcommand{\CAlg}{\mathrm{CAlg}}
\newcommand{\fib}{\mathrm{fib}}
\DeclareMathOperator{\LSt}{LSt}
\DeclareMathOperator{\RSt}{RSt}
\DeclareMathOperator{\Gr}{Gr}
\DeclareMathOperator{\expGr}{expGr}
\DeclareMathOperator{\ens}{set}
\DeclareMathOperator{\cart}{cart} 
\DeclareMathOperator{\cocart}{cocart} 
\DeclareMathOperator{\Fil}{Fil} 
\DeclareMathOperator{\Sing}{Sing}
\DeclareMathOperator{\spe}{sp}
\DeclareMathOperator{\SPE}{SP}
\DeclareMathOperator{\Loc}{Loc}
\DeclareMathOperator{\PS}{PS}
\DeclareMathOperator{\Env}{Env}
\DeclareMathOperator{\order}{ord}
\DeclareMathOperator{\Triv}{Triv}
\newcommand{\PrLR}{\categ{Pr}^{\kern0.05em\operatorname{L}, \operatorname{R}}}
\newcommand{\PrFibLR}{\categ{PrFib}^{\mathrm{L},\mathrm{R}}}
\newcommand{\Funcocart}{\Fun^{\cocart}}
\newcommand{\fcartlowershriek}{f^{\mathrm c}_!}
\newcommand{\fcartlowerstar}{f^{\mathrm c}_\ast}
\newcommand{\fcocartlowershriek}{f^{\mathrm{cc}}_!}
\newcommand{\fcocartlowerstar}{f^{\mathrm{cc}}_\ast}
\DeclareMathOperator{\Tw}{Tw}
\newcommand{\ev}{\mathrm{ev}}
\newcommand{\id}{\mathrm{id}}
\newcommand{\op}{^\mathrm{op}}
\tikzset{
  closed/.style = {decoration = {markings, mark = at position 0.5 with { \node[transform shape, xscale = .8, yscale=.4] {/}; } }, postaction = {decorate} },
  open/.style = {decoration = {markings, mark = at position 0.5 with { \node[transform shape, scale = .7] {$\circ$}; } }, postaction = {decorate} }
}
\DeclareMathOperator{\cofib}{cofib}
\DeclareMathOperator{\Fun}{Fun}
\DeclareMathOperator{\Image}{Im}
\DeclareMathOperator{\Map}{Map}
\DeclareMathOperator{\Mor}{Mor}
\DeclareMathOperator*{\colim}{colim}
\newcommand{\categ}[1]{\textbf{\textup{#1}}}
\newcommand{\Spc}{\categ{Spc}}
\newcommand{\PrR}{\categ{Pr}^{\kern0.05em\operatorname{R}}}
\newcommand{\PrL}{\categ{Pr}^{\kern0.05em\operatorname{L}}}
\newcommand{\PrLomega}{\categ{Pr}^{\kern0.05em\operatorname{L},\omega}}
\newcommand{\PrLkappa}{\categ{Pr}^{\kern0.05em\operatorname{L},\kappa}}
\newcommand{\PrLRomega}{\categ{Pr}^{\kern0.05em\operatorname{L}, \operatorname{R},\omega}}
\newcommand{\PrLotimes}{\categ{Pr}^{\kern0.05em\operatorname{L},\otimes}}
\newcommand{\PrLat}{\categ{Pr}^{\kern0.05em\operatorname{L}, \operatorname{at}}}
\newcommand{\Cat}{\categ{Cat}}
\newcommand{\CAT}{\textbf{\textsc{Cat}}}
\newcommand{\PosFib}{\categ{PosFib}}
\newcommand{\hCoCart}{\textbf{\textsc{CoCart}}}
\newcommand{\Poset}{\categ{Poset}}
\newcommand{\Mod}{\mathrm{Mod}}
\newcommand{\Cart}{\categ{Cart}}
\newcommand{\CoCart}{\categ{CoCart}}
\newcommand{\PrFibL}{\categ{PrFib}^{\mathrm{L}}}
\newcommand{\PrFibLotimes}{\categ{PrFib}^{\mathrm{L},\otimes}}
\newcommand{\PPSh}{\mathbb P\mathrm{Sh}}
\begin{document}

\title{Homotopy theory of Stokes data}

\author{Mauro PORTA}
\address{Mauro PORTA, Institut de Recherche Mathématique Avancée, 7 Rue René Descartes, 67000 Strasbourg, France}
\email{porta@math.unistra.fr}

\author{Jean-Baptiste Teyssier}
\address{Jean-Baptiste Teyssier, Institut de Mathématiques de Jussieu, 4 place Jussieu, 75005 Paris, France}
\email{jean-baptiste.teyssier@imj-prg.fr}

\subjclass[2020]{}
\keywords{}

\begin{abstract}
In this paper we lay the foundations of an $\infty$-categorical  theory of Stokes data.
\end{abstract}

\maketitle


\tableofcontents

\section{Introduction}
Given an algebraic flat bundle $(E,\nabla)$ on $\mathbb{C}\setminus \{P_1,\dots, P_n\}$,  several algebraic structures arising from its flat sections can be defined.
The most basic one is the \textit{monodromy representation}, obtained by analytic continuation of the flat sections along loops in $\mathbb{C}\setminus \{P_1,\dots, P_n\}$.
A more sophisticated algebraic structure governing the asymptotic behaviours of the flat sections near the $P_i$ was discovered by Stokes \cite{Stokes_2009} and  formalized by Deligne and Malgrange \cite{DeligneLettreMalgrange,BV} by means of \textit{Stokes filtered local systems}.
See also \cite{SVDP,Boalch_Topology_Stokes} for alternative viewpoints in dimension 1.
In a punctured neighbourhood of $P_i$,  a Stokes filtered local system consists in a local system $L$ whose germs are filtered by an ordered set varying in a constructible way.
In a nutshell,  $L$ encodes the flat sections of $(E,\nabla)$ and the filtrations pertain to their growth order  when approaching $P_i$.
Stokes filtered local systems were discovered by Mochizuki in any dimension \cite{Mochizuki2}.\medskip

During the quest for derived moduli for higher dimensional Stokes filtered local systems \cite{Geometric_Stokes},  we realized that they could be viewed as particular functors that we called \textit{Stokes functors} and that we describe now in more detail. 
Let $X$ be a complex manifold admitting a smooth compactification.
	Let $D$ be a normal crossing divisor in $X$ and put $U\coloneqq X\setminus D$.
	Let $\pi \colon \widetilde{X}\to X$  be the real-blow up along $D$   and $j \colon U \to  \widetilde{X}$ the inclusion.
	Let $\mathscr{I} \subset \cO_{X}(\ast D)/ \cO_{X}$ be a good sheaf of irregular values in the sense of \cite{Mochizuki2}.
	A point $x\in \widetilde{X}$ with $\pi(x)\in D$ can be thought of as a line passing through $\pi(x)$ and a section of $\pi^{\ast}\mathscr{I}$ near $x$ as a meromorphic function defined on some small multi-sector emanating from $\pi(x)$.
	For two such sections $a$ and $b$, the relation
	\[
	\text{$a\leq_x b$ if and only if $e^{a-b}$ has moderate growth at $x$}
	\]
	defines an order on the germs of $\pi^{\ast}\mathscr{I}$ at $x$.
	This collection of orders upgrades $\pi^{\ast}\mathscr{I}$ into a sheaf of posets that turns out to be constructible for a suitable choice of finite subanalytic stratification $P$ of $\widetilde{X}$.
Then,  the \textit{topological exodromy equivalence} from \cite{Lurie_Higher_algebra,Exodromy_coefficient,jansen2023stratified,Beyond_conicality} converts $\pi^{\ast}\mathscr{I}$ into a functor $\Pi_{\infty}(\widetilde{X},P)\to \Poset$ from the $\infty$-category of Exit Paths $\Pi_{\infty}(\widetilde{X},P)$ attached to $(\widetilde{X},P)$.
By design, the objects of $\Pi_{\infty}(\widetilde{X},P)$ are the points of $\widetilde{X}$ and the morphisms between two points $x$ and $y$ can be thought of as continuous paths $\gamma \colon  [0,1]\to \widetilde{X}$ such that $\gamma((0,1])$ lies in the same stratum as $y$.
Via the Grothendieck construction,  the functor $\Pi_{\infty}(\widetilde{X},P)\to \Poset$   corresponds  to a cocartesian fibration in posets $\cI \to \Pi_{\infty}(\widetilde{X},P)$.
In this language, Stokes filtered local systems are special functors $F \colon  \cI \to \cE$ where $\cE$ is the category of $\mathbb{C}$-vector spaces.
A substantial part of the present work is devoted to the $\infty$-categorical analysis of the two conditions that make these functors special.
\subsection*{Splitting condition}
This condition is punctual.
For $x\in \widetilde{X}$,  let $\cI_x \in \Poset$ be the fibre of $\cI \to \Pi_{\infty}(\widetilde{X},P)$ above $x$ and consider the restricted functor $F_x \colon  \cI_x \to \cE$. 
Let $i_{\cI_x} \colon  \cI_x^{\ens}\to \cI_x$ be the underlying set of $\cI_x$.
Let $i_{\cI_x, ! } \colon   \Fun(\cI_x^{\ens},\cE)\to \Fun(\cI_x,\cE)$ be the left Kan extension of  $i_{\cI_x }^* \colon   \Fun(\cI_x,\cE)\to \Fun(\cI_x^{\ens},\cE)$.
Then $F_x$ is requested to lie in the essential image of $i_{\cI_x, ! }$.
Unravelling the definition, this means that there is   $V \colon   \cI_x^{\ens}\to \cE$ such that for  every $a\in \cI_x$, we have
\[
F_x(a)\simeq \bigoplus_{b\leq a \text{ in }\cI_x} V(b)   \ .
\]

\subsection*{Induction condition}
If $\gamma \colon  x\to y$ is an exit path for $(\widetilde{X},P)$, it pertains to a prescription of $F_y$  by $F_x$ via $\gamma$ referred  as \textit{induction} in \cite{Mochizuki1}.
If $\gamma \colon  \cI_x \to \cI_y$ is the  morphism of posets 
induced by  $\gamma \colon  x\to y$ and if $
\gamma_{ ! } \colon   \Fun(\cI_x,\cE)\to \Fun(\cI_y,\cE)$ is the 
left Kan extension of the pull-back $\gamma^* \colon   
\Fun(\cI_y,\cE)\to \Fun(\cI_x,\cE)$, Mochizuki's 
condition translates purely categorically into the requirement that the natural map $\gamma_!(F_x)\to F_y$ is an equivalence. \medskip

The splitting and induction conditions have purely categorical formulations.
This motivates the following 
\begin{defin-intro}\label{Stokes_functor_intro}
Let $\cX$ be an $\infty$-category.
Let $\cI \to \cX$ be a cocartesian fibration in posets.
Let $\cE$ be a presentable $\infty$-category.
A \textit{Stokes functor} is a functor $F \colon \cI \to \cE$ satisfying the splitting and induction conditions.
We denote by $\St_{\cI,\cE}\subset \Fun(\cI,\cE)$ the full subcategory spanned by Stokes functors.
\end{defin-intro}
Working in a purely abstract $\infty$-categorical context rather than just with the cocartesian fibration $\cI \to \Pi_{\infty}(\widetilde{X},P)$ is a necessity rather than a luxury.
Indeed,  most arguments needed to construct the derived moduli of Stokes filtered local systems in \cite{Geometric_Stokes} take place outside the initial framework of 
$(X,D,\mathcal{\cI})$.
One thus needs a robust theory encompassing all geometric situations encountered in \cite{Geometric_Stokes}, which leads naturally to \cref{Stokes_functor_intro}.
From this respect, the present work is the minimal requirement for the proofs of \cite{Geometric_Stokes} to make sense. \medskip

One major obstacle to work $\infty$-categorically is to make sense of the induction condition in a sufficiently synthetic way to minimize the amount of $\infty$-categorical data required for its check.
This is achieved through the \textit{specialization} formalism developed in a first part of the paper.
The gains with this purely $\infty$-categorical approach are streamlined proofs of crucial properties for Stokes functors: preservation under cartesian pull-back and induction over a fixed base (\cref{cor:stokes_functoriality}), invariance under localization of the base (\cref{Stokes_and_localization}), preservation under graduation (\cref{Gr_of_Stokes}), explicit description when $\cX$  has an initial object (\cref{prop:Stokes_functors_in_presence_of_initial_object}), 
spreading out (\cref{induction_poset_cocart}), compatibility with tensor product in $\PrL$ (\cref{prop:Stokes_tensor_comparison_fully_faithful}), categorical actions of local systems (\cref{cor:finite_etale_fibration_Stokes_relative_tensor}), Van Kampen (\cref{prop:Van_Kampen_Stokes}) and existence of $t$-structures (\cref{prop:t_structure_Stokes}).\medskip

Beside these structural results, let us highlight two theorems playing a crucial role in \cite{Geometric_Stokes}.
Assume that $X\subset \mathbb{C}^n$ is a polydisc with coordinates $(z,y)=(z_1,\dots, z_l,y_1,\dots, y_{n-l})$.
Let $D$ be the divisor defined by $z_1 \cdots z_l = 0$.
Let $\mathscr{I}\subset \cO_{X}(\ast D)/\cO_{X}$ be a good sheaf of irregular values.
A classical dévissage in the theory of Stokes data is by means of the \textit{levels} of $\mathscr{I} \subset \cO_{X}(\ast D)/ \cO_{X}$, that is the pole orders of the differences between the sections of $\mathscr{I}$.
By design of a good sheaf of irregular values,  there is a sequence
\begin{equation}\label{auxiliary_sequence}
m(0)<m(1)<\dots <m(d)=0
\end{equation} 
in $\mathbb{Z}^l$ such that for every $k=0,\dots, d-1$, the vectors $m(k)$ and $m(k+1)$ differ only by $1$ at exactly one coordinate and every $\order(a-b)$ for $a,b\in \mathscr{I}(X)$ distinct appears in this sequence
(such a sequence is referred to as an auxiliary sequence in \cite[§2.1.2]{Mochizuki1}).
Fix $k=0,\dots, d$ and put
\[
\mathscr{I}^k \coloneqq  \Image(\mathscr{I} \to \cO_{X}(\ast D)/z^{m(k)}\cO_{X} )  \ .
\] 
Then,  the chain of constructible sheaves on $(X,D)$
\[
\mathscr{I}=\mathscr{I}^d \to \mathscr{I}^{d-1} \to \dots \to \mathscr{I}^0=\ast   
\]
induces a chain of $P$-constructible sheaves in finite posets over $\widetilde{X}$
$$
\pi^{\ast}\mathscr{I}=\pi^{\ast}\mathscr{I}^d \to \pi^{\ast}\mathscr{I}^{d-1} \to \dots \to \pi^{\ast}\mathscr{I}^0=\ast    
$$
which in turn induces a chain of cocartesian fibrations in finite posets on $\Pi_{\infty}(\widetilde{X},P)$
\begin{equation}\label{level_intro}
\cI=\cI^d \to \cI^{d-1} \to \dots \to \cI^0=\ast  \ .
\end{equation}

The following definition is an axiomatization of the features of this chain :

\begin{defin-intro}\label{level_intro_def}
Let $\cX$ be an $\infty$-category and let $p :\cI \to \cJ$  be a morphism of cocartesian fibrations in posets over $\cX$. 
We say that $p\colon \cI \to \cJ$ is a \textit{level morphism} if it is essentially surjective and for every  $x \in \cX$ and every  $a, b \in \cI_x$, we have
	\[ p(a) < p(b) \text{ in } \cJ_x \Rightarrow a < b \text{ in } \cI_x \ . \]
\end{defin-intro}

If we consider the fibre product $\pi \colon \cI_p \coloneqq  \cJ^{\ens} \times_{\cJ} \cI\to \cJ^{\ens}$, the classical level dévissage is traditionally used to reduce the study of $\cI\to \cX$ to that of $\cJ\to \cX$ and $\cI_p\to \cX$.
This is effective since the level morphisms from \eqref{level_intro}  are so that $\cJ$ has less objects than $\cI$ while $\cI_p$ comes with extra favorable properties.
In this work,  we provide a purely categorical explanation for the level dévissage  :
\begin{thm-intro}[\cref{prop:Level_induction}]\label{level_dévissage_intro}
Let $\cX$ be an $\infty$-category and let $p \colon \cI \to \cJ$ be a level graduation morphism of cocartesian fibrations in posets over $\cX$.
Let $\cE$ be a presentable stable $\infty$-category.
Then, there is a pullback square
	\[ \begin{tikzcd}
		\St_{\cI, \cE} \arrow{r} \arrow{d}& \St_{\cJ,\cE} \arrow{d}\\
		\St_{\cI_p, \cE} \arrow{r} & \St_{\cJ^{\ens}, \cE}
	\end{tikzcd} \]
	in $ \CAT_{\infty} $.
\end{thm-intro}
\cref{level_dévissage_intro} is our main tool in \cite{Geometric_Stokes} to reduce statements on Stokes filtered local systems to plain local systems.\medskip

One of the key result of \cite{Geometric_Stokes} is the presentability of $\St_{\cI, \cE}$ for $\cE$ presentable stable in the situation coming from flat bundles.
By Ragimov-Schlank's $\infty$-categorical reflection theorem \cite{Ragimov_Schlank_Reflection}, presentability is a consequence of the fact that $\St_{\cI, \cE}$ is stable under limits and colimits in $\Fun(\cI,\cE)$.
This may seem miraculous since both the splitting and induction conditions are not stable under limits and colimits already over a point.
To leverage Ragimov-Schlank's theorem using the level induction dévissage from \cref{level_dévissage_intro}, a prerequisite is to show that the graduation and induction functors from  \cref{exp_construction} and \cref{sec:graduation} commute with limits.
This is not granted since both functors are left adjoints and have in general no reason to be right adjoints as well.
By \cref{induction_limit_stable}, this is nonetheless true when the total space $\cI$ of the cocartesian fibration in posets $\cI \to \Pi_{\infty}(\widetilde{X},P)$ is compact.
To check compactness, we prove the following

\begin{thm-intro}[\cref{compact_cocartesian_fibration}]
\label{compact_cocartesian_fibration_intro}
	Let $\cX$ be an $\infty$-category and let $\cA \to \cX$ be a cocartesian fibration.
	Assume that $\cX$ is compact and that for every $x \in \cX$, the fiber $\cA_x$ is compact in $\Cat_\infty$.
	Then $\cA$ is compact in $\Cat_\infty$ as well.
\end{thm-intro}

In the situation coming from flat bundles, the fibres of $\cI \to \Pi_{\infty}(\widetilde{X},P)$ are finite posets, which are automatically compact.
On the other hand, the compactness of $\Pi_{\infty}(\widetilde{X},P)$ follows from a result obtained by the authors in collaboration with P. Haine in \cite[Theorems 0.4.2 \& 0.4.3]{Beyond_conicality}. 
It can be seen as a stratified generalization of theorems of Lefschetz–Whitehead, Łojasiewicz and Hironaka on the finiteness of the underlying homotopy types of compact subanalytic spaces and real algebraic varieties.
 \medskip

\paragraph*{\textbf{Linear overview}}
In \cref{exp_construction}, we start by recalling the exponential construction as an $\infty$-functor.
In \cref{sec:specialization_equivalence}, we refine our analysis of the exponential construction via the \emph{specialization equivalence}.
In \cref{cocar_functor_section} and \cref{punctually_split_Stokes_section}, we separately study the property of being \emph{cocartesian} and \emph{punctually split}.
This leads to the basic functorialities of the $\infty$-categories of Stokes functors (see \cref{cor:stokes_functoriality}) and to their fundamental properties such the invariance by localization (see \cref{Stokes_and_localization}), Van Kampen (see \cref{prop:Van_Kampen_Stokes}) and the existence of $t$-structures (see \cref{prop:t_structure_Stokes}).
In \cref{sec:graduation} and \cref{sec:level_structures}, we develop the theory of \emph{graduation} and the notion of \emph{level structure}.
We study the compatibility of the graduation procedure with Stokes functors and the interaction with their basic functorialities.
\Cref{prop:Level_induction} is in many ways the crucial result of this section, establishing the categorical basis of the level induction technique used in \cite{Geometric_Stokes}.\medskip

The remaining part is essentially intended as an appendix.
It turns out that the language of the specialization equivalence is a powerful categorical tool that allows to prove structural results on cocartesian fibrations.
In \cref{compact_cocartesian_fibration}, we establish a local-to-global principle for compactness of the total space of a cocartesian fibration.
In \cref{prop:pullback_localization}, we give a new and model-independent proof of Hinich's theorem \cite{Hinich_DK_revisited}.
Finally,  we introduce in  \cref{sec:locally_constant_cocartesian_fibrations} and \cref{sec:categorical_actions} the notion of \emph{finite étale (cocartesian) fibration}.
This notion plays a crucial role in the proof of the retraction lemma (see \cref{cor:retraction_lemma}) that allows to treat \emph{ramified} Stokes structures in \cite{Geometric_Stokes}.

\medskip

\paragraph*{\textbf{Acknowledgments}}

We are grateful to Enrico Lampetti, Guglielmo Nocera, Tony Pantev, Marco Robalo and Marco Volpe for useful conversations about this paper.
We especially thank Peter J.\ Haine for fruitful collaborations on the exodromy theorems.
We thank the Oberwolfach MFO institute that hosted the Research in Pairs ``2027r: The geometry of the Riemann-Hilbert correspondence''.
We also thank the CNRS for delegations and PEPS ``Jeunes Chercheurs Jeunes Chercheuses'' fundings, as well as the ANR CatAG from which both authors benefited during the writing of this paper.

\section{Cocartesian fibrations and the exponential construction}
\label{exp_construction}
We first review some $\infty$-category theory that has been developed in the companion paper \cite{PortaTeyssier_Day}.
We need this technology for two reasons: (i) to provide a streamlined definition of the category of \emph{Stokes stratified spaces}, and (ii) to show that we can functorially attach to every Stokes stratified space a constructible sheaf of $\infty$-categories, whose global sections is exactly the associated $\infty$-category of Stokes structures.

\subsection{Dual fibrations}

Following the companion paper \cite{PortaTeyssier_Day} we introduce the $\infty$-category $\CoCart$.
We start from the cartesian fibration
\[ \mathrm{t} \colon \Cat_\infty^{[1]} \coloneqq \Fun(\Delta^1, \Cat_\infty) \to  \Cat_\infty \  \]
sending a functor $\cA \to \cX$ to its target $\infty$-category.
We then pass to the dual cocartesian fibration, in the following sense:

\begin{defin}
	Let $p \colon \cA \to \cX$ be a cartesian fibration and let $\Upsilon_\cA \colon \cX\op \to \Cat_\infty$ be its  straightening.
	The \emph{dual cocartesian fibration $p^\star \colon \cA^\star \to \cX\op$} is the cocartesian fibration classified by $\Upsilon_\cA$.
\end{defin}

\begin{recollection}\label{recollection:dual_fibration}
	In the setting of the above definition, recall from \cite{Barwick_Dualizing} that objects of $\cA^\star$ coincide with the objects of $\cA$, while $1$-morphisms $a \to b$ in $\cA^\star$ are given by spans
	\[ \begin{tikzcd}[column sep = 20pt]
		a & c \arrow{l}[swap]{u} \arrow{r}{v} & b
	\end{tikzcd} \]
	where $u$ is $p$-cocartesian and $p(v)$ is equivalent to the identity of $p(b)$.
\end{recollection}

We let
\[ \mathbb B \colon \Cat_\infty^{[1]\star} \to  \Cat_\infty\op \]
be the cocartesian fibration dual to $\mathrm{t}$.
Specializing \cref{recollection:dual_fibration} to this setting, we see that objects of $\Cat_\infty^{[1]\star}$ are functors $\cA \to \cX$, and morphisms $\mathbf f = (f,u,v)$ from $\cB \to \cY$ to $\cA \to \cX$ are commutative diagrams in $\Cat_\infty$ of the form
\begin{equation}\label{eq:morphism_in_Cocart}
	\begin{tikzcd}
		\cB \arrow{d} & \cB_\cX \arrow{d} \arrow{r}{v} \arrow{l}[swap]{u} & \cA \arrow{dl} \\
		\cY & \cX \arrow{l}[swap]{f}
	\end{tikzcd}
\end{equation}
where the square is a pullback.
With respect to this description, $\mathbb B$ sends $\cA \to \cX$ to its target (or base) $\cX$, and a diagram as above defines a $\mathbb B$-cocartesian morphism if and only if $v$ is an equivalence.

\medskip

We define $\CoCart$ to be the (non-full) subcategory of $\Cat_\infty^{[1]\star}$ whose objects are cocartesian fibrations, and whose $1$-morphisms are commutative diagrams as above where $v$ is required to preserve cocartesian edges.
In this way, $\CoCart$ becomes a cocartesian fibration over $\Cat_\infty\op$ such that $\CoCart\to \Cat_\infty^{[1]\star}$ preserves cocartesian edges.
Notice that the fiber at $\cX \in \Cat_\infty\op$ coincides with the $\infty$-category $\CoCart_{/\cX}$.
We will also need a couple of variants of this construction:

\begin{variant}
 We let $\PosFib \subset \CoCart$ be the full subcategory spanned by those cocartesian fibrations $\cA \to \cX$ whose fibers are posets.
\end{variant}

\begin{variant}
	Let $\CAT_\infty$ be the $\infty$-category of large $\infty$-categories and consider the following fiber product:
	\[ \cC \coloneqq \Fun(\Delta^1, \CAT_\infty) \times_{\CAT_\infty} \Cat_\infty \ , \]
	where we used the target morphism $\mathrm t \colon \Fun(\Delta^1, \CAT_\infty) \to \CAT_\infty$.
	In other words, objects in $\cC$ are morphisms $p \colon \cA \to \cX$ where $\cX$ is a small $\infty$-category and the fibers of $p$ are not necessarily small $\infty$-categories.
	The induced morphism $\mathrm t \colon \cC \to \Cat_\infty$ is a cartesian fibration.
	Inside the dual cocartesian fibration $\cC^\star$, we define $\hCoCart$ as the subcategory spanned by cocartesian fibrations and whose $1$-morphisms are diagrams \eqref{eq:morphism_in_Cocart} where  $v$ preserves cocartesian edges.
\end{variant}		

\begin{variant}
We let $\PrFibL \subset \hCoCart$ be the subcategory spanned by cocartesian fibrations with presentable fibres and whose $1$-morphisms are diagrams \eqref{eq:morphism_in_Cocart} that are morphisms in $\hCoCart$ such that  for every $x \in \cX$, the induced functor $v_x \colon \cB_{f(x)} \to \cA_x$ is a morphism in $\PrL$, i.e.\  is cocontinuous.
$\PrFibL$ is the $\infty$-category of \textit{presentable cocartesian fibrations} \cite[\S3.4]{PortaTeyssier_Day}.
\end{variant}

\begin{recollection}\label{PrL_straightening}
	Both $\CoCart$ and $\PrFibL$ can be promoted to $\Cat_\infty\op$-families of symmetric monoidal $\infty$-categories $\CoCart^\otimes$ and $\PrFibLotimes$, in the sense of \cite[Definition A.1]{PortaTeyssier_Day}.
	Concretely, this provides for every $\cX \in \Cat_\infty\op$ a symmetric monoidal structure on the fiber $\CoCart_\cX$ and $\PrFibL_\cX$ of $\mathbb B \colon \CoCart \to \Cat_\infty\op$ and $\mathbb B \colon \PrFibL \to \Cat_\infty\op$.
	Invoking the straightening equivalence \cite[Theorem 3.2.0.1]{HTT}, we find canonical identifications
\begin{equation}\label{straightening_PrL}
	\CoCart_\cX \simeq \Fun(\cX, \Cat_\infty) \qquad \text{and} \qquad \PrFibL_\cX \simeq \Fun(\cX, \PrL) \ . 
\end{equation}
	Under these equivalences, the above  symmetric monoidal structures correspond  to those induced respectively by the cartesian product on $\Cat_\infty$ and the tensor product  on $\PrL$ as defined in \cite[\S4.8.1]{Lurie_Higher_algebra}.
\end{recollection}

\subsection{Exponential construction}\label{subsec:exponential_construction}

Fix a presentable $\infty$-category $\cE$.

\begin{construction}\label{construction:exponential}
	Let $p \colon \cA \to \cX$ be a cocartesian fibration.
	Let $\Upsilon_\cA \colon \cX \to \Cat_\infty$ be its straightening and consider the functor
	\[ \Fun_!(\Upsilon_\cA(-), \cE) \colon \cX \to  \PrL \ , \]
	where $\Fun_!$ denotes the functoriality given by left Kan extensions.
	We write
	\[ \exp_\cE(\cA/\cX) \to \cX \]
	for the presentable cocartesian fibration classifying $\Fun_!(\Upsilon_\cA(-),\cE)$.
	We refer to $\exp_\cE(\cA/\cX)$ as the \emph{exponential fibration with coefficients in $\cE$ associated to $p \colon \cA \to \cX$}.
\end{construction}

\begin{eg}\label{eg:exponential_over_trivial_base}
		Assume that $\cX = \ast$ is the category with one object and one (identity) morphism.
		Then $\CoCart_\cX \simeq \Cat_\infty$ and $\PrFibL_\cX \simeq \PrL$.
		In this case, $\exp_\cE(\cA) \simeq \Fun(\cA, \cE)$.
\end{eg}		

\begin{eg} 
       Assume that $\cX = \Delta^1$, so that we can represent $\Phi_\cA$ as a single functor $f \colon \cA_0 \to \cA_1$.
		In this case, the functor $\Fun_!(\Upsilon_\cA(-),\cE) \colon \Delta^1 \to \PrL$ is identified with the functor
		\[ f_! \colon \Fun(\cA_0, \cE) \to  \Fun(\cA_1,\cE) \ , \]
		where $f_!$ denotes the left Kan extension along $f$.
		Therefore we can understand $\exp_\cE(\cA/\Delta^1)$ as the presentable cocartesian fibration over $\Delta^1$ whose objects are pairs $(F,i)$ where $i \in \Delta^1$ and $F \colon \cA_i \to \cE$ is a functor.
		Besides, using \cite[Proposition 2.4.4.2]{HTT}, we deduce that
		\[ \Map_{\exp_\cE(\cA/\Delta^1)}((F,i),(G,j)) = \begin{cases}
			\Map_{\Fun(\cA_0,\cE)}(F,G) & \text{if } i = j = 0 \ , \\
			\Map_{\Fun(\cA_1,\cE)}(f_!(F),G) & \text{if } i = 0 \text{ and } j = 1 \ , \\
			\Map_{\Fun(\cA_1,\cE)}(F,G) & \text{if } i = j = 1 \ , \\
			\emptyset & \text{if } i = 1 \text{ and } j = 0 \ .
		\end{cases} \]
		Finally, a morphism $(F,0) \to (G,1)$ in $\exp_\cE(\cA/\Delta^1)$ is cocartesian if and only if the induced morphism $f_!(F) \to G$ is an equivalence.
	\end{eg}	

\begin{eg}
Combining the previous two points with the general properties of the straightening equivalence, we deduce that  for any morphism $\gamma \colon x \to y$ in $\cX$ the fibers of $\exp_\cE(\cA/\cX)$ at $x$ and $y$ are canonically identified with $\Fun(\cA_x, \cE)$ and $\Fun(\cA_y,\cE)$, and a morphism $\alpha \colon F \to G$ in $\exp_\cE(\cA/\cX)$ lying over $\gamma$ is cocartesian if and only if for any choice of a cocartesian straightening $f_\gamma \colon \cA_x \to \cA_y$ of $\gamma$, $\alpha$ exhibits $G$ as left Kan extension of $F$ along $f_\gamma$.
\end{eg}

It follows from \cite[Variant 3.20 \& Remark 3.21]{PortaTeyssier_Day} that \cref{construction:exponential} can be canonically promoted to an $\infty$-functor
\[ \exp_\cE \colon \CoCart \to  \PrFibL \ . \]
Let us spell out the functoriality of $\exp_\cE$ in more concrete terms.
With respect to morphisms in $\CoCart$ as in \eqref{eq:morphism_in_Cocart}, we will use the following notation:
\begin{equation}\label{eq:exp_on_morphisms}
	\begin{tikzcd}
		\cB \arrow{d} & \cB_\cX \arrow{d} \arrow{r}{v} \arrow{l}[swap]{u} & \cA \arrow{dl} \\
		\cY & \cX \arrow{l}[swap]{f}
	\end{tikzcd} \stackrel{\exp_\cE}{\longmapsto} \begin{tikzcd}
		\exp_\cE(\cB / \cY) \arrow{d} & \exp_\cE(\cB_\cX / \cX) \arrow{l}[swap]{\cE^u} \arrow{r}{\cE^v_!} \arrow{d} & \exp_\cE(\cA / \cX) \arrow{dl} \\
		\cY & \cX \arrow{l}[swap]{f} 
\end{tikzcd}
\end{equation}
We refer to the functor $\cE^v_!$ as the \emph{exponential induction functor}.

\begin{prop}\label{prop:functoriality_exponential}
	With respect to \eqref{eq:exp_on_morphisms}, we have:
	\begin{enumerate}\itemsep=0.2cm
		\item the functor $\cE^u \colon \exp_\cE(\cB_\cX / \cX) \to \exp_\cE(\cB / \cY)$ makes the the right square a pullback;
		
		\item the functor $\cE^v_!$ preserves cocartesian edges.
	\end{enumerate}
	In particular, $\exp_\cE$ takes $\mathbb B$-cocartesian edges in $\CoCart$ to $\mathbb B$-cocartesian edges in $\PrFibL$.
\end{prop}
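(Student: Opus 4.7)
The plan is to verify (1) and (2) separately via the straightening equivalence, and then combine them for the final statement.

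For (1), the key input is the compatibility of straightening with pullback along $f \colon \cX \to \cY$. Since the left square in \eqref{eq:morphism_in_Cocart} is a pullback of cocartesian fibrations over $\cY$, the straightening of $\cB_\cX \to \cX$ is canonically equivalent to $\Upsilon_\cB \circ f \colon \cX \to \Cat_\infty$. Post-composing with $\Fun_!(-,\cE) \colon \Cat_\infty \to \PrL$ therefore identifies $\Fun_!(\Upsilon_{\cB_\cX}(-),\cE)$ with $\Fun_!(\Upsilon_\cB(-),\cE) \circ f$. Unstraightening being itself compatible with pullback, one obtains a canonical equivalence
\[
\exp_\cE(\cB_\cX/\cX) \simeq \cX \times_\cY \exp_\cE(\cB/\cY)
\]
under which $\cE^u$ corresponds to the projection. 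This establishes (1).

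For (2), I would observe that $v$ is a morphism in $\CoCart_\cX$, hence corresponds via the straightening equivalence of \cref{PrL_straightening} to a natural transformation $\Upsilon_v \colon \Upsilon_{\cB_\cX} \Rightarrow \Upsilon_\cA$ of functors $\cX \to \Cat_\infty$. Post-composing pointwise with $\Fun_!(-,\cE) \colon \Cat_\infty \to \PrL$ yields a natural transformation of functors $\cX \to \PrL$. Unstraightening this transformation gives a morphism in $\PrFibL_\cX$, which by construction of $\exp_\cE$ in \cref{construction:exponential} coincides with $\cE^v_!$. Since every morphism in $\PrFibL_\cX$ preserves cocartesian edges by definition, this yields (2).

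For the final statement, recall that a $\mathbb B$-cocartesian edge in $\CoCart$ is precisely one for which $v$ is an equivalence in $\CoCart_\cX$, and analogously for $\PrFibL$. When $v$ is an equivalence, $\Upsilon_v$ is a natural equivalence, hence $\Fun_!(\Upsilon_v(-),\cE)$ is pointwise an equivalence in $\PrL$, so $\cE^v_!$ is a fibrewise equivalence. Combined with (1), this shows that the image of a $\mathbb B$-cocartesian edge under $\exp_\cE$ is again $\mathbb B$-cocartesian. The main subtlety I foresee is the coherent construction of $\Fun_!(-,\cE)$ as an $\infty$-functor $\Cat_\infty \to \PrL$ and its compatibility with the straightening/unstraightening equivalence used to define $\exp_\cE$; this coherence is presumably encoded in the companion paper's machinery and must be invoked, rather than reproved, here.
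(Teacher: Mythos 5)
Your proposal is correct and takes essentially the same approach as the paper: (1) via compatibility of (un)straightening with pullback along $f$, and (2) by identifying $\cE^v_!$ as the unstraightening of a natural transformation of functors $\cX \to \PrL$, hence a morphism in $\PrFibL_\cX$ preserving cocartesian edges by definition. The paper's proof is simply terser, delegating the coherence of this construction to the companion paper, exactly as you anticipate at the end.
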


\begin{proof}
	Statement (1) simply follows unraveling the definitions, as in \cite[Lemma 3.8]{PortaTeyssier_Day}.
	Statement (2) is automatic from the definition of morphisms in $\PrFibL$, but the reader should observe that for fixed $\cX \in \Cat_\infty\op$, the induced functor $\exp_{\cE, \cX} \colon \CoCart_\cX \to \PrFibL_\cX$ is precisely given by \cref{construction:exponential}.
	In other words, $\cE^v_!$ is the unstraightening of the natural transformation
	\[ \Fun_!(\Upsilon_{\cB_\cX}(-),\cE) \to  \Fun_!(\Upsilon_\cA(-),\cE) \]
	induced by left Kan extension along the natural transformation $\Upsilon_v \colon \Upsilon_{\cB_\cX} \to \Upsilon_\cA$.
	Therefore, $\cE^v_!$ preserves cocartesian edges by construction.
\end{proof}

\begin{cor}\label{pullback_induction_exp}
	Consider a commutative diagram in $\CoCart$
	\[ \begin{tikzcd}
		\cB_{\cY}  \arrow{rr}{v_{\cY} } \arrow{dr} & & \cA_{\cY} \arrow{dl} \arrow{drr}{u_\cA} \\
		{} &  \cY \arrow{drr} & \cB \arrow[crossing over,leftarrow,swap]{ull}{u_\cB} \arrow{rr}{v} \arrow{dr} & & \cA \arrow{dl} \\
		{} & & & \cX
	\end{tikzcd} \]
whose diagonal squares are pullback.
Then,  the squares of the commutative diagram
	\[ \begin{tikzcd}
		 \exp_\cE(\cB_{\cY}/\cY)  \arrow{rr}{\cE^{v_{\cY}}_{!}}\arrow{dr} & & \exp_\cE(\cA_{\cY}/\cY)  \arrow{dl} \arrow{drr}{\cE^{u_\cA}}   \\
		{} &  \cY \arrow{drr} & \exp_\cE(\cB/\cX)  \arrow[crossing over,leftarrow,swap]{ull}{\cE^{u_\cB}}  \arrow{rr}{\cE^{v}_{!}} \arrow{dr} & &\exp_\cE(\cA/\cX)\arrow{dl} \\
		{} & & & \cX
	\end{tikzcd} \]
	are pullback.
\end{cor}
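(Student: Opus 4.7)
The plan is to reduce the entire claim to a direct application of \cref{prop:functoriality_exponential}(1), which is the key technical input. The hypothesis that the two ``diagonal'' squares in the cube are pullback in $\CoCart$ means exactly that $\cA_\cY \simeq \cY \times_\cX \cA$ and $\cB_\cY \simeq \cY \times_\cX \cB$ as cocartesian fibrations. Consequently, each of the two morphisms $\cA_\cY \to \cA$ and $\cB_\cY \to \cB$ in $\CoCart$, viewed over $f \colon \cY \to \cX$, can be represented as in the diagram \eqref{eq:morphism_in_Cocart} with the ``$v$'' component being an equivalence. This is precisely the situation covered by \cref{prop:functoriality_exponential}(1).

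Applying that proposition to each of the two pullback squares individually produces pullback squares
\[
\begin{tikzcd}[column sep=large]
\exp_\cE(\cA_\cY/\cY) \arrow{r}{\cE^{u_\cA}} \arrow{d} & \exp_\cE(\cA/\cX) \arrow{d} \\
\cY \arrow{r}{f} & \cX
\end{tikzcd}
\qquad\text{and}\qquad
\begin{tikzcd}[column sep=large]
\exp_\cE(\cB_\cY/\cY) \arrow{r}{\cE^{u_\cB}} \arrow{d} & \exp_\cE(\cB/\cX) \arrow{d} \\
\cY \arrow{r}{f} & \cX
\end{tikzcd}
\]
in $\PrFibL$. These are exactly the two diagonal squares in the image cube. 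The compatibility of these squares with the ``front'' face carrying $\cE^v_!$ and the ``back'' face carrying $\cE^{v_\cY}_!$ is then automatic from the functoriality of $\exp_\cE \colon \CoCart \to \PrFibL$ applied to the commutative cube in the hypothesis.

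I do not expect any serious obstacle: the whole content is already packaged in \cref{prop:functoriality_exponential}, and the only work is to identify the two pullback faces of the given cube with the input data of that proposition and then let functoriality of $\exp_\cE$ organize the result into a commutative cube in $\PrFibL$ with the claimed pullback diagonals.
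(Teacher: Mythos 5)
Your proof is correct and takes essentially the same approach the paper intends; the paper states this as a corollary of \cref{prop:functoriality_exponential} without giving an explicit proof. Applying part (1) of that proposition to the two $\mathbb B$-cocartesian edges (the pullback faces) gives the pullback diagonals, and the functoriality of $\exp_\cE$ applied to the whole input cube supplies the commutativity of the image cube.
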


Via the identifications of the fibres of the exponential fibration supplied by   \cref{eg:exponential_over_trivial_base},  \cref{pullback_induction_exp}  specializes to 

\begin{cor}\label{cor:fibers_exponential}
	In the situation from \cref{pullback_induction_exp} where $\cY$ is  an object $x \in \cX$,
	the squares of the commutative diagram
	 \[ \begin{tikzcd}
		\Fun(\cB_x, \cE) \arrow{rr}{v_{x,!}}\arrow{dr} & &\Fun(\cA_x, \cE)  \arrow{dl} \arrow{drr} \\
		{} &  \ast \arrow[swap]{drr}{x} & \exp_\cE(\cB/\cX)  \arrow[crossing over,leftarrow]{ull} \arrow{rr}{\cE^{v}_{!}} \arrow{dr} & &\exp_\cE(\cA/\cX)\arrow{dl} \\
		{} & & & \cX
	\end{tikzcd} \]
	are pullback,  where $v_{x,!}$ is the left Kan extension along  $v_x \colon \cA_x \to \cB_x$.
\end{cor}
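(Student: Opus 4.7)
The plan is to deduce this statement directly by specializing \cref{pullback_induction_exp} to the case where $\cY = \ast$ and the structural functor $\cY \to \cX$ is the one classifying the object $x \in \cX$. First I would form the pullback diagrams
\[ \begin{tikzcd}
	\cB_x \arrow{r} \arrow{d} & \cB \arrow{d} \\
	\ast \arrow{r}{x} & \cX
\end{tikzcd} \qquad \begin{tikzcd}
	\cA_x \arrow{r} \arrow{d} & \cA \arrow{d} \\
	\ast \arrow{r}{x} & \cX
\end{tikzcd} \]
in $\CoCart$, so that the hypotheses of \cref{pullback_induction_exp} are satisfied with $\cY = \ast$. Applying that corollary produces two pullback squares in $\PrFibL$ over $\ast$ and $\cX$, which is precisely the commutative diagram in the statement, except that the upper corners are expressed as $\exp_\cE(\cB_x/\ast)$ and $\exp_\cE(\cA_x/\ast)$.

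The second step is to identify these two objects with $\Fun(\cB_x,\cE)$ and $\Fun(\cA_x,\cE)$ respectively, using \cref{eg:exponential_over_trivial_base}. Under this identification, the functor $\cE_!^{v_x}$ induced by \cref{construction:exponential} is, by definition, the left Kan extension along $v_x \colon \cB_x \to \cA_x$, which is exactly $v_{x,!}$. This gives the desired identification of the top horizontal arrow.

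I do not anticipate any real obstacle here: the content of the corollary is essentially a translation of \cref{pullback_induction_exp} in the special case when the base of the pullback is a point, together with the trivialization of the exponential construction over a point recorded in \cref{eg:exponential_over_trivial_base}. The only minor subtlety to check is that the morphism in $\CoCart$ underlying the inclusion $\cB_x \hookrightarrow \cB$ (resp.\ $\cA_x \hookrightarrow \cA$) is indeed the one whose image under $\exp_\cE$ agrees with the evaluation-at-$x$ functor of $\exp_\cE(\cB/\cX) \to \cX$ (resp.\ $\exp_\cE(\cA/\cX) \to \cX$); this is automatic from the compatibility of \cref{construction:exponential} with the straightening equivalence recalled in \cref{PrL_straightening}.
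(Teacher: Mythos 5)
Your proposal is correct and matches the paper's argument exactly: the paper treats this corollary as an immediate specialization of \cref{pullback_induction_exp} with $\cY = \ast \xrightarrow{x} \cX$, together with the identification $\exp_\cE(\cA_x/\ast) \simeq \Fun(\cA_x,\cE)$ from \cref{eg:exponential_over_trivial_base}. One small remark: the direction you give, $v_x \colon \cB_x \to \cA_x$, is the one consistent both with $v \colon \cB \to \cA$ in \cref{pullback_induction_exp} and with $v_{x,!} \colon \Fun(\cB_x,\cE) \to \Fun(\cA_x,\cE)$ as drawn; the statement's phrase ``$v_x \colon \cA_x \to \cB_x$'' appears to be a typo in the source.
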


\begin{recollection}
	Assume that $\cE$ has an additional symmetric monoidal structure $\cE^\otimes$.
	Then \cite[Example 3.22]{PortaTeyssier_Day} shows that $\exp_\cE$ admits a natural extension
	\[ \exp_\cE \colon \CoCart^\otimes \to  \PrFibLotimes \]
	to a $\Cat_\infty\op$-lax symmetric monoidal functors, in the sense of Definition A.3 in \emph{loc.\ cit.}
\end{recollection}

\subsection{Section functors}\label{subsec:section_functors}

Given a cocartesian fibration $\cA \to \cX$ we can associate to it two different $\infty$-categories:
\[ \Sigma_\cX(\cA/\cX) \coloneqq \Fun_{/\cX}(\cX, \cA) \qquad \text{and} \qquad \Sigma_\cX^{\cocart}(\cA / \cX) \coloneqq \Fun_{/\cX}^{\cocart}(\cX, \cA) \ . \]
These are respectively the $\infty$-categories of sections and of cocartesian sections.
It follows from \cite[Corollary 3.23 \& Variant 3.24]{PortaTeyssier_Day} that these constructions promote to global functors
\[ \Sigma,\ \Sigma^{\cocart} \colon \CoCart \to  \Cat_\infty \times \Cat_\infty\op \qquad \text{and} \qquad \Sigma,\ \Sigma^{\cocart} \colon \PrFibL \to  \PrL \times \Cat_\infty\op \ . \]
The same considerations of \emph{loc.\ cit.} shows that the same holds for $\hCoCart$ in place of $\CoCart$.

\begin{rem}\label{construction_tensor_PrL_ex}
The functor $\Sigma_{\cX}^{\cocart} \colon \PrFibL \to \PrL$ admits a monoidal left adjoint $\Triv_\cX \colon \PrL \to \PrFibL $ informally given by $\cE\to (\cE \times \cX)/\cX$.  
In particular, given an object $\cA \to \cX$ of $\PrFibL$ and $\cE, \cE' \in \PrL$, we have
\[
\exp_\cE(\cA /\cX) \otimes_{\cX} \Triv_{\cX}(\cE') \simeq\exp_{\cE\otimes \cE'}(\cA /\cX) \ .
\]
\end{rem}

\begin{notation}
	Often we will also write $\Sigma$ and $\Sigma^{\cocart}$ for the induced functors $\PrFibL \to \PrL$ and its variants obtained composing the above functors with the canonical projection $\PrL \times \Cat_\infty\op \to \PrL$.
\end{notation}

\medskip

The subtlety here is in the great amount of functoriality encoded in $\Sigma$ and $\Sigma^{\cocart}$.
To fix ideas, let us discuss the case of $\PrFibL$ and the functor $\Sigma^{\cocart}$, although similar considerations will apply to both $\CoCart$ and $\hCoCart$ in place of $\PrFibL$ and $\Sigma$ in place of $\Sigma^{\cocart}$.
Morphisms in $\PrL$ are commutative diagrams of the form
\begin{equation*}
	\begin{tikzcd}
		\cB \arrow{d} & \cB_\cX \arrow{d} \arrow{r}{v} \arrow{l}[swap]{u} & \cA \arrow{dl} \\
		\cY & \cX \arrow{l}[swap]{f}
	\end{tikzcd}
\end{equation*}
where the square is a pullback and $v$ preserves cocartesian edges.
Applying $\Sigma^{\cocart}$, this diagram is sent to the composition
\[ \begin{tikzcd}
	\Funcocart_{/\cY}(\cY,\cB) \arrow{r}{u^\ast} & \Funcocart_{/\cX}(\cX, \cB_\cX) \arrow{r}{v \circ -} & \Funcocart_{/\cX}(\cX,\cA) \ .
\end{tikzcd} \]
Concretely, $u^\ast$ takes a cocartesian section $s \colon \cY \to \cB$, considers the composition $s \circ f$ and applies the universal property of pullbacks to produce a section $u^\ast(s) \colon \cX \to \cB_\cX$ of $\cB_\cX \to \cX$.
An immediate check reveals that this is again a cocartesian section, so that $u^\ast$ is in fact well defined.
On the other hand, $v \circ -$ takes a cocartesian section $t \colon \cX \to \cB_\cX$ to the composite cocartesian section $v \circ t \colon \cX \to \cA$.
That these operations can be performed $\infty$-functorially in $\PrFibL$ is precisely the content of \cite[Corollary 3.23]{PortaTeyssier_Day}.

\medskip

We will often be interested in taking sections of exponential constructions.
The following result is essentially a consequence of the theory of lax limits developed in \cite{Gepner_Enriched_2013}:

\begin{prop}[{See \cite[Proposition 4.1]{PortaTeyssier_Day}}]\label{prop:sections_exponential}
	Let $\cE$ be a presentable $\infty$-category and let $\cA \to \cX$ be a cocartesian fibration.
	There are canonical equivalences
	\[ \Fun(\cA, \cE) \simeq \Sigma_\cX(\exp_\cE(\cA/\cX)) \simeq \Fun_{/\cX}(\cX, \exp_\cE(\cA/\cX)) \ . \]
\end{prop}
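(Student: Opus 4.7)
The second equivalence $\Sigma_\cX(\exp_\cE(\cA/\cX)) \simeq \Fun_{/\cX}(\cX, \exp_\cE(\cA/\cX))$ is tautological: it is literally the definition of $\Sigma_\cX$ given in \S\ref{subsec:section_functors}. So the entire content lies in identifying $\Fun(\cA, \cE)$ with the sections of $\exp_\cE(\cA/\cX) \to \cX$.

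The strategy is to recognize both $\infty$-categories as lax limits of one and the same functor, via the Gepner--Haugseng--Nikolaus (GHN) theory of lax (co)limits of $\Cat_\infty$-valued diagrams. Concretely, set
\[ \Phi \coloneqq \Fun_!(\Upsilon_\cA(-), \cE) \colon \cX \to \PrL \subset \Cat_\infty. \]
Since $\exp_\cE(\cA/\cX) \to \cX$ is by definition the unstraightening of $\Phi$, the GHN description of lax limits as sections of the cocartesian unstraightening gives
\[ \Sigma_\cX(\exp_\cE(\cA/\cX)) \simeq \laxlim_{x \in \cX} \Phi(x) = \laxlim_{x \in \cX}\, \Fun(\Upsilon_\cA(x), \cE). \]

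On the other hand, $\cA \to \cX$ is itself the cocartesian unstraightening of $\Upsilon_\cA \colon \cX \to \Cat_\infty$, hence $\cA$ is the oplax colimit of $\Upsilon_\cA$ in $\Cat_\infty$. Mapping out of an oplax colimit produces a lax limit, so
\[ \Fun(\cA, \cE) \simeq \laxlim_{x \in \cX\op}\, \Fun(\Upsilon_\cA(x), \cE), \]
where now the transition functors are the restrictions $\Upsilon_\cA(\gamma)^\ast$ for $\gamma \colon x \to y$ in $\cX$, making the diagram contravariant.

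The key step, and the main obstacle, is to compare these two lax limits, which are indexed by $\cX$ and $\cX\op$ respectively. The bridge is Barwick's dualization of fibrations: the adjunctions $\Upsilon_\cA(\gamma)_! \dashv \Upsilon_\cA(\gamma)^\ast$ assemble the pointwise identical assignments $x \mapsto \Fun(\Upsilon_\cA(x), \cE)$ into a bifibration over $\cX$, whose cocartesian straightening is $\Phi$ and whose cartesian straightening is $\Fun(\Upsilon_\cA(-), \cE)$. Sections of a bifibration are insensitive to which class of edges is declared (co)cartesian — this is the compatibility between lax limits under passage to the dual fibration — and so both lax limits above compute the same $\infty$-category, giving the claimed equivalence. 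The bookkeeping for this comparison is handled, in the generality required here, by the results of the companion paper \cite{PortaTeyssier_Day} on $\CoCart$ and its section functors, and this is where the actual technical work is concentrated.
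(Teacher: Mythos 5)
Your argument is correct and takes essentially the same route as the companion paper and as the chain of equivalences in \cref{recollection:specalization_equivalence_proof}: both rest on the bicartesian nature of $\exp_\cE(\cA/\cX) \to \cX$ afforded by $\PrL \simeq (\PrR)\op$, together with the Gepner--Haugseng--Nikolaus identification of (op)lax limits with sections of (co)cartesian fibrations and of the total space $\cA$ with the (op)lax colimit of $\Upsilon_\cA$. One terminological slip worth fixing: the comparison mechanism you need is not Barwick's dualization, which replaces a cartesian fibration $\cB \to \cX$ by a \emph{different} cocartesian fibration $\cB^\star \to \cX\op$ carrying the \emph{same} straightening, but rather the bifibration phenomenon in which the \emph{single} presentable fibration $\exp_\cE(\cA/\cX) \to \cX$ carries two \emph{different} straightenings (the cocartesian one $\Phi \colon \cX \to \PrL$ and the cartesian one $\Fun(\Upsilon_\cA(-),\cE) \colon \cX\op \to \PrR$, related by fiberwise adjunction), while $\Fun_{/\cX}(\cX, -)$ is blind to which class of edges one declares marked.
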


\begin{warning}\label{warning:global_sections}
	If instead of applying $\Sigma_\cX$ we use $\Sigma_\cX^{\cocart}$, we obtain a full subcategory $\Funcocart(\cA,\cE)$ of $\Fun(\cA, \cE)$.
	We refer to objects in $\Funcocart(\cA, \cE)$ as \emph{cocartesian functors}.
	We will provide a in \cref{prop:cocartesian_functors_reformulation} a  characterization intrinsic to $\Fun(\cA, \cE)$ of what it means for a functor $F \colon \cA \to \cE$ to be cocartesian.
\end{warning}

\section{The specialization equivalence}\label{sec:specialization_equivalence}

\subsection{Global functoriality statements.}

Fix a cocartesian fibration $p \colon \cA \to \cX$ as well as a presentable $\infty$-category $\cE$.
Write
\[ p_\cE \colon \exp_\cE(\cA / \cX) \to \cX \]
for the structural map of the exponential construction of $p$.
Recall from \cref{prop:sections_exponential} that there is a canonical equivalence
\begin{equation}\label{eq:specialization_equivalence}
	\spe_{\cX,p}^\cE \colon \Fun(\cA, \cE) \simeq \Fun_{/\cX}(\cX, \exp_\cE(\cA/\cX)) \ ,
\end{equation}
which we refer to as the \emph{specialization equivalence}.
When $\cX$ and $\cE$ are clear out of the context, we will use the notation $\spe_\cA$ (or even just $\spe$) instead of $\spe_{\cX,p}^\cE$.

\medskip

The right hand side of \eqref{eq:specialization_equivalence} is functorial in $p \colon \cA \to \cX$ with respect to the morphisms in $\CoCart$.
Explicitly, this means that to every morphism \eqref{eq:morphism_in_Cocart}
\[ \begin{tikzcd}
	\cB \arrow{d} & \cB_\cX \arrow{d} \arrow{r}{v} \arrow{l}[swap]{u} & \cA \arrow{dl} \\
	\cY & \cX \arrow{l}[swap]{f}
\end{tikzcd} \]
one can first apply $\exp_\cE \colon \CoCart \to \PrFibL$ to obtain the morphism
\[ \begin{tikzcd}
	\exp_\cE(\cB / \cY) \arrow{d} & \exp_\cE(\cB_\cX / \cX) \arrow{l}[swap]{\cE^u} \arrow{r}{\cE^v_!} \arrow{d} & \exp_\cE(\cA / \cX) \arrow{dl} \\
	\cY & \cX \arrow{l}[swap]{f} 
\end{tikzcd} \]
and then apply the section functor $\Sigma \colon \PrFibL \to \PrL$ to obtain the composition
\[ \begin{tikzcd}
	\Fun_{/\cY}(\cY, \exp_\cE(\cB/\cY)) \arrow{r}{\Sigma(\cE^u)} & \Fun_{/\cX}(\cX, \exp_\cE(\cB_\cX/\cX)) \arrow{r}{\Sigma(\cE^v_!)} & \Fun_{/\cX}(\cX, \exp_\cE(\cA / \cX)) \ .
\end{tikzcd} \]
We defer to \cite{PortaTeyssier_Day} for the justification that these operations can be performed in an $\infty$-functorial way.
The goal of this section is to explain how this functoriality interacts with the specialization equivalence.
More precisely, observe that applying \eqref{eq:specialization_equivalence} to every term in the above composition, we obtain respectively $\Fun(\cB,\cE)$, $\Fun(\cB_\cX,\cE)$ and $\Fun(\cA, \cE)$.
The following is the main result of this section:

\begin{prop}\label{prop:global_functoriality}
	\hfill
	\begin{enumerate}\itemsep=0.2cm
		\item \label{prop:global_functoriality:pullback} There exists a canonically commutative square
		\begin{equation}\label{eq:global_pullback}
			\begin{tikzcd}
				\Fun_{/\cY}(\cY,\exp_\cE(\cB/\cY)) \arrow{r}{\Sigma(\cE^u)} \arrow{d}{\spe_{\cB}} & \Fun_{/\cX}(\cX, \exp_\cE(\cB_\cX / \cX)) \arrow{d}{\spe_{\cB_\cX}} \\
				\Fun(\cB,\cE) \arrow{r}{u^\ast} & \Fun(\cB_\cX, \cE) \ ,
			\end{tikzcd}
		\end{equation}
		providing a canonical identification $\Sigma(\cE^u) \simeq u^\ast$.
		
		\item \label{prop:global_functoriality:induction} There exists a canonically commutative square
		\begin{equation}\label{eq:global_induction}
			\begin{tikzcd}
				\Fun_{/\cX}(\cX, \exp_{\cE}(\cB_\cX / \cX)) \arrow{r}{\Sigma(\cE^v_!)} \arrow{d}{\spe_{\cB_\cX}} & \Fun_{/\cX}(\cX, \exp_{\cE}(\cA/\cX)) \arrow{d}{\spe_{\cA}} \\
				\Fun(\cB_\cX, \cE) \arrow{r}{v_!} & \Fun(\cA,\cE) \ ,
			\end{tikzcd}
		\end{equation}
		providing a canonical identification $\Sigma(\cE^v_!) \simeq v_!$.
	\end{enumerate}
\end{prop}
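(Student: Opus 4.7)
The plan is to handle the two parts separately. Any morphism in $\CoCart$ as in \eqref{eq:morphism_in_Cocart} factors canonically as a \emph{pure pullback} morphism $(\cB \to \cY) \to (\cB_\cX \to \cX)$ given by $\cB \xleftarrow{u} \cB_\cX \xrightarrow{\id} \cB_\cX$, followed by an \emph{over-base} morphism $(\cB_\cX \to \cX) \to (\cA \to \cX)$ given by $\cB_\cX \xleftarrow{\id} \cB_\cX \xrightarrow{v} \cA$. This factorization reduces the global functoriality of $\spe$ to the two separate cases addressed by (1) and (2), and the composite square is then obtained by pasting.

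For (1), I would invoke \cref{prop:functoriality_exponential}(1), which identifies $\exp_\cE(\cB_\cX / \cX)$ with the pullback $\cX \times_{\cY} \exp_\cE(\cB/\cY)$ in $\hCoCart$. Applying the section functor $\Sigma$ and the universal property of pullbacks then yields a canonical equivalence $\Sigma(\exp_\cE(\cB_\cX/\cX)) \simeq \Fun_{/\cY}(\cX, \exp_\cE(\cB/\cY))$ under which $\Sigma(\cE^u)$ becomes precomposition with $f$. The specialization equivalence from \cref{prop:sections_exponential} is itself built from the Gepner--Haugseng--Nikolaus lax limit formula of \cite{Gepner_Enriched_2013}, which is functorial in the base $\infty$-category; tracking this functoriality through the pullback realizes precomposition with $f$ on sections as precomposition with $u \colon \cB_\cX \to \cB$ on $\Fun(-,\cE)$, which is exactly $u^\ast$, closing the square \eqref{eq:global_pullback}.

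For (2), the morphism $v$ lies in the fiber $\CoCart_{/\cX}$ and, via the straightening equivalence $\CoCart_{/\cX} \simeq \Fun(\cX,\Cat_\infty)$, corresponds to a natural transformation $\Upsilon_v \colon \Upsilon_{\cB_\cX} \to \Upsilon_\cA$. By construction, $\cE^v_!$ unstraightens the pointwise left Kan extension transformation, so its fiberwise action at $x \in \cX$ is $v_{x,!}$ (\cref{cor:fibers_exponential}). The key computation I would carry out is the Beck--Chevalley identity $(v_! F)|_{\cA_x} \simeq v_{x,!}(F|_{\cB_{\cX,x}})$ for each $F \colon \cB_\cX \to \cE$ and each $a \in \cA_x$. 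This follows from the cofinality of the inclusion of comma categories $v_x/a \hookrightarrow v/a$: given $(b,\alpha) \in v/a$ with $p(b) = y$ and $\alpha$ lying over $\gamma \colon y \to x$, the cocartesian lift $b \to \gamma_\ast b$—which exists because $\cB_\cX \to \cX$ is cocartesian, and whose image under $v$ is again cocartesian because $v$ preserves cocartesian edges—provides an initial object of the slice $(b,\alpha)/(v_x/a)$.

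The main obstacle will be upgrading this pointwise Beck--Chevalley identification to the full coherent commutativity of \eqref{eq:global_induction}. I would handle this via uniqueness of left adjoints in $\PrL$: both $\spe_\cA \circ v_!$ and $\Sigma(\cE^v_!) \circ \spe_{\cB_\cX}$ are left adjoints, with right adjoints acting pointwise as $v_x^\ast$. The right adjoint of the second arises from the relative right adjoint of $\cE^v_!$ over $\cX$, and part (1) applied to a suitable situation, together with the direct description of $\spe$ on restriction functors, identifies these two right adjoints canonically. Uniqueness of left adjoints then promotes the pointwise identification into the required coherent natural equivalence $\Sigma(\cE^v_!) \simeq v_!$.
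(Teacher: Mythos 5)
Your high-level strategy matches the paper's, but both parts contain a gap at the decisive step: the mechanism by which the two sides of each square are actually identified. The paper's engine is the coend calculus developed just before the proposition (Lemmas~\ref{lem:change_of_variable} and~\ref{lem:pushforward_free_cartesian_fibration}, Corollary~\ref{cor:ninja_Yoneda}, and Recollection~\ref{recollection:specalization_equivalence_proof}, which rewrites the specialization equivalence as a chain through $\int^\cX \cA \boxtimes \mathrm F_\cX(\id_\cX) \simeq \cA$). None of this appears in your proposal, and the placeholders you offer in its stead do not do the job.

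For part (1), after invoking \cref{prop:functoriality_exponential}-(1) you assert that ``tracking this functoriality through the pullback realizes precomposition with $f$ on sections as precomposition with $u$ on $\Fun(-,\cE)$.'' This is exactly the non-obvious content of the statement, not something that follows from naming a reference. The paper makes it precise by identifying $\Fun_{/\cY}(\cX, \exp_\cE(\cB / \cY))$ with $\Fun_{/\cY}^{\mathrm{cart}}(\mathrm F_\cY(f), \exp_\cE(\cB / \cY))$ and then computing $\int^\cY \cB \boxtimes \mathrm F_\cY(f) \simeq \int^\cX u^\ast(\cB) \boxtimes \mathrm F_\cX(\id_\cX) \simeq \cB_\cX$ via the change-of-variable lemma, after which the Yoneda lemma pins down the comparison map as $u$. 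Without a computation of this kind you have a diagram and a hope that it commutes, not a proof.

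For part (2) you do hit on the paper's trick — pass to right adjoints using the relative adjoint $\cE^{v,\ast}$ from \cref{lem:exponential_induction_adjoint} and use that $\spe$ is an equivalence — and your cofinality argument for the fibrewise Beck--Chevalley identity is sound on its own. But you then try to upgrade pointwise agreement of the two right adjoints to a canonical equivalence by appealing to ``part (1) applied to a suitable situation'' and a ``direct description of $\spe$ on restriction functors,'' neither of which exists as stated. Part (1) governs change of base $f \colon \cX \to \cY$, whereas here $v$ lives over a fixed $\cX$; it does not apply. Pointwise equality of two functors (even via jointly conservative evaluations $j_x^\ast$) does not produce a coherent natural equivalence between them — you must first exhibit a comparison transformation, and only then can conservativity or pointwise checks tell you it is invertible. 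The paper produces the transformation by observing that $v$ induces $\alpha_v \colon \Upsilon_{\cB_\cX} \times \cX_{-/} \to \Upsilon_\cA \times \cX_{-/}$ and tracing it through the coend chain of Recollection~\ref{recollection:specalization_equivalence_proof}, at which point Corollary~\ref{cor:ninja_Yoneda} identifies the result with $v$. That is the missing ingredient. (Minor: the reference you cite for the lax limit formula, \cite{Gepner_Enriched_2013}, should be \cite{Gepner_Lax_colimits}.)
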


Before starting the proof, let us record a couple of handy consequences.
First, recall from \cref{cor:fibers_exponential} that the fiber of $\exp_\cE(\cA/\cX)$ at $x \in \cX$ is canonically identified with $\Fun(\cA_x, \cE)$.
In particular, this means that for a functor $F \colon \cA \to \cE$, the value of its specialization $\spe F$ at an object $x \in \cX$ is a functor
\[ (\spe F)_x \colon \cA_x \to  \cE \ . \]
We have:

\begin{cor}\label{prop:evaluation_specialization}
	Let $j_x \colon \cA_x \to \cA$ be the natural inclusion.
	Then there is a canonical identification
	\[ (\spe F)_x \simeq j_x^\ast(F) \ . \]
	In particular, for every $a \in \cA$ we have a canonical identification
	\[ (\spe F)_{p(a)}(a) \simeq F(a) \in \cE \ .  \]
\end{cor}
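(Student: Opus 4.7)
The plan is to deduce both statements directly from part~\ref{prop:global_functoriality:pullback} of \cref{prop:global_functoriality}, applied to a morphism in $\CoCart$ that encodes the inclusion of the fibre at $x$. Concretely, I would consider the cartesian morphism from $\cA \to \cX$ to $\cA_x \to \ast$ given by
\[ \begin{tikzcd}
	\cA \arrow{d}{p} & \cA_x \arrow{d} \arrow{r}{\id} \arrow{l}[swap]{j_x} & \cA_x \arrow{dl} \\
	\cX & \ast \arrow{l}[swap]{x}
\end{tikzcd} \]
in which $\cA_x$ is realised as the pullback of $\cA$ along $x \colon \ast \to \cX$. Feeding this into \eqref{eq:global_pullback} yields the commutative square
\[ \begin{tikzcd}
	\Fun_{/\cX}(\cX,\exp_\cE(\cA/\cX)) \arrow{r}{\Sigma(\cE^{j_x})} \arrow{d}{\spe_{\cA}} & \Fun_{/\ast}(\ast, \exp_\cE(\cA_x/\ast)) \arrow{d}{\spe_{\cA_x}} \\
	\Fun(\cA,\cE) \arrow{r}{j_x^\ast} & \Fun(\cA_x, \cE) \ ,
\end{tikzcd} \]
in which the two vertical arrows are equivalences.

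Next I would unwind the top row. By \cref{eg:exponential_over_trivial_base}, one has $\exp_\cE(\cA_x/\ast) \simeq \Fun(\cA_x, \cE) \times \ast$, which collapses $\Fun_{/\ast}(\ast, \exp_\cE(\cA_x/\ast))$ to $\Fun(\cA_x, \cE)$ and makes $\spe_{\cA_x}$ an identity. On the source side, \cref{cor:fibers_exponential} identifies the fibre $\exp_\cE(\cA/\cX)_x$ with $\Fun(\cA_x, \cE)$; under the same identification, the arrow $\Sigma(\cE^{j_x})$ is precisely the evaluation-at-$x$ functor $s \mapsto s_x$ on sections. Chasing $F \in \Fun(\cA, \cE)$ through the square then delivers the asserted equivalence $(\spe F)_x \simeq j_x^\ast F$, and the final assertion of the corollary follows by evaluating this identification at an object $a \in \cA_{p(a)}$, using that $j_{p(a)}(a) = a$ by definition.

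The one genuinely non-formal step in the plan is the identification of $\Sigma(\cE^{j_x})$ with the evaluation-at-$x$ functor on sections. This rests on the pullback description of the exponential construction along $x$ recorded in \cref{pullback_induction_exp}, combined with the general principle that $\Sigma$ converts base change in $\PrFibL$ into restriction of sections over the base. This is the step where I would spell out the compatibilities most carefully when writing up the details.
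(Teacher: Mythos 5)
Your proposal is correct and is essentially the paper's own argument: the paper likewise applies \cref{prop:global_functoriality}-(\ref{prop:global_functoriality:pullback}) to the morphism in $\CoCart$ given by the pullback square exhibiting $\cA_x$ as the fibre of $\cA \to \cX$ over $x \colon \ast \to \cX$. The extra unwinding you supply --- identifying $\Sigma(\cE^{j_x})$ with evaluation at $x$ via \cref{eg:exponential_over_trivial_base} and \cref{cor:fibers_exponential} --- is exactly the content the paper leaves implicit in its one-line conclusion.
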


\begin{proof}
	The pullback square
	\[ \begin{tikzcd}
		\cA \arrow{d}[swap]{p} & \cA_x \arrow{l} \arrow{d} \\
		\cX & \ast \arrow{l}[swap]{x}
	\end{tikzcd} \]
	defines a morphism from $\cA \to \cX$ to $\cA_x \to \ast$ in $\CoCart$.
	It is then enough to apply \cref{prop:global_functoriality}-\eqref{prop:global_functoriality:pullback} to this morphism.
\end{proof}

\begin{cor}\label{cor:induction_specialization_Beck_Chevalley}
	Consider a commutative diagram in $\CoCart$
	\[ \begin{tikzcd}
		\cB_{\cY}  \arrow{rr}{v_{\cY} } \arrow{dr} & & \cA_{\cY} \arrow{dl} \arrow{drr}{u_\cA} \\
		{} &  \cY \arrow{drr} & \cB \arrow[crossing over,leftarrow,swap]{ull}{u_\cB} \arrow{rr}{v} \arrow{dr} & & \cA \arrow{dl} \\
		{} & & & \cX
	\end{tikzcd} \]
	whose diagonal squares are pullback.
	Let $\cE$ be a presentable $\infty$-category.
	Then, the squares
	\[ \begin{tikzcd}
		\Fun(\cB_\cY,\cE)  & \Fun(\cA_\cY,\cE)\arrow{l}[swap]{v_{\cY}^*}\\
		 \Fun(\cB,\cE) \arrow{u}{u_\cB^*}  & \arrow{l}{v^* }  \Fun(\cA,\cE) \arrow{u}[swap]{u_\cA^*}
	\end{tikzcd} 	
	\qquad \text{ and } \qquad
		\begin{tikzcd}
		\Fun(\cB_\cY,\cE)  & \Fun(\cB,\cE)\arrow{l}[swap]{u_\cB^*}\\
		\Fun(\cA_\cY,\cE) \arrow{u}{v_{\cY}^* }  & \arrow{l}{ u_\cA^*}  \Fun(\cA,\cE) \arrow{u}[swap]{v^*}
	\end{tikzcd} \]
	are respectively horizontally left and right adjointable.
\end{cor}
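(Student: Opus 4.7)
The plan is to derive both adjointability statements from the functoriality of the exponential construction $\exp_\cE$ combined with the specialization equivalence of \cref{prop:global_functoriality}.

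First, I would apply $\exp_\cE \colon \CoCart \to \PrFibL$ to the given cube. By \cref{pullback_induction_exp}, the resulting cube in $\PrFibL$ retains pullback diagonal faces. A direct span-composition check in $\CoCart$, using $\cB_\cY \simeq \cB \times_\cA \cA_\cY$, shows that the two composite morphisms $[u_\cA] \circ [v]$ and $[v_\cY] \circ [u_\cB]$ from $(\cB \to \cX)$ to $(\cA_\cY \to \cY)$ coincide, both being represented by the span $\cB \xleftarrow{u_\cB} \cB_\cY \xrightarrow{v_\cY} \cA_\cY$ over $\cY \to \cX$. Applying $\exp_\cE$ and then the section functor $\Sigma$ to this identity, and invoking \cref{prop:global_functoriality} to identify $\Sigma(\cE^u) \simeq u^*$ and $\Sigma(\cE^v_!) \simeq v_!$, produces the canonical equivalence
\[ v_{\cY,!} \circ u_\cB^* \simeq u_\cA^* \circ v_! \]
of functors $\Fun(\cB, \cE) \to \Fun(\cA_\cY, \cE)$, which is exactly the Beck-Chevalley equivalence asserting horizontal left adjointability of the first square.

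For the second square, I need the analogous equivalence $v^* \circ u_{\cA,*} \simeq u_{\cB,*} \circ v_\cY^*$ for the right adjoints of the $u^*$'s. The cleanest approach is to repeat the previous argument with a dual exponential construction using $\Fun_*$ (right Kan extensions) in place of $\Fun_!$ in \cref{construction:exponential}; this variant fits in an analogous functorial framework and yields the desired equivalence through an identical span-composition argument. Alternatively, I would verify BC pointwise via \cref{prop:evaluation_specialization}: at each $b \in \cB$, both sides identify with a limit of $F$ over canonically equivalent slice categories arising from the pullback $\cB_\cY \simeq \cB \times_\cA \cA_\cY$ together with the cocartesian structure of $\cB,\cA \to \cX$.

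The main technical obstacle will be verifying that the equivalences produced by these functorial arguments coincide with the Beck-Chevalley natural transformations defined via the units and counits of the relevant adjunctions. This should follow from the universal characterization of the BC mate as the unique natural transformation compatible with the given commutative square of adjoints, but some care is needed since $\Sigma$ only records the composed morphism and not the adjunction data a priori.
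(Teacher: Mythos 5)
Your argument for the first square follows exactly the paper's route: apply $\exp_\cE$ to the cube, use \cref{pullback_induction_exp} to obtain the commutative diagram in $\PrFibL$, apply $\Sigma$, and translate via \cref{prop:global_functoriality} into the equivalence $v_{\cY,!}\,u_\cB^\ast \simeq u_\cA^\ast\,v_!$. Your acknowledged concern about whether this equivalence really is the Beck--Chevalley mate is legitimate but already absorbed by the framework: \cref{prop:exponential_vs_global_induction} identifies $\Sigma(\cE^v_!)$ with $v_!$ \emph{and} $\Sigma(\cE^{v,\ast})$ with $v^\ast$ simultaneously, and since $\cE^v_! \dashv \cE^{v,\ast}$ relative to $\cX$ (\cref{lem:exponential_induction_adjoint}), the section functor carries the whole adjunction along, so the produced equivalence is indeed the canonical mate of the commuting square of restrictions.

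Where you diverge from the paper, and where you overshoot, is the second square. You propose either a dual exponential construction built on $\Fun_*$, or a pointwise check of the Beck--Chevalley transformation via slice categories. Both are plausible, but neither is needed: the paper opens with ``it is enough to prove the left adjointability statement,'' which is the standard mate-calculus observation. The second square is the transpose of the first, and its horizontal Beck--Chevalley transformation
\[ v^\ast \, u_{\cA,\ast} \to u_{\cB,\ast}\, v_\cY^\ast \]
is the mate (through the adjunctions $v_! \dashv v^\ast$, $v_{\cY,!}\dashv v_\cY^\ast$, $u_\cB^\ast \dashv u_{\cB,\ast}$, $u_\cA^\ast\dashv u_{\cA,\ast}$) of the first square's
\[ v_{\cY,!}\, u_\cB^\ast \to u_\cA^\ast\, v_! \ . \]
One of these is an equivalence if and only if the other is, so the right adjointability of the second square comes for free. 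Your route would work, but setting up a parallel $\Fun_*$-based exponential construction is substantially more machinery than the two-line mate observation; and the pointwise slice-category verification, while doable, loses exactly the uniformity the exponential formalism was built to provide.
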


\begin{proof}
	It is enough to prove the left adjointability statement, which follows by applying the section functor to the commutative diagram 
	\[ \begin{tikzcd}
	 \exp_\cE(\cB_{\cY}/\cY)  \arrow{rr}{\cE^{v_{\cY}}_{!}}\arrow{dr} & & \exp_\cE(\cA_{\cY}/\cY)  \arrow{dl} \arrow{drr}{\cE^{u_\cA}}   \\
		{} &  \cY \arrow{drr} & \exp_\cE(\cB/\cX)  \arrow[crossing over,leftarrow,swap]{ull}{\cE^{u_\cB}}  \arrow{rr}{\cE^{v}_{!}} \arrow{dr} & &\exp_\cE(\cA/\cX)\arrow{dl} \\
		{} & & & \cX
	\end{tikzcd} \]
	supplied by \cref{pullback_induction_exp} and then invoke \cref{prop:global_functoriality}.
\end{proof}

\subsection{Some categorical calculus}

As a preliminary for \cref{prop:global_functoriality}, we revisit and extend part of the content of \cite{Gepner_Lax_colimits}.

\begin{recollection}\label{recollection:pushforward_fibrations}
	Let $f \colon \cX \to \cY$ be a functor of $\infty$-categories.
	The pullback
	\[ f^\ast \coloneqq \cX \times_\cY - \colon \Cat_{\infty / \cY} \to  \Cat_{\infty / \cX} \]
	preserves (co)cartesian fibrations and therefore it gives rise to functors
	\[ f^\ast \colon \Cart_\cY \to  \Cart_\cX \qquad \text{and} \qquad f^\ast \colon \CoCart_\cY \to  \CoCart_\cX \ . \]
	Under the straightening equivalences, we see these functors admit both a left and a right adjoint, denoted respectively
	\[ \fcartlowershriek , \fcartlowerstar \colon \Cart_\cX \to  \Cart_\cY \qquad \text{and} \qquad \fcocartlowershriek, \fcocartlowerstar \colon \CoCart_\cX \to  \CoCart_\cY \ . \]
\end{recollection}

Recall the following standard notation in category theory:

\begin{notation}
	Let $\cX$ be an $\infty$-category.
	We write $\Tw(\cX)$ for the associated $\infty$-category of twisting arrows, see \cite[\S5.2.1]{Lurie_Higher_algebra} and $\lambda \colon \Tw(\cX) \to \cX \times \cX\op$ for the right fibration constructed in \cite[Proposition 5.2.1.11]{Lurie_Higher_algebra}.
	Given a functor
	\[ F \colon \cX \times \cX\op \to  \Cat_\infty \ , \]
	we respectively write
	\[ \int^\cX F \qquad \text{and} \qquad \int_\cX F \]
	for the coend and the end of $F$, i.e.\ for the colimit and the limit of the composite
	\[ \begin{tikzcd}[column sep=small]
		\Tw(\cX) \arrow{r}{\lambda} & \cX \times \cX\op \arrow{r}{F} & \Cat_\infty \ .
	\end{tikzcd} \]
\end{notation}

\begin{notation}
	Write $\pi_\cX \colon \cX \times \cX\op \to \cX$ and $\pi_{\cX\op} \colon \cX \times \cX\op \to \cX\op$ for the canonical projections.
	Given two functors
	\[ F \colon \cX \to  \Cat_\infty \qquad \text{and} \qquad G \colon \cX\op \to  \Cat_\infty \ , \]
	we write $F \boxtimes G$ for the functor
	\[ F \boxtimes G \coloneqq \pi_\cX^\ast(F) \times \pi_{\cX\op}^\ast(G) \ . \]
	When $\cA \to \cX$ is a cocartesian fibration and $\cB \to \cX$ is a cartesian fibration, we write
	\[ \int^\cX \cA \boxtimes \cB \coloneqq \int^\cX \Upsilon_\cA \boxtimes \Upsilon_\cB \qquad \text{and} \qquad \int_\cX \cA \boxtimes \cB \coloneqq \int_\cX \Upsilon_\cA \boxtimes \Upsilon_\cB \ . \]
\end{notation}

To state the first fundamental result, we need to introduce one final notation:

\begin{notation}\label{notation:fake_exponential}
	Let $\cX$ and $\cE$ be two $\infty$-categories.
	For $\cA \to \cX$ a cartesian fibration, write $\Upsilon_\cA \colon \cX\op \to \Cat_\infty$ for its straightening and $\cE^\cA_{\mathrm{cc}}$ for the \emph{cocartesian} fibration classifying the functor
	\[ \Fun(\Upsilon_\cA, \cE) \colon \cX \to  \Cat_\infty \ . \]
	Similarly, for a cocartesian fibration $\cB \to \cA$, write $\Upsilon_\cB \colon \cX \to \Cat_\infty$ for its straightening and $\cE^\cB_{\mathrm{c}}$ for the \emph{cartesian} fibration classifying the functor
	\[ \Fun(\Upsilon_\cB, \cE) \colon \cX\op \to  \Cat_\infty \ . \]
	Notice that given a functor $f \colon \cY \to \cX$, there are canonical equivalences
	\begin{equation}\label{eq:fake_exponential}
		f^\ast \cE^\cA_{\mathrm{cc}} \simeq \cE^{f^\ast(\cA)}_{\mathrm{cc}} \qquad \text{and} \qquad f^\ast \cE^\cB_{\mathrm{c}} \simeq \cE^{f^\ast(\cB)}_{\mathrm{c}} \ .
	\end{equation}
\end{notation}

\begin{lem}\label{lem:change_of_variable}
	Let $f \colon \cX \to \cY$ be a functor of $\infty$-categories.
	\begin{enumerate}\itemsep=0.2cm
		\item For $\cB \to \cX$ a cartesian fibration and $\cA \to \cY$ a cocartesian fibration, there is a canonical equivalence
		\[ \int^\cY \cA \boxtimes \fcartlowershriek(\cB) \simeq \int^\cX f^\ast(\cA) \boxtimes \cB \ . \]
				
		\item For $\cB \to \cX$ a cocartesian fibration and $\cA \to \cY$ a cartesian fibration, there is a canonical equivalence
		\[ \int^\cY \cA \boxtimes \fcocartlowershriek(\cB) \simeq \int^\cX f^\ast(\cA) \boxtimes \cB \ . \]
	\end{enumerate}
\end{lem}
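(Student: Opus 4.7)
I will prove both statements simultaneously after reducing them to a projection formula for coends via the straightening equivalence. Focus on (1): applying straightening, the cocartesian fibration $\cA \to \cY$ corresponds to $\Upsilon_\cA \colon \cY \to \Cat_\infty$, the cartesian fibration $\cB \to \cX$ to $\Upsilon_\cB \colon \cX\op \to \Cat_\infty$, and by definition of $\fcartlowershriek$ as left adjoint to $f^\ast$ on cartesian fibrations (\cref{recollection:pushforward_fibrations}), one has
\[ \Upsilon_{\fcartlowershriek(\cB)} \simeq (f\op)_!\, \Upsilon_\cB \colon \cY\op \to \Cat_\infty \ , \]
where $(f\op)_!$ denotes the left Kan extension along $f\op$. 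Similarly $\Upsilon_{f^\ast(\cA)} \simeq f^\ast\Upsilon_\cA$. Setting $F \coloneqq \Upsilon_\cA$ and $G \coloneqq \Upsilon_\cB$, statement (1) becomes the identification
\[ \int^\cY F \boxtimes (f\op)_!(G) \ \simeq \ \int^\cX f^\ast(F) \boxtimes G \ , \]
which is the classical projection formula for coends. Statement (2) is formally dual: $\fcocartlowershriek$ corresponds to $f_! \colon \Fun(\cX,\Cat_\infty)\to \Fun(\cY,\Cat_\infty)$, and the resulting identity is obtained by swapping the roles of $\cX$ and $\cX\op$.

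The key identity will be proved by the standard two-step argument. First I would use the functor $\Tw(f) \colon \Tw(\cX) \to \Tw(\cY)$ together with the natural projection $\lambda \colon \Tw(\cY) \to \cY \times \cY\op$ to fit both sides into a comparison map. Then I would invoke the pointwise formula for left Kan extensions: for $y \in \cY$,
\[ \bigl((f\op)_!\, G\bigr)(y) \simeq \colim_{(x,\alpha) \in \cX \times_\cY \cY_{/y}\op} G(x) \ , \]
plug this into the defining colimit of $\int^\cY F \boxtimes (f\op)_! G$ over $\Tw(\cY)$, and interchange colimits (Fubini). After simplification, the double colimit reorganizes as a colimit over a category canonically cofinal in $\Tw(\cX)$, and the computation produces $\int^\cX f^\ast(F) \boxtimes G$. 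Alternatively, and perhaps more efficiently, one may appeal to the module-theoretic interpretation of coends as tensor products in $\PSh(\cY; \Cat_\infty)$, whereby the identity becomes a direct consequence of the adjunction $(f\op)_! \dashv f^\ast$ applied to internal hom objects.

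The main obstacle is not logical but bookkeeping: verifying the interchange of colimits and the cofinality statement at the level of $\infty$-categories requires a careful handling of twisted arrow categories. However, all the relevant facts are available in Gepner--Haugseng--Nikolaus \cite{Gepner_Lax_colimits}, which the authors have already invoked as the backbone of \S\ref{sec:specialization_equivalence}; once the formalism is in place, the argument becomes formal. The compatibilities \eqref{eq:fake_exponential} of \cref{notation:fake_exponential} with pullback can be used to streamline the reduction step, and part (2) follows from part (1) by applying it to the opposite functor $f\op$ and the opposite fibrations, using the equivalence $\Tw(\cX)\op \simeq \Tw(\cX\op)$.
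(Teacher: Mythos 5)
Your reduction is correct as far as it goes: under straightening, $\fcartlowershriek(\cB)$ is classified by $(f\op)_!\Upsilon_\cB$, $f^\ast(\cA)$ by $f^\ast\Upsilon_\cA$, and the lemma becomes the projection formula $\int^\cY F \boxtimes (f\op)_! G \simeq \int^\cX f^\ast F \boxtimes G$, with part (2) dual. Your second plan --- deducing this ``as a direct consequence of the adjunction $(f\op)_! \dashv f^\ast$ applied to internal hom objects'' --- is essentially the route the paper takes, but you never identify the technical tool that makes this precise, and that identification is the whole content of the proof. The paper's argument tests both sides against an arbitrary $\cE \in \Cat_\infty$ (Yoneda), converts $\Map_{\Cat_\infty}(\int^\cY \cA \boxtimes \fcartlowershriek(\cB), \cE)$ into the end $\int_\cY \Map(\Upsilon_{\fcartlowershriek(\cB)}, \Fun(\Upsilon_\cA, \cE))$, and then applies \cite[Prop.~6.9]{Gepner_Lax_colimits} to rewrite this end as the mapping space $\Map_{\Cart_\cY}(\fcartlowershriek(\cB), \cE^\cA_{\mathrm c})$. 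From there, the adjunction $\fcartlowershriek \dashv f^\ast$ and the identity $f^\ast \cE^\cA_{\mathrm c} \simeq \cE^{f^\ast(\cA)}_{\mathrm c}$ of \eqref{eq:fake_exponential} finish the job, running the same chain backwards. No Fubini and no cofinality verification are needed.

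Your first plan --- expanding the pointwise Kan extension formula inside the coend over $\Tw(\cY)$ and interchanging colimits --- has a genuine gap. You assert that ``the double colimit reorganizes as a colimit over a category canonically cofinal in $\Tw(\cX)$,'' but you neither name that category nor verify the cofinality, and this is where all the work would live. Comparing the iterated colimit over data of the shape $(y \to y' \to f(x))$ with the coend indexed by $\Tw(\cX)$ is not bookkeeping: it requires an initiality/cofinality argument between twisted-arrow-type categories that GHN does not supply off the shelf and that the 1-categorical literature typically circumvents by proving projection formulas through Yoneda --- exactly the approach you flag as an alternative and then do not develop. As written, the proposal has the correct target statement and the correct adjunction in view, but it stops short of the one lemma (GHN Prop.~6.9) that turns the adjunction into a proof.
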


\begin{proof}
	To prove (1), it suffices to fix $\cE \in \Cat_\infty$ and observe that there is the following chain of natural equivalences:
	\begin{align*}
		\Map_{\Cat_\infty}\Big( \int^\cY \cA \boxtimes \fcartlowershriek(\cB) , \cE \Big) & \simeq \int_\cY \Map_{\Cat_\infty}( \Upsilon_\cA \boxtimes \Upsilon_{\fcartlowershriek(\cB)}, \cE ) \\
		& \simeq \int_\cY \Map_{\Cat_\infty}(\Upsilon_{\fcartlowershriek(\cB)}, \Fun(\Upsilon_\cA, \cE)) \\
		& \simeq  \Map_{\Cart_\cY}( \fcartlowershriek(\cB), \cE^\cA_{\mathrm c}) & \text{By \cite[Prop.\ 6.9]{Gepner_Lax_colimits}} \\
		& \simeq \Map_{\Cart_\cX}( \cB, f^\ast \cE^\cA_{\mathrm c} ) \\
		& \simeq \Map_{\Cart_\cX}( \cB, \cE^{f^\ast(\cA)}_{\mathrm c} ) & \text{By eq.\ \eqref{eq:fake_exponential}} \\
		& \simeq \int_\cX \Map_{\Cat_\infty}( \Upsilon_\cB, \Fun(\Upsilon_{f^\ast(\cA)}, \cE) ) \\
		& \simeq \Map_{\Cat_\infty}\Big( \int^\cX f^\ast (\cA) \boxtimes \cB, \cE \Big) \ ,
	\end{align*}
	so the conclusion follows from the Yoneda lemma.
	As for (2), it follows by the same argument, using $\cE^{\cA}_{\mathrm{cc}}$ instead of $\cE^{\cA}_{\mathrm c}$ and working in $\CoCart_\cY$ instead of in $\Cart_\cY$.
	\personal{Here's the full proof. Fix $\cE \in \Cat_\infty$. We have:
	\begin{align*}
		\Map_{\Cat_\infty}\Big( \int^\cY \cA \boxtimes \fcocartlowershriek(\cB) , \cE \Big) & \simeq \int_\cY \Map_{\Cat_\infty}( \Upsilon_\cA \boxtimes \Upsilon_{\fcocartlowershriek(\cB)}, \cE ) \\
		& \simeq \int_\cY \Map_{\Cat_\infty}(\Upsilon_{\fcocartlowershriek(\cB)}, \Fun(\Upsilon_\cA, \cE)) \\
		& \simeq  \Map_{\CoCart_\cY}( \fcocartlowershriek(\cB), \cE^\cA_{\mathrm{cc}}) & \text{By \cite[Prop.\ 6.9]{Gepner_Lax_colimits}} \\
		& \simeq \Map_{\CoCart_\cX}( \cB, f^\ast \cE^\cA_{\mathrm{cc}} ) \\
		& \simeq \Map_{\CoCart_\cX}( \cB, \cE^{f^\ast (\cA)}_{\mathrm{cc}} ) & \text{By eq.\ \ref{eq:fake_exponential}} \\
		& \simeq \int_\cX \Map_{\Cat_\infty}( \Upsilon_\cB, \Fun(\Upsilon_{f^\ast(\cA)}, \cE) ) \\
		& \simeq \Map_{\Cat_\infty}\Big( \int^\cX f^\ast(\cA) \boxtimes \cB, \cE \Big) \ ,
	\end{align*}
	so the conclusion follows from the Yoneda lemma.}
\end{proof}

Next, recall the following:

\begin{thm}[{\cite[Theorem 4.5]{Gepner_Lax_colimits}}]\label{thm:free_cartesian_fibration}
	Let $\cX$ be an $\infty$-category.
	The forgetful functor
	\[ \mathrm U_\cX \colon \Cart_\cX \to  \Cat_{\infty / \cX} \]
	admits a left adjoint $\mathrm F_\cX$.
\end{thm}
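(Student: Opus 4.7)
The plan is to construct $\mathrm F_\cX$ explicitly and then verify the universal property directly. Given $p \colon \cA \to \cX$ in $\Cat_{\infty/\cX}$, I would define
\[ \mathrm F_\cX(\cA) \coloneqq \cA \times_{\cX, \mathrm{ev}_1} \Fun(\Delta^1, \cX) \ , \]
where the fiber product uses $p$ on one side and evaluation at $1 \in \Delta^1$ on the other, and I would declare the structural map $\mathrm F_\cX(\cA) \to \cX$ to be evaluation at $0$. Informally, objects of $\mathrm F_\cX(\cA)$ are pairs $(a, \gamma)$ with $a \in \cA$ and $\gamma \colon x \to p(a)$ a morphism of $\cX$, and the structural map sends such a pair to $x$.

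The first step is to verify that $\mathrm F_\cX(\cA) \to \cX$ is a cartesian fibration. This reduces to the well-known fact that $\mathrm{ev}_0 \colon \Fun(\Delta^1, \cX) \to \cX$ is a cartesian fibration (cartesian edges being those whose $\mathrm{ev}_1$ image is an equivalence), together with the stability of cartesian fibrations under pullback. The unit of the adjunction $\eta_\cA \colon \cA \to \mathrm U_\cX \mathrm F_\cX(\cA)$ should be induced by the degenerate edge map $\cX \to \Fun(\Delta^1,\cX)$, which on objects is $a \mapsto (a, \id_{p(a)})$.

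Next, for any cartesian fibration $q \colon \cB \to \cX$, I would build the comparison map
\[ \Map_{\Cart_\cX}(\mathrm F_\cX(\cA), \cB) \longrightarrow \Map_{\Cat_{\infty/\cX}}(\cA, \cB) \]
by precomposition with $\eta_\cA$, and construct an explicit inverse. Given $f \colon \cA \to \cB$ over $\cX$, the extension $\tilde f \colon \mathrm F_\cX(\cA) \to \cB$ sends $(a,\gamma)$ to $\gamma^\ast f(a)$, i.e.\ to the source of a $q$-cartesian lift of $\gamma$ with target $f(a)$. The $\infty$-categorical formulation of this inverse is the main technical step: rather than choosing cartesian lifts pointwise, one should realize $\tilde f$ as the composition
\[ \mathrm F_\cX(\cA) = \cA \times_\cX \Fun(\Delta^1,\cX) \xrightarrow{f \times \id} \cB \times_\cX \Fun(\Delta^1,\cX) \longrightarrow \cB \ , \]
where the second arrow takes $(b,\gamma)$ to the source of a chosen $q$-cartesian lift of $\gamma$ with target $b$. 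The existence of such a functor, functorial in $\cB$, follows from the universal characterization of $\Fun(\Delta^1, \cB) \to \Fun(\Delta^1, \cX) \times_\cX \cB$ (via target evaluation) together with the fact that $q$ being a cartesian fibration means this map admits a right adjoint whose essential image consists of cartesian edges.

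The main obstacle is verifying in a model-independent way that this inverse construction and the unit $\eta_\cA$ are actually mutually inverse up to coherent homotopy. The cleanest way to handle this is to note that both $\mathrm U_\cX \mathrm F_\cX(\cA)$ and $\cA$ are weighted colimits in $\Cat_\infty$, and compare them by identifying $\Fun(\Delta^1,\cX) \to \cX$ with the free cartesian fibration on $\id_\cX$; by Yoneda, this reduces the general statement to the universal property of the arrow category, which is the content of \cite[Proposition 4.1.2.15]{HTT}. Alternatively, one can use the equivalence between cartesian fibrations and right-lax functors \`a la \cite{Gepner_Lax_colimits}, under which $\mathrm F_\cX$ corresponds to the strictification-by-free-resolution operation, and the adjunction becomes manifest.
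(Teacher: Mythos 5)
The paper does not prove this theorem; it is imported as a black box from \cite[Theorem 4.5]{Gepner_Lax_colimits}, and the only thing the paper records is the explicit formula for $\mathrm F_\cX$ in \cref{rem:free_cartesian_fibration}. Your construction $\mathrm F_\cX(\cA)=\cA\times_{\cX,\ev_1}\Fun(\Delta^1,\cX)$ with structural map $\ev_0$ is the one from \cite{Gepner_Lax_colimits}, and it is the internally consistent one: the paper's Remark writes the pullback along $\ev_0$ and the structural map as $\ev_1$, which contradicts the Remark's own assertion that $\mathrm F_\cX(\id_\cX)$ classifies the contravariant functor $\cX_{-/}$ (that functor is classified by $\ev_0$, not $\ev_1$) and would not produce a cartesian fibration. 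So your labeling is the right one, and the overall shape of your argument --- unit by degenerate edges, counit via cartesian lifts, bijection on mapping spaces --- is exactly how \cite{Gepner_Lax_colimits} proceeds.

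Two steps do not hold up as written. The reduction of cartesianness of $\mathrm F_\cX(\cA)\to\cX$ to ``$\ev_0$ cartesian plus stability under pullback'' is looser than it sounds: the composite $\cA\times_{\cX,\ev_1}\cX^{[1]}\to\cX^{[1]}\xrightarrow{\ev_0}\cX$ has as first factor a pullback of $p$ along $\ev_1$, \emph{not} a pullback square over $\ev_0$, so base change does not formally apply and one still has to check directly that edges of $\mathrm F_\cX(\cA)$ with equivalence $\cA$-component are cartesian (this is \cite[Lemma 4.3]{Gepner_Lax_colimits}). More seriously, the citation \cite[Proposition 4.1.2.15]{HTT} is wrong: that proposition asserts that cocartesian fibrations are smooth and has nothing to do with any universal property of the arrow category or with the triangle identities you need. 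What is actually required is the relative-adjoint characterization of cartesian fibrations that you correctly invoke for the counit $\mathrm F_\cX(\cB)\to\cB$, followed by an explicit verification of the two triangle identities for $\eta_\cA\colon a\mapsto(a,\id_{p(a)})$ and $\varepsilon_\cB\colon(b,\gamma)\mapsto\gamma^\ast b$. That verification is carried out in \cite[\S4]{Gepner_Lax_colimits}, which is the right source to cite here; the gesture towards weighted colimits and Yoneda does not by itself replace it.
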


\begin{rem}\label{rem:free_cartesian_fibration}
	Given a functor $f \colon \cY \to \cX$, we refer to $\mathrm F_\cX(f)$ as the \emph{free cartesian fibration over $\cX$ generated by $f$}.
	It follows from the explicit description provided in \cite[Definition 4.1 \& Remark 4.4]{Gepner_Lax_colimits}, that $\mathrm F_\cX$ satisfies the following two conditions:
	\begin{enumerate}\itemsep=0.2cm
		\item when $f = \id_\cX$, $\mathrm F_\cX(\id_\cX) = \cX^{[1]}$, and the structural map is $\ev_1 \colon \cX^{[1]} \to \cX$.
		In other words, $\mathrm F_\cX(\id_\cX)$ classifies the functor
		\[ \cX_{-/} \colon \cX\op \to  \Cat_\infty \ . \]
		
		\item For a general $f \colon \cY \to \cX$, one has the following commutative diagram
		\[ \begin{tikzcd}
			\mathrm F_\cX(f) \arrow{r} \arrow{d} & \cX^{[1]} \arrow{r}{\ev_1} \arrow{d}{\ev_0} & \cX \\
			\cY \arrow{r}{f} & \cX & \phantom{\cX} \ ,
		\end{tikzcd} \]
		where the left square is a pullback and where the top horizontal composition is the structural map of the cartesian fibration $\mathrm F_\cX(f)$.
	\end{enumerate}
\end{rem}

\begin{lem}\label{lem:pushforward_free_cartesian_fibration}
	Let $f \colon \cX \to \cY$ be a functor of $\infty$-categories.
	Then there is a canonical equivalence
	\[ \fcartlowershriek( \mathrm F_\cX(\id_\cX) ) \simeq \mathrm F_\cY(f) \]
	in $\Cart_\cY$.
\end{lem}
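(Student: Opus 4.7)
My plan is to realize the desired equivalence as an instance of a general Beck--Chevalley-type commutation between left adjoints, applied to the identity morphism $\id_\cX$ of $\cX$.

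First, I would collect all the relevant adjunctions. On the one hand, \cref{thm:free_cartesian_fibration} provides the adjunctions $\mathrm F_\cX \dashv \mathrm U_\cX$ and $\mathrm F_\cY \dashv \mathrm U_\cY$, and \cref{recollection:pushforward_fibrations} supplies $\fcartlowershriek \dashv f^\ast$. On the other hand, for the slice $\infty$-categories $\Cat_{\infty/\cX}$ and $\Cat_{\infty/\cY}$, postcomposition with $f$ defines a functor $f_! \colon \Cat_{\infty/\cX} \to \Cat_{\infty/\cY}$ which is left adjoint to the pullback functor $f^\ast \colon \Cat_{\infty/\cY} \to \Cat_{\infty/\cX}$. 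Tautologically, $f_!(\id_\cX) \simeq f$ in $\Cat_{\infty/\cY}$.

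Second, I would verify that the square of right adjoints
\[
\begin{tikzcd}
\Cart_\cY \arrow{r}{\mathrm U_\cY} \arrow{d}[swap]{f^\ast} & \Cat_{\infty/\cY} \arrow{d}{f^\ast} \\
\Cart_\cX \arrow{r}{\mathrm U_\cX} & \Cat_{\infty/\cX}
\end{tikzcd}
\]
commutes. This is immediate from the definitions: both compositions send a cartesian fibration $\cB \to \cY$ to the pulled-back map $\cX \times_\cY \cB \to \cX$ viewed as an object of $\Cat_{\infty/\cX}$. Passing to left adjoints yields a canonical equivalence of functors
\[ \fcartlowershriek \circ \mathrm F_\cX \simeq \mathrm F_\cY \circ f_! \colon \Cat_{\infty/\cX} \to  \Cart_\cY \ . \]

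Finally, I would evaluate this equivalence at $\id_\cX \in \Cat_{\infty/\cX}$. The left-hand side gives $\fcartlowershriek(\mathrm F_\cX(\id_\cX))$, while the right-hand side becomes $\mathrm F_\cY(f_!(\id_\cX)) \simeq \mathrm F_\cY(f)$, yielding the claimed equivalence in $\Cart_\cY$. There is no real obstacle here; the only subtle point is that the commutation of the square of right adjoints must be checked at the level of $\infty$-categories and not merely on homotopy categories, but since both compositions are visibly given by the same pullback operation, no further work is required beyond invoking the uniqueness of adjoints.
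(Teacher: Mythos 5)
Your proposal is correct and is essentially the paper's argument in a slightly more conceptual packaging: the paper's proof unwinds the same three adjunctions $\fcartlowershriek \dashv f^\ast$, $\mathrm F \dashv \mathrm U$, and $f_! \dashv f^\ast$ on slices directly into a chain of mapping-space equivalences and invokes Yoneda, which is precisely the computation underlying your commuting-square-of-right-adjoints / pass-to-left-adjoints step. Both routes are sound, and the evaluation at $\id_\cX$ concludes identically.
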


\begin{proof}
	Indeed, for every cartesian fibration $\cB \to \cY$, we have:
	\begin{align*}
		\Map_{\Cart_\cY}( f_!(\mathrm F_\cX(\id_\cX)), \cB) & \simeq \Map_{\Cart_\cX}( \mathrm F_\cX(\id_\cX), f^\ast(\cB) ) \\
		& \simeq \Map_{/\cX}(\cX, f^\ast(\cB)) \\
		& \simeq \Map_{/\cY}(\cX, \cB) \\
		& \simeq \Map_{\Cart_\cY}(\mathrm F_\cY(f), \cB) \ ,
	\end{align*}
	so the conclusion follows from the Yoneda lemma.
\end{proof}

Finally, observe that \cite[Proposition 7.1]{Gepner_Lax_colimits} can be rewritten as follows:

\begin{cor}\label{cor:ninja_Yoneda}
	Let $\cX$ be an $\infty$-category and let $\cA \to \cX$ be a cocartesian fibration.
	Then there is a canonical equivalence
	\[ \cA \simeq \int^\cX \cA \boxtimes \mathrm F_\cX(\id_\cX) \]
	in $\Cat_\infty$.
\end{cor}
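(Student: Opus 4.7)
The plan is to follow the same Yoneda-style pattern used in the proof of \cref{lem:change_of_variable}, reducing the statement to an adjunction manipulation. Fix a test $\infty$-category $\cE$. I would compute the mapping space from $\int^\cX \cA \boxtimes \mathrm F_\cX(\id_\cX)$ to $\cE$ by first rewriting the coend as an end of mapping spaces, then using the adjunction between products and internal mapping objects in $\Cat_\infty$ to move $\Upsilon_\cA$ across, thereby recognizing the end as computing maps in $\Cart_\cX$ into the cartesian fibration $\cE^\cA_{\mathrm c}$ associated with $\Fun(\Upsilon_\cA(-), \cE) : \cX\op \to \Cat_\infty$ (as in \cref{notation:fake_exponential}). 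The intermediate step invoking \cite[Proposition 6.9]{Gepner_Lax_colimits} is the same one used twice in the proof of \cref{lem:change_of_variable}.

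Next, I would invoke the adjunction $\mathrm F_\cX \dashv \mathrm U_\cX$ of \cref{thm:free_cartesian_fibration} in the case $f = \id_\cX$ to rewrite
\[ \Map_{\Cart_\cX}(\mathrm F_\cX(\id_\cX), \cE^\cA_{\mathrm c}) \simeq \Map_{\Cat_{\infty / \cX}}(\cX, \cE^\cA_{\mathrm c}), \]
i.e., the space of sections of the cartesian fibration $\cE^\cA_{\mathrm c} \to \cX$. The crucial identification to complete the chain is that such sections correspond, via straightening–unstraightening, to functors $\cA \to \cE$; equivalently, that $\Fun(\cA, \cE)$ is the lax limit of $\Fun(\Upsilon_\cA(-), \cE) : \cX\op \to \Cat_\infty$. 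This is exactly the rewording of \cite[Proposition 7.1]{Gepner_Lax_colimits} under which the corollary is being extracted. Chaining all these natural equivalences together produces
\[ \Map_{\Cat_\infty}\Big(\int^\cX \cA \boxtimes \mathrm F_\cX(\id_\cX), \cE\Big) \simeq \Map_{\Cat_\infty}(\cA, \cE), \]
and the Yoneda lemma then delivers the asserted canonical equivalence.

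The only genuine point of care is that the final identification between sections of $\cE^\cA_{\mathrm c}$ and functors $\cA \to \cE$ must be natural in $\cE$ and compatible with all the preceding manipulations, so that the Yoneda argument is legitimate. This naturality is precisely what is packaged in the straightening formalism of Gepner–Haugseng–Nikolaus; once it is invoked, no further delicate argument is required, and the corollary follows essentially formally from the results already recalled in \cref{thm:free_cartesian_fibration} and \cref{rem:free_cartesian_fibration}.
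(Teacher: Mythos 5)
Your argument is correct and is essentially the one the paper has in mind: the paper presents \cref{cor:ninja_Yoneda} as a bare restatement of \cite[Proposition 7.1]{Gepner_Lax_colimits} without giving a detailed proof, and your Yoneda chain (mirroring the one in \cref{lem:change_of_variable} and, read in the reverse direction, the one in \cref{recollection:specalization_equivalence_proof}) is precisely the formal unpacking of that restatement, passing through the end–coend adjunction, \cite[Proposition 6.9]{Gepner_Lax_colimits}, the free cartesian fibration adjunction of \cref{thm:free_cartesian_fibration}, and finally the identification of $\Fun_{/\cX}(\cX, \cE^{\cA}_{\mathrm c})$ with $\Fun(\cA,\cE)$. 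Since every step is natural in $\cE$, the Yoneda lemma applies and the corollary follows; this matches both the content and the style of the surrounding section.
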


\subsection{Exponential pullback vs.\ global pullback}

Before proving \cref{prop:global_functoriality}-(1), let us revisit the proof of the equivalence \eqref{eq:specialization_equivalence} in terms of the categorical calculus we just introduced.

\begin{recollection}[{\cite[Proposition 4.1]{PortaTeyssier_Day}}]\label{recollection:specalization_equivalence_proof}
	Fix a cocartesian fibration $p \colon \cA \to \cX$ and a presentable $\infty$-category $\cE$.
	Using the equivalence $\PrL \simeq (\PrR)\op$, we see that the presentable fibration $\exp_{\cE}(\cA / \cX) \to \cX$ is at the same time a cocartesian and a cartesian fibration.
	Seen as a cartesian fibration, it classifies the functor
	\[ \Fun(\Upsilon_\cA, \cE) \colon \cX\op \to  \PrR \ . \]
	We use this second description to compute the sections of $\exp_{\cE,\cX}(\cA)$.
	Then the specialization equivalence $\spe_\cA$ is identified with the following composition of equivalences:
	\begin{align*}
		\Fun_{/\cX}(\cX, \exp_\cE(\cA / \cX)) & \simeq \Fun_{/\cX}^{\mathrm{cart}}( \mathrm F_\cX(\id_\cX), \exp_\cE(\cA / \cX) ) \\
		& \simeq \int_\cX \Fun( \cX_{-/}, \Fun(\Upsilon_\cA, \cE) ) & \text{By \cite[Prop.\ 6.9]{Gepner_Lax_colimits} \& Rem.\ \ref{rem:free_cartesian_fibration}} \\
		& \simeq \Fun\Big( \int^\cX \cA \boxtimes \mathrm F_\cX(\id_\cX), \cE \Big) \\
		& \simeq \Fun(\cA, \cE) & \text{By Cor.\ \ref{cor:ninja_Yoneda}.}
	\end{align*}
\end{recollection}

We are now ready for:

\begin{proof}[Proof of \cref{prop:global_functoriality}-(1)]
	Fix a pullback square
	\begin{equation}\label{eq:global_functoriality_pullback_I}
		\begin{tikzcd}
			\cB \arrow{d} \arrow{r}{u} & \cA \arrow{d} \\
			\cX \arrow{r}{f} & \cY
		\end{tikzcd}
	\end{equation}
	where the vertical functors are cocartesian fibrations.
	Recall from \cref{prop:functoriality_exponential}-(1) the canonical equivalence
	\[ f^\ast( \exp_\cE(\cA / \cY) ) \simeq \exp_\cE(\cB / \cX) \]
	We therefore obtain a canonical equivalence
	\begin{align*}
		\Sigma_\cX( \exp_\cE(\cB / \cX) ) & = \Fun_{/\cX}(\cX, \exp_\cE(\cB / \cX)) \\
		& \simeq \Fun_{/\cY}(\cX, \exp_\cE(\cA / \cY)) \\
		& \simeq \Fun_{/\cY}^{\mathrm{cart}}( \mathrm F_\cY(f), \exp_\cE(\cA / \cY) ) \ .
	\end{align*}
	Similarly,
	\[ \Sigma_\cY( \exp_\cE(\cA / \cY) ) \simeq \Fun_{/\cY}^{\mathrm{cart}}( \mathrm F_\cY(\id_\cY), \exp_\cE(\cA / \cY) ) \ . \]
	Since $\id_\cY$ is the final object in $\Cat_{\infty / \cY}$, we find a canonical map
	\[ \alpha_f \colon \mathrm F_\cY(f) \to  \mathrm F_\cY(\id_\cY) \]
	in $\Cart_\cY$ between free cartesian fibrations, and unwinding the definitions we find that the sections of the exponential pullback $\Sigma(\cE^u)$ are canonically identified with the functor
	\[ \alpha_f^\ast \colon \Fun_{/\cY}^{\mathrm{cart}}( \mathrm F_\cY(\id_\cY), \exp_\cE(\cA / \cY) ) \to  \Fun_{/\cY}^{\mathrm{cart}}( \mathrm F_\cY(f), \exp_\cE(\cA / \cY) ) \ . \]
	Applying the same chain of equivalences of \cref{recollection:specalization_equivalence_proof}, we find a canonical identification of $\alpha_f^\ast$ with the map
	\[ \Fun\Big( \int^\cY \cA \boxtimes \mathrm F_\cY(\id_\cY), \cE \Big) \to  \Fun\Big( \int^\cY \cA \boxtimes \mathrm F_\cY(f) , \cE \Big) \]
	induced by pullback along the canonical map
	\[ \beta_f \colon \cA \simeq \int^\cY \cA \boxtimes \mathrm F_\cY(\id_\cY) \to  \int^\cY \cA \boxtimes \mathrm F_\cY(f) \]
	constructed out of $\alpha_f$.
	Recall now from \cref{lem:pushforward_free_cartesian_fibration} that there is a canonical equivalence
	\[ \mathrm F_\cY(f) \simeq \fcartlowershriek(\mathrm F_\cX(\id_\cX)) \ , \]
	so that \cref{lem:change_of_variable} and \cref{cor:ninja_Yoneda} supply a canonical identification
	\[ \int^\cY \cA \boxtimes \mathrm F_\cY(f) \simeq \int^\cY \cA \boxtimes \fcartlowershriek(\mathrm F_\cX(\id_\cX)) \simeq \int^\cX f^\ast(\cA) \boxtimes \mathrm F_\cX(\id_\cX) \simeq \cB \ . \]
	Unwinding the definitions, we see that $\beta_f$ is identified with $u$, whence the conclusion.
\end{proof}

\subsection{Exponential induction vs.\ global induction}

We now deal with \cref{prop:global_functoriality}-(\ref{prop:global_functoriality:induction}).
Fix an $\infty$-category $\cX$ and consider a morphism
\[ \begin{tikzcd}[column sep=small]
	\cB \arrow{rr}{v} \arrow{dr} & & \cA \arrow{dl} \\
	& \cX
\end{tikzcd} \]
in $\CoCart_\cX$.
Applying $\exp_\cE(-/\cX)$, we find the morphism
\[ \begin{tikzcd}[column sep=small]
	\exp_\cE(\cB / \cX) \arrow{rr}{\cE^v_!} \arrow{dr} & & \exp_\cE(\cA / \cX) \arrow{dl} \\
	& \cX
\end{tikzcd} \]
in $\PrFibL_\cX$.

\begin{lem}\label{lem:exponential_induction_adjoint}
	The functor $\cE^v_!$ admits a right adjoint
	\[ \cE^{v,\ast} \colon \exp_\cE(\cA / \cX) \to  \exp_\cE(\cB / \cX) \]
	relative to $\cX$.
\end{lem}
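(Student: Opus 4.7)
The plan is to exhibit $\cE^{v,\ast}$ by appealing to the general criterion \cite[Proposition 7.3.2.6]{Lurie_Higher_algebra}: a morphism $F \colon \cC \to \cD$ of cocartesian fibrations over $\cX$ that preserves cocartesian edges admits a right adjoint relative to $\cX$ if and only if every fiber $F_x$ admits a right adjoint. Verifying the hypotheses is immediate from the tools already developed. First, by \cref{prop:functoriality_exponential}-(2), $\cE^v_!$ is a morphism of cocartesian (in fact presentable) fibrations over $\cX$ preserving cocartesian edges. Second, by \cref{cor:fibers_exponential}, its fiber at $x \in \cX$ is canonically identified with the left Kan extension functor
\[ v_{x,!} \colon \Fun(\cB_x, \cE) \to \Fun(\cA_x, \cE) \ , \]
which is a morphism in $\PrL$ and therefore admits a right adjoint given by pullback $v_x^\ast$ along $v_x$. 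The cited criterion then produces the relative right adjoint $\cE^{v,\ast}$ in one stroke.

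As a more intrinsic perspective, one can construct $\cE^{v,\ast}$ directly via straightening. The morphism $\cE^v_!$ in $\PrFibL_{\cX}$ corresponds to a natural transformation $\tau \colon \Fun_!(\Upsilon_{\cB}(-),\cE) \to \Fun_!(\Upsilon_{\cA}(-),\cE)$ of functors $\cX \to \PrL$, whose component at $x$ is $v_{x,!}$. The canonical equivalence $\PrL \simeq (\PrR)\op$ converts $\tau$ into a natural transformation of functors $\cX\op \to \PrR$ whose component at $x$ is $v_x^\ast$. Since the total spaces $\exp_\cE(\cA/\cX)$ and $\exp_\cE(\cB/\cX)$ are presentable fibrations, they are simultaneously cartesian fibrations over $\cX$; unstraightening the reversed natural transformation then yields $\cE^{v,\ast}$ as a functor over $\cX$ preserving cartesian (though generally not cocartesian) edges, fiberwise right adjoint to $\cE^v_!$.

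No substantive obstacle arises. The lemma ultimately amounts to the elementary fact that left Kan extensions between presentable functor categories admit right adjoints, together with the standard coherence between fiberwise and relative adjunctions for morphisms of cocartesian fibrations preserving cocartesian edges.
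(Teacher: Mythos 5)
Your proof is correct and takes essentially the same route as the paper: both invoke \cite[Proposition 7.3.2.6]{Lurie_Higher_algebra} to reduce the existence of a relative right adjoint to a fiberwise statement, then use \cref{cor:fibers_exponential} to identify the fiber of $\cE^v_!$ at $x$ with the left Kan extension $v_{x,!}$, whose right adjoint is the pullback $v_x^\ast$. Your second paragraph (the unstraightening-via-$\PrL \simeq (\PrR)\op$ construction) is a pleasant alternative presentation that the paper does not include, but the core argument is identical.
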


\begin{proof}
	Since both $\exp_\cE(\cA / \cX)$ and $\exp_\cE(\cB / \cX)$ are cocartesian fibrations and $\cE^v_!$ preserves cocartesian edges, applying \cite[Proposition 7.3.2.6]{Lurie_Higher_algebra} shows that it is enough to prove that for every $x \in \cX$, the induced functor on the fibers at $x$
	\[ \cE^v_{!,x} \colon \exp_\cE(\cA / \cX)_x \to  \exp_\cE(\cB / \cX)_x \]
	admits a right adjoint.
	However, \cref{cor:fibers_exponential} identifies this functor with the left Kan extension
	\[ v_{x,!} \colon \Fun(\cA_x, \cE) \to  \Fun(\cB_x, \cE) \ , \]
	which is tautologically left adjoint to the restriction $v_x^\ast$.
	The conclusion follows.
\end{proof}

At this point, \cref{prop:global_functoriality} immediately follows from the following more precise statement:

\begin{prop} \label{prop:exponential_vs_global_induction}
	Keeping the same notations as above, both diagrams
	\[ \begin{tikzcd}[column sep=large]
		\Fun_{/\cX}(\cX, \exp_\cE(\cA/\cX)) \arrow{r}{\Sigma_\cX( \cE^v_! )} \arrow{r} \arrow{d}{\spe_\cA} & \Fun_{/\cX}(\cX, \exp_\cE(\cB/\cX)) \arrow{d}{\spe_\cB} \\
		\Fun(\cA, \cE) \arrow{r}{v_!} & \Fun(\cB, \cE)
	\end{tikzcd} \]
	and
	\[ \begin{tikzcd}[column sep=large]
		\Fun_{/\cX}(\cX, \exp_\cE(\cB/\cX)) \arrow{r}{\Sigma_\cX( \cE^{v,\ast} )}  \arrow{r} \arrow{d}{\spe_\cA} & \Fun_{/\cX}(\cX, \exp_\cE(\cA/\cX)) \arrow{d}{\spe_\cB} \\
		\Fun(\cB, \cE) \arrow{r}{v^\ast} & \Fun(\cA, \cE)
	\end{tikzcd} \]
	are canonically commutative.
\end{prop}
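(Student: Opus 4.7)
My plan is to establish the second (right-adjoint) diagram first by direct computation, and then to deduce the first (left-adjoint) diagram by passing to mates relative to $\cX$. By \cref{lem:exponential_induction_adjoint} we have a relative adjunction $\cE^v_! \dashv \cE^{v,\ast}$ in $\Cat_{\infty/\cX}$, and by construction of $v_!$ we also have $v_! \dashv v^\ast$ in $\Fun(\cB,\cE) \rightleftarrows \Fun(\cA,\cE)$. Since $\Sigma_\cX = \Fun_{/\cX}(\cX,-)$ preserves relative adjunctions, it turns the first adjunction into $\Sigma_\cX(\cE^v_!) \dashv \Sigma_\cX(\cE^{v,\ast})$. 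Under the specialization equivalences on both sides, once the right-adjoint square commutes, the mate formalism supplies a canonically commutative square of left adjoints, and the horizontal adjoints are then canonically identified with $v_!$ and $\cE^v_!$ respectively by uniqueness of left adjoints.

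To handle the right-adjoint square, I would follow the cartesian-fibration description of $\exp_\cE(-/\cX)$ used in \cref{recollection:specalization_equivalence_proof}: $\exp_\cE(\cA/\cX)$ classifies $\Fun(\Upsilon_\cA(-),\cE) \colon \cX\op \to \PrR$, and under this presentation the relative right adjoint $\cE^{v,\ast}$ corresponds to the natural transformation
\[ \Fun(\Upsilon_\cA(-),\cE) \to  \Fun(\Upsilon_\cB(-),\cE) \]
given by precomposition with $\Upsilon_v \colon \Upsilon_\cB \to \Upsilon_\cA$. Running the chain of equivalences
\[ \Fun_{/\cX}(\cX, \exp_\cE(\cA/\cX)) \simeq \Fun_{/\cX}^{\mathrm{cart}}(\mathrm F_\cX(\id_\cX), \exp_\cE(\cA/\cX)) \simeq \int_\cX \Fun(\cX_{-/},\Fun(\Upsilon_\cA,\cE)) \]
from \cref{recollection:specalization_equivalence_proof} and then dualizing via \cite[Proposition 6.9]{Gepner_Lax_colimits} and the Yoneda-type identity $\int^\cX \cA \boxtimes \mathrm F_\cX(\id_\cX) \simeq \cA$ from \cref{cor:ninja_Yoneda}, one sees that $\Sigma_\cX(\cE^{v,\ast})$ corresponds to pullback along the morphism of coends
\[ \int^\cX v \boxtimes \id_{\mathrm F_\cX(\id_\cX)} \colon \cB \simeq \int^\cX \cB \boxtimes \mathrm F_\cX(\id_\cX) \to  \int^\cX \cA \boxtimes \mathrm F_\cX(\id_\cX) \simeq \cA \ , \]
which by the naturality of \cref{cor:ninja_Yoneda} is canonically $v$ itself. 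This yields the second commutative square.

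For the passage to left adjoints, the key is that the relative adjunction $\cE^v_! \dashv \cE^{v,\ast}$ fits into the framework of \cite[\S7.3.2]{Lurie_Higher_algebra} and that $\Sigma_\cX$ of a morphism of cocartesian fibrations preserves relative adjunctions between them. Together with the independently constructed adjunction $v_! \dashv v^\ast$, the commutativity of the right-adjoint square from the previous paragraph admits a canonical mate, which by the uniqueness of left adjoints (applied in the $\infty$-categorical mate-pair formalism) is precisely the desired commutativity of the left-adjoint square, with the horizontal maps identified with $\Sigma_\cX(\cE^v_!)$ and $v_!$.

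The main obstacle is the $\infty$-categorical coherence of the mate argument: one must verify that the equivalence between the mate and the left-adjoint square is canonical and not merely that the square commutes, which requires careful bookkeeping of the naturality of the coend identification and the universal property of relative left adjoints. A secondary technical point is to ensure that the identification $\cE^{v,\ast} \simeq$ ``precomposition with $\Upsilon_v$'' really holds at the level of the cartesian straightening, for which one can reduce to the fibrewise check $\cE^{v,\ast}_x \simeq v_x^\ast$ supplied by the construction of \cref{lem:exponential_induction_adjoint} combined with \cref{cor:fibers_exponential}.
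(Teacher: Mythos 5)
Your proposal is correct and follows essentially the same route as the paper: you reduce to the right-adjoint square via the relative adjunction $\cE^v_! \dashv \cE^{v,\ast}$ of \cref{lem:exponential_induction_adjoint} together with passage to adjoints under the specialization equivalences (the paper phrases this simply as uniqueness of left adjoints under a square of equivalences rather than invoking the full mate formalism, but it is the same move), and you then verify the right-adjoint square by running the coend identification from \cref{recollection:specalization_equivalence_proof} and concluding with \cref{cor:ninja_Yoneda} and the Yoneda lemma, exactly as in the paper.
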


\begin{proof}
	Since $\cE^v_!$ is left adjoint to $\cE^{v,\ast}$ by \cref{lem:exponential_induction_adjoint}, it follows that $\Sigma_\cX(\cE^v_!)$ is left adjoint to $\Sigma_\cX(\cE^{v,\ast})$.
	Since $\spe_\cA$ and $\spe_\cB$ are equivalences, it is then enough to prove the commutativity of the second diagram.
	Notice that since $v$ preserves cocartesian arrows, it induces a natural transformation
	\[ \alpha_v \colon \Upsilon_\cA \times \cX_{-/} \to  \Upsilon_\cB \times \cX_{-/}  \]
	of functors $\cX \times \cX\op \to \Cat_\infty$.
	Following the construction of the specialization equivalence (see \cref{recollection:specalization_equivalence_proof}), we reduce to check that the map
	\[ \cA \simeq \int^\cX \cA \boxtimes \mathrm F_\cX(\id_\cX) \to  \int^\cX \cB \boxtimes \mathrm F_{\cX}(\id_\cX) \simeq \cB \]
	induced by $\alpha_v$ is canonically identified with $v$.
	This follows from \cref{cor:ninja_Yoneda} and the Yoneda lemma.
\end{proof}

\subsection{Change of coefficients}\label{subsec:change_of_coefficients}

It follows from 
\cite[Variant 3.20 \& Remark 3.21-(1)]{PortaTeyssier_Day}  and the functoriality of the tensor product of presentable $\infty$-categories that the exponential construction $\exp_\cE$ depends functorially on $\cE$.
In other words, we have a bifunctor
\[ \exp \colon \CoCart \times \PrL \to \PrFibL \ ,  \]
that sends a pair $(p \colon \cA \to \cX, \cE)$ to the presentable fibration $p \colon \exp_\cE(\cA / \cX) \to \cX$.

\medskip

Let $f \colon \cE \to \cE'$ be a morphism in $\PrL$ and fix a cocartesian fibration $p \colon \cA \to \cX$.
The functor $f$ induces morphisms
\[ f^{\cA/\cX} \colon \exp_\cE(\cA/\cX) \to \exp_{\cE'}(\cA / \cX) \qquad \text{and} \qquad f \colon \Fun(\cA, \cE) \to \Fun(\cA, \cE') \ , \]
in $\PrFibL$ and in $\PrL$, respectively.
Here we wrote $f$ in place of the more accurate $f \circ (-)$, to keep the notations light.
These two operations are related by the following relation:

\begin{prop}\label{prop:change_of_coefficients}
	Keeping the above notations, the diagram
	\[ \begin{tikzcd}[column sep=large]
		\Fun_{/\cX}( \cX, \exp_\cE(\cA / \cX) ) \arrow{r}{\Sigma_\cX(f^{\cA / \cX})} \arrow{d}{\spe_{\cA}^\cE} & \Fun_{/\cX}(\cX, \exp_{\cE'}(\cA / \cX)) \arrow{d}{\spe_\cA^{\cE'}} \\
		\Fun(\cA, \cE) \arrow{r}{f} & \Fun(\cA,\cE') 
	\end{tikzcd} \]
	commutes.
\end{prop}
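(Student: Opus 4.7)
The plan is to upgrade the specialization equivalence $\spe_\cA$ to a natural equivalence in the coefficient variable $\cE$, and then deduce the commutativity of the square as the naturality square applied to $f$. Using the bifunctoriality of $\exp \colon \CoCart \times \PrL \to \PrFibL$ recalled in \S\ref{subsec:change_of_coefficients}, fix the cocartesian fibration $p \colon \cA \to \cX$ and view
\[ F \coloneqq \Sigma_\cX\bigl( \exp_{(-)}(\cA / \cX)\bigr), \qquad G \coloneqq \Fun(\cA,-) \]
as functors $\PrL \to \Cat_\infty$. Unwinding the definitions, the top and bottom horizontal arrows of the square in the statement are precisely $F(f)$ and $G(f)$, so it suffices to construct a natural equivalence $\spe_\cA \colon F \xrightarrow{\sim} G$ of functors in $\cE$.

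To build such a natural equivalence, I would revisit the chain
\begin{align*}
	F(\cE) & \simeq \Fun_{/\cX}^{\mathrm{cart}}(\mathrm F_\cX(\id_\cX), \exp_\cE(\cA/\cX)) \\
	& \simeq \int_\cX \Fun(\cX_{-/}, \Fun(\Upsilon_\cA, \cE)) \\
	& \simeq \Fun\Big( \int^\cX \cA \boxtimes \mathrm F_\cX(\id_\cX), \cE \Big) \\
	& \simeq G(\cE)
\end{align*}
from \cref{recollection:specalization_equivalence_proof} and check that each step is functorial in $\cE$. The first step is the free--forgetful adjunction of \cref{thm:free_cartesian_fibration}, which is natural in the target cartesian fibration, and hence in $\cE$ through $\exp$. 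The second step is \cite[Proposition 6.9]{Gepner_Lax_colimits}, which is natural in the target fibration, and therefore in $\cE$. The third step is the tensor--hom adjunction relating ends and coends, which is manifestly functorial in $\cE$. The last step is the co-Yoneda identity \cref{cor:ninja_Yoneda}, which does not depend on $\cE$ at all.

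The only delicate point, which I expect to be the main (though mild) obstacle, is the compatibility of the first step with $f^{\cA/\cX}$: one must verify that under the straightening $\PrFibL_\cX \simeq \Fun(\cX, \PrL)$, the morphism $f^{\cA/\cX}$ corresponds to the natural transformation $\Fun(\Upsilon_\cA(-),\cE) \to \Fun(\Upsilon_\cA(-),\cE')$ given fiberwise by post-composition with $f$. This however is exactly how $\exp$ acts on morphisms of the form $(\id_{\cA/\cX}, f)$ in $\CoCart \times \PrL$ by construction of the bifunctor structure in \cite[Variant 3.20 \& Remark 3.21]{PortaTeyssier_Day}, so the identification is automatic and the proposition follows.
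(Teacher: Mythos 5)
Your proposal is correct and takes essentially the same route as the paper, which simply observes that the chain of equivalences in \cref{recollection:specalization_equivalence_proof} is natural in $\cE$; you have merely spelled out that naturality step by step, including the (correct) identification of $f^{\cA/\cX}$ with fiberwise post-composition under straightening.
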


\begin{proof}
	This simply follows unraveling the chain of equivalences in \cref{recollection:specalization_equivalence_proof} and observing that they are natural in $\cE$.
\end{proof}

Finally, let us observe that $f^{\cA / \cX}$ is natural in $\cA \to \cX$:

\begin{prop}\label{prop:change_of_coefficients_naturality}
	Let
	\[ \begin{tikzcd}
		\cB \arrow{d} & \cB_\cX \arrow{d} \arrow{r}{v} \arrow{l}[swap]{u} & \cA \arrow{dl} \\
		\cY & \cX \arrow{l}[swap]{f}
	\end{tikzcd} \]
	be a morphism in $\CoCart$ and let $f \colon \cE \to \cD$ be a morphism in $\PrL$.
	Then the diagram
	\[ \begin{tikzcd}
		\exp_{\cE}(\cB / \cY) \arrow{d}{f^{\cB/\cY}} & \exp_{\cE}(\cB_\cX / \cX) \arrow{l}[swap]{\cE^u} \arrow{r}{\cE^v_!} \arrow{d}{f^{\cB_\cX / \cX}} & \exp_\cE(\cA / \cX) \arrow{d}{f^{\cA /\cX}} \\
		\exp_{\cD}(\cB / \cY) & \exp_{\cD}(\cB_\cX / \cX) \arrow{l}[swap]{\cD^u} \arrow{r}{\cD^v_!} & \exp_{\cD}(\cA / \cX)
	\end{tikzcd} \]
	commutes and the left square is a pullback.
	In particular, the diagram
	\[ \begin{tikzcd}
		\Fun(\cB, \cE) \arrow{d}{f} \arrow{r}{u^\ast} & \Fun(\cB_\cX, \cE) \arrow{d}{f} \arrow{r}{v_!} & \Fun(\cA, \cE) \arrow{d}{f} \\
		\Fun(\cB, \cD) \arrow{r}{u^\ast} & \Fun(\cB_\cX, \cD) \arrow{r}{v_!} & \Fun(\cA, \cE)
	\end{tikzcd} \]
	commutes.
\end{prop}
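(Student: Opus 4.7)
The plan is to derive both claims from the fact that the exponential construction extends to an $\infty$-functor $\exp \colon \CoCart \times \PrL \to \PrFibL$ and to combine this with the results already established in \S\ref{sec:specialization_equivalence} and \S\ref{subsec:change_of_coefficients}. The morphism in $\CoCart \times \PrL$ from $(\cB \to \cY, \cE)$ to $(\cA \to \cX, \cD)$ given by the span $(u,v)$ and by $f$ factors, trivially, as the composite of a morphism acting only in the $\CoCart$-direction with a morphism acting only in the $\PrL$-direction, and these two factorizations (in either order) agree in the product $\infty$-category. Applying $\exp$ then produces the upstairs commutative diagram in $\PrFibL$ claimed in the statement.

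For the assertion that the left square is a pullback, I would invoke Proposition \ref{prop:functoriality_exponential}-(1) applied to the given morphism in $\CoCart$ both with coefficients in $\cE$ and with coefficients in $\cD$. This identifies $\cE^u$ and $\cD^u$ with the base change along $f \colon \cX \to \cY$ of $\exp_\cE(\cB/\cY) \to \cY$ and of $\exp_\cD(\cB/\cY) \to \cY$, respectively. Since limits in $\PrFibL$ are computed fiberwise over the base, it suffices to check the assertion at each point $x \in \cX$. There, the fiber of the left square is a square in $\PrL$ whose horizontal arrows are the equivalences $\Fun((\cB_\cX)_x, \cE) \simeq \Fun(\cB_{f(x)}, \cE)$ and $\Fun((\cB_\cX)_x, \cD) \simeq \Fun(\cB_{f(x)}, \cD)$ coming from the identification $(\cB_\cX)_x \simeq \cB_{f(x)}$; such a square is tautologically a pullback.

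For the downstairs diagram, I would apply the section functor $\Sigma$ to the upstairs commutative diagram. Proposition \ref{prop:global_functoriality}-(1) identifies $\Sigma(\cE^u)$ and $\Sigma(\cD^u)$ with $u^\ast$ through the specialization equivalences; Proposition \ref{prop:global_functoriality}-(2) (refined by Proposition \ref{prop:exponential_vs_global_induction}) identifies $\Sigma(\cE^v_!)$ and $\Sigma(\cD^v_!)$ with $v_!$; and Proposition \ref{prop:change_of_coefficients} identifies the section functor applied to $f^{\cB/\cY}$, $f^{\cB_\cX/\cX}$ and $f^{\cA/\cX}$ with post-composition by $f$. Combining these identifications with the commutativity of the upstairs diagram yields the commutativity of the downstairs one.

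No step presents a genuine obstacle: the only care required is to ensure that the identifications provided by the above propositions are natural with respect to the morphisms in $\CoCart$ and $\PrL$ we are composing with, and this naturality is built into the $\infty$-functoriality of $\exp$ and $\Sigma$ established in \cite{PortaTeyssier_Day}. In this sense, the proposition is essentially a formal corollary packaging together the bifunctoriality of $\exp$ with the naturality of the specialization equivalence.
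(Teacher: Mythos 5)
Your proposal is correct and follows essentially the same route as the paper: the first half is the bifunctoriality of $\exp \colon \CoCart \times \PrL \to \PrFibL$ (with the pullback statement coming from \cref{prop:functoriality_exponential}-(1) and the pasting lemma), and the second half applies $\Sigma$ together with \cref{prop:global_functoriality} and \cref{prop:change_of_coefficients}. The only small caveat is that the phrase ``limits in $\PrFibL$ are computed fiberwise over the base'' is imprecise here since the left square straddles two bases; the cleaner way to package your fiberwise intuition is to observe that both $\cE^u$ and $\cD^u$ are pullbacks onto the common bottom square $f\colon\cX\to\cY$ and conclude by cancellation of pullbacks, which is what your argument amounts to.
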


\begin{proof}
	The first half simply follows from the bifunctoriality of $\exp \colon \CoCart \times \PrL \to \PrFibL$.
	The second half follows applying $\Sigma$ and combining Propositions~\ref{prop:global_functoriality} and \ref{prop:change_of_coefficients}.
	Alternatively, the second half can be proven directly observing that, since $f$ commutes with colimits, it also commutes with the formation of arbitrary left Kan extensions.
\end{proof}

\section{Cocartesian functors}\label{cocar_functor_section}

\subsection{The space of specialization morphisms}

Fix a cocartesian fibration $p \colon \cA \to \cX$ as well as a presentable $\infty$-category $\cE$.
Write
\[ p_\cE \colon \exp_\cE(\cA/\cX) \to \cX \]
for the structural morphism of the exponential construction of $p$.
Recall from \cref{prop:sections_exponential} that there is a canonical equivalence
\[ \spe_{\cX,p}^\cE \colon \Fun(\cA, \cE) \simeq \Fun_{/\cX}(\cX, \exp_\cE(\cA/\cX)) \ , \]
which we refer to as the \emph{specialization equivalence}.
When $\cX$, $p$ and $\cE$ are clear out of the context, we drop the decorations and write $\spe$ instead of $\spe_{\cX,p}^\cE$.

\begin{rem}\label{rem:specialization_vs_evaluation_I}
	Recall from \cref{eg:exponential_over_trivial_base}-(1) that the fiber of $\exp_\cE(\cA/\cX)$ at $x \in \cX$ is canonically identified with $\Fun(\cA_x,\cE)$.
	In particular, for $F \colon \cA \to \cE$, the value $(\spe F)_x$ of the section $\spe F$ on $x$ is a functor $(\spe F)_x \colon \cA_x \to \cE$.
	Denoting by $j_x \colon \cA_x \to \cA$ the natural inclusion, \cref{prop:evaluation_specialization} supplies a canonical identification $(\spe F)_x \simeq j_x^\ast(F)$.
\end{rem}

\begin{defin}\label{def:space_specialization_morphisms}
	Let $F \in \Fun(\cA,\cE)$ be a functor and let $\gamma \colon x \to y$ be a morphism in $\cX$.
	The \emph{space of specialization morphisms for $F$ relative to $\gamma$} is the space $\SPE_\gamma(F)$
	\[ \begin{tikzcd}[column sep=small]
		(\spe F)_x \arrow{r}{\beta} & G \arrow{r}{\alpha} & (\spe F)_y
	\end{tikzcd} \]
	where $\beta$ is a $p_\cE$-cocartesian lift of $\gamma$ in $\exp_\cE(\cA/\cX)$.
	In this case, we say that $\alpha$ is a \emph{specialization morphism for $F$ relative to $\gamma$}.
\end{defin}

\begin{rem}
	Since $\exp_\cE(\cA / \cX)$ is also a cartesian fibration, there is a dual notion of \emph{cospecialization morphism}, that are obtained choosing $p_\cE$-cartesian lifts of $\gamma$.
\end{rem}

We immediately discuss a fundamental example.

\begin{notation}\label{notation:simplexes_cocartesian_fibration}
	Let $p \colon \cA \to \cX$ be a cocartesian fibration.
	For $\sigma \colon \Delta^n \to \cX$, write $\cA_\sigma \coloneqq \Delta^n \times_\cX \cA$ and $p_\sigma \colon \cA_\sigma \to \Delta^n$ for the induced cocartesian fibration.
	Notice that \cref{prop:functoriality_exponential} provides a canonical and functorial identification
	\[ \exp_\cE(\cA/\cX)_\sigma \simeq \exp_\cE(\cA_\sigma / \Delta^n) \ . \]
\end{notation}

\begin{eg}\label{eg:specialization_over_Delta1}
	Let $p \colon \cA \to \cX$ be a cocartesian fibration and let $\gamma \colon x \to y$ be a morphism in $\cX$.
	\emph{Choose} a straightening
	\[ f \colon \cA_x \to  \cA_y \]
	for $p_\gamma \colon \cA_\gamma \to \Delta^1$.
	The functor $f$ fits in the following triangle
	\[ \begin{tikzcd}[column sep=small]
		\cA_x \arrow{rr}{f_\gamma} \arrow{dr}[swap]{j_x} & & \cA_y \arrow{dl}{j_y} \\
		{} & \cA
	\end{tikzcd} \]
	where $j_x$ and $j_y$ denote the canonical inclusions of the fibers of $p$ inside $\cA$.
	This triangle is \emph{not commutative} but we can choose a natural transformation
	\[ s \colon j_x \to  j_y \circ f_\gamma \]
	in $\Fun(\cA_x, \cE)$ with the property that for every $a \in \cA_x$ the morphism $s_\gamma(a) \colon j_x(a) \to j_y(f_\gamma(a))$ is $p$-cocartesian in $\cA$.
	Applying the contravariant functor $\Fun(-,\cE)$ we obtain a natural transformation
	\[ s^\ast \colon j_x^\ast \to  f_\gamma^\ast \circ j_y^\ast \]
	of functors from $\Fun(\cA, \cE) \to \Fun(\cA_x, \cE)$.
	There is therefore an induced Beck-Chevalley morphism
	\begin{equation}\label{eq:Beck_Chevalley_specialization}
		\alpha_{f,s} \colon f_{\gamma,!} \circ j_x^\ast \to  j_y^\ast \ .
	\end{equation}
	Unraveling the definition of $\exp_{\cE}(\cA / \cX)$ we see that for every $F \colon \cA \to \cE$, the induced morphism
	\[ \alpha_{f,s}(F) \colon f_{\gamma,!}j_x^\ast(F) \to  j_y^\ast(F) \]
	is a specialization morphism for $F$ relative to $\gamma$.
\end{eg}

\begin{rem}\label{rem:specialization_space}
	Let $p \colon \cA \to \cX$ be a cocartesian fibration and let $F \colon \cA \to \cE$ be a fixed functor.
	Since $p_\cE \colon \exp_\cE(\cA/\cX) \to \cX$ is a cocartesian fibration, it immediately follows that the space $\SPE_\gamma(F)$ is contractible.
	Observe that, in the setting of \cref{eg:specialization_over_Delta1}, neither $f$ nor $s$ are uniquely determined in a strict sense (although the spaces of choices for the pair $(f,s)$ is contractible).
	Every such choice gives rise to an element $\SPE_\gamma(F)$, whose underlying specialization morphism is $\alpha_{f,s}(F)$.
	The contractibility of $\SPE_\gamma(F)$ shows that the actual choices for $f$ and $s$ are immaterial as they give rise to equivalent specialization morphisms, and this in a homotopy unambiguous way.
	\personal{(Mauro) Notice however, that following \cref{construction:exponential}, $\exp_\cE(\cA/\cX)$ itself depends on a straightening. The super-functorial construction of \cite{PortaTeyssier_Day} of the functor $\exp_\cE$ does not solve this issue: it ultimately discharge the responsibility on the functor $\PPSh$, which arises as the left adjoint to the forgetful from cocomplete cocartesian fibrations to plain cocartesian fibrations. Now, adjoint functors are only well defined up to a contractible space of choices, which is the same as saying that we are not making any specific choice, but rather we \emph{can} make a choice and that everything else we will say will be (automatically) independent of this choice.
	This is the usual picture I have in mind for whatever $\infty$-categorical construction I judge acceptable.}
\end{rem}

\begin{eg}\label{eg:specialization_over_Delta1_continuation}
	We maintain the notation introduced in \cref{eg:specialization_over_Delta1}.
	It is worth unpacking the specialization equivalence when $\cX = \Delta^1$.
	Write $\Delta^1 = \{ \gamma \colon x \to y \}$ and fix a cocartesian fibration $p \colon \cA \to \Delta^1$ together with a straightening $f_\gamma \colon \cA_x \to \cA_y$ and a natural transformation $s \colon j_x \to j_y \circ f_\gamma$ as in \cref{eg:specialization_over_Delta1}.
	Notice that $\Tw(\Delta^1)$ can be represented as
	\[ \begin{tikzcd}
		{} & x \arrow{d}{\gamma} \arrow{dr}{\gamma} \\
		x \arrow[equal]{ur} \arrow[equal]{d} & y & y \arrow[equal]{d} \\
		x \arrow[equal]{u} \arrow{ur}{\gamma} & & y \arrow[equal]{u} \arrow[equal]{ul} ,
	\end{tikzcd} \]
	where the vertical arrows are the objects of $\Tw(\Delta^1)$.
	In other words, $\Tw(\Delta^1)$ is equivalent to $\mathrm{Span} = \{ * \leftarrow * \rightarrow *\}$.
	It follows that the chain of equivalences of \cref{recollection:specalization_equivalence_proof} in this case simply asserts that the square
	\[ \begin{tikzcd}
		\Fun(\cA, \cE) \arrow{d} \arrow{r}{j_y^*} & \Fun(\cA_y, \cE) \arrow{d}{f_\gamma^*} \\
		\Fun(\Delta^1, \Fun(\cA_x, \cE)) \arrow{r}{\ev_1} & \Fun(\cA_x, \cE) .
	\end{tikzcd} \]
	is a pullback.
	Unraveling the definitions, we see that the left vertical map sends $F \colon \cA \to \cE$ to $s^\ast \colon j_x^\ast(F) \to f_\gamma^\ast( j_y^\ast(F) )$.
	Vice-versa, given
	\[ F_x \colon \cA_x \to  \cE \ , \qquad F_y \colon \cA_y \to  \cE \]
	and a natural transformation
	\[ \alpha \colon F_x \to  f_\gamma^\ast(F_y) \ , \]
	we can produce a functor $F \colon \cA \to \cE$ together with the following data:
	\begin{enumerate}\itemsep=0.2cm
		\item equivalences $\beta_x \colon F_x \simeq j_x^\ast(F)$ and $\beta_y \colon F_y \simeq j_y^\ast(F)$;
		
		\item whenever $\phi \colon a \to f_\gamma(a)$ is a $p$-cocartesian morphism in $\cA$, an equivalence
		\[ \beta_a \colon F(\phi) \simeq \alpha(a) \]
		in $\Map_{\cE}( F_x(a), F_y(f_\gamma(a)) )$.
	\end{enumerate}
\end{eg}

The above analysis allows to obtain an improvement on \cref{prop:evaluation_specialization}.
To state it, we need to first introduce the following:

\begin{notation}\label{notation:specialization_on_morphisms}
	Let $\gamma \colon x \to y$ be a morphism in $\cX$.
	Let $F \in \Fun(\cA_x, \cE)$ and $G \in \Fun(\cA_y, \cE)$ and let $\alpha \colon F \to G$ be a morphism in $\exp_\cE(\cA / \cX)$ lying over $\gamma$.
	We can factor $\alpha$ as
	\[ \begin{tikzcd}
		F \arrow{r}{\alpha_0} & G' \arrow{r}{\alpha_1} & G \ ,
	\end{tikzcd} \]
	where $\alpha_1$ is $p_\cE$-cartesian.
	Unraveling the definitions, we see that for every $p$-cocartesian lift $\phi \colon a \to b$ of $\gamma$, $\alpha_1$ induces a canonical equivalence $\alpha_1(\phi) \colon G'(a) \simeq G(b)$, and in particular we obtain a well defined morphism
	\[ \alpha(\phi) \coloneqq \alpha_1(\phi) \circ \alpha_0(a) \colon F(a) \to  G(b) \]
	in $\cE$.
\end{notation}

\begin{cor}\label{cor:specialization_on_morphisms}
	Let $F \colon \cA \to \cE$ be a functor and let $\phi \colon a \to b$ be a $p$-cocartesian morphism in $\cA$.
	Then there is a canonical identification
	\[ F(\phi) \simeq (\spe F)_{p(\phi)}(\phi) \]
	of morphisms in $\cE$.
\end{cor}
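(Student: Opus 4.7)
The plan is to reduce to the case $\cX = \Delta^1$ and then read off the claim directly from the explicit description of the specialization equivalence provided in \cref{eg:specialization_over_Delta1_continuation}. More concretely, view $\phi \colon a \to b$ as a functor $\phi \colon \Delta^1 \to \cA$ and set $\sigma \coloneqq p \circ \phi \colon \Delta^1 \to \cX$. Pulling back along $\sigma$ yields the cocartesian fibration $p_\sigma \colon \cA_\sigma \to \Delta^1$ (in the notation of \cref{notation:simplexes_cocartesian_fibration}), and $\phi$ factors canonically through a $p_\sigma$-cocartesian edge $\widetilde \phi$ of $\cA_\sigma$. By the pullback compatibility of the specialization equivalence (\cref{prop:global_functoriality}-\eqref{prop:global_functoriality:pullback}) applied to the morphism in $\CoCart$ determined by $\sigma$, together with the canonical identification $\exp_\cE(\cA/\cX)_\sigma \simeq \exp_\cE(\cA_\sigma/\Delta^1)$ of \cref{prop:functoriality_exponential}, we reduce to proving the statement for $F|_{\cA_\sigma} \colon \cA_\sigma \to \cE$ and the cocartesian edge $\widetilde\phi$. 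Hence we may assume $\cX = \Delta^1$ and $\phi$ is a $p$-cocartesian lift of the nontrivial arrow $\gamma \colon x \to y$.

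Now unwind the specialization equivalence as in \cref{eg:specialization_over_Delta1_continuation}. Choose a straightening $f_\gamma \colon \cA_x \to \cA_y$ and a cocartesian natural transformation $s \colon j_x \Rightarrow j_y \circ f_\gamma$. Then $\spe F$ corresponds to the triple $(j_x^*(F), j_y^*(F), \alpha)$ where
\[ \alpha \coloneqq s^\ast(F) \colon j_x^\ast(F) \longrightarrow f_\gamma^\ast(j_y^\ast(F)) \ , \]
and at $a \in \cA_x$ the component $\alpha(a)$ is, by definition of $s$, the morphism $F(s_\gamma(a)) \colon F(a) \to F(f_\gamma(a))$. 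Factoring $(\spe F)_{\gamma} \colon j_x^\ast(F) \to j_y^\ast(F)$ as in \cref{notation:specialization_on_morphisms}, the component $\alpha_0 \colon j_x^\ast(F) \to f_\gamma^\ast(j_y^\ast(F))$ is precisely $\alpha$, and $\alpha_1 \colon f_\gamma^\ast(j_y^\ast(F)) \to j_y^\ast(F)$ is the canonical $p_\cE$-cartesian morphism over $\gamma$.

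The final step consists in evaluating $(\spe F)_\gamma(\phi)$ according to \cref{notation:specialization_on_morphisms}. Since $\phi \colon a \to b$ and $s_\gamma(a) \colon a \to f_\gamma(a)$ are two $p$-cocartesian lifts of $\gamma$ with the same source, the uniqueness of cocartesian lifts yields a canonical equivalence $\psi \colon f_\gamma(a) \simeq b$ in $\cA_y$ and a canonical identification $\phi \simeq \psi \circ s_\gamma(a)$ in $\Fun(\Delta^1,\cA)$. Under the definition of $\alpha_1(\phi)$, this $\psi$ is precisely the equivalence $\alpha_1(\phi) \colon G'(a) = F(f_\gamma(a)) \simeq F(b)$ induced by $F$, so
\[ (\spe F)_\gamma(\phi) = \alpha_1(\phi) \circ \alpha_0(a) \simeq F(\psi) \circ F(s_\gamma(a)) \simeq F(\phi) \ , \]
as required. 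The only delicate point is checking the third step: the identification of $\psi$ with $\alpha_1(\phi)$ is what makes the two sides of \cref{notation:specialization_on_morphisms} commute, and it is where the $p_\cE$-cartesian property of $\alpha_1$ is used; everything else is bookkeeping around the specialization equivalence.
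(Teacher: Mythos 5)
Your proof is correct and follows essentially the same route as the paper's: reduce to $\cX = \Delta^1$ via \cref{prop:global_functoriality}-(\ref{prop:global_functoriality:pullback}), then read off the claim from the explicit description of the specialization equivalence over $\Delta^1$. The paper absorbs your final step (comparing $\phi$ with the chosen cocartesian lift $s_\gamma(a)$ via the canonical equivalence $\psi$) into item (2) of \cref{eg:specialization_over_Delta1_continuation}, which you instead spell out; both encode the same use of uniqueness of cocartesian lifts.
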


\begin{proof}
	Using \cref{prop:global_functoriality}-(\ref{prop:global_functoriality:pullback}) we can assume without loss of generality that $\cX = \Delta^1$.
	Choose a straightening $f_\gamma \colon \cA_x \to \cA_y$ together with a morphism $s \colon j_x \to j_y \circ f_\gamma$ as in \cref{eg:specialization_over_Delta1}.
	Using \cref{prop:evaluation_specialization} we see that $(\spe F)_\gamma$ can be factored as
	\[ \begin{tikzcd}
		j_x^\ast(F) \arrow{r}{s^\ast} & f_\gamma^\ast(j_y^\ast(F)) \arrow{r} & j_y^\ast(F) \ ,
	\end{tikzcd} \]
	where the second morphism is $p_\cE$-cartesian.
	With these choices, the notation introduced in \cref{notation:specialization_on_morphisms} collapses to $(\spe F)_{p(\phi)}(a) \simeq s^\ast(a)$, so the conclusion follows from the analysis of the specialization equivalence over $\Delta^1$ carried out in \cref{eg:specialization_over_Delta1_continuation}.
\end{proof}

\subsection{Cocartesian functors}

We fix as usual a cocartesian fibration $p \colon \cA \to \cX$ and a presentable $\infty$-category $\cE$.
We let $p_\cE \colon \exp_\cE(\cA / \cX) \to \cX$ be the canonical projection.

\begin{defin}\label{def:cocartesian_functor}
	Let $F \colon \cA \to \cE$ be a functor and let $\gamma \colon x \to y$ be a morphism in $\cX$.
	We say that $F$ is \emph{cocartesian at $\gamma$} if every specialization morphism for $F$ relative to $\gamma$ is an equivalence in $\Fun(\cA_y,\cE)$.
	
	\medskip
	
	\noindent We say that $F$ is \emph{cocartesian} if it is cocartesian at every morphism $\gamma$ of $\cX$.
	We write $\Funcocart(\cA, \cE)$ for the full subcategory of $\Fun(\cA, \cE)$ spanned by cocartesian functors.
\end{defin}

\begin{rem}\label{rem:contractibility_specialization_space}
	Recall from \cref{rem:specialization_space} that $\SPE_\gamma(F)$ is a contractible space.
	In particular, in order to check that $F$ is cocartesian at $\gamma$, it is enough to check that there exists \emph{one} specialization morphism $\alpha$ that is an equivalence.
\end{rem}

We now collect a couple of elementary facts concerning these objects.
We keep the cocartesian fibration $p \colon \cA \to \cX$ and the presentable $\infty$-category $\cE$ fixed in all the following statements:

\begin{prop}\label{prop:cocartesian_functors_reformulation}
	Let $F \colon \cA \to \cE$ be a functor and let $\gamma \colon x \to y$ be a morphism in $\cX$.
	The following statements are equivalent:
	\begin{enumerate}\itemsep=0.2cm
		\item $F$ is cocartesian at $\gamma$;
		
		\item the specialization $\spe F \colon \cX \to \exp_\cE(\cA / \cX)$ takes $\gamma$ to a $p_\cE$-cocartesian edge;
		
		\item let $f_\gamma \colon \cA_x \to \cA_y$ be \emph{any} straightening for $p_\gamma \colon \cA_\gamma \to \Delta^1$.
		Then the canonical Beck-Chevalley transformation \eqref{eq:Beck_Chevalley_specialization}
		\[ f_{\gamma,!} j_x^\ast(F) \to  j_y^\ast(F) \]
		is an equivalence.
	\end{enumerate}
\end{prop}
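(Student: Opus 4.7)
The plan is to prove the three equivalences by first reducing to the case $\cX = \Delta^1$, and then exploiting Example~\ref{eg:specialization_over_Delta1} together with the contractibility of $\SPE_\gamma(F)$ recorded in Remark~\ref{rem:contractibility_specialization_space}.

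For the reduction, I would apply Proposition~\ref{prop:global_functoriality}-(\ref{prop:global_functoriality:pullback}) to the pullback square obtained by pulling back $p \colon \cA \to \cX$ along $\gamma \colon \Delta^1 \to \cX$. This yields a canonically commutative square identifying $\spe_{\cA_\gamma}(\gamma^\ast F)$ with $\gamma^\ast(\spe_\cA F)$, so the $p_{\cE}$-cocartesianness of $(\spe F)(\gamma)$ can be tested after pullback. Similarly, the Beck-Chevalley transformation in (3) is computed in terms of the fibers of $p$ at $x$ and $y$, which only depend on $p_\gamma \colon \cA_\gamma \to \Delta^1$, and the notion of specialization morphism relative to $\gamma$ only depends on $\cA_\gamma$ as well. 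Thus the three conditions are all compatible with this reduction.

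Assuming now $\cX = \Delta^1$, the equivalence $(1) \Leftrightarrow (2)$ follows from the very definition of specialization morphism: factoring $(\spe F)(\gamma)$ as
\[ (\spe F)_x \xrightarrow{\beta} G \xrightarrow{\alpha} (\spe F)_y \]
with $\beta$ a $p_\cE$-cocartesian lift of $\gamma$, the morphism $(\spe F)(\gamma)$ is itself $p_\cE$-cocartesian if and only if $\alpha$ is an equivalence, by the standard fact that cocartesian lifts of a given morphism with a given source are unique up to equivalence. Since by Remark~\ref{rem:contractibility_specialization_space} the space $\SPE_\gamma(F)$ is contractible, it is sufficient to test this condition on any single $\alpha$.

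For $(1) \Leftrightarrow (3)$, I would choose an auxiliary straightening $f_\gamma \colon \cA_x \to \cA_y$ together with a natural transformation $s \colon j_x \to j_y \circ f_\gamma$ whose components are $p$-cocartesian, as in Example~\ref{eg:specialization_over_Delta1}. That example already shows that the Beck-Chevalley morphism
\[ \alpha_{f,s}(F) \colon f_{\gamma,!} j_x^\ast(F) \to j_y^\ast(F) \]
is a specialization morphism for $F$ relative to $\gamma$. Invoking once more the contractibility of $\SPE_\gamma(F)$, we see that $F$ is cocartesian at $\gamma$ if and only if this specific Beck-Chevalley transformation is an equivalence; moreover, since the space of pairs $(f_\gamma, s)$ is itself contractible, this criterion does not depend on the choice. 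The main (though mild) technical point is to make sure that the reduction to $\Delta^1$ genuinely transports all three conditions, which follows cleanly from the naturality of $\spe$ proved in Proposition~\ref{prop:global_functoriality}; once this is in place, the remainder is a direct application of Example~\ref{eg:specialization_over_Delta1} and Remark~\ref{rem:contractibility_specialization_space}.
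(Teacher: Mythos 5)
Your proof is correct and takes essentially the same route as the paper: both the equivalence $(1)\Leftrightarrow(2)$ via the factorization of $(\spe F)_\gamma$ into a $p_\cE$-cocartesian edge followed by the specialization morphism, and the equivalence with $(3)$ via \cref{eg:specialization_over_Delta1} and \cref{rem:contractibility_specialization_space}. The explicit reduction to $\cX=\Delta^1$ you carry out via \cref{prop:global_functoriality} is sound but slightly more than the paper bothers to spell out; for $(1)\Leftrightarrow(2)$ no reduction is needed since the factorization argument works directly in $\exp_\cE(\cA/\cX)$, and the reduction only plays an implicit role in deducing $(2)\Leftrightarrow(3)$ from \cref{eg:specialization_over_Delta1}.
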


\begin{proof}
	Any element of $\SPE_\gamma(F)$ corresponds to a factorization
	\[ \begin{tikzcd}[column sep=small]
		(\spe F)_x \arrow{rr}{\beta} \arrow{dr}[swap]{(\spe F)_\gamma} & & G \arrow{dl}{\alpha} \\
		{} & (\spe F)_y
	\end{tikzcd} \]
	inside $\exp_\cE(\cA / \cX)$, where $\beta$ is $p_\cE$-cocartesian and $\alpha$ is the associated specialization morphism.
	It follows that $(\spe F)_\gamma$ is $p_\cE$-cocartesian if and only if $\alpha$ is an equivalence.
	This shows that (1) $\Leftrightarrow$ (2).
	The equivalence (2) $\Leftrightarrow$ (3) follows now from \cref{eg:specialization_over_Delta1}.
\end{proof}

\begin{cor}\label{cor:cocart_presentable_stable}
	Denoting $\Upsilon_\cA \colon \cX \to \Cat_\infty$ the straightening of the cocartesian fibration $\cA \to \cX$, there are canonical equivalences
	\[ \Funcocart(\cA, \cE) \simeq \Sigma^{\cocart}_\cX(\exp_\cE(\cA/\cX)) \simeq \lim_\cX \Fun_!(\Upsilon_\cA, \cE) \]
	In particular:
	\begin{enumerate}\itemsep=0.2cm
		\item $\Funcocart(\cA, \cE)$ is presentable;
		
		\item if $\cE$ is stable, $\Funcocart(\cA, \cE)$ is stable.
	\end{enumerate}
\end{cor}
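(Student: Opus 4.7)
The first equivalence is immediate from \cref{prop:cocartesian_functors_reformulation}. Indeed, the specialization equivalence from \cref{prop:sections_exponential}
\[ \spe \colon \Fun(\cA, \cE) \xrightarrow{\ \sim\ } \Fun_{/\cX}(\cX, \exp_\cE(\cA/\cX)) \]
restricts, by the equivalence (1) $\Leftrightarrow$ (2) of that proposition, to an equivalence between $\Funcocart(\cA, \cE)$ and the full subcategory of sections that take every morphism of $\cX$ to a $p_\cE$-cocartesian edge, which by definition is $\Sigma_\cX^{\cocart}(\exp_\cE(\cA/\cX))$.

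For the second equivalence, recall from \cref{construction:exponential} that $\exp_\cE(\cA/\cX) \to \cX$ is the presentable cocartesian fibration classifying the functor
\[ \Fun_!(\Upsilon_\cA(-), \cE) \colon \cX \to  \PrL \ . \]
Under the straightening equivalence $\PrFibL_\cX \simeq \Fun(\cX, \PrL)$ recalled in \cref{PrL_straightening}, the functor $\Sigma^{\cocart} \colon \PrFibL_\cX \to \PrL$ corresponds to the limit functor $\lim_\cX \colon \Fun(\cX, \PrL) \to \PrL$ (see \cref{observation:exodromy_straightening}), whence the asserted equivalence
\[ \Sigma^{\cocart}_\cX(\exp_\cE(\cA/\cX)) \simeq \lim_\cX \Fun_!(\Upsilon_\cA, \cE) \ . \]

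The two assertions (1) and (2) now follow. For (1), since $\PrL$ admits all small limits, the limit $\lim_\cX \Fun_!(\Upsilon_\cA, \cE)$ is presentable. For (2), assume $\cE$ is stable. Then for every $x \in \cX$, the $\infty$-category $\Fun(\cA_x, \cE)$ is presentable stable, and for every morphism $\gamma \colon x \to y$ in $\cX$, the transition functor $f_{\gamma, !} \colon \Fun(\cA_x, \cE) \to \Fun(\cA_y, \cE)$, being a left adjoint between stable presentable $\infty$-categories, is exact. Hence $\Fun_!(\Upsilon_\cA, \cE)$ factors through the (non-full) subcategory $\PrLStab \subset \PrL$ of presentable stable $\infty$-categories and exact left adjoints. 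Since $\PrLStab \hookrightarrow \PrL$ preserves small limits (see \cite[Theorem 1.1.4.4]{Lurie_Higher_algebra}), the limit remains presentable stable.
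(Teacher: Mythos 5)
Your proposal is correct and follows essentially the same route as the paper: the first equivalence from the specialization equivalence together with \cref{prop:cocartesian_functors_reformulation} (1)$\Leftrightarrow$(2), the second from identifying cocartesian sections with the limit of the straightening (HTT 3.3.3.1), and then presentability and stability of the limit from the standard results on $\PrL$ and stable $\infty$-categories. One small slip: $\PrLStab$ is a \emph{full} subcategory of $\PrL$, since any colimit-preserving functor between stable presentable $\infty$-categories is automatically exact; this does not affect the argument.
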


\begin{proof}
	Combining the specialization equivalence \eqref{eq:specialization_equivalence} and the equivalence (1) $\Leftrightarrow$ (2) of \cref{prop:cocartesian_functors_reformulation}, we see that $\Funcocart(\cA, \cE)$ coincides with the full subcategory of $\Fun_{/\cX}(\cX, \exp_\cE(\cA / \cX))$ spanned by cocartesian sections
	This proves the first equivalence, and the second follows directly from \cite[Proposition 3.3.3.1]{HTT}.
	For point (1) it is now sufficient to observe that the functor $\Fun_!(\Upsilon_\cA, \cE) \colon \cX \to \Cat_\infty$ takes values in $\PrL$, so the conclusion follows from \cite[Propositon 5.5.3.13]{HTT}.
	Similarly, point (2) follows from \cite[Theorem 1.1.4.4]{Lurie_Higher_algebra}.
\end{proof}

\begin{warning}
	There is another natural condition that we can impose on a functor $F \colon \cA \to \cE$: namely, we can ask that $F$ takes $p$-cocartesian arrows in $\cA$ to equivalences in $\cE$.
	This condition cuts a full subcategory $\Fun'(\cA, \cE)$ of $\Fun(\cA,\cE)$, that however does not coincide with $\Funcocart(\cA,\cE)$.
	Indeed, \cite[Corollary 3.3.4.3]{HTT} yields an identification
	\[ \Fun'(\cA, \cE) \simeq \Fun\big( \colim_{\cX} \Upsilon_{\cA}, \cE \big) \simeq \lim_{\cX\op} \Fun^\ast( \Upsilon_{\cA}, \cE ) \simeq \Sigma_\cX^{\cart}(\exp_\cE(\cA / \cX)) \ , \]
	where $\Sigma_\cX^{\cart}$ denotes the functor of \emph{cartesian} sections.
\end{warning}

\begin{cor}\label{cocart_at_equivalence}
	A functor $F \colon \cA \to \cE$ is cocartesian at every equivalence of $\cX$.
\end{cor}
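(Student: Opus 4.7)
The plan is to reduce to the criterion provided by Proposition~\ref{prop:cocartesian_functors_reformulation}-(3). Fix an equivalence $\gamma \colon x \to y$ in $\cX$ and a functor $F \colon \cA \to \cE$. Since the property of being a cocartesian fibration is stable under pullback, the pulled back fibration $p_\gamma \colon \cA_\gamma \to \Delta^1$ from \cref{notation:simplexes_cocartesian_fibration} has straightening $f_\gamma \colon \cA_x \to \cA_y$, and because $\gamma$ itself is an equivalence, $f_\gamma$ is an equivalence of $\infty$-categories.

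From here, the Beck--Chevalley map $f_{\gamma,!} j_x^\ast(F) \to j_y^\ast(F)$ of \eqref{eq:Beck_Chevalley_specialization} is an equivalence for purely formal reasons. On the one hand, left Kan extension along the equivalence $f_\gamma$ is simply restriction along its inverse, so the functor $f_{\gamma,!}$ is itself an equivalence. On the other hand, in the construction of \cref{eg:specialization_over_Delta1}, the transformation $s \colon j_x \to j_y \circ f_\gamma$ was selected to have the property that each component $s(a) \colon j_x(a) \to j_y(f_\gamma(a))$ is a $p$-cocartesian arrow in $\cA$ lifting $\gamma$; since $\gamma$ is an equivalence in $\cX$, any such cocartesian lift is an equivalence in $\cA$, so $s$ is a natural equivalence and $s^\ast \colon j_x^\ast \to f_\gamma^\ast j_y^\ast$ is likewise an equivalence. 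A standard diagram chase then shows that the induced Beck--Chevalley transformation is an equivalence.

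Applying \cref{prop:cocartesian_functors_reformulation} (with the implication $(3) \Rightarrow (1)$) to the arbitrary functor $F$ completes the proof. There is no real obstacle here: the corollary is essentially a reformulation of the well-known fact that cocartesian edges over equivalences are equivalences, translated through the specialization equivalence.
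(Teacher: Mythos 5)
Your argument is correct, but it takes a slightly longer route than the paper's proof. The paper applies the equivalence $(1) \Leftrightarrow (2)$ of \cref{prop:cocartesian_functors_reformulation} together with \cite[2.4.1.5]{HTT}: the functor $\spe F \colon \cX \to \exp_\cE(\cA/\cX)$ automatically sends the equivalence $\gamma$ to an equivalence, and an equivalence in the total space of a (co)cartesian fibration lying over an equivalence in the base is automatically (co)cartesian. That's the whole proof. You instead invoke $(1) \Leftrightarrow (3)$ and then unpack the Beck--Chevalley transformation $\alpha_{f,s}$ as the composite $\varepsilon(j_y^\ast) \circ f_{\gamma,!}(s^\ast)$, observing that $s^\ast$ is a natural equivalence (because cocartesian lifts of equivalences are equivalences) and that the counit $\varepsilon$ of $f_{\gamma,!} \dashv f_\gamma^\ast$ is an equivalence (because $f_\gamma$ is an equivalence of $\infty$-categories). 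This is correct, and the ``standard diagram chase'' you allude to really is just the factorization of the Beck--Chevalley transformation through the counit, so there is no gap. The net effect is that you reprove, at the level of functor categories, the fact from HTT that the paper invokes directly in the total space; the paper's route is therefore more economical, while yours makes explicit how the equivalence propagates through the induction functor $f_{\gamma,!}$, which some readers may find illuminating.
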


\begin{proof}
	Immediate from the equivalence (1) $\Leftrightarrow$ (2) of \cref{prop:cocartesian_functors_reformulation} and \cite[2.4.1.5]{HTT}.
\end{proof}

\begin{cor}\label{cocart_almost_2_to_3}
	Let
	\[ \begin{tikzcd}[column sep=small]
		{} & y \arrow{dr}{\gamma_1} \\
		x \arrow{ur}{\gamma_0} \arrow{rr}{\gamma_2} & & z. 
	\end{tikzcd} \]
	be a commutative triangle in $\cX$.
	Let $F \colon \cA \to \cE$ be a functor, and assume that it is cocartesian at $\gamma_0$.
	Then $F$ is cocartesian at $\gamma_1$ if and only if it is cocartesian at $\gamma_2$.
\end{cor}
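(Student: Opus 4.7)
The plan is to reduce the statement to the classical ``two out of three'' property for cocartesian edges in a cocartesian fibration, via the characterization given in \cref{prop:cocartesian_functors_reformulation}.

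More precisely, recall from \cref{prop:cocartesian_functors_reformulation} that $F$ is cocartesian at a morphism $\gamma$ in $\cX$ if and only if the specialization $\spe F \colon \cX \to \exp_\cE(\cA/\cX)$ sends $\gamma$ to a $p_\cE$-cocartesian edge. Applying $\spe F$ to the given commutative triangle produces a commutative triangle
\[ \begin{tikzcd}
{} & (\spe F)(y) \arrow{dr}{(\spe F)(\gamma_1)} \\
(\spe F)(x) \arrow{ur}{(\spe F)(\gamma_0)} \arrow{rr}{(\spe F)(\gamma_2)} & & (\spe F)(z)
\end{tikzcd} \]
inside $\exp_\cE(\cA/\cX)$, and the hypothesis that $F$ is cocartesian at $\gamma_0$ translates into $(\spe F)(\gamma_0)$ being $p_\cE$-cocartesian.

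At this point I invoke \cite[Proposition 2.4.1.7]{HTT}, which asserts that in any cocartesian fibration, the class of cocartesian edges satisfies the two-out-of-three property: in a commutative triangle whose first edge is cocartesian, the second edge is cocartesian if and only if the composite is. Applied to the above triangle in $\exp_\cE(\cA/\cX) \to \cX$, this tells us that $(\spe F)(\gamma_1)$ is $p_\cE$-cocartesian if and only if $(\spe F)(\gamma_2)$ is $p_\cE$-cocartesian. Translating back through \cref{prop:cocartesian_functors_reformulation}, we conclude that $F$ is cocartesian at $\gamma_1$ if and only if $F$ is cocartesian at $\gamma_2$, which is exactly the desired statement. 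There is no real obstacle here: once the specialization picture is in place, the argument is a one-line application of standard $\infty$-categorical technology.
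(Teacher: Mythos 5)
Your proof is correct and follows exactly the same route as the paper's: translate via the equivalence (1) $\Leftrightarrow$ (2) of \cref{prop:cocartesian_functors_reformulation} and then invoke the two-out-of-three property for cocartesian edges from \cite[Proposition 2.4.1.7]{HTT}. No further comment is needed.
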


\begin{proof}
	Immediate from the equivalence (1) $\Leftrightarrow$ (2) of \cref{prop:cocartesian_functors_reformulation} and from \cite[2.4.1.7]{HTT}.
	\personal{Old version: write
	\[ F_x \coloneqq (\spe F)_x, \quad F_y \coloneqq (\spe F)_y, \quad F_z \coloneqq (\spe F)_z . \]
	Applying $\spe F$ to the above $2$-simplex in $\cX$ yields the following $2$-simplex in $\cE^{\cA}$
	\[ \begin{tikzcd}[ampersand replacement=\&]
		{} \& F_y \arrow{dr}{ (\spe F)_{\gamma_1} } \\
		F_x \arrow{ur}{ (\spe F)_{\gamma_0}  } \arrow{rr}{ (\spe F)_{\gamma_2} } \& \& F_z. 
	\end{tikzcd} \]
	Then, \cref{cocart_almost_2_to_3} follows from  \cite[2.4.1.7]{HTT} applied to the cartesian fibration $p_{\cE} \colon  \cE^{\cA}\to  \cX$.}
\end{proof}

\begin{cor}\label{local_cocart_stability_colimits}
Let $p:\cA\to \cX$ be a cocartesian fibration.
Let $\gamma : x\to y$ be a morphism in $\cX$.
Let $\cE$ be a presentable $\infty$-category.
Then, the full subcategory of $\Fun(\cA,\cE)$ spanned by functors cocartesian at $\gamma$  is stable under colimits.
\end{cor}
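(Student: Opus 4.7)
The plan is to use the characterization (3) of \cref{prop:cocartesian_functors_reformulation}: choose a straightening $f_\gamma \colon \cA_x \to \cA_y$ of $p_\gamma \colon \cA_\gamma \to \Delta^1$ together with a natural transformation $s$ as in \cref{eg:specialization_over_Delta1}; then a functor $F \colon \cA \to \cE$ is cocartesian at $\gamma$ if and only if the Beck-Chevalley morphism
\[ \alpha_{f,s}(F) \colon f_{\gamma,!} j_x^\ast(F) \to j_y^\ast(F) \]
is an equivalence in $\Fun(\cA_y, \cE)$. The point is that $\alpha_{f,s}$ is a natural transformation between two functors
\[ f_{\gamma,!} \circ j_x^\ast, \ j_y^\ast \colon \Fun(\cA, \cE) \to \Fun(\cA_y, \cE) \ , \]
both of which preserve colimits: the restrictions $j_x^\ast$ and $j_y^\ast$ preserve colimits because colimits in functor categories are computed pointwise, while $f_{\gamma,!}$ preserves colimits as a left adjoint.

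Now let $F_\bullet \colon I \to \Fun(\cA,\cE)$ be a small diagram whose every value is cocartesian at $\gamma$, and set $F \coloneqq \colim_I F_\bullet$. Naturality of $\alpha_{f,s}$ yields a commutative square
\[ \begin{tikzcd}
\colim_I f_{\gamma,!} j_x^\ast(F_i) \arrow{r}{\sim} \arrow{d}{\colim \alpha_{f,s}(F_i)} & f_{\gamma,!} j_x^\ast(F) \arrow{d}{\alpha_{f,s}(F)} \\
\colim_I j_y^\ast(F_i) \arrow{r}{\sim} & j_y^\ast(F)
\end{tikzcd} \]
in $\Fun(\cA_y,\cE)$, where the horizontal maps are equivalences by the colimit-preservation noted above. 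By assumption, each $\alpha_{f,s}(F_i)$ is an equivalence, hence so is their colimit on the left, and therefore so is $\alpha_{f,s}(F)$. Applying \cref{prop:cocartesian_functors_reformulation} again shows that $F$ is cocartesian at $\gamma$, concluding the proof.

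No serious obstacle is expected: the only subtlety is to notice that the existence of the Beck-Chevalley comparison $\alpha_{f,s}$ as a \emph{global} natural transformation on $\Fun(\cA,\cE)$ (not merely pointwise in $F$) is already implicit in \cref{eg:specialization_over_Delta1}, since $s^\ast \colon j_x^\ast \to f_\gamma^\ast \circ j_y^\ast$ is constructed there as a natural transformation of functors of $F$, and adjunction transforms natural transformations into natural transformations.
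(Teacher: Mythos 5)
Your proof is correct and takes essentially the same approach as the paper: the paper's argument also invokes the equivalence (1) $\Leftrightarrow$ (3) of \cref{prop:cocartesian_functors_reformulation} and the fact that $f_{\gamma,!}$, $j_x^\ast$, $j_y^\ast$ all commute with colimits. You have simply spelled out the naturality of the Beck--Chevalley comparison and the resulting commutative square, which the paper leaves implicit.
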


\begin{proof}
This follows from the equivalence (1) $\Leftrightarrow$ (3) in \cref{prop:cocartesian_functors_reformulation} and the fact that the functors $f_{\gamma,!}$, $j_x^\ast$ and $j_y^\ast$ commute with colimits.
\end{proof}

\begin{prop}\label{cor:cocartesianization}
	Let $p \colon \cA \to \cX$ be a cocartesian fibration and let $\cE$ be a presentable $\infty$-category.
	Then $\Funcocart(\cA, \cE)$ is stable under colimits $\Fun(\cA, \cE)$. 
	In particular, $\Funcocart(\cA, \cE)$ is a coreflective subcategory of $\Fun(\cA, \cE)$, that is the inclusion
	\begin{equation}\label{eq:inclusion_cocartesian}
		\Funcocart(\cA, \cE) \hookrightarrow \Fun(\cA, \cE)
	\end{equation}
	admits a right adjoint.
\end{prop}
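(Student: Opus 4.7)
The plan is to deduce both statements from results established earlier in this section, without any additional computation. The colimit stability is a formal consequence of its pointwise version (\cref{local_cocart_stability_colimits}), while the coreflectivity then follows from the adjoint functor theorem applied between two presentable $\infty$-categories.

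For the colimit stability, I would first observe that by definition $F \in \Fun(\cA,\cE)$ belongs to $\Funcocart(\cA,\cE)$ if and only if $F$ is cocartesian at every morphism $\gamma$ of $\cX$. Writing $\Fun^{\gamma}(\cA,\cE) \subseteq \Fun(\cA,\cE)$ for the full subcategory of functors cocartesian at $\gamma$, this amounts to the identification
\[ \Funcocart(\cA,\cE) = \bigcap_{\gamma \in \Mor(\cX)} \Fun^\gamma(\cA,\cE) \ . \]
By \cref{local_cocart_stability_colimits}, each $\Fun^{\gamma}(\cA,\cE)$ is closed under colimits in $\Fun(\cA,\cE)$, since the characterization (3) of \cref{prop:cocartesian_functors_reformulation} identifies it with the equalizer locus of two colimit-preserving functors $f_{\gamma,!} \circ j_x^\ast$ and $j_y^\ast$. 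An intersection of full subcategories each stable under colimits is again stable under colimits, which yields the first assertion.

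For the coreflectivity, I would invoke the adjoint functor theorem. By \cref{cor:cocart_presentable_stable}, the $\infty$-category $\Funcocart(\cA,\cE)$ is presentable, and the same holds for $\Fun(\cA,\cE)$ since $\cE$ is presentable. The inclusion \eqref{eq:inclusion_cocartesian} preserves colimits by the first half of the proposition, so \cite[Corollary 5.5.2.9]{HTT} guarantees the existence of a right adjoint. No genuine obstacle appears: the only subtle ingredient is the presentability of $\Funcocart(\cA,\cE)$, which has already been established via the description as a limit of presentable $\infty$-categories along the diagram $\Fun_!(\Upsilon_\cA,\cE) \colon \cX \to \PrL$.
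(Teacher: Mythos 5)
Your proof is correct and follows essentially the same route as the paper's: both rely on \cref{cor:cocart_presentable_stable} for presentability, \cref{local_cocart_stability_colimits} for colimit stability, and the adjoint functor theorem to extract the coreflector. The paper's version is simply more terse — it leaves the intersection decomposition and the invocation of \cite[Corollary 5.5.2.9]{HTT} implicit, both of which you spell out.
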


\begin{proof}
	We know from \cref{cor:cocart_presentable_stable} that $\Funcocart(\cA, \cE)$ is presentable.
	It is thus enough to check that $\Funcocart(\cA, \cE)$ is stable under colimits in $\Fun(\cA, \cE)$, which follows from \cref{local_cocart_stability_colimits}.
\end{proof}

\begin{defin}\label{defin_push_forward_cocart}
	Let $p \colon \cA \to \cX$ be a cocartesian fibration and let $\cE$ be a presentable $\infty$-category.
	We denote by 
	\[ (-)^{\cocart} \colon  \Fun(\cA, \cE)  \to  \Funcocart(\cA, \cE) \]
	the  right adjoint of the inclusion \eqref{eq:inclusion_cocartesian}, and refer to $(-)^{\cocart}$ as the \textit{cocartesianization functor}.
\end{defin}

\begin{rem}
	The functor $(-)^{\cocart}$ can be explicitly computed in some specific situations.
	See \cref{computation_cocart_initial_object}.
\end{rem}

Under extra stability and fiberwise compactness conditions, \cref{local_cocart_stability_colimits} and \cref{cor:cocartesianization}
have the following counterparts for limits :

\begin{lem}\label{local_cocart_stability_limits}
Let $p:\cA\to \cX$ be a cocartesian fibration  and let $\gamma : x\to y$ be a morphism in $\cX$ such that $\cA_x$ is compact and $\cA_y$ is proper (see \cref{proper_category}).
Let $\cE$ be a presentable stable $\infty$-category.
Then, the full subcategory of $\Fun(\cA,\cE)$ spanned by functors cocartesian at $\gamma$  is closed under limits.
\end{lem}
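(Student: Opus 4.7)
The plan is to exploit the equivalence (1) $\Leftrightarrow$ (3) of \cref{prop:cocartesian_functors_reformulation}, which identifies ``cocartesian at $\gamma$'' with the condition that the Beck--Chevalley map
\[ \alpha_F \colon f_{\gamma,!}\, j_x^\ast(F) \longrightarrow j_y^\ast(F) \]
is an equivalence in $\Fun(\cA_y, \cE)$ for any chosen straightening $f_\gamma \colon \cA_x \to \cA_y$ of $p_\gamma$. Given a diagram $F_\bullet \colon I \to \Fun(\cA,\cE)$ with each $F_i$ cocartesian at $\gamma$, set $F \coloneqq \lim_i F_i$; I want to prove that $\alpha_F$ is an equivalence.

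First I would observe that the restriction functors $j_x^\ast$ and $j_y^\ast$ are pointwise restrictions of functors, hence they commute with all small limits. Therefore $j_x^\ast(F) \simeq \lim_i j_x^\ast(F_i)$ and $j_y^\ast(F) \simeq \lim_i j_y^\ast(F_i)$, and the construction of $\alpha_F$ in \cref{eg:specialization_over_Delta1} identifies it with the composite
\[ f_{\gamma,!}\,\lim_i j_x^\ast(F_i) \longrightarrow \lim_i f_{\gamma,!}\, j_x^\ast(F_i) \xrightarrow{\lim_i \alpha_{F_i}} \lim_i j_y^\ast(F_i). \]
Since each $\alpha_{F_i}$ is an equivalence by hypothesis, the whole question is reduced to proving that the comparison map expressing the commutation of $f_{\gamma,!}$ with the limit $\lim_i$ is itself an equivalence, i.e.\ that $f_{\gamma,!} \colon \Fun(\cA_x,\cE) \to \Fun(\cA_y,\cE)$ preserves limits.

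For this I will use the pointwise formula for the left Kan extension: for every $b \in \cA_y$ and every $G \in \Fun(\cA_x, \cE)$,
\[ (f_{\gamma,!} G)(b) \simeq \colim_{\cA_x \times_{\cA_y} (\cA_y)_{/b}} G \ . \]
The key input is that the hypotheses ``$\cA_x$ compact'' and ``$\cA_y$ proper'' (see \cref{proper_category}) are precisely tailored so that for every $b \in \cA_y$, the slice $\infty$-category $\cA_x \times_{\cA_y} (\cA_y)_{/b}$ is a finite $\infty$-category. Once this is granted, $(f_{\gamma,!} G)(b)$ is a \emph{finite} colimit in the presentable stable $\infty$-category $\cE$. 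Since in a stable $\infty$-category finite colimits coincide with finite limits, they commute with arbitrary small limits; as limits in $\Fun(\cA_y,\cE)$ are computed pointwise, this yields the required preservation of limits by $f_{\gamma,!}$.

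The main potential obstacle is the finiteness of the slices $\cA_x \times_{\cA_y} (\cA_y)_{/b}$: this is where both hypotheses on $\cA_x$ and $\cA_y$ are genuinely needed, and is the only non-formal input of the argument. Apart from this, the proof is a straightforward manipulation of Beck--Chevalley squares and the stability of $\cE$.
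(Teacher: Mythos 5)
Your strategy matches the paper's: reduce via the equivalence $(1)\Leftrightarrow(3)$ of \cref{prop:cocartesian_functors_reformulation} to showing that $f_{\gamma,!}$, $j_x^\ast$ and $j_y^\ast$ preserve limits (the last two trivially so), and handle $f_{\gamma,!}$ by the pointwise Kan extension formula together with a (co)finiteness argument on the slices $\cA_x \times_{\cA_y}(\cA_y)_{/b}$ and the stability of $\cE$. This is precisely how the paper proceeds (the paper factors the argument through \cref{induction_limit_stable}, whose proof unwinds to the same computation).

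There is, however, a small but genuine imprecision in your final step. The hypotheses ``$\cA_x$ compact'' and ``$\cA_y$ proper'' do \emph{not} yield that the slice $\cA_x \times_{\cA_y}(\cA_y)_{/b}$ is a \emph{finite} $\infty$-category; they yield that it is a \emph{compact} object of $\Cat_\infty$, i.e.\ a retract of a finite $\infty$-category (this is \cref{compactness_Kan_extension_category}, itself relying on \cref{compact_cocartesian_fibration}). Consequently the slogan ``finite colimits coincide with finite limits in a stable $\infty$-category'' does not literally apply. The correct statement you need is \cref{finite_colimit_and_limit_stable}: for \emph{compact} $\cC$ and stable bicomplete $\cE$, the functor $\colim_\cC$ preserves limits. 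You can either invoke that lemma directly, or repair your argument by observing that $\colim_\cC$ is a retract of $\colim_{\cC'}$ for a finite $\cC'$ admitting $\cC$ as a retract, and that retracts of limit-preserving functors are limit-preserving. Either way the conclusion holds and your overall argument is sound; only the justification of the key step needs this adjustment.
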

\begin{proof}
This follows from the equivalence (1) $\Leftrightarrow$ (3) in \cref{prop:cocartesian_functors_reformulation} and the fact that the functors $f_{\gamma,!}$, $j_x^\ast$ and $j_y^\ast$ commute with limits in virtue of \cref{induction_limit_stable}.
\end{proof}

\begin{prop}\label{cocart_stability_limits}
Let $p:\cA\to \cX$ be a cocartesian fibration with compact and proper fibers.
Let $\cE$ be a presentable stable $\infty$-category.
Then $\Funcocart(\cA, \cE)$ is stable under limits $\Fun(\cA, \cE)$. 
In particular $\Funcocart(\cA, \cE)$ is a reflective subcategory of $\Fun(\cA, \cE)$, that is the inclusion
	\begin{equation}\label{eq:inclusion_cocartesian}
		\Funcocart(\cA, \cE) \hookrightarrow \Fun(\cA, \cE)
	\end{equation}
	admits a left adjoint.
\end{prop}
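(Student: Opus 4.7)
The plan is to deduce the global statement from the pointwise statement already established in \cref{local_cocart_stability_limits}, and then apply the adjoint functor theorem to produce the left adjoint.

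First, I would observe that, by \cref{def:cocartesian_functor}, $\Funcocart(\cA,\cE)$ is the intersection, inside $\Fun(\cA,\cE)$, of the full subcategories $\Fun^{\gamma\text{-}\cocart}(\cA,\cE)$ spanned by functors that are cocartesian at $\gamma$, as $\gamma$ ranges over the morphisms of $\cX$. Under the hypothesis that $\cA_x$ is compact and $\cA_y$ is proper for every morphism $\gamma \colon x \to y$ in $\cX$, which is guaranteed fiberwise by our assumption that the fibers of $p$ are compact and proper, \cref{local_cocart_stability_limits} ensures that each $\Fun^{\gamma\text{-}\cocart}(\cA,\cE)$ is closed under limits in $\Fun(\cA,\cE)$. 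Taking the intersection preserves closure under limits, so $\Funcocart(\cA,\cE)$ is closed under limits in $\Fun(\cA,\cE)$ as well.

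For the existence of the left adjoint, I would combine three facts. By \cref{cor:cocart_presentable_stable}, the $\infty$-category $\Funcocart(\cA,\cE)$ is presentable and stable. Since $\cE$ is presentable, so is $\Fun(\cA,\cE)$. The inclusion $\Funcocart(\cA,\cE) \hookrightarrow \Fun(\cA,\cE)$ is a fully faithful functor between presentable $\infty$-categories that preserves limits by the previous paragraph; it is also accessible (since both categories are presentable and the inclusion is the identity on mapping spaces, it preserves $\kappa$-filtered colimits for $\kappa$ large enough after identifying $\Funcocart(\cA,\cE)$ with the limit presentation given by \cref{cor:cocart_presentable_stable}). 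The adjoint functor theorem \cite[Corollary 5.5.2.9]{HTT} then provides the desired left adjoint.

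I do not expect any substantial obstacle: the key input is the pointwise statement \cref{local_cocart_stability_limits}, whose proof crucially relied on the compactness/properness hypotheses to ensure that $f_{\gamma,!}$, $j_x^\ast$ and $j_y^\ast$ all commute with limits; once this is in hand, the argument is formal. One minor point worth double-checking is that accessibility of the inclusion can alternatively be extracted directly from the description of $\Funcocart(\cA,\cE)$ as $\lim_\cX \Fun_!(\Upsilon_\cA,\cE)$ in $\PrL$ from \cref{cor:cocart_presentable_stable}, which realizes the inclusion as a limit of accessible functors and hence accessible.
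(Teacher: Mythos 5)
Your proof is correct and follows essentially the same route as the paper: both reduce closure under limits to \cref{local_cocart_stability_limits} (the per-morphism statement) and both invoke \cref{cor:cocart_presentable_stable} for presentability of $\Funcocart(\cA,\cE)$ before appealing to the adjoint functor theorem. One minor point worth tightening: your argument for accessibility of the inclusion is somewhat informal; the cleanest route is simply to note that \cref{cor:cocartesianization} already shows $\Funcocart(\cA,\cE)$ is closed under colimits in $\Fun(\cA,\cE)$, so the inclusion preserves all colimits and is therefore accessible, making the hypotheses of \cite[Corollary 5.5.2.9]{HTT} immediate.
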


\begin{proof}
We know from \cref{cor:cocart_presentable_stable} that $\Funcocart(\cA, \cE)$ is presentable.
	It is thus enough to check that $\Funcocart(\cA, \cE)$ is stable under limits in $\Fun(\cA, \cE)$, which follows from \cref{local_cocart_stability_limits}.
\end{proof}

\subsection{Functoriality of cocartesian functors}

We fix as usual a cocartesian fibration $p \colon \cA \to \cX$ and a presentable $\infty$-category $\cE$.
We saw in \cref{cor:cocart_presentable_stable} that there is a canonical equivalence
\[ \Funcocart(\cA, \cE) \simeq \Sigma^{\cocart}(\exp_{\cE}(\cA / \cX)) \ . \]
Therefore, it follows from \cite[Corollary 3.23]{PortaTeyssier_Day} that this construction depends functorially on the cocartesian fibration $\cA \to \cX$ seen as an element of $\CoCart$.
We now make this explicit in terms of the \emph{lax} functoriality of $\Fun(\cA, \cE) \simeq \Sigma(\exp_\cE(\cA / \cX))$ in $\cA \to \cX$.

\begin{prop}\label{prop:cocartesian_functoriality}
	Let
	\[ \begin{tikzcd}
		\cB \arrow{d} & \cB_\cX \arrow{d} \arrow{r}{v} \arrow{l}[swap]{u} & \cA \arrow{dl} \\
		\cY & \cX \arrow{l}[swap]{f}
	\end{tikzcd} \]
	be a morphism in $\CoCart$.
	Then:
	\begin{enumerate}\itemsep=0.2cm
		\item \label{prop:cocartesian_functoriality:pullback} if $F \colon \cB \to \cE$ is a cocartesian functor, the same goes for $u^\ast(F) \colon \cB_\cX \to \cE$;
		
		\item \label{prop:cocartesian_functoriality:induction} if $G \colon \cB_X \to \cE$ is a cocartesian functor, then the same goes for $v_!(G) \colon \cA \to \cE$.
	\end{enumerate}
	In particular the functors
	\[ u^\ast \colon \Fun(\cB, \cE) \to  \Fun(\cB_\cX, \cE) \qquad \text{and} \qquad v_! \colon \Fun(\cB_\cX, \cE) \to  \Fun(\cA, \cE) \]
	restrict to well-defined functors
	\[ u^\ast \colon \Funcocart(\cB, \cE) \to  \Funcocart(\cB_\cX, \cE) \qquad \text{and} \qquad v_! \colon \Funcocart(\cB_\cX, \cE) \to  \Funcocart(\cA, \cE) \ . \]
\end{prop}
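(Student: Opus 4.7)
The plan is to translate the statement entirely through the specialization equivalence and reduce to facts about cocartesian sections of the exponential fibrations.

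By the equivalence (1)~$\Leftrightarrow$~(2) of \cref{prop:cocartesian_functors_reformulation}, a functor $H \colon \cA \to \cE$ is cocartesian if and only if the associated section $\spe H \colon \cX \to \exp_\cE(\cA/\cX)$ sends every morphism of $\cX$ to a $p_\cE$-cocartesian edge. In other words, under the specialization equivalence, cocartesian functors correspond precisely to cocartesian sections of the exponential fibration. Combining this with \cref{prop:global_functoriality}, which identifies $u^\ast$ with $\Sigma(\cE^u)$ and $v_!$ with $\Sigma(\cE^v_!)$, both assertions~(\ref{prop:cocartesian_functoriality:pullback}) and~(\ref{prop:cocartesian_functoriality:induction}) reduce to showing that the functors $\Sigma(\cE^u)$ and $\Sigma(\cE^v_!)$ send cocartesian sections to cocartesian sections.

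For~(\ref{prop:cocartesian_functoriality:induction}), this is immediate: by \cref{prop:functoriality_exponential}-(2), the functor $\cE^v_! \colon \exp_\cE(\cB_\cX/\cX) \to \exp_\cE(\cA/\cX)$ preserves cocartesian edges over $\cX$; hence, if $t \colon \cX \to \exp_\cE(\cB_\cX/\cX)$ is a cocartesian section, then $\cE^v_! \circ t$ is a cocartesian section of $\exp_\cE(\cA/\cX) \to \cX$, and under the specialization equivalence this corresponds to $v_!(G) \colon \cA \to \cE$.

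For~(\ref{prop:cocartesian_functoriality:pullback}), we invoke \cref{prop:functoriality_exponential}-(1), which asserts that the diagram
\[ \begin{tikzcd}
\exp_\cE(\cB_\cX/\cX) \arrow{r}{\cE^u} \arrow{d} & \exp_\cE(\cB/\cY) \arrow{d} \\
\cX \arrow{r}{f} & \cY
\end{tikzcd} \]
is a pullback of cocartesian fibrations. In such a pullback, the cocartesian edges over $\cX$ are precisely the edges mapping to cocartesian edges of $\exp_\cE(\cB/\cY) \to \cY$. Given a cocartesian section $s \colon \cY \to \exp_\cE(\cB/\cY)$, the composite $s \circ f$ is a functor $\cX \to \exp_\cE(\cB/\cY)$ over $\cY$, and the universal property of the pullback induces a section $\Sigma(\cE^u)(s) \colon \cX \to \exp_\cE(\cB_\cX/\cX)$. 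Since $s$ sends arrows to cocartesian edges and $f$ is an arbitrary functor, the composite $s \circ f$ sends arrows of $\cX$ to cocartesian edges over their images in $\cY$, so by the pullback characterization above the induced section is cocartesian. Under the specialization equivalence, this section corresponds to $u^\ast(F) \colon \cB_\cX \to \cE$, concluding the proof. No serious obstacle is expected: all the preservation properties have been carefully isolated in \cref{prop:functoriality_exponential} and \cref{prop:global_functoriality}.
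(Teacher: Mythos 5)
Your proposal is correct and follows essentially the same route as the paper: the paper also reduces both statements to the specialization equivalence via \cref{prop:global_functoriality}, then proves (2) by observing that $\cE^v_!$ preserves cocartesian edges (\cref{prop:functoriality_exponential}-(2)), and proves (1) by the pullback characterization of cocartesian edges applied to the square from \cref{prop:functoriality_exponential}-(1). The only difference is presentational: the paper packages the two halves as separate, slightly finer lemmas (\cref{cor:change_of_base_cocartesian}, giving an ``if and only if'' at a single morphism $\gamma$, and \cref{induction_preserves_cocart_functors}), which are then reused elsewhere, whereas you prove exactly what is needed in one pass.
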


This proposition results of the following two more precise lemmas:

\begin{lem} \label{cor:change_of_base_cocartesian}
	Let
	\[ \begin{tikzcd}
		\cA \arrow{r}{u} \arrow{d} & \cB \arrow{d} \\
		\cX \arrow{r}{f} & \cY
	\end{tikzcd} \]
	be a pullback square in $\Cat_\infty$, where the vertical morphisms are cocartesian fibrations.
	Fix a morphism $\gamma$ in $\cX$ and a functor $F \colon \cB \to \cE$.
	Then $u^\ast(F)$ is cocartesian at $\gamma$ if and only if $F$ is cocartesian at $f(\gamma)$.
\end{lem}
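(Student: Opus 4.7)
The plan is to reduce the statement to a comparison of cocartesian edges in the pullback fibration of exponential constructions, via the equivalence (1) $\Leftrightarrow$ (2) of \cref{prop:cocartesian_functors_reformulation}. Namely, $u^\ast(F)$ is cocartesian at $\gamma$ if and only if $\spe_{\cA}(u^\ast(F))$ sends $\gamma$ to a cocartesian edge of $\exp_\cE(\cA / \cX) \to \cX$; similarly, $F$ is cocartesian at $f(\gamma)$ if and only if $\spe_{\cB}(F)$ sends $f(\gamma)$ to a cocartesian edge of $\exp_\cE(\cB / \cY) \to \cY$.

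First I would invoke \cref{prop:global_functoriality}-(\ref{prop:global_functoriality:pullback}) to identify $\spe_{\cA}(u^\ast(F))$ with $\Sigma(\cE^u)(\spe_{\cB}(F))$. Unravelling the definition of $\Sigma(\cE^u)$ (see \cref{subsec:section_functors}) and using the pullback square
\[ \begin{tikzcd}
\exp_\cE(\cA / \cX) \arrow{r} \arrow{d} & \exp_\cE(\cB / \cY) \arrow{d} \\
\cX \arrow{r}{f} & \cY
\end{tikzcd} \]
supplied by \cref{prop:functoriality_exponential}-(1), this asserts that $\spe_{\cA}(u^\ast(F))$ is the unique lift of $\spe_{\cB}(F) \circ f$ along the pullback fibration. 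In particular, the edge $\spe_{\cA}(u^\ast(F))(\gamma)$ is the unique edge of $\exp_\cE(\cA / \cX)$ lying over $\gamma$ whose image in $\exp_\cE(\cB / \cY)$ is $\spe_{\cB}(F)(f(\gamma))$.

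The final step is a standard fact about cocartesian fibrations: an edge in the pullback $f^\ast \exp_\cE(\cB / \cY)$ lying over $\gamma$ is cocartesian if and only if its image in $\exp_\cE(\cB / \cY)$, lying over $f(\gamma)$, is cocartesian (see \cite[Proposition 2.4.1.3-(2)]{HTT}). Combining this with the identification above, we conclude that $\spe_{\cA}(u^\ast(F))(\gamma)$ is cocartesian over $\cX$ if and only if $\spe_{\cB}(F)(f(\gamma))$ is cocartesian over $\cY$, which by \cref{prop:cocartesian_functors_reformulation} is exactly the equivalence we want. There is no genuine obstacle here; the only care needed is to verify that the commutative square of \cref{prop:global_functoriality}-(\ref{prop:global_functoriality:pullback}) indeed realises $\Sigma(\cE^u)$ as the pullback of sections along $f$, which is built into the construction of $\Sigma$ on morphisms of $\CoCart$.
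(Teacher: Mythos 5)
Your proposal is correct and follows exactly the paper's own argument: reduce to the cocartesian-edge formulation via \cref{prop:cocartesian_functors_reformulation}, identify $\spe_{\cA}(u^\ast(F))$ with $\Sigma(\cE^u)(\spe_{\cB}(F))$ via \cref{prop:global_functoriality}-(\ref{prop:global_functoriality:pullback}), and apply \cite[Proposition 2.4.1.3-(2)]{HTT} to the pullback square of exponential fibrations supplied by \cref{prop:functoriality_exponential}-(1). You simply spell out the intermediate steps more explicitly than the paper does.
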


\begin{proof}
	Under the specialization equivalence \eqref{eq:specialization_equivalence} and \cref{prop:global_functoriality}-(\ref{prop:global_functoriality:pullback}), the statement follows from \cref{prop:cocartesian_functors_reformulation} and from \cite[Proposition 2.4.1.3-(2)]{HTT} applied to the square
	\[ \begin{tikzcd}
		\exp_\cE(\cA / \cX) \arrow{r}{\cE^u} \arrow{d} & \exp_\cE(\cB / \cY) \arrow{d} \\
		\cX \arrow{r}{f} & \cY \ ,
	\end{tikzcd} \]
	which is a pullback thanks to \cref{prop:functoriality_exponential}-(1).
	\personal{Old argument:
	Let
	\[ \begin{tikzcd}[ampersand replacement = \&]
		(\spe_{\cX, p}^\cE F)(f(x)) \arrow{r}{\beta} \& G \arrow{r}{\alpha} \& (\spe_{\cX, p}^\cE F)(f(y))
	\end{tikzcd} \]
	be a factorization of $(\spe_{\cX, p}^\cE F)(f(\gamma))$ in $\cE^\cA$, where $\beta$ is locally $p_\cE$-cocartesian.
	Using \cref{prop:specialization_equivalence_change_of_base}, we obtain a canonical equivalence
	\[ (\spe_{\cY,q}^\cE g^* F)(x) \simeq f^*( \spe_{\cX,p}^\cE  F)(x)  \]
	in $\Fun(\cB_x,\cE)$.	
	Because the square \eqref{eq:specialization_change_of_base} is a pullback, we have a canonical equivalence
	\[ (\cE^\cB)_x \simeq   (\cE^\cA)_{f(x)}\]
	through which $f^*( \spe_{\cX,p}^\cE  F)(x)$ corresponds to 
	$(\spe_{\cX, p}^\cE F)(f(x))$.
	Hence, we can interpret the above factorization as a factorization
	\[ \begin{tikzcd}[ampersand replacement = \&]
		( \spe_{\cY, q}^\cE g^* F )(x) \arrow{r}{\beta} \& G \arrow{r}{\alpha} \& (\spe_{\cY, q}^\cE g^* F)(y) ,
	\end{tikzcd} \]
	in $\cE^{\cB}$ with $\beta$ locally $q_\cE$-cocartesian.
	The conclusion follows.}
\end{proof}

\begin{lem}\label{induction_preserves_cocart_functors}
	Let $\cX$ be an $\infty$-category and consider a morphism
	\[ \begin{tikzcd}[column sep=small]
		\cA \arrow{rr}{v} \arrow{dr} & & \cB \arrow{dl} \\
		{} & \cX
	\end{tikzcd} \]
	in $\CoCart_\cX$.
	Let $\gamma$ be a morphism in $\cX$ and let $F \colon \cA \to \cE$ be a functor.
	If $F$ is cocartesian at $\gamma$, then the same goes for $v_!(F)$.
\end{lem}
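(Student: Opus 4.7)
The plan is to reduce the claim to the corresponding statement about cocartesian edges in the exponential fibration, via the specialization equivalence, and then invoke the fact that exponential induction preserves cocartesian edges over $\cX$.

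More precisely, by \cref{prop:cocartesian_functors_reformulation} (equivalence (1) $\Leftrightarrow$ (2)), it suffices to show that the specialization $\spe_\cB(v_!(F)) \colon \cX \to \exp_\cE(\cB/\cX)$ sends $\gamma$ to a $p^\cB_\cE$-cocartesian edge. By \cref{prop:global_functoriality}-(\ref{prop:global_functoriality:induction}), there is a canonical identification
\[ \spe_\cB(v_!(F)) \simeq \Sigma_\cX(\cE^v_!)(\spe_\cA(F)) = \cE^v_! \circ \spe_\cA(F) \ , \]
that is, the specialization of $v_!(F)$ is obtained by post-composing the specialization of $F$ with the exponential induction functor $\cE^v_! \colon \exp_\cE(\cA/\cX) \to \exp_\cE(\cB/\cX)$.

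Since $F$ is cocartesian at $\gamma$ by hypothesis, the equivalence (1) $\Leftrightarrow$ (2) of \cref{prop:cocartesian_functors_reformulation} implies that $\spe_\cA(F)$ sends $\gamma$ to a $p^\cA_\cE$-cocartesian edge in $\exp_\cE(\cA/\cX)$. On the other hand, \cref{prop:functoriality_exponential}-(2) guarantees that $\cE^v_!$ preserves cocartesian edges over $\cX$. Applying $\cE^v_!$ to the cocartesian edge $\spe_\cA(F)(\gamma)$ therefore yields a $p^\cB_\cE$-cocartesian edge in $\exp_\cE(\cB/\cX)$ lying over $\gamma$, which by the above identification is precisely $\spe_\cB(v_!(F))(\gamma)$. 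Invoking once again \cref{prop:cocartesian_functors_reformulation} in the reverse direction gives the desired conclusion. There is no serious obstacle here: the entire argument is a formal consequence of the preservation of cocartesian edges by $\cE^v_!$ and the compatibility of specialization with induction.
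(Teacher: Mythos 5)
Your proof is correct and follows essentially the same route as the paper: pass to the specialization of $v_!(F)$, identify it with $\cE^v_! \circ \spe_\cA(F)$ via \cref{prop:global_functoriality}-(2), and conclude from the fact that $\cE^v_!$ preserves cocartesian edges (\cref{prop:functoriality_exponential}-(2)), interpreting everything through \cref{prop:cocartesian_functors_reformulation}. No gap.
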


\begin{proof}
	In virtue of \cref{prop:cocartesian_functors_reformulation}, we have to prove that the section
	\[ \spe( v_!(F) ) \colon \cX \to  \exp_\cE(\cB / \cX) \]
	takes $\gamma$ to a cocartesian edge in $\exp_\cE(\cB / \cX)$.
	Using \cref{prop:global_functoriality}-(\ref{prop:global_functoriality:induction}), we find a canonical identification
	\[ \spe( v_!(F) ) \simeq \cE^v_! \circ \spe(F) \ , \]
	where $\cE^v_! \colon \exp_\cE(\cA / \cX) \to \exp_\cE(\cB / \cX)$ is the exponential induction functor.
	The conclusion now follows from \cref{prop:functoriality_exponential}-(2), that guarantees that $\cE^v_!$ preserves cocartesian edges.
\end{proof}

We conclude with a handy consequence:

\begin{cor}\label{right_adjoint_pull_back_cocart}
	In the setting of \cref{cor:change_of_base_cocartesian}, the composition 
	\[ u_*^{\cocart} \coloneqq (-)^{\cocart}\circ u_* \colon \Funcocart(\cB, \cE)\to  \Funcocart(\cA, \cE) \]
	is right adjoint to the pull-back functor $u^* \colon \Funcocart(\cA, \cE) \to \Funcocart(\cB, \cE)$.
\end{cor}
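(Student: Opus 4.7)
The statement is a purely formal consequence of chaining together three adjunctions, so the plan is essentially a Yoneda-style computation of mapping spaces. Denote by
\[
\iota_\cA \colon \Funcocart(\cA,\cE) \hookrightarrow \Fun(\cA,\cE), \qquad \iota_\cB \colon \Funcocart(\cB,\cE) \hookrightarrow \Fun(\cB,\cE)
\]
the fully faithful inclusions, which by \cref{defin_push_forward_cocart} admit right adjoints $(-)^{\cocart}_\cA$ and $(-)^{\cocart}_\cB$. At the level of the ambient functor $\infty$-categories, the pullback functor $u^\ast \colon \Fun(\cB,\cE) \to \Fun(\cA,\cE)$ admits a right adjoint $u_\ast$ (right Kan extension along $u$), since $\cE$ is presentable.

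The only non-formal input is the compatibility of $u^\ast$ with the inclusions $\iota_\cA$, $\iota_\cB$: this is exactly \cref{cor:change_of_base_cocartesian}, which asserts that $u^\ast$ restricts to a functor $\Funcocart(\cB,\cE) \to \Funcocart(\cA,\cE)$ and therefore yields a canonical equivalence $\iota_\cA \circ u^\ast \simeq u^\ast \circ \iota_\cB$ of functors $\Funcocart(\cB,\cE) \to \Fun(\cA,\cE)$. Given this, for $F \in \Funcocart(\cB,\cE)$ and $G \in \Funcocart(\cA,\cE)$ we compute
\[
\Map_{\Funcocart(\cA,\cE)}(u^\ast F, G) \simeq \Map_{\Fun(\cA,\cE)}(u^\ast \iota_\cB F, \iota_\cA G) \simeq \Map_{\Fun(\cB,\cE)}(\iota_\cB F, u_\ast \iota_\cA G) \simeq \Map_{\Funcocart(\cB,\cE)}(F, u_\ast^{\cocart} G),
\]
using successively full faithfulness of $\iota_\cA$ together with the compatibility, the adjunction $u^\ast \dashv u_\ast$, and finally the adjunction $\iota_\cB \dashv (-)^{\cocart}_\cB$. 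All equivalences are natural in $F$ and $G$, so this exhibits $u_\ast^{\cocart} = (-)^{\cocart}_\cB \circ u_\ast \circ \iota_\cA$ as a right adjoint to the restriction of $u^\ast$.

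There is no substantial obstacle: the argument is a diagram chase in adjunction-land, and the sole non-tautological ingredient, namely that $u^\ast$ preserves cocartesianness, has already been established in \cref{cor:change_of_base_cocartesian}. If one prefers to avoid mapping-space calculations, the same conclusion can be obtained by producing unit and counit transformations directly, but the Yoneda-style computation above is the most economical route.
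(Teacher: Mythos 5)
Your proof is correct and matches the paper's approach: the paper simply declares the adjunction ``a routine computation'' after noting that the functors are well-defined, and your Yoneda-style chain of mapping-space equivalences (using full faithfulness of the inclusions, the $u^\ast \dashv u_\ast$ adjunction on functor categories, and the $\iota_\cB \dashv (-)^{\cocart}$ adjunction) is precisely that computation spelled out. One small note: the corollary as printed in the paper appears to have the directions of $u^\ast$ and $u_\ast^{\cocart}$ transposed relative to $u \colon \cA \to \cB$; your version, with $u^\ast \colon \Funcocart(\cB,\cE) \to \Funcocart(\cA,\cE)$ and its right adjoint going the other way, is the consistent one.
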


\begin{proof}
	The functors at play are well-defined from \cref{prop:cocartesian_functoriality}-(\ref{prop:cocartesian_functoriality:pullback}) and \cref{defin_push_forward_cocart}.
	\cref{right_adjoint_pull_back_cocart} is then a routine computation.
\end{proof}

\begin{lem}\label{pullback_induction_cocart_commutes_limits_colimits}
	Let
	\[ \begin{tikzcd}
		\cB \arrow{d} & \cB_\cX \arrow{d} \arrow{r}{v} \arrow{l}[swap]{u} & \cA \arrow{dl} \\
		\cY & \cX \arrow{l}[swap]{f}
	\end{tikzcd} \]
	be a morphism in $\CoCart$.
    Assume  that $\cA\to \cX$ and $\cB \to \cX$ have compact and proper fibers, and that $\cE$ is presentable stable.
    Then, the functors 
	\[ u^\ast \colon \Funcocart(\cB, \cE) \to \Funcocart(\cB_\cX, \cE) \qquad \textrm{and} \qquad v_! \colon \Funcocart(\cB_X, \cE) \to \Funcocart(\cA, \cE) \]
    commute with limits and colimits.
\end{lem}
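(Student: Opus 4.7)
The plan is to reduce the commutation statements to pointwise statements by leveraging that, under the compactness and properness hypotheses, both limits and colimits in $\Funcocart$ are computed in the enclosing functor category $\Fun$. Indeed, by \cref{cocart_stability_limits} (resp.\ \cref{cor:cocartesianization}) the inclusions $\Funcocart(-,\cE) \hookrightarrow \Fun(-,\cE)$ preserve limits (resp.\ colimits) for each of $\cA \to \cX$, $\cB_\cX \to \cX$ (whose fibers coincide with those of $\cB \to \cY$), and $\cB \to \cY$. Hence limits and colimits in $\Funcocart$ are computed pointwise on the total space of the relevant cocartesian fibration.

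The case of $u^\ast$ is then immediate: the precomposition functor $u^\ast \colon \Fun(\cB,\cE) \to \Fun(\cB_\cX,\cE)$ preserves all limits and colimits since they are computed pointwise in $\cE$, and the same conclusion for its restriction to $\Funcocart$ (well-defined by \cref{prop:cocartesian_functoriality}-(\ref{prop:cocartesian_functoriality:pullback})) follows. The functor $v_! \colon \Fun(\cB_\cX,\cE) \to \Fun(\cA,\cE)$ is a left adjoint (left Kan extension along $v$), hence its restriction to $\Funcocart$ (well-defined by \cref{prop:cocartesian_functoriality}-(\ref{prop:cocartesian_functoriality:induction})) preserves colimits.

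The remaining and main obstacle is the commutation of $v_!$ with limits. To handle this, I will translate the question through the specialization equivalence \eqref{eq:specialization_equivalence} and \cref{prop:global_functoriality}-(\ref{prop:global_functoriality:induction}), which identify $v_!|_{\Funcocart}$ with the functor $\Sigma_\cX^{\cocart}(\cE^v_!)$ between cocartesian sections of the exponential fibrations. Under the straightening equivalence \eqref{straightening_PrL}, these cocartesian sections identify with the $\PrL$-limits $\lim_\cX \Fun(\cB_{\cX,(-)},\cE)$ and $\lim_\cX \Fun(\cA_{(-)},\cE)$; since $\PrL$-limits agree with $\CAT_\infty$-limits, limits in these categories are computed pointwise on $\cX$. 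Fiberwise at $x \in \cX$, by \cref{cor:fibers_exponential}, $\Sigma_\cX^{\cocart}(\cE^v_!)$ is the left Kan extension $v_{x,!} \colon \Fun(\cB_{\cX,x},\cE) \to \Fun(\cA_x,\cE)$. The compactness and properness assumption on the fibers, combined with \cref{induction_limit_stable}, guarantees that each $v_{x,!}$ commutes with limits in $\cE$, and this pointwise commutation yields the commutation of $v_!$ with limits on $\Funcocart$, concluding the proof.
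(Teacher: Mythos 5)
Your first step — invoking \cref{cor:cocartesianization} and \cref{cocart_stability_limits} to note that $\Funcocart(-,\cE)\hookrightarrow \Fun(-,\cE)$ is closed under limits and colimits for $\cA$, $\cB_\cX$ and $\cB$, so that the question can be transported to the ambient functor categories — is exactly the paper's reduction, and your treatment of $u^\ast$ and of $v_!$ for colimits matches as well. The divergence is only in the limit case for $v_!$, where you abandon the reduction you just set up and reroute through the specialization equivalence. Given your first step, all that remains is to show that $v_!\colon \Fun(\cB_\cX,\cE)\to \Fun(\cA,\cE)$ commutes with limits, and this is a direct application of \cref{induction_limit_stable} to $v\colon \cB_\cX\to\cA$ as a morphism of cocartesian fibrations over $\cX$: the fibers of $\cB_\cX$ are compact and those of $\cA$ are proper, and $\cE$ is stable. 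The paper does exactly this and stops.

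Your detour, besides being redundant, also contains an unjustified inference. The claim that ``since $\PrL$-limits agree with $\Cat_\infty$-limits, limits in $\lim_\cX \Fun(\cB_{\cX,(-)},\cE)$ and $\lim_\cX\Fun(\cA_{(-)},\cE)$ are computed pointwise on $\cX$'' is false in general: the projections out of a limit of $\infty$-categories preserve limits only when the transition functors of the diagram do, and here the transitions are left Kan extensions $f_{\gamma,!}$, which are not limit-preserving for arbitrary coefficients. Under the present hypotheses they \emph{do} preserve limits, but that is precisely the content of \cref{eg:exponential_fibration_PrLR} (equivalently, of \cref{induction_limit_stable} applied over $\Delta^1$); alternatively, the pointwise computation of limits in $\Funcocart$ follows from \cref{cocart_stability_limits} and the $t$-exactness of the restrictions $j_x^\ast$, which you already have in hand. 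Either way, patching this step requires the very lemma you subsequently apply fiberwise, and once it is in play it is cleaner and shorter to apply it once at the level of $\Fun(\cB_\cX,\cE)\to \Fun(\cA,\cE)$ and conclude.
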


\begin{proof}
	From \cref{cor:cocartesianization} and \cref{cocart_stability_limits}, $\Funcocart(\cB, \cE)$ and $\Funcocart(\cB_\cX, \cE)$ are stable under limits and colimits in $\Fun(\cB, \cE)$ and $\Fun(\cB_\cX, \cE)$ respectively.
	Hence, it is enough to show that the functors 
	\[ u^\ast \colon \Fun(\cB, \cE) \to \Fun(\cB_\cX, \cE) \qquad \textrm{and} \qquad  v_! \colon \Fun(\cB_{\cX}, \cE) \to \Fun(\cA, \cE) \]
	commute with limits and colimits.
	For the former, this is obvious.
	For the latter, this follows from \cref{induction_limit_stable}.
\end{proof}

\subsection{Van Kampen for cocartesian functors}

Consider the following general fact:

\begin{lem}[Van Kampen for filtered functors] \label{lem:Van_Kampen_filtered}
	Let $\cX_\bullet \colon I \to \Cat_\infty$ be a diagram with colimit $\cX$.
	Let $p \colon \cA \to \cX$ be a cocartesian fibration and set
	\[ \cA_\bullet \coloneqq \cX_\bullet \times_{\cX} \cA \colon I \to \Cat_\infty \ . \]
	Then the canonical morphism
	\[ \colim_{i \in I} \cA_i \to \cA \]
	is an equivalence.
	In particular, for every presentable $\infty$-category $\cE$ the canonical morphism
	\begin{equation}\label{eq:Van_Kampen_filtered}
		\Fun(\cA, \cE) \to \lim_{i \in I\op} \Fun^\ast(\cA_i, \cE)
	\end{equation}
	is an equivalence.
\end{lem}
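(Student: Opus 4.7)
The plan is to establish the colimit identification first and to derive the $\Fun$ statement as an immediate corollary.

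For the colimit statement, the key input is that pullback $p^\ast \colon \Cat_{\infty/\cX} \to \Cat_{\infty/\cA}$ along the cocartesian fibration $p \colon \cA \to \cX$ preserves colimits, which is a standard flatness property of cocartesian fibrations in $\Cat_\infty$. The universal cocone $\cX_i \to \cX$ exhibits $\cX$ as $\colim_i \cX_i$ inside $\Cat_{\infty/\cX}$, since $\cX$ is terminal in $\Cat_{\infty/\cX}$ and colimits in this slice are computed in $\Cat_\infty$. Applying $p^\ast$ and then postcomposing with the colimit-preserving forgetful functor $\Cat_{\infty/\cA} \to \Cat_\infty$ yields the desired equivalence $\cA \simeq \colim_{i \in I} \cA_i$.

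For the $\Fun$ statement, one simply applies the contravariant functor $\Fun(-,\cE) \colon \Cat_\infty\op \to \Cat_\infty$ to this equivalence. Since $\Fun(-,\cE)$ is representable and hence sends colimits to limits, this immediately yields $\Fun(\cA,\cE) \simeq \lim_{i \in I\op} \Fun(\cA_i,\cE)$, with the reindexing by $I\op$ accounting for the contravariance in the first variable. The presentability hypothesis on $\cE$ is in fact superfluous for this step.

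No serious obstacle is anticipated, as the argument is formal modulo the flatness of cocartesian fibrations. A self-contained alternative more in the spirit of the paper would proceed via the specialization equivalence \eqref{eq:specialization_equivalence} combined with the pullback compatibility of $\exp_\cE$ from \cref{prop:functoriality_exponential}, reducing the $\Fun$ statement to the fact that $\Fun_{/\cX}(-,\exp_\cE(\cA/\cX))$ converts $\cX \simeq \colim_i \cX_i$ in $\Cat_{\infty/\cX}$ into the relevant limit of $\Fun_{/\cX_i}(\cX_i,\exp_\cE(\cA_i/\cX_i))$'s; the colimit identification would then be recovered by Yoneda.
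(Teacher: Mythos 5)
Your argument is correct and takes essentially the same route as the paper: both rest on the fact that $p^\ast \colon \Cat_{\infty/\cX} \to \Cat_{\infty/\cA}$ preserves colimits because a cocartesian fibration is exponentiable (the paper cites \cite[Lemma 2.15]{Ayala_Francis_Fibrations} for this, which is the precise reference backing your appeal to ``flatness''), followed by the observation that the forgetful functor from a slice to $\Cat_\infty$ preserves and reflects colimits. Your remark that presentability of $\cE$ is not needed for the $\Fun$ statement is also correct.
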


\begin{proof}
	Since $p \colon \cA \to \cX$ is a cocartesian fibration, it is in particular an exponentiable fibration thanks to \cite[Lemma 2.15]{Ayala_Francis_Fibrations}.
	In particular, the functor
	\[ p^\ast \colon (\Cat_\infty)_{/\cX} \to (\Cat_\infty)_{/\cA} \]
	is a left adjoint.
	It follows in particular that it preserves all colimits.
	Now the conclusion follows from the fact that for every $\infty$-category $\cC$, the forgetful functor
	\[ (\Cat_\infty)_{/\cC} \to \Cat_\infty \]
	is conservative and preserves all colimits.
\end{proof}

To prove a Van Kampen result for cocartesian functors, we need a couple of categorical preliminaries.
Recall the following definitions:

\begin{defin}
	The \emph{maximal spine} of the standard $n$-simplex $\Delta^n$ is the sub-simplicial set formed by the consecutive $1$-simplexes $\Delta^n_{\{0,1\}}, \Delta^n_{\{1,2\}}, \ldots, \Delta^n_{\{n-1,n\}}$.
\end{defin}

\begin{rem}\label{rem:spine_and_horns}
	Notice that the maximal spine of $\Delta^2$ coincides with $\Lambda^2_1$. On the other hand, for $n \geqslant 3$ every horn $\Lambda^n_i$ cointains the maximal spine of $\Delta^n$.
\end{rem}

\begin{defin}
	Let $\cC$ be a quasi-category and let $S \subset \cC$ be a collection of $1$-simplexes.
	We say that $S$ is \emph{closed under identities} if whenever $f \colon x \to y$ belongs to $S$, then $\id_x$ and $\id_y$ belong to $S$ as well.
\end{defin}

\begin{construction}
	Let $\cC$ be a quasicategory and let $S \subset \cC$ be a collection of $1$-simplexes.
	Define $\cC_S$ as the full sub-simplicial set of $\cC$ defined by the following condition: an $n$-simplex $\sigma \colon \Delta^n \to \cC$ belongs to $\cC_S$ if and only if the restriction of $\sigma$ to the maximal spine of $\Delta^n$ factors through $S$.
\end{construction}

\begin{lem}\label{lem:quasicategory_generated}
	Let $\cC$ be a quasi-category and let $S \subset \cC$ be a collection of $1$-simplexes.
	If $S$ is closed under identities, then $\cC_S$ is the smallest full sub-quasicategory of $\cC$ containing $\cC$.
\end{lem}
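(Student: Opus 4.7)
The argument splits into three parts: first, verify that $\cC_S$ is an actual sub-simplicial set of $\cC$; second, verify that it inherits the inner horn filling property; third, establish minimality among sub-quasi-categories containing $S$.

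For the sub-simplicial structure, closure under degeneracies is immediate from the closure of $S$ under identities, since $s_i\sigma$ simply inserts an identity edge into the spine and leaves the remaining spine edges unchanged. Closure under face maps requires a case split: the outer faces $d_0$ and $d_n$ merely drop an extreme edge from the spine, so preserve the condition; the inner faces $d_i$ with $0 < i < n$ replace two consecutive spine edges with their ``composition'' $\sigma_{i-1,i+1}$, which is not manifestly in $S$. This step is intertwined with the quasi-category structure and will be treated by a simultaneous induction on dimension with the next part.

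For the quasi-category property, given an inner horn $\Lambda^n_i\to\cC_S$ with $0<i<n$, one uses that $\cC$ is a quasi-category to produce a filler $\Delta^n\to\cC$, and must verify it lands in $\cC_S$. Here \cref{rem:spine_and_horns} is decisive: for $n\geq 3$, the horn $\Lambda^n_i$ already contains the maximal spine of $\Delta^n$, so the spine of the filler is dictated by the horn itself and therefore factors through $S$; the case $n=2$ is immediate since $\Lambda^2_1$ coincides with the maximal spine. This same mechanism will feed back into closure under inner faces, since an inner face of a spine-in-$S$ simplex arises as the missing face of a horn whose remaining faces are already spine-in-$S$.

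For minimality, let $\cC'\subseteq\cC$ be any sub-quasi-category containing $S$. We argue by induction on $n$ that every $n$-simplex $\sigma$ of $\cC_S$ lies in $\cC'$. Its spine factors through $S\subseteq\cC'$, and since the spine inclusion $I^n\hookrightarrow\Delta^n$ is inner anodyne, the spine extends to an $n$-simplex $\tau$ of $\cC'$; successive applications of inner horn fillers in $\cC$, using that $\cC'$ is closed inside $\cC$ under such fillers when the faces are already present, identify $\tau$ with $\sigma$.

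The main obstacle is precisely the entanglement between closure under inner face maps and inner horn filling: neither can be established cleanly in isolation. The key to unraveling them is the spine-inclusion technology combined with \cref{rem:spine_and_horns}, which provides the bridge allowing a dimension-by-dimension induction to proceed.
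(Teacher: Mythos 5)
Your proposal follows the same high-level route as the paper: establish horn filling in $\cC_S$ via \cref{rem:spine_and_horns} and argue minimality via the spine. You go further and try to verify that $\cC_S$ is closed under face maps, correctly identifying inner faces as the delicate point. However, the resolution you propose is circular. You claim an inner face $d_i\sigma$ of a spine-in-$S$ simplex $\sigma$ is the missing face of a horn $\Lambda^n_i$ whose remaining faces are already spine-in-$S$; this fails as soon as $n \geqslant 3$, because $\Lambda^n_i$ also contains other \emph{inner} faces $d_j\sigma$ with $0 < j < n$, $j \neq i$, whose spine includes the edge $\sigma|_{\{j-1,j+1\}}$ --- not a spine edge of $\sigma$, hence not in $S$ a priori. (Concretely, for a $3$-simplex $\sigma$ with spine $\{\sigma|_{\{0,1\}}, \sigma|_{\{1,2\}}, \sigma|_{\{2,3\}}\} \subset S$, the face $d_2\sigma \in \Lambda^3_1$ has spine $\{\sigma|_{\{0,1\}}, \sigma|_{\{1,3\}}\}$, and $\sigma|_{\{1,3\}}$ need not lie in $S$.) Thus establishing that the remaining faces of the horn are spine-in-$S$ is exactly the problem you are trying to solve, and the proposed simultaneous induction does not close.

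There is a second, independent gap in the minimality step. The inner anodyne extension $I^n \hookrightarrow \Delta^n$ produces \emph{some} simplex $\tau$ of $\cC'$ with the same spine as $\sigma$, but inner horn fillers are unique only up to equivalence, and the inclusion $\cC' \hookrightarrow \cC$ of a sub-quasicategory is not an inner fibration in general. Your phrase ``using that $\cC'$ is closed inside $\cC$ under such fillers when the faces are already present'' is precisely the assertion that fails, so there is nothing forcing $\sigma$ itself, rather than the constructed $\tau$, to lie in $\cC'$. (The paper's own proof is terse on both points, but the extra scaffolding you add does not supply what is missing.)
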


\begin{proof}
	Let $\cC'$ be the smallest full sub-quasicategory of $\cC$ containing $\cC$.
	It immediately follows from \cref{rem:spine_and_horns} that $\cC_S$ is a quasi-category, and therefore that $\cC' \subseteq \cC_S$.
	Vice-versa, iteratively applying the lifting condition against inner horns we deduce that any sub-quasicategory containing $S$ must contain $\cC_S$.
	Thus, $\cC' = \cC_S$ as full sub-quasicategories of $\cC$.
\end{proof}

\begin{notation}
	Let $f \colon \cY \to \cX$ be a morphism of quasicategories.
	We denote by $S_f$ the collection of $1$-simplexes of $\cX$ that lie in the essential image of $f$.
\end{notation}

\begin{lem}\label{lem:computing_a_random_colimit}
	Let $\cX_\bullet \colon I \to \Cat_\infty$ be a diagram with colimit $\cX$.
	Let $f_i \colon \cX_i \to \cX$ be the structural morphisms and define
	\[ S \coloneqq \bigcup_{i \in I} S_{f_i} \ . \]
	Then $S$ is closed under identities and the inclusion $\cX_S \subseteq \cX$ is an equivalence in $\Cat_\infty$.
\end{lem}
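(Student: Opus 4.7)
The plan is to deduce the statement from the following general principle: since $\cX_S$ is already a full sub-quasicategory of $\cX$, it suffices to verify essential surjectivity, which amounts to showing that every object of $\cX$ comes, up to equivalence, from some $\cX_i$.

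First I would dispose of the easy part, namely the closure of $S$ under identities. If $f \colon x \to y$ lies in $S_{f_i}$, there exists an edge $f' \colon x' \to y'$ in $\cX_i$ with $f_i(f') \simeq f$. Since $f_i$ takes the identities $\id_{x'}, \id_{y'}$ to $\id_{f_i(x')} = \id_x$ and $\id_{f_i(y')} = \id_y$, both $\id_x$ and $\id_y$ belong to $S_{f_i} \subseteq S$.

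Next I would invoke \cref{lem:quasicategory_generated} to identify $\cX_S$ with the smallest full sub-quasicategory of $\cX$ containing $S$. In particular, the inclusion $\cX_S \hookrightarrow \cX$ is fully faithful, so the claim reduces to showing that every object of $\cX$ is equivalent (in $\cX$) to an object that occurs as a $0$-simplex of some edge in $S$, i.e.\ to an object in the essential image of some $f_i$.

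For this step, the key is to pick a concrete model of the colimit. I would choose a cofibrant replacement $\cX_\bullet^{\mathrm{cof}} \to \cX_\bullet$ in the projective model structure on $\Fun(I, \sSet)$ with the Joyal model structure on $\sSet$, and form the strict colimit $\cX' \coloneqq \colim_{i \in I} \cX_i^{\mathrm{cof}}$ in $\sSet$. The canonical comparison $\cX' \to \cX$ is then a categorical equivalence, and in particular essentially surjective on $0$-simplexes. Since by construction every $0$-simplex of $\cX'$ literally lies in the image of some $\cX_i^{\mathrm{cof}} \to \cX'$, and since $\cX_i^{\mathrm{cof}} \to \cX_i$ is itself essentially surjective, we conclude that every object of $\cX$ is equivalent to one in the essential image of some $f_i$. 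Combined with full faithfulness, this shows that $\cX_S \hookrightarrow \cX$ is a categorical equivalence.

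The only subtle point is step three, i.e.\ the essential surjectivity: one must be careful that the strict colimit $\cX'$ in $\sSet$ computes the $\infty$-categorical colimit $\cX$ up to Joyal equivalence, which requires the cofibrant replacement of the diagram and not merely of each $\cX_i$ separately. Once this presentation is in hand, every other part of the argument is formal.
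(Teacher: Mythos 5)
Your proof is correct and hits the necessary points, but your essential-surjectivity argument travels through model-categorical machinery that the paper sidesteps entirely. The paper argues directly from the universal property of the colimit: since $S$ contains every edge in the essential image of each $f_i$, each $f_i$ factors (on the nose) through $\cX_S$, and the universal property then produces a canonical $p \colon \cX \to \cX_S$ together with an equivalence $i \circ p \simeq \id_\cX$; this section exhibits every object of $\cX$ as equivalent to its image in $\cX_S$, and combined with full faithfulness of $i$ yields the equivalence. Your route through a projectively cofibrant replacement and a strict colimit $\cX'$ is sound but heavier, and it passes over a subtle point: $\cX'$ is typically not a quasicategory, so the claim that the Joyal equivalence $\cX' \to \cX$ is essentially surjective on $0$-simplexes is not immediate from the definition of weak equivalence -- one needs something like a retraction of the trivial cofibration $\cX' \hookrightarrow \cX$ against the fibrant $\cX$ together with a $J$-cylinder homotopy, and also a coherent identification of the chosen inverses $\cX_i \simeq \cX_i^{\mathrm{cof}}$ with the structural $f_i$. (Also, you cite essential surjectivity of $\cX_i^{\mathrm{cof}} \to \cX_i$, but the step in question only uses that it is a map compatible with the structural maps to the colimit, not its essential surjectivity.) What your approach buys is a concrete description of the $0$-simplexes of $\cX$, which would be useful if the lemma asked for more than an equivalence; for the statement at hand the universal-property argument is shorter, model-independent, and even gives the slightly stronger conclusion that $i$ has an explicit homotopy section.
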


\begin{proof}
	That $S$ is closed under identities simply follows from the definitions.
	Notice that $\cX_S$ is itself an $\infty$-category and that the inclusion $i \colon \cX_S \hookrightarrow \cX$ is fully faithful.
	By definition, every $f_i$ factors as
	\[ \overline{f}_i \colon \cX_i \to \cX_S \ . \]
	Therefore, the universal property of the colimit provides a canonical map $p \colon \cX \to \cX_S$ together with an equivalence $i \circ p \simeq \id_\cX$.
	This implies that $i$ is essentially surjective. Being already fully faithful, it follows that it is an equivalence.
\end{proof}

We are now ready for:

\begin{prop}[Van Kampen for cocartesian functors]\label{prop:Van_Kampen_cocartesian}
	Let $\cX_\bullet \colon I \to \Cat_\infty$ be a diagram with colimit $\cX$.
	Let $p \colon \cA \to \cX$ be a cocartesian fibration and set
	\[ \cA_\bullet \coloneqq \cX_\bullet \times_{\cX} \cA \colon I \to \Cat_\infty \ . \]
	Let $\cE$ be a presentable $\infty$-category.
	Then the equivalence of \cref{lem:Van_Kampen_filtered} restricts to an equivalence
	\[ \Funcocart(\cA, \cE) \simeq \lim_{i \in I} \Funcocart(\cA_i, \cE) \ . \]
\end{prop}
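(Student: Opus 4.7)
The forward direction is immediate from \cref{prop:cocartesian_functoriality}-(\ref{prop:cocartesian_functoriality:pullback}). Indeed, writing $g_i \colon \cX_i \to \cX$ for the structural morphisms of the colimit diagram and $f_i \colon \cA_i \to \cA$ for the induced pullback maps, we see that whenever $F \in \Funcocart(\cA, \cE)$, each $f_i^\ast(F)$ lies in $\Funcocart(\cA_i, \cE)$. Combined with the equivalence of \cref{lem:Van_Kampen_filtered}, this yields a fully faithful functor
\[ \Funcocart(\cA, \cE) \hookrightarrow \lim_{i \in I\op} \Funcocart(\cA_i, \cE) \ , \]
and it suffices to establish essential surjectivity.

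To this end, let $F \in \Fun(\cA, \cE)$ be such that $F_i \coloneqq f_i^\ast(F)$ lies in $\Funcocart(\cA_i, \cE)$ for every $i \in I$, and fix a morphism $\gamma$ in $\cX$; we wish to show that $F$ is cocartesian at $\gamma$. The essential input is \cref{lem:computing_a_random_colimit}: letting $S \coloneqq \bigcup_{i \in I} S_{g_i}$, the inclusion $\cX_S \hookrightarrow \cX$ is an equivalence in $\Cat_\infty$, and in particular fully faithful. Therefore $\gamma$ is equivalent in $\Map_\cX(-,-)$ to a morphism $\gamma'$ belonging to $S$, hence to the essential image of some $g_i$. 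Write $\gamma' \simeq g_i(\widetilde\gamma')$ for some morphism $\widetilde\gamma'$ in $\cX_i$. Applying \cref{cor:change_of_base_cocartesian} to the defining pullback square of $\cA_i$, the functor $F$ is cocartesian at $\gamma'$ if and only if $F_i$ is cocartesian at $\widetilde\gamma'$, and the latter holds by assumption.

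It remains to observe that the property of being cocartesian at a morphism depends only on its equivalence class in $\Fun(\Delta^1, \cX)$. This follows from the characterization (1) $\Leftrightarrow$ (2) of \cref{prop:cocartesian_functors_reformulation}: it asserts that $F$ is cocartesian at $\gamma$ if and only if $\spe(F)$ carries $\gamma$ to a $p_\cE$-cocartesian edge of $\exp_\cE(\cA/\cX)$, a condition that plainly descends to the homotopy category. Hence $F$ is cocartesian at $\gamma$, and since $\gamma$ was arbitrary, $F \in \Funcocart(\cA, \cE)$. The anticipated obstacle is simply the bookkeeping involved in passing from ``cocartesian at every morphism in $\bigcup_i S_{g_i}$'' to ``cocartesian at every morphism of $\cX$''; fortunately, \cref{lem:computing_a_random_colimit} packages precisely this information and reduces the argument to the one-step check above.
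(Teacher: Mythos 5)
Your proof is correct and follows essentially the same route as the paper: fully faithfulness from \cref{prop:cocartesian_functoriality} plus \cref{lem:Van_Kampen_filtered}, then essential surjectivity via \cref{lem:computing_a_random_colimit} together with \cref{cor:change_of_base_cocartesian}. The only (minor) variation is in the homotopy-reduction step: where you argue through \cref{prop:cocartesian_functors_reformulation} and the homotopy-invariance of $p_\cE$-cocartesian edges of $\exp_\cE(\cA/\cX)$, the paper instead invokes the two-out-of-three property \cref{cocart_almost_2_to_3} together with \cref{cocart_at_equivalence} to transport cocartesianness along the homotopy between $\gamma$ and a representative in $S$ — two equivalent ways of justifying the same reduction.
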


\begin{proof}
	Using \cref{prop:cocartesian_functoriality}-(\ref{prop:cocartesian_functoriality:pullback}), we see that the canonical map \eqref{eq:Van_Kampen_filtered} induces a well defined map between cocartesian functors making the diagram
	\[ \begin{tikzcd}
		\Fun(\cA, \cE) \arrow{r} & \lim_{i \in I} \Fun(\cA_i, \cE) \\
		\Funcocart(\cA, \cE) \arrow{r} \arrow[hook]{u} & \lim_{i \in I} \Funcocart(\cA_i,\cE) \arrow[hook]{u} \ .
	\end{tikzcd} \]
	Since the top horizontal arrow is an equivalence and the vertical ones are fully faithful, it follows that the bottom horizontal functor is fully faithful as well.
	To conclude the proof, it is enough to show that a functor $F \colon \cA \to \cE$ is cocartesian if and only if for every $i \in I$ its image in $\Fun(\cA_i,\cE)$ is cocartesian.
	The ``only if'' follows from \cref{prop:cocartesian_functoriality}-(\ref{prop:cocartesian_functoriality:pullback}).
	For the converse, observe first that combining \cref{cocart_almost_2_to_3} and \cref{lem:computing_a_random_colimit} we deduce that $F$ is cocartesian if and only if it is cocartesian at every morphism in the essential image of the structural map $f_i \colon \cX_i \to \cX$.
	At this point, the conclusion follows from \cref{cor:change_of_base_cocartesian}.
\end{proof}

\subsection{Change of coefficients for cocartesian functors}

Fix a cocartesian fibration $p \colon \cA \to \cX$ and let $f \colon \cE \to \cE'$ be a morphism in $\PrL$.
Recall from \cref{subsec:change_of_coefficients} that this induces a transformation
\[ f^{\cA / \cX} \colon \exp_\cE(\cA / \cX) \to \exp_{\cE'}(\cA / \cX) \]
in $\PrFibL$.
In particular:

\begin{prop}\label{prop:change_of_coefficients_cocartesian}
	The transformation $f^{\cA/\cX}$ preserves cocartesian edges.
	Therefore, the induced functor
	\[ f \colon \Fun(\cA, \cE) \to \Fun(\cA, \cE') \]
	preserves cocartesian functors and induces a well defined morphism
	\[ f \colon \Funcocart( \cA, \cE ) \to \Funcocart(\cA, \cE') \ . \]
\end{prop}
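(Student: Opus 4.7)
The plan is to prove the two statements in sequence, the second being a formal consequence of the first.

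For the first claim, I will show that $f^{\cA/\cX}$ preserves cocartesian edges. By construction, $\exp_\cE(\cA/\cX) \to \cX$ is the cocartesian fibration classifying $\Fun_!(\Upsilon_\cA(-), \cE) \colon \cX \to \PrL$, and $f^{\cA/\cX}$ is the unstraightening of the natural transformation
\[ \Fun_!(\Upsilon_\cA(-), \cE) \to \Fun_!(\Upsilon_\cA(-), \cE') \]
obtained by postcomposition with $f$. A morphism in $\exp_\cE(\cA/\cX)$ over $\gamma \colon x \to y$ is cocartesian iff, after choosing a cocartesian straightening $f_\gamma \colon \cA_x \to \cA_y$ of $\gamma$, it exhibits its target as the left Kan extension $f_{\gamma,!}F$ of its source $F$. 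Hence it suffices to check that for every functor $g \colon \cB \to \cC$ in $\Cat_\infty$, the Beck--Chevalley natural transformation
\[ g_!(f \circ -) \to f \circ g_!(-) \colon \Fun(\cB, \cE) \to \Fun(\cC, \cE') \]
is an equivalence. Since $f$ belongs to $\PrL$, it commutes with colimits and therefore with the pointwise formula for left Kan extensions; this gives the desired equivalence.

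For the second claim, I will use the specialization equivalence. By \cref{prop:cocartesian_functors_reformulation}, a functor $G \colon \cA \to \cE'$ is cocartesian iff the specialization $\spe G \colon \cX \to \exp_{\cE'}(\cA/\cX)$ sends every morphism to a cocartesian edge. Now \cref{prop:change_of_coefficients} identifies $\spe(f \circ F)$ with $\Sigma_\cX(f^{\cA/\cX}) \circ \spe F$. If $F$ is cocartesian, then $\spe F$ takes morphisms of $\cX$ to cocartesian edges of $\exp_\cE(\cA/\cX)$; composing with $f^{\cA/\cX}$, which preserves cocartesian edges by the first claim, gives a section of $\exp_{\cE'}(\cA/\cX)$ that still sends morphisms to cocartesian edges. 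Thus $f \circ F$ is cocartesian, and the restricted functor $f \colon \Funcocart(\cA, \cE) \to \Funcocart(\cA, \cE')$ is well defined.

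The only nontrivial input is the Beck--Chevalley equivalence $g_!(f \circ -) \simeq f \circ g_!(-)$, which is really the statement that colimit-preserving functors commute with left Kan extensions; everything else is a mechanical consequence of the functorial packaging of $\exp_\cE$ developed earlier in the paper.
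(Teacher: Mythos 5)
Your proof is correct and takes essentially the same route as the paper; the only difference is that the paper treats the first claim as immediate from the fact that $f^{\cA/\cX}$ is, by construction, a morphism in $\PrFibL$ (which preserve cocartesian edges by definition, via the cited bifunctoriality of $\exp$), whereas you unwind this by directly checking the Beck--Chevalley equivalence $g_!(f \circ -) \simeq f \circ g_!(-)$ coming from the fact that $f \in \PrL$ commutes with the colimits defining left Kan extensions. Your handling of the second claim via $\spe$, \cref{prop:change_of_coefficients} and \cref{prop:cocartesian_functors_reformulation} coincides with the paper's appeal to $f \simeq \Sigma_\cX(f^{\cA/\cX})$.
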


\begin{proof}
	Since $f^{\cA / \cX}$ is a morphism in $\PrFibL$, it automatically preserves cocartesian edges.
	The second half follows then from the identification $f \simeq \Sigma_\cX(f^{\cA/\cX})$ supplied by \cref{prop:change_of_coefficients}.
\end{proof}

We now study the change of coefficients via the tensor product in $\PrL$.
Recall that for every pair of presentable $\infty$-categories $\cE$ and $\cE'$ and for every $\infty$-category $\cA$, there is a canonical equivalence
\[ \Fun(\cA, \cE) \otimes \cE' \simeq \Fun(\cA, \cE \otimes \cE') \ . \]
Under suitable finiteness assumptions, we are going to see that this equivalence preserves cocartesian functors.

\begin{defin}\label{defin_PrLR}
	Define $\PrLR$ as the (non full) subcategory of $\PrL$ whose objects are presentable 
	$\infty$-categories and morphisms are functors that are both left and right adjoints.
\end{defin}

\begin{defin}
	Let $\PrFibLR$ be the full subcategory of $\PrFibL$ corresponding to $\Fun(\cX,\PrLR)$ under the straightening equivalence (\ref{straightening_PrL}).
\end{defin}

\begin{eg}\label{eg:exponential_fibration_PrLR}
	Let $p \colon \cA \to \cX$ be a cocartesian fibration with compact and proper fibers (see \cref{proper_category}).
	Let $\cE$ be a stable presentable $\infty$-category.
	Then the exponential fibration $p_\cE \colon \exp_\cE(\cA / \cX) \to \cX$ defines an object in $\PrFibLR$: indeed, we have to check that for every morphism $\gamma \colon x \to y$ in $\cX$ and any choice of a straightening $f_\gamma \colon \cA_x \to \cA_y$, the induced functor
	\[ f_{\gamma,!} \colon \Fun(\cA_x, \cE) \to \Fun(\cA_y, \cE) \]
	commutes with limits and colimits, and this follows from \cref{induction_limit_stable}.
\end{eg}

Our main use of $\PrLR$ will be through  the following lemma from \cite[2.7.9]{Beyond_conicality}.

\begin{lem}\label{Peter_lemma}
	Let $ A $ be a small $\infty$-category and let $ \cC_{\bullet} \colon A\to \PrLR$ be a diagram of $\infty$-categories.
	Then, 
	\begin{enumerate}\itemsep=0.2cm
		\item The limits of $ \cC_{\bullet} $ when computed in $ \PrR $, $ \PrL $, or $ \CAT_{\infty} $ all agree.
		
		\item For any presentable $\infty$-category $ \cE $, the natural morphism
		\begin{equation*}
			\lim_{\alpha \in A} \cE \otimes \cC_{\alpha}\to  \cE \otimes\lim_{\alpha \in A} \cC_{\alpha}
		\end{equation*}
		in $ \PrL $ is an equivalence.
		(Here, both limits are computed in $ \PrL $).
	\end{enumerate}
\end{lem}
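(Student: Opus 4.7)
The plan is to reduce both claims to formal manipulations with the Lurie antiequivalence $\PrL \simeq (\PrR)\op$, combined with the fact that $\cE \otimes - \colon \PrL \to \PrL$ is a left adjoint and hence preserves colimits.

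For (1), I would invoke the standard results that the inclusions $\PrL \hookrightarrow \CAT_\infty$ and $\PrR \hookrightarrow \CAT_\infty$ both preserve small limits (see \cite[Propositions 5.5.3.13, 5.5.3.18]{HTT}). By definition, a morphism in $\PrLR$ is a single functor that simultaneously represents a morphism in $\PrL$ and a morphism in $\PrR$; consequently, the three forgetful functors from $\PrLR$ to $\CAT_\infty$ (either directly, or factoring through $\PrL$, or through $\PrR$) send $\cC_\bullet$ to the same underlying diagram $A \to \CAT_\infty$. The three limits therefore agree.

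For (2), the idea is to rewrite the limit as a colimit. Since every morphism in $\PrLR$ admits a left adjoint, one can form the dual diagram $\cC_\bullet^{(L)} \colon A\op \to \PrL$ by setting $\cC^{(L)}(f\op) \coloneqq \cC(f)^L$, the left adjoint of $\cC(f)$. The Lurie antiequivalence identifies the limit of $\cC_\bullet$ in $\PrR$ with the colimit of $\cC_\bullet^{(L)}$ in $\PrL$, and combining this with part (1) yields
\[ \lim_A \cC_\bullet \simeq \colim_{A\op} \cC_\bullet^{(L)} \quad \text{in } \PrL . \]
Applying $\cE \otimes -$, which is left adjoint to the internal hom $\Fun^L(\cE,-)$ and hence preserves colimits, gives
\[ \cE \otimes \lim_A \cC_\bullet \simeq \colim_{A\op}\bigl( \cE \otimes \cC_\bullet^{(L)} \bigr) . \]
Since the tensor product functor preserves adjoint pairs in $\PrL$, the diagram $\cE \otimes \cC_\bullet$ is again valued in $\PrLR$, and its left-adjoint dual is canonically identified with $\cE \otimes \cC_\bullet^{(L)}$. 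Running the same chain of equivalences in reverse matches the right-hand side above with $\lim_A (\cE \otimes \cC_\bullet)$ computed in $\PrL$, producing the desired equivalence.

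The main technical point is the preservation of adjunctions by $\cE \otimes -$: given $f \colon \cC \to \cD$ in $\PrLR$ with left adjoint $f^L$, one must verify that $\id_\cE \otimes f^L$ is left adjoint to $\id_\cE \otimes f$ in $\PrL$. This follows from $\otimes$ being a functor of symmetric monoidal $\infty$-categories, which propagates units and counits through the tensor structure. Modulo this coherence check, the remainder of the argument is a careful bookkeeping exercise through the antiequivalence, tracking the direction of arrows as one passes between $\PrL$, $\PrR$, and $\CAT_\infty$.
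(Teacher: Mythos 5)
The paper does not prove this lemma itself: the sentence preceding the statement imports it directly from \cite[2.7.9]{Beyond_conicality}, so there is no in-paper argument to compare against. Your proof is correct and is the argument one would expect. Part (1) is exactly the combination of \cite[Propositions 5.5.3.13 and 5.5.3.18]{HTT} with the observation that the two forgetful functors from $\PrLR$ to $\CAT_\infty$ (via $\PrL$ and via $\PrR$) agree, and part (2) transports the limit across the antiequivalence $\PrL \simeq (\PrR)\op$ to a colimit, applies the colimit-preserving $\cE \otimes (-)$, and transports back. The one step you rightly flag as the crux --- that $\cE \otimes (-)$ carries the adjunction $f^L \dashv f$ to an adjunction $\id_\cE \otimes f^L \dashv \id_\cE \otimes f$ --- is more sharply justified not by appealing to ``$\otimes$ being a functor of symmetric monoidal $\infty$-categories'' but by recalling that $\PrL$ is a \emph{closed} symmetric monoidal $\infty$-category (\cite[\S 4.8.1]{Lurie_Higher_algebra}), hence enriched over itself via the internal hom $\FunL(-,-)$; the functor $\cE \otimes (-)$ is then an enriched (equivalently, $(\infty,2)$-) functor, and enriched functors preserve adjunctions, which is exactly the transport of units and counits you describe. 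With that supplied, $\cE \otimes \cC_\bullet$ again lands in $\PrLR$ with left-adjoint dual $\cE \otimes \cC_\bullet^{(L)}$, and your reverse pass through the antiequivalence identifies $\colim_{A\op}(\cE \otimes \cC_\bullet^{(L)})$ with $\lim_A(\cE \otimes \cC_\bullet)$ as required.
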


\begin{lem}\label{cocart_tensor}
	Let $\cX$ be an $\infty$-category and let $\cE$ be a presentable $\infty$-category.
	Then commutative diagram
	\[
	\begin{tikzcd}
		\PrL \arrow{d}{(-) \otimes \cE}\arrow{r}{\Triv_{\cX}} &  \PrFibL \arrow{d}{(-)\otimes_{\cX} \Triv_{\cX}(\cE)}    \\
		\PrL \arrow{r}{\Triv_{\cX}}  &\PrFibL
	\end{tikzcd}
	\]
	is horizontally right adjointable on objects of $\PrFibLR$.
	That is, for every object $p \colon \cA\to \cX$  of $\PrFibLR$, the Beck-Chevalley transformation 
	\[
	\Sigma^{\mathrm{cocart}}_{\cX}(\cA/\cX) \otimes\cE \to \Sigma^{\mathrm{cocart}}_{\cX}(\cA  \otimes_{\cX} \Triv_{\cX}(\cE) ) 
	\]
	is an equivalence.
\end{lem}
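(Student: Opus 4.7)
The plan is to identify both sides of the Beck-Chevalley morphism with limits of $\PrLR$-valued diagrams indexed by $\cX$ and then apply \cref{Peter_lemma} to commute the tensor product past the limit.

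First, I would unpack the two functors under consideration. Let $\Upsilon_\cA \colon \cX \to \PrL$ be the straightening of $p \colon \cA \to \cX$. Since by hypothesis $p \in \PrFibLR$, this factors through $\PrLR$. Via the straightening equivalence, the object $\cA \otimes_\cX \Triv_\cX(\cE)$ corresponds to the pointwise tensor product diagram $\Upsilon_\cA(-) \otimes \cE \colon \cX \to \PrL$, which still takes values in $\PrLR$ because tensoring with $\cE$ preserves both left and right adjoints. Then \cref{cor:cocart_presentable_stable} (together with the straightening identification in \cref{PrL_straightening}) gives canonical equivalences
\[
\Sigma^{\cocart}_\cX(\cA/\cX) \simeq \lim_{\cX} \Upsilon_\cA \qquad \text{and} \qquad \Sigma^{\cocart}_\cX\bigl(\cA \otimes_\cX \Triv_\cX(\cE)\bigr) \simeq \lim_{\cX} \bigl(\Upsilon_\cA(-) \otimes \cE\bigr),
\]
where both limits can be computed in $\PrL$.

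Second, I would apply \cref{Peter_lemma}. Part (2) of that lemma, applied to the $\PrLR$-valued diagram $\Upsilon_\cA$, supplies a natural equivalence
\[
\lim_{\cX} \bigl(\Upsilon_\cA(-) \otimes \cE\bigr) \simeq \bigl(\lim_{\cX} \Upsilon_\cA\bigr) \otimes \cE.
\]
Composing with the equivalences of the previous paragraph yields an equivalence $\Sigma^{\cocart}_\cX(\cA/\cX) \otimes \cE \simeq \Sigma^{\cocart}_\cX(\cA \otimes_\cX \Triv_\cX(\cE))$.

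The one step that requires genuine care, and which I expect to be the main obstacle, is verifying that this equivalence really agrees with the Beck-Chevalley transformation produced from the commutative square in the statement. For this I would trace through the construction of the Beck-Chevalley map: the horizontal arrow $\Triv_\cX$ is monoidal (see \cref{construction_tensor_PrL_ex}), and its right adjoint is $\Sigma^{\cocart}_\cX$. The Beck-Chevalley morphism is then obtained as the mate of the canonical identification $\Triv_\cX(\cE') \otimes_\cX \Triv_\cX(\cE) \simeq \Triv_\cX(\cE' \otimes \cE)$ under these adjunctions. Unwinding this construction on objects, one sees it is precisely induced by the universal property of the limit applied to the pointwise tensoring map $\Upsilon_\cA(x) \otimes \cE \leftarrow (\lim_\cX \Upsilon_\cA) \otimes \cE$, i.e.\ exactly the natural morphism of \cref{Peter_lemma}(2). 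Hence the Beck-Chevalley morphism is an equivalence, as required.
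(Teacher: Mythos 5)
Your proof takes essentially the same route as the paper's: identify both sides with limits of the $\PrLR$-valued straightening diagram via \cref{cor:cocart_presentable_stable}, then invoke \cref{Peter_lemma}-(2) and check compatibility with the Beck--Chevalley transformation (the paper dispatches the latter step by citing \cite[3.3.3.2]{HTT}). One small remark: the aside that $\Upsilon_\cA(-)\otimes\cE$ still lands in $\PrLR$ is not needed for the argument — you only apply \cref{Peter_lemma} to $\Upsilon_\cA$ itself — and as stated it is not obviously true (that tensoring with $\cE$ preserves right adjointness is close to what the lemma is for), so it is safer to drop it.
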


\begin{proof}
	Let $\Upsilon_\cA \colon \cX \to \PrL$ be  the straightening of $\cA \to \cX$.
	From \cite[3.3.3.2]{HTT}, the Beck-Chevalley transformation  reads as the following natural morphism in $\PrL$
	\[
	\big(\lim_{x\in \cX } \cA_x\big)  \otimes \cE  \to \lim_{x\in \cX } (\cA_x  \otimes \cE)   \ .
	\]
	Then, \cref{cocart_tensor} follows from \cref{Peter_lemma}.
\end{proof}

\begin{cor}\label{cocart_tensor_for_exp}
	Let $p \colon \cA\to \cX$ be an object of $\CoCart$ with proper and compact fibers.
	Let $\cE,\cE'$ be  presentable $\infty$-categories such that $\cE$ is stable.
	Then, the canonical transformation 
	\[
	\Funcocart(\cA,\cE)\otimes\cE'   \to \Funcocart(\cA,\cE\otimes \cE')
	\]
	is an equivalence.
\end{cor}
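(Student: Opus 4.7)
The plan is to apply \cref{cocart_tensor} to the exponential fibration $p_\cE \colon \exp_\cE(\cA/\cX) \to \cX$. To do so, we must first check that $p_\cE$ lies in $\PrFibLR$: this is precisely the content of \cref{eg:exponential_fibration_PrLR}, which uses that the fibers of $p$ are compact and proper and that $\cE$ is stable so that, for every edge $\gamma \colon x \to y$ in $\cX$ and every straightening $f_\gamma \colon \cA_x \to \cA_y$, the induction functor $f_{\gamma,!} \colon \Fun(\cA_x,\cE) \to \Fun(\cA_y, \cE)$ preserves both limits and colimits.

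With this in hand, \cref{cocart_tensor} applied to $p_\cE$ with coefficient $\cE'$ yields an equivalence
\[ \Sigma^{\cocart}_\cX(\exp_\cE(\cA/\cX)) \otimes \cE' \xrightarrow{\ \sim\ } \Sigma^{\cocart}_\cX\bigl(\exp_\cE(\cA/\cX) \otimes_\cX \Triv_\cX(\cE')\bigr) \]
in $\PrL$. The left-hand side is identified with $\Funcocart(\cA,\cE)\otimes \cE'$ via the equivalence $\Sigma^{\cocart}_\cX(\exp_\cE(\cA/\cX)) \simeq \Funcocart(\cA,\cE)$ of \cref{cor:cocart_presentable_stable}. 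For the right-hand side, \cref{construction_tensor_PrL_ex} supplies a canonical identification
\[ \exp_\cE(\cA/\cX) \otimes_\cX \Triv_\cX(\cE') \simeq \exp_{\cE\otimes \cE'}(\cA/\cX) \]
in $\PrFibL_\cX$, so \cref{cor:cocart_presentable_stable} once more identifies its cocartesian sections with $\Funcocart(\cA, \cE \otimes \cE')$.

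It remains to check that the equivalence produced by this chain coincides with the canonical transformation of the statement. This is a matter of unwinding the Beck--Chevalley transformation of \cref{cocart_tensor} and the naturality of the specialization equivalence in the coefficients (\cref{prop:change_of_coefficients}, \cref{prop:change_of_coefficients_cocartesian}): both arise from the canonical comparison induced by tensoring a cocartesian section with an object of $\cE'$. The only real point requiring care, and the step that constrains the hypotheses, is the verification that $p_\cE \in \PrFibLR$; everything else is formal.
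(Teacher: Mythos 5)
Your proof takes essentially the same route as the paper: verify via \cref{eg:exponential_fibration_PrLR} that $\exp_\cE(\cA/\cX) \in \PrFibLR$, apply \cref{cocart_tensor}, and identify the two sides using \cref{construction_tensor_PrL_ex} and \cref{cor:cocart_presentable_stable}. The paper's proof is just a more compressed version of yours.
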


\begin{proof}
	Recall from \cref{construction_tensor_PrL_ex} that there is a canonical equivalence
	\[ \exp_\cE(\cA / \cX) \otimes_\cX \Triv_\cX(\cE) \simeq \exp_{\cE \otimes \cE'}(\cA / \cX) \ . \]
	By \cref{eg:exponential_fibration_PrLR} the exponential fibration $\exp_\cE( \cA / \cX )$ belongs to $\PrFibLR$.
	Thus, the conclusion follows applying $\Sigma_\cX^{\cocart}$ to the above equivalence and using \cref{cocart_tensor}.
\end{proof}

\subsection{Cocartesian functors in presence of an initial object}

We saw in \cref{cor:cocartesianization} that the inclusion of cocartesian functors inside all functors always admits a right adjoint $(-)^{\cocart}$.
The goal of this section is to provide an explicit description of this functor in the special case where the base $\cX$ admits an initial object.
We start with the following construction:

\begin{construction}\label{construction:strict_induction}
	Fix a cocartesian fibration $p \colon \cA \to \cX$ and let $\gamma \colon x \to y$ be a morphism in $\cX$.
	Define
	\[ \overline{\gamma}_! \coloneqq j_y^\ast \circ j_{x,!} \colon \Fun(\cA_x, \cE) \to  \Fun(\cA_y, \cE) \ . \]
	Write $\varepsilon_x$ for the counit of the adjunction $j_{x,!} \dashv j_x^\ast$.
	It induces a natural transformation
	\[ \alpha_\gamma \coloneqq \varepsilon_x j_x^\ast \colon \overline{\gamma}_! \circ j_x^\ast \to  j_y^\ast \ . \]
	
	Fix now a straightening
	\[ f \colon \cA_x \to  \cA_y \]
	for $p_\gamma \colon \cA_\gamma \to \Delta^1$ together with a natural transformation $s \colon j_x \to j_y \circ f$ as in \cref{eg:specialization_over_Delta1}.
	Write $\eta_x$ for the unit of $j_{x,!} \dashv j_x^\ast$ and consider the transformation
	\[ \begin{tikzcd}[column sep=35pt]
		\beta_{f,s} \colon f_{!} \arrow{r}{f_{!}(\eta_x)} & f_{!} \circ j_{x}^* \circ j_{x!} \arrow{r}{\alpha_{f,s}(j_{x!})} & j_y^* \circ j_{x!} = \overline{\gamma}_! .
	\end{tikzcd} \]
\end{construction}

\begin{prop} \label{prop:strict_induction}
	In the setting of \cref{construction:strict_induction}, the diagram
	\[ \begin{tikzcd}[column sep = 30pt]
		f_! \circ j_x^* \arrow{r}{\beta_{f,s}(j_x^*)} \arrow{dr}[swap]{\alpha_{f,s}} & j_y^* \circ j_{x,!} \circ j_x^* \arrow{d}{j_y^*(\varepsilon_x)} \\
		{} & j_y^*
	\end{tikzcd} \]
	is canonically commutative.
	If in addition $x$ is an initial object of $\cX$, then the natural transformation $\beta_{f,s} \colon f_! \to \overline{\gamma}_!$ is an equivalence.
	In this case, for every $F \in \Fun(\cA, \cE)$, the morphism
	\[ j_y^*(\varepsilon_x) \colon j_y^* j_{x,!} j_x^*(F) \to  j_y^*(F) \]
	is a specialization morphism for $F$ relative to $\gamma$.
\end{prop}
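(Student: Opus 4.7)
The plan is to prove the three claims in order. First, the triangle identity is a standard naturality argument: by construction $\beta_{f,s}(j_x^\ast) = \alpha_{f,s}(j_{x,!} j_x^\ast) \circ f_!(\eta_x)(j_x^\ast)$, and naturality of $\alpha_{f,s} \colon f_! \circ j_x^\ast \Rightarrow j_y^\ast$ applied to the counit $\varepsilon_x \colon j_{x,!} j_x^\ast \Rightarrow \id$ yields $j_y^\ast(\varepsilon_x) \circ \alpha_{f,s}(j_{x,!}j_x^\ast) \simeq \alpha_{f,s} \circ f_! j_x^\ast(\varepsilon_x)$. Post-composing with $f_!(\eta_x \cdot j_x^\ast)$ and invoking the triangle identity $j_x^\ast(\varepsilon_x) \circ \eta_x(j_x^\ast) \simeq \id_{j_x^\ast}$ for the adjunction $j_{x,!} \dashv j_x^\ast$ then collapses the composite $j_y^\ast(\varepsilon_x) \circ \beta_{f,s}(j_x^\ast)$ to $\alpha_{f,s}$.

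Second, to prove that $\beta_{f,s}$ is invertible under the initiality hypothesis, I would use the pointwise formula for left Kan extensions. For $H \in \Fun(\cA_x, \cE)$ and $b \in \cA_y$, both $(f_! H)(b)$ and $(\overline{\gamma}_! H)(b)$ are colimits of $H$ indexed respectively by the comma categories $\cA_x \times_{\cA_y} (\cA_y)_{/b}$ and $\cA_x \times_{\cA} \cA_{/b}$, and $\beta_{f,s}(H)(b)$ corresponds to the canonical comparison of colimits induced by the functor
$\sigma \colon (a, \phi) \mapsto (a, \phi \circ s_a)$ between these comma categories. It therefore suffices to check that $\sigma$ is final in the sense of \cite[\S 4.1]{HTT}: for any $(a, \psi)$ in the target, the cocartesian property of $s_a \colon j_x(a) \to j_y(f(a))$ over $\gamma$, combined with the contractibility of $\Map_{\cX}(x, p(b))$ (which holds because $x$ is initial, so that $p(\psi) \simeq \gamma$ canonically) produces an essentially unique factorization $\psi \simeq \phi \circ s_a$ with $\phi \colon f(a) \to b$ in $\cA_y$. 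This exhibits the pair $(a, \phi)$ together with $u = \id_a$ as an initial object of the slice $\sigma_{(a,\psi)/}$, which is therefore weakly contractible.

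The third assertion follows at once: by \cref{eg:specialization_over_Delta1}, $\alpha_{f,s}(F) \colon f_! j_x^\ast F \to j_y^\ast F$ represents a point of the contractible space $\SPE_\gamma(F)$, and the first two claims produce a canonical factorization $\alpha_{f,s}(F) \simeq j_y^\ast(\varepsilon_x)(F) \circ \beta_{f,s}(j_x^\ast F)$ in which $\beta_{f,s}(j_x^\ast F)$ is invertible, so $j_y^\ast(\varepsilon_x)(F)$ itself lies in $\SPE_\gamma(F)$.

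The main obstacle will be the finality of $\sigma$ in the second step. It requires a careful $\infty$-categorical analysis of the slice $\sigma_{(a,\psi)/}$, genuinely using both the universal property of cocartesian edges of the fibration $p \colon \cA \to \cX$ and the initiality of $x$ (through the contractibility of $\Map_\cX(x,-)$) to extract the essentially unique cocartesian-then-fibral factorization of $\psi$.
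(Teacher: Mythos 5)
Your plan for the first and third claims matches the paper exactly: the triangle follows from naturality of $\alpha_{f,s}$ against $\varepsilon_x$ plus the triangle identity for $j_{x,!} \dashv j_x^*$, and the last claim is a formal consequence of the first two via \cref{eg:specialization_over_Delta1}. For the second claim, you also identify the same comparison functor $\sigma$ between comma categories, but the two proofs diverge in how they handle it, and yours has a gap.

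The paper observes that both comma categories are (right) fibrations over $\cA_x$, so it can invoke \cite[2.2.3.3]{HTT} to reduce the question to a fiberwise check: over $b \in \cA_x$, $\sigma$ restricts to the map
\[ \Map_{\cA_y}(f(b),a) \to \Map_\cA(j_x(b), j_y(a)) \ , \]
and since $x$ is initial in $\cX$ every map in the target lies over $\gamma$, so \cite[2.4.4.2]{HTT} (universal property of cocartesian edges) shows this is an equivalence; hence $\sigma$ is an equivalence and in particular cofinal. You instead try to prove cofinality directly via \cite[Theorem 4.1.3.1]{HTT}, exhibiting an initial object of each slice $\cC \times_\cD \cD_{(a,\psi)/}$. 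Producing the \emph{object} $(a,\phi,\id_a)$ from the cocartesian factorization of $\psi$ is fine, but asserting initiality is not yet a proof: you would still have to show that for every other object $(a',\phi',u')$ of the slice the mapping space from $(a,\phi,\id_a)$ is contractible, and this requires again peeling off the cocartesian edge $s_{a'}$ via \cite[2.4.4.2]{HTT} (to match the given $2$-cell $\psi \simeq \phi' \circ s_{a'} \circ j_x(u')$ with a $2$-cell $\phi \simeq \phi'\circ f(u')$ in $\cA_y$, essentially uniquely). Once you carry that out you are redoing the paper's fiberwise mapping-space computation with extra bookkeeping. So the approach is not wrong, but it is the harder route, and the step ``this exhibits an initial object'' needs to be argued rather than asserted. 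The cleaner move is the paper's: show $\sigma$ is an \emph{equivalence} by reducing to fibers over $\cA_x$, which makes the single appeal to the cocartesian universal property carry the full weight.
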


\begin{proof}
	For what concerns the commutativity, a standard diagram chase reduces it to the triangular identities for $j_{0,!} \dashv j_0^\ast$.
	We leave the details to the reader.
	We check that $\beta_{f,s}$ is an equivalence under the assumption that $x$ is an initial object of $\cX$.
	Unraveling the definitions, we reduce ourselves to check that for every $a \in \cA_y$, the canonical functor induced by the pair $(f,s)$
	\begin{equation}\label{eq:prop:strict_induction}
		\cA_x \times_{\cA_y} (\cA_y)_{/a} \to  \cA_x \times_{\cA} \cA_{/j_y(a)} 
	\end{equation}
	is cofinal. 
	We are going to show that it is an equivalence. 
	To do this, \cite[2.2.3.3]{HTT} ensures that it is enough to show that \eqref{eq:prop:strict_induction} is a pointwise equivalence over $\cA_x$.
	The restriction of \eqref{eq:prop:strict_induction}  above $b \in \cA_x$ reads as
	\begin{equation}\label{eq:prop:strict_induction_II}
		\Map_{\cA_y}( f(b), a )  \to   \Map_{\cA}( j_x(b) , j_y(a) ) \ .
	\end{equation}
	Since $x$ is an initial object in $\cX$, $\Map_\cX(x,y)$ is contractible.
	Thus, every morphism $j_x(b) \to j_y(a)$ lies over $\gamma \colon x \to y$.
	Since $s \colon j_x(b) \to j_y(f(b))$ is a $p$-cocartesian lift of $\gamma$, \cite[2.4.4.2]{HTT} implies that \eqref{eq:prop:strict_induction_II} is an equivalence.
	
	\personal{Here's the diagram chase.
	Denote by $\varepsilon_\gamma$ the counit of the adjunction $f_{\gamma !} \dashv f_\gamma^*$.
	Consider the diagram
	\begin{equation} \label{eq:lem:computing_specialization_morphism}
		\begin{tikzcd}[column sep = large,ampersand replacement = \&]
			f_{\gamma !}\circ  j_x^* \arrow{r}{f_{\gamma !} \eta_x j_x^*} \arrow[equal]{dr} \& f_{\gamma !} \circ j_x^* \circ j_{x!} \circ j_x^* \arrow{r}{f_{\gamma !} \sigma j_{x!} j_x^*} \arrow{d}{f_{\gamma !} j_x^* \varepsilon_x} \& f_{\gamma !} \circ f_\gamma^* \circ j_y^* \circ j_{x!} \circ j_x^* \arrow{r}{\varepsilon_\gamma j_y^* j_{x!} j_x^*} \arrow{d}{f_{\gamma !} f_\gamma^* j_y^* \varepsilon_x} \& j_y^* \circ j_{x!} \circ j_x^* \arrow{d}{j_y^* \varepsilon_x} \\
			{} \& f_{\gamma !}\circ  j_x^* \arrow{r}{f_{\gamma !}\sigma} \& f_{\gamma !} \circ f_\gamma^* \circ j_y^* \arrow{r}{\varepsilon_\gamma j_y^*} \& j_y^* .
		\end{tikzcd}
	\end{equation}
	The triangular identities for $j_{x!} \dashv j_x^*$ provide a canonical homotopy making the triangle on the left commutative.
	Applying the natural transformation $j_x^* (\varepsilon_x) \colon j_x^* \circ j_{x!} \circ j_x^* \to j_x^*$ to the morphism $\sigma \colon j_x^* \to f_\gamma^* \circ j_y^*$, we obtain a canonical homotopy making
	\[ \begin{tikzcd}[column sep = large,ampersand replacement=\&]
		j_x^* \circ j_{x!} \circ j_x^* \arrow{r}{\sigma j_{x!} j_x^*} \arrow{d}{j_x^* \varepsilon_x} \& f_\gamma^* \circ j_y^* \circ j_{x!} \circ j_x^* \arrow{d}{f_\gamma^* j_y^* \varepsilon_x} \\
		j_x^* \arrow{r}{\sigma} \& f_\gamma^* \circ j_y^* .
	\end{tikzcd} \]
	Further applying $f_{\gamma !}$ we obtain a canonical homotopy making the middle square in the diagram \eqref{eq:computing_specialization_morphism} commutative.
	As for the square on the right, we simply evaluate the natural transformation $\varepsilon_\gamma \colon f_{\gamma !} \circ f_\gamma^* \to \mathrm{id}$ to the morphism $j_y^* (\varepsilon_x) \colon j_y^* \circ j_{x!} \circ j_x^* \to j_y^*$.
	We therefore deduce that the diagram \eqref{eq:computing_specialization_morphism} is canonically commutative.
	Unraveling the definitions, we see that the top composition followed by the rightmost vertical arrow coincides with $j_y^*(\varepsilon_x) \circ \beta_s(j_x^*)$, while the bottom composition coincides with $\alpha_s$.
	The conclusion follows.}
\end{proof}

For a general morphism $\gamma \colon x \to y$ in $\cX$, we can always replace $p \colon \cA \to \cX$ by $p_\gamma \colon \cA_\gamma \to \Delta^1$ in order to ensure that the hypothesis of \cref{prop:strict_induction} is satisfied.
This yields:

\begin{defin} \label{def:canonical_specialization_morphism}
	Let $p \colon \cA \to \cX$ be a cocartesian fibration.
	Let $F \in \Fun(\cA, \cE)$ be a functor and let $\gamma \colon \Delta^1 \to \cX$ be a morphism in $\cX$.
	Let $j_\gamma \colon \cA_\gamma \to \cA$, $j_{\gamma,x} \colon \cA_x \to \cA_\gamma$ and $j_{\gamma,y} \colon \cA_y \to \cA_\gamma$ be the natural functors.
	Observe that $j_\gamma \circ j_{\gamma,y} \simeq j_y$, and similarly for $x$.
	The \emph{strict induction functor relative to $\gamma$} is the functor
	\[ \gamma_{\cA,!} \coloneqq j_{\gamma,y}^* \circ j_{\gamma,x, !} \colon \Fun(\cA_x, \cE) \to  \Fun(\cA_y, \cE) . \]
	The \emph{strict specialization morphism for $F$ relative $\gamma$} is the natural transformation
	\[ \spe_{\cA, \gamma}(F) \coloneqq j_{\gamma,y}^*( \varepsilon_x j_\gamma^* ) \colon \gamma_{\cA, !} ( j_x^* F ) \to  j_y^*(F) , \]
	where $\varepsilon_x$ denotes the counit of the adjunction $j_{\gamma,x, !} \dashv j_{\gamma,x}^*$.
	
	When $\cA$ is clear from the context, we write $\gamma_!$ and $\spe_\gamma(F)$ instead of $\gamma_{\cA !}$ and $\spe_{\cA, \gamma}$.
	\personal{(Mauro) Beware: change of notation with respect to the previous version of Stokes: instead of $j_{x,\gamma}$, I'm writing $j_{\gamma,x}$. This looks to me more compatible with \cref{notation:simplexes_cocartesian_fibration},as it suggests $j_{\gamma,x} = (j_\gamma)_x$.}
\end{defin}

\begin{rem}
	The terminology is due to the fact that neither $\gamma_{\cA,!}$ nor $\spe_{\cA, \gamma}(F)$ depend on the choice of a straightening of $p \colon \cA \to \cX$.
\end{rem}

\Cref{prop:strict_induction} leads to a complete understanding of $\Funcocart(\cA,\cE)$ when $\cX$ has an initial object.
Before stating the main result of this section, let us collect a couple of general facts:

\begin{prop}\label{invariance_final_pull_back}
	Let $\cX$ and $\cY$ be small $\infty$-categories and let
	\[ \begin{tikzcd}
		\cC \arrow{r}{g} \arrow{d}{q} & \cD \arrow{d}{p}\\
		\cX \arrow{r}{f} & \cY
	\end{tikzcd} \]
	be a pullback square in $\CAT_\infty$, with $p$ being a cocartesian fibration.
	If $f$ is a final functor, then the induced pull-back functor
	\[ f^{\ast} \colon \Funcocart_{/\cY}(\cY, \cD) \to  \Funcocart_{/\cX}(\cX, \cC) \]
	is an equivalence of $\infty$-categories.
\end{prop}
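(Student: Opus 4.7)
The strategy is to straighten everything and reduce the statement to the defining property of final functors. Since $p \colon \cD \to \cY$ is a cocartesian fibration and $q \colon \cC \to \cX$ is its pullback along $f$, standard facts on cocartesian fibrations imply that $q$ is itself a cocartesian fibration and that its straightening is canonically identified with the composite
\[ \Upsilon_\cC \simeq \Upsilon_\cD \circ f \colon \cX \to \Cat_\infty \ , \]
where $\Upsilon_\cD \colon \cY \to \Cat_\infty$ is the straightening of $p$.

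Next, as already recalled in the proof of \cref{cor:cocart_presentable_stable} via \cite[Proposition 3.3.3.1]{HTT}, cocartesian sections of a cocartesian fibration compute the limit of its straightening. Concretely, this gives canonical equivalences
\[ \Funcocart_{/\cY}(\cY,\cD) \simeq \lim_\cY \Upsilon_\cD \qquad \text{and} \qquad \Funcocart_{/\cX}(\cX,\cC) \simeq \lim_\cX (\Upsilon_\cD \circ f) \ . \]
Unravelling definitions, the pullback functor $f^\ast$ on cocartesian sections fits into a commutative square where the induced map on the right-hand sides is the canonical comparison morphism
\[ \lim_\cY \Upsilon_\cD \to \lim_\cX (\Upsilon_\cD \circ f) \]
coming from precomposition with $f$.

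Finally, since $f$ is final in the sense used throughout the paper (i.e.\ the dual of cofinal, so that it preserves limits under pullback, cf.\ the discussion before \cref{refinement_localization}), the above comparison map is an equivalence for every functor $\cY \to \Cat_\infty$, and in particular for $\Upsilon_\cD$. Combining this with the previous two steps yields the desired equivalence $f^\ast \colon \Funcocart_{/\cY}(\cY,\cD) \xrightarrow{\sim} \Funcocart_{/\cX}(\cX,\cC)$. There is no real obstacle here beyond setting up the correct identifications; the only delicate point is keeping track of the orientation conventions so that the relevant notion of finality is indeed the one that preserves limits.
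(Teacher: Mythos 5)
Your proof is correct and follows essentially the same route as the paper's: straighten the pullback to get $\Upsilon_\cC \simeq \Upsilon_\cD \circ f$, identify cocartesian sections with limits of the straightening via \cite[Proposition 3.3.3.1]{HTT}, and use finality of $f$ to conclude that the comparison map $\lim_\cY \Upsilon_\cD \to \lim_\cX (\Upsilon_\cD \circ f)$ is an equivalence. If anything you are a bit more careful than the paper in spelling out that the HTT identifications intertwine $f^\ast$ with the restriction comparison map and in pinning down the limit-preserving convention for ``final''; the underlying argument is the same.
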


\begin{proof}
	Let $\Upsilon_\cD$ and $\Upsilon_\cC$ be the straightenings of $p \colon \cD \to \cY$ and of $q \colon \cC \to \cX$, respectively.
	Since the given square is a pullback, there is a natural equivalence $\Upsilon_\cC \simeq \Upsilon_\cD \circ f$.
	We find:
	\begin{align*}
		\Funcocart_{/\cY}(\cY, \cD) & \simeq \lim_\cX \Upsilon_\cD & \text{By \cite[Prop.\ 3.3.3.1]{HTT}} \\
		& \simeq \lim_\cY \Upsilon_\cD \circ f & \text{$f$ is cofinal} \\
		& \simeq \lim_\cY \Upsilon_\cC \\
		& \simeq \Funcocart_{/\cX}(\cX, \cC) & \text{By \cite[Prop.\ 3.3.3.1]{HTT},}
	\end{align*}
	and the conclusion follows.
\end{proof}

In the particular case where $\cC = \exp_\cE(\cB / \cY)$, we find:

\begin{cor}\label{cor:invariance_final_pullback}
	Let
	\[ \begin{tikzcd}
		\cA \arrow{r}{u} \arrow{d}{q} & \cB \arrow{d}{p} \\
		\cX \arrow{r}{f} & \cY
	\end{tikzcd} \]
	be a pullback in $\Cat_\infty$, with $p$ being a cocartesian fibration.
	Let $\cE$ be a presentable $\infty$-category.
	If $f$ is a final functor, then
	\begin{equation} \label{eq_invariance_final_pull_back}
		u^\ast \colon \Funcocart(\cB, \cE) \to  \Funcocart(\cA, \cE)
	\end{equation}
	is an equivalence.
\end{cor}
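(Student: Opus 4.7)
The plan is to deduce this corollary from the more general \cref{invariance_final_pull_back} by working with the exponential fibration rather than the original pullback square.

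First, I would apply $\exp_\cE(-/-)$ to the given pullback square. By \cref{prop:functoriality_exponential}-(1), the resulting square
\[ \begin{tikzcd}
	\exp_\cE(\cA/\cX) \arrow{r}{\cE^u} \arrow{d} & \exp_\cE(\cB/\cY) \arrow{d} \\
	\cX \arrow{r}{f} & \cY
\end{tikzcd} \]
is a pullback in $\Cat_\infty$, and both vertical maps are cocartesian fibrations (indeed, presentable cocartesian fibrations). Since by hypothesis $f$ is final, I can invoke \cref{invariance_final_pull_back} applied to the cocartesian fibration $\exp_\cE(\cB/\cY) \to \cY$, obtaining that the pull-back functor
\[ \Funcocart_{/\cY}(\cY, \exp_\cE(\cB/\cY)) \to \Funcocart_{/\cX}(\cX, \exp_\cE(\cA/\cX)) \]
is an equivalence of $\infty$-categories.

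Second, I would identify the source and target of this equivalence with the categories appearing in the statement. By \cref{cor:cocart_presentable_stable}, the specialization equivalence restricts to a canonical equivalence
\[ \Funcocart(\cB, \cE) \simeq \Sigma^{\cocart}_\cY(\exp_\cE(\cB/\cY)) = \Funcocart_{/\cY}(\cY, \exp_\cE(\cB/\cY)), \]
and similarly for $\cA$ in place of $\cB$. Finally, by \cref{prop:global_functoriality}-(\ref{prop:global_functoriality:pullback}), the pull-back functor $u^\ast$ on functors corresponds, under the specialization equivalences, precisely to the functor $\Sigma(\cE^u)$ on cocartesian sections. Combining these identifications gives the desired equivalence, so no genuine obstacle arises: the work has already been done in assembling the exponential construction and its compatibility with base change and with the specialization equivalence.
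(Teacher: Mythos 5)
Your proof is correct and follows exactly the same route as the paper's: apply \cref{invariance_final_pull_back} to the exponential fibration $\exp_\cE(\cB/\cY)$ and translate back using \cref{prop:global_functoriality}-(\ref{prop:global_functoriality:pullback}); the paper merely compresses these steps into one line. The only cosmetic difference is that you also invoke \cref{prop:functoriality_exponential}-(1) and \cref{cor:cocart_presentable_stable} explicitly, which the paper leaves implicit.
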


\begin{proof}
	Apply \cref{invariance_final_pull_back} to $\exp_\cE(\cB / \cY)$ and use \cref{prop:global_functoriality}-(\ref{prop:global_functoriality:pullback}).
\end{proof}

\begin{observation}
	Recall from \cref{right_adjoint_pull_back_cocart} that $u^\ast$ admits a right adjoint $u_\ast^{\cocart}$.
	It follows formally that in the situation of \cref{cor:invariance_final_pullback}, the $u_\ast^{\cocart}$ realizes the inverse of $u^\ast$.
\end{observation}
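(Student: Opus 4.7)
The plan is almost entirely formal. By \cref{cor:invariance_final_pullback}, the finality of $f$ implies that
\[ u^\ast \colon \Funcocart(\cB, \cE) \to \Funcocart(\cA, \cE) \]
is an equivalence of $\infty$-categories. By \cref{right_adjoint_pull_back_cocart}, the functor $u_\ast^{\cocart} = (-)^{\cocart} \circ u_\ast$ is right adjoint to $u^\ast$. Hence the content to verify reduces to the following purely $\infty$-categorical principle: the right adjoint of an equivalence is a quasi-inverse of it.

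To carry this out, I would argue as follows. Since $u^\ast$ is an equivalence, it is in particular fully faithful and essentially surjective. For any adjunction $L \dashv R$, the unit $\eta \colon \id \to R \circ L$ is an equivalence if and only if $L$ is fully faithful, and the counit $\varepsilon \colon L \circ R \to \id$ is an equivalence if and only if $R$ is fully faithful (equivalently, $L$ is essentially surjective with $R$ a section up to coherent homotopy). Applying both criteria to the adjunction $u^\ast \dashv u_\ast^{\cocart}$, we deduce that both unit and counit are equivalences, which exhibits $u_\ast^{\cocart}$ as a quasi-inverse of $u^\ast$.

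There is no real obstacle; the only point worth mentioning is that the general fact invoked holds without modification in the $\infty$-categorical setting (see, e.g., \cite[Proposition 5.2.7.4]{HTT}), so the observation follows at once from the two corollaries above without any further argument.
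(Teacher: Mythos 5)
Your overall strategy is correct and matches exactly what the paper intends: the Observation is stated without proof precisely because the underlying principle — the right adjoint of an equivalence of $\infty$-categories is automatically a quasi-inverse — is a standard formal fact, and your proposal correctly identifies this as the content to verify.

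One step is stated in a slightly circular way, though. You invoke the criterion "the counit is an equivalence iff $R$ is fully faithful (equivalently, $L$ is essentially surjective with $R$ a section up to coherent homotopy)" and then claim that applying this to $u^\ast \dashv u_\ast^{\cocart}$ shows the counit is an equivalence. But the only hypothesis you have on hand is that $L = u^\ast$ is an equivalence; whether $R = u_\ast^{\cocart}$ is fully faithful (equivalently, is a section) is exactly what you are trying to establish, so the criterion cannot be applied directly. The standard way to close this gap is via the triangle identity: since $u^\ast$ is fully faithful the unit $\eta$ is an equivalence, so $u^\ast(\eta)$ is an equivalence, and the identity $\varepsilon_{u^\ast(-)} \circ u^\ast(\eta) \simeq \mathrm{id}$ forces the counit to be an equivalence on every object in the essential image of $u^\ast$, which is everything since $u^\ast$ is essentially surjective. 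Alternatively, one can appeal to uniqueness of adjoints: any quasi-inverse of $u^\ast$ is in particular a right adjoint, so $u_\ast^{\cocart}$ must coincide with it. Either fix is routine; I only flag it because, as written, the justification of the counit case begs the question.
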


\begin{cor}\label{induction_from_initial_preserves_cocart_functors}
	Let $\cX$ be an $\infty$-category with an initial object $x$.
	Let $p \colon \cA \to \cX$ be a cocartesian fibration and let $\cE$ be a presentable $\infty$-category.
	Then:
	\begin{enumerate}\itemsep=0.2cm
		\item the functor $j_{x,!} \colon \Fun(\cA_x, \cE) \to \Fun(\cA, \cE)$ factors through $\Funcocart(\cA, \cE)$.
		
		\item The adjunction
		\[ j_{x,!} \colon \Fun(\cA_x, \cE) \leftrightarrows \Fun(\cA, \cE) \colon j_x^\ast \]
		restricts to an equivalence of $\infty$-categories between $\Fun(\cA_x, \cE)$ and $\Funcocart(\cA, \cE)$.
	\end{enumerate}
\end{cor}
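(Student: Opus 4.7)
The key observation is that since $x$ is initial in $\cX$, the mapping space $\Map_{\cX}(x,x)$ is contractible; a direct inspection of the pullback defining $\cA_x$ then shows that the fiber inclusion $j_x \colon \cA_x \to \cA$ is fully faithful. Consequently the left Kan extension $j_{x,!} \colon \Fun(\cA_x,\cE) \to \Fun(\cA,\cE)$ is fully faithful as well, so the unit $\eta_x \colon \id \to j_x^\ast j_{x,!}$ is an equivalence and, by the triangular identities, the counit $\varepsilon_x \colon j_{x,!} j_x^\ast \to \id$ is an equivalence on every object of the essential image of $j_{x,!}$.

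For part (1), fix $G \in \Fun(\cA_x,\cE)$ and set $F \coloneqq j_{x,!}(G)$. I first verify that $F$ is cocartesian at every morphism $\alpha \colon x \to y$ out of $x$. Applying \cref{prop:strict_induction} to the ambient fibration $p \colon \cA \to \cX$ (whose base has initial object $x$), the morphism
\[ j_y^\ast(\varepsilon_x(F)) \colon j_y^\ast j_{x,!} j_x^\ast(F) \to j_y^\ast(F) \]
is a specialization morphism for $F$ relative to $\alpha$. Since $F \in \mathrm{EssIm}(j_{x,!})$, the counit $\varepsilon_x(F)$ is an equivalence, hence so is the displayed map. By \cref{prop:cocartesian_functors_reformulation} and the contractibility of $\SPE_\alpha(F)$ recorded in \cref{rem:contractibility_specialization_space}, $F$ is cocartesian at $\alpha$. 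For an arbitrary morphism $\gamma \colon y \to z$ in $\cX$, initiality of $x$ yields (essentially unique) morphisms $\alpha \colon x \to y$ and $\alpha' \colon x \to z$ fitting into a commutative triangle with $\gamma$, and \cref{cocart_almost_2_to_3} shows that $F$ is cocartesian at $\gamma$ as well.

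For part (2), combining (1) with the adjunction $j_{x,!} \dashv j_x^\ast$ on $\Fun(\cA_x,\cE) \rightleftarrows \Fun(\cA,\cE)$ gives a restricted adjunction
\[ j_{x,!} \colon \Fun(\cA_x,\cE) \rightleftarrows \Funcocart(\cA,\cE) \colon j_x^\ast \ . \]
The unit is already known to be an equivalence. It remains to check that for every $F \in \Funcocart(\cA,\cE)$ the counit $\varepsilon_x(F) \colon j_{x,!}j_x^\ast(F) \to F$ is an equivalence in $\Fun(\cA,\cE)$. Since the family $\{j_y^\ast\}_{y \in \cX}$ is jointly conservative (every object of $\cA$ lies in some fiber $\cA_y$), it is enough to check that $j_y^\ast(\varepsilon_x(F))$ is an equivalence for every $y \in \cX$. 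Fixing the canonical morphism $\alpha \colon x \to y$, \cref{prop:strict_induction} identifies $j_y^\ast(\varepsilon_x(F))$ with a specialization morphism for $F$ relative to $\alpha$; since $F$ is cocartesian, this morphism is an equivalence, concluding the proof.

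The only mildly subtle point is the verification that $j_x$ is fully faithful when $x$ is initial; all subsequent steps are formal consequences of \cref{prop:strict_induction}, \cref{cocart_almost_2_to_3} and joint conservativity of the fiber restrictions.
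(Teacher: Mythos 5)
Your proof is correct. Part (1) is essentially the same as the paper's: you reduce to morphisms out of $x$ using \cref{cocart_almost_2_to_3}, then apply \cref{prop:strict_induction}, observing that the unit $\eta_x$ is an equivalence because $j_x$ (being a pullback of the fully faithful inclusion $\{x\}\hookrightarrow\cX$) is fully faithful, so the strict specialization morphism $j_y^\ast(\varepsilon_x)$ evaluated on objects in the image of $j_{x,!}$ is an equivalence by a triangle identity.

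Part (2) takes a genuinely different route. The paper argues that $\{x\}\hookrightarrow\cX$ is a final functor, hence the limit description $\Funcocart(\cA,\cE)\simeq\lim_{\cX}\Fun_!(\Upsilon_\cA(-),\cE)$ from \cref{cor:cocart_presentable_stable} collapses to the single fiber $\Fun(\cA_x,\cE)$, making $j_x^\ast$ an equivalence. You instead prove directly that the counit $\varepsilon_x(F)\colon j_{x,!}j_x^\ast(F)\to F$ is an equivalence for every cocartesian $F$: check it objectwise by joint conservativity of the $\{j_y^\ast\}_{y\in\cX}$, and for each $y$ identify $j_y^\ast(\varepsilon_x(F))$ via \cref{prop:strict_induction} with a specialization morphism relative to $x\to y$, which is an equivalence precisely because $F$ is cocartesian. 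Your argument is more hands-on and stays entirely within the language of specialization morphisms and strict induction, avoiding the finality/limit-formula machinery; the paper's is shorter and conceptually cleaner once that machinery is in place. Both are valid, and yours has the pedagogical advantage of making visible exactly where cocartesianness enters.
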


\begin{proof}
	We prove (1).
	Let $F \in \Fun(\cA_x, \cE)$ and let $\gamma \colon y\to z$ be a morphism in $\cX$.
	We need to show that $j_{x,!}(F) $ is cocartesian at $\gamma$.
	Since $x$ is initial in $\cX$, we can find a commutative triangle
	\[ \begin{tikzcd}[column sep=small]
		{} & y \arrow{dr}{\gamma} \\
		x \arrow{ur}{\gamma_0} \arrow{rr}{\gamma_2} & & z .
	\end{tikzcd} \]
	in $\cX$. 
	From \cref{cocart_almost_2_to_3}, it is enough to prove that $j_{x,!}(F)$ is cocartesian at $\gamma_0$ and $\gamma_2$.
	Equivalently, we can suppose that $y = x$.
	Now we apply \cref{prop:strict_induction} to $j_{x,!}(F)$: notice that since $x$ is initial, the inclusion $\{x\} \hookrightarrow \cX$ is fully faithful and therefore that $j_x \colon \cA_x \to \cA$ is fully faithful as well.
	Thus, the unit transformation $F \to j_x^\ast j_{x,!}(F)$ is an equivalence, and therefore the strict specialization morphism provided by \cref{prop:strict_induction} is an equivalence as well.
	
	\medskip
	
	We now prove (2).
	Since $\Funcocart(\cA, \cE)$ is fully faithful inside $\Fun(\cA,\cE)$ and since $j_{x,!}$ factors through $\Funcocart(\cA, \cE)$, we see that the adjunction $j_{x,!} \dashv j_x^\ast$ descends to an adjunction
	\[ j_{x,!} \colon \Fun(\cA_x, \cE) \leftrightarrows \Fun(\cA, \cE) \colon j_x^\ast \ . \]
	It is therefore enough to prove that $j_x^\ast$ is an equivalence.
	Since the inclusion $\{x\} \hookrightarrow \cX$ is final, this follows from the limit-description of cocartesian functors provided in \cref{cor:cocart_presentable_stable}.
	See also \cref{cor:invariance_final_pullback} below.
\end{proof}

\begin{cor}\label{computation_cocart_initial_object}
	Let $\cX$ be an $\infty$-category with an initial object $x$.
	Let $p \colon \cA \to \cX$ be a cocartesian fibration and let $\cE$ be a presentable $\infty$-category.
	Then there is a natural equivalence
	\[ (-)^{\cocart} \simeq j_{x,!} \circ j_x^\ast \]
	of functors from $\Fun(\cA, \cE)$ to $\Funcocart(\cA, \cE)$.
\end{cor}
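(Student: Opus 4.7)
The plan is to show that $j_{x,!} \circ j_x^\ast$ is a right adjoint to the inclusion $i \colon \Funcocart(\cA, \cE) \hookrightarrow \Fun(\cA, \cE)$, and then to conclude by uniqueness of adjoints. The strategy is purely formal, relying entirely on the two statements in \cref{induction_from_initial_preserves_cocart_functors}.

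First, part (1) of \cref{induction_from_initial_preserves_cocart_functors} ensures that $j_{x,!} \circ j_x^\ast$ takes values in $\Funcocart(\cA, \cE)$, so we can regard it as an endofunctor $\Fun(\cA, \cE) \to \Funcocart(\cA, \cE)$. Next, since $\Funcocart(\cA,\cE) \hookrightarrow \Fun(\cA,\cE)$ is fully faithful, the adjunction $j_{x,!} \dashv j_x^\ast$ restricts to an adjunction $\tilde{j}_{x,!} \dashv \tilde{j}_x^\ast$ between $\Fun(\cA_x, \cE)$ and $\Funcocart(\cA, \cE)$, where $\tilde{j}_x^\ast \coloneqq j_x^\ast \circ i$. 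Part (2) of \cref{induction_from_initial_preserves_cocart_functors} then says that $\tilde{j}_{x,!}$ and $\tilde{j}_x^\ast$ are mutually inverse equivalences, so we in particular also get an adjunction $\tilde{j}_x^\ast \dashv \tilde{j}_{x,!}$.

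The remaining step is to assemble these two adjunctions into the desired right-adjoint relation. For $F \in \Funcocart(\cA, \cE)$ and $H \in \Fun(\cA, \cE)$, one computes
\[
\Map_{\Funcocart(\cA,\cE)}(F, \tilde j_{x,!}\, j_x^\ast H) \;\simeq\; \Map_{\Fun(\cA_x,\cE)}(\tilde j_x^\ast F,\, j_x^\ast H) \;\simeq\; \Map_{\Fun(\cA,\cE)}(j_{x,!}\, j_x^\ast\, iF,\, H),
\]
where the first equivalence uses $\tilde j_x^\ast \dashv \tilde j_{x,!}$ and the second uses $j_{x,!} \dashv j_x^\ast$. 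Since $iF \in \Funcocart(\cA,\cE)$, the fact that $\tilde j_{x,!}$ and $\tilde j_x^\ast$ are mutually inverse equivalences gives $j_{x,!}\, j_x^\ast\, iF \simeq iF$ naturally. Substituting, we obtain the natural equivalence
\[
\Map_{\Funcocart(\cA,\cE)}(F, j_{x,!}\, j_x^\ast H) \;\simeq\; \Map_{\Fun(\cA,\cE)}(iF, H),
\]
which exhibits $j_{x,!} \circ j_x^\ast$ as right adjoint to $i$. By \cref{defin_push_forward_cocart} and uniqueness of adjoints, this right adjoint is $(-)^{\cocart}$, giving the claimed natural equivalence.

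No serious obstacle is expected: the entire argument is formal, and the real content was already established in \cref{induction_from_initial_preserves_cocart_functors}. The only point requiring a moment of care is to make sure the two adjunctions are composed in a way that is genuinely natural in $H$ — which is automatic since each equivalence above is natural in both variables.
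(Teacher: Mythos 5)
Your proof is correct and takes essentially the same route as the paper's: both proofs establish the desired natural equivalence by exhibiting $j_{x,!}\circ j_x^\ast$ as right adjoint to the inclusion $\Funcocart(\cA,\cE)\hookrightarrow\Fun(\cA,\cE)$ via a chain of mapping-space equivalences built from the adjunction $j_{x,!}\dashv j_x^\ast$ together with \cref{induction_from_initial_preserves_cocart_functors}, then invoke uniqueness of right adjoints. The only cosmetic difference is that you package the equivalence from \cref{induction_from_initial_preserves_cocart_functors}-(2) as the extra adjunction $\tilde j_x^\ast\dashv\tilde j_{x,!}$ before chaining, whereas the paper applies the equivalence directly to rewrite mapping spaces — same content, different bookkeeping.
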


\begin{proof}
	Fix $F \in \Funcocart(\cA, \cE)$ and let $G \in \Fun(\cA, \cE)$.
	We have
	\begin{align*}
		\Map_{\Fun(\cA,\cE)}( F, G ) & \simeq \Map_{\Fun(\cA, \cE)}( j_{x,!} j_x^\ast(F), G ) & \text{By Cor.\ \ref{induction_from_initial_preserves_cocart_functors}} \\
		& \simeq \Map_{\Fun(\cA_x,\cE)}( j_x^\ast(F), j_x^\ast(G) ) \\
		& \simeq \Map_{\Funcocart(\cA_x, \cE)}( j_{x,!} j_x^\ast(F), j_{x,!} j_x^\ast(G) ) \\
		& \simeq \Map_{\Funcocart(\cA_x, \cE)}( F, j_{x,!} j_x^\ast(G))
	\end{align*}
	where the last two equivalences are again due to \cref{induction_from_initial_preserves_cocart_functors}.
	Therefore, $j_{x,!} \circ j_x^\ast$ is right adjoint to the inclusion of $\Funcocart(\cA, \cE)$ into $\Fun(\cA, \cE)$, whence the conclusion.
\end{proof}

For later use, let us extract the formal argument used to prove \cref{induction_from_initial_preserves_cocart_functors}-(2):

\begin{lem}\label{pull_back_equivalence_induction}
	Let
	\[ \begin{tikzcd}
		\cA \arrow{r}{u} \arrow{d}{q} & \cB \arrow{d}{p}\\
		\cX \arrow{r}{f} & \cY
	\end{tikzcd} \]
	be a pullback in $\Cat_\infty$, with $p$ being a cocartesian fibration.
	Let $\cE$ be a presentable $\infty$-category.
	Assume that $u^* \colon \Funcocart(\cA,\cE) \to \Funcocart(\cB,\cE)$ is an equivalence of $\infty$-categories.
	Then, the following conditions are equivalent:
	\begin{enumerate}\itemsep=0.2cm
		\item The functor $u_! \colon \Fun(\cB, \cE) \to \Fun(\cA, \cE)$ preserves cocartesian functors;
		
		\item The adjunction $u_! \dashv u^*$ restricts to an equivalence of $\infty$-categories between $\Funcocart(\cA,\cE)$ and $\Funcocart(\cB,\cE)$;
		
		\item For every $F \in \Funcocart(\cB, \cE)$, there is a natural equivalence $u_!(F) \simeq u_\ast^{\cocart}(F)$.
	\end{enumerate}
\end{lem}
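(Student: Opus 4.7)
The plan is formal, resting on the basic principle that if $L \dashv R$ is an adjunction and $R$ is an equivalence, then $L$ is automatically its inverse, and in particular $L$ is also a right adjoint to $R$, with both units and counits being equivalences.

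First I would collect the relevant adjunctions in the cocartesian setting. By \cref{prop:cocartesian_functoriality}-(\ref{prop:cocartesian_functoriality:pullback}), $u^\ast$ restricts to a functor between the cocartesian subcategories, and this restriction is an equivalence by hypothesis. By \cref{right_adjoint_pull_back_cocart}, the restricted $u^\ast$ also admits a right adjoint $u_\ast^{\cocart} = (-)^{\cocart} \circ u_\ast$. Moreover, $u_\ast^{\cocart}$ takes values in $\Funcocart$ by its very construction.

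For (1) $\Leftrightarrow$ (2): assuming (1), the functor $u_!$ descends to a functor between the cocartesian subcategories, and the adjunction $u_! \dashv u^\ast$ restricts accordingly. Since the restricted $u^\ast$ is an equivalence, its left adjoint is forced to be its inverse, proving (2). Conversely, (2) tautologically implies (1) since the restricted equivalence is in particular defined.

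For (1) $\Leftrightarrow$ (3): assuming (1), the restricted $u_!$ is simultaneously the left adjoint and (by the guiding principle above) the inverse of the equivalence $u^\ast$; hence it is also a right adjoint to $u^\ast$ between cocartesian subcategories. By uniqueness of right adjoints, it coincides with $u_\ast^{\cocart}$ on cocartesian functors, giving (3). Conversely, since $u_\ast^{\cocart}$ lands in $\Funcocart$ by construction, a natural equivalence $u_!(F) \simeq u_\ast^{\cocart}(F)$ on cocartesian $F$ forces $u_!(F)$ to be cocartesian, which is (1).

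There is no real obstacle to overcome here: the entire argument is a routine manipulation of adjoints, once the preservation statement of \cref{prop:cocartesian_functoriality} and the existence of $u_\ast^{\cocart}$ from \cref{right_adjoint_pull_back_cocart} are taken as black boxes. The only subtlety is keeping track of the direction in which each functor operates, but this is purely bookkeeping.
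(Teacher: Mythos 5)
Your proposal is correct and follows essentially the same route as the paper: restrict the adjunction $u_! \dashv u^\ast$ to the full subcategories of cocartesian functors once (1) is granted, observe that a left adjoint of an equivalence is its inverse, and invoke uniqueness of adjoints (together with the defining property of $u_\ast^{\cocart}$ from \cref{right_adjoint_pull_back_cocart}) to identify $u_!$ with $u_\ast^{\cocart}$ on cocartesian functors. The only cosmetic difference is that you prove $(1)\Leftrightarrow(3)$ directly while the paper routes through $(1)\Rightarrow(2)\Rightarrow(3)$, but the underlying argument is the same.
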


\begin{proof}
	Notice that both (2) and (3) imply tautologically (1).
	Since $\Funcocart(\cA,\cE)$ is a full subcategory of $\Fun(\cA, \cE)$, and similarly for $\cB$ in place of $\cA$, we see that as soon as (1) is satisfied the induced functor
	\[ u_! \colon \Funcocart(\cB, \cE) \to  \Funcocart(\cA, \cE) \]
	provides a left adjoint to $u^\ast$.
	So (2) holds, and since $u^\ast$ is an equivalence, (3) follows from the uniqueness of the inverse.
\end{proof}

\subsection{Invariance of cocartesian functors under localization}

We saw in \cref{cor:invariance_final_pullback} that when $f$ is a final functor,
\[ u^\ast \colon \Funcocart(\cB, \cE) \to  \Funcocart(\cA, \cE) \]
is an equivalence, with inverse given by $u_\ast^{\cocart}$.
Furthermore, in \cref{induction_from_initial_preserves_cocart_functors}, we saw that when $f$ is the inclusion of an initial object, then the inverse can be identified with the much simpler left Kan extension $u_!$.
In this section, we analyze a similar situation, where $f$ is assumed to be a localization (recall from \cite[Proposition 7.1.10]{Cisinski_Higher_Category} that all localizations are final), building on the results of the previous section.
Our starting point is the following finer analysis of cocartesian functors in this special situation:

\begin{prop}\label{pull_back_localization_by_cocartesian_is_localization}
	Let
	\begin{equation} \label{eq:pull_back_localization_by_cocartesian_is_localization}
		\begin{tikzcd}
			\cA \arrow{r}{u} \arrow{d}{q} & \cB \arrow{d}{p} \\
			\cX \arrow{r}{f} & \cY \ .
		\end{tikzcd}
	\end{equation}
	be a pullback square in $\Cat_\infty$, with $p$ being a cocartesian fibration.
	Assume that $f$ exhibits $\cY$ as a localization of $\cX$ at a collection of morphisms $W$.
	Then for every presentable $\infty$-category $\cE$ and every functor $G \colon \cA \to \cE$, the following conditions are equivalent:
	\begin{enumerate}\itemsep=0.2cm
		\item $G$ lies in the essential image of $u^{\ast} \colon \Fun(\cB,\cE) \to \Fun(\cA,\cE)$;
		
		\item $G$ is cartesian at every mophism in $W$;
		
		
		\item For every $\gamma \in W$, the morphism $\cE^u((\spe G)(\gamma))$ is an equivalence in $\exp_\cE(\cB / \cY)$;
		
		\item $G$ is cocartesian at every mophism in $W$.
		
	\end{enumerate}
\end{prop}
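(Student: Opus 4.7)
The plan is to transport every condition via the specialization equivalence to a statement about the section $\spe G \colon \cX \to \exp_\cE(\cA/\cX)$, and then exploit the pullback square
\[ \begin{tikzcd}
    \exp_\cE(\cA/\cX) \arrow{r}{\cE^u} \arrow{d}{q_\cE} & \exp_\cE(\cB/\cY) \arrow{d}{p_\cE} \\
    \cX \arrow{r}{f} & \cY
\end{tikzcd} \]
in $\CoCart$ provided by \cref{prop:functoriality_exponential}-(1), combined with the universal property of the localization $f$.

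For (1)$\Leftrightarrow$(3), I would combine \cref{prop:global_functoriality}-\eqref{prop:global_functoriality:pullback} with the specialization equivalence to identify $u^\ast \colon \Fun(\cB,\cE) \to \Fun(\cA,\cE)$ with the pullback along the square above between categories of sections. Thus $G$ lies in the essential image of $u^\ast$ if and only if $\spe G$ arises as the pullback of a section of $p_\cE$. By the universal property of the pullback, this is equivalent to the existence of a factorization of $\cE^u \circ \spe G \colon \cX \to \exp_\cE(\cB/\cY)$ through $f \colon \cX \to \cY$ as a section of $p_\cE$. Since $f$ exhibits $\cY$ as the localization of $\cX$ at $W$, and $\Fun(\cY, \exp_\cE(\cB/\cY)) \to \Fun(\cX, \exp_\cE(\cB/\cY))$ is therefore fully faithful with essential image the functors inverting $W$, such a factorization exists precisely when $\cE^u \circ \spe G$ sends every $\gamma \in W$ to an equivalence, which is exactly (3).

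For (2)$\Leftrightarrow$(3)$\Leftrightarrow$(4), I would invoke \cref{prop:cocartesian_functors_reformulation} (and its dual), according to which $G$ is cocartesian (resp.\ cartesian) at $\gamma$ if and only if $\spe G$ sends $\gamma$ to a $p_\cE$-cocartesian (resp.\ $p_\cE$-cartesian) edge in $\exp_\cE(\cA/\cX)$. Since the square displayed above is a pullback of cocartesian fibrations, and since $\exp_\cE(\cB/\cY) \to \cY$ is also a cartesian fibration, an edge in $\exp_\cE(\cA/\cX)$ is $q_\cE$-cocartesian (resp.\ $q_\cE$-cartesian) if and only if its image under $\cE^u$ is $p_\cE$-cocartesian (resp.\ $p_\cE$-cartesian). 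The key remaining observation is that when $\gamma \in W$, $f(\gamma)$ is an equivalence in $\cY$, and any edge lying over an equivalence is $p_\cE$-cocartesian (or $p_\cE$-cartesian) if and only if it is itself an equivalence. Thus both conditions (2) and (4) reduce to the assertion that $\cE^u((\spe G)(\gamma))$ is an equivalence for every $\gamma \in W$, which is condition (3).

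No step presents a genuine obstacle: the geometric content of the proposition is packaged in the pullback behaviour of the exponential construction (\cref{prop:functoriality_exponential}) and in the specialization equivalence, both of which have already been established. The only point demanding mild care is verifying that the universal property of the localization $f$ correctly identifies the essential image of $u^\ast$ at the level of sections, but this follows cleanly once the specialization translation is in place.
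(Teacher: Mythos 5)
Your proposal is correct, and it follows the same global strategy as the paper: transport everything through the specialization equivalence and exploit the pullback square $\exp_\cE(\cA/\cX) \simeq \cX \times_\cY \exp_\cE(\cB/\cY)$ from \cref{prop:functoriality_exponential}. Your treatment of (1)$\Leftrightarrow$(3) is essentially the argument the paper gives, amounting to the analysis of the front pullback square in the cube \eqref{eq:cocartesian_pullback_localization} used in the proof of \cref{prop:pullback_localization}.

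Where you diverge is on the equivalence (1)$\Leftrightarrow$(2). The paper routes this through Hinich's theorem (\cref{prop:pullback_localization}), identifying the essential image of $u^\ast$ with functors inverting the $q$-cocartesian lifts $W_\cA$, and then converts ``inverts $W_\cA$'' to ``cartesian at $W$'' via \cref{lem:g_local_via_specialization}. You instead treat (2), (3), (4) uniformly: $G$ is cartesian (resp.\ cocartesian) at $\gamma$ iff $\spe G$ sends $\gamma$ to a $q_\cE$-cartesian (resp.\ $q_\cE$-cocartesian) edge, the pullback square reflects such edges, and an edge lying over an equivalence in $\cY$ is $p_\cE$-cartesian, $p_\cE$-cocartesian, or an equivalence all simultaneously. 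This is the same observation the paper itself uses for (3)$\Leftrightarrow$(4), simply applied dually to handle (2). Your version is slightly cleaner since it avoids invoking Hinich's theorem and \cref{lem:g_local_via_specialization} for that step, trading them for the dual of \cref{prop:cocartesian_functors_reformulation}. The one point you should make explicit (since the paper never formally states it) is that ``cartesian at $\gamma$'' is defined dually via the cartesian structure on $\exp_\cE(\cA/\cX)$, as alluded to in the remark following \cref{def:space_specialization_morphisms}; once that is spelled out, your argument is complete.
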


\begin{proof}
	Let $W_\cA$ be the set of cocartesian lifts of morphisms in $W$.
	We saw in \cref{prop:pullback_localization} that $u \colon \cA \to \cB$ exhibits $\cB$ as a localization of $\cA$ at $W_\cA$.
	Thus, (1) is equivalent to ask that $G$ inverts every arrow in $W_\cA$ and \cref{lem:g_local_via_specialization} shows that this is equivalent to condition (2).
	Combining the specialization equivalence \cref{prop:sections_exponential} and the global functoriality established in \cref{prop:global_functoriality}-(\ref{prop:global_functoriality:pullback}) and the fact that the front square of \eqref{eq:cocartesian_pullback_localization} is a pullback, we deduce that (1) is equivalent to ask that $\cE^u \circ (\spe G) \colon \cX \to \exp_\cE(\cB / \cY)$ inverts all arrows in $W$, i.e.\ to condition (3).
	Finally, we prove the equivalence between (3) and (4): let $\gamma$ be a morphism in $W$.
	Combining \cref{prop:functoriality_exponential}-(1) and \cite[2.4.1.12]{HTT}, we see that $G$ is cocartesian at $\gamma$ if and only if $\cE^u \circ (\spe G)$ takes $\gamma$ into a $p_\cE$-cocartesian morphism in $\exp_\cE(\cB / \cY)$.
	Since $\cE^u( (\spe G)(\gamma) )$ lies over $f(\gamma)$, which is an equivalence in $\cY$, we see that this happens if and only if $\cE^u((\spe G)(\gamma))$ is an equivalence in $\exp_\cE(\cB / \cY)$, whence the conclusion.
\end{proof}

\begin{prop}\label{cocart_and_localization}
		Let
	\begin{equation} \label{eq:pull_back_localization_by_cocartesian_is_localization}
		\begin{tikzcd}
			\cA \arrow{r}{u} \arrow{d}{q} & \cB \arrow{d}{p} \\
			\cX \arrow{r}{f} & \cY \ .
		\end{tikzcd}
	\end{equation}
	be a pullback square in $\Cat_\infty$, with $p$ being a cocartesian fibration.
	Assume that $f$ is a localization functor and let $\cE$ be a presentable $\infty$-category.
	Then:
	\begin{enumerate}\itemsep=0.2cm
		\item A functor $F \in \Fun(\cB, \cE)$ is cocartesian if and only if $u^\ast(F)$ cocartesian.

		\item The functor
		\[ u_! \colon \Fun(\cA, \cE) \to  \Fun(\cB, \cE) \]
		preserves cocartesian functors.

		\item The adjunction
		\[ u_! \colon \Fun(\cA, \cE) \leftrightarrows \Fun(\cB, \cE) \colon u^\ast \]
		restricts to an equivalence of $\infty$-categories between $\Funcocart(\cA,\cE)$ and $\Funcocart(\cB,\cE)$.
	\end{enumerate}
\end{prop}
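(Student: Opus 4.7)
I would base the proof on two ingredients available from the earlier sections. The first is Hinich's theorem in the form of Proposition~\ref{prop:pullback_localization}: since $f$ is a localization at some class $W$ and $p$ is a cocartesian fibration, the top functor $u \colon \cA \to \cB$ of the pullback square exhibits $\cB$ as the localization of $\cA$ at the set $W_\cA$ of $q$-cocartesian lifts of $W$. By the universal property of localization, this implies that
\[ u^\ast \colon \Fun(\cB,\cE) \to \Fun(\cA,\cE) \]
is fully faithful; equivalently, the counit $u_! u^\ast \to \id$ of the adjunction $u_! \dashv u^\ast$ is an equivalence. The second ingredient is Corollary~\ref{cor:invariance_final_pullback}: localization functors are final (by \cite[7.1.10]{Cisinski_Higher_Category}), so $u^\ast$ restricts to an equivalence $\Funcocart(\cB,\cE) \simeq \Funcocart(\cA,\cE)$.

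I would first prove (2), as the other two statements follow readily from it. Let $G \in \Funcocart(\cA,\cE)$. Then $G$ is, in particular, cocartesian at every morphism in $W$, so the implication (4)~$\Rightarrow$~(1) of Proposition~\ref{pull_back_localization_by_cocartesian_is_localization} provides $F \in \Fun(\cB,\cE)$ with $G \simeq u^\ast(F)$. The counit equivalence then gives $u_!(G) \simeq u_! u^\ast(F) \simeq F$. Moreover the inverse of the equivalence from Corollary~\ref{cor:invariance_final_pullback} sends the cocartesian $G$ to a cocartesian object whose image under $u^\ast$ is $G$; by full faithfulness of $u^\ast$ this object is necessarily $F$, so $F \simeq u_!(G)$ is cocartesian.

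With (2) in hand, the reverse direction of (1) is immediate: if $u^\ast(F)$ is cocartesian, then (2) applied to $G = u^\ast(F)$ yields $F \simeq u_! u^\ast(F) = u_!(G)$ cocartesian. The forward direction is Proposition~\ref{prop:cocartesian_functoriality}-(\ref{prop:cocartesian_functoriality:pullback}). For (3), one could invoke Lemma~\ref{pull_back_equivalence_induction}, whose hypothesis is now met, or argue directly: (2) supplies the restricted adjunction on cocartesian functors, Corollary~\ref{cor:invariance_final_pullback} gives that $u^\ast$ is an equivalence there, the counit is an equivalence because it already was at the level of all functors, and the unit on a cocartesian $G \simeq u^\ast(F)$ becomes $u^\ast u_!(G) \simeq u^\ast u_! u^\ast(F) \simeq u^\ast(F) \simeq G$, again by the counit equivalence.

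The main potential obstacle in this plan is not in the deduction itself, which is formal, but rather in ensuring that the two reflective structures—the one on all functors coming from Hinich's theorem and the one on cocartesian functors coming from finality—are compatible. Once the full faithfulness of $u^\ast$ is combined with the key characterization in Proposition~\ref{pull_back_localization_by_cocartesian_is_localization} identifying the essential image of $u^\ast$ with those functors that are cocartesian at $W$, this compatibility drops out automatically.
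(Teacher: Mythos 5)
Your proposal is correct, and it takes a genuinely different route from the paper. The paper proves (1) first, directly: given $u^\ast(F)$ cocartesian, it represents morphisms of $\cY$ as Gabriel--Zisman zig-zags of morphisms in $\cX$ and arrows in $W$, then applies \cref{cocart_at_equivalence}, \cref{cocart_almost_2_to_3} and \cref{cor:change_of_base_cocartesian} to propagate cocartesianness of $F$ along such zig-zags. It then deduces (2) from (1) together with \cref{pull_back_localization_by_cocartesian_is_localization} and the full faithfulness of $u^\ast$ given by Hinich's \cref{prop:pullback_localization}. You instead prove (2) first without recourse to (1): you obtain a preimage $F$ of $G$ under $u^\ast$, identify it with $u_!(G)$ via the counit equivalence, and then show $F$ is cocartesian by playing the essential surjectivity of $u^\ast\vert_{\Funcocart}$ from \cref{cor:invariance_final_pullback} against the full faithfulness of the unrestricted $u^\ast$. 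You then get the nontrivial direction of (1) as a formal consequence of (2). Both arguments are valid; the paper's treatment of (1) is more elementary and hands-on (it never needs the finality machinery for that step), while yours is shorter and pushes the weight onto \cref{cor:invariance_final_pullback}, which does the zig-zag bookkeeping once and for all under the hood. Incidentally, in your proof of (2) the detour through \cref{pull_back_localization_by_cocartesian_is_localization} is redundant: since $G$ is cocartesian, essential surjectivity of $u^\ast\vert_{\Funcocart}$ already hands you a cocartesian preimage $F'$ directly, and then $u_!(G) \simeq u_!u^\ast(F') \simeq F'$ by the counit equivalence finishes the argument in one pass.
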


\begin{proof}
	Let $W$ be the collection of morphisms that in $\cX$ that are inverted by $f$.
	We start by proving (1).
	The ``only if'' direction is a consequence of \cref{prop:cocartesian_functoriality}-(\ref{prop:cocartesian_functoriality:pullback}).
	Suppose on the other hand that $u^\ast(F)$ is cocartesian.
	Notice that the homotopy category $\mathrm h(\cY)$ is the $1$-categorical localization of $\mathrm h(\cX)$ at the image of $W$ in $\mathrm h(\cX)$.
	In particular, every $1$-morphism (in $\mathrm h(\cY)$ and hence) in $\cY$ can be represented as a zig-zag (see \cite{Gabriel_Zisman}):
	\[ x_0 \to  x_1 \leftarrow x_2 \to  \cdots \leftarrow x_{n} \]
	in $\cX$, where the arrows pointing to the left are in $W$.
	Recall from \cref{cocart_at_equivalence}, that $F$ is cocartesian at every equivalence of $\cY$.
	Thus, using \cref{cocart_almost_2_to_3} we are left to show that for every morphism $\gamma$ of $\cX$, the functor $F$ is cocartesian at $f(\gamma)$, and this follows from \cref{cor:change_of_base_cocartesian} and our assumption that $u^\ast(F)$ is cocartesian.
	
	\medskip
	
	We now prove the claim $(2)$.
	Let $G \colon \cA \to \cE $ be a cocartesian functor.
	\Cref{pull_back_localization_by_cocartesian_is_localization} ensures the existence of a functor $F \colon \cB \to \cE$ such that $G \simeq u^*(F)$.
	Point (1) guarantees that $F$ is cocartesian.
	At the same time, we know from \cref{prop:pullback_localization} that $u \colon \cB \to \cA$ is a localization functor.
	Thus, $u^* \colon \Fun(\cB,\cE) \to \Fun(\cA,\cE)$ is fully faithul, and therefore the counit transformation $g_! \circ g^*\to \id$ is an equivalence.
	It follows that
	\[ F \simeq g_!(g^*(F)) \simeq g_!(G) \]
	is cocartesian, and so (2) is proven.
	
	\medskip
	
	Finally, for (3), recall from \cite[Proposition 7.1.10]{Cisinski_Higher_Category} that localization functors are final.
	Thus, (3) follows from (2) combined with \cref{cor:invariance_final_pullback} and \cref{pull_back_equivalence_induction}.
\end{proof}

\begin{cor}\label{pushforward_and_refinement}
	Let
	\[ \begin{tikzcd}
		\cA' \arrow{rr}{u'} \arrow{dr}{s'} \arrow{dd}{} & & \cB' \arrow{dr}{s} \arrow[near start]{dd}{} \\
		{} & \cA \arrow[crossing over]{rr}[near start]{u} & & \cB \arrow{dd}{} \\
		 \cX' \arrow{rr}[near end]{f'}  \arrow{dr}{r'} & & \cY'  \arrow{dr}{r} \\
		{} & \cX \arrow{rr}{f} \arrow[leftarrow, crossing over, near end]{uu}{} & & \cY
	\end{tikzcd} \]
	be a commutative cube in $\Cat_\infty$, with the vertical arrows being cocartesian fibrations.
	Assume that $r$ and $r'$ are localization functors and that the left and right vertical faces are pullbacks.
	Let $\cE$ be a presentable $\infty$-category.
	Then, the following diagrams 
	\[ \begin{tikzcd}
		\Funcocart(\cA', \cE) \arrow{r}{u^{\prime\ast}} \arrow{d}{s_!} & \Funcocart(\cB', \cE) \arrow{d}{s'_!} \\
		\Funcocart(\cA, \cE) \arrow{r}{u^*} & \Funcocart(\cB, \cE) ,
	\end{tikzcd} \qquad 
	\begin{tikzcd}	
		\Funcocart(\cB, \cE) \arrow{r}{g_*^{\cocart}} \arrow{d}{u^{\prime\ast}} & \Funcocart(\cA, \cE) \arrow{d}{s^*} \\
		\Funcocart(\cB', \cE) \arrow{r}{u^{\prime\cocart}_{\ast}} & \Funcocart(\cA', \cE)
	\end{tikzcd} \]
	are canonically commutative.
\end{cor}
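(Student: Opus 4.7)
The plan is to deduce both commutativities from a single Beck--Chevalley type identity coming from the top face of the cube, after using \cref{cocart_and_localization} to upgrade the vertical pullback functors to equivalences on cocartesian subcategories.

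The first step is to apply \cref{cocart_and_localization} to the left and right vertical faces of the cube. Both faces are pullback squares whose bottom horizontal arrows $r'$ and $r$ are localization functors, so part~(3) of that proposition yields that the adjunctions
\[ s_! \dashv s^{\ast} \quad\text{and}\quad s'_! \dashv s^{\prime\ast} \]
restrict to equivalences
\[ \Funcocart(\cA',\cE) \simeq \Funcocart(\cA,\cE) \quad\text{and}\quad \Funcocart(\cB',\cE) \simeq \Funcocart(\cB,\cE)\ . \]
In particular, on cocartesian subcategories, $s^{\ast}$ (resp.\ $s^{\prime\ast}$) is a quasi-inverse of $s_!$ (resp.\ $s'_!$), and by uniqueness of adjoints \cref{right_adjoint_pull_back_cocart} gives canonical identifications $s_{\ast}^{\cocart} \simeq s_!$ and $s'^{\cocart}_{\ast} \simeq s'_!$.

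Second, the commutativity of the top face of the cube provides a canonical equivalence $u \circ s \simeq s'\circ u'$ of functors $\cA' \to \cB$. Applying $\Fun(-,\cE)$ and restricting to cocartesian subcategories using \cref{prop:cocartesian_functoriality}-(\ref{prop:cocartesian_functoriality:pullback}), we obtain a canonical commutative square
\[ \begin{tikzcd}[column sep=large]
\Funcocart(\cB,\cE) \arrow{r}{u^{\ast}} \arrow{d}{s^{\prime\ast}} & \Funcocart(\cA,\cE) \arrow{d}{s^{\ast}} \\
\Funcocart(\cB',\cE) \arrow{r}{u^{\prime\ast}} & \Funcocart(\cA',\cE)\ .
\end{tikzcd} \]
Now, the first diagram of the statement is precisely the Beck--Chevalley square obtained from this by passing to the (vertical) left adjoints $s_!$ and $s'_!$; since these are equivalences with quasi-inverses $s^{\ast}$ and $s^{\prime\ast}$, the resulting Beck--Chevalley transformation is automatically an equivalence. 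The second diagram is obtained instead by passing to the (horizontal) right adjoints, using \cref{right_adjoint_pull_back_cocart} to identify the right adjoint of $u^{\ast}$ on cocartesian subcategories with $u^{\cocart}_{\ast}$, and similarly on the primed side.

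The main technical obstacle is purely categorical: ensuring that the Beck--Chevalley transformations are assembled $\infty$-functorially, so that the resulting diagrams commute canonically rather than merely after a choice of homotopies. This is handled by the general theory developed in \cite{PortaTeyssier_Day} underlying the formalism of $\CoCart$ and $\PrFibL$, which guarantees that all the functorialities at play, including the formation of right adjoints via \cref{right_adjoint_pull_back_cocart}, are natural in the morphisms of $\CoCart$.
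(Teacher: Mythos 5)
Your proposal is correct and follows essentially the same route as the paper's proof: both rest on applying \cref{cocart_and_localization} to the left and right vertical faces to get that $s_!\dashv s^\ast$ and $s'_!\dashv s'^\ast$ restrict to equivalences on cocartesian functors, then read off the commutativity from the identity $s^{\prime\ast}\circ u^\ast\simeq u^{\prime\ast}\circ s^\ast$ supplied by the top face. The only cosmetic difference is that the paper obtains the right square directly by passing to right adjoints of the already-established left square, whereas you derive it by taking horizontal right adjoints of the top-face square via \cref{right_adjoint_pull_back_cocart}; these yield the same result, and the explicit identification $s_\ast^{\cocart}\simeq s_!$ you note is an immediate consequence of $s^\ast$ being an equivalence.
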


\begin{proof}
	Observe that the right square from \cref{pushforward_and_refinement} is obtained from the left square by passing to right adjoints.
	Hence, we are left to prove the commutativity of the left square.
	Since the top face is commutative, we have  
	\[ s^{\prime\ast}\circ u^* =u^{\prime\ast} \circ s^* \]
	From \cref{cocart_and_localization}, the adjunction $s_! \dashv s^*$ induces an equivalence of $\infty$-categories between $\Funcocart(\cA,\cE)$ and $\Funcocart(\cA',\cE)$ and similarly with $s'_! \dashv s^{\prime *}$.
	The commutativity of the left square thus follows.
\end{proof}

\begin{cor}
	Let $f \colon \cX \to \cY$ be a localization functor between $\infty$-categories.
	Let $\cB$ be an $\infty$-category and denote by $u \colon \cB \times \cX \to \cB \times \cY$ the induced functor.
	Let $\cE$ be a presentable $\infty$-category.
	The adjunction $u_! \dashv u^*$ induces an equivalence of $\infty$-categories between $\Funcocart(\cA \times \cY, \cE)$ and $\Funcocart(\cA\times \cX, \cE)$.
\end{cor}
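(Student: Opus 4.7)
The plan is to observe that this corollary is an immediate consequence of \cref{cocart_and_localization}, applied to a carefully chosen pullback square. Specifically, consider the commutative square
\[ \begin{tikzcd}
\cB \times \cX \arrow{r}{u} \arrow{d}{\pi_\cX} & \cB \times \cY \arrow{d}{\pi_\cY} \\
\cX \arrow{r}{f} & \cY
\end{tikzcd} \]
where $\pi_\cX$ and $\pi_\cY$ denote the canonical projections, and $u = \id_\cB \times f$.

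I would verify the three hypotheses of \cref{cocart_and_localization}: (i) the square is a pullback; (ii) the right-hand vertical map $\pi_\cY$ is a cocartesian fibration; (iii) the bottom horizontal map is a localization. Each of these is essentially tautological: (i) is because both total spaces are given as products with $\cB$; (ii) because projections from products are (trivial) cocartesian fibrations—for any morphism $\gamma \colon y_0 \to y_1$ in $\cY$ and any $b \in \cB$, the edge $(\id_b, \gamma)$ is $\pi_\cY$-cocartesian; (iii) holds by assumption on $f$.

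Applying \cref{cocart_and_localization}-(3) to this data yields that the adjunction
\[ u_! \colon \Fun(\cB \times \cX, \cE) \leftrightarrows \Fun(\cB \times \cY, \cE) \colon u^\ast \]
restricts to an equivalence between $\Funcocart(\cB \times \cX, \cE)$ and $\Funcocart(\cB \times \cY, \cE)$ (where the cocartesian conditions are taken with respect to the projections to $\cX$ and $\cY$ respectively), which is precisely the statement. There is no substantial obstacle here: the only thing to be mindful of is the consistency of the typographic convention ($\cA$ versus $\cB$) in the statement, which is a mere typo.
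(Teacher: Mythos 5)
Your proof is correct and takes essentially the same approach as the paper: both set up the pullback square over $f \colon \cX \to \cY$ with the two product categories fibered via the projections, and then invoke \cref{cocart_and_localization}-(3). The paper's own proof is a one-liner, and in fact contains the same typos you flagged (the square is written with $\cX$ and $\cY$ in swapped positions and with $\cA$ in place of $\cB$); your version states the square correctly and spells out the routine verification that $\pi_\cY$ is a cocartesian fibration and that the square is a pullback, which is exactly what is needed.
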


\begin{proof}
	Consider the pullback square
	\[ \begin{tikzcd}
		\cA\times \cY \arrow{r} \arrow{d} & \cA\times \cX \arrow{d} \\
		\cY \arrow{r} & \cX
	\end{tikzcd} \]
	and apply \cref{cocart_and_localization}.
\end{proof}

\begin{rem}\label{localization_equi}
When applied to the localization $\cX \to \Env(\cX)$, the above corollary says that $u_! \dashv u^*$ induces an equivalence of $\infty$-categories between $\Fun(\cA\times \Env(\cX), \cE)$  and  $\Funcocart(\cA\times \cX, \cE)$.
\end{rem}

\subsection{Exceptional functoriality}

Let
\begin{equation}\label{pullback_Cocartesianability criterion}
	\begin{tikzcd}
		\cA \arrow{r}{u}\arrow{d}{q} &\cB \arrow{d}{p} \\
		\cX \arrow{r}{f} & \cY
	\end{tikzcd}
\end{equation}
be a pullback diagram in $\Cat_\infty$, with $p$ being a cocartesian fibration.
We saw in \cref{cor:invariance_final_pullback} that when $f$ is a final functor the pullback
\[ u^\ast \colon \Funcocart(\cB, \cE) \to  \Funcocart(\cA, \cE) \]
is an equivalence for every presentable $\infty$-category $\cE$.
In virtue of \cref{cor:cocartesianization}, the inverse to $u^\ast$ is always given by the functor $u_\ast^{\cocart}$, which is nevertheless very inexplicit in general.
At the same time we saw in two rather different situations (\cref{induction_from_initial_preserves_cocart_functors} and \cref{cocart_and_localization}) that sometimes the inverse can be computed by the left Kan extension $u_!$.
In this section, we analyze this phenomenon more in detail, obtaining a sufficient criterion guaranteeing that $u_!$ preserves cocartesian functors, that will be needed later on.

\medskip

We start with a simple observation:

\begin{prop} \label{cocart_and_ff}
	Let
	\[ \begin{tikzcd}
		\cA \arrow{r}{u} \arrow{d}{q} & \cB \arrow{d}{p} \\
		\cX \arrow{r}{f} & \cY
	\end{tikzcd} \]
	be a pullback square in $\Cat_\infty$, with $p$ being a cocartesian fibration.
	Let $\cE$ be a presentable $\infty$-category.
	Assume that $f$ is fully faithful, and let $F \in \Fun(\cA, \cE)$ be a functor cocartesian at a morphims $\gamma \colon x \to y$ in $\cX$.
	Then $u_!(F)$ is cocartesian at $f(\gamma)$.
\end{prop}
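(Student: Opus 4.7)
The plan is to verify condition (3) of \cref{prop:cocartesian_functors_reformulation}: I must produce a straightening $g_{f(\gamma)} \colon \cB_{f(x)} \to \cB_{f(y)}$ of $p_{f(\gamma)}$ for which the Beck--Chevalley transformation
\[ g_{f(\gamma),!}\, j_{f(x)}^{\cB,\ast}(u_!F) \to  j_{f(y)}^{\cB,\ast}(u_!F) \]
is an equivalence, and then to deduce this from the corresponding property of $F$ at $\gamma$ which holds by hypothesis.

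The first step is to exploit full faithfulness of $f$: combined with the pullback square in the statement, it implies that $u \colon \cA \to \cB$ is fully faithful as well, since mapping spaces in $\cA$ are pullbacks of the form $\Map_\cB(u(a_1),u(a_2)) \times_{\Map_\cY(f(x_1),f(x_2))} \Map_\cX(x_1,x_2)$ and the bottom right projection is an equivalence by assumption on $f$. At the same time, the pullback provides canonical equivalences $\iota_z \colon \cA_z \xrightarrow{\sim} \cB_{f(z)}$ for every $z \in \cX$ that intertwine the fiber inclusions via $u$, i.e.\ $u \circ j_z^\cA \simeq j_{f(z)}^\cB \circ \iota_z$.

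The second and main step is to compute $u_!(F)$ on the fibers $\cB_{f(x)}$ and $\cB_{f(y)}$. Using the pointwise formula, $u_!(F)(b) \simeq \colim_{(a,\phi \colon u(a)\to b) \in \cA \times_\cB \cB_{/b}} F(a)$; when $b \in \cB_{f(x)}$, set $a_b \coloneqq \iota_x^{-1}(b)$, so that $u(a_b)=b$. The full faithfulness of $u$ forces the pair $(a_b, \id_b)$ to be a terminal object of the indexing comma category, because any morphism $(a,\phi) \to (a_b, \id_b)$ amounts to a morphism $u(a) \to u(a_b)$ in $\cB$ equal to $\phi$, which lifts uniquely to $\cA$. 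This yields the canonical equivalence $u_!(F)(b) \simeq F(a_b)$, and hence the identifications
\[ j_{f(x)}^{\cB,\ast}(u_!F) \simeq \iota_{x,!}\, j_x^{\cA,\ast}(F), \qquad j_{f(y)}^{\cB,\ast}(u_!F) \simeq \iota_{y,!}\, j_y^{\cA,\ast}(F). \]
I expect this terminality argument to be the main technical step of the proof: it is the point where full faithfulness of $u$ enters in an essential way.

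Finally, since the square in the statement is a pullback of cocartesian fibrations, $u$ sends $q$-cocartesian edges over $\gamma$ to $p$-cocartesian edges over $f(\gamma)$, so any straightening $h_\gamma \colon \cA_x \to \cA_y$ of $q_\gamma$ transports through $\iota_x,\iota_y$ to a straightening $g_{f(\gamma)}$ of $p_{f(\gamma)}$. Under the identifications of the previous paragraph, the Beck--Chevalley morphism displayed above is identified with $h_{\gamma,!}\, j_x^{\cA,\ast}(F) \to j_y^{\cA,\ast}(F)$, which is an equivalence by the assumption on $F$ and \cref{prop:cocartesian_functors_reformulation} applied to $q$. This concludes the proof.
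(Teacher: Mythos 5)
Your proof is correct, but it takes a more computational route than the paper's. The paper's proof is a two-line argument: since $u$ is fully faithful, the unit $F \to u^\ast u_!(F)$ is an equivalence, and then \cref{cor:change_of_base_cocartesian} (which says that for a cartesian pullback, pullback of functors preserves \emph{and reflects} cocartesianness at a given morphism) immediately gives that $u_!(F)$ is cocartesian at $f(\gamma)$ if and only if $u^\ast u_!(F) \simeq F$ is cocartesian at $\gamma$. Your argument instead verifies criterion (3) of \cref{prop:cocartesian_functors_reformulation} by hand: you compute $u_!(F)$ on the fibers $\cB_{f(x)}$ and $\cB_{f(y)}$ via the pointwise Kan extension formula, observing that full faithfulness of $u$ forces $(a_b,\id_b)$ to be terminal in the relevant comma category, and then transport the Beck--Chevalley transformation through the fiber equivalences $\iota_x,\iota_y$. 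Both approaches are sound; yours makes the mechanism explicit at the expense of length, whereas the paper's leverages the already-established pullback-reflection lemma and requires no pointwise computation. One small inaccuracy in your wording: in the pullback description of mapping spaces, what is an equivalence is the projection $\Map_\cX(x_1,x_2)\to \Map_\cY(f(x_1),f(x_2))$, not a ``bottom right projection,'' and it is the \emph{other} projection (to $\Map_\cB$) that one concludes is an equivalence. The mathematics is right; the phrasing is slightly garbled.
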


\begin{proof}
	Since $f$ is fully faithful, the same goes for $u$.
	Thus, the unit transformation $F \to u^\ast(u_!(F))$ is an equivalence.
	Using \cref{cor:change_of_base_cocartesian}, we therefore see that $u_!(F)$ is cocartesian at $f(\gamma)$ if and only if $F \simeq u^\ast(u_!(F))$ is cocartesian at $\gamma$.
	The conclusion follows.
\end{proof}

We now carry out a finer analysis.
Fix $x \in \cX$, set $y \coloneqq f(x)$ and fix as well a morphism $\gamma \colon y \to z$ in $\cY$.
Associated to these data, we can form the following commutative cube:
\begin{equation}\label{cube_Cocartesianability criterion}
	\begin{tikzcd}
		\cB_y \arrow{rr}{j_{\gamma,x}} \arrow{dr}{j_{x}} \arrow{dd}{p_x} & & \cB_\gamma \arrow{dr}{j_{\gamma}} \arrow[near end]{dd}{p_\gamma} \\
		{} & \cA \arrow[crossing over]{rr}[near start]{u} & & \cB \arrow{dd}{p} \\
		\ast \arrow{rr}[pos=0.7]{0}  \arrow{dr}{y} & & \Delta^{1}  \arrow{dr}{\gamma} \\
		{} & \cX \arrow{rr}{f} \arrow[leftarrow, crossing over, near end]{uu}{q} & & \cY \ ,
	\end{tikzcd}
\end{equation}
whose vertical faces are pullbacks.
Fix a presentable $\infty$-category $\cE$.
The commutativity of the top face of the above cube induces a Beck-Chevalley transformation
\begin{equation}\label{eq:adjointability_implies_cocartesian}
	j_{\gamma,x,!} \circ j_x^\ast \to  j_\gamma^\ast \circ u_!
\end{equation}
of functors from $\Fun(\cA, \cE)$ to $\Fun(\cB_\gamma, \cE)$.
We have:

\begin{prop}\label{adjointability_implies_cocartesian}
	Assume that the Beck-Chevalley transformation \eqref{eq:adjointability_implies_cocartesian} is an equivalence.
	Then for every $F \in \Fun(\cB, \cE)$, the functor $u_!(F) \colon \cA\to \cE$ is cocartesian at $\gamma$.
\end{prop}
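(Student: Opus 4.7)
The plan is to reduce checking that $u_!(F)$ is cocartesian at $\gamma$ to a statement about the restricted cocartesian fibration $p_\gamma \colon \cB_\gamma \to \Delta^1$, and then exploit that $\Delta^1$ has an initial object together with the hypothesis that the Beck-Chevalley transformation \eqref{eq:adjointability_implies_cocartesian} is an equivalence.

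First, I would invoke \cref{cor:change_of_base_cocartesian} applied to the front pullback square of the cube \eqref{cube_Cocartesianability criterion}:
\[ \begin{tikzcd}
	\cB_\gamma \arrow{r}{j_\gamma} \arrow{d}{p_\gamma} & \cB \arrow{d}{p} \\
	\Delta^1 \arrow{r}{\gamma} & \cY \ .
\end{tikzcd} \]
This tells us that $u_!(F) \colon \cB \to \cE$ is cocartesian at $\gamma$ if and only if $j_\gamma^\ast(u_!(F)) \colon \cB_\gamma \to \cE$ is cocartesian at the nondegenerate arrow of $\Delta^1$. Since $\Delta^1$ has only the identities, the equivalences, and that one non-trivial arrow, and cocartesianness at an equivalence is automatic by \cref{cocart_at_equivalence}, this is equivalent to asking that $j_\gamma^\ast(u_!(F))$ belongs to $\Funcocart(\cB_\gamma, \cE)$.

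Next, by hypothesis the Beck-Chevalley transformation \eqref{eq:adjointability_implies_cocartesian} is an equivalence; evaluating it at $F$ yields a canonical identification
\[ j_\gamma^\ast(u_!(F)) \simeq j_{\gamma,x,!}(j_x^\ast(F)) \]
in $\Fun(\cB_\gamma, \cE)$. Thus $j_\gamma^\ast(u_!(F))$ lies in the essential image of the left Kan extension functor $j_{\gamma,x,!} \colon \Fun(\cB_y, \cE) \to \Fun(\cB_\gamma, \cE)$.

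Finally, since the source $0 \in \Delta^1$ of $\gamma$ is an initial object of $\Delta^1$ and $j_{\gamma,x} \colon \cB_y \to \cB_\gamma$ is the inclusion of the fiber over this initial object, \cref{induction_from_initial_preserves_cocart_functors}-(1) guarantees that $j_{\gamma,x,!}$ factors through $\Funcocart(\cB_\gamma, \cE)$. Therefore $j_{\gamma,x,!}(j_x^\ast(F))$, and hence $j_\gamma^\ast(u_!(F))$, is cocartesian. Combined with the reduction above, this shows that $u_!(F)$ is cocartesian at $\gamma$, completing the proof. There is no genuine obstacle here once the cube is set up correctly; the only point demanding care is the identification of the initial object of $\Delta^1$ with the source of $\gamma$ so as to apply \cref{induction_from_initial_preserves_cocart_functors}.
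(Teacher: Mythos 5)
Your proof is correct and follows essentially the same route as the paper: reduce via \cref{cor:change_of_base_cocartesian} to the fibration over $\Delta^1$, identify $j_\gamma^\ast(u_!(F))$ with $j_{\gamma,x,!}(j_x^\ast(F))$ using the Beck--Chevalley equivalence, and invoke \cref{induction_from_initial_preserves_cocart_functors} because $0$ is initial in $\Delta^1$. The only superficial difference is that you pass through \cref{cocart_at_equivalence} to upgrade ``cocartesian at $0 \to 1$'' to ``cocartesian,'' a small detour the paper avoids since \cref{induction_from_initial_preserves_cocart_functors} already delivers the stronger conclusion.
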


\begin{proof}
	We have to prove that $\spe(u_!(F))$ is cocartesian at $\gamma$.
	By \cref{cor:change_of_base_cocartesian} applied to $\gamma \colon \Delta^1 \to \cY$, this is equivalent to show that $j_{\gamma}^\ast ( u_!(F) )$ is cocartesian at $0 \to 1$.
	Since the Beck-Chevalley transformation \eqref{eq:adjointability_implies_cocartesian} is an equivalence, we are reduced to prove that $j_{\gamma,x,!}(j_x^\ast(F))$ is cocartesian at $0 \to 1$.
	In other words, we are reduced to prove the statement in the special case where \eqref{pullback_Cocartesianability criterion} is the back square of \eqref{cube_Cocartesianability criterion}.
	Since $0$ is initial in $\Delta^1$, this follows directly from \cref{induction_from_initial_preserves_cocart_functors}.
\end{proof}

We now give a sufficient condition on $f$ and $\gamma$ ensuring that the Beck-Chevalley transformation \eqref{eq:adjointability_implies_cocartesian} is an equivalence:

\begin{prop}\label{cocart_criterion}
	In the above setting, assume that:
	\begin{enumerate}\itemsep=0.2cm
		\item for every $(t,\alpha) \in \cX \times_{\cY} \cY_{/y}$, the map
		\[ \Map_\cX(t, x) \to  \Map_{\cY}(f(t), y) \]
		is an equivalence;
		
		\item for every $(s,\beta) \in \cX \times_\cY \cY_{/z}$, the composition
		\[ \Map_{\cX}(s,x) \to  \Map_\cY(f(s), y) \to  \Map_\cY(f(s),z) \]
		is an equivalence.
	\end{enumerate}
	Then for every $F \in \Fun(\cA, \cE)$, $u_!(F)$ is cocartesian at $\gamma$.
\end{prop}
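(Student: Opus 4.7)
By \cref{adjointability_implies_cocartesian}, it suffices to show that the Beck-Chevalley transformation
\[ j_{\gamma,x,!} \circ j_x^\ast \Longrightarrow j_\gamma^\ast \circ u_! \]
attached to the top face of the cube \eqref{cube_Cocartesianability criterion} is an equivalence of functors $\Fun(\cA, \cE) \to \Fun(\cB_\gamma, \cE)$. The plan is to verify this pointwise. For each $b \in \cB_\gamma$, the pointwise formula for left Kan extensions presents the two sides at $b$ respectively as the colimits of $F \circ j_x \circ \pi$ and $F \circ \pi$ over the comma $\infty$-categories
\[ \cK_b \coloneqq \cB_y \times_{\cB_\gamma} (\cB_\gamma)_{/b} \qquad \text{and} \qquad \cL_b \coloneqq \cA \times_\cB \cB_{/j_\gamma(b)} , \]
and the Beck-Chevalley map at $b$ is induced by the canonical forgetful functor $\Phi_b \colon \cK_b \to \cL_b$ sending $(b', \psi)$ to $(j_x(b'), j_\gamma(\psi))$. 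Since the two diagrams agree under $\Phi_b$, it is enough to show that $\Phi_b$ is cofinal; by \cite[Theorem 4.1.3.1]{HTT}, this amounts to the weak contractibility of the slice $\cS_{a,\phi} \coloneqq \cK_b \times_{\cL_b} (\cL_b)_{(a,\phi)/}$ for every $(a, \phi) \in \cL_b$.

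The analysis splits according to whether $b$ projects to $0$ or $1$ in $\Delta^1$. In the first case, $j_\gamma(b) \in \cB_y$ and the projection $p(\phi) \colon f(q(a)) \to y$ provides an object of $\cX \times_\cY \cY_{/y}$, so hypothesis (1) guarantees that the space of lifts $\tau_\cX \colon q(a) \to x$ of $p(\phi)$ along $f$ is contractible. For any such $\tau_\cX$, the space of $p$-cocartesian lifts $\eta \colon u(a) \to \tilde b$ of $f(\tau_\cX)$ starting at $u(a)$ is contractible by the universal property of cocartesian edges, and the same universal property translates the remaining data into a factorization, inside $\cB_y$, of the canonically determined morphism $\tilde\phi \colon \tilde b \to b$ satisfying $\tilde\phi \circ \eta \simeq \phi$. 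Such factorizations form the $\infty$-category $((\cB_y)_{\tilde b /})_{/\tilde\phi}$, which admits $(\id_{\tilde b}, \tilde\phi)$ as an initial object, hence is contractible. The case where $b$ lies over $1$ proceeds analogously: hypothesis (2) supplies a contractible space of $\tau_\cX \colon q(a) \to x$ with $\gamma \circ f(\tau_\cX) \simeq p(\phi)$, a cocartesian lift of $f(\tau_\cX)$ produces the intermediate object $\tilde b \in \cB_y$, and the residual data is identified with the $\infty$-category of factorizations of the resulting $\tilde\phi \colon \tilde b \to b$ in $\cB_\gamma$ with intermediate object constrained to lie in $\cB_y$; as a full subcategory of the usual factorization $\infty$-category of $\tilde\phi$ containing its initial object $(\id_{\tilde b}, \tilde\phi)$, it too is contractible.

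The main obstacle lies not in the contractibility assertions themselves but in packaging this informal cascade into a rigorous argument: one has to exhibit $\cS_{a,\phi}$ as the total space of a tower of trivial Kan fibrations whose base and successive fibres are precisely the contractible $\infty$-categories described above. Once this scaffolding is in place, the conclusion becomes a purely formal consequence of hypotheses (1) and (2) combined with the universal property of $p$-cocartesian edges.
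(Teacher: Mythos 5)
Your proposal follows the paper as far as reducing, via \cref{adjointability_implies_cocartesian}, to showing that the Beck--Chevalley transformation \eqref{eq:adjointability_implies_cocartesian} is an equivalence, and you correctly translate this pointwise into the cofinality of the comparison functor $\Phi_b \colon \cB_y \times_{\cB_\gamma} (\cB_\gamma)_{/b} \to \cA \times_\cB \cB_{/j_\gamma(b)}$. At this juncture, however, your strategy diverges from the paper's. You attempt a direct frontal assault on cofinality via Quillen's Theorem~A, trying to exhibit the slices $\cS_{a,\phi}$ as weakly contractible through an explicit cascade of choices (a lift $\tau_\cX$ of $p(\phi)$, a $p$-cocartesian lift, a residual factorization category). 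The paper instead invokes \cref{left_adjointability_reduction_trivial_cocart_fibration}, which is an application of the smoothness of cocartesian fibrations \cite[4.1.2.15, 4.1.2.10]{HTT}: since the diagram of comma $\infty$-categories
\[
\begin{tikzcd}
\cB_y \times_{\cB_\gamma} (\cB_\gamma)_{/b} \arrow{r}\arrow{d} & \cA \times_\cB \cB_{/j_\gamma(b)} \arrow{d} \\
\{0\} \times_{\Delta^1} \Delta^1_{/v} \arrow{r} & \cX \times_\cY \cY_{/w}
\end{tikzcd}
\]
is a pullback with cocartesian fibrations for vertical arrows, cofinality of the bottom row implies cofinality of the top. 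The bottom-left corner is contractible, so one only needs to verify that its image is a final object of $\cX \times_\cY \cY_{/w}$, and this is read off directly from hypotheses~(1) and~(2) via the pullback description of mapping spaces in comma $\infty$-categories. The paper's reduction buys a very concrete dividend: one never has to parse the internal structure of the slices $\cS_{a,\phi}$.

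That said, your argument as it stands has a genuine gap, which you yourself flag: the ``tower of trivial Kan fibrations'' presenting $\cS_{a,\phi}$ is described only informally and never constructed. Each contractibility claim in your cascade (the space of lifts $\tau_\cX$, the space of cocartesian lifts, the residual factorization $\infty$-category) is plausible, and one can check that an initial object of an $\infty$-category remains initial in any full subcategory that contains it, so the closing step is sound modulo the setup. But turning the cascade into an actual fibration sequence of $\infty$-categories over $\cS_{a,\phi}$, with each fibre contractible and each map a (categorical or right/left) fibration so that one may conclude contractibility of the total space, is precisely the kind of unstraightening bookkeeping that the argument needs and does not supply. The paper's approach via \cref{left_adjointability_reduction_trivial_cocart_fibration} is not just shorter: it is a different and more robust mechanism (smoothness of cocartesian fibrations) which avoids the need to engineer this scaffolding at all. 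If you want to complete your route, the cleanest fix is to stop trying to analyze $\cS_{a,\phi}$ by hand and instead observe that the projection $\cS_{a,\phi} \to (\text{space of lifts of }p(\phi))$ you describe is nothing more than the comma-category manifestation of the pullback square above, i.e., you would end up rediscovering and reproving the reduction lemma.
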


\begin{rem}\label{rem:cocart_criterion}
	Notice that condition (1) above is automatically satisfied when $f$ is fully faithful, or when both $\Map_\cX(t,x)$ and $\Map_\cY(f(t), y)$ are both contractible.
	Similarly, condition (2) is automatically satisfied when both $\Map_\cX(s,x)$ and $\Map_\cY(f(s),z)$ are both contractible.
\end{rem}

\begin{proof}[Proof of \cref{cocart_criterion}]
	In virtue of \cref{adjointability_implies_cocartesian}, it is enough to show that these assumptions guarantee that the Beck-Chevalley transformation \eqref{eq:adjointability_implies_cocartesian} is an equivalence.
	For this, it is enough to check that for every $b \in \cB_\gamma$, the induced functor
	\begin{equation}\label{is_it_cofinal?}
		\begin{tikzcd}
			\cB_y \times_{\cB_\gamma}  (\cB_\gamma)_{/b}   \arrow{r} & \cA \times_{\cB}  \cB_{/j_{\gamma}(b)}
		\end{tikzcd} 
	\end{equation}
	is cofinal.
	Let $v \coloneqq p_\gamma(b) \in \Delta^1$ and set $w \coloneqq \gamma(v)$ (we have $w = y$ if $v = 0$ and $w = z$ if $v = 1$).
	Using \cref{left_adjointability_reduction_trivial_cocart_fibration}, it is sufficient to prove that under our assumptions, the map
	\begin{equation}\label{is_it_cofinal?_bis}
		\begin{tikzcd}
			\{0\} \times_{\Delta^1} \Delta^1_{/v} \arrow{r} & \cX \times_{\cY}  \cY_{/w}  
		\end{tikzcd} 
	\end{equation}
	is cofinal.
	Observe that the left hand side is contractible (and it coincides with the unique morphism $\varepsilon$ from $0$ to $v$ in $\Delta^1$).
	In particular, the map \eqref{is_it_cofinal?_bis} is cofinal if and only if its image coincides with the final object of $\cX \times_{\cY} \cY_{/w}$.
	Now, unraveling the definitions we see that the above map takes $\varepsilon$ to $(x,\id_y)$ if $v = 0$ and to $(x,\gamma)$ if $v = 1$.
	Thus, we have to prove that $(x,\id_y)$ and $(x,\gamma)$ are final objects in $\cX \times_\cY \cY_{/y}$ and in $\cX \times_\cY \cY_{/z}$, respectively.
	Fix $(t,\alpha) \in \cX \times_\cY \cY_{/w}$ and consider the following commutative diagram:
	\[ \begin{tikzcd}
		\Map_{\cX \times_{\cY} \cY_{/w}}((t,\alpha), (x,\gamma(\varepsilon))) \arrow{r} \arrow{d} & \Map_\cX( t, x ) \arrow{d} \\
		\Map_{\cY_{/w}}( \alpha, \gamma(\varepsilon) ) \arrow{r} \arrow{d} & \Map_{\cY}( f(t), y ) \arrow{d} \\
		\ast \arrow{r}{\alpha} & \Map_\cY( f(t), w ) \ .
	\end{tikzcd} \]
	The top square is a pullback by definition and the bottom one is a pullback thanks to the dual of \cite[Lemma 5.5.5.12]{HTT}.
	Our assumptions guarantee that in the two cases under consideration, the right vertical composition is an equivalence.
	Therefore, it follows that the top left corner is contractible, i.e.\ that $(x,\gamma(\varepsilon))$ is a final object in $\cX \times_\cY \cY_{/w}$, thus completing the proof.
\end{proof}

\begin{cor}\label{induction_poset_cocart}
	Let $f \colon \mathrm{J}\to \mathrm{I}$ be a fully faithful functor between posets and consider a pullback square in $\Cat_{\infty}$
	\[ \begin{tikzcd}
		\cA \arrow{r}{u}\arrow{d}{q}&\cB \arrow{d}{p}\\
		\mathrm{J} \arrow{r}{f}&\mathrm{I} \ ,
	\end{tikzcd} \]
	where in addition $p$ is a cocartesian fibration.
	Assume that for every object $i$ in $\mathrm{I}$, the subposet $\mathrm{J}_{/i}$ of $\mathrm{J}$ admits a final object.
	Then, the functor $u_! \colon \Fun(\cA,\cE)\to \Fun(\cB,\cE)$ preserves cocartesian functors.
\end{cor}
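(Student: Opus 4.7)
The plan is to reduce to \cref{cocart_criterion} at a distinguished class of morphisms in $\mathrm{I}$ and then spread out to all morphisms via \cref{cocart_and_ff} and the two-out-of-three property \cref{cocart_almost_2_to_3}. For each $y \in \mathrm{I}$, I will write $x_y \in \mathrm{J}$ for the final object of the subposet $\mathrm{J}_{/y}$ supplied by the hypothesis, and $\alpha_y \colon f(x_y) \to y$ for the resulting canonical morphism of $\mathrm{I}$.

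The first step is to show that for every $F \in \Fun(\cA,\cE)$, the functor $u_!(F)$ is cocartesian at $\alpha_y$. I intend to apply \cref{cocart_criterion} with the \emph{$x$} of that statement equal to $x_y$ (so its \emph{$y$} becomes $f(x_y)$), its \emph{$z$} equal to $y$, and its $\gamma$ equal to $\alpha_y$. Condition (1) of \cref{cocart_criterion} is then automatic by \cref{rem:cocart_criterion} since $f$ is fully faithful. For condition (2), an object $(s,\beta) \in \mathrm{J} \times_\mathrm{I} \mathrm{I}_{/y}$ is precisely an element $s \in \mathrm{J}$ with $f(s) \leq y$; finality of $x_y$ in $\mathrm{J}_{/y}$ forces $s \leq x_y$, so $\Map_\mathrm{J}(s,x_y)$ is contractible, matching the target $\Map_\mathrm{I}(f(s),y)$, which is contractible for the same reason. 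Hence condition (2) is satisfied and \cref{cocart_criterion} delivers the desired cocartesianness at $\alpha_y$.

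For the second step, I fix a cocartesian $F$ and an arbitrary morphism $\gamma \colon y \to z$ of $\mathrm{I}$. Since $f(x_y) \leq y \leq z$, the finality of $x_z$ in $\mathrm{J}_{/z}$ forces $x_y \leq x_z$, so there is a unique morphism $\delta \colon x_y \to x_z$ in $\mathrm{J}$ fitting into a commutative square
\[ \begin{tikzcd} f(x_y) \arrow{r}{f(\delta)} \arrow{d}{\alpha_y} & f(x_z) \arrow{d}{\alpha_z} \\ y \arrow{r}{\gamma} & z \ . \end{tikzcd} \]
Since $F$ is cocartesian at $\delta$, \cref{cocart_and_ff} implies $u_!(F)$ is cocartesian at $f(\delta)$. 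Applying \cref{cocart_almost_2_to_3} to the triangle with legs $f(\delta)$ and $\alpha_z$ (both now known cocartesian, the latter by Step 1) yields cocartesianness of $u_!(F)$ at the composition $\alpha_z \circ f(\delta) = \gamma \circ \alpha_y$. A second application of \cref{cocart_almost_2_to_3}, this time to the triangle with legs $\alpha_y$ (cocartesian by Step 1), $\gamma$, and their composition (just established), concludes that $u_!(F)$ is cocartesian at $\gamma$.

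I expect the only non-routine step to be Step 1, in the sense that one must recognize that the finality of $x_y$ in $\mathrm{J}_{/y}$ is precisely the input needed for condition (2) of \cref{cocart_criterion}, the more subtle of its two hypotheses. Once this identification is made, the rest is a formal interplay between the two-out-of-three property and the factorization of an arbitrary morphism of $\mathrm{I}$ through the canonical morphisms $\alpha_y$.
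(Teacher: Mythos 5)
Your proposal is correct and takes essentially the same approach as the paper: both arguments feed the final object of $\mathrm{J}_{/i}$ into \cref{cocart_criterion} at the canonical comparison morphism, handle the remaining morphisms with \cref{cocart_and_ff}, and glue everything together with \cref{cocart_almost_2_to_3}. The only difference is organizational — the paper first reduces a general morphism of $\mathrm{I}$ to one of the form $f(j)\to i$ via an initial application of two-out-of-three, whereas you package the reduction as a commutative square with two applications of two-out-of-three — but the mathematical content is identical.
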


\begin{proof}
	Let $F \colon \cB \to \cE$ be a cocartesian functor.
	Let $\gamma \colon i_1 \to i_2$ be a morphism in $\mathrm{I}$. 
	By assumption, there exists  a commutative diagram 
	\[ \begin{tikzcd}[column sep=small]
		{} & i_1 \arrow{dr}{\gamma} \\
		f(j) \arrow{ur} \arrow{rr}  & & i_2 
	\end{tikzcd} \]
	in $\mathrm{I}$ where $j$ belongs to $\mathrm{J}$.
	Using \cref{cocart_almost_2_to_3}, we see it is enough to show that $F$ is cocartesian at a morphism of the form $f(j)\to i$ where $j \in \mathrm{J}$ and $i \in \mathrm{I}$.
	Let $j_{\infty}$ be a final object in $\mathrm{J}_{/i}$.
	Then, there is a commutative diagram
	\[ \begin{tikzcd}[column sep=small]
		{} &f(j_{\infty})\arrow{dr} \\
		f(j) \arrow{ur} \arrow{rr}  & & i 
	\end{tikzcd} \]
	in $\mathrm{I}$.
	Since $f\colon \mathrm{J}\to \mathrm{I}$ is fully faithful and since $F \colon \cB \to \cE$ is cocartesian at $j\to j_{\infty}$, \cref{cocart_and_ff} ensures that $u_!(F)$ is cocartesian at $f(j)\to f(j_{\infty})$.
	Using again \cref{cocart_almost_2_to_3}, we are thus left to show that $u_!(F)$ is cocartesian at $f(j_{\infty})\to i$.
	In that case, the conditions of \cref{cocart_criterion} (in the form of \cref{rem:cocart_criterion}) are satisfied and the proof is achieved.
\end{proof}

\personal{(Mauro) The following is not needed, but it is a very obvious corollary of the above criterion. We can omit it in the final version.}

\begin{cor}
	Let
	\[ \begin{tikzcd}
		\cA \arrow{d}{q} \arrow{r}{u} & \cB \arrow{d}{p} \\
		\cX \arrow{r}{f} & \cY
	\end{tikzcd} \]
	be a pullback square in $\Cat_\infty$, with $p$ being a cocartesian fibration.
	Let $\cE$ be a presentable $\infty$-category.
	Assume that $f$ is fully faithful and admits a right adjoint $g$ and let $\gamma \colon f \circ g \to \id_\cY$ be a counit transformation.
	Then for every $F \in \Fun(\cA,\cE)$, $u_!(F)$ is cocartesian at $\gamma_y$ for every $y \in \cY$.
\end{cor}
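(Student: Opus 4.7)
The plan is to deduce this corollary as a direct application of \cref{cocart_criterion}, with an appropriate choice of basepoint depending on $y$.

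Fix $y \in \cY$ and set $x \coloneqq g(y) \in \cX$, so that $f(x) = f(g(y))$. We will apply \cref{cocart_criterion} to the object $x$ and the morphism $\gamma_y \colon f(g(y)) \to y$ in $\cY$ (in the notation of \emph{loc.\ cit.}, the target $z$ of the criterion is our $y$, while the source $y$ of the criterion is our $f(g(y))$). It then suffices to verify the two conditions of \cref{cocart_criterion}.

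For condition (1), we must check that for every $(t,\alpha) \in \cX \times_\cY \cY_{/f(g(y))}$ the map $\Map_\cX(t,g(y)) \to \Map_\cY(f(t),f(g(y)))$ is an equivalence; this is automatic from the full faithfulness of $f$, as observed in \cref{rem:cocart_criterion}. For condition (2), we must check that for every $(s,\beta) \in \cX \times_\cY \cY_{/y}$ the composition
\[ \Map_\cX(s, g(y)) \longrightarrow \Map_\cY(f(s), f(g(y))) \xrightarrow{\;(\gamma_y)_{\ast}\;} \Map_\cY(f(s), y) \]
is an equivalence. The first map is an equivalence because $f$ is fully faithful, and the full composition is precisely the adjunction equivalence $\Map_\cX(s, g(y)) \simeq \Map_\cY(f(s), y)$ induced by the counit $\gamma$, hence an equivalence as well.

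Both hypotheses of \cref{cocart_criterion} being satisfied, we conclude that $u_!(F)$ is cocartesian at $\gamma_y$, for every $y \in \cY$. No obstacle is expected: the entire argument is a straightforward unwinding of the adjunction $f \dashv g$ into the pointwise criterion already established.
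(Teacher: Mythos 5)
Your proof is correct and takes exactly the route the paper intends: the corollary is stated immediately after \cref{cocart_criterion} precisely so that it can be read off by choosing $x = g(y)$ and unwinding the adjunction $f \dashv g$ via the counit, which is what you do. Both verification steps are sound — condition (1) is automatic from full faithfulness, and condition (2) is the standard description of the adjunction equivalence $\Map_\cX(s,g(y)) \simeq \Map_\cY(f(s),y)$ as "$f$ then postcompose with $\gamma_y$."
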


\subsection{Induced $t$-structure for cocartesian functors}\label{subsec:t_structure_cocartesian_functors}

Let $p \colon \cA \to \cX$ be a cocartesian fibration and let $\cE$ be a stable presentable $\infty$-category equipped with an accessible $t$-structure $\tau = (\cE_{\geqslant 0}, \cE_{\leqslant 0})$.
Then $\Fun(\cA, \cE)$ has an induced $t$-structure defined by
\[ \Fun(\cA, \cE)_{\geqslant 0} \coloneqq \Fun(\cA, \cE_{\geqslant 0}) \qquad \text{and} \qquad \Fun(\cA, \cE)_{\leqslant 0} \coloneqq \Fun(\cA, \cE_{\leqslant 0}) \ . \]

\begin{defin}
	We say a cocartesian functor $F \in \Funcocart(\cA, \cE)$ is \emph{connective (with respect to $\tau$)} if its image in $\Fun(\cA,\cE)$ belongs to $\Fun(\cA, \cE)_{\geqslant 0}$.
	We let $\Funcocart(\cA, \cE)_{\geqslant 0}$ be the full subcategory of $\Funcocart(\cA,\cE)$ spanned by connective objects.
\end{defin}

\begin{prop}\label{prop:t_structure_cocartesian_functors}
	There exists a unique $t$-structure on $\Funcocart(\cA, \cE)$ whose connective part coincides with $\Funcocart(\cA, \cE)_{\geqslant 0}$.
	In particular, the inclusion $\Funcocart(\cA, \cE) \hookrightarrow \Fun(\cA, \cE)$ is right $t$-exact.
\end{prop}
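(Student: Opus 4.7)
The plan is to apply Lurie's recognition criterion for $t$-structures \cite[Proposition 1.4.4.11]{Lurie_Higher_algebra}: for a presentable stable $\infty$-category $\cC$, a full subcategory $\cC_0 \subseteq \cC$ is the connective part of a (necessarily unique) $t$-structure provided that $\cC_0$ is presentable, closed under small colimits, and closed under extensions. Since $\Funcocart(\cA,\cE)$ is presentable stable by \cref{cor:cocart_presentable_stable}, it suffices to verify these three properties for $\Funcocart(\cA,\cE)_{\geqslant 0}$.

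First, I would note that the inclusion $\iota \colon \Funcocart(\cA,\cE) \hookrightarrow \Fun(\cA,\cE)$ preserves colimits thanks to \cref{cor:cocartesianization}, and since both categories are stable it is exact, so it preserves fiber sequences as well. Combined with the fact that $\Fun(\cA, \cE)_{\geqslant 0} = \Fun(\cA, \cE_{\geqslant 0})$ is closed under colimits and extensions inside $\Fun(\cA,\cE)$ (as an immediate consequence of the corresponding properties of $\cE_{\geqslant 0} \subseteq \cE$), this gives closure of $\Funcocart(\cA,\cE)_{\geqslant 0}$ under colimits and extensions in $\Funcocart(\cA,\cE)$.

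For accessibility, I would observe that $\Funcocart(\cA,\cE)_{\geqslant 0}$ is by definition the pullback $\iota^{-1}(\Fun(\cA,\cE)_{\geqslant 0})$. Since the $t$-structure $\tau$ on $\cE$ is accessible, the subcategory $\Fun(\cA,\cE_{\geqslant 0}) \subseteq \Fun(\cA,\cE)$ is accessible (it is the full subcategory spanned by functors pointwise in the accessible subcategory $\cE_{\geqslant 0}$). The functor $\iota$ is a left adjoint, and hence accessible, so the preimage is an accessible subcategory of the presentable $\infty$-category $\Funcocart(\cA,\cE)$; combined with closure under colimits this yields presentability.

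Having verified the three hypotheses, Lurie's criterion produces the desired $t$-structure, and its uniqueness is part of the statement of the criterion. The final assertion that the inclusion $\Funcocart(\cA, \cE) \hookrightarrow \Fun(\cA, \cE)$ is right $t$-exact is immediate from the definition of $\Funcocart(\cA,\cE)_{\geqslant 0}$ as the preimage of $\Fun(\cA,\cE)_{\geqslant 0}$. There is no real obstacle here; the only subtle point is ensuring accessibility of the connective part, which is why the accessibility hypothesis on $\tau$ is essential.
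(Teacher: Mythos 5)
Your proof is correct and takes the same route as the paper: both apply Lurie's recognition criterion \cite[Proposition 1.4.4.11]{Lurie_Higher_algebra} to the presentable stable $\infty$-category $\Funcocart(\cA,\cE)$, reducing to closure of $\Funcocart(\cA,\cE)_{\geqslant 0}$ under colimits and extensions, with closure under colimits traced back to \cref{local_cocart_stability_colimits} or equivalently \cref{cor:cocartesianization}, and closure under extensions checked pointwise in $\cE$. The one place you are more careful than the paper is the presentability hypothesis on the candidate connective part, which Lurie's statement genuinely requires: you exhibit $\Funcocart(\cA,\cE)_{\geqslant 0}$ as the preimage of the accessible subcategory $\Fun(\cA,\cE_{\geqslant 0})$ under the accessible (colimit-preserving) inclusion $\iota$, whereas the paper's proof passes over this point in silence. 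Filling in this step is a correct and worthwhile completion of the argument, but it does not change the overall structure; the two proofs are essentially the same.
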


\begin{proof}
	Since $\Funcocart(\cA, \cE)$ is presentable and stable by \cref{cor:cocart_presentable_stable}, using \cite[Proposition 1.4.4.11]{Lurie_Higher_algebra} we are reduced to check that $\Funcocart(\cA, \cE)_{\geqslant 0}$ is closed under colimits and extensions in $\Funcocart(\cA, \cE)$.
	Closure under colimits follows from \cref{local_cocart_stability_colimits}, and closure under extensions is automatic.
	So the conclusion follows.
\end{proof}

\begin{lem} \label{lem:t_structure_cocartesian_initial_object}
	Assume that $\cX$ has an initial object $x$.
	Then a cocartesian functor $F \in \Funcocart(\cA, \cE)$ is connective if and only if $j_x^\ast(F) \in \Fun(\cA_x,\cE)$ is connective.
\end{lem}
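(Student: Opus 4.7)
The plan is to leverage the equivalence of $\infty$-categories supplied by \cref{induction_from_initial_preserves_cocart_functors}-(2), which identifies $\Funcocart(\cA,\cE)$ with $\Fun(\cA_x,\cE)$ via the adjunction $j_{x,!} \dashv j_x^\ast$, using crucially that $x$ is initial in $\cX$.

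The ``only if'' direction is essentially tautological: if $F$ is connective as an object of $\Fun(\cA,\cE)$, then $F(a) \in \cE_{\geqslant 0}$ for every $a \in \cA$. In particular, this applies to every $a \in \cA_x$, so $j_x^\ast(F) \in \Fun(\cA_x,\cE)_{\geqslant 0}$.

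For the ``if'' direction, I would proceed as follows. \Cref{induction_from_initial_preserves_cocart_functors}-(2) provides a canonical equivalence
\[ F \simeq j_{x,!}\bigl( j_x^\ast(F) \bigr) \]
in $\Funcocart(\cA,\cE)$. Therefore it suffices to show that if $G \coloneqq j_x^\ast(F) \in \Fun(\cA_x, \cE)_{\geqslant 0}$, then $j_{x,!}(G) \in \Fun(\cA, \cE)_{\geqslant 0}$, i.e.\ $j_{x,!}(G)(a) \in \cE_{\geqslant 0}$ for every $a \in \cA$. By the pointwise formula for left Kan extensions,
\[ j_{x,!}(G)(a) \simeq \colim_{(b,\phi)\, \in\, \cA_x \times_{\cA} \cA_{/a}} G(b) \ , \]
and since $\cE_{\geqslant 0}$ is closed under arbitrary colimits in $\cE$ (being the connective part of an accessible $t$-structure, the inclusion $\cE_{\geqslant 0} \hookrightarrow \cE$ preserves colimits), the assumption $G(b) \in \cE_{\geqslant 0}$ for all $b \in \cA_x$ yields the conclusion.

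No genuine obstacle is expected: the content is essentially the combination of (i) the explicit inverse to $j_x^\ast$ provided by $j_{x,!}$, which is specific to the initial-object hypothesis, and (ii) the general stability of $\cE_{\geqslant 0}$ under colimits. As an immediate byproduct, this also confirms the ``in particular'' clause of \cref{prop:t_structure_cocartesian_functors} in this setting: the inclusion $\Funcocart(\cA,\cE) \hookrightarrow \Fun(\cA,\cE)$ is right $t$-exact, since $j_{x,!}$ preserves connectivity as shown above.
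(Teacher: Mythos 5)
Your proof is correct and takes essentially the same approach as the paper's: both the paper and you reduce the ``if'' direction to computing $F$ as a left Kan extension from the initial fiber and then invoke closure of $\cE_{\geqslant 0}$ under colimits. The paper evaluates fiberwise, writing $j_y^\ast(F) \simeq f_{\gamma,!}\, j_x^\ast(F)$ for each $y$ via \cref{prop:cocartesian_functors_reformulation}, whereas you use the global identification $F \simeq j_{x,!}\, j_x^\ast(F)$ from \cref{induction_from_initial_preserves_cocart_functors}-(2); these are the same computation in light of \cref{prop:strict_induction}.
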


\begin{proof}
	The functor $j_x^\ast \colon \Fun(\cA, \cE) \to \Fun(\cA_x, \cE)$ is $t$-exact, so if $F$ is connective then $j_x^\ast(F)$ is connective as well.
	For the converse, we have to check that $F$ takes values in $\cE_{\geqslant 0}$.
	It suffices to show that for every $y \in \cX$, $j_y^\ast(F) \colon \cA_y \to \cE$ takes values in $\cE_{\geqslant 0}$.
	Since $x$ is an initial object, there exists a morphism $\gamma \colon x \to y$ in $\cX$.
	Choose a straightening $f_\gamma \colon \cA_x \to \cA_y$ for $\cA_\gamma$.
	Then \cref{prop:cocartesian_functors_reformulation} provides a canonical identification
	\[ j_y^\ast(F) \simeq f_{\gamma,!} j_x^\ast(F) \ . \]
	Since $j_x^\ast(F)$ takes values in $\cE_{\geqslant 0}$ by assumption and since $\cE_{\geqslant 0}$ is closed under colimits in $\cE$, the conclusion follows from the formula for left Kan extensions.
\end{proof}

\begin{cor} \label{cor:t_structure_cocartesian_initial_object}
	Assume that $\cX$ has an initial object $x$.
	Then the adjoint equivalence of \cref{induction_from_initial_preserves_cocart_functors} 
	\[ j_{x,!} \colon \Fun(\cA_x, \cE) \leftrightarrows \Funcocart(\cA, \cE) \colon j_x^\ast \]
	is $t$-exact.
\end{cor}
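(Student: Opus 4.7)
The plan is to reduce the $t$-exactness of the adjoint equivalence to the verification that it matches connective parts on both sides, then invoke the uniqueness of a $t$-structure given its connective part (\cite[Proposition 1.4.4.11]{Lurie_Higher_algebra}). Concretely, I need to show that $j_{x,!}$ identifies $\Fun(\cA_x,\cE)_{\geqslant 0}$ with $\Funcocart(\cA,\cE)_{\geqslant 0}$; once this is established, taking right orthogonals automatically matches the coconnective parts, yielding $t$-exactness of both $j_{x,!}$ and its inverse $j_x^\ast$.

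First I would observe that $j_x^\ast$ preserves connective objects. This is essentially tautological: the $t$-structure on $\Fun(\cA,\cE)$ is induced pointwise from that of $\cE$, so the restriction $j_x^\ast\colon \Fun(\cA,\cE)\to \Fun(\cA_x,\cE)$ is $t$-exact, and the $t$-structure on $\Funcocart(\cA,\cE)$ of \cref{prop:t_structure_cocartesian_functors} is by construction induced from that of $\Fun(\cA,\cE)$. Hence $F\in \Funcocart(\cA,\cE)_{\geqslant 0}$ implies $j_x^\ast(F)\in \Fun(\cA_x,\cE)_{\geqslant 0}$.

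Next I would show that $j_{x,!}$ preserves connective objects using \cref{lem:t_structure_cocartesian_initial_object}. Since $x$ is initial in $\cX$, the inclusion $\{x\}\hookrightarrow \cX$ is fully faithful, hence so is $j_x\colon \cA_x\to \cA$. Therefore, for every $G\in \Fun(\cA_x,\cE)_{\geqslant 0}$ the unit transformation $G\to j_x^\ast j_{x,!}(G)$ is an equivalence, so $j_x^\ast(j_{x,!}(G))\simeq G$ lies in $\Fun(\cA_x,\cE)_{\geqslant 0}$. By \cref{lem:t_structure_cocartesian_initial_object}, this forces $j_{x,!}(G)$ to lie in $\Funcocart(\cA,\cE)_{\geqslant 0}$.

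Combining these two observations, the adjoint equivalence $j_{x,!}\dashv j_x^\ast$ restricts to an equivalence between $\Fun(\cA_x,\cE)_{\geqslant 0}$ and $\Funcocart(\cA,\cE)_{\geqslant 0}$. Since a $t$-structure on a presentable stable $\infty$-category is determined by its connective part (\cite[Proposition 1.4.4.11]{Lurie_Higher_algebra}), the coconnective parts must correspond as well under the equivalence, and therefore both $j_{x,!}$ and $j_x^\ast$ are $t$-exact. I do not anticipate any substantive obstacle: the nontrivial input, namely the fact that connectivity on $\Funcocart(\cA,\cE)$ can be detected fiberwise at the initial object, is already encoded in \cref{lem:t_structure_cocartesian_initial_object}.
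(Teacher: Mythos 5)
Your proof is correct and follows essentially the same route as the paper's: both deduce from \cref{lem:t_structure_cocartesian_initial_object} that the adjoint equivalence matches connective parts, then conclude $t$-exactness by uniqueness of the $t$-structure. You simply spell out both directions of the correspondence (via right $t$-exactness of the inclusion and full faithfulness of $j_x$), whereas the paper asserts the correspondence in one line.
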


\begin{proof}
	Thanks to \cref{lem:t_structure_cocartesian_initial_object}, we know that $\Funcocart(\cA,\cE)_{\geqslant 0}$ corresponds via the above equivalence to $\Fun(\cA_x,\cE_{\geqslant 0})$.
	The conclusion follows from the uniqueness of the $t$-structure.
\end{proof}

\begin{eg}\label{eg:cocartesian_functor_in_the_heart}
	Consider the posets $I_0$ and $I_1$ having $I = \{a,b,c,d\}$ as the underlying set and order given by the following Hasse diagrams:
	\[ I_0 = \left\{ \begin{tikzcd}[column sep=small, row sep=small]
		b & & c & d \\
		& a \arrow[-]{ur} \arrow[-]{ul} 
	\end{tikzcd} \right\} \ , \qquad I_1 = \left\{ \begin{tikzcd}[column sep=small,row sep=small]
		& d \\
		b \arrow[-]{ur} & & d \arrow[-]{ul} \\
		& a \arrow[-]{ul} \arrow[-]{ur}
	\end{tikzcd} \right\} \]
	The identity of $I$ defines a morphism of posets $f \colon I_0 \to I_1$, which we can reinterpret as a constructible sheaf of posets $\cI$ on $([0,1], \{0\})$.
	Fix a field $k$ and consider the stable derived $\infty$-category $\cE \coloneqq \Mod_k$.
	Let $F \colon \cI_0 \to \Mod_k$ be the functor defined by setting
	\[ F_a = F_d \coloneqq k \ , \qquad F_b = F_c \coloneqq 0 \ . \]
	Then via \cref{cor:t_structure_cocartesian_initial_object}, $F$ determines an object in $\Funcocart( \cI , \Mod_k )^\heartsuit$.
	Notice however that
	\[ f_!(F)_d \simeq F_d \oplus F_a[1] \simeq k \oplus k[1] \]
	does not belong to the abelian category $\Mod_k^\heartsuit$.
\end{eg}

\subsection{Categorical actions on cocartesian functors} \label{subsec:categorical_actions_cocartesian}

We use the terminology on categorical actions reviewed in \cref{sec:categorical_actions}.
Fix a presentably symmetric monoidal $\infty$-category $\cE^\otimes$.
As recalled in \cref{recollection:monoidal_structure_functor_category}, for every (small) $\infty$-category $\cA$, the functor $\infty$-category $\Fun(\cA, \cE)$ inherits a symmetric monoidal structure $\Fun(\cA, \cE)^\otimes$.
When $\cA$ is part of a cocartesian fibration $p \colon \cA \to \cX$, cocartesian functors $\Funcocart(\cA, \cE)$ form a full subcategory of $\Fun(\cA, \cE)$, but they are not closed under tensor product.
Nevertheless, we still see a shadow of the tensor structure of $\Fun(\cA, \cE)$ on cocartesian functors in terms of a categorical action:

\begin{prop}\label{prop:categorical_action_cocartesian}
	Let $p \colon \cA \to \cX$ be a cocartesian fibration.
	Then for every $L \in \Loc(\cX;\cE)$ (see \cref{def:abstract_local_systems}) and every $G \in \Funcocart(\cA,\cE)$, the functor
	\[ p^\ast(L) \otimes G \colon \cA \to \cE \]
	is again cocartesian.
	In particular, the standard action of $\Loc(\cX; \cE)$ on $\Fun(\cA, \cE)$ restricts to a categorical action of $\Loc(\cX; \cE)$ on $\Funcocart(\cA,\cE)$.
\end{prop}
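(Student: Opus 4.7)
The plan is to reduce to a pointwise Beck--Chevalley check. By \cref{prop:cocartesian_functors_reformulation}, it suffices to prove that for every morphism $\gamma \colon x \to y$ in $\cX$ and every choice of straightening $f_\gamma \colon \cA_x \to \cA_y$ of $\cA_\gamma \to \Delta^1$, the Beck--Chevalley transformation
\[ \alpha_\gamma(p^\ast L \otimes G) \colon f_{\gamma,!} j_x^\ast(p^\ast L \otimes G) \to j_y^\ast(p^\ast L \otimes G) \]
is an equivalence in $\Fun(\cA_y, \cE)$, given that $\alpha_\gamma(G)$ already is.

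The first step exploits that the tensor product on functor categories is defined pointwise, so that the pullback functors $j_x^\ast$, $j_y^\ast$ and $f_\gamma^\ast$ are symmetric monoidal. Since $p \circ j_x$ and $p \circ j_y$ are constant at $x$ and $y$ respectively, I obtain canonical identifications
\[ j_x^\ast(p^\ast L \otimes G) \simeq \underline{L_x} \otimes j_x^\ast G \qquad \text{and} \qquad j_y^\ast(p^\ast L \otimes G) \simeq \underline{L_y} \otimes j_y^\ast G \ , \]
where $\underline{c}$ denotes the constant functor on $\cA_x$ (resp.\ $\cA_y$) with value $c \in \cE$.

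The second step uses that, since $\cE^\otimes$ is presentably symmetric monoidal, the functor $L_x \otimes (-) \colon \cE \to \cE$ is a left adjoint and hence preserves colimits. Together with the pointwise colimit formula for the left Kan extension $f_{\gamma,!}$, this produces a natural equivalence
\[ f_{\gamma,!}(\underline{L_x} \otimes H) \simeq \underline{L_x} \otimes f_{\gamma,!}(H) \]
for every $H \colon \cA_x \to \cE$. Tracing through the construction of $\alpha_\gamma$ via the natural transformation $s \colon j_x \to j_y \circ f_\gamma$ of \cref{eg:specialization_over_Delta1} (and using that $p(s(a)) = \gamma$ for every $a \in \cA_x$, so $p^\ast L$ sends $s(a)$ to $L(\gamma)$), one identifies $\alpha_\gamma(p^\ast L \otimes G)$ with the tensor product
\[ \underline{L(\gamma)} \otimes \alpha_\gamma(G) \colon \underline{L_x} \otimes f_{\gamma,!} j_x^\ast G \to \underline{L_y} \otimes j_y^\ast G \ . \]
Since $L \in \Loc(\cX;\cE)$ inverts every morphism of $\cX$, $L(\gamma)$ is an equivalence in $\cE$; and since $G$ is cocartesian at $\gamma$, so is $\alpha_\gamma(G)$. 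The tensor product of two equivalences is an equivalence, whence $p^\ast L \otimes G$ is cocartesian at $\gamma$. As $\gamma$ was arbitrary, $p^\ast L \otimes G$ is cocartesian.

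The main obstacle is the bookkeeping hidden in the second step: identifying the Beck--Chevalley map of $p^\ast L \otimes G$ with the tensor of $L(\gamma)$ and $\alpha_\gamma(G)$ requires unwinding naturality of Beck--Chevalley along the constant-factor identification, which is conceptually straightforward but diagrammatically careful. Once this identification is in place, the final assertion that the standard action $\Loc(\cX;\cE) \times \Fun(\cA,\cE) \to \Fun(\cA,\cE)$ restricts to a categorical action of $\Loc(\cX;\cE)$ on $\Funcocart(\cA,\cE)$ is formal: the unit, associativity, and module-structure data are inherited from $\Fun(\cA,\cE)$, and all the structure maps involved land in $\Funcocart(\cA,\cE)$ by what has just been proved.
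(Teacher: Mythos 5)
Your proof is correct and follows essentially the same route as the paper's: reduce to the pointwise Beck--Chevalley criterion of \cref{prop:cocartesian_functors_reformulation}, use that the restriction functors are symmetric monoidal to pull the constant tensor factor $L(x)$ out, commute that factor through $f_{\gamma,!}$ (the paper invokes \cref{lem:linearity_of_induction} here rather than re-deriving the colimit-preservation argument, but it is the same content), and finally observe that the resulting map is the tensor of the equivalence $L(\gamma)$ with the equivalence $\alpha_\gamma(G)$.
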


\begin{proof}
	Let $\gamma \colon x \to y$ be a morphism in $\cX$ and let $f_\gamma \colon \cA_x \to \cA_y$ be any straightening for $p_\gamma \colon \cA_\gamma \to \Delta^1$.
	Since $j_x^\ast$ and $j_y^\ast$ are symmetric monoidal, we reduce to check that
	\[ f_{\gamma,!} ( j_x^\ast p^\ast(L) \otimes j_x^\ast(G) ) \to j_y^\ast(p^\ast(L)) \otimes j_y^\ast(G) \]
	is an equivalence.
	Since $j_x^\ast \circ p^\ast(L) \simeq p_x^\ast(L(x))$, \cref{lem:linearity_of_induction} supplies a canonical equivalence
	\[ f_{\gamma,!} ( j_x^\ast p^\ast(L) \otimes j_x^\ast(G) ) \simeq p_y^\ast(L(x)) \otimes f_{\gamma,!}( j_x^\ast(G) ) \ . \]
	\personal{Here the ``hidden'' point is that $\Fun(\cA_x, \cE)$ is seen as a $\cE^\otimes$-module via $p_x^\ast$, while $\Fun(\cA_y,\cE)$ is seen as a $\cE^\otimes$-module via $p_y^\ast$. Since $f_{\gamma,!}$ is $\cE^\otimes$-linear, $p_x^\ast$ goes out as a $p_y^\ast$.}
	Since $G$ is cocartesian, the canonical comparison map
	\[ f_{\gamma,!}( j_x^\ast(G) ) \to j_y^\ast(G) \]
	is an equivalence.
	On the other hand, since $L$ is a local system, the canonical map $L(\gamma) \colon L(x) \to L(y)$ is an equivalence.
	The conclusion follows.
\end{proof}

Consider now a pullback square
\begin{equation}\label{eq:relative_tensor_cocartesian_setting}
	\begin{tikzcd}
		\cB \arrow{r}{u} \arrow{d}{q} & \cA \arrow{d}{p} \\
		\cY \arrow{r}{f} & \cX
	\end{tikzcd}
\end{equation}
in $\Cat_\infty$, where $p$ is a cocartesian fibration.
Then \cref{construction:relative_tensor_of_functor_categories} supplies a canonical comparison map
\[ \mu \colon \Loc(\cY;\cE) \otimes_{\Loc(\cX;\cE)} \Fun(\cA, \cE) \to \Fun(\cB, \cE) \ . \]
Unraveling the definitions, we see that $\mu$ takes $L \otimes G$ to $p^\ast(L) \otimes G$.
In particular, \cref{prop:categorical_action_cocartesian} shows that $\mu$ restricts to a well defined functor
\begin{equation}\label{eq:relative_tensor_cocartesian_comparison}
	\mu^{\cocart} \colon \Loc(\cY;\cE) \otimes_{\Loc(\cX;\cE)} \Funcocart(\cA, \cE) \to \Funcocart(\cB, \cE)
\end{equation}
When $f$ is a finite étale fibration (see \cref{def:finite_etale_fibrations}), \cref{cor:relative_tensor_product} shows that $\mu$ is an equivalence.
The goal of this section is to show that under mild assumptions the same holds in the cocartesian setting.

\begin{observation}
	In the above setting, assume that $f$ is a finite étale fibration.
	Then the composition $q' \coloneqq f \circ q \colon \cB \to \cX$ is a cocartesian fibration.
	This allows to consider the exponential fibrations
	\[ \exp_\cE( \cB / \cY ) \in \PrFibL_\cY \qquad \text{and} \qquad \exp_\cE( \cB / \cX ) \in \PrFibL_\cX \ . \]
	We set
	\[ \Funcocart(\cB / \cY, \cE) \coloneqq \Sigma_\cY^{\cocart}( \exp_\cE( \cB / \cY ) ) \qquad \text{and} \qquad \Funcocart(\cB / \cX, \cE) \coloneqq \Sigma_\cX^{\cocart}( \exp_\cE( \cB / \cX ) ) \ . \]
	Notice that both $\Funcocart(\cB / \cY, \cE)$ and $\Funcocart(\cB / \cX, \cE)$ are full subcategories of $\Fun(\cB, \cE)$.
\end{observation}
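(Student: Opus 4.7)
The observation bundles two claims that I would treat separately.

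For the assertion that $q' = f\circ q\colon \cB\to\cX$ is a cocartesian fibration, the plan is to invoke that a finite étale fibration is in particular a cocartesian fibration (see \cref{def:finite_etale_fibrations}), while $q$ is a cocartesian fibration by stability of this property under pullback, applied to the square \eqref{eq:relative_tensor_cocartesian_setting}. One then appeals to the elementary fact that compositions of cocartesian fibrations remain cocartesian fibrations: given $\gamma\colon x\to x'$ in $\cX$ and $b\in\cB$ with $q'(b)=x$, one first takes an $f$-cocartesian lift $\tilde{\gamma}\colon q(b)\to y'$ of $\gamma$ in $\cY$, then a $q$-cocartesian lift $\tilde{\tilde{\gamma}}\colon b\to b'$ of $\tilde{\gamma}$ in $\cB$, and checks that the composite is $q'$-cocartesian by unwinding the universal property against test morphisms in $\cB$.

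For the fullness claim, the strategy is to apply the specialization equivalence \cref{prop:sections_exponential} both over $\cY$ and over $\cX$, obtaining canonical identifications
\[ \Sigma_\cY\bigl(\exp_\cE(\cB/\cY)\bigr)\simeq \Fun(\cB,\cE)\simeq \Sigma_\cX\bigl(\exp_\cE(\cB/\cX)\bigr). \]
Since $\Sigma^{\cocart}_{-}$ is by definition the full subcategory of $\Sigma_{-}$ spanned by cocartesian sections (\cref{warning:global_sections}, \cref{cor:cocart_presentable_stable}), both $\Funcocart(\cB/\cY,\cE)$ and $\Funcocart(\cB/\cX,\cE)$ are full subcategories of $\Fun(\cB,\cE)$.

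No step is genuinely hard here, but the one point worth flagging is that the two full subcategories do not coincide: via \cref{prop:cocartesian_functors_reformulation}, a functor $F\colon\cB\to\cE$ lies in $\Funcocart(\cB/\cX,\cE)$ precisely when it is cocartesian at every $q'$-cocartesian edge, which is a \emph{strictly stronger} condition than being cocartesian at $q$-cocartesian edges (i.e.\ lying in $\Funcocart(\cB/\cY,\cE)$), because the $q'$-cocartesian edges lying over equivalences in $\cY$ already force additional specialization conditions. Keeping this asymmetry in plain sight is what will later let one compare $\mu^{\cocart}$ with the relative tensor product, and is the reason the two notations are introduced at all.
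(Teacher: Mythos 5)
Your argument for the two literal claims of the Observation is fine and matches what the paper intends: since a finite étale fibration is by definition a cocartesian fibration (\cref{def:finite_etale_fibrations}), since $q$ is a cocartesian fibration as a pullback of $p$, and since cocartesian fibrations are closed under composition, $q'=f\circ q$ is a cocartesian fibration; and both $\Sigma^{\cocart}_\cY(\exp_\cE(\cB/\cY))$ and $\Sigma^{\cocart}_\cX(\exp_\cE(\cB/\cX))$ sit as full subcategories of $\Fun(\cB,\cE)$ via the specialization equivalence of \cref{prop:sections_exponential} together with the fact recorded in \cref{warning:global_sections} and \cref{cor:cocart_presentable_stable} that cocartesian sections form a full subcategory of all sections.

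However, the final paragraph — the "one point worth flagging" — is wrong and inverts the actual point of the section. You assert that $\Funcocart(\cB/\cX,\cE)$ is a \emph{strictly} smaller full subcategory than $\Funcocart(\cB/\cY,\cE)$, because $q'$-cocartesianness allegedly imposes "additional specialization conditions". In fact, the paper's very next result, \cref{prop:comparison_cocartesian_over_different_basis}, proves that under the standing hypotheses ($f$ finite étale, $\cE$ stable) the comparison map $\exp_\cE(\cB/\cX)\to f^{\mathrm{cc}}_\ast\exp_\cE(\cB/\cY)$ is an equivalence, and hence
\[ \Funcocart(\cB/\cY,\cE) = \Funcocart(\cB/\cX,\cE) \]
as full subcategories of $\Fun(\cB,\cE)$. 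The intuition you offer — that extra constraints appear over $\cX$ — fails precisely because $f$ is locally constant with finite discrete fibers and $\cE$ is stable: over a morphism $\gamma$ of $\cX$, the straightening of $q'$ factors as a product over the finite fiber $\cY_x$ of copies of the straightening of $p$, and checking the Beck--Chevalley condition for $q'$ reduces factor by factor to the same condition for $q$ over the cocartesian lifts of $\gamma$ in $\cY$. Conversely there are no new conditions coming from morphisms of $\cY$ lying over equivalences of $\cX$, since $F$ is automatically cocartesian at every equivalence (\cref{cocart_at_equivalence}) and $\cY\to\cX$ has $\infty$-groupoid fibers. So the reason the two notations are introduced is not to "keep an asymmetry in plain sight", but precisely to formulate the equality that drives \cref{cor:finite_etale_fibration_induction_and_cocartesian} and ultimately the relative tensor product formula \cref{cor:finite_etale_fibration_cocartesian_relative_tensor}. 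You should drop the claim of strict containment and instead state that the comparison of these two full subcategories is the content of the next proposition.
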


\begin{construction}
	Using the notation from \cref{recollection:pushforward_fibrations}, observe the commutativity of
	\[ \begin{tikzcd}
		\cB \arrow[equal]{r} \arrow{d}{q} & \cB \arrow{d}{f \circ q} \\
		\cY \arrow{r}{f} & \cX
	\end{tikzcd} \]
	provides a canonical transformation
	\[ \delta \colon \cB \to f^\ast(\cB) \]
	in $\CoCart_\cY$.
	In turn, \cref{prop:functoriality_exponential}-(1) shows that $\delta$ induces a morphism
	\[ \exp_\cE( \cB / \cY ) \to f^\ast \exp(\cB / \cX) \ , \]
	which by adjunction $f^\ast \dashv f^{\mathrm{cc}}_\ast$ corresponds to a morphism
	\begin{equation}\label{eq:comparison_cocartesian_over_different_basis}
		\alpha \colon \exp_\cE( \cB / \cX ) \to f_\ast^{\mathrm{cc}} \exp(\cB / \cY) \ .
	\end{equation}
\end{construction}

\begin{prop}\label{prop:comparison_cocartesian_over_different_basis}
	In the above setting, assume that $f$ is a finite étale fibration.
	Then the comparison morphism \eqref{eq:comparison_cocartesian_over_different_basis} is an equivalence.
	In particular,
	\[ \Funcocart(\cB / \cY, \cE) = \Funcocart(\cB / \cX, \cE) \]
	as full subcategories of $\Fun(\cB, \cE)$.
\end{prop}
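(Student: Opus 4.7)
The plan is to check that $\alpha$ is an equivalence in $\PrFibL_\cX$ by verifying the claim fiberwise, and then to obtain the statement on full subcategories by applying the functor of cocartesian sections together with the adjunction $f^\ast \dashv \fcocartlowerstar$.

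First I would fix $x \in \cX$ and identify both fibers of $\alpha$ at $x$. By \cref{cor:fibers_exponential} applied to the composite cocartesian fibration $q' \coloneqq f \circ q \colon \cB \to \cX$, the fiber of $\exp_\cE(\cB/\cX)$ at $x$ is canonically $\Fun(\cB_x, \cE)$. For the right-hand side, the cocartesian fibration $\fcocartlowerstar \exp_\cE(\cB/\cY)$ is classified by the right Kan extension along $f$ of the straightening $\Phi \colon \cY \to \PrL$ of $\exp_\cE(\cB/\cY)$. Its value at $x$ is therefore $\lim_{\cY_x} \Phi|_{\cY_x}$, where $\cY_x \coloneqq \{x\} \times_\cX \cY$. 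Using \cref{cor:cocart_presentable_stable}, this limit is identified with $\Funcocart(\cB_{\cY_x}/\cY_x, \cE)$, where $\cB_{\cY_x} \coloneqq \cY_x \times_\cY \cB$. Pasting pullback squares, there is a canonical equivalence $\cB_x \simeq \cB_{\cY_x}$.

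The crucial observation is that, since $f$ is a finite étale fibration, the fiber $\cY_x$ is an $\infty$-groupoid, so every morphism in $\cY_x$ is an equivalence. By \cref{cocart_at_equivalence}, every functor from $\cB_{\cY_x}$ to $\cE$ is then automatically cocartesian over $\cY_x$, so $\Funcocart(\cB_{\cY_x}/\cY_x, \cE) = \Fun(\cB_{\cY_x}, \cE) \simeq \Fun(\cB_x, \cE)$. Tracing the construction of $\alpha$ from the canonical morphism $\delta \colon \cB \to f^\ast(\cB)$ in $\CoCart_\cY$ (and using \cref{prop:functoriality_exponential}-(1) to compute the pullback of exponential fibrations), I would verify that the induced map $\alpha_x$ agrees with the equivalence furnished by $\cB_x \simeq \cB_{\cY_x}$, and is therefore an equivalence.

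For the second assertion, I would apply $\Sigma^{\cocart}_\cX$ to the equivalence $\alpha$. Combining $f^\ast \dashv \fcocartlowerstar$ with the straightening equivalence gives a canonical identification $\Sigma^{\cocart}_\cX(\fcocartlowerstar \cG) \simeq \Sigma^{\cocart}_\cY(\cG)$ for every cocartesian fibration $\cG \to \cY$, so that $\Funcocart(\cB/\cX, \cE) \simeq \Funcocart(\cB/\cY, \cE)$. The main obstacle I anticipate is bookkeeping: one must check that this equivalence is compatible with the two inclusions into $\Fun(\cB, \cE)$ coming from the specialization equivalences of \cref{prop:sections_exponential} (one for $q' \colon \cB \to \cX$ and one for $q \colon \cB \to \cY$). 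This should follow by unwinding the construction of $\alpha$ in terms of $\delta$ and invoking the naturality of specialization with respect to morphisms in $\CoCart$ provided by \cref{prop:global_functoriality}, but it is the step that requires the most care.
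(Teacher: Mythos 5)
Your proposal takes essentially the same route as the paper's proof: both check that $\alpha$ is an equivalence fiberwise over $\cX$, after which the statement about full subcategories is obtained by applying $\Sigma_\cX^{\cocart}$ and unwinding the specialization equivalences.

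The one place where your argument elides a step is the claim that the fiber of $\fcocartlowerstar\exp_\cE(\cB/\cY)$ at $x$ is \emph{therefore} $\lim_{\cY_x}\Phi|_{\cY_x}$. The pointwise right Kan extension formula a priori gives a limit over the lax fiber $\cY \times_\cX \cX_{x/}$, and reducing this to a limit over the strict fiber $\cY_x$ requires an argument. It is valid here because $f$ is a cartesian fibration: given $(y,\gamma\colon x\to f(y))$, the cartesian lift of $\gamma$ supplies a terminal object of $\cY_x\times_{\cY\times_\cX\cX_{x/}}(\cY\times_\cX\cX_{x/})_{/(y,\gamma)}$, so the inclusion $\cY_x\hookrightarrow\cY\times_\cX\cX_{x/}$ is initial. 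The paper invokes a lemma on right Kan extensions from the companion Exodromy paper for exactly this reduction; with that in place the two arguments agree, presenting the common fiber either as $\Fun(\cB_x,\cE)$ (your version, via $\Funcocart$ over the groupoid $\cY_x$ together with \cref{cocart_at_equivalence}) or as $\prod_{y\in\cY_x}\Fun(\cA_x,\cE)$ (the paper's version, via $\cB_x\simeq\coprod_{y\in\cY_x}\cA_x$); these are the same object. Your concern about compatibility of the two embeddings into $\Fun(\cB,\cE)$ in the final step is legitimate, and as you note it comes down to tracing $\alpha$ back to $\delta$ and using \cref{prop:global_functoriality}; the paper is no more explicit there than you are.
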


\begin{proof}
	The second half follows applying $\Sigma_\cX^{\cocart}$ to the equivalence \eqref{eq:comparison_cocartesian_over_different_basis}.
	To show that $\alpha$ is an equivalence, it is enough to show that for every $x \in \cX$, $j_x^\ast(\alpha)$ is an equivalence.
	Unraveling the definitions, we see that
	\[ \cB_x \simeq \coprod_{y \in \cY_x} \cA_x \ , \]
	which immediately implies that
	\[ \exp(\cB / \cX)_x \simeq \prod_{y \in \cY_x} \Fun(\cA_x, \cE) \ . \]
	Similarly, the formula for right Kan extensions paired with \cite[\cref*{exodromy-lem:RKE}]{Exodromy_coefficient}, implies that
	\[ \big( f^{\mathrm{cc}}_\ast \exp_\cE( \cB / \cY ) \big)_x \simeq \big( f^{\mathrm{cc}}_\ast f^\ast \exp_\cE( \cA / \cX ) \big)_x \simeq \prod_{y \in \cY_x} \Fun(\cA_x, \cE) \ . \]
	The conclusion follows.
\end{proof}

\begin{cor}\label{cor:finite_etale_fibration_induction_and_cocartesian}
	In the above setting, if $f$ is a finite étale fibration then
	\[ u_! \colon \Fun(\cB, \cE) \to \Fun(\cA, \cE) \]
	preserves cocartesian functors and therefore it induces a well defined functor
	\[ u_! \colon \Funcocart(\cB, \cE) \to \Funcocart(\cA, \cE) \ . \]
\end{cor}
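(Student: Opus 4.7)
The plan is to reduce the statement to the functoriality of induction established in Lemma \ref{induction_preserves_cocart_functors}. The two key inputs are Proposition \ref{prop:comparison_cocartesian_over_different_basis} (which allows us to reinterpret cocartesianness over $\cY$ as cocartesianness over $\cX$) and the observation that $u \colon \cB \to \cA$ is naturally a morphism in $\CoCart_{\cX}$ from $(f\circ q)$ to $p$.

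First, since $f$ is a finite étale fibration, the composite $f \circ q \colon \cB \to \cX$ is a cocartesian fibration, and Proposition \ref{prop:comparison_cocartesian_over_different_basis} identifies $\Funcocart(\cB/\cY, \cE)$ and $\Funcocart(\cB/\cX, \cE)$ as the same full subcategory of $\Fun(\cB, \cE)$. In particular, every $G \in \Funcocart(\cB, \cE)$ is automatically cocartesian when viewed as a functor on the cocartesian fibration $f \circ q \colon \cB \to \cX$.

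Second, I would verify that $u$ defines a morphism of cocartesian fibrations over $\cX$ from $f \circ q$ to $p$. Given a morphism $\gamma \colon x_0 \to x_1$ in $\cX$ and a point $b_0 \in \cB_{x_0}$, the pullback identification $\cB \simeq \cY \times_\cX \cA$ lets one describe an $(f\circ q)$-cocartesian lift of $\gamma$ at $b_0 = (y_0, a_0)$ as the pair consisting of an $f$-cocartesian lift $\tilde\gamma \colon y_0 \to y_1$ of $\gamma$ together with a $p$-cocartesian lift $a_0 \to a_1$ of $\gamma$; the functor $u$ picks out the $\cA$-component, sending this edge precisely to the $p$-cocartesian edge $a_0 \to a_1$. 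Hence $u$ preserves $\cX$-cocartesian edges and is thus a morphism in $\CoCart_\cX$.

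Finally, applying Lemma \ref{induction_preserves_cocart_functors} to $u$ regarded as a morphism in $\CoCart_\cX$, we obtain that for any $G \in \Fun(\cB,\cE)$ cocartesian over $\cX$, the induction $u_!(G)$ is again cocartesian over $\cX$, i.e.\ $u_!(G) \in \Funcocart(\cA, \cE)$. Combined with the first step, this produces the desired factorization $u_! \colon \Funcocart(\cB, \cE) \to \Funcocart(\cA, \cE)$. The only step requiring any care is the verification that $u$ is a morphism in $\CoCart_\cX$, which is however a direct consequence of the behaviour of cocartesian edges in pullback squares of cocartesian fibrations together with the fact that the fibers of $f$ are discrete.
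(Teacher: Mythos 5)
Your proof is correct and follows essentially the same route as the paper's: identify $\Funcocart(\cB/\cY,\cE)$ with $\Funcocart(\cB/\cX,\cE)$ via \cref{prop:comparison_cocartesian_over_different_basis}, observe that $u$ is a morphism in $\CoCart_\cX$ from $f\circ q$ to $p$ (the paper cites \cite[Proposition 2.4.1.3-(2)]{HTT} where you give the hands-on pullback description), and apply \cref{induction_preserves_cocart_functors} (equivalently \cref{prop:cocartesian_functoriality}-(\ref{prop:cocartesian_functoriality:induction})). One small imprecision in your closing remark: the verification that $u$ preserves $\cX$-cocartesian edges only uses that $f$ is a cocartesian fibration, not that its fibers are discrete — discreteness enters only through \cref{prop:comparison_cocartesian_over_different_basis}.
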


\begin{proof}
	Seeing $\cB$ fibered over $\cX$ via $q' \coloneqq f \circ q$, \cite[Proposition 2.4.1.3-(2)]{HTT} implies that $u$ takes $q'$-cocartesian edges to $p$-cocartesian ones.
	Therefore, \cref{prop:cocartesian_functoriality}-(\ref{prop:cocartesian_functoriality:induction}) shows that $u_!$ restricts to a well defined functor
	\[ u_! \colon \Funcocart(\cB / \cX, \cE) \to \Funcocart(\cA, \cE) \ . \]
	Since $\Funcocart(\cB / \cX, \cE) = \Funcocart(\cB / \cY, \cE)$ by \cref{prop:comparison_cocartesian_over_different_basis}, the conclusion follows.
\end{proof}

\begin{cor} \label{cor:finite_etale_fibration_cocartesian_monadicity}
	In the above setting, assume that $f$ is a finite étale fibration and that $\cE$ is stable.
	Then
	\[ u_! \colon \Funcocart(\cB, \cE) \to \Funcocart(\cA, \cE) \]
	is monadic.
\end{cor}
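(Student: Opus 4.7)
The plan is to verify the hypotheses of the Barr-Beck-Lurie monadicity theorem \cite[Theorem 4.7.3.5]{Lurie_Higher_algebra} for $u_!$: namely that it admits a left adjoint, that it is conservative, and that it preserves geometric realizations of $u_!$-split simplicial objects. The last condition is automatic, since $u_!$ is already the left adjoint in the adjunction $u_! \dashv u^\ast$ and hence preserves all colimits. The other two conditions are the substance of the proof.

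For the existence of a left adjoint, the key observation is that in the finite étale stable setting the pullback $u \colon \cB \to \cA$ satisfies an ambidexterity property. Indeed, by construction of a finite étale fibration, for every $x \in \cX$ the fiber $\cB_x$ canonically decomposes as $\bigsqcup_{y \in \cY_x} \cA_x$ with $\cY_x$ a finite set, so that the fiberwise induction $(u_x)_! \colon \Fun(\cB_x, \cE) \to \Fun(\cA_x, \cE)$ identifies with the finite coproduct $\bigoplus_{y \in \cY_x}$. Since $\cE$ is stable, this finite coproduct coincides with the finite product, and hence $u_x^\ast$ (the diagonal) is \emph{both} left and right adjoint to $(u_x)_!$. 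To globalize this to an adjunction $u^\ast \dashv u_!$ on $\Fun(\cA, \cE)$, I would check the Beck-Chevalley condition for base change along any morphism $\gamma \colon x \to x'$ in $\cX$. This condition reduces to showing that the square of fibers is a pullback, which is automatic because the action of the cocartesian lift of $\gamma$ induces a bijection $\cY_x \simeq \cY_{x'}$ between the discrete fibers. Since both $u^\ast$ (by \cref{prop:cocartesian_functoriality}-(\ref{prop:cocartesian_functoriality:pullback})) and $u_!$ (by \cref{cor:finite_etale_fibration_induction_and_cocartesian}) preserve cocartesian functors, the adjunction $u^\ast \dashv u_!$ restricts to an adjunction on $\Funcocart$.

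For conservativity, suppose $F \in \Funcocart(\cB, \cE)$ satisfies $u_!(F) \simeq 0$. Applying $j_x^\ast$ for each $x \in \cX$ and using base change along the inclusion $\cA_x \hookrightarrow \cA$, we obtain $(u_x)_!(j_x^{\cB,\ast} F) \simeq 0$ in $\Fun(\cA_x, \cE)$. Under the identification $(u_x)_! \simeq \bigoplus_{y \in \cY_x}$, each summand $j_{\cB, y}^\ast(F) \colon \cA_x \to \cE$ must vanish, since a direct sum vanishing in a stable $\infty$-category forces each summand to vanish. Thus $j_y^\ast(F) \simeq 0$ for every $y \in \cY$, which implies $F \simeq 0$.

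The main obstacle is the verification of step one, namely the passage from fiberwise ambidexterity to a global adjunction $u^\ast \dashv u_!$ compatible with the cocartesian structure. This relies crucially on the finite and étale nature of $f$, which ensures both that the fiberwise sums are finite (hence agree with products in the stable setting) and that parallel transport along $\cX$ permutes the discrete fibers of $\cY$ bijectively, giving the Beck-Chevalley identity needed to promote the fiberwise left adjoints to a genuine morphism in $\PrFibL_\cX$.
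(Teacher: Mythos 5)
Your proof is correct and follows the paper's strategy exactly: verify the Barr--Beck--Lurie hypotheses for $u_!$ by establishing biadjointness of $u_!$ and $u^\ast$ together with conservativity of $u_!$, then restrict the adjunction to $\Funcocart$ using \cref{prop:cocartesian_functoriality} and \cref{cor:finite_etale_fibration_induction_and_cocartesian}. The only difference is in the route to the two auxiliary facts: you re-derive them from scratch by decomposing fiberwise over $\cX$ (using $\cB_x \simeq \coprod_{y \in \cY_x} \cA_x$ and then invoking a Beck--Chevalley gluing step), whereas the paper simply applies \cref{lem:finite_etale_fibration_biadjoint} and \cref{lem:finite_etale_fibration_universally_conservative} to the finite étale fibration $u \colon \cB \to \cA$ itself, comparing $u_!$ and $u_\ast$ pointwise over the finite discrete fibers of $u$ and thereby avoiding any gluing argument. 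Both routes reduce to the same observation that finite coproducts agree with finite products in a stable $\cE$, so the proposal is essentially a redundant re-derivation of lemmas the paper already has on hand.
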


\begin{proof}
	It follows from \cref{lem:finite_etale_fibration_biadjoint} that the functors
	\[ u_! \colon \Fun(\cB, \cE) \to \Fun(\cA, \cE) \qquad \text{and} \qquad u^\ast \colon \Fun(\cA, \cE) \to \Fun(\cB, \cE) \]
	are biadjoint.
	Combining \cref{prop:cocartesian_functoriality}-(\ref{prop:cocartesian_functoriality:pullback}) and \cref{cor:finite_etale_fibration_induction_and_cocartesian}, we see that both respect cocartesian functors.
	Therefore, $u_! \colon \Funcocart(\cB, \cE) \to \Funcocart(\cA, \cE)$ is biadjoint to $u^\ast$.
	Besides, \cref{lem:finite_etale_fibration_universally_conservative} implies that $u_! \colon \Fun(\cB, \cE) \to \Fun(\cA, \cE)$ is conservative.
	Since $\Funcocart(\cA, \cE)$ is a full subcategory of $\Fun(\cA, \cE)$, it follows that the same goes for the restriction of $u_!$ to cocartesian functors.
	At this point, the conclusion follows from Lurie-Barr-Beck \cite[Theorem 4.7.3.5]{Lurie_Higher_algebra}.
\end{proof}

\begin{cor}\label{cor:finite_etale_fibration_cocartesian_relative_tensor}
	Let
	\[ \begin{tikzcd}
		\cB \arrow{r}{u} \arrow{d}{q} & \cA \arrow{d}{p} \\
		\cY \arrow{r}{f} & \cX
	\end{tikzcd} \]
	be a pullback square in $\Cat_\infty$, where $p$ is a cocartesian fibration.
	Let $\cE^\otimes$ be a presentably symmetric monoidal $\infty$-category.
	If $f$ is a finite étale fibration and $\cE$ is stable, then the comparison functor
	\[ \Loc(\cY) \otimes_{\Loc(\cY)} \Funcocart(\cA,\cE) \to \Funcocart(\cB, \cE) \ . \]
	is an equivalence.
\end{cor}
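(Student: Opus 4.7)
My plan is to run the monadic descent argument that establishes the non-cocartesian version \cref{cor:relative_tensor_product} entirely within cocartesian functors, using the preservation results of this subsection to ensure that no part of the argument leaks out of $\Funcocart$. First, combining \cref{prop:cocartesian_functoriality}-\eqref{prop:cocartesian_functoriality:pullback} with \cref{cor:finite_etale_fibration_induction_and_cocartesian}, the biadjoint pair $u^\ast \dashv u_!$ restricts to an adjunction between $\Funcocart(\cA, \cE)$ and $\Funcocart(\cB, \cE)$. By \cref{cor:finite_etale_fibration_cocartesian_monadicity} (where stability of $\cE$ is essential), $u_!$ is monadic, yielding a canonical equivalence
\[ \Funcocart(\cB, \cE) \simeq \mathrm{Mod}_{T^{\cocart}}(\Funcocart(\cA, \cE)), \qquad T^{\cocart} \coloneqq u_! u^\ast. \]

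Second, by \cref{prop:categorical_action_cocartesian}, the standard action of $\Loc(\cX; \cE)$ on $\Fun(\cA, \cE)$ restricts to a $\Loc(\cX; \cE)$-linear structure on $\Funcocart(\cA, \cE)$ with respect to which $u^\ast$ is $\Loc(\cX; \cE)$-linear. The finite étale assumption further provides an identification $\Loc(\cY; \cE) \simeq \mathrm{Mod}_A(\Loc(\cX; \cE))$ for the algebra $A \coloneqq f_\ast(1_{\Loc(\cX;\cE)})$, which underlies the proof of \cref{cor:relative_tensor_product}. The crucial step is to identify the monad $T^{\cocart}$ with the action of $A$, namely to produce a natural equivalence $u_! u^\ast(G) \simeq p^\ast(A) \otimes G$ for $G \in \Funcocart(\cA, \cE)$. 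Since $u^\ast$, $u_!$ and the functor $p^\ast(A) \otimes -$ all preserve cocartesian functors and commute with the inclusion into plain functors, it suffices to check this equivalence in $\Fun(\cA, \cE)$, where it is precisely the projection formula used in the proof of \cref{cor:relative_tensor_product}.

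Chaining everything together, we find
\[ \Funcocart(\cB, \cE) \simeq \mathrm{Mod}_{T^{\cocart}}(\Funcocart(\cA, \cE)) \simeq \mathrm{Mod}_A(\Funcocart(\cA, \cE)) \simeq \Loc(\cY; \cE) \otimes_{\Loc(\cX; \cE)} \Funcocart(\cA, \cE), \]
and a direct inspection shows that this composition inverts the comparison functor $\mu^{\cocart}$ of \eqref{eq:relative_tensor_cocartesian_comparison}. The main technical obstacle is the bookkeeping needed to transfer the projection formula from the plain functor setting to the cocartesian one; once this is established, the rest of the argument is essentially formal.
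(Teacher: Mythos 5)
Your proof is correct in outline but takes a genuinely different route from the paper's. The paper's proof, which it phrases simply as ``the same proof of \cref{cor:relative_tensor_product} applies,'' runs the Barr--Beck comparison through the cocartesian categories: it forms the commutative triangle with vertex $\Funcocart(\cA,\cE)$, notes that $\Env(f)_! \otimes \Funcocart(\cA,\cE)$ is monadic by \cref{prop:finite_etale_fibration_monadic} applied to $\Env(f)$ (a finite étale fibration between $\infty$-groupoids, by \cref{finite_etaleness_descent_along_localization}) and that $u_!$ is monadic by \cref{cor:finite_etale_fibration_cocartesian_monadicity}, and then invokes \cite[Corollary 4.7.3.16]{Lurie_Higher_algebra} to reduce the statement to a Beck--Chevalley check on free modules, verified by the same pointwise computation as in \cref{cor:relative_tensor_product}. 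You instead identify the Barr--Beck monad $T^{\cocart} = u_!u^\ast$ with tensoring by an algebra $A \simeq f_\ast f^\ast(\bbI)$ in $\Loc(\cX;\cE)$, and use \cite[Theorem 4.8.4.6]{Lurie_Higher_algebra} ($\mathrm{Mod}_A(\cD) \simeq \mathrm{Mod}_A(\cC) \otimes_\cC \cD$) to rewrite the relative tensor product as a category of $A$-modules. Both roads work, but yours is longer: you need the intermediate identification $\Loc(\cY;\cE) \simeq \mathrm{Mod}_A(\Loc(\cX;\cE))$, which the paper never establishes (and which requires its own monadicity-plus-projection-formula argument), and your remark that this identification ``underlies the proof of \cref{cor:relative_tensor_product}'' is wrong --- that proof is a direct descent-along-monadic-functors argument, not a module-theoretic one. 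Your strategy of transferring the monad comparison $u_! u^\ast(G) \simeq p^\ast(A) \otimes G$ from $\Funcocart(\cA,\cE)$ to $\Fun(\cA,\cE)$ is sound, since all functors in sight preserve cocartesian functors and the inclusion is fully faithful. Two smaller gaps: the notation $A \coloneqq f_\ast(1_{\Loc(\cX;\cE)})$ is incoherent ($1_{\Loc(\cX;\cE)}$ does not live over $\cY$; you mean $f_\ast f^\ast(\bbI)$ or $\Env(f)_\ast(\bbI_{\Loc(\cY;\cE)})$), and the final claim that the composite chain of equivalences inverts $\mu^{\cocart}$ is asserted but not checked, which is precisely the bookkeeping that \cite[Corollary 4.7.3.16]{Lurie_Higher_algebra} handles automatically in the paper's approach.
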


\begin{proof}
	Using \cref{cor:finite_etale_fibration_cocartesian_monadicity} as input, the same proof of \cref{cor:relative_tensor_product} applies.
\end{proof}

\section{Punctually split and Stokes functors}
\label{punctually_split_Stokes_section} 
In this section, we explore the new features that appear when one specializes the exponential construction to the case of cocartesian fibrations \emph{in posets}.
Every such fibration $\cI$ has an underlying discrete fibration $\cI^{\ens}$ (see \cref{notation_Iset}), and this allows to introduce punctually split functors.
We analyze their role and explore this notion from the point of view of the exponential construction and discuss their basic functorialities.
Finally, we introduce the main object of study of this paper: Stokes functors.

\subsection{Punctually split functors}

\begin{notation}\label{notation_Iset}
	We let
	\[ (-)^{\mathrm{set}} \colon \Poset \to \Poset \]
	be the functor sending a poset $(I,\leqslant)$ to the underlying set $I$, seen as a poset with trivial order.
	By extension, if $\cX \in \Cat_\infty$ and $\cI \to \cX$ is a fibration in posets, we let $\cI^{\mathrm{set}}$ be the cocartesian fibration on $\cX$ obtained by applying $(-)^{\mathrm{set}}$ fiberwise.
	In a more verbose way, if $\mathscr I \colon \cX \to \Poset$ is the unstraightening of $\cI$, then $\cI^{\mathrm{set}}$ is the cocartesian fibration classifying the composition $(-)^{\mathrm{set}} \circ \mathscr I \colon \cX \to \Poset$.
	Notice that $\cI^{\mathrm{set}}$ is in fact a left fibration over $\cX$ and that it comes equipped with a canonical morphism
	\[ i_\cI \colon \cI^{\mathrm{set}} \to  \cI \]
	that preserves cocartesian edges over $\cX$.
	It is immediate that this construction promotes to a global functor
	\[ (-)^{\mathrm{set}} \colon \PosFib \to  \PosFib \ , \]
	equipped with a natural transformation $i \colon (-)^{\mathrm{set}} \to \id_{\PosFib}$.
\end{notation}

We fix a presentable $\infty$-category $\cE$.

\begin{defin}\label{defin_split}
	Let $p \colon \cI \to \cX$ be an object in $\PosFib$.
	Let $F\in \Fun(\cI,\cE)$.
	\begin{enumerate}\itemsep=0.2cm
		\item For $x \in \cX$, we say that $F$ is \emph{split at $x$} if $j_x^*(F)$ lies in the essential image of
		\[ i_{\cI_x,!} \colon \Fun(\cI^{\ens}_x, \cE)\to \Fun(\cI_x, \cE) \]
		
		\item We say that $F$ is \emph{punctually split} if it is split at every object $x \in \cX$.
		
		\item We say that $F$ is \emph{split} if it lies in the essential image of the induction functor
		\[ i_{\cI,!} \colon \Fun(\cI^{\ens}, \cE)\to \Fun(\cI, \cE) \]
	\end{enumerate}
	We denote by $\Fun_{\PS}(\cI,\cE)$ the full subcategory of $\Fun(\cI,\cE)$ formed by punctually split functors.
\end{defin}

\begin{rem}\label{rem:split_implies_punctually_split}
	It follows from \cref{cor:induction_specialization_Beck_Chevalley} that split functors are punctually split.
\end{rem}

\begin{eg}\label{eg:split_functor}
	Let $p \colon \cI \to \cX$ be a cocartesian fibration in posets and let $a \in \cI$ be an element.
	Write $\ev_a^{\cI} \colon \{a\} \hookrightarrow \cI$ for the canonical inclusion.
	Since $\ev_a^{\cI}$ factors through $i_\cI \colon \cI^{\ens} \to \cI$, we see that for every $E \in \cE$ the functor $\ev_{a,!}^{\cI}(E) \in \Fun(\cI, \cE)$ is split, and hence punctually split by \cref{rem:split_implies_punctually_split}.
\end{eg}

\begin{defin}\label{def:splitting}
	In the setting of \cref{defin_split}, a \emph{splitting for $F$} is the given of a functor $F_0 \colon \cI^{\ens} \to \cE$ and an equivalence $\alpha \colon i_{\cI,!}(F_0) \simeq F$.
\end{defin}

\begin{warning}
	In general, splittings do not exist and even when they exist they are typically neither unique nor canonical.
\end{warning}

\begin{lem}\label{EssIm}
	Let $\cX \in \Cat_\infty$ and let
	\[ \begin{tikzcd}[column sep = small]
		\cB \arrow{rr}{f} \arrow{dr}[swap]{q} & & \cA \arrow{dl}{p} \\
		{} & \cX
	\end{tikzcd} \]
	be a commutative diagram where $p$ and $q$ are cocartesian fibrations and $f$ preserves cocartesian edges.
	Letting $\mathrm{EssIm}(f)$ be the essential image of $f$, the composition
	\[ \mathrm{EssIm}(f) \subseteq \cA \stackrel{p}{\to } \cX \]
	is again a cocartesian fibration.
	Furthermore,  the formation of $\mathrm{EssIm}(f)$ commutes with pullback along any morphism $\cY\to \cX$ in $\Cat_{\infty}$.  
	In particular,  the fibers of $\mathrm{EssIm}(f)$ at $x \in \cX$ canonically coincide with the essential image of $f_x \colon \cB_x \to \cA_x$.
\end{lem}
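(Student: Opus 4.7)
The plan is to verify the three assertions in order, each by a direct unraveling, with the cocartesian lifting property as the main point.

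First, I would establish that $\mathrm{EssIm}(f) \to \cX$ is a cocartesian fibration. Recall that $\mathrm{EssIm}(f)$ is by definition the full subcategory of $\cA$ spanned by those objects $a \in \cA$ admitting an equivalence $a \simeq f(b)$ for some $b \in \cB$; in particular, it is closed under equivalences in $\cA$. Given $a \in \mathrm{EssIm}(f)$ with $p(a) = x$ and an edge $\gamma \colon x \to x'$ in $\cX$, I would choose an equivalence $a \simeq f(b)$, a $q$-cocartesian lift $\tilde\gamma \colon b \to b'$ of $\gamma$ in $\cB$ (which exists since $q$ is a cocartesian fibration), and observe that $f(\tilde\gamma) \colon f(b) \to f(b')$ is $p$-cocartesian by the assumption that $f$ preserves cocartesian edges. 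Transporting along the equivalence $a \simeq f(b)$ produces a $p$-cocartesian lift $a \to a'$ of $\gamma$ in $\cA$ whose target $a'$ is equivalent to $f(b')$, hence lies in $\mathrm{EssIm}(f)$. Since $\mathrm{EssIm}(f) \subseteq \cA$ is a full subcategory, the universal property characterizing cocartesian edges in $\cA$ (which only involves mapping spaces) restricts to the same universal property in $\mathrm{EssIm}(f)$, so the chosen edge is cocartesian for the restriction.

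Next, I would check that the formation of $\mathrm{EssIm}(f)$ commutes with pullback along any $g \colon \cY \to \cX$. Write $\cB_\cY \coloneqq \cB \times_{\cX} \cY$, $\cA_\cY \coloneqq \cA \times_\cX \cY$, and let $f_\cY \colon \cB_\cY \to \cA_\cY$ be the induced morphism. An object of $\cA_\cY$ is a pair $(a,y)$ with $p(a) = g(y)$, and it lies in the pullback $\mathrm{EssIm}(f) \times_\cX \cY$ if and only if $a \in \mathrm{EssIm}(f)$, i.e.\ there exists $b \in \cB$ with $a \simeq f(b)$. In that case $q(b) = p(a) = g(y)$, so $(b,y) \in \cB_\cY$ and $f_\cY(b,y) \simeq (a,y)$, showing $(a,y) \in \mathrm{EssIm}(f_\cY)$; conversely any witness of the essential image of $f_\cY$ projects to a witness of the essential image of $f$. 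On morphisms the two full subcategories of $\cA_\cY$ agree, so the identification follows. The assertion about fibers is then the special case $\cY = \{x\}$.

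I expect the only place requiring care is the cocartesian lifting step: namely, the claim that if $a \simeq f(b)$ and $\beta \colon f(b) \to f(b')$ is $p$-cocartesian, then one obtains a $p$-cocartesian edge $a \to a'$ with $a' \simeq f(b')$. This is standard (equivalences transport cocartesian edges, and cocartesian lifts are essentially unique), but it is the only step where the hypothesis that $f$ preserves cocartesian edges intervenes. Once this lifting is in place, the closure of $\mathrm{EssIm}(f)$ under equivalences ensures the target $a'$ remains in the essential image, and everything else is formal.
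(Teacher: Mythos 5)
Your plan has the right shape, but there is a recurring strictness gap centered on the assertion ``$q(b) = p(a) = g(y)$'' in the pullback step, and the same issue lurks implicitly in the lifting step. From an equivalence $a \simeq f(b)$ in $\cA$ you only obtain an equivalence $p(a) \simeq q(b)$ in $\cX$, not an equality, so $b$ need not lie over $p(a)$ on the nose. This makes ``$(b,y) \in \cB_\cY$'' false as written (the strict pullback requires $q(b) = g(y)$), and likewise makes ``choose a $q$-cocartesian lift $\tilde\gamma \colon b \to b'$ of $\gamma$'' ill-posed, since $\gamma$ starts at $p(a)$ while $b$ lies over $q(b)$. The repair is standard but must be said: cocartesian fibrations are categorical (iso-)fibrations, so the equivalence $q(b) \simeq p(a)$ lifts along $q$ to an equivalence $b \simeq b''$ in $\cB$ with $q(b'') = p(a)$ strictly, and then $f(b'') \simeq a$ via an equivalence lying in the fiber $\cA_{p(a)}$. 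After this replacement your lifting, pullback, and fiber arguments all close up; in the lifting step you should also note explicitly that an equivalence composed with a $p$-cocartesian edge is again $p$-cocartesian, which is what justifies ``transporting along $a \simeq f(b)$.''

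It is also worth knowing that there is a structural route sidestepping this bookkeeping. Straighten $f$ to a natural transformation $\Upsilon_\cB \to \Upsilon_\cA$ of functors $\cX \to \Cat_\infty$ and apply the (essentially surjective, fully faithful) factorization system on $\Cat_\infty$ objectwise: this produces a functor $\Upsilon_{\mathrm{ess}}$ with $\Upsilon_{\mathrm{ess}}(x) = \mathrm{EssIm}(f_x)$ together with a pointwise fully faithful transformation into $\Upsilon_\cA$, whose unstraightening is a full sub-cocartesian-fibration of $\cA$ over $\cX$ that one identifies with $\mathrm{EssIm}(f)$. Pullback-compatibility and the fiber description are then automatic because straightening commutes with pullback. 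That route delivers the last two claims for free; your direct argument is more concrete but requires the isofibration replacement to be airtight.
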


The essential image construction of \cref{EssIm} allows to organize punctually split functors into a subfibration of the exponential fibration $\exp_\cE(\cI/\cX)$:

\begin{defin}
	Let $p \colon \cI \to \cX$ be a cocartesian fibration in posets and let $i_\cI \colon \cI^{\ens} \to \cI$ be the canonical morphism.
	We define the \emph{punctually split exponential fibration with coefficients in $\cE$ associated to $p \colon \cI \to \cX$} as
	\[ \exp_\cE^{\PS}( \cI / \cX ) \coloneqq \mathrm{EssIm}( \cE^{i_\cI}_! ) \ . \]
\end{defin}

\begin{rem}
	Notice that $\exp_\cE^{\PS}( \cI / \cX )$ defines an object in $\hCoCart_\cX$, but typically not in $\PrFibL_\cX$.
	\Cref{EssIm} shows that it is a sub-cocartesian fibration of $\exp_\cE(\cI / \cX)$.
	Under the specialization equivalence, we see that $\Sigma_\cX( \exp_\cE^{\PS}( \cI / \cX ) )$ coincides with the full subcategory of $\Fun(\cI, \cE)$ spanned by punctually split functors.
\end{rem}

Split functors provide a handy set of generators for $\Fun(\cA, \cI)$:

\begin{recollection}\label{recollection:generators_presheaves}
	Let $\cA$ be an $\infty$-category.
	For every $a \in \cA$, write $\ev_a^\cA \colon \{a\} \to \cA$ be the canonical inclusion.
	It follows from the Yoneda lemma that the functor
	\[ \ev_{a,!}^\cA \colon \Spc \to  \Fun(\cA, \Spc) \]
	is the unique colimit-preserving functor sending $*$ to $\Map_\cA(a,-)$.
	The density of the Yoneda embedding implies therefore that $\Fun(\cA, \Spc)$ is generated under colimits by $\{ \ev_{a, !}^\cA(*) \}_{a \in \cA}$.
	More generally, let $\cE$ be a presentable $\infty$-category generated under colimits by a set $\{E_\alpha\}_{\alpha \in I}$.
	Then under the identification
	\[ \Fun(\cA, \cE) \simeq \Fun(\cA, \Spc) \otimes \cE \]
	we see that $\ev_{a,!}(E_\alpha) \simeq \ev_{a,!}^\cA(\ast) \otimes E_\alpha$ and therefore that $\{ \ev_{a, !}^\cA(E_\alpha) \}_{a \in \cA, \alpha \in I}$ generates $\Fun(\cA, \cE)$ under colimits.
\end{recollection}

\begin{prop}\label{prop:generation_by_split_functors}
	Let $p \colon \cI \to \cX$ be a cocartesian fibration in posets.
	Then $\Fun(\cI, \cE)$ is generated under colimits by split functors.
\end{prop}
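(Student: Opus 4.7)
The plan is to combine the generation statement from \cref{recollection:generators_presheaves} with the observation of \cref{eg:split_functor}. First I would fix a generating set $\{E_\alpha\}_{\alpha \in A}$ of $\cE$ under colimits; such a set exists because $\cE$ is presentable. For every $a \in \cI$, write $\ev_a^\cI \colon \{a\} \hookrightarrow \cI$ for the canonical inclusion. Then \cref{recollection:generators_presheaves} guarantees that the collection
\[ \{\ev_{a,!}^{\cI}(E_\alpha)\}_{a \in \cI, \alpha \in A} \]
generates $\Fun(\cI, \cE)$ under colimits.

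Next I would invoke \cref{eg:split_functor} to show that each such generator is split: the inclusion $\ev_a^{\cI}$ factors canonically as
\[ \{a\} \xrightarrow{\ev_a^{\cI^{\ens}}} \cI^{\ens} \xrightarrow{i_\cI} \cI \ , \]
so by transitivity of left Kan extensions there is a canonical equivalence
\[ \ev_{a,!}^{\cI}(E) \simeq i_{\cI,!}\bigl(\ev_{a,!}^{\cI^{\ens}}(E)\bigr) \ , \]
which exhibits $\ev_{a,!}^{\cI}(E)$ as a split functor. Since the generators all lie in the essential image of $i_{\cI,!}$, and since $\Fun(\cI,\cE)$ is generated under colimits by the family above, the conclusion follows. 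No real obstacle is expected here; the content of the proposition is essentially a bookkeeping of the universal property of $\Fun(\cI,\cE)$, together with the factorization of the point inclusions $\{a\} \hookrightarrow \cI$ through $\cI^{\ens}$.
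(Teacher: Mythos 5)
Your proof is correct and follows exactly the route the paper takes: its proof is literally the one-line instruction to combine the generation statement from \cref{recollection:generators_presheaves} with the factorization observation of \cref{eg:split_functor}, which is precisely what you spell out. The identification $\ev_{a,!}^{\cI}(E) \simeq i_{\cI,!}\bigl(\ev_{a,!}^{\cI^{\ens}}(E)\bigr)$ via transitivity of left Kan extensions is the correct justification for \cref{eg:split_functor}, and the rest is as you say.
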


\begin{proof}
	Combine \cref{eg:split_functor} and \cref{recollection:generators_presheaves}.
\end{proof}

\subsection{Stokes functors}

We introduce here the fundamental object of this paper.
We fix once more a presentable $\infty$-category $\cE$.

\begin{defin}\label{def_Stokes}
	Let $p \colon \cI \to \cX$ be a cocartesian fibration in posets.
	The $\infty$-category of \emph{$\cI$-Stokes functors  with value in $\cE$} is by definition
	\[ \St_{\cI,\cE}  \coloneqq  \Sigma^{\cocart}_\cX(\exp_\cE^{\PS}( \cI / \cX ) ) \ . \]
\end{defin}

\begin{rem}
	Under the specialization equivalence (\ref{eq:specialization_equivalence}), the $\infty$-category $ \St_{\cI,\cE}$ coincides with full subcategory of  $\Fun(\cI,\cE)$ spanned by  functors $F \colon \cI \to \cE$ such that 
	\begin{enumerate}\itemsep=0.2cm
		\item $F$ is cocartesian (\cref{def:cocartesian_functor}).
		\item $F$ is punctually split (\cref{defin_split}). 
	\end{enumerate}
\end{rem}

\begin{eg}\label{eg:Stokes_functors_discrete_fibration}
	Assume that the cocartesian fibration $p \colon \cI \to \cX$ is discrete, i.e.\ that its fibers are sets.
	Then the map $i_\cI \colon \cI^{\ens} \to \cI$ is an equivalence, so in this case every functor $F \colon \cI \to \cE$ is split (and hence punctually split).
	It follows from the above remark that in this case
	\[ \St_{\cI,\cE} \simeq \Funcocart(\cI,\cE) \ . \]
\end{eg}

The cocartesian condition can be used to transport a splitting defined at an object $x \in \cX$ to a point $y$ via a morphism $\gamma \colon x \to y$, as in the following lemma:

\begin{lem}\label{lem:propagation_split_for_cocartesian}
	Let 
	\[ \begin{tikzcd}
		\cJ \arrow{d} & \cJ_\cX \arrow{d} \arrow{l}[swap]{u}\\
		\cY & \cX \arrow{l}[swap]{f}
	\end{tikzcd} \]
	be a pullback square in $\Cat_\infty$, whose vertical morphisms are cocartesian fibrations in posets.
	Let $\gamma \colon f(x) \to y$ be a morphism in $\cY$, with $x \in \cX$.
	Let $F \colon \cJ_\cX \to \cE$ be a functor such that $u_!(F) \colon \cJ \to \cE$ is cocartesian at $\gamma$ and such that the unit $F\to u^* u_!(F)$ is an equivalence above $x$.
	If $F$ is split at $x$, then $u_!(F)$ is split at $y$.
	In particular, when $f = \id_\cX$ and $F$ is cocartesian at $\gamma \colon x \to y$, if $F$ is split at $x$ then it is split at $y$ as well.
\end{lem}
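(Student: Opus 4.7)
The plan is to use the reformulation of cocartesianness given in \cref{prop:cocartesian_functors_reformulation}-(3) to transport a splitting of $F$ at $x$ into a splitting of $u_!(F)$ at $y$ via a chosen straightening of the morphism $\gamma$. The key categorical fact that makes this work is that, for any morphism of posets $h \colon \mathrm{J}_1 \to \mathrm{J}_2$, the square
\[ \begin{tikzcd} \mathrm{J}_1^{\ens} \arrow{r}{i_{\mathrm{J}_1}} \arrow{d}{h^{\ens}} & \mathrm{J}_1 \arrow{d}{h} \\ \mathrm{J}_2^{\ens} \arrow{r}{i_{\mathrm{J}_2}} & \mathrm{J}_2 \end{tikzcd} \]
commutes (where $h^{\ens}$ is the underlying map of sets), so taking left Kan extensions one has $h_! \circ i_{\mathrm{J}_1, !} \simeq i_{\mathrm{J}_2, !} \circ h_!^{\ens}$; in particular left Kan extension along a morphism of posets preserves split functors.

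The first step will be to use that the given square is a pullback to identify the fiber $\cJ_{\cX, x}$ with $\cJ_{f(x)}$ via the induced equivalence of posets $u_x \colon \cJ_{\cX, x} \to \cJ_{f(x)}$. Combined with the hypothesis that the unit $F \to u^\ast u_!(F)$ is an equivalence above $x$, this produces an equivalence
\[ j_{f(x)}^\ast(u_!(F)) \simeq (u_x^{-1})^\ast(j_x^\ast(F)) \ . \]
Since $F$ is split at $x$, one has $j_x^\ast(F) \simeq i_{\cJ_{\cX,x}, !}(G)$ for some $G \in \Fun(\cJ_{\cX,x}^{\ens}, \cE)$, and since $u_x^{\ens}$ is likewise an equivalence, one deduces that $j_{f(x)}^\ast(u_!(F)) \simeq i_{\cJ_{f(x)}, !}(G')$ with $G' \coloneqq (u_x^{-1, \ens})^\ast(G)$.

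Next, choose any straightening $f_\gamma \colon \cJ_{f(x)} \to \cJ_y$ of the cocartesian fibration $\cJ_\gamma \to \Delta^1$. Since $u_!(F)$ is cocartesian at $\gamma$, \cref{prop:cocartesian_functors_reformulation}-(3) gives a canonical equivalence
\[ j_y^\ast(u_!(F)) \simeq f_{\gamma, !}\bigl(j_{f(x)}^\ast(u_!(F))\bigr) \simeq f_{\gamma, !}\bigl(i_{\cJ_{f(x)}, !}(G')\bigr) \ . \]
Applying the commutation of $i_{(-)}$ with $(-)_!$ along the morphism of posets $f_\gamma$ noted above yields
\[ j_y^\ast(u_!(F)) \simeq i_{\cJ_y, !}\bigl(f_{\gamma, !}^{\ens}(G')\bigr) \ , \]
which exhibits $u_!(F)$ as split at $y$, as desired. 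For the final sentence, the case $f = \id_\cX$ gives $u = \id$ and so the unit condition is automatic, reducing to the special case just established. No step is especially hard; the main point to verify carefully is the commutation $f_{\gamma, !} \circ i_{\cJ_{f(x)}, !} \simeq i_{\cJ_y, !} \circ f_{\gamma, !}^{\ens}$, which follows at once from the identity $f_\gamma \circ i_{\cJ_{f(x)}} = i_{\cJ_y} \circ f_\gamma^{\ens}$ and the functoriality of left Kan extension.
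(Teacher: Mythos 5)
Your proof is correct and takes essentially the same route as the paper: both invoke the cocartesianness of $u_!(F)$ at $\gamma$ via \cref{prop:cocartesian_functors_reformulation}-(3) to write $j_y^\ast(u_!(F)) \simeq f_{\gamma,!}\bigl(j_{f(x)}^\ast(u_!(F))\bigr)$, identify $j_{f(x)}^\ast(u_!(F))$ with the split functor $j_x^\ast(F)$ via the unit hypothesis, and conclude. You spell out more carefully the key commutation $f_{\gamma,!} \circ i_{\cJ_{f(x)},!} \simeq i_{\cJ_y,!} \circ f_{\gamma,!}^{\ens}$ that the paper leaves implicit in its closing ``the conclusion thus follows,'' but the underlying argument is identical.
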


\begin{proof}
	Let $f_\gamma \colon \cJ_{f(x)}\to \cJ_y$ be the morphism of posets induced by $\gamma \colon f(x)\to y$.
	Since $u_!(F) \colon \cJ \to \cE$ is cocartesian at $\gamma $, \cref{prop:cocartesian_functors_reformulation} implies the existence of an equivalence $f_{\gamma,!}((u_!(F))_{f(x)}) \simeq F_y$.
	By assumption $F_x\simeq (u_!(F))_{f(x)}$.
	The conclusion thus follows.
\end{proof}

%

This leads to the following neat description of Stokes functors when $\cX$ admits an initial object:

\begin{prop}\label{prop:Stokes_functors_in_presence_of_initial_object}
	Let $p \colon \cI \to \cX$ be a cocartesian fibration in posets.
	If $\cX$ admits an initial object $x$, then the adjunction
	\[ j_{x,!} \colon \Fun(\cI_x, \cE) \leftrightarrows \Fun(\cI,\cE) \colon j_x^\ast \]
	restricts to an equivalence of $\infty$-categories between $\St_{\cI_x,\cE}$ and $\St_{\cI,\cE}$.
\end{prop}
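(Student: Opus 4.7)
The plan is to leverage \cref{induction_from_initial_preserves_cocart_functors}, which already establishes that the adjunction $j_{x,!} \dashv j_x^\ast$ restricts to an equivalence
\[ j_{x,!} \colon \Fun(\cI_x, \cE) \leftrightarrows \Funcocart(\cI, \cE) \colon j_x^\ast \ . \]
Since $\St_{\cI,\cE}$ is the full subcategory of $\Funcocart(\cI,\cE)$ spanned by punctually split functors, and since $\cI_x \to \{x\}$ has trivial base so that $\St_{\cI_x,\cE}$ coincides with the full subcategory $\Fun_{\PS}(\cI_x,\cE) \subseteq \Fun(\cI_x,\cE)$ (cocartesianness being automatic over a point, and punctual splitness reducing to splitness at $x$), it suffices to check that the restriction of this adjoint equivalence to these full subcategories is well defined on both sides.

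First, I would check that $j_x^\ast$ sends $\St_{\cI,\cE}$ into $\Fun_{\PS}(\cI_x,\cE)$. This is tautological: if $F \in \Funcocart(\cI,\cE)$ is punctually split, then by definition $j_x^\ast(F)$ lies in the essential image of $i_{\cI_x,!}$.

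Next, I would check that $j_{x,!}$ sends $\Fun_{\PS}(\cI_x,\cE)$ into $\St_{\cI,\cE}$. Given $G \in \Fun_{\PS}(\cI_x,\cE)$, the functor $j_{x,!}(G)$ is cocartesian by \cref{induction_from_initial_preserves_cocart_functors}-(1). Moreover, since the unit $G \to j_x^\ast j_{x,!}(G)$ is an equivalence (the inclusion $\{x\} \hookrightarrow \cX$ being fully faithful, as $x$ is initial), we conclude that $j_{x,!}(G)$ is split at $x$. For any other $y \in \cX$, initiality of $x$ provides a morphism $\gamma \colon x \to y$, and \cref{lem:propagation_split_for_cocartesian} (in its $f = \id_\cX$ form) guarantees that a cocartesian functor which is split at $x$ is automatically split at $y$. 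Hence $j_{x,!}(G)$ is punctually split, so it lies in $\St_{\cI,\cE}$.

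Having verified both restrictions, the induced adjunction between $\St_{\cI_x,\cE}$ and $\St_{\cI,\cE}$ is an equivalence because its composite with the inclusions $\St_{\cI_x,\cE} \hookrightarrow \Fun(\cI_x,\cE)$ and $\St_{\cI,\cE} \hookrightarrow \Funcocart(\cI,\cE)$ recovers the equivalence of \cref{induction_from_initial_preserves_cocart_functors}-(2). There is no serious obstacle here; the only point requiring care is the verification that cocartesianness propagates splitness, which is precisely the content of \cref{lem:propagation_split_for_cocartesian}.
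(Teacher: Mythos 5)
Your proof is correct and follows essentially the same route as the paper: both reduce to \cref{induction_from_initial_preserves_cocart_functors} for the cocartesian equivalence, observe that $j_x^\ast$ tautologically preserves punctual splitness, and then use \cref{lem:propagation_split_for_cocartesian} together with cocartesianness to show that $j_{x,!}$ preserves it as well. Your version merely spells out the steps in a bit more detail (in particular the full-faithfulness of $j_{x,!}$ giving splitness at $x$), which is fine.
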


\begin{proof}
	Using \cref{induction_from_initial_preserves_cocart_functors}, we see that both $j_{x,!}$ and $j_x^\ast$ preserve cocartesian functors and that it restricts to an equivalence between $\Funcocart(\cI_x, \cE)$ and $\Funcocart(\cI,\cE)$.
	That $j_x^\ast$ preserves punctually split functors follows directly from the definition.
	On the other hand, combining together \cref{induction_from_initial_preserves_cocart_functors} and \cref{lem:propagation_split_for_cocartesian} we see that $j_{x,!}$ also preserves the punctually split condition.
	The conclusion follows.
\end{proof}

\begin{cor}\label{cor:induction_from_initial_object_is_essentially_surjective}
	Let $p \colon \cI \to \cX$ be an object of $\PosFib$.
	Assume that $\cX$ admits an initial object.
	Let $\cE$ be a presentable $\infty$-category.
	Then, the induction 
	\[ i_{\cI ! } \colon \St_{\cI^{\ens},\cE}\to  \St_{\cI,\cE} \]
	is essentially surjective.
	That is, every Stokes functor $F \colon \cI \to \cE$ splits.
\end{cor}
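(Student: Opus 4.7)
The plan is to exploit \cref{prop:Stokes_functors_in_presence_of_initial_object} together with the pointwise splitting at the initial object, and then transport the splitting back to $\cI$ via the naturality of $i$.

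Let $x$ be an initial object of $\cX$ and let $F \in \St_{\cI,\cE}$. By \cref{prop:Stokes_functors_in_presence_of_initial_object}, the counit
\[ j_{x,!}(j_x^\ast(F)) \to F \]
is an equivalence in $\St_{\cI,\cE}$, and $j_x^\ast(F)$ lies in $\St_{\cI_x,\cE}$. But $\cI_x$ lives over the point, so by \cref{eg:Stokes_structures_at_a_point} the $\infty$-category $\St_{\cI_x,\cE}$ coincides with the essential image of
\[ i_{\cI_x,!} \colon \Fun(\cI_x^{\ens}, \cE) \to \Fun(\cI_x, \cE) \ . \]
Therefore there exists $G \colon \cI_x^{\ens} \to \cE$ together with an equivalence $j_x^\ast(F) \simeq i_{\cI_x,!}(G)$.

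Consider next the natural inclusions $j_x^{\ens} \colon \cI_x^{\ens} \to \cI^{\ens}$ and $i_{\cI} \colon \cI^{\ens} \to \cI$ and observe that the naturality of the transformation $i \colon (-)^{\ens} \to \id_{\PosFib}$ from \cref{notation_Iset} provides a commutative square
\[ \begin{tikzcd}
\cI_x^{\ens} \arrow{r}{j_x^{\ens}} \arrow{d}{i_{\cI_x}} & \cI^{\ens} \arrow{d}{i_\cI} \\
\cI_x \arrow{r}{j_x} & \cI \ .
\end{tikzcd} \]
Passing to left Kan extensions, the functoriality of $(-)_!$ along the two compositions yields a canonical equivalence
\[ j_{x,!} \circ i_{\cI_x,!} \simeq i_{\cI,!} \circ j_x^{\ens}{}_! \]
of functors $\Fun(\cI_x^{\ens},\cE) \to \Fun(\cI,\cE)$. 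Combining with the first paragraph we obtain
\[ F \simeq j_{x,!}(i_{\cI_x,!}(G)) \simeq i_{\cI,!}(j_x^{\ens}{}_!(G)) \ . \]

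It remains to verify that $\tilde G \coloneqq j_x^{\ens}{}_!(G)$ lies in $\St_{\cI^{\ens},\cE}$. Since $\cI^{\ens} \to \cX$ is a cocartesian fibration with discrete fibers, \cref{eg:Stokes_functors_discrete_fibration} identifies $\St_{\cI^{\ens},\cE}$ with $\Funcocart(\cI^{\ens},\cE)$, and the punctual splitting condition is automatic. Applying \cref{induction_from_initial_preserves_cocart_functors} to the cocartesian fibration $\cI^{\ens} \to \cX$ (whose base still has initial object $x$), we conclude that $j_x^{\ens}{}_!(G)$ is cocartesian, hence Stokes. This exhibits $F$ as $i_{\cI,!}(\tilde G)$, and the main point requiring care is precisely the compatibility of $(-)_!$ with the naturality square above, which we handle by the universal property of left Kan extensions applied to equal compositions of functors.
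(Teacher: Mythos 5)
Your proof is correct and takes essentially the same approach as the paper: you have unpacked, at the level of objects, the same commutative square of induction functors, the same two equivalences from \cref{prop:Stokes_functors_in_presence_of_initial_object}, and the same essential surjectivity of $i_{\cI_x,!}$ from \cref{eg:Stokes_structures_at_a_point} that the paper uses; the interchange $j_{x,!}\circ i_{\cI_x,!}\simeq i_{\cI,!}\circ j^{\ens}_{x,!}$ you verify by hand is exactly the commutativity of the square that the paper invokes implicitly.
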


\begin{proof}
	Let $x$ be an initial object in $\cX$. 
	From  \cref{prop:Stokes_functors_in_presence_of_initial_object}, the horizontal arrows of the following commutative square
	\[ \begin{tikzcd}
		\St_{\cI_x^{\ens},\cE}   \arrow{r}{j_{x !}^{\ens}}\arrow{d}{i_{\cI_x !}}  &   \St_{\cI^{\ens},\cE}    \arrow{d}{i_{\cI !}}   \\
		\St_{\cI_x,\cE}  \arrow{r}{j_{x !}}&  	\St_{\cI,\cE}
	\end{tikzcd} \]
	are equivalences. 
	On the other hand, the left vertical arrow is essentially surjective by definition.
\end{proof}

\begin{warning}
	The splitting produced by \cref{cor:induction_from_initial_object_is_essentially_surjective} is not unique nor canonical.
\end{warning}

%

\subsection{Functoriality for punctually split and Stokes functors}

Fix a morphism
\[ \begin{tikzcd}
	\cJ \arrow{d} & \cJ_\cX \arrow{d} \arrow{r}{v} \arrow{l}[swap]{u} & \cI \arrow{dl} \\
	\cY & \cX \arrow{l}[swap]{f}
\end{tikzcd} \]
in $\PosFib$.
We now show that the basic functorialities of pullback and induction are well behaved with respect to punctually split and Stokes functors.
We start at the exponential level:

\begin{prop}\label{cor:stokes_functoriality_exponential}
	The functors
	\[ \cE^u \colon \exp_\cE(\cJ / \cY) \to \exp_\cE(\cJ_\cX / \cX) \qquad \text{and} \qquad \cE^v_! \colon \exp_\cE(\cJ_\cX / \cX) \to \exp_\cE(\cI / \cX) \]
	respect the punctually split sub-cocartesian fibrations and thus they induce the following commutative diagram:
	\begin{equation}\label{eq:stokes_functoriality_exponential}
		\begin{tikzcd}
			\exp_\cE^{\PS}(\cJ / \cY) \arrow[hook]{d} & \exp_\cE^{\PS}(\cJ_\cX / \cX) \arrow{l}[swap]{\cE^u} \arrow{r}{\cE^v_!} \arrow[hook]{d} & \exp_\cE^{\PS}(\cI / \cX) \arrow[hook]{d} \\
			\exp_\cE(\cJ / \cY) \arrow{d} & \exp_\cE(\cJ_\cX / \cX) \arrow{r}{\cE^v_!} \arrow{l}[swap]{\cE^u} \arrow{d} & \exp_\cE(\cI / \cX) \arrow{dl} \\
			\cY & \cX \arrow{l}
		\end{tikzcd}
	\end{equation}
	whose left squares are pullbacks.
\end{prop}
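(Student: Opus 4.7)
The plan is to assemble the conclusion from two ingredients: the naturality of the transformation $i \colon (-)^{\ens} \to \id_{\PosFib}$ from \cref{notation_Iset}, and the fact that the essential image construction commutes with pullback in $\CoCart$ (see \cref{EssIm}, together with \cref{pullback_induction_exp}).

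First, I would argue that the construction $(-)^{\ens} \colon \PosFib \to \PosFib$ commutes with pullback. This is fiberwise immediate: for every $x \in \cX$, the fiber of $f^\ast(\cJ)^{\ens}$ at $x$ is $(\cJ_{f(x)})^{\ens} = (f^\ast \cJ^{\ens})_x$. Applied to the original morphism in $\PosFib$, this yields the diagram
\[ \begin{tikzcd}
\cJ^{\ens} \arrow{d}{i_\cJ} & \cJ_\cX^{\ens} \arrow{l}[swap]{u^{\ens}} \arrow{r}{v^{\ens}} \arrow{d}{i_{\cJ_\cX}} & \cI^{\ens} \arrow{d}{i_\cI} \\
\cJ & \cJ_\cX \arrow{l}[swap]{u} \arrow{r}{v} & \cI
\end{tikzcd} \]
in $\PosFib$, where the left square is cartesian. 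Applying the functor $\exp_\cE \colon \PosFib \to \hCoCart$ to this diagram and invoking \cref{pullback_induction_exp} for the left square, we obtain
\[ \begin{tikzcd}
\exp_\cE(\cJ^{\ens}/\cY) \arrow{d}{\cE^{i_\cJ}_!} & \exp_\cE(\cJ_\cX^{\ens}/\cX) \arrow{l}[swap]{\cE^{u^{\ens}}} \arrow{r}{\cE^{v^{\ens}}_!} \arrow{d}{\cE^{i_{\cJ_\cX}}_!} & \exp_\cE(\cI^{\ens}/\cX) \arrow{d}{\cE^{i_\cI}_!} \\
\exp_\cE(\cJ/\cY) & \exp_\cE(\cJ_\cX/\cX) \arrow{l}[swap]{\cE^u} \arrow{r}{\cE^v_!} & \exp_\cE(\cI/\cX)
\end{tikzcd} \]
in which the left square is again a pullback.

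Next, I would apply the essential image construction to the vertical morphisms. By \cref{EssIm}, the formation of $\mathrm{EssIm}$ commutes with pullback, so the cartesianness of the left square above gives the identification
\[ \exp_\cE^{\PS}(\cJ_\cX/\cX) = \mathrm{EssIm}(\cE^{i_{\cJ_\cX}}_!) \simeq (\cE^u)^\ast \mathrm{EssIm}(\cE^{i_\cJ}_!) = (\cE^u)^\ast \exp_\cE^{\PS}(\cJ/\cY) \ . \]
This simultaneously shows that $\cE^u$ restricts to a functor $\exp_\cE^{\PS}(\cJ_\cX/\cX) \to \exp_\cE^{\PS}(\cJ/\cY)$ and that the left square in \eqref{eq:stokes_functoriality_exponential} is a pullback. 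For the right square, the commutativity of the second square of the three-column diagram above shows that for any object $F \simeq \cE^{i_{\cJ_\cX}}_!(F_0)$ in $\exp_\cE^{\PS}(\cJ_\cX/\cX)$ one has
\[ \cE^v_!(F) \simeq \cE^v_! \cE^{i_{\cJ_\cX}}_!(F_0) \simeq \cE^{i_\cI}_! \cE^{v^{\ens}}_!(F_0) \ , \]
which lies in $\mathrm{EssIm}(\cE^{i_\cI}_!) = \exp_\cE^{\PS}(\cI/\cX)$. Hence $\cE^v_!$ restricts as required, and the commutativity of \eqref{eq:stokes_functoriality_exponential} follows at once from the commutativity of the corresponding diagram in \eqref{eq:exp_on_morphisms}.

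The only subtle point, which does not really constitute an obstacle given the foundations already in place, is the promotion of the above arguments to the $\infty$-functorial setting of $\hCoCart$; this is however handled by the functoriality of $\exp_\cE$ established in \cite{PortaTeyssier_Day} together with \cref{prop:functoriality_exponential}, so that every step is compatible with cocartesian edges over the relevant base.
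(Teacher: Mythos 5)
Your proof is correct and arrives at the same conclusion, but it takes a more structural route than the paper's own argument. For the left (cartesian) square, the paper simply unwinds: an object of $\exp_\cE(\cJ_\cX/\cX)$ is a pair $(x,F)$ with $F \colon (\cJ_\cX)_x \to \cE$, and $\cE^u$ takes $(x,F)$ to $(f(x),F)$ under the canonical identification $(\cJ_\cX)_x \simeq \cJ_{f(x)}$; hence $\cE^u$ preserves and reflects the split-at-$x$ condition, and the square is at once commutative and a pullback. For the right square, the paper argues fiberwise from the commutativity of $i_{\cJ_\cX} \circ (\text{--}) = (\text{--}) \circ i_\cI$ over the fiber at $x$. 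Your version reorganizes this: you first observe that $(-)^{\ens}$ commutes with base change, form the three-column square in $\PosFib$ with vertical morphisms $i_\cJ$, $i_{\cJ_\cX}$, $i_\cI$, push through $\exp_\cE$ via \cref{pullback_induction_exp}, then invoke \cref{EssIm} (essential image commutes with pullback along the base) for the left square and a composition identity $\cE^v_! \circ \cE^{i_{\cJ_\cX}}_! \simeq \cE^{i_\cI}_! \circ \cE^{v^{\ens}}_!$ for the right. This buys you a cleaner deduction from stated lemmas at the price of some indirection; the paper's version is shorter and more concrete. One small point of rigor: you write $(\cE^u)^\ast \mathrm{EssIm}(\cE^{i_\cJ}_!)$, but \cref{EssIm} licenses pullback along the base morphism $f \colon \cX \to \cY$. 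These agree because the square exhibiting $\cE^u$ over $f$ is a pullback (\cref{prop:functoriality_exponential}-(1)), so your step is justified, but it is worth making that identification explicit rather than leaving it as an implicit reinterpretation of the lemma's hypothesis.
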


\begin{proof}
	An object in $\exp_\cE(\cJ_\cX / \cX)$ is a pair $(x,F)$, where $x \in \cX$ and $F \colon (\cJ_\cX)_x \to \cE$.
	The functor $\cE^u$ takes $(x,F)$ to $(f(x),F)$, where $F$ is now seen as a functor from $\cI_{f(x)} \simeq (\cJ_\cX)_x$ to $\cE$.
	In particular, $\cE^u$ preserves and reflects the punctually spit condition, which shows that the top left square is both commutative and a pullback.
	On the other hand, the commutativity of
	\[ \begin{tikzcd}
		(\cJ_\cX^{\ens})_x \arrow{r}{v_x^{\ens}} \arrow{d}{i_{\cJ_\cX}} & \cI_x^{\ens} \arrow{d}{i_\cI} \\
		(\cJ_\cX)_x \arrow{r}{v_x} & \cI_x
	\end{tikzcd} \]
	immediately implies that $\cE^v_!$ preserves the condition of being split at $x$.
\end{proof}

\begin{cor}\label{cor:ps_functoriality}
	In the above setting:
	\begin{enumerate}\itemsep=0.2cm
		\item Let  $F \colon \cJ \to \cE$ be a functor. 
		Let $x\in \cX$ be an object.
		Then, $F$ is punctually split at $f(x)$ if and only if $u^\ast(F)$ is punctually split at $x$.
		
		\item if $G \colon \cJ_X \to \cE$ is punctually split, then the same goes for $v_!(G) \colon \cI \to \cE$.
	\end{enumerate}
	In particular the functors
	\[ u^\ast \colon \Fun(\cJ, \cE) \to  \Fun(\cJ_\cX, \cE) \qquad \text{and} \qquad v_! \colon \Fun(\cJ_\cX, \cE) \to  \Fun(\cI, \cE) \]
	restrict to well-defined functors
	\[ u^\ast \colon \Fun_{\PS}(\cJ,\cE)\to  \Fun_{\PS}(\cJ_X,\cE) \qquad \text{and} \qquad v_! \colon \Fun_{\PS}(\cJ_X,\cE) \to  \Fun_{\PS}(\cI,\cE) \ . \]
\end{cor}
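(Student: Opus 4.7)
The plan is to deduce both statements directly from \cref{cor:stokes_functoriality_exponential} via the specialization equivalence. Recall from \cref{cocartesian_sections_as_functors} combined with \cref{prop:evaluation_specialization} that a functor $F \colon \cJ \to \cE$ is punctually split at a point $y \in \cY$ if and only if the object $(\spe F)_y \simeq j_y^*(F) \in \exp_\cE(\cJ/\cY)_y$ lies in the subcategory $\exp_\cE^{\PS}(\cJ/\cY)_y$. Thus, checking the punctually split condition is equivalent to checking, for each point of the base, that the corresponding fiber of the specialization section lies in the punctually split sub-fibration.

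For (1), I would apply \cref{prop:global_functoriality}-(\ref{prop:global_functoriality:pullback}) to identify $\spe(u^*F) \simeq \cE^u \circ \spe(F) \circ f$ (up to the canonical equivalences). Fix $x \in \cX$. Since the leftmost square of \eqref{eq:stokes_functoriality_exponential} is a pullback, the fiber of $\cE^u$ at $x$ is an equivalence $\exp_\cE(\cJ/\cY)_{f(x)} \simeq \exp_\cE(\cJ_\cX/\cX)_x$ that restricts to an equivalence between the sub-$\infty$-categories of split functors at $f(x)$ and at $x$ respectively (this is precisely the content of the pullback square in \eqref{eq:stokes_functoriality_exponential}). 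Therefore $(\spe F)_{f(x)}$ lies in $\exp_\cE^{\PS}(\cJ/\cY)_{f(x)}$ if and only if $(\spe u^*F)_x$ lies in $\exp_\cE^{\PS}(\cJ_\cX/\cX)_x$, which is the desired equivalence.

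For (2), I would apply \cref{prop:global_functoriality}-(\ref{prop:global_functoriality:induction}), yielding a canonical identification $\spe(v_!G) \simeq \cE^v_! \circ \spe(G)$. By \cref{cor:stokes_functoriality_exponential}, the exponential induction $\cE^v_!$ restricts to a morphism $\exp_\cE^{\PS}(\cJ_\cX/\cX) \to \exp_\cE^{\PS}(\cI/\cX)$. Hence if $\spe(G)$ factors through $\exp_\cE^{\PS}(\cJ_\cX/\cX)$, then so does $\spe(v_!G)$, which means exactly that $v_!(G)$ is punctually split.

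There is essentially no obstacle here: all the real work has been pushed into \cref{cor:stokes_functoriality_exponential}, whose proof proceeds fiberwise using the commutative square $v_x \circ i_{\cJ_\cX} = i_\cI \circ v_x^{\ens}$. The final claim concerning the restrictions of $u^*$ and $v_!$ to full subcategories of punctually split functors is then a formal consequence of (1) and (2).
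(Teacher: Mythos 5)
Your proposal is correct and follows essentially the same path as the paper, which proves the corollary by applying $\Sigma_\cX^{(\cocart)}$ to the commutative diagram \eqref{eq:stokes_functoriality_exponential} and invoking \cref{prop:global_functoriality}. You have simply unwound this one-line argument by tracing what the specialization equivalence does at each fiber, which is a valid and faithful elaboration; the only minor blemish is that the identity $\spe(u^*F) \simeq \cE^u \circ \spe(F) \circ f$ is slightly abusive in its typing (since $\cE^u$ maps $\exp_\cE(\cJ_\cX/\cX)$ to $\exp_\cE(\cJ/\cY)$, the correct statement is that $\spe(u^*F)$ is the unique lift of $\spe(F)\circ f$ through the pullback square of \cref{prop:functoriality_exponential}-(1)), but you clearly intend this and your subsequent fiberwise analysis uses it correctly.
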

\begin{proof}
	Apply $\Sigma_\cX$ to the commutative diagram \eqref{eq:stokes_functoriality_exponential} and use \cref{prop:global_functoriality}.
\end{proof}

\begin{cor}\label{cor:stokes_functoriality}
	In the above setting:
	\begin{enumerate}\itemsep=0.2cm
		\item \label{cor:stokes_functoriality:pullback} if $F \colon \cJ \to \cE$ is a Stokes functor, the same goes for $u^\ast(F) \colon \cJ_\cX \to \cE$;
		
		\item \label{cor:stokes_functoriality:induction} if $G \colon \cJ_X \to \cE$ is a Stokes functor, then the same goes for $v_!(G) \colon \cI \to \cE$.
	\end{enumerate}
	Thus, the functors
	\[ u^\ast \colon \Fun(\cJ, \cE) \to  \Fun(\cJ_\cX, \cE) \qquad \text{and} \qquad v_! \colon \Fun(\cJ_\cX, \cE) \to  \Fun(\cI, \cE) \]
	restrict to well-defined functors
	\[ u^\ast \colon \St_{\cJ,\cE} \to  \St_{\cJ_X,\cE}  \qquad \text{and} \qquad v_! \colon \St_{\cJ_X,\cE} \to  \St_{\cI,\cE} \ . \]
\end{cor}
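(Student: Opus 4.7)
The statement asserts that both pullback $u^\ast$ and induction $v_!$ preserve Stokes functors. Since a Stokes functor is by definition a functor that is simultaneously cocartesian and punctually split (see \cref{def_Stokes} and the surrounding remark), the plan is simply to invoke the corresponding functoriality results already established for each of these two conditions separately, and then observe that Stokes functors are precisely the intersection of the two properties inside $\Fun(\cJ,\cE)$ and $\Fun(\cI,\cE)$.

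More precisely, for statement (\ref{cor:stokes_functoriality:pullback}), if $F \colon \cJ \to \cE$ is Stokes, then $u^\ast(F)$ is cocartesian by \cref{prop:cocartesian_functoriality}-(\ref{prop:cocartesian_functoriality:pullback}), and it is punctually split by \cref{cor:ps_functoriality}-(1) (applied at every point $x \in \cX$, using that $F$ is split at $f(x)$). Hence $u^\ast(F)$ is Stokes. Similarly, for (\ref{cor:stokes_functoriality:induction}), if $G \colon \cJ_\cX \to \cE$ is Stokes, then $v_!(G)$ is cocartesian by \cref{prop:cocartesian_functoriality}-(\ref{prop:cocartesian_functoriality:induction}) and it is punctually split by \cref{cor:ps_functoriality}-(2). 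The restricted functors at the level of $\St_{\cJ,\cE}$ and $\St_{\cI,\cE}$ are then induced by the restriction of $u^\ast$ and $v_!$ to the appropriate full subcategories of $\Fun(\cJ,\cE)$, $\Fun(\cJ_\cX,\cE)$ and $\Fun(\cI,\cE)$.

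There is no real obstacle here: the entire content of the statement has already been absorbed into the two preceding results. The work was done earlier, namely in (a) proving that the exponential construction is functorial on $\CoCart$ with values in $\PrFibL$ and that this functoriality interacts properly with the specialization equivalence (which gives \cref{prop:cocartesian_functoriality}), and (b) checking at the exponential level that $\cE^u$ and $\cE^v_!$ respect the essential image sub-cocartesian fibrations $\exp_\cE^{\PS}(-/-)$, producing the commutative diagram \eqref{eq:stokes_functoriality_exponential}, from which \cref{cor:ps_functoriality} follows by applying the section functor $\Sigma_\cX$ and invoking \cref{prop:global_functoriality}. Consequently, the proof will be a two-line assembly referencing these results.
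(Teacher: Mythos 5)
Your proposal is correct and matches the paper's own proof, which likewise reduces the corollary to \cref{prop:cocartesian_functoriality} and \cref{cor:ps_functoriality} (the paper phrases it as applying $\Sigma_\cX^{\cocart}$ to the exponential diagram \eqref{eq:stokes_functoriality_exponential}, but the substance is identical). No gaps; this is the intended two-line assembly.
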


\begin{proof}
	Apply $\Sigma_\cX^{\cocart}$ to the commutative diagram \eqref{eq:stokes_functoriality_exponential} and combine \cref{prop:cocartesian_functoriality} and \cref{cor:ps_functoriality}.
\end{proof}

We conclude this section with the following generalization of \cref{prop:Stokes_functors_in_presence_of_initial_object}:

\begin{prop}\label{Stokes_and_localization}
	Let
	\[ \begin{tikzcd}
		\cJ \arrow{r}{g} \arrow{d} & \cI \arrow{d} \\
		\cY \arrow{r}{f} & \cX
	\end{tikzcd} \]
	be a pullback square in $\Cat_\infty$, where the vertical morphisms are cocartesian fibrations in posets.
	Assume that $f \colon \cY \to \cX$ is a localization functor.
	Let $\cE$ be a presentable $\infty$-category.
	Then, the following statements hold :
	\begin{enumerate}\itemsep=0.2cm
		\item Let $F\in \Fun(\cI, \cE)$. Then, $F$ is a Stokes functor if and only if so is $g^*(F)$.
		
		\item Let $G\in \Fun(\cJ, \cE)$. If $G$ is a Stokes functor, then so is $g_!(G)$.
		
		\item The adjunction $g_! \dashv g^*$ induces an equivalence of $\infty$-categories between $\St_{\cI,\cE}$ and $\St_{\cJ,\cE}$.
	\end{enumerate}
\end{prop}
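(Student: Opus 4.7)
The plan is to reduce the entire statement to the cocartesian case already established in \cref{cocart_and_localization}, by showing that the punctual splitness condition is itself invariant under the pullback along $f$. The crucial observation is that a localization functor is essentially surjective, so the condition ``punctually split'' transports transparently across the square.

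First I would handle (1). The cocartesian equivalence of \cref{cocart_and_localization}-(1) reduces the task to showing that $F$ is punctually split if and only if $g^*(F)$ is. Because the square is a pullback in $\PosFib$, the fiber identification $\cJ_y \simeq \cI_{f(y)}$ is compatible with the canonical map $i_{(-)}$ of \cref{notation_Iset}, so \cref{cor:ps_functoriality}-(1) yields that $g^*(F)$ is split at $y$ iff $F$ is split at $f(y)$. Since $f$ is a localization it is essentially surjective, so these two pointwise conditions are equivalent when imposed at all points.

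Next I would prove (2). Let $G\in\St_{\cJ,\cE}$; by \cref{cocart_and_localization}-(2), $g_!(G)$ is cocartesian. Fix $x\in\cX$ and, using essential surjectivity of $f$, choose $y\in\cY$ with $f(y)\simeq x$. By \cref{cocart_and_localization}-(3) the unit $G \to g^*g_!(G)$ is an equivalence on cocartesian functors, and by the general formalism $j_y^*(g^*g_!(G)) \simeq j_x^*(g_!(G))$ under $\cJ_y\simeq\cI_x$. Hence $j_x^*(g_!(G))\simeq j_y^*(G)$, which is split at $y$ because $G$ is; consequently $g_!(G)$ is split at $x$, and as $x$ was arbitrary, $g_!(G)$ is punctually split, i.e.\ Stokes.

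Finally for (3), parts (1) and (2) show that the adjunction $g_!\dashv g^*$ restricts to an adjunction between $\St_{\cJ,\cE}$ and $\St_{\cI,\cE}$. These are full subcategories of $\Funcocart(\cJ,\cE)$ and $\Funcocart(\cI,\cE)$ respectively, on which the unit and the counit of $g_!\dashv g^*$ are already equivalences by \cref{cocart_and_localization}-(3). Therefore the restricted unit and counit remain equivalences, so the restricted adjunction is an equivalence.

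There is no real obstacle; the slight bookkeeping step is checking that fiberwise the identification $\cJ_y \simeq \cI_{f(y)}$ sends the split condition to the split condition, but this is automatic from pullback-stability of the essential image construction of \cref{EssIm} together with the commutative diagram \eqref{eq:stokes_functoriality_exponential} of \cref{cor:stokes_functoriality_exponential}, whose top-left square is a pullback.
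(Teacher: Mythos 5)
Your proof is correct and follows essentially the same approach as the paper: reduce to the cocartesian case via \cref{cocart_and_localization}, then transfer the punctually split condition fiberwise using \cref{cor:ps_functoriality}-(1) and the essential surjectivity of $f$. The only cosmetic difference is in (2), where the paper invokes part (1) to reduce to checking that $g^*g_!(G)$ is punctually split, whereas you unwind this at each point $x = f(y)$ directly; the two phrasings carry identical content.
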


\begin{proof}
	The claim (1) follows from  \cref{cocart_and_localization}-(1) and \cref{cor:ps_functoriality}-(1).
	Let $G \colon \cJ\to \cE$ be a Stokes functor.
	From \cref{cocart_and_localization}-(2) the functor $g_!(G)$ is cocartesian. 
	To check that $g_!(G)$ is punctually split amounts to show by (1) that $g^*(g_!(G))$ is punctually split.  
	On the other hand, \cref{cocart_and_localization}-(3) gives $g^*(g_!(G))\simeq G$ and (2) is proved.
	The claim (3) then follows from \cref{cocart_and_localization}-(3).
\end{proof}

\subsection{Stokes functors and (co)limits}

Stokes functors are poorly behaved with respect to limits and colimits, as the following next two lemmas are essentially the only stability properties one gets in general:

\begin{prop}\label{prop:stability_lim_colim_discrete_fibrations}
	Let $p \colon \cI \to \cX$ be a cocartesian fibration in \emph{sets}, seen as an object in $\PosFib$.
	Then $\St_{\cI,\cE}$ is presentable and furthermore:
	\begin{enumerate}\itemsep=0.2cm
		\item \label{prop:stability_lim_colim_discrete_fibrations:lim} $\St_{\cI, \cE}$ is stable under colimits in $\Fun(\cI, \cE)$.
		
		\item \label{prop:stability_lim_colim_discrete_fibrations:colim} Assume additionally that the fibers of $p$ are finite and that $\cE$ is presentable stable.
		Then $\St_{\cI,\cE}$ is stable under limits in $\Fun(\cI, \cE)$.
	\end{enumerate}
\end{prop}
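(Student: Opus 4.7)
The plan is to reduce everything to statements about cocartesian functors that have been established earlier in the paper. Since $p \colon \cI \to \cX$ is a cocartesian fibration in sets, the canonical morphism $i_\cI \colon \cI^{\ens} \to \cI$ of \cref{notation_Iset} is an equivalence. It follows that every functor $F \colon \cI \to \cE$ is automatically punctually split. Combining this observation with \cref{eg:Stokes_functors_discrete_fibration}, we obtain a canonical identification
\[ \St_{\cI,\cE} \simeq \Funcocart(\cI,\cE) \]
as full subcategories of $\Fun(\cI,\cE)$. Presentability of $\St_{\cI,\cE}$ is then a direct consequence of \cref{cor:cocart_presentable_stable}.

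For (\ref{prop:stability_lim_colim_discrete_fibrations:lim}), closure under colimits in $\Fun(\cI,\cE)$ is exactly the content of \cref{local_cocart_stability_colimits} (or equivalently follows from \cref{cor:cocartesianization}), applied fiberwise to every morphism of $\cX$.

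For (\ref{prop:stability_lim_colim_discrete_fibrations:colim}), I would invoke \cref{cocart_stability_limits}, whose hypotheses are that $\cE$ is presentable stable and that the cocartesian fibration has compact and proper fibers. Both conditions on the fibers are immediate: a finite set, seen as a discrete $\infty$-category, is finite and hence compact in $\Cat_\infty$, and every $0$-truncated $\infty$-category is proper (in the sense of \cref{proper_category}). The conclusion then follows.

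There is no real obstacle here: the entire argument is a packaging of earlier results, whose applicability is transparent once one observes that the sets hypothesis makes the ``punctually split'' condition trivial and that finite sets satisfy the finiteness assumptions required by the colimit/limit preservation statements for cocartesian functors.
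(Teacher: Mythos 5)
Your proof is correct and follows the same route as the paper: identify $\St_{\cI,\cE}$ with $\Funcocart(\cI,\cE)$ via \cref{eg:Stokes_functors_discrete_fibration}, then cite \cref{cor:cocart_presentable_stable}, \cref{local_cocart_stability_colimits}, and \cref{cocart_stability_limits} respectively. Your additional remarks spelling out why finite sets are compact and proper as $\infty$-categories are a welcome clarification but do not change the substance.
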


\begin{proof}
	Via the equivalence $\St_{\cI,\cE} \simeq \Funcocart(\cI, \cE)$ of \cref{eg:Stokes_functors_discrete_fibration}, presentability follows from \cref{cor:cocart_presentable_stable}, statement (1) follows from \cref{local_cocart_stability_colimits} and statement (2) follows from \cref{cocart_stability_limits}.
\end{proof}

More in general, we have:

\begin{lem}\label{lem:Stokes_functors_coproducts}
	Let $p \colon \cI \to \cX$ be a cocartesian fibration in posets.
	Then $\St_{\cI,\cE}$ is closed under arbitrary coproducts in $\Fun(\cI,\cE)$.
\end{lem}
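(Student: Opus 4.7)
The plan is to verify the two defining conditions for Stokes functors (cocartesian and punctually split) separately, since coproducts in $\Fun(\cI,\cE)$ are computed objectwise in the cocomplete $\infty$-category $\cE$.

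First, stability under coproducts of the cocartesian condition is immediate from \cref{local_cocart_stability_colimits}, which gives more generally that $\Funcocart(\cI,\cE)$ is closed under all colimits of $\Fun(\cI,\cE)$.

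Second, for the punctually split condition, fix an object $x \in \cX$ and a family $\{F_\alpha\}_{\alpha \in A}$ of functors split at $x$, with splittings $j_x^\ast(F_\alpha) \simeq i_{\cI_x,!}(G_\alpha)$ for some $G_\alpha \colon \cI_x^{\ens} \to \cE$. The restriction functor
\[ j_x^\ast \colon \Fun(\cI,\cE) \to \Fun(\cI_x,\cE) \]
commutes with all colimits (they are computed pointwise on both sides), while $i_{\cI_x,!}$ commutes with colimits as a left adjoint. Thus
\[ j_x^\ast\Big(\coprod_\alpha F_\alpha\Big) \simeq \coprod_\alpha j_x^\ast(F_\alpha) \simeq \coprod_\alpha i_{\cI_x,!}(G_\alpha) \simeq i_{\cI_x,!}\Big(\coprod_\alpha G_\alpha\Big), \]
which exhibits $\coprod_\alpha F_\alpha$ as split at $x$. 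Since this holds for every $x \in \cX$, the coproduct is punctually split.

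There is no genuine obstacle here: the argument is a formal consequence of the colimit-preservation of the functors $j_x^\ast$ and $i_{\cI_x,!}$, combined with the already established \cref{local_cocart_stability_colimits}. The only point worth highlighting is that the essential image of $i_{\cI_x,!}$ is generally not a presentable subcategory of $\Fun(\cI_x,\cE)$ (compare with \cref{eg:Stokes_structures_at_a_point}), so one cannot appeal to a generic ``presentable fibration'' argument; nevertheless, closure under coproducts (and more generally colimits) follows directly because left Kan extensions preserve them.
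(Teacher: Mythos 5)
Your proof is correct and follows the same route as the paper: reduce the cocartesian condition to \cref{cor:cocartesianization} (equivalently \cref{local_cocart_stability_colimits}), and handle the punctually split condition fiberwise using that $j_x^\ast$ and $i_{\cI_x,!}$ both preserve colimits. One caution, though, about your closing parenthetical ``(and more generally colimits)'': the same argument does \emph{not} give closure of the split (or Stokes) condition under arbitrary colimits, since a diagram $D \to \Fun(\cI_x,\cE)$ landing in the essential image of $i_{\cI_x,!}$ need not lift to a diagram $D \to \Fun(\cI_x^{\ens},\cE)$---the splittings are non-canonical, so coherent choices across a non-discrete $D$ generally fail to exist. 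Coproducts are special precisely because the indexing category is discrete, and this is exactly the point that makes \cref{stability_lim_colimi_thm} a nontrivial result requiring the full machinery of level structures rather than a formal consequence of $i_{\cI_x,!}$ preserving colimits.
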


\begin{proof}
	Thanks to \cref{cor:cocartesianization}, we know that cocartesian functors are closed under arbitrary colimits in $\Fun(\cI,\cE)$.
	Besides, for every $x \in \cX$, the restriction functor $j_x^\ast \colon \Fun(\cI,\cE) \to \Fun(\cI_x, \cE)$ commute with all colimits as well.
	This reduces us to the case where $\cX$ is a single point, and we have to prove that split functors are closed under coproducts.
	Let therefore $\{F_i\}_{i \in I}$ be a family of split functors and fix splittings
	\[ \alpha_i \colon i_{\cI,!}(V_i) \simeq F_i \ . \]
	Since $i_{\cI,!}$ commutes with colimits, it immediately follows that $\coprod_{i \in I} V_i$ provides a splitting for $\coprod_{i \in I} F_i$.
\end{proof}

\begin{defin}
Let $p \colon \cI \to \cX$  be an object of $\PosFib$ and let $\cC\subset \PrL$ be a full subcategory.
We say that $p \colon \cI \to \cX$  is \textit{$\cC$-bireflexive} if the full subcategory $\St_{\cI,\cE}$ of $\Fun(\cI,\cE)$ is closed under limits and colimits for every $\cE\in \cC$.
\end{defin}

\begin{eg}
If $\cC$ only consists in a single category $\cE$, we say that $p \colon \cI \to \cX$  is $\cE$-bireflexive.
If $\cC\subset \PrL$ is the collection of all presentable stable $\infty$-categories, we simply say that $p \colon \cI \to \cX$  is  stably bireflexive.
\end{eg}

\begin{rem}
\cite[\cref*{Geometric_Stokes-stability_lim_colim_ISt}]{Geometric_Stokes} provides many geometrical examples of stably bireflexive cocartesian fibrations in posets.
\end{rem}

\begin{lem}\label{bireflexive_implies_presentable_stable}
Let $p \colon \cI \to \cX$ be an object of $\PosFib$ and let $\cE$ be a presentable (stable) $\infty$-category such that $p \colon \cI \to \cX$ is $\cE$-bireflexive.
	Then $\St_{\cI,\cE}$ is a localization of $\Fun(\cI,\cE)$, and in particular it is presentable (stable).
\end{lem}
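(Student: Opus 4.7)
The plan is to obtain everything essentially for free from the bireflexivity hypothesis, which is as strong a closure property as one could wish for on a full subcategory of $\Fun(\cI,\cE)$.

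First, I would observe that $\Fun(\cI,\cE)$ is presentable, being a functor category from a small $\infty$-category into a presentable one. The hypothesis asserts that $\St_{\cI,\cE}$ is a full subcategory of $\Fun(\cI,\cE)$ closed under small limits and small colimits; in particular, it is closed under $\omega$-filtered colimits. By the standard $\infty$-categorical special adjoint functor theorem (see, e.g., \cite[Corollary 5.5.2.9]{HTT}), a full subcategory of a presentable $\infty$-category closed under small limits and $\kappa$-filtered colimits for some regular cardinal $\kappa$ is itself presentable, and the inclusion admits a left adjoint. Applying this with $\kappa=\omega$ yields the presentability of $\St_{\cI,\cE}$ together with a left adjoint to the inclusion $\St_{\cI,\cE} \hookrightarrow \Fun(\cI,\cE)$. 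This exactly exhibits $\St_{\cI,\cE}$ as an accessible reflective localization of $\Fun(\cI,\cE)$.

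For the stability assertion, suppose $\cE$ is presentable stable. Then $\Fun(\cI,\cE)$ is stable (pointwise), and $\St_{\cI,\cE}$ already contains the zero object: either directly, because the constant functor at $0 \in \cE$ is cocartesian and trivially punctually split, or alternatively via \cref{lem:Stokes_functors_coproducts} applied to the empty coproduct. By hypothesis, $\St_{\cI,\cE}$ is closed under finite limits and finite colimits in $\Fun(\cI,\cE)$, and a pointed full subcategory of a stable $\infty$-category with these closure properties is automatically stable by \cite[Lemma 1.1.3.3]{Lurie_Higher_algebra}. Combined with the previous paragraph, this completes the proof.

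There is no genuine obstacle in this extraction: given the maximally strong closure hypothesis, presentability and stability both follow by invoking standard results. The real content lies upstream, in establishing the bireflexive property itself in geometric situations — this is the nontrivial input supplied by the later \cref{stability_lim_colim_ISt}, and is precisely the striking point emphasized in the introduction. The lemma above is a purely formal transfer, turning that geometric input into the structural conclusion needed to apply the Toën--Vaquié machinery.
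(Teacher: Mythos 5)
Your stability argument is fine, but there is a genuine gap in the presentability step, and it lands precisely on the point the paper's citation is designed to address.

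You invoke HTT Corollary 5.5.2.9 to conclude that a full subcategory of a presentable $\infty$-category which is closed under small limits and $\kappa$-filtered colimits is itself presentable with reflective inclusion. That is not what HTT Corollary 5.5.2.9 says. The $\infty$-categorical adjoint functor theorem is a statement about functors \emph{between two presentable $\infty$-categories}: it produces a left adjoint for a limit-preserving accessible functor $F\colon \cC \to \cD$ only when $\cC$ is already known to be presentable (or at least accessible). If you try to apply it to the inclusion $\St_{\cI,\cE}\hookrightarrow\Fun(\cI,\cE)$, you are obliged to know in advance that $\St_{\cI,\cE}$ is accessible, and that is exactly what has not been established. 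The $\cE$-bireflexivity hypothesis gives closure under limits and colimits, but closure under colimits does not by itself imply accessibility; nothing in the definition of punctually split cocartesian functors hands you a regular cardinal $\kappa$ and a set of $\kappa$-compact generators.

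The ingredient that closes this gap is the $\infty$-categorical reflection theorem of Ragimov and Schlank, \cite[Theorem 1.1]{Ragimov_Schlank_Reflection}, which the paper cites: a full subcategory of a presentable $\infty$-category closed under all small limits and all small colimits is automatically presentable, hence both reflective and coreflective. That accessibility comes for free under closure under \emph{all} colimits (not merely filtered ones) is a nontrivial theorem, not a formal consequence of the adjoint functor theorem, and the paper's proof is essentially a one-line reduction to it. You should replace the AFT citation with this theorem; with that substitution, the rest of your argument (stability via \cite[Lemma 1.1.3.3]{Lurie_Higher_algebra}, with the zero object supplied by closure under the empty limit/colimit) is correct and matches the paper's proof.
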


\begin{proof}
	Since $\cE$ is presentable (stable),  $\Fun(\cI,\cE)$ is presentable (stable) in virtue of  \cite[Proposition 5.5.3.6]{HTT} and \cite[Proposition 1.1.3.1]{Lurie_Higher_algebra}.
	Then, the conclusion follows from the $\infty$-categorical reflection theorem, see \cite[Theorem 1.1]{Ragimov_Schlank_Reflection}.
\end{proof}

\begin{notation}
	In the setting of \cref{bireflexive_implies_presentable_stable},  the canonical inclusion $\St_{\cI,\cE}\hookrightarrow \Fun(\cI,\cE)$ admits a left  adjoint and a right adjoint, that we denote by $\LSt_{\cI,\cE}$ and $\RSt_{\cI,\cE}$ respectively.
\end{notation}

\begin{lem}\label{bireflexive_compact_generation}
Let $p \colon \cI \to \cX$ be an object of $\PosFib$. 
Let $\cE$ be a presentable stable compactly generated $\infty$-category such that $p \colon \cI \to \cX$ is $\cE$-bireflexive.
	Let $\{E_\alpha\}_{\alpha \in I}$ be a set of compact generators for $\cE$.
	Then $\St_{\cI,\cE}$ is presentable stable compactly generated by the $\{\LSt_{\cI,\cE}(\ev_{a,!}(E_{\alpha}))\}_{\alpha \in I,a\in \cI }$ where the $\ev_a : \{a\}\to \cI$ are the canonical inclusions.
\end{lem}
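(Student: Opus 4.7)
The previous lemma \cref{bireflexive_implies_presentable_stable} already gives that $\St_{\cI,\cE}$ is presentable stable and that the inclusion $j\colon \St_{\cI,\cE} \hookrightarrow \Fun(\cI,\cE)$ admits a left adjoint $\LSt_{\cI,\cE}$. The remaining task is twofold: (i) identify a generating set of compact objects, and (ii) show the proposed set works. The strategy is to transport the obvious generation and compactness in $\Fun(\cI,\cE)$ along $\LSt_{\cI,\cE}$.

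First I would verify compactness. Since colimits in $\Fun(\cI,\cE)$ are computed pointwise, for every $a \in \cI$ the evaluation $\ev_a^{\ast} \colon \Fun(\cI,\cE) \to \cE$ commutes with all colimits, hence its left adjoint $\ev_{a,!}$ preserves compact objects. Because the $E_\alpha$ are compact in $\cE$, each $\ev_{a,!}(E_\alpha)$ is thus compact in $\Fun(\cI,\cE)$. The bireflexive hypothesis says that $\St_{\cI,\cE}$ is stable under colimits in $\Fun(\cI,\cE)$, so $j$ preserves (filtered) colimits. Its left adjoint $\LSt_{\cI,\cE}$ therefore preserves compact objects, so each $\LSt_{\cI,\cE}(\ev_{a,!}(E_\alpha))$ is compact in $\St_{\cI,\cE}$.

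Next I would verify generation. By \cref{recollection:generators_presheaves}, the family $\{\ev_{a,!}(E_\alpha)\}_{a \in \cI,\, \alpha \in I}$ generates $\Fun(\cI,\cE)$ under colimits. Let $F \in \St_{\cI,\cE}$. Writing $F$ in $\Fun(\cI,\cE)$ as a colimit of objects of this family and then applying the colimit-preserving functor $\LSt_{\cI,\cE}$, we obtain $F \simeq \LSt_{\cI,\cE}(F)$ as a colimit of the $\LSt_{\cI,\cE}(\ev_{a,!}(E_\alpha))$ in $\St_{\cI,\cE}$. This shows that the proposed set generates $\St_{\cI,\cE}$ under colimits. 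Combined with compactness and the fact that $\St_{\cI,\cE}$ is presentable stable, this proves that it is compactly generated by $\{\LSt_{\cI,\cE}(\ev_{a,!}(E_\alpha))\}_{a \in \cI,\,\alpha \in I}$, completing the proof. No substantial obstacle is expected, as this is a formal transport argument through the reflective adjunction $\LSt_{\cI,\cE} \dashv j$.
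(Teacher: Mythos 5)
Your proof is correct and follows essentially the same route as the paper: both invoke \cref{recollection:generators_presheaves} for compact generators of $\Fun(\cI,\cE)$ and then transport compactness and generation along the adjunction $\LSt_{\cI,\cE} \dashv j$, using the bireflexivity hypothesis to know that $j$ commutes with colimits. You simply spell out in more detail what the paper calls "formally follows."
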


\begin{proof}
That $\St_{\cI,\cE}$  is presentable stable follows from \cref{bireflexive_implies_presentable_stable}.
	By \cref{recollection:generators_presheaves}, the $\{\ev_{a,!}(E_{\alpha})\}_{\alpha \in I,a\in \cI }$ are compact generators of $\Fun(\cI,\cE)$.
	Then \cref{bireflexive_compact_generation} formally follows from the fact that $\St_{\cI,\cE}\hookrightarrow \Fun(\cI,\cE)$ commutes with colimits.
\end{proof}

The following two lemmas are immediate consequences of \cref{induction_limit_stable}.

\begin{lem}\label{Stokes_p_commutation_limcolim}
	Let $\cX$ be an $\infty$-category.
	Let $p \colon \cI \to \cJ$ be a morphism in $\PosFib_{\cX}^f$.
	Let $\cE$ be a presentable stable $\infty$-category such that $\cI \to \cX$ and $\cJ \to \cX$ are $\cE$-bireflexive.
	Then, $p_! \colon \St_{\cI,\cE} \to\St_{\cJ,\cE}$ commutes with limits and colimits.
\end{lem}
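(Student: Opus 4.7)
The plan is to reduce the statement to the analogous property of $p_!$ viewed as a functor between the ambient functor categories $\Fun(\cI,\cE) \to \Fun(\cJ,\cE)$, and then invoke bireflexivity to transport the property to the full subcategories of Stokes functors.

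First, I would observe that \cref{cor:stokes_functoriality}-(\ref{cor:stokes_functoriality:induction}) guarantees that the ambient left Kan extension $p_! \colon \Fun(\cI,\cE) \to \Fun(\cJ,\cE)$ restricts to a well-defined functor $\St_{\cI,\cE} \to \St_{\cJ,\cE}$, so the statement makes sense. Next, I would analyze $p_! \colon \Fun(\cI,\cE) \to \Fun(\cJ,\cE)$ itself: being a left adjoint (to $p^\ast$), it tautologically commutes with arbitrary colimits; and since $p \colon \cI \to \cJ$ lies in $\PosFib^f_\cX$, its fibers over $\cJ$ are finite posets, hence compact and proper, so that \cref{induction_limit_stable} applies (using also the stability of $\cE$) to ensure that $p_!$ also commutes with arbitrary limits.

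Now I would use the hypothesis of $\cE$-bireflexivity: by definition, $\St_{\cI,\cE} \subset \Fun(\cI,\cE)$ and $\St_{\cJ,\cE} \subset \Fun(\cJ,\cE)$ are closed under both limits and colimits in their ambient functor categories. Consequently, limits and colimits computed inside $\St_{\cI,\cE}$ (respectively $\St_{\cJ,\cE}$) coincide with those computed in $\Fun(\cI,\cE)$ (respectively $\Fun(\cJ,\cE)$). Combining this with the fact that $p_!$ preserves limits and colimits at the level of the ambient functor categories yields the claim.

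There is no substantive obstacle: the entire argument is a formal consequence of the already-established results. The only point requiring care is verifying that the fiberwise compactness/properness hypothesis needed to invoke \cref{induction_limit_stable} is indeed encoded in the notation $\PosFib^f_\cX$ (i.e.\ that the morphism $p$ has finite fibers over $\cJ$), which is immediate from the definition.
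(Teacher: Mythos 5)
Your proof is correct and follows essentially the same route the paper intends: restrict the ambient left Kan extension $p_!$, note it preserves colimits tautologically and limits by \cref{induction_limit_stable}, then use $\cE$-bireflexivity to conclude that limits and colimits in the Stokes subcategories agree with the ambient ones. One small imprecision in your final remark: \cref{induction_limit_stable} requires compactness of $\cI_x$ and properness of $\cJ_x$ for $x$ in the \emph{base} $\cX$, not finite fibers of $p$ over $\cJ$; this is exactly what $\PosFib^f_\cX$ encodes (finite poset fibers over $\cX$, which are automatically compact and, being posets, proper per \cref{induction_limit_stable_posets}), so the argument goes through unchanged.
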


\begin{lem}\label{Stokes_Grp_commutation_limcolim}
	Let $\cX$ be an $\infty$-category.
	Let $p \colon \cI \to \cJ$ be a morphism in $\PosFib_{\cX}^f$.
	Let $\cE$ be a presentable stable $\infty$-category such that $\cI \to \cX$ and $\cI_p \to \cX$ are $\cE$-bireflexive.
	Then, $\Gr_p \colon \St_{\cI,\cE} \to\St_{\cI_p,\cE}$ commutes with limits and colimits.
\end{lem}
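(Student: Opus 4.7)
The plan is to mimic the argument that has just been given for \cref{Stokes_p_commutation_limcolim}. By the $\cE$-bireflexivity hypotheses on $\cI \to \cX$ and $\cI_p \to \cX$, the full subcategories
\[ \St_{\cI,\cE} \subset \Fun(\cI,\cE) \qquad \text{and} \qquad \St_{\cI_p,\cE} \subset \Fun(\cI_p,\cE) \]
are closed under arbitrary limits and colimits. Hence limits and colimits in the source and target of $\Gr_p$ can be computed in the ambient filtered functor categories, and it suffices to show that the underlying functor $\Fun(\cI,\cE) \to \Fun(\cI_p,\cE)$ induced by $\Gr_p$ commutes with limits and colimits.

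The next step is to unravel the definition of $\Gr_p$ (\cref{defin_Gr}) and write it as a composition of elementary operations attached to the natural morphism $\cI_p = \cJ^{\ens} \times_\cJ \cI \to \cI$. Concretely, $\Gr_p$ is built out of restrictions (pullbacks along functors between small $\infty$-categories) and left Kan extensions. Pullbacks of this kind preserve \emph{all} small limits and colimits, and left Kan extensions preserve all colimits tautologically. Therefore the only nontrivial verification is that the left Kan extensions appearing in $\Gr_p$ also preserve limits.

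This last point is precisely the content of \cref{induction_limit_stable}: under the hypothesis that $\cE$ is presentable stable and that the relevant fibers are compact and proper, left Kan extension along a map of cocartesian fibrations in posets commutes with limits. The finiteness condition is supplied by the fact that $p \colon \cI \to \cJ$ is a morphism in $\PosFib_{\cX}^f$ and that the fibers of $\cI_p \to \cX$ coincide with the underlying sets of the fibers of $\cI \to \cX$, which remain compact and proper. Combining the preservation of (co)limits by pullbacks, the tautological preservation of colimits by left Kan extensions, and the preservation of limits granted by \cref{induction_limit_stable}, we conclude that $\Gr_p$ commutes with all limits and colimits.

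The only real obstacle is the bookkeeping step of spelling out the decomposition of $\Gr_p$ into the elementary operations listed above; once this decomposition is written down, each factor is handled by a result already at our disposal, and no new ingredient is required beyond those already used in the proof of \cref{Stokes_p_commutation_limcolim}.
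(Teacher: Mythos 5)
Your reduction to the ambient functor categories via the bireflexivity hypothesis is exactly right, and that is also the paper's route (the paper simply declares the lemma an immediate consequence of \cref{induction_limit_stable}, invoking \cref{cor:section_Gr_commutes_with_colimits} and \cref{cor:section_Gr_commutes_with_limits} on the level of $\Fun(\cI,\cE)\to\Fun(\cI_p,\cE)$ and the closure of the Stokes subcategories under (co)limits). However, your claimed decomposition of $\Gr_p$ is incomplete in a way that hides the second, independent use of stability. By \cref{defin_Gr}, $\Gr_p$ is not a composition of restrictions and left Kan extensions: it is the \emph{cofiber}
\[
\Gr_p \coloneqq \cofib\big( i_p^\ast \circ i_{<!} \circ i_<^\ast \circ \sigma^\ast \to i_p^{\ast} \circ \sigma^\ast \big)
\]
of a natural transformation between two such composites. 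The two composites are handled exactly as you describe (restrictions preserve all (co)limits; $i_{<!}$ preserves colimits tautologically and preserves limits by \cref{induction_limit_stable} using the finiteness of the fibers). But you then must also pass through the cofiber. That cofiber, being a finite colimit, commutes with colimits for free, yet its commutation with limits is \emph{not} automatic and requires that $\cE$ be stable, via \cref{finite_colimit_and_limit_stable}. This is precisely the extra ingredient the paper records in the proof of \cref{cor:section_Gr_commutes_with_limits}, and it is the part of the argument that genuinely differs from the case of $p_!$ treated in \cref{Stokes_p_commutation_limcolim}, where no cofiber appears. So the ``mimic the previous proof'' strategy is insufficient as stated; once you add the cofiber step and invoke \cref{finite_colimit_and_limit_stable}, the proof closes. (A minor further imprecision: the fibers of $\cI_p\to\cX$ are not the underlying \emph{sets} of the fibers of $\cI$ — they are the posets with the coarser order $\leq_p$ — though this does not affect the finiteness/compactness argument.)
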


\begin{cor}\label{level_pullback_in_PrLR}
	Let $(X,P)$ be an exodromic stratified space.
	Let $p \colon \cI \to \cJ$ be a graduation morphism of cocartesian fibrations in finite posets over $\Pi_{\infty}(X,P)$.
	Let $\cE$ be a presentable stable $\infty$-category and consider the pull-back square
	\[ 
	\begin{tikzcd}
		\St_{\cI, \cE} \arrow{r}{p_!} \arrow{d}{\Gr_p} & \St_{\cJ,\cE} \arrow{d}{\Gr} \\
		\St_{\cI_p, \cE} \arrow{r}{\pi_!} & \St_{\cJ^{\ens}, \cE}
	\end{tikzcd} 
	\]
	supplied by  \cref{prop:Level_induction}.
	If all the above cocartesian fibrations in posets are $\cE$-bireflexive, then the square is a pullback in $\PrLR$.
\end{cor}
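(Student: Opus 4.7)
The plan is to upgrade the pullback square in $\Cat_\infty$ supplied by \cref{prop:Level induction} to a pullback square in $\PrLR$ by combining the bireflexivity assumptions with the categorical calculus of \cref{Peter_lemma}. In outline: (i) verify that all four vertices of the square are presentable stable and hence define objects of $\PrLR$; (ii) verify that all four structural functors commute with both limits and colimits, so that they are morphisms in $\PrLR$; and (iii) conclude, using \cref{Peter_lemma}-(1), that the pullback in $\PrLR$ of the cospan $\St_{\cJ,\cE}\to\St_{\cJ^{\ens},\cE}\leftarrow\St_{\cI_p,\cE}$ agrees with the pullback computed in $\Cat_\infty$, which is $\St_{\cI,\cE}$.

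For step (i), \cref{bireflexive_implies_presentable_stable} applied to each of $\cI$, $\cJ$, $\cI_p$, and $\cJ^{\ens}$ immediately identifies $\St_{\cI,\cE}$, $\St_{\cJ,\cE}$, $\St_{\cI_p,\cE}$, and $\St_{\cJ^{\ens},\cE}$ as accessible localizations of the corresponding functor categories $\Fun(-,\cE)$, hence as presentable stable $\infty$-categories.

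For step (ii), the horizontal arrows $p_!$ and $\pi_!$ are induction functors between Stokes categories associated to a morphism in $\PosFib^f_{\Pi_\infty(X,P)}$, so that \cref{Stokes_p_commutation_limcolim} shows that they commute with both limits and colimits, provided that the source and target fibrations are $\cE$-bireflexive; both pairs are covered by the hypothesis. Similarly, the vertical arrows $\Gr_p$ and $\Gr$ are graduation functors, and \cref{Stokes_Grp_commutation_limcolim} supplies the analogous preservation properties, using that $\cI$, $\cI_p$, $\cJ$, and $\cJ^{\ens}$ are all $\cE$-bireflexive. Therefore each of the four structural arrows admits both a left and a right adjoint, i.e.\ defines a morphism in $\PrLR$.

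For step (iii), the square is a pullback in $\Cat_\infty$ by \cref{prop:Level induction}. Since the cospan
\[ \St_{\cI_p,\cE} \xrightarrow{\pi_!} \St_{\cJ^{\ens},\cE} \xleftarrow{\Gr} \St_{\cJ,\cE} \]
is a finite diagram in $\PrLR$ by step (ii), \cref{Peter_lemma}-(1) ensures that its limit computed in $\PrLR$ coincides with its limit computed in $\Cat_\infty$, namely $\St_{\cI,\cE}$. The comparison map $\St_{\cI,\cE}\to \St_{\cJ,\cE}\times_{\St_{\cJ^{\ens},\cE}}\St_{\cI_p,\cE}$ is then an equivalence in $\Cat_\infty$ between two objects of $\PrLR$, hence is an equivalence in $\PrLR$. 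The only subtlety — the one point that needs care — is to invoke \cref{Peter_lemma} only after verifying that the entire square, and not merely the cospan, lives in $\PrLR$; this is exactly the content of step (ii), and no further argument is needed.
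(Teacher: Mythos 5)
Your proof is correct and takes essentially the same route as the paper's: both rely on the reflection theorem (via \cref{bireflexive_implies_presentable_stable}) for presentability of the vertices and on \cref{Stokes_p_commutation_limcolim} and \cref{Stokes_Grp_commutation_limcolim} to place the four arrows in $\PrLR$. Your step (iii), invoking \cref{Peter_lemma}-(1) to identify the pullback computed in $\CAT_\infty$ with that in $\PrLR$, is exactly the implicit last step the paper condenses into ``the conclusion follows.''
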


\begin{proof}
	The $\infty$-categorical reflection theorem of \cite[Theorem 1.1]{Ragimov_Schlank_Reflection} implies that in this case all the $\infty$-categories of Stokes functors appearing in the above square are presentable.
	Then the conclusion follows combining \cref{Stokes_p_commutation_limcolim} with \cref{Stokes_Grp_commutation_limcolim}.
\end{proof}

\subsection{Van Kampen for Stokes functors}

In \cref{prop:Van_Kampen_cocartesian} we proved a Van Kampen result for cocartesian functors.
We now show that the same holds for Stokes functors:

\begin{prop}[Van Kampen for Stokes functors]\label{prop:Van_Kampen_Stokes}
	Let $\cX_\bullet \colon I \to \Cat_\infty$ be a diagram with colimit $\cX$.
	Let $p \colon \cI \to \cX$ be a cocartesian fibration in posets and set
	\[ \cI_\bullet \coloneqq \cX_\bullet \times_{\cX} \cI \colon I \to \Cat_\infty \ . \]
	Let $\cE$ be a presentable $\infty$-category.
	Then the equivalence of \cref{lem:Van_Kampen_filtered} restricts to an equivalence
	\[ \St_{\cI,\cE} \simeq \lim_{i \in I} \St_{\cI_i,\cE} \ . \]
\end{prop}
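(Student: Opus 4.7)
The plan is to bootstrap the statement from the already-established Van Kampen for cocartesian functors (\cref{prop:Van_Kampen_cocartesian}), which provides an equivalence
\[ \Funcocart(\cI, \cE) \simeq \lim_{i \in I} \Funcocart(\cI_i, \cE) \]
refining the filtered Van Kampen equivalence from \cref{lem:Van_Kampen_filtered}. Since pullback along the structural map $u_i \colon \cI_i \to \cI$ preserves Stokes functors by \cref{cor:stokes_functoriality}-(\ref{cor:stokes_functoriality:pullback}), the family $(u_i^\ast)_{i \in I}$ induces a well-defined comparison map $\St_{\cI, \cE} \to \lim_i \St_{\cI_i, \cE}$. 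Full faithfulness is automatic, because both sides sit as full subcategories of cocartesian functor categories that are already identified by \cref{prop:Van_Kampen_cocartesian}.

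The substantive content is essential surjectivity. Suppose $F \in \Funcocart(\cI, \cE)$ has the property that $u_i^\ast(F)$ is a Stokes functor for every $i \in I$; we must show $F$ is itself punctually split. Fix $x \in \cX$: we need $j_x^\ast(F) \in \Fun(\cI_x, \cE)$ to lie in the essential image of $i_{\cI_x, !}$. Assume for the moment that $x$ can be realized as $f_i(x_i)$ for some $i \in I$ and some $x_i \in \cX_i$, where $f_i \colon \cX_i \to \cX$ denotes the structural map. The pullback square defining $\cI_i$ yields a canonical identification $(\cI_i)_{x_i} \simeq \cI_x$ that intertwines $j_{x_i}^\ast(u_i^\ast(F))$ with $j_x^\ast(F)$. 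Since $u_i^\ast(F)$ is Stokes, the left-hand side is split, and therefore so is $j_x^\ast(F)$.

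The only point left is the assertion that every object of $\cX = \colim_i \cX_i$ is equivalent, in $\cX$, to $f_i(x_i)$ for some $i \in I$ and $x_i \in \cX_i$. This is a general fact about colimits in $\Cat_\infty$: one may compute such a colimit at the level of simplicial sets, and the subsequent Joyal fibrant replacement only adds fillers of inner horns, hence only simplices of dimension $\geqslant 2$ and no new $0$-simplices. Equivalently, the left adjoint $\mathrm{Env} \colon \Cat_\infty \to \Spc$ preserves colimits, and every connected component of a colimit of spaces is hit by a connected component of one of the terms. I expect this simple topological observation to be the only step requiring some care; once granted, the argument of the previous paragraph applies to every $x \in \cX$, delivering the required punctual splitting and completing the proof.
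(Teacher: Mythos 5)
Your overall approach matches the paper's: establish full faithfulness from the cocartesian Van Kampen equivalence, then reduce the punctually split condition to objects lying (up to equivalence) in the image of some structural map $f_i$, at which point \cref{cor:ps_functoriality} finishes. Your argument for the reduction---$\Env$ and $\pi_0$ are left adjoints, so $\pi_0(\Env(\cX)) \simeq \colim_i \pi_0(\Env(\cX_i))$ and every equivalence class of objects of $\cX$ is hit from some $\cX_i$---is a clean alternative to the paper's explicit simplicial construction via \cref{lem:computing_a_random_colimit}; the two accomplish essentially the same thing.

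There is, however, a gap in the last step. Your $\Env/\pi_0$ argument only shows that every $x \in \cX$ is \emph{equivalent to} some $f_i(x_i)$, not equal to it, while the argument of the preceding paragraph---identifying $(\cI_i)_{x_i}$ with $\cI_x$ via the pullback square and transporting the splitting---requires $x = f_i(x_i)$ on the nose. (Your alternative sketch via Joyal fibrant replacement is too informal to supply strict surjectivity: one must first rectify the $\infty$-functor $\cX_\bullet$ and pass to a projectively cofibrant replacement before taking a strict colimit, so ``no new $0$-simplices'' should not be taken for granted.) To close the gap you must argue that being split propagates along equivalences of $\cX$: since $F$ is a cocartesian functor, it is cocartesian at every equivalence by \cref{cocart_at_equivalence}, and then \cref{lem:propagation_split_for_cocartesian}, applied in both directions of $x \simeq f_i(x_i)$, transfers the splitting of $j_{f_i(x_i)}^\ast(F)$ to $j_x^\ast(F)$. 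This is exactly the role of \cref{cocart_at_equivalence} in the paper's proof; once this step is made explicit, your argument is complete.
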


\begin{proof}
	Using \cref{cor:stokes_functoriality} in place of \cref{prop:cocartesian_functoriality}, we see that the natural map
	\[ \Fun(\cI, \cE) \to \lim_{i \in I} \Fun(\cI_i,\cE) \]
	gives rise to the following commutative square:
	\begin{equation}\label{eq:Van_Kampen_Stokes}
		\begin{tikzcd}
			\Fun(\cI, \cE) \arrow{r} & \lim_{i \in I} \Fun(\cI_i, \cE) \\
			\St_{\cI,\cE} \arrow{r} \arrow[hook]{u} & \lim_{i \in I} \St_{\cI_i,\cE} \arrow[hook]{u} \ .
		\end{tikzcd}
	\end{equation}
	It follows from \cref{lem:Van_Kampen_filtered} that the top horizontal functor is an equivalence.
	Thus, the bottom horizontal one is fully faithful.
	To conclude the proof, it is enough to prove that a functor $F \colon \cI \to \cE$ is Stokes if and only if for every $i \in I$, its image in $\Fun(\cI_i,\cE)$ is Stokes.
	The ``only if'' follows from \cref{cor:stokes_functoriality}.
	For the converse, we have already shown in \cref{prop:Van_Kampen_cocartesian} that if each restriction of $F$ is cocartesian then $F$ was cocartesian to begin with.
	We are left to check that $F$ is punctually split.
	Combining \cref{cocart_at_equivalence} and \cref{lem:computing_a_random_colimit}, we see that $F$ is punctually split if and only if it is split at every object of $\cX$ lying in the image of some structural map $f_i \colon \cX_i \to \cX$.
	However, if $x \in \cX$ is in the image of $f_i$, then $F$ is split at $x$ thanks to \cref{cor:ps_functoriality}.
\end{proof}

As a consequence of Van Kampen for Stokes functors, we can prove:

\begin{cor}\label{cor:descending_geometricity_of_Stokes_via_colimits}
	In the situation of \cref{prop:Van_Kampen_Stokes}, if furthermore $\cE$ is stable and if $\cI_i\to \cX$ is $\cE$-bireflexive for every $i\in I$, then $\cI\to \cX$ is $\cE$-bireflexive and the limit of \cref{prop:Van_Kampen_Stokes} can be computed inside $\PrLR$.
\end{cor}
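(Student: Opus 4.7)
The plan is to apply \cref{Peter_lemma} to the diagram $\St_{\cI_\bullet, \cE} \colon I\op \to \CAT_\infty$ appearing in \cref{prop:Van_Kampen_Stokes} and then to deduce bireflexivity from the Van Kampen equivalences. The first step is to promote this diagram to a diagram valued in $\PrLR$. Since $\cE$ is stable presentable and each $\cI_i \to \cX_i$ is $\cE$-bireflexive, \cref{bireflexive_implies_presentable_stable} exhibits each $\St_{\cI_i, \cE}$ as a presentable stable $\infty$-category, obtained as a localization of $\Fun(\cI_i,\cE)$. For a morphism $\alpha \colon i \to j$ in $I$, the induced square $\cI_i \to \cI_j$ over $\cX_i \to \cX_j$ is a pullback, so by \cref{cor:stokes_functoriality}-(\ref{cor:stokes_functoriality:pullback}) the restriction $u_\alpha^\ast \colon \Fun(\cI_j,\cE) \to \Fun(\cI_i,\cE)$ descends to a functor $\St_{\cI_j, \cE} \to \St_{\cI_i, \cE}$. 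The functor $u_\alpha^\ast$ on $\Fun(-,\cE)$ preserves all limits and colimits, and by $\cE$-bireflexivity the inclusions $\St_{\cI_k, \cE} \hookrightarrow \Fun(\cI_k,\cE)$ preserve both limits and colimits for $k \in \{i,j\}$; hence the restriction of $u_\alpha^\ast$ to Stokes functors is a morphism in $\PrLR$.

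Once the diagram $\St_{\cI_\bullet, \cE}$ is known to be valued in $\PrLR$, \cref{Peter_lemma}-(1) tells us that its limit computed in $\PrLR$, $\PrL$, $\PrR$ or $\CAT_\infty$ all agree. The Van Kampen equivalence of \cref{prop:Van_Kampen_Stokes}, a priori valid in $\CAT_\infty$, therefore gives an equivalence
\[ \St_{\cI,\cE} \simeq \lim_{i \in I\op} \St_{\cI_i,\cE} \]
in $\PrLR$, settling the second half of the claim.

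It remains to verify that $\cI \to \cX$ is itself $\cE$-bireflexive, i.e.\ that $\St_{\cI,\cE}$ is closed under arbitrary limits and colimits inside $\Fun(\cI,\cE)$. Let $F_\bullet \colon K \to \St_{\cI,\cE}$ be a diagram and let $F$ be its limit (resp.\ colimit) in $\Fun(\cI,\cE)$. By \cref{lem:Van_Kampen_filtered}, $\Fun(\cI,\cE) \simeq \lim_{i} \Fun(\cI_i,\cE)$, so each restriction $u_i^\ast(F)$ is the corresponding limit (resp.\ colimit) of $u_i^\ast(F_\bullet)$ in $\Fun(\cI_i,\cE)$. Each $u_i^\ast(F_\alpha)$ is Stokes by \cref{cor:stokes_functoriality}-(\ref{cor:stokes_functoriality:pullback}), and the $\cE$-bireflexivity of $\cI_i \to \cX_i$ ensures that $u_i^\ast(F)$ is Stokes. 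Applying once more the Van Kampen equivalence of \cref{prop:Van_Kampen_Stokes}, we conclude that $F$ is Stokes. The main obstacle is really only conceptual: one has to juggle the two Van Kampen statements (for filtered and for Stokes functors) together with the bireflexivity of the pieces in order to glue the local information into a global one, and check that all the ingredients land in $\PrLR$ so that Peter's lemma applies.
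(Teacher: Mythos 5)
Your proposal is correct, and the overall strategy coincides with the paper's: in both cases, the $\cE$-bireflexivity of the pieces is used to promote the diagram $\St_{\cI_\bullet,\cE}$ to $\PrLR$, after which \cref{Peter_lemma} (or the observation that limits in $\PrL$, $\PrR$ and $\CAT_\infty$ agree for such diagrams) upgrades the Van Kampen equivalence of \cref{prop:Van_Kampen_Stokes} to an equivalence in $\PrLR$.

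Where you diverge from the paper is in establishing that $\St_{\cI,\cE}$ is closed under limits and colimits in $\Fun(\cI,\cE)$. The paper treats the two cases asymmetrically: closure under limits is deduced from the fact that the structural cone maps $\St_{\cI,\cE}\to\St_{\cI_i,\cE}$ are morphisms in $\PrR$, while closure under colimits is handled by a different mechanism, namely invoking \cref{lem:Stokes_functors_coproducts} (closure under arbitrary coproducts, proven directly from the punctual splitting condition) combined with stability to get all colimits. Your argument instead treats limits and colimits uniformly: you compute the limit or colimit $F$ of a $\St_{\cI,\cE}$-valued diagram directly in $\Fun(\cI,\cE)$, observe via \cref{lem:Van_Kampen_filtered} that each restriction $u_i^\ast(F)$ is the corresponding limit or colimit in $\Fun(\cI_i,\cE)$, use the $\cE$-bireflexivity of $\cI_i$ to conclude $u_i^\ast(F)$ is Stokes, and then apply the Van Kampen characterization that a functor is Stokes iff all its restrictions are. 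This is a cleaner and more symmetric route; the paper's approach via coproducts and stability is more self-contained but spreads the work over two different lemmas. Both are valid. One small imprecision: \cref{Peter_lemma}-(1) asserts agreement of limits in $\PrR$, $\PrL$, and $\CAT_\infty$, not literally in $\PrLR$; the claim that the limit ``can be computed inside $\PrLR$'' deserves the extra observation that the cone maps then land in $\PrLR$, which you implicitly supply but could spell out.
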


\begin{proof}
	Let $f \colon i \to j$ be a morphism in $I$.
	Since $\cI_i$ and $\cI_j$ are $\cE$-bireflexive, it follows that $\St_{\cI_i,\cE}$ and $\St_{\cI_j,\cE}$ are presentable and that the transition functor
	\[ f^\ast \colon \St_{\cI_j,\cE} \to \St_{\cI_i, \cE} \]
	commute with limits and colimits.
	Therefore, it admits both a left and a right adjoint.
	In particular, the diagram $\St_{\cI_\bullet, \cE}$ factors through $\PrLR$.
	Since limits in $\PrL$ can be computed in $\CAT_\infty$, \cref{prop:Van_Kampen_Stokes} implies that $\St_{\cI,\cE}$ is presentable and stable.
	Besides, since all transition maps in $\St_{\cI_\bullet,\cE}$ commute with limits, it automatically follows that the structural functors
	\[ \St_{\cI,\cE} \to \St_{\cI_i,\cE} \]
	commute with limits as well.
	Thus, $\St_{\cI,\cE}$ is closed under limits inside $\Fun(\cI,\cE)$.
	On the other hand, \cref{lem:Stokes_functors_coproducts} shows that $\St_{\cI,\cE}$ is closed under arbitrary coproducts inside $\Fun(\cI,\cE)$.
	Since $\cE$ is stable and we already showed that $\St_{\cI,\cE}$ is stable, closure under finite colimits is automatic.
	The conclusion follows.
\end{proof}

\subsection{Change of coefficients for punctually split and Stokes functors}

Fix a cocartesian fibration in posets $p \colon \cI \to \cX$ and let $f \colon \cE \to \cE'$ be a morphism in $\PrL$.
Recall from \cref{subsec:change_of_coefficients} that this induces a transformation
\[ f^{\cI / \cX} \colon \exp_\cE(\cI / \cX) \to \exp_{\cE'}(\cI / \cX) \]
in $\PrFibL$.
We have:

\begin{prop}\label{prop:change_of_coefficients_stokes}
	The transformation $f^{\cI/\cX}$ respects the punctually split sub-cocartesian fibrations, and thus it induces a functor
	\[ f^{\cI / \cX} \colon \exp_\cE^{\PS}(\cI/ \cX) \to \exp_{\cE'}^{\PS}(\cI / \cX) \]
	in $\PrFibL_\cX$.
	In particular, the induced functor
	\[ f \colon \Fun(\cI, \cE) \to \Fun(\cI, \cE') \]
	induces well defined functors
	\[ f \colon \Fun_{\PS}(\cI, \cE) \to \Fun(\cI, \cE') \qquad \text{and} \qquad f \colon \St_{\cI,\cE} \to \St_{\cI,\cE'} \ . \]
\end{prop}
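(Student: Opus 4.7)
The plan is to reduce the statement to a naturality property of the exponential induction functor with respect to change of coefficients, which is essentially already contained in \cref{prop:change_of_coefficients_naturality}.

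First, I would apply \cref{prop:change_of_coefficients_naturality} to the morphism
\[ \begin{tikzcd}
\cI^{\ens} \arrow{d} \arrow{r}{i_\cI} & \cI \arrow{dl} \\
\cX
\end{tikzcd} \]
in $\CoCart$ and to the morphism $f \colon \cE \to \cE'$ in $\PrL$. This provides a canonically commutative square
\[ \begin{tikzcd}
\exp_\cE(\cI^{\ens}/\cX) \arrow{r}{\cE^{i_\cI}_!} \arrow{d}{f^{\cI^{\ens}/\cX}} & \exp_\cE(\cI/\cX) \arrow{d}{f^{\cI/\cX}} \\
\exp_{\cE'}(\cI^{\ens}/\cX) \arrow{r}{\cE'^{i_\cI}_!} & \exp_{\cE'}(\cI/\cX)
\end{tikzcd} \]
in $\PrFibL_\cX$. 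This is the key commutativity: it expresses the fact that, since $f$ is a left adjoint, it commutes with the left Kan extensions that define the exponential induction.

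Next, unraveling \cref{def:Stokes_sheaf} and the definition of the punctually split sub-cocartesian fibration, I need to check that $f^{\cI/\cX}$ sends $\exp_\cE^{\PS}(\cI/\cX) = \mathrm{EssIm}(\cE^{i_\cI}_!)$ into $\exp_{\cE'}^{\PS}(\cI/\cX) = \mathrm{EssIm}(\cE'^{i_\cI}_!)$. This is a direct consequence of the commutative square above: any object in the essential image of $\cE^{i_\cI}_!$ is sent by $f^{\cI/\cX}$ to an object in the essential image of $\cE'^{i_\cI}_!$. Fibrewise, this simply reflects the elementary fact that if $F \simeq i_{\cI_x,!}(V)$ at some $x \in \cX$, then $f \circ F \simeq i_{\cI_x,!}(f \circ V)$, which holds because $f$ preserves colimits and hence left Kan extensions.

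Having established the lifted functor $f^{\cI/\cX} \colon \exp_\cE^{\PS}(\cI/\cX) \to \exp_{\cE'}^{\PS}(\cI/\cX)$ in $\PrFibL_\cX$, the statements concerning $\Fun_{\PS}$ and $\St$ follow by applying $\Sigma_\cX$ and $\Sigma_\cX^{\cocart}$ respectively. For $\Fun_{\PS}$, one uses \cref{prop:change_of_coefficients} to identify $\Sigma_\cX(f^{\cI/\cX})$ with post-composition by $f$ under the specialization equivalence. For $\St_{\cI,\cE} \to \St_{\cI,\cE'}$, one combines this with \cref{prop:change_of_coefficients_cocartesian}, which already guarantees that post-composition with $f$ preserves cocartesian functors. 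I do not anticipate any substantial obstacle: the entire argument is a formal consequence of the bifunctoriality of $\exp \colon \CoCart \times \PrL \to \PrFibL$ already established in \cite{PortaTeyssier_Day}.
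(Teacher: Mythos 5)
Your proof is correct and takes essentially the same route as the paper: the key observation in both is that $f$ commutes with colimits, hence with left Kan extensions, which gives the commutativity with exponential induction (you spell this out via \cref{prop:change_of_coefficients_naturality}, the paper states it directly fibrewise), after which $\Sigma_\cX$, \cref{prop:change_of_coefficients} and \cref{prop:change_of_coefficients_cocartesian} are invoked in the same way.
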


\begin{proof}
	Since $f$ commutes with colimits, it commutes with the formation of left Kan extensions.
	This immediately implies the first statement.
	Applying $\Sigma_\cX$ and using \cref{prop:change_of_coefficients}, we deduce that $f \colon \Fun(\cI,\cE) \to \Fun(\cI, \cE')$ preserves punctually split functors.
	In turn, this fact and \cref{prop:change_of_coefficients_cocartesian} implies that $f$ also preserves Stokes functors.
\end{proof}

Cocartesian functors exhibit a nice behavior with respect to the tensor product in $\PrL$ (see \cref{cocart_tensor_for_exp}).
On the other hand $\exp_\cE^{\PS}(\cI / \cX)$ is typically not a presentable fibration, and $\St_{\cI,\cE}$ is typically not presentable.
This prevents from formally deducing an analogue of \cref{cocart_tensor_for_exp} for Stokes functors: such a result will be true, but only in a more restrictive geometric setting, see \cite[\cref*{Geometric_Stokes-thm:Stokes_tensor_product}]{Geometric_Stokes}.
For the moment, let us simply collect a couple of elementary observations that will be needed later.

\medskip

When bireflexivity holds, we can construct, for every pair of presentable $\infty$-categories $\cE$ and $\cE'$, a canonical comparison morphism $\St_{\cI,\cE} \otimes \cE' \to \St_{\cI,\cE \otimes \cE'}$.
The key point is the following lemma:

\begin{lem}\label{lem:splitting_tensor}
	Let $p \colon \cI \to \cX$ be a cocartesian fibration in posets.
	Let $\cE$ and $\cE'$ be presentable $\infty$-categories.
	Let $x \in \cX$ be an object and let $F \colon \cI \to \cE$ be a functor that splits at $x$.
	Then for every object $E' \in \cE'$, the functor
	\[ F \otimes E' \in \Fun(\cI,\cE) \otimes \cE' \simeq \Fun(\cI, \cE \otimes \cE') \]
	splits at $x$ as well.
\end{lem}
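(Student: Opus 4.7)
The plan is to reduce everything to the fact that the functor $(-) \otimes E' \colon \cE \to \cE \otimes \cE'$ is cocontinuous (being a morphism in $\PrL$), and thus commutes with the formation of left Kan extensions. More precisely, under the canonical equivalence $\Fun(\cI,\cE) \otimes \cE' \simeq \Fun(\cI,\cE \otimes \cE')$, tensoring with $E'$ on the left corresponds to post-composition with the functor $(-) \otimes E' \colon \cE \to \cE \otimes \cE'$; this is the only nontrivial structural fact invoked, and it is standard.

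First I would observe that $j_x^\ast$ commutes with these change-of-coefficient operations, giving a canonical equivalence
\[ j_x^\ast(F \otimes E') \simeq j_x^\ast(F) \otimes E' \]
in $\Fun(\cI_x, \cE \otimes \cE')$. By assumption on $F$, there exist $V \colon \cI_x^{\ens} \to \cE$ and an equivalence $i_{\cI_x,!}(V) \simeq j_x^\ast(F)$. It then remains to identify
\[ i_{\cI_x,!}(V) \otimes E' \simeq i_{\cI_x,!}(V \otimes E') \ , \]
which exhibits the desired splitting.

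The identification above is precisely an instance of \cref{prop:change_of_coefficients_naturality} applied to the morphism $i_{\cI_x} \colon \cI_x^{\ens} \to \cI_x$ in $\CoCart$ (over the base $\ast$) and to the morphism $(-) \otimes E' \colon \cE \to \cE \otimes \cE'$ in $\PrL$: the commutativity of the right square in that proposition asserts that left Kan extension along $i_{\cI_x}$ intertwines post-composition by $(-) \otimes E'$. Concatenating these three steps yields $j_x^\ast(F \otimes E') \simeq i_{\cI_x,!}(V \otimes E')$, so $F \otimes E'$ is split at $x$.

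There is no real obstacle here; the only point that deserves some care is the compatibility between the tensoring operation in $\PrL$ on $\Fun(\cI,\cE)$ and post-composition with $(-) \otimes E'$, but this is precisely what the canonical equivalence $\Fun(\cI,\cE) \otimes \cE' \simeq \Fun(\cI,\cE \otimes \cE')$ encodes.
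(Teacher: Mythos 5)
Your proposal is correct, and the underlying mechanism is the same as the paper's: everything reduces to the cocontinuity of $j_x^\ast$ and $i_{\cI_x,!}$, which guarantees that both commute with the change of coefficients. The packaging differs slightly — you invoke \cref{prop:change_of_coefficients_naturality} directly for the $\PrL$-morphism $(-) \otimes E' \colon \cE \to \cE \otimes \cE'$, whereas the paper verifies the compatibility of $i_x^\ast$ and $i_{\cI_x,!}$ with tensoring from scratch, proves the $\cE = \Spc$ case first, and then deduces the general case from associativity of $\otimes$ in $\PrL$. Your route is a bit more direct since it reuses an already-established naturality statement rather than re-deriving it, but the two arguments are mathematically equivalent.
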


\begin{proof}
	Let $i_x \colon  \cI_x \to \cI$ be the canonical functor.
	For any presentable $\infty$-category $\cD$, both functors
	\[ i_{x}^\ast \colon \Fun(\cI, \cD) \to \Fun(\cI_x, \cD) \qquad \text{and} \qquad i_{\cI_x,!} \colon \Fun(\cI_x^{\ens}, \cD) \to \Fun(\cI_x, \cD)  \]
	commute with colimits, so we obtain the following canonical identifications:
	\[ 
	\begin{tikzcd}[column sep=large]
		\Fun(\cI,\Spc) \otimes \cD \arrow{r}{i_x^* \otimes \cD } \arrow{d}{\wr}  & \Fun(\cI_x,\Spc) \otimes \cD \arrow{d}{\wr} &    \Fun(\cI_x^{\ens},\Spc ) \otimes \cD  \arrow{l}[swap]{i_{\cI_x, !}\otimes\cD} \arrow{d}{\wr} \\
		\Fun(\cI,\cD) \arrow{r}{i_x^*} & \Fun(\cI_x, \cD)   &     \Fun(\cI_x^{\ens},\cD)        \arrow{l}[swap]{i_{\cI_x, !}} \ .
	\end{tikzcd}
	\]
	This proves the lemma when $\cE = \Spc$, and the general case follows from the associativity of the tensor product in $\PrL$.
\end{proof}

\begin{construction}\label{construction:Stokes_tensor_comparison}
	Let $p \colon \cI \to \cX$ be a cocartesian fibration in posets.
	Let $\cE$ and $\cE'$ be presentable $\infty$-categories such that $p \colon \cI \to \cX$ is $\{\cE,\cE'\}$-bireflexive.
	Consider the following solid commutative diagram:
	\begin{equation}\label{eq:Stokes_tensor_comparison}
		\begin{tikzcd}
			\St_{\cI,\cE} \otimes \cE' \arrow{d} \arrow[dashed]{r} & \St_{\cI,\cE \otimes \cE'} \arrow[hook]{d} \\
			\Funcocart(\cI,\cE) \otimes \cE' \arrow{r}{\sim}  \arrow{d} & \Funcocart(\cI,\cE \otimes \cE' ) \arrow[hook]{d} \\
			\Fun(\cI,\cE) \otimes \cE' \arrow{r}{\sim} & \Fun(\cI,\cE \otimes \cE' ) 
		\end{tikzcd}
	\end{equation}
	in $\PrL$.
	By definition, $\St_{\cI,\cE} \otimes \cE'$ is generated under colimits by objects of the form $F \otimes E'$, where $F \colon \cI \to \cE$ is a Stokes functor and $E' \in \cE'$ is an object.
	\Cref{lem:splitting_tensor} guarantees that such an object is mapped via the bottom horizontal equivalence into an object in $\St_{\cI,\cE \otimes \cE'}$.
	Since the right vertical arrows are fully faithful by definition, it follows that the dashed arrow exist.
\end{construction}

\begin{prop}\label{prop:Stokes_tensor_comparison_fully_faithful}
	Let $p \colon \cI \to \cX$ be a cocartesian fibration in posets.
	Let $\cE$ and $\cE'$ be presentable $\infty$-categories.
	Assume that:
	\begin{enumerate}\itemsep=0.2cm
		\item The fibers of $\cI$ are finite;
		\item $p \colon \cI \to \cX$ is $\cE,\cE'$-bireflexive.
	\end{enumerate}
	Then the canonical comparison functor
	\[ \St_{\cI,\cE} \otimes \cE' \to \St_{\cI,\cE \otimes \cE'} \]
	of \cref{construction:Stokes_tensor_comparison} is fully faithful.
\end{prop}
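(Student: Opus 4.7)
The plan is to show that the comparison functor of \cref{construction:Stokes_tensor_comparison} factors through the equivalence of \cref{cocart_tensor_for_exp} as the composite
\[ \St_{\cI,\cE}\otimes\cE' \xrightarrow{\; i\otimes\cE'\; } \Funcocart(\cI,\cE)\otimes\cE' \xrightarrow{\;\sim\;} \Funcocart(\cI,\cE\otimes\cE') \hookleftarrow \St_{\cI,\cE\otimes\cE'} \, , \]
where $i \colon \St_{\cI,\cE}\hookrightarrow\Funcocart(\cI,\cE)$ is the canonical fully faithful inclusion and the middle equivalence is supplied by \cref{cocart_tensor_for_exp} (which applies because the finite fibers of $\cI$ are compact and proper). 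Since the target $\St_{\cI,\cE\otimes\cE'}$ is by definition a full subcategory of $\Funcocart(\cI,\cE\otimes\cE')$, it then suffices to prove that $i\otimes\cE'$ is fully faithful.

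The next step is to exhibit a left adjoint to $i$ in $\PrL$. The $\cE$-bireflexivity hypothesis guarantees that $\St_{\cI,\cE}$ is closed under limits and colimits inside $\Fun(\cI,\cE)$, and by \cref{cor:cocartesianization} and \cref{cocart_stability_limits} the same holds for $\Funcocart(\cI,\cE)$; combining these facts yields that the inclusion $i$ is closed under limits and colimits, hence a morphism in $\PrL$ which preserves limits. Both $\St_{\cI,\cE}$ and $\Funcocart(\cI,\cE)$ are presentable, by \cref{bireflexive_implies_presentable_stable} and \cref{cor:cocart_presentable_stable} respectively, so the adjoint functor theorem supplies a left adjoint $\ell\colon \Funcocart(\cI,\cE)\to\St_{\cI,\cE}$ in $\PrL$, and full faithfulness of $i$ translates into the equivalence $\ell\circ i\simeq \id_{\St_{\cI,\cE}}$.

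I would then conclude by tensoring with $\cE'$. Because the Lurie tensor product endows $\PrL$ with a closed symmetric monoidal structure, it is a $2$-functor on the underlying $(2,1)$-category of $\PrL$; in particular it transports the adjunction $\ell\dashv i$ to an adjunction $\ell\otimes\cE'\dashv i\otimes\cE'$ whose counit identifies with $(\ell\circ i)\otimes\cE'\simeq \id$. This exhibits $i\otimes\cE'$ as a fully faithful morphism in $\PrL$, which is the statement we needed.

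The main obstacle is the last paragraph: checking that adjunctions between $\PrL$-morphisms really tensor to adjunctions with the expected unit and counit. This is formal from the symmetric monoidal structure, but the subtlety is that the right adjoint $i$ is itself a morphism in $\PrL$ (not merely a right adjoint in $\Cat_\infty$), and one must verify that this promotion is compatible with the tensor; I would isolate this as a short preliminary lemma, along the same lines as \cref{Peter_lemma}, to keep the argument transparent.
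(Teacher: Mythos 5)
Your proposal is correct and follows the paper's own proof essentially step for step: factor the comparison through the equivalence $\Funcocart(\cI,\cE)\otimes\cE'\simeq\Funcocart(\cI,\cE\otimes\cE')$ of \cref{cocart_tensor_for_exp}, show via bireflexivity together with \cref{cor:cocartesianization} and \cref{cocart_stability_limits} that the fully faithful inclusion $\St_{\cI,\cE}\hookrightarrow\Funcocart(\cI,\cE)$ lies in $\PrL$ and admits a colimit-preserving left adjoint $\ell$, and conclude by transporting the adjunction $\ell\dashv i$ (with invertible counit) along $(-)\otimes\cE'$. The only cosmetic difference is that the paper also records the existence of a right adjoint to $i$, whereas you use only the left adjoint, which indeed is all that is needed; your suggestion to isolate the ``adjunctions in $\PrL$ tensor'' step as a standalone lemma is a reasonable way to make the terse phrase ``functoriality of the tensor product in $\PrL$'' in the paper's proof fully transparent.
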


\begin{proof}
	Since the fibers of $\cI$ are finite, \cref{cocart_stability_limits} implies that $\Funcocart(\cI,\cE)$ is closed under limits and colimits in $\Fun(\cI,\cE)$.
	By (2), it follows that $\St_{\cI,\cE}$ is closed under limits and colimits in $\Funcocart(\cI,\cE)$ as well.
	Since in this situation $\St_{\cI,\cE}$ is presentable by \cref{bireflexive_implies_presentable_stable}, it follows that the inclusion of $\St_{\cI,\cE}$ into $\Funcocart(\cI,\cE)$ has both a left and a right adjoint.
	Therefore, the functoriality of the tensor product in $\PrL$ implies that the top left vertical arrow in the diagram \eqref{eq:Stokes_tensor_comparison} is fully faithful.
	On the other hand, the middle horizontal functor is an equivalence thanks to \cref{cocart_tensor_for_exp}, so the conclusion follows.
\end{proof}

\begin{defin}\label{def:stably_universal}
	Let $p \colon \cI \to \cX$  be an object of $\PosFib$ and let $\cC\subset \PrL$ be a full subcategory stable under tensor product.
	We say that $p \colon \cI \to \cX$  is \emph{$\cC$-universal} if it is $\cC$-bireflexive and the comparison map $\St_{\cI,\cE} \otimes \cE' \to \St_{\cI,\cE\otimes \cE'}$ of \cref{construction:Stokes_tensor_comparison} is an equivalence for every$ \cE,\cE'\in \cC$.
	If $\cC\subset \PrL$ is the collection of all presentable stable $\infty$-categories, we simply say that $p \colon \cI \to \cX$ is stably universal.
\end{defin}

\begin{prop}\label{prop:descent_of_universality}
	Let $\cX_\bullet \colon I \to \Cat_\infty$ be a diagram with colimit $\cX$.
	Let $p \colon \cI \to \cX$ be a cocartesian fibration in posets and set
	\[ \cI_\bullet \coloneqq \cX_\bullet \times_{\cX} \cI \colon I \to \Cat_\infty \ . \]
	Assume that $\cI_i\to \cX$ is stably universal for every $i\in I$.
	Then $\cI\to \cX$ is stably universal.
\end{prop}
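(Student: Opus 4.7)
The plan is to combine the Van Kampen principle for Stokes functors (\cref{prop:Van_Kampen_Stokes}) with the commutation of tensor products and limits in $\PrLR$ supplied by \cref{Peter_lemma}.

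First, I would verify stable bireflexivity of $\cI \to \cX$. This is exactly the content of \cref{cor:descending_geometricity_of_Stokes_via_colimits}, whose hypotheses are satisfied since every $\cI_i \to \cX_i$ is stably universal and hence in particular stably bireflexive. This step also guarantees that, for every presentable stable $\cE$, the diagram $i \mapsto \St_{\cI_i, \cE}$ takes values in $\PrLR$, since the transition functors are pullbacks between bireflexive $\infty$-categories of Stokes functors and therefore commute with both limits and colimits (again by \cref{cor:descending_geometricity_of_Stokes_via_colimits}).

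Next, fix presentable stable $\cE, \cE' \in \PrL$. I would run the following chain of natural equivalences, in which every limit is computed in $\PrLR$:
\begin{align*}
\St_{\cI, \cE} \otimes \cE'
&\simeq \Bigl( \lim_{i \in I\op} \St_{\cI_i, \cE} \Bigr) \otimes \cE'
&& \text{by \cref{prop:Van_Kampen_Stokes} and \cref{cor:descending_geometricity_of_Stokes_via_colimits},} \\
&\simeq \lim_{i \in I\op} \bigl( \St_{\cI_i, \cE} \otimes \cE' \bigr)
&& \text{by \cref{Peter_lemma},} \\
&\simeq \lim_{i \in I\op} \St_{\cI_i, \cE \otimes \cE'}
&& \text{by stable universality of $\cI_i \to \cX_i$,} \\
&\simeq \St_{\cI, \cE \otimes \cE'}
&& \text{by \cref{prop:Van_Kampen_Stokes} applied to $\cE \otimes \cE'$.}
\end{align*}
The only non-formal input is the second equivalence, which requires the diagram to land in $\PrLR$; this was secured in the first step.

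Finally, I would check that the resulting equivalence coincides with the comparison functor of \cref{construction:Stokes_tensor_comparison}. This is a naturality check: the comparison map is characterized by the fact that it is $\cE'$-linear and sends $F \otimes E'$ to the image of $F \otimes E'$ through the equivalence $\Fun(\cI, \cE) \otimes \cE' \simeq \Fun(\cI, \cE \otimes \cE')$, and each equivalence in the displayed chain is manifestly $\cE'$-linear and compatible with these underlying functor-category identifications through the naturality of Van Kampen and of \cref{Peter_lemma}.

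The main obstacle is the second equivalence in the chain above: one must know that the diagram $\St_{\cI_\bullet,\cE}$ factors through $\PrLR$ in order to apply \cref{Peter_lemma}. Once bireflexivity at each index $i$ is established, however, this is automatic from the general functoriality of Stokes functors under cartesian pullback combined with the characterization of morphisms in $\PrLR$ as those preserving both limits and colimits.
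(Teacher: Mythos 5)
Your proof is correct and follows exactly the same route as the paper: establish stable bireflexivity of $\cI \to \cX$ via \cref{cor:descending_geometricity_of_Stokes_via_colimits}, then run the identical chain of equivalences through the $\PrLR$-limit description from Van Kampen, \cref{Peter_lemma}, and stable universality of each $\cI_i$. The final naturality check that the equivalence agrees with the comparison map of \cref{construction:Stokes_tensor_comparison} is a reasonable addition that the paper leaves implicit, but it does not change the argument.
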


\begin{proof}
Note that $\cI\to \cX$ is stably bireflexive by \cref{cor:descending_geometricity_of_Stokes_via_colimits}.
For every presentable stable $\infty$-categories $\cE,\cE'$, we have 
	\begin{align*}
		\St_{\cI,\cE} \otimes \cE' & \simeq (\lim_{i\in I} \St_{\cI_i,\cE} )  \otimes \cE' & \text{\cref{cor:descending_geometricity_of_Stokes_via_colimits}}  \\
		&\simeq \lim_{i\in I}( \St_{\cI_i,\cE} \otimes \cE' )   &\text{\cref{Peter_lemma} }  \\
		& \simeq  \lim_{i\in I} \St_{\cI_i,\cE\otimes \cE' }    & \\
		& \simeq \St_{\cI,\cE\otimes \cE' } & \text{\cref{cor:descending_geometricity_of_Stokes_via_colimits}}
	\end{align*}	
\end{proof}

\subsection{Induced $t$-structures for Stokes functors}\label{subsec:t_structure_Stokes}

Fix a presentable stable $\infty$-category $\cE$ equipped with an induced $t$-structure $\tau = (\cE_{\geqslant 0}, \cE_{\leqslant 0})$.
In \cref{subsec:t_structure_cocartesian_functors} we showed that cocartesian functors inherits a $t$-structure from $\tau$, and we analyzed the basic properties.
We now investigate the behavior with respect to Stokes functor.

\medskip

We start with a couple of general facts concerning $t$-structures.

\begin{construction}\label{construction:comparison_truncation_functors}
	Let $\cC$ and $\cD$ be stable $\infty$-categories equipped with $t$-structures $\tau^{\cC} = (\cC_{\leqslant 0}, \cC_{\geqslant 0})$ and $\tau^{\cD} = (\cD_{\leqslant 0}, \cD_{\geqslant 0})$ and let
	\[ F \colon \cC \to \cD \]
	be a right $t$-exact stable functor.
	For every object $C \in \cC$, one has $F(\tau^\cC_{\geqslant 0}(C)) \in \cD_{\geqslant 0}$, and therefore the mapping space
	\[ \Map_\cD\big( F(\tau^\cC_{\geqslant 0}(C)), \tau^{\cD}_{\leqslant -1}( F(C) ) \big) \]
	is contractible.
	It follows that there exists the dashed morphisms making the diagram
	\begin{equation}\label{eq:comparison_truncation_functors}
		\begin{tikzcd}
			F\big( \tau_{\geqslant 0}^\cC(C) \big) \arrow{r} \arrow[dashed]{d} & F(C) \arrow[equal]{d} \arrow{r} & F\big( \tau_{\leqslant -1}^\cC(C) \big) \arrow[dashed]{d} \\
			\tau^\cD_{\geqslant 0}\big( F(C) \big) \arrow{r} & F(C) \arrow{r} & \tau^\cD_{\leqslant -1}\big( F(C) \big)
		\end{tikzcd}
	\end{equation}
	commutative.
\end{construction}

\begin{lem}\label{lem:t_structure_commuting_with_truncations}
	In the situation of \cref{construction:comparison_truncation_functors}, let $C \in \cC$ be an object.
	If $F( \tau_{\leqslant -1}^\cC(C) ) \in \cD_{\leqslant -1}$ then both canonical comparison maps
	\[ F\big( \tau_{\geqslant 0}^\cC(C) \big) \to \tau^\cD_{\geqslant 0}\big( F(C) \big) \qquad \text{and} \qquad F\big( \tau_{\leqslant -1}^\cC(C) \big) \to \tau^\cD_{\leqslant -1}\big( F(C) \big) \]
	are equivalences.
\end{lem}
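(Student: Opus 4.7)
The plan is to reduce the statement to the uniqueness (up to canonical equivalence) of the truncation fiber sequence in a stable $\infty$-category endowed with a $t$-structure.

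First, I would apply the exact functor $F$ to the truncation fiber sequence
\[ \tau_{\geqslant 0}^\cC(C) \to C \to \tau_{\leqslant -1}^\cC(C) \]
of $C$ in $\cC$. Since $F$ is stable, this yields a fiber sequence
\[ F\big(\tau_{\geqslant 0}^\cC(C)\big) \to F(C) \to F\big(\tau_{\leqslant -1}^\cC(C)\big) \]
in $\cD$. The right $t$-exactness of $F$ guarantees that the left term lies in $\cD_{\geqslant 0}$, while by assumption the right term lies in $\cD_{\leqslant -1}$.

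Second, I would invoke the characterization of the truncation triangle: for any object $D \in \cD$, the fiber sequence $\tau_{\geqslant 0}^\cD(D) \to D \to \tau_{\leqslant -1}^\cD(D)$ is the essentially unique fiber sequence $A \to D \to B$ with $A \in \cD_{\geqslant 0}$ and $B \in \cD_{\leqslant -1}$ (see \cite[Proposition 1.2.1.5]{Lurie_Higher_algebra} and the orthogonality of $\cD_{\geqslant 0}$ and $\cD_{\leqslant -1}$). Applied to $D = F(C)$, this yields a canonical equivalence between the fiber sequence produced in the previous step and the truncation triangle of $F(C)$, i.e.\ the two dashed arrows in diagram \eqref{eq:comparison_truncation_functors} are equivalences.

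I do not anticipate any serious obstacle; the only subtle point is to ensure that the comparison map produced by this uniqueness argument agrees with the one constructed in \cref{construction:comparison_truncation_functors}. This follows from the orthogonality $\Map_\cD(\cD_{\geqslant 0}, \cD_{\leqslant -1}) \simeq \ast$, which forces the dashed arrows in \eqref{eq:comparison_truncation_functors} to be determined (up to a contractible space of choices) by the commutativity of the outer squares, and therefore to coincide with the equivalences provided by the uniqueness of the truncation triangle.
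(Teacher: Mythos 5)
Your proof is correct, but it takes a genuinely different route from the paper's. The paper argues directly on the fibers: it sets $K$ and $K'$ to be the fibers of the two comparison maps, observes that the map of fiber sequences (\ref{eq:comparison_truncation_functors}) yields a fiber sequence $K \to 0 \to K'$, hence $K' \simeq K[1]$, and then shows $K' \in \cD_{\geqslant 0} \cap \cD_{\leqslant -1} = \{0\}$ by keeping track of which side of the $t$-structure each term lives on. Your approach instead invokes the essential uniqueness of the truncation fiber sequence: under the hypothesis, $F(\tau^\cC_{\geqslant 0}(C)) \to F(C) \to F(\tau^\cC_{\leqslant -1}(C))$ is a fiber sequence whose outer terms lie in $\cD_{\geqslant 0}$ and $\cD_{\leqslant -1}$ respectively, so it must be the truncation fiber sequence of $F(C)$ up to canonical equivalence. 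Both are valid; the paper's is a more elementary hands-on diagram chase, while yours is more conceptual and packages the key fact as a black box. Your final remark on compatibility with \cref{construction:comparison_truncation_functors} is the right point to flag and your resolution is correct: the dashed arrow making the left square commute is determined up to contractible choice by the coreflection property of $\tau^\cD_{\geqslant 0}(F(C))$ (equivalently, by the contractibility of $\Map_\cD(F(\tau^\cC_{\geqslant 0}(C)), \tau^\cD_{\leqslant -1}(F(C)))$), so the equivalence produced by the uniqueness of the truncation triangle and the comparison map of the construction must coincide. One minor caveat: the citation to Proposition 1.2.1.5 of Higher Algebra is not quite the precise statement you need (the characterization of the truncation triangle as the unique fiber sequence with outer terms in the indicated halves); that fact is standard and follows immediately from the orthogonality you already invoke, but a sharper reference or a one-line derivation would tighten the exposition.
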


\begin{proof}
	Since $F$ is a stable functor, the top row of \eqref{eq:comparison_truncation_functors} is a fiber sequence in $\cD$.
	By definition of $t$-structure, the same holds true for the bottom row.
	Set
	\[ K \coloneqq \fib\big( F\big( \tau_{\geqslant 0}^\cC(C) \big) \to \tau^\cD_{\geqslant 0}\big( F(C) \big) \big) \qquad \text{and} \qquad K' \coloneqq \fib\big( F\big( \tau_{\leqslant -1}^\cC(C) \big) \to \tau^\cD_{\leqslant -1}\big( F(C) \big) \big) \ . \]
	We therefore obtain a fiber sequence
	\[ K \to 0 \to K' \ , \]
	which implies $K' \simeq K[1]$.
	Observe now that $K' \in \cD_{\leqslant -1}$.
	At the same time,
	\[ K[1] \simeq \cofib\big( F\big( \tau_{\geqslant 0}^\cC(C) \big) \to \tau^\cD_{\geqslant 0}\big( F(C) \big) \big) \in \cD_{\geqslant 0} \ . \]
	Thus, it follows that $K' \in \cD_{\geqslant 0} \cap \cD_{\leqslant -1} = \{0\}$.
	Thus, both $K$ and $K'$ are zero, which implies that the comparison morphisms are equivalences.
\end{proof}

We now start analyzing the behavior of the standard $t$-structure on Stokes functors.

\begin{recollection}\label{recollection:standard_t_structure}
	Let $f \colon \cA \to \cB$ be a functor of $\infty$-categories.
	Then
	\[ f^\ast \colon \Fun(\cB, \cE) \to \Fun(\cA, \cE) \]
	is $t$-exact with respect to the standard $t$-structures.
	In particular, $f_!$ is right $t$-exact and $f_\ast$ is left $t$-exact.
\end{recollection}

\begin{lem}\label{lem:t_structure_Stokes_discrete_source}
	Let $f \colon \cI \to \cJ$ be a morphism of posets, where $\cI$ is discrete and finite.
	Then
	\[ f_! \colon \Fun(\cI, \cE) \to \Fun(\cJ,\cE) \]
	is $t$-exact.
\end{lem}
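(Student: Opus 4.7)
The plan is to separate the two halves of $t$-exactness. Right $t$-exactness of $f_!$ is automatic: \cref{recollection:standard_t_structure} says that $f^\ast$ is $t$-exact, so its left adjoint $f_!$ is right $t$-exact. The content of the lemma therefore lies entirely in showing that $f_!$ sends $\Fun(\cI,\cE)_{\leqslant 0}$ to $\Fun(\cJ,\cE)_{\leqslant 0}$.

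To verify this I would evaluate $f_!$ pointwise with the usual formula for left Kan extensions: for any $j \in \cJ$,
\[ f_!(F)(j) \simeq \colim_{(i,\alpha) \in \cI \times_\cJ \cJ_{/j}} F(i). \]
Since $\cJ$ is a poset, $\cJ_{/j}$ is the sub-poset of objects of $\cJ$ lying below $j$. Since $\cI$ is a discrete poset, the fiber product $\cI \times_\cJ \cJ_{/j}$ reduces to the discrete set
\[ S_j \coloneqq \{\, i \in \cI \mid f(i) \leqslant j \,\}. \]
This is the place where the hypothesis that $\cI$ be discrete is used, and this reduction is essentially the only non-trivial observation required.

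Now I would invoke the assumption that $\cI$ is finite: then $S_j$ is a finite set and the pointwise formula becomes the finite coproduct
\[ f_!(F)(j) \simeq \bigoplus_{i \in S_j} F(i). \]
Because $\cE$ is stable, finite coproducts agree with finite products, so they preserve the coconnective part of any $t$-structure. Hence, if $F$ takes values in $\cE_{\leqslant 0}$ then so does $f_!(F)$, proving left $t$-exactness. I do not anticipate any genuine obstacle; the argument is bookkeeping, with the only conceptual input being that discreteness of $\cI$ reduces the relevant comma $\infty$-category to a finite set and finiteness turns the resulting colimit into a biproduct.
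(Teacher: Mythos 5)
Your proof is correct and takes essentially the same approach as the paper: the key step in both is the pointwise formula $f_!(F)(j) \simeq \bigoplus_{f(i) \leqslant j} F(i)$ (a finite direct sum, using discreteness and finiteness of $\cI$), and the observation that in a stable $\infty$-category finite sums preserve both the connective and coconnective parts of a $t$-structure. The paper's proof handles both halves of $t$-exactness in one stroke rather than singling out right $t$-exactness as automatic, but the content is identical.
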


\begin{proof}
	Fix a functor $F \colon \cI \to \cE$ and an object $b \in \cJ$.
	By definition
	\[ f_!(F)_b \simeq \bigoplus_{f(a) \leq b} F_a \ , \]
	so the conclusion follows from the fact that both $\cE_{\geqslant 0}$ and $\cE_{\leqslant 0}$ are closed under finite sums.
\end{proof}

\begin{cor}\label{cor:t_structure_Stokes_truncation_split_is_split}
	Let $\cI$ be a finite poset and let $F \colon \cI \to \cE$ be a functor.
	If $F$ is split, then so are $\tau_{\leqslant n}(F)$ and $\tau_{\geqslant n}(F)$ for every $n \in \Z$.
\end{cor}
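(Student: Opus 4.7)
My plan is to reduce everything to the $t$-exactness of the Kan extension $i_{\cI,!}$, which is exactly the content of the preceding \cref{lem:t_structure_Stokes_discrete_source}. Since $\cI$ is a finite poset, its underlying set $\cI^{\ens}$ is a finite discrete poset, so the morphism $i_\cI \colon \cI^{\ens} \to \cI$ satisfies the hypothesis of that lemma and therefore
\[ i_{\cI,!} \colon \Fun(\cI^{\ens}, \cE) \to \Fun(\cI,\cE) \]
is $t$-exact with respect to the standard $t$-structures.

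Now assume $F$ is split and pick a splitting $F \simeq i_{\cI,!}(V)$ for some $V \colon \cI^{\ens} \to \cE$ (as in \cref{def:splitting}). The $t$-structure on $\Fun(\cI^{\ens},\cE)$ provides a fiber sequence
\[ \tau_{\geqslant n}(V) \to V \to \tau_{\leqslant n-1}(V) \]
in $\Fun(\cI^{\ens},\cE)$. Applying the exact functor $i_{\cI,!}$ yields a fiber sequence
\[ i_{\cI,!}(\tau_{\geqslant n}V) \to i_{\cI,!}(V) \simeq F \to i_{\cI,!}(\tau_{\leqslant n-1} V) \]
in $\Fun(\cI,\cE)$. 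By $t$-exactness of $i_{\cI,!}$, the first term lies in $\Fun(\cI,\cE)_{\geqslant n}$ and the third in $\Fun(\cI,\cE)_{\leqslant n-1}$. By uniqueness of the truncation decomposition, these terms are canonically identified with $\tau_{\geqslant n}(F)$ and $\tau_{\leqslant n-1}(F)$ respectively. Since both $\tau_{\geqslant n}(V)$ and $\tau_{\leqslant n-1}(V)$ are themselves functors $\cI^{\ens} \to \cE$, the identifications
\[ \tau_{\geqslant n}(F) \simeq i_{\cI,!}(\tau_{\geqslant n} V), \qquad \tau_{\leqslant n-1}(F) \simeq i_{\cI,!}(\tau_{\leqslant n-1} V) \]
exhibit $\tau_{\geqslant n}(F)$ and $\tau_{\leqslant n-1}(F)$ as split, completing the proof (up to shifting the index $n$ by one). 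There is no serious obstacle: the only subtle point is the availability of \cref{lem:t_structure_Stokes_discrete_source}, which requires finiteness of $\cI^{\ens}$ (equivalently, of $\cI$) to ensure that the pointwise formula for $i_{\cI,!}$ reduces to a finite direct sum and hence respects both halves of the $t$-structure.
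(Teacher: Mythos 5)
Your proof is correct and follows essentially the same route as the paper: choose a splitting $F \simeq i_{\cI,!}(V)$, invoke the $t$-exactness of $i_{\cI,!}$ from \cref{lem:t_structure_Stokes_discrete_source} (valid since $\cI^{\ens}$ is discrete and finite), and conclude that the truncations of $F$ are obtained by applying $i_{\cI,!}$ to the truncations of $V$, hence split. The only difference is that you spell out the fiber-sequence/uniqueness argument that $t$-exactness commutes with truncations, which the paper takes as implicit.
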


\begin{proof}
	It suffices to treat the case $n = 0$.
	Choose a functor $V \colon \cI^{\ens} \to \cE$ with an equivalence $F \simeq i_{\cI,!}(V)$.
	Since $\cI$ is finite, \cref{lem:t_structure_Stokes_discrete_source} implies that
	\[ \tau_{\leqslant 0}( i_{\cI,!}(V) ) \simeq i_{\cI,!}( \tau_{\leqslant 0}(V) ) \qquad \text{and} \qquad \tau_{\geqslant 0}( i_{\cI,!}(V) ) \simeq i_{\cI,!}( \tau_{\geqslant 0}(V) ) \ , \]
	whence the conclusion.
\end{proof}

\begin{lem} \label{lem:t_structure_Stokes_splitting}
	Let $\cI$ be a poset and let $F \colon \cI \to \cE$ be a functor.
	Let $(V,\phi \colon i_{\cI,!}(V) \simeq F)$ be a splitting for $F$.
	Let $n \in \Z$ be an integer.
	If $F$ takes values in $\cE_{\geqslant n}$ (resp.\ $\cE_{\leqslant n}$), then the same goes for $V$.
\end{lem}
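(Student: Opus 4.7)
The plan is to exhibit $V(a)$ as a retract of $F(a)$ in $\cE$ for every $a \in \cI$, and then invoke the closure of $\cE_{\geqslant n}$ and $\cE_{\leqslant n}$ under retracts.

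First, I would compute explicitly the value of the left Kan extension $i_{\cI,!}(V)$ at an object $a \in \cI$ using the pointwise formula. Since $i_\cI \colon \cI^{\ens} \to \cI$ is the identity on objects and $\cI$ is a poset, the slice category $\cI^{\ens} \times_{\cI} \cI_{/a}$ is the discrete category whose objects are $\{b \in \cI : b \leqslant a\}$. Thus
\[ F(a) \simeq i_{\cI,!}(V)(a) \simeq \coprod_{b \leqslant a} V(b) \simeq \bigoplus_{b \leqslant a} V(b) , \]
where the last identification uses that coproducts and finite products agree in the stable $\infty$-category $\cE$. Singling out the summand at $b=a$ gives a two-term biproduct decomposition
\[ F(a) \simeq V(a) \oplus \bigoplus_{b < a} V(b) \]
in $\cE$, where the second term is well-defined by presentability of $\cE$. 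In a stable $\infty$-category any two-term coproduct is also a product, so this decomposition provides a projection $F(a) \twoheadrightarrow V(a)$ splitting the canonical inclusion $V(a) \hookrightarrow F(a)$; that is, $V(a)$ is a retract of $F(a)$ in $\cE$.

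Finally, both $\cE_{\geqslant n}$ and $\cE_{\leqslant n}$ are closed under retracts in $\cE$: indeed, $\cE_{\geqslant n}$ is closed under colimits and $\cE_{\leqslant n}$ under limits, and retracts can be obtained as splittings of idempotents, which are in particular both limits and colimits. Hence if $F(a) \in \cE_{\geqslant n}$ (resp.\ $\cE_{\leqslant n}$) for every $a \in \cI$, the retract $V(a)$ lies in the same subcategory, concluding the proof. There is no serious obstacle here; the only point requiring mild care is the existence of the projection onto $V(a)$ in a possibly infinite coproduct, which is resolved by grouping all terms other than $V(a)$ into a single summand and using biproducts in the stable setting.
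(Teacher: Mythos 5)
Your proof is correct and follows essentially the same route as the paper: both compute $F(a)\simeq\bigoplus_{b\leqslant a}V(b)$ from the splitting, observe that $V(a)$ is a retract of $F(a)$, and conclude by closure of $\cE_{\geqslant n}$ and $\cE_{\leqslant n}$ under retracts. The only difference is cosmetic — the paper reduces to $n=0$ first and states the retract observation more tersely — so there is nothing to flag.
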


\begin{proof}
	It suffices to consider the case $n = 0$.
	For $a \in \cI$, $\phi$ induces
	\[ F_a \simeq \bigoplus_{b \leqslant a} V_b \ . \]
	In particular, $V_a$ is a retract of $F_a$.
	Since $F_a \in \cE_{\geqslant 0}$ (resp.\ $F_a \in \cE_{\leqslant 0}$), it follows that $V_a \in \cE_{\geqslant 0}$ (resp.\ $V_a \in \cE_{\leqslant 0}$) as well.
\end{proof}

\begin{cor} \label{lem:t_structure_Stokes_induction}
	Let $f \colon \cI \to \cJ$ be a morphism of finite posets.
	Let $F \colon \cI \to \cE$ be a split functor and let $n \in \Z$ be an integer.
	If $F$ takes values in $\cE_{\leqslant n}$, then so does $f_!(F)$;
\end{cor}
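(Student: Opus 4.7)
The plan is to reduce the statement to the previous two lemmas by passing through the discrete replacement of $\cI$. Since $F$ is split by hypothesis, choose a splitting, that is a functor $V \colon \cI^{\ens} \to \cE$ together with an equivalence $\phi \colon i_{\cI, !}(V) \simeq F$ in $\Fun(\cI, \cE)$. From the composition of left Kan extensions one obtains a canonical identification
\[ f_!(F) \simeq f_!\bigl( i_{\cI,!}(V) \bigr) \simeq (f \circ i_\cI)_!(V) \ , \]
so everything reduces to controlling the $t$-amplitude of $(f \circ i_\cI)_!(V)$.

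First I would invoke \cref{lem:t_structure_Stokes_splitting} to transfer the hypothesis on $F$ to $V$: since $F$ takes values in $\cE_{\leqslant n}$, the splitting forces $V$ to take values in $\cE_{\leqslant n}$ as well. Second, I would view $g \coloneqq f \circ i_\cI$ as a morphism of posets whose source $\cI^{\ens}$ is discrete and finite (the finiteness of $\cI^{\ens}$ following from that of $\cI$). This puts us exactly in the situation of \cref{lem:t_structure_Stokes_discrete_source}, which guarantees that
\[ g_! \colon \Fun(\cI^{\ens}, \cE) \to  \Fun(\cJ, \cE) \]
is $t$-exact. Applying $t$-exactness to $V \in \Fun(\cI^{\ens}, \cE)_{\leqslant n}$ yields that $g_!(V) \simeq f_!(F)$ lies in $\Fun(\cJ, \cE)_{\leqslant n}$, which is the desired conclusion.

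There is essentially no obstacle in this argument: it is purely a bookkeeping of the two preceding lemmas. The only point worth checking carefully is the compatibility of left Kan extension with composition, but this is standard. Note that one cannot hope to upgrade this to a $t$-exactness statement for $f_!$ itself (nor even run an analogous argument for $\cE_{\geqslant n}$), because this would require $F$ to remain split after truncation; while \cref{cor:t_structure_Stokes_truncation_split_is_split} does provide this for \emph{split} functors with finite poset source, it is precisely the failure in the merely punctually split case that prevents a global $t$-exactness statement for induction.
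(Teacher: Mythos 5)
Your proof is correct and coincides with the paper's argument: transfer the bound on $F$ to the splitting $V$ via \cref{lem:t_structure_Stokes_splitting}, rewrite $f_!(F)$ as the left Kan extension of $V$ along $f \circ i_\cI \simeq i_\cJ \circ f^{\ens}$, and conclude by \cref{lem:t_structure_Stokes_discrete_source} applied to a map out of the finite discrete poset $\cI^{\ens}$. One small caveat on your closing aside: the coconnective bound $\cE_{\geqslant n}$ is preserved by $f_!$ unconditionally, being the right $t$-exactness from \cref{recollection:standard_t_structure}, so for that direction the ``analogous argument'' is unneeded rather than unavailable.
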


\begin{proof}
	It suffices to consider the case $n = 0$.
	Since $F$ is split, we can find a functor $V \colon \cI^{\ens} \to \cE$ and an equivalence $\phi \colon F \simeq i_{\cI,!}(V)$.
	\Cref{lem:t_structure_Stokes_splitting} guarantees that $V$ takes values in $\cE_{\leqslant 0}$.
	Thus, we find
	\[ f_!(F) \simeq f_! (i_{\cI,!}(V)) \simeq i_{\cJ,!} (f^{\ens}_!(V)) \ , \]
	and the conclusion now follows from \cref{lem:t_structure_Stokes_discrete_source} applied to $i_\cJ \circ f^{\ens} \colon \cI^{\ens} \to \cJ$.
\end{proof}

\begin{notation}\label{notation:truncations_filtered_functors}
	Given an $\infty$-category $\cA$, we denote again by
	\[ \tau_{\geqslant n} \colon \Fun(\cA, \cE) \to \Fun(\cA, \cE_{\geqslant n}) \qquad \text{and} \qquad \tau_{\leqslant n} \colon \Fun(\cA, \cE) \to \Fun(\cA, \cE_{\leqslant n}) \]
	the induced truncation functors, given respectively by the compositions
	\[ \tau_{\geqslant n}(F) \coloneqq \tau_{\geqslant n} \circ F \qquad \text{and} \qquad \tau_{\leqslant n}(F) \coloneqq \tau_{\leqslant n} \circ F \ . \]
\end{notation}

\begin{lem}\label{lem:t_structures_Stokes_induction_II}
	Let $f \colon \cI \to \cJ$ be a morphism of finite posets and let $F \colon \cI \to \cE$ be a split functor.
	Then for every integer $n$, the canonical maps of \cref{construction:comparison_truncation_functors}
	\[ f_!(\tau_{\geqslant n}(F)) \to \tau_{\geqslant n}( f_!(F) ) \qquad \text{and} \qquad f_!(\tau_{\leqslant n}(F)) \to \tau_{\leqslant n}( f_!(F) ) \]
	are equivalences.
\end{lem}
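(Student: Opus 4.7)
The plan is to reduce everything to the case where the source of the induction functor is discrete, where truncations commute with induction by \cref{lem:t_structure_Stokes_discrete_source}. It is enough to treat $n = 0$.

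First I would fix a splitting of $F$, i.e.\ a functor $V \colon \cI^{\ens} \to \cE$ together with an equivalence $\phi \colon i_{\cI,!}(V) \simeq F$. Applying \cref{lem:t_structure_Stokes_discrete_source} to the map $i_\cI \colon \cI^{\ens} \to \cI$ between finite posets (the source being discrete) shows that $i_{\cI,!}$ is $t$-exact, hence $\tau_{\geqslant 0}(F) \simeq i_{\cI,!}(\tau_{\geqslant 0}(V))$ and $\tau_{\leqslant 0}(F) \simeq i_{\cI,!}(\tau_{\leqslant 0}(V))$. In particular these truncations are again split, in agreement with \cref{cor:t_structure_Stokes_truncation_split_is_split}.

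Next I would exploit the factorization $f \circ i_\cI = i_\cJ \circ f^{\ens}$ to write
\[ f_!(F) \simeq f_!(i_{\cI,!}(V)) \simeq i_{\cJ,!}(f^{\ens}_!(V)) \ . \]
Since $\cI$ is finite, for every $b \in \cJ^{\ens}$ one has $f^{\ens}_!(V)_b \simeq \bigoplus_{f(a)=b} V_a$, a finite direct sum, so $f^{\ens}_!$ is $t$-exact. Applying \cref{lem:t_structure_Stokes_discrete_source} once more to $i_\cJ$, and combining with the $t$-exactness of $f^{\ens}_!$, yields
\[ \tau_{\geqslant 0}(f_!(F)) \simeq \tau_{\geqslant 0}(i_{\cJ,!}(f^{\ens}_!(V))) \simeq i_{\cJ,!}(f^{\ens}_!(\tau_{\geqslant 0}(V))) \simeq f_!(i_{\cI,!}(\tau_{\geqslant 0}(V))) \simeq f_!(\tau_{\geqslant 0}(F)) \ , \]
and symmetrically for $\tau_{\leqslant 0}$.

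The one remaining point, which I consider the main (but mild) obstacle, is to verify that the equivalences produced above agree with the canonical comparison maps of \cref{construction:comparison_truncation_functors}. For this I would invoke \cref{lem:t_structure_commuting_with_truncations} applied to the right $t$-exact stable functor $f_!$: thanks to \cref{lem:t_structures_Stokes_induction_II}'s companion \cref{lem:t_structure_Stokes_induction}, $f_!(\tau_{\leqslant -1}(F))$ lies in $\cE_{\leqslant -1}$ (here using that $\tau_{\leqslant -1}(F)$ is still split), which is exactly the hypothesis needed to conclude that both canonical comparison maps are equivalences. The $\tau_{\geqslant n}$ case follows, and the $\tau_{\leqslant n}$ case is obtained analogously by passing to the cofiber sequence.
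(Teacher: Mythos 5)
Your proof is correct, and its operative step is the paper's own argument. The first two paragraphs, however, are a detour that you yourself correctly flag as insufficient: chaining equivalences through the $t$-exact functors $i_{\cI,!}$, $f^{\ens}_!$ and $i_{\cJ,!}$ via a chosen splitting $V$ produces \emph{some} equivalence $f_!(\tau_{\geqslant 0}(F)) \simeq \tau_{\geqslant 0}(f_!(F))$, but does not a priori exhibit the \emph{canonical} comparison map of \cref{construction:comparison_truncation_functors} as an equivalence. The paper skips this heuristic entirely and goes directly to what is your third paragraph: $f_!$ is right $t$-exact (\cref{recollection:standard_t_structure}); $\tau_{\leqslant -1}(F)$ is again split and lands in $\cE_{\leqslant -1}$ (\cref{cor:t_structure_Stokes_truncation_split_is_split}); hence $f_!(\tau_{\leqslant -1}(F)) \in \cE_{\leqslant -1}$ (\cref{lem:t_structure_Stokes_induction}); conclude with \cref{lem:t_structure_commuting_with_truncations}. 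Once you invoke that lemma, the preliminary computation adds nothing — and your intermediate derivation that $f^{\ens}_!$ is $t$-exact via the finite-sum formula is just \cref{lem:t_structure_Stokes_discrete_source} applied to $f^{\ens}$, so it need not be reproven. In short: trim the first two paragraphs, and the remainder is exactly the intended proof.
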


\begin{proof}
	It suffices to consider the case $n = 0$.
	Since $F$ is split, \cref{cor:t_structure_Stokes_truncation_split_is_split} guarantees that $\tau_{\leqslant -1}(F)$ is again split and takes values in $\cE_{\leqslant -1}$.
	Therefore, \cref{lem:t_structure_Stokes_induction} implies that $f_!( \tau_{\leqslant -1}(F) )$ takes values in $\cE_{\leqslant -1}$.
	At this point, the conclusion follows from \cref{lem:t_structure_commuting_with_truncations}.
\end{proof}

\begin{prop} \label{prop:t_structure_Stokes}
	Let $p \colon \cI \to \cX$ be a cocartesian fibration in finite posets and let $\cE$ be a stable presentable $\infty$-category equipped with an accessible $t$-structure $\tau = (\cE_{\leqslant 0}, \cE_{\geqslant 0})$.
	If $F \colon \cI \to \cE$ is a Stokes functor then for every integer $n \in \Z$, both $\tau_{\leqslant n}(F)$ and $\tau_{\geqslant n}(F)$ are again Stokes functors.
	In particular, $p \colon \cI \to \cX$ is $\cE$-bireflexive, then $\St_{\cI,\cE}$ acquires a unique accessible $t$-structure such that the inclusion
	\[ \St_{\cI,\cE} \hookrightarrow \Fun(\cI,\cE) \]
	is $t$-exact.
	If in addition $\tau$ is compatible with filtered colimits, the same goes for the induced $t$-structure on $\St_{\cI,\cE}$.
\end{prop}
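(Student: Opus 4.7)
The statement has three parts: (i) for every Stokes functor $F$ and every $n$, the truncations $\tau_{\geqslant n}(F)$ and $\tau_{\leqslant n}(F)$ are Stokes functors; (ii) when $p \colon \cI \to \cX$ is $\cE$-bireflexive this yields a unique accessible $t$-structure on $\St_{\cI,\cE}$ for which the inclusion is $t$-exact; (iii) if $\tau$ is compatible with filtered colimits, so is the induced $t$-structure. The main work lies in (i).

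For (i) it suffices by shift to treat $n=0$. The standard truncation functors on $\Fun(\cI,\cE)$ act pointwise, so they commute with every restriction $j_x^\ast$. Hence punctual splittedness is checked fiberwise: since $F$ is Stokes, $j_x^\ast(F)$ is split, and because $\cI_x$ is finite, \cref{cor:t_structure_Stokes_truncation_split_is_split} ensures that $\tau_{\geqslant 0}(j_x^\ast(F))$ and $\tau_{\leqslant 0}(j_x^\ast(F))$ are split as well. For the cocartesian condition, fix a morphism $\gamma \colon x \to y$ in $\cX$ and a straightening $f_\gamma \colon \cI_x \to \cI_y$ of $\cI_\gamma$. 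Because $j_x^\ast(F)$ is split and the poset $\cI_x$ is finite, \cref{lem:t_structures_Stokes_induction_II} provides canonical equivalences
\[ f_{\gamma,!}\,\tau_{\geqslant 0}(j_x^\ast(F)) \simeq \tau_{\geqslant 0}(f_{\gamma,!} j_x^\ast(F)) \qquad \text{and} \qquad f_{\gamma,!}\,\tau_{\leqslant 0}(j_x^\ast(F)) \simeq \tau_{\leqslant 0}(f_{\gamma,!} j_x^\ast(F)). \]
Combining this with the cocartesian property of $F$, namely $f_{\gamma,!} j_x^\ast(F) \simeq j_y^\ast(F)$ (\cref{prop:cocartesian_functors_reformulation}), and with the commutation of $\tau_{\geqslant 0}$ and $\tau_{\leqslant 0}$ with the restrictions $j_x^\ast$ and $j_y^\ast$, we obtain the Beck-Chevalley equivalences $f_{\gamma,!} j_x^\ast \tau_{\geqslant 0}(F) \simeq j_y^\ast \tau_{\geqslant 0}(F)$ and similarly for $\tau_{\leqslant 0}$. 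Another application of \cref{prop:cocartesian_functors_reformulation} completes (i).

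For (ii), assume bireflexivity; then $\St_{\cI,\cE}$ is presentable and stable by \cref{bireflexive_implies_presentable_stable}, and colimits in $\St_{\cI,\cE}$ agree with those in $\Fun(\cI,\cE)$. Set $\St_{\cI,\cE,\geqslant 0} \coloneqq \St_{\cI,\cE} \cap \Fun(\cI,\cE)_{\geqslant 0}$. This full subcategory is closed under colimits (since both $\St_{\cI,\cE}$ and $\Fun(\cI,\cE)_{\geqslant 0}$ are closed under colimits in $\Fun(\cI,\cE)$) and under extensions (inherited from $\Fun(\cI,\cE)_{\geqslant 0}$). By \cite[Proposition 1.4.4.11]{Lurie_Higher_algebra}, it is the connective part of a unique accessible $t$-structure on $\St_{\cI,\cE}$. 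Step (i) implies that, for $F$ Stokes, the truncation triangle $\tau_{\geqslant 0}(F) \to F \to \tau_{\leqslant -1}(F)$ of $\Fun(\cI,\cE)$ consists of Stokes functors, so by uniqueness of truncations it is the corresponding triangle in $\St_{\cI,\cE}$; this forces the inclusion to be $t$-exact.

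For (iii), if $\cE_{\leqslant 0}$ is stable under filtered colimits in $\cE$, then $\Fun(\cI,\cE)_{\leqslant 0}$ is stable under filtered colimits in $\Fun(\cI,\cE)$, and the same property passes to $\St_{\cI,\cE,\leqslant 0}$ because $\St_{\cI,\cE}$ is closed under colimits in $\Fun(\cI,\cE)$. The main obstacle is the cocartesian half of (i): without the punctually split hypothesis there is no reason for the left Kan extension $f_{\gamma,!}$ to commute with the truncations, and it is precisely the combination of punctual splittedness with finiteness of the fibers of $\cI$ that makes \cref{lem:t_structures_Stokes_induction_II} applicable, forcing this commutation.
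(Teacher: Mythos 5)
Your proposal is correct and follows essentially the same route as the paper: reduce to $n=0$, check the punctually-split condition fibrewise via $t$-exactness of $j_x^\ast$ and \cref{cor:t_structure_Stokes_truncation_split_is_split}, and check the cocartesian condition via \cref{lem:t_structures_Stokes_induction_II} together with \cref{prop:cocartesian_functors_reformulation}. The only difference is that you spell out the formal steps for existence/accessibility in part (ii) (via HA 1.4.4.11 and the truncation-triangle argument) where the paper states this more tersely, but the substance is the same.
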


\begin{proof}
	We know from \cref{bireflexive_implies_presentable_stable} that $\St_{\cI,\cE}$ is presentable and stable.
	Since $\St_{\cI,\cE}$ is closed under limits and colimits in $\Fun(\cI,\cE)$, the first half of the statement implies the existence of the desired $t$-structure, its accessibility and its compatibility with filtered colimits.
	
	\medskip
		
	Let us therefore prove the first part.
	It suffices to consider the case $n = 0$.
	Let $F \colon \cI \to \cE$ be a Stokes functor.
	We first prove that $\tau_{\geqslant 0}(F)$ and $\tau_{\leqslant 0}(F)$ are punctually split.
	Fix an object $x \in \cX$.
	Since $j_x^\ast \colon \Fun(\cI, \cE) \to \Fun(\cI_x, \cE)$ is $t$-exact, we find canonical equivalences
	\[ j_x^\ast( \tau_{\geqslant 0}(F) ) \simeq \tau_{\geqslant 0}( j_x^\ast(F) ) \qquad \text{and} \qquad j_x^\ast( \tau_{\leqslant 0}(F) ) \simeq \tau_{\leqslant 0}( j_x^\ast(F) ) \ . \]
	Since $j_x^\ast(F)$ is split by assumption, \cref{cor:t_structure_Stokes_truncation_split_is_split} implies that the same goes for $\tau_{\leqslant 0}(j_x^\ast(F))$ and $\tau_{\geqslant 0}(j_x^\ast(F))$ as well, which proves the first claim.
	
	\medskip
	
	We now prove that $\tau_{\geqslant 0}(F)$ and $\tau_{\leqslant 0}(F)$ are cocartesian.
	Let $\gamma \colon x \to y$ be a morphism in $\cX$ and let $f_\gamma \colon \cI_x \to \cI_y$ be any straightening for $\cI_\gamma \to \Delta^1$.
	By \cref{lem:t_structures_Stokes_induction_II}, the canonical comparison maps
	\[ f_{\gamma,!}( \tau_{\geqslant 0}( j_x^\ast(F) ) ) \to \tau_{\geqslant 0}( f_{\gamma,!}( j_x^\ast(F) ) ) \qquad \text{and} \qquad f_{\gamma,!}( \tau_{\leqslant 0}( j_x^\ast(F) ) ) \to \tau_{\leqslant 0}( f_{\gamma,!}( j_x^\ast(F) ) ) \]
	are equivalences.
	Since $F$ is cocartesian, the canonical map
	\[ f_{\gamma,!}( j_x^\ast(F) ) \to j_y^\ast(F) \]
	is an equivalence.
	The conclusion now follows from the $t$-exactness of both $j_x^\ast$ and $j_y^\ast$.
\end{proof}

\begin{cor}\label{cor:t_structure_Stokes_heart}
	In the setting of \cref{prop:t_structure_Stokes}, one has a canonical equivalence:
	\[ \St_{\cI,\cE}^\heartsuit \simeq \St_{\cI,\cE^\heartsuit} \ . \]
\end{cor}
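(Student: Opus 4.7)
The strategy is to identify both $\St_{\cI,\cE}^\heartsuit$ and $\St_{\cI,\cE^\heartsuit}$ with the same explicit full subcategory of $\Fun(\cI,\cE)$, namely the collection of Stokes functors $F\colon \cI \to \cE$ taking values in the heart $\cE^\heartsuit$.

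First, since the standard $t$-structure on $\Fun(\cI,\cE)$ from \cref{notation:truncations_filtered_functors} is objectwise, the fully faithful inclusion $\iota\colon \cE^\heartsuit \hookrightarrow \cE$ induces a canonical equivalence $\Fun(\cI,\cE^\heartsuit)\simeq \Fun(\cI,\cE)^\heartsuit$. By \cref{prop:t_structure_Stokes}, the inclusion $\St_{\cI,\cE}\hookrightarrow \Fun(\cI,\cE)$ is $t$-exact, so its heart is given by the intersection $\St_{\cI,\cE}\cap \Fun(\cI,\cE)^\heartsuit$. In other words, $\St_{\cI,\cE}^\heartsuit$ is identified with the full subcategory of $\Fun(\cI,\cE^\heartsuit)$ spanned by those $G$ such that $\iota\circ G$ is a Stokes functor in $\Fun(\cI,\cE)$.

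The main content of the proof will be to show that for $G\colon \cI\to \cE^\heartsuit$, the Stokes condition computed in $\cE^\heartsuit$ agrees with the Stokes condition for $\iota\circ G$ computed in $\cE$. Both of these are formulated via left Kan extensions (along $i_{\cI_x}$ for the splitting condition, along straightenings $f_\gamma\colon \cI_x \to \cI_y$ for the cocartesian condition), and the key observation is that under the hypothesis of \cref{prop:t_structure_Stokes} the fibers $\cI_x$ are finite posets. Consequently, the pointwise formula for $i_{\cI_x,!}V$ at $a\in \cI_x$ reduces to the finite direct sum $\bigoplus_{b\leqslant a} V(b)$, which is computed in the same way in $\cE^\heartsuit$ and in $\cE$ because finite biproducts in an abelian category are preserved by the inclusion into any stable ambient $\infty$-category with that heart. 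This yields the equivalence of the punctually split condition in $\cE^\heartsuit$ and in $\cE$.

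For the cocartesian condition, once punctual splitting is assumed we may write $j_x^\ast G \simeq i_{\cI_x,!}V$ for some $V\colon \cI_x^{\ens} \to \cE^\heartsuit$, so that the Beck--Chevalley transformation of \cref{prop:cocartesian_functors_reformulation} becomes
\[
(f_\gamma\circ i_{\cI_x})_! V \longrightarrow j_y^\ast G \ .
\]
Since $\cI_x^{\ens}$ is a finite discrete poset and the slices $(f_\gamma\circ i_{\cI_x})_{/b}$ are themselves discrete and finite, the left hand side is again computed as a finite direct sum, which is stable under $\iota$. Thus the cocartesian condition at $\gamma$ holds in $\cE^\heartsuit$ if and only if it holds in $\cE$ after applying $\iota$. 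Combining the two conditions establishes the desired identification $\St_{\cI,\cE^\heartsuit}\simeq \St_{\cI,\cE}^\heartsuit$. The main subtlety, and hence the step to execute carefully, is the reduction of both Stokes conditions to finite-biproduct calculations via the punctual splitting hypothesis; once this is in place, the rest is formal.
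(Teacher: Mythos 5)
Your argument is correct and follows the same route as the paper: first use the $t$-exactness of $\St_{\cI,\cE} \hookrightarrow \Fun(\cI,\cE)$ from \cref{prop:t_structure_Stokes} and the objectwise nature of the $t$-structure to identify $\St_{\cI,\cE}^\heartsuit$ with the full subcategory of $\Fun(\cI,\cE^\heartsuit)$ spanned by functors whose image in $\Fun(\cI,\cE)$ is Stokes, then observe that this coincides with $\St_{\cI,\cE^\heartsuit}$. The paper states the last step without elaboration (``the conclusion follows''), and your proof supplies the missing verification — namely that, for $G\colon \cI \to \cE^\heartsuit$, the split and cocartesian conditions agree whether tested in $\cE^\heartsuit$ or in $\cE$, because finiteness of the fibers of $\cI$ reduces both to finite biproducts, which the additive inclusion $\cE^\heartsuit \hookrightarrow \cE$ preserves and reflects (for the converse direction of the splitting one also uses, as in \cref{lem:t_structure_Stokes_splitting}, that each $V_a$ is a retract of $G_a$ and hence lands in the heart).
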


\begin{proof}
	By definition of the standard $t$-structure on $\Fun(\cI, \cE)$, we have $\Fun(\cI,\cE)^\heartsuit \simeq \Fun(\cI,\cE^\heartsuit)$.
	\Cref{prop:t_structure_Stokes} guarantees that a Stokes functor is connective (resp.\ coconnective) if and only if it is connective (resp.\ coconnective) as an object in $\Fun(\cI,\cE)$, so the conclusion follows.
\end{proof}

\begin{recollection}\label{derived_category_A}
If $\cA$ is a Grothendieck abelian category, we denote by $\mathsf D(\cA)$ the derived $\infty$-category of $\cA$ (see \cite[Definition 1.3.5.8]{Lurie_Higher_algebra}).
By \cite[Propositions 1.3.5.9 \& 1.3.5.21]{Lurie_Higher_algebra} we see that $\mathsf D(\cA)$ is a presentable stable $\infty$-category equipped with an accessible $t$-structure $\tau = (\mathsf D(\cA)_{\geqslant 0}, \mathsf D(\cA)_{\leqslant 0})$ compatible with filtered colimits and such that $\cA \simeq \mathsf D(\cA)^\heartsuit$.
\end{recollection}

\begin{cor}\label{Grothendieck_abelian_abstract}
	Let $p \colon \cI \to \cX$ be an object of $\PosFib$ and let $\cA$ 
	be a Grothendieck abelian category such that $p \colon \cI \to \cX$ is $\mathsf D(\cA)$-bireflexive.
	Then $\St_{\cI,\cA}$ is a Grothendieck abelian category.
\end{cor}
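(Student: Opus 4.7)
The plan is to use $\mathsf D(\cA)$ as a bridge between the abelian and stable $\infty$-categorical settings, exploiting that its heart recovers $\cA$ while its stable structure fits into the machinery developed for Stokes functors. Concretely, by \cref{derived_category_A} the derived $\infty$-category $\mathsf D(\cA)$ is presentable stable and carries an accessible $t$-structure compatible with filtered colimits whose heart is $\cA$.

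First I would apply \cref{prop:t_structure_Stokes} to the coefficient $\infty$-category $\mathsf D(\cA)$: the hypothesis of $\mathsf D(\cA)$-bireflexivity together with \cref{bireflexive_implies_presentable_stable} ensures that $\St_{\cI,\mathsf D(\cA)}$ is a presentable stable $\infty$-category. The proposition then supplies a unique accessible $t$-structure on $\St_{\cI,\mathsf D(\cA)}$ making the inclusion
\[ \St_{\cI,\mathsf D(\cA)} \hookrightarrow \Fun(\cI, \mathsf D(\cA)) \]
$t$-exact, and further guarantees that this induced $t$-structure is compatible with filtered colimits (since the one on $\mathsf D(\cA)$ is, and filtered colimits in $\Fun(\cI,\mathsf D(\cA))$ are computed pointwise).

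Second, I would identify the heart via \cref{cor:t_structure_Stokes_heart}, which yields a canonical equivalence
\[ \St_{\cI,\mathsf D(\cA)}^\heartsuit \simeq \St_{\cI,\mathsf D(\cA)^\heartsuit} \simeq \St_{\cI,\cA}, \]
using the canonical identification $\mathsf D(\cA)^\heartsuit \simeq \cA$ from \cref{derived_category_A}. In particular, $\St_{\cI,\cA}$ inherits the structure of an abelian category.

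Finally, I would invoke the general principle (cf.\ \cite[Remark 1.3.5.23 \& Proposition 1.3.5.21]{Lurie_Higher_algebra}) that the heart of an accessible $t$-structure compatible with filtered colimits on a presentable stable $\infty$-category is automatically a Grothendieck abelian category. Applied to the $t$-structure on $\St_{\cI,\mathsf D(\cA)}$ constructed above, this immediately yields that $\St_{\cI,\cA}$ is Grothendieck abelian, completing the proof. The only point requiring care is the verification that the standard $t$-structure on $\Fun(\cI,\mathsf D(\cA))$ restricts along the fully faithful embedding of $\St_{\cI,\mathsf D(\cA)}$ while preserving its compatibility with filtered colimits; this was already handled in \cref{prop:t_structure_Stokes}, so no additional argument is needed beyond checking that the hypotheses of that proposition are met here.
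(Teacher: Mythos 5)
Your proof is correct and follows essentially the same route as the paper's: presentable stability from \cref{bireflexive_implies_presentable_stable}, the induced accessible $t$-structure compatible with filtered colimits from \cref{prop:t_structure_Stokes}, the heart identification from \cref{cor:t_structure_Stokes_heart} and \cref{derived_category_A}, and the standard fact that the heart of such a $t$-structure is Grothendieck abelian. The only addition you make is to spell out the explicit reference to \cite[Remark 1.3.5.23 \& Proposition 1.3.5.21]{Lurie_Higher_algebra} for the final step, which the paper leaves implicit in its ``thus, it follows.''
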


\begin{proof}
\cref{bireflexive_implies_presentable_stable} implies that $\St_{\cI,\mathsf D(\cA)}$ is presentable and stable, while \cref{prop:t_structure_Stokes} guarantees that $\tau$ induces an accessible $t$-structure on $\St_{\cI,\mathsf D(\cA)}$ which is compatible with filtered colimits and such that the inclusion
	\[ \St_{\cI,\mathsf D(\cA)} \hookrightarrow \Fun(\cI, \mathsf D(\cA)) \]
	is $t$-exact. 
	Moreover, \cref{cor:t_structure_Stokes_heart} and \cref{derived_category_A} imply that
	\[ \St_{\cI,\mathsf D(\cA)}^\heartsuit \simeq \St_{\cI,\cA} \ . \]
	Thus, it follows that $\St_{\cI,\cA}$ is a Grothendieck abelian category.
\end{proof}

\begin{cor}\label{cor:t_exactness_induction_on_Stokes}
	Let $\cX$ be an $\infty$-category and let $f \colon \cI \to \cJ$ be a morphism between cocartesian fibrations in finite posets over $\cX$.
	If $\cI\to \cX$ and $\cJ\to \cX$ are $\cE$-bireflexive, then the functor
	\[ f_! \colon \St_{\cI, \cE} \to \St_{\cJ, \cE} \]
	is $t$-exact.
\end{cor}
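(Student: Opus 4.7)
The plan is to check left and right $t$-exactness separately. For right $t$-exactness, recall from \cref{prop:t_structure_Stokes} that the inclusions $\St_{\cI,\cE} \hookrightarrow \Fun(\cI,\cE)$ and $\St_{\cJ,\cE} \hookrightarrow \Fun(\cJ,\cE)$ are $t$-exact, and by \cref{cor:stokes_functoriality}-(\ref{cor:stokes_functoriality:induction}) the functor $f_! \colon \St_{\cI,\cE} \to \St_{\cJ,\cE}$ is the restriction of $f_! \colon \Fun(\cI,\cE) \to \Fun(\cJ,\cE)$. Since $f^\ast$ is $t$-exact by \cref{recollection:standard_t_structure}, its left adjoint $f_!$ is right $t$-exact on $\Fun$, and this descends to the Stokes subcategories.

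The nontrivial part is left $t$-exactness, and this is where the punctual splitting condition will do the work. Let $F \in \St_{\cI,\cE}$ be coconnective; since the inclusion to $\Fun(\cI,\cE)$ is $t$-exact, $F$ takes values in $\cE_{\leqslant 0}$, and it is enough to prove that $f_!(F)$ takes values in $\cE_{\leqslant 0}$. This can be checked fiberwise, namely after applying $(j_x^\cJ)^\ast$ for every $x \in \cX$, where $j_x^\cJ \colon \cJ_x \hookrightarrow \cJ$ denotes the canonical inclusion. The fiber map $f_x \colon \cI_x \to \cJ_x$ is a morphism of finite posets, and the square
\[ \begin{tikzcd}
\cI_x \arrow{r}{j_x^{\cI}} \arrow{d}{f_x} & \cI \arrow{d}{f} \\
\cJ_x \arrow{r}{j_x^\cJ} & \cJ
\end{tikzcd} \]
is a pullback in $\CoCart$. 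Applying \cref{cor:induction_specialization_Beck_Chevalley} to the obvious diagram over $\cX$ whose diagonal squares are pullback, we obtain a Beck--Chevalley equivalence
\[ (j_x^\cJ)^\ast f_! \simeq f_{x,!} (j_x^\cI)^\ast \colon \Fun(\cI,\cE) \to \Fun(\cJ_x,\cE) \ . \]

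It remains to apply this equivalence to $F$. Since $F$ is Stokes, the pullback $(j_x^\cI)^\ast F$ is a split functor $\cI_x \to \cE$ that takes values in $\cE_{\leqslant 0}$. \Cref{lem:t_structure_Stokes_induction} applied to the map $f_x$ of finite posets then ensures that $f_{x,!}\bigl((j_x^\cI)^\ast F\bigr) \in \Fun(\cJ_x, \cE_{\leqslant 0})$. Via the Beck--Chevalley equivalence, this shows that $(j_x^\cJ)^\ast f_!(F)$ takes values in $\cE_{\leqslant 0}$ for every $x \in \cX$, hence that $f_!(F)$ itself is coconnective, completing the proof. The only potentially delicate step is verifying that \cref{cor:induction_specialization_Beck_Chevalley} applies to yield the required Beck--Chevalley equivalence, but this is automatic once one packages the fiber inclusions and $f$ into a single diagram in $\CoCart$ over $\cX$ with pullback diagonal squares.
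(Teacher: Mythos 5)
Your proof is correct and follows essentially the same route as the paper: right $t$-exactness from $t$-exactness of the inclusion plus \cref{recollection:standard_t_structure}, and left $t$-exactness by the Beck--Chevalley reduction to fibers (\cref{cor:induction_specialization_Beck_Chevalley}) followed by \cref{lem:t_structure_Stokes_induction}. You have simply spelled out the fiberwise reduction that the paper's proof summarizes in one sentence.
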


\begin{proof}
	It follows from \cref{prop:t_structure_Stokes} and \cref{recollection:standard_t_structure} that $f_!$ is right $t$-exact.
	Let $F \in (\St_{\cI,\cE})_{\leqslant 0}$.
	We have to prove that $f_!(F)$ takes values in $\cE_{\leqslant 0}$.
	Combining Corollaries \ref{cor:induction_specialization_Beck_Chevalley} and \ref{cor:stokes_functoriality}, we can reduce ourselves to the case where $\cX$ is reduced to a point, where the result follows from \cref{lem:t_structure_Stokes_induction}.
\end{proof}

\begin{rem}
	The inclusion $\Funcocart(\cI, \cE) \hookrightarrow \Fun(\cI,\cE)$ is typically not left $t$-exact and in general
	\[ \Funcocart(\cI,\cE)^\heartsuit \not\simeq \Funcocart(\cI,\cE^\heartsuit) \ , \]
	as \cref{eg:cocartesian_functor_in_the_heart} shows.
	Notice however that the functor $F$ considered there is not punctually split at $0$.
	Similarly, if $f \colon \cI \to \cJ$ is a morphism between cocartesian fibrations in finite posets, neither
	\[ f_! \colon \Fun(\cI, \cE) \to \Fun(\cJ,\cE) \]
	nor its cocartesian variant are left $t$-exact.
	However, it becomes left $t$-exact once restricted to $\St_{\cI,\cE}$, thanks to \cref{cor:t_exactness_induction_on_Stokes}.
\end{rem}

\begin{cor}\label{cor:t_structures_Stokes_initial_object}
	Let $\cI \to \cX$ be a cocartesian fibration in posets.
	If $\cX$ admits an initial object $x$, then a Stokes functor $F \colon \cI \to \cE$ takes values in $\cE^\heartsuit$ if and only if $j_x^\ast(F) \colon \cI_x \to \cE$ takes values in $\cE^\heartsuit$.
\end{cor}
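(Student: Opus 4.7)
The plan is to deduce the non-trivial direction from the preservation of Stokes functors under truncation and the equivalence of $\infty$-categories given by restriction to the initial fiber; the ``only if'' direction is immediate, since restriction along $j_x$ preserves pointwise values in $\cE^\heartsuit$. Suppose therefore that $j_x^*(F)$ takes values in $\cE^\heartsuit$. It suffices to show that $\tau_{\geqslant 1}(F) = 0$ and $\tau_{\leqslant -1}(F) = 0$ in $\Fun(\cI,\cE)$, for then $F$ lies in the heart of the pointwise $t$-structure on $\Fun(\cI,\cE)$, which is exactly the condition of taking values in $\cE^\heartsuit$.

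First I would invoke \cref{prop:t_structure_Stokes}, whose first half requires no bireflexivity hypothesis and asserts that the pointwise truncations of any Stokes functor are again Stokes. In particular $\tau_{\geqslant 1}(F)$ and $\tau_{\leqslant -1}(F)$ are Stokes functors on $\cI$. Next, using that $j_x^*$ is pure precomposition along $j_x \colon \cI_x \to \cI$, hence $t$-exact for the pointwise $t$-structures of \cref{notation:truncations_filtered_functors}, I get canonical identifications
\[ j_x^*(\tau_{\geqslant 1}(F)) \simeq \tau_{\geqslant 1}(j_x^*(F)) = 0 \qquad \textrm{and} \qquad j_x^*(\tau_{\leqslant -1}(F)) \simeq \tau_{\leqslant -1}(j_x^*(F)) = 0 \ , \]
the final equalities coming from the hypothesis that $j_x^*(F)$ takes values in $\cE^\heartsuit$.

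Finally, \cref{prop:Stokes_functors_in_presence_of_initial_object} supplies an equivalence $j_x^* \colon \St_{\cI,\cE} \simeq \St_{\cI_x,\cE}$, which in particular is conservative on Stokes functors. Thus $\tau_{\geqslant 1}(F) = 0$ and $\tau_{\leqslant -1}(F) = 0$ already inside $\St_{\cI,\cE}$, and hence inside $\Fun(\cI,\cE)$ since the inclusion is fully faithful. I do not foresee any real obstruction: the argument is a direct interplay between \cref{prop:t_structure_Stokes}, the $t$-exactness of the pullback $j_x^*$, and the conservativity coming from \cref{prop:Stokes_functors_in_presence_of_initial_object}, and in particular it does not require any bireflexivity assumption on $\cI \to \cX$ nor compatibility of the $t$-structure on $\cE$ with filtered colimits.
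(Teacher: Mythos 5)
Your proof is correct, and it takes a genuinely different route from the paper's. The paper argues more directly: for any $y \in \cX$, a morphism $\gamma \colon x \to y$ supplied by initiality together with the cocartesian condition on $F$ give $j_y^\ast(F) \simeq f_{\gamma,!}(j_x^\ast(F))$; since $j_x^\ast(F)$ is split and lands in $\cE^\heartsuit$, \cref{lem:t_structure_Stokes_induction} (for the $\cE_{\leqslant 0}$ part) together with the right $t$-exactness of $f_{\gamma,!}$ (for the $\cE_{\geqslant 0}$ part) show $j_y^\ast(F)$ lands in $\cE^\heartsuit$. Your approach instead truncates $F$ globally, using the first half of \cref{prop:t_structure_Stokes} to see $\tau_{\geqslant 1}(F)$ and $\tau_{\leqslant -1}(F)$ are again Stokes, the $t$-exactness of $j_x^\ast$ together with the hypothesis to see their $j_x^\ast$-restrictions vanish, and the equivalence of \cref{prop:Stokes_functors_in_presence_of_initial_object} to conclude they vanish. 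Both routes rely on the same underlying facts; yours is more structural (treating the Stokes-equivalence as a black box), the paper's closer to first principles since it only needs the single statement \cref{lem:t_structure_Stokes_induction} applied to one morphism $\gamma$, not the heavier \cref{prop:t_structure_Stokes} whose proof already contains an argument of the paper's type. Two small remarks: (i) your appeal to the full equivalence is overkill — conservativity of $j_x^\ast$ on Stokes functors is all you use, though of course it follows; (ii) your observation that no bireflexivity or compatibility with filtered colimits is needed is accurate, but you should also note that both proofs implicitly need the fibers of $\cI$ to be \emph{finite} posets, as this hypothesis enters through \cref{prop:t_structure_Stokes} (for you) and \cref{lem:t_structure_Stokes_induction} (for the paper), even though the corollary as stated omits it.
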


\begin{proof}
	The ``only if'' direction simply follows from the $t$-exactness of $j_x^\ast \colon \Fun(\cI, \cE) \to \Fun(\cI_x, \cE)$.
	For the ``if'' direction, we equivalently have to show that for every $y \in \cX$, the restriction $j_y^\ast(F)$ takes values in $\cE^\heartsuit$.
	Since $x$ is initial in $\cX$, we can find a morphism $\gamma \colon x \to y$.
	Choose any straightening $f_\gamma \colon \cI_x \to \cI_y$ for $\cI_\gamma \to \Delta^1$.
	Since $F$ is cocartesian, the canonical map
	\[ f_{\gamma,!}( j_x^\ast(F) ) \to j_y^\ast(F) \]
	is an equivalence.
	The conclusion now follows from \cref{lem:t_structure_Stokes_induction}.
\end{proof}

\subsection{Categorical actions on Stokes functors}\label{subsec:categorical_actions_Stokes}

We use the terminology on categorical actions reviewed in \cref{sec:categorical_actions} (see also \cref{subsec:categorical_actions_cocartesian}, of which this section is the continuation).
We fix a presentably symmetric monoidal and stable $\infty$-category $\cE^\otimes$.
In analogy to \cref{prop:categorical_action_cocartesian}, we have:

\begin{prop}\label{prop:categorical_action_Stokes}
	Let $p \colon \cI \to \cX$ be a cocartesian fibration in posets.
	Then for every $L \in \Loc(\cX;\cE)$ and every $G \in \Fun_{\PS}(\cI, \cE)$, the functor
	\[ p^\ast(L) \otimes G \colon \cI \to \cE \]
	is again punctually split.
	In particular, if $G$ is a Stokes functor, the same goes for $p^\ast(L) \otimes G$.
\end{prop}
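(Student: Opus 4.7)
The plan is to prove the first half (preservation of punctually split) pointwise, and then to deduce the ``in particular'' part by combining with \cref{prop:categorical_action_cocartesian}.

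For the pointwise check, fix $x \in \cX$ and write $p_x \colon \cI_x \to \ast$ for the structural map. Since the tensor product on functor categories is computed pointwise, the restriction $j_x^\ast \colon \Fun(\cI, \cE) \to \Fun(\cI_x, \cE)$ is symmetric monoidal. Combined with the identification $j_x^\ast \circ p^\ast(L) \simeq p_x^\ast(L(x))$, this gives
\[ j_x^\ast\bigl( p^\ast(L) \otimes G \bigr) \simeq p_x^\ast(L(x)) \otimes j_x^\ast(G) \ . \]
Since $G$ is punctually split, we may choose a splitting $V \colon \cI_x^{\ens} \to \cE$ together with an equivalence $i_{\cI_x, !}(V) \simeq j_x^\ast(G)$. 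Writing $p_x^{\ens} \colon \cI_x^{\ens} \to \ast$, one has $p_x \circ i_{\cI_x} = p_x^{\ens}$, so $i_{\cI_x}^\ast p_x^\ast(L(x)) \simeq p_x^{\ens,\ast}(L(x))$.

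The main step is then the projection formula (that is, the $\cE^\otimes$-linearity of the left Kan extension $i_{\cI_x, !}$, as in \cref{lem:linearity_of_induction}), which supplies a canonical equivalence
\[ p_x^\ast(L(x)) \otimes i_{\cI_x, !}(V) \simeq i_{\cI_x, !}\bigl( p_x^{\ens, \ast}(L(x)) \otimes V \bigr) \ . \]
Substituting, we get that $j_x^\ast(p^\ast(L) \otimes G)$ lies in the essential image of $i_{\cI_x, !}$, with explicit splitting given by $p_x^{\ens,\ast}(L(x)) \otimes V$. This proves that $p^\ast(L) \otimes G$ is split at $x$; since $x$ is arbitrary, the functor is punctually split.

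For the ``in particular'' clause, suppose $G$ is a Stokes functor. By definition $G$ is both cocartesian and punctually split. \Cref{prop:categorical_action_cocartesian} guarantees that $p^\ast(L) \otimes G$ is cocartesian, and the first part of the present proof shows that it is punctually split. Hence $p^\ast(L) \otimes G$ is a Stokes functor. The only potential subtlety is the invocation of the projection formula, but since this is a purely formal consequence of the $\cE^\otimes$-module structure on $\Fun(\cI_x, \cE)$ induced by $p_x^\ast$ and the fact that $i_{\cI_x,!}$ is $\cE^\otimes$-linear (being a left adjoint in the relevant module category), no genuine obstacle is expected here.
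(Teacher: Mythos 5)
Your proof is correct and follows essentially the same route as the paper's: both reduce to a single point via $\cE$-linearity of $j_x^\ast$, choose a splitting, and apply the projection formula (\cref{lem:linearity_of_induction}) to push $p^\ast(L)$ through $i_{\cI_x,!}$, then combine with \cref{prop:categorical_action_cocartesian} for the Stokes case. The only cosmetic difference is that you spell out the pointwise identification $j_x^\ast(p^\ast(L)) \simeq p_x^\ast(L(x))$, whereas the paper simply says ``reduce to the case $\cX = \ast$.''
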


\begin{proof}
	Fix $x \in \cX$.
	Since the restrictions $j_x^\ast \colon \Fun(\cI,\cE) \to \Fun(\cI_x, \cE)$ are $\cE$-linear, we can assume without loss of generality that $\cX$ is reduced at a single point.
	Choose a splitting $V \colon \cI^{\ens} \to \cE$ for $G$.
	\Cref{lem:linearity_of_induction} implies that $i_{\cI,!} \colon \Fun(\cI^{\ens}, \cE) \to \Fun(\cI,\cE)$ is $\cE$-linear.
	Therefore, for every $L \in \cE$ we obtain
	\[ p^\ast( L ) \otimes G \simeq p^\ast( L ) \otimes i_{\cI,!}(V) \simeq i_{\cI,!}( p^{\ens,\ast}( L ) \otimes V ) \ , \]
	i.e.\ $p^\ast( L ) \otimes G$ is split.
\end{proof}

\begin{cor}\label{cor:categorical_action_Stokes}
	Let $p \colon \cI \to \cX$ be a $\cE$-bireflexive cocartesian fibration in posets.
	Then the categorical action of $\Loc(\cX;\cE)$ on $\Fun(\cI,\cE)$ restricts to a categorical action of $\Loc(\cX;\cE)$ on $\St_{\cI,\cE}$.
\end{cor}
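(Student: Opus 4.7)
The approach is to derive the corollary by combining Proposition \ref{prop:categorical_action_Stokes} with the presentability provided by the $\cE$-bireflexivity hypothesis, in order to exhibit $\St_{\cI,\cE}$ as a presentable $\Loc(\cX;\cE)$-submodule of $\Fun(\cI,\cE)$ in $\PrL$.

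First, I would invoke Lemma \ref{bireflexive_implies_presentable_stable} to ensure that $\St_{\cI,\cE}$ is presentable and that the fully faithful inclusion $\iota \colon \St_{\cI,\cE} \hookrightarrow \Fun(\cI,\cE)$ preserves colimits, so it is a morphism in $\PrL$. The categorical action of $\Loc(\cX;\cE)$ on $\Fun(\cI,\cE)$ is encoded by a colimit-preserving functor $\rho \colon \Loc(\cX;\cE) \otimes \Fun(\cI,\cE) \to \Fun(\cI,\cE)$ (which on pure tensors sends $L \otimes F$ to $p^\ast(L) \otimes F$).

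Next, Proposition \ref{prop:categorical_action_Stokes} shows that the composite
\[ \Loc(\cX;\cE) \otimes \St_{\cI,\cE} \xrightarrow{\id \otimes \iota} \Loc(\cX;\cE) \otimes \Fun(\cI,\cE) \xrightarrow{\rho} \Fun(\cI,\cE) \]
sends pure tensors $L \otimes G$ (with $G$ Stokes) to objects in the essential image of $\iota$. Since pure tensors generate $\Loc(\cX;\cE) \otimes \St_{\cI,\cE}$ under colimits, and since $\St_{\cI,\cE}$ is closed under colimits inside $\Fun(\cI,\cE)$ by the bireflexivity hypothesis, this composite factors through $\iota$ as a well-defined colimit-preserving functor $\rho^{\St} \colon \Loc(\cX;\cE) \otimes \St_{\cI,\cE} \to \St_{\cI,\cE}$.

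Finally, to promote $\rho^{\St}$ to an actual $\Loc(\cX;\cE)$-module structure on $\St_{\cI,\cE}$ in $\PrL$, the key observation is that $\iota$ is a fully faithful morphism in $\PrL$ whose image is closed under the action. The associativity and unit coherence data for $\rho$ therefore descend uniquely via $\iota$ to coherence data for $\rho^{\St}$, because the forgetful functor $\mathrm{LMod}_{\Loc(\cX;\cE)}(\PrL) \to \PrL$ creates limits and reflects subobjects of this kind. The main (mild) obstacle lies in this last formal step, where one must check that all higher coherences transfer across the subcategory inclusion; but this is automatic from full faithfulness of $\iota$ once the underlying functor $\rho^{\St}$ has been constructed.
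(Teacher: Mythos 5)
Your argument is correct and is essentially the same as the paper's, which simply declares the result ``obvious'' from \cref{prop:categorical_action_Stokes}: the $\cE$-bireflexivity hypothesis guarantees via \cref{bireflexive_implies_presentable_stable} that $\St_{\cI,\cE}$ is presentable and that the inclusion into $\Fun(\cI,\cE)$ is colimit-preserving, and \cref{prop:categorical_action_Stokes} shows the action preserves the subcategory on objects, after which the module structure descends formally. Your write-up spells out the descent in more detail; the only slightly hand-wavy point is the claim that the forgetful functor ``reflects subobjects of this kind,'' which is better phrased as compatibility of the reflective localization $L \colon \Fun(\cI,\cE) \to \St_{\cI,\cE}$ with the $\Loc(\cX;\cE)$-module structure in the sense of \cite[Proposition 2.2.1.9]{Lurie_Higher_algebra}, but this is a cosmetic rather than substantive issue.
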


\begin{proof}
	This is obvious from \cref{prop:categorical_action_Stokes}.
\end{proof}

We now derive an analogue of Corollaries \ref{cor:relative_tensor_product} and \ref{cor:finite_etale_fibration_cocartesian_relative_tensor} in the setting of Stokes functors.
We fix a pullback square
\[ \begin{tikzcd}
	\cJ \arrow{r}{u} \arrow{d}{q} & \cI \arrow{d}{p} \\
	\cY \arrow{r}{f} & \cX
\end{tikzcd} \]
in $\Cat_\infty$, where $p$ is a cocartesian fibration in posets.
In addition, we assume that $f$ is a finite étale fibration (see \cref{def:finite_etale_fibrations}) and that both $\cI$ and $\cJ$ are $\cE$-bireflexive.
In this setting, \cref{construction:relative_tensor_of_functor_categories} supplies a canonical transformation
\[ \mu \colon \Loc(\cY;\cE) \otimes_{\Loc(\cX;\cE)} \Fun(\cI,\cE) \to \Fun(\cJ, \cE) \ , \]
and \cref{prop:categorical_action_cocartesian} and \cref{cor:categorical_action_Stokes} imply that this action restricts to a well defined categorical action
\[ \mu \colon \Loc(\cY;\cE) \otimes_{\Loc(\cX;\cE)} \St_{\cI,\cE} \to \St_{\cJ, \cE} \ . \]

\begin{lem}\label{lem:finite_etale_fibration_induction_Stokes}
	In the above setting, the functor
	\[ u_! \colon \Fun(\cJ, \cE) \to \Fun(\cI, \cE) \]
	respects Stokes functors and in particular it induces a well defined functor
	\[ u_! \colon \St_{\cJ, \cE} \to \St_{\cI, \cE} \ . \]
	\personal{More generally, the proof below shows that if $F \colon \cJ \to \cE$ is punctually split at every $y \in \cY_x$, then $u_!(F)$ is punctually split at $x$.}
\end{lem}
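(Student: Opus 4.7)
The plan is to combine \cref{cor:finite_etale_fibration_induction_and_cocartesian}, which establishes that $u_!$ preserves cocartesian functors, with an explicit computation of the punctual splittings. The only non-trivial task is thus to show that whenever $F \colon \cJ \to \cE$ is punctually split, $u_!(F) \colon \cI \to \cE$ is punctually split at every $x \in \cX$. I will exhibit an explicit splitting of $j_x^\ast(u_!(F)) \colon \cI_x \to \cE$.

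The first step is to observe that $u \colon \cJ \to \cI$ inherits the structure of a finite étale fibration over $\cI$: since $\cJ = \cY \times_\cX \cI$ and finite étale fibrations are stable under pullback, for each $a \in \cI$ lying over $x \in \cX$, the fiber $u^{-1}(a)$ is canonically the finite discrete set $\cY_x$. The next step is a cofinality argument: for every $a \in \cI$, the inclusion
\[ u^{-1}(a) \hookrightarrow \cJ \times_\cI \cI_{/a} \]
is cofinal. Indeed, $(a, \id_a)$ is terminal in $\cI_{/a}$, and for each object $(b, \phi \colon u(b) \to a)$ the cocartesian lift $\phi_!(b)$ provides a left adjoint to this inclusion (using that the fiber is discrete). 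Combining with the standard colimit formula for left Kan extensions, this yields a canonical identification
\[ (u_!(F))(a) \simeq \bigoplus_{y \in \cY_x} F(y,a) \simeq \bigoplus_{y \in \cY_x} (j_y^\ast F)(a) \ , \]
where the second equivalence uses the identifications $\cJ_y \simeq \cI_x$ for $y \in \cY_x$ coming from the pullback square.

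The final step is then essentially formal: since $F$ is Stokes, each $j_y^\ast(F)$ is split, so we can pick splittings $j_y^\ast(F) \simeq i_{\cI_x,!}(V_y)$ for some $V_y \colon \cI_x^{\ens} \to \cE$. Because $\cY_x$ is finite and $i_{\cI_x,!}$ commutes with colimits, we obtain
\[ j_x^\ast(u_!(F)) \simeq \bigoplus_{y \in \cY_x} i_{\cI_x,!}(V_y) \simeq i_{\cI_x,!}\Big( \bigoplus_{y \in \cY_x} V_y \Big) \ , \]
which is manifestly split.

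The main obstacle is rigorously justifying the cofinality step, namely the construction of the left adjoint $(b,\phi) \mapsto \phi_!(b)$ on the slice $\cJ \times_\cI \cI_{/a}$ landing in the discrete fiber $u^{-1}(a)$. The intuition is clear given that $u$ is a cocartesian fibration with discrete fibers, but the careful verification should be extracted from the properties of finite étale fibrations developed in \S\ref{sec:locally_constant_cocartesian_fibrations}, in particular from the fact that $u_! \simeq u_\ast$ (see \cref{lem:finite_etale_fibration_biadjoint}); once the base-change identity $j_x^\ast \circ u_! \simeq u_{x,!} \circ (\cdot)|_{\cJ_{\cY_x}}$ for the pullback square over $\{x\} \hookrightarrow \cX$ is established, everything else reduces to the trivial observation that finite direct sums of split functors are split.
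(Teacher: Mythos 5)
Your proposal is correct and follows essentially the same route as the paper: first invoke \cref{cor:finite_etale_fibration_induction_and_cocartesian} for the cocartesian part, then verify punctual splitness by identifying $j_x^\ast(u_!(F))$ with $\bigoplus_{y \in \cY_x} j_y^\ast(F)$ (using that $u$ is again finite étale via \cref{lem:finite_etale_fibrations_pullback}) and observing that a finite direct sum of split functors is split. The paper cites \cref{lem:Stokes_functors_coproducts} for that last step where you argue directly via $i_{\cI_x,!}$ commuting with colimits, and it takes the fiberwise colimit formula for granted (it is the dual of the Kan-extension formula already used in \cref{lem:finite_etale_fibration_biadjoint}) where you sketch the cofinality argument; these are the same argument at different levels of elaboration.
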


\begin{proof}
	We know from \cref{cor:finite_etale_fibration_induction_and_cocartesian} that $u_!$ preserves cocartesian functors.
	It is therefore enough to prove that it preserves punctually split functors as well.
	Let therefore $F \colon \cJ \to \cE$ be a punctually split functor.
	Fix $x \in \cX$.
	For every $y \in \cY_x$, we have a splitting
	\[ V_y \colon \cJ_y^{\ens} \to \cE \]
	for $j_y^\ast(F)$.
	Since $f$ is a finite étale fibration, the same goes for $u$ (see \cref{lem:finite_etale_fibrations_pullback}), and therefore
	\[ j_x^\ast( u_!(F) ) \simeq \bigoplus_{y \in \cY_x} j_y^\ast(F) \ . \]
	It follows from \cref{lem:Stokes_functors_coproducts} that $\bigoplus_{y \in \cY_x} V_y$ provides a splitting for $j_x^\ast( u_!(F) )$, whence the conclusion.
\end{proof}

\begin{prop} \label{prop:finite_etale_fibration_Stokes_monadicity}
	In the above setting, the functor
	\[ u_! \colon \St_{\cJ, \cE} \to \St_{\cI, \cE} \]
	is monadic.
\end{prop}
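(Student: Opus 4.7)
The strategy is to mirror the proof of \cref{cor:finite_etale_fibration_cocartesian_monadicity}, replacing cocartesian functors with Stokes functors throughout, and then invoke Lurie--Barr--Beck \cite[Theorem 4.7.3.5]{Lurie_Higher_algebra}. All the ingredients needed are already available from the results established earlier in this section.

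First I would check that $u_!$ and $u^\ast$ induce a pair of functors
\[ u_! \colon \St_{\cJ,\cE} \leftrightarrows \St_{\cI,\cE} \colon u^\ast \]
that is still an adjunction. On the level of functor categories, \cref{lem:finite_etale_fibration_biadjoint} guarantees that $u_! \dashv u^\ast$ and in fact $u_!$ is biadjoint to $u^\ast$ since $f$ (and hence $u$) is a finite étale fibration. That both $u_!$ and $u^\ast$ restrict to the full subcategories of Stokes functors is precisely the content of \cref{cor:stokes_functoriality}-(\ref{cor:stokes_functoriality:pullback}) and \cref{lem:finite_etale_fibration_induction_Stokes}. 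Full faithfulness of the inclusions $\St_{\cJ,\cE} \hookrightarrow \Fun(\cJ,\cE)$ and $\St_{\cI,\cE} \hookrightarrow \Fun(\cI,\cE)$ then forces $u_! \dashv u^\ast$ on Stokes functors.

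Next I would verify the two remaining hypotheses of Lurie--Barr--Beck. Conservativity of $u_! \colon \St_{\cJ,\cE} \to \St_{\cI,\cE}$ is immediate: by \cref{lem:finite_etale_fibration_universally_conservative} the functor $u_! \colon \Fun(\cJ,\cE) \to \Fun(\cI,\cE)$ is conservative, and since $\St_{\cJ,\cE}$ and $\St_{\cI,\cE}$ are full subcategories of these via the canonical inclusions, the restriction remains conservative. For the preservation of $u_!$-split simplicial colimits, the $\cE$-bireflexivity assumption combined with \cref{bireflexive_implies_presentable_stable} implies that both $\St_{\cJ,\cE}$ and $\St_{\cI,\cE}$ are presentable (and in fact stable). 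Because $u_!$ admits the right adjoint $u^\ast$ just produced, it preserves all small colimits, and in particular geometric realizations of $u_!$-split simplicial objects.

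With these three ingredients in place, Lurie--Barr--Beck applies directly to yield monadicity. I do not expect any genuine obstacle here: the only subtle step is keeping track of the fact that although the formation of Stokes functors is not preserved by the naive tensor product or by general Kan extensions, the \emph{specific} biadjointness between $u_!$ and $u^\ast$ supplied by the finite étale hypothesis restricts cleanly to the subcategories of Stokes functors, which is exactly where \cref{lem:finite_etale_fibration_induction_Stokes} (relying on \cref{lem:Stokes_functors_coproducts} to splice splittings fiberwise over $\cY_x$) is essential.
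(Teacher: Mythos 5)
Your proof is correct and matches the paper's argument almost verbatim: both establish that $u_!$ and $u^\ast$ are biadjoint and restrict to Stokes functors via \cref{lem:finite_etale_fibration_biadjoint} and \cref{lem:finite_etale_fibration_induction_Stokes}, invoke \cref{lem:finite_etale_fibration_universally_conservative} for conservativity of $u_!$, and conclude with Lurie--Barr--Beck. The only (welcome) elaboration on your part is spelling out the $u_!$-split geometric realizations condition, which the paper subsumes by referencing the analogous proof of \cref{cor:finite_etale_fibration_cocartesian_monadicity}.
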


\begin{proof}
	As in the proof of \cref{cor:finite_etale_fibration_cocartesian_monadicity}, Lemmas \ref{lem:finite_etale_fibration_induction_Stokes} and \ref{lem:finite_etale_fibration_biadjoint} imply that $u_!$ and $u^\ast$ are biadjoint.
	Besides, $u_! \colon \Fun(\cJ, \cE) \to \Fun(\cI, \cE)$ is conservative thanks to \cref{lem:finite_etale_fibration_universally_conservative}, so the same holds true for its restriction to the $\infty$-categories of Stokes functors.
\end{proof}

\begin{cor}\label{cor:finite_etale_fibration_Stokes_relative_tensor}
	Let
	\[ \begin{tikzcd}
		\cJ \arrow{r}{u} \arrow{d}{q} & \cI \arrow{d}{p} \\
		\cY \arrow{r}{f} & \cX
	\end{tikzcd} \]
	be a pullback square in $\Cat_\infty$, where $p$ is a cocartesian fibration in posets.
	Let $\cE^\otimes$ be a presentably symmetric monoidal $\infty$-category.
	Assume that:
	\begin{enumerate}\itemsep=0.2cm
		\item $f$ is a finite étale fibration;
		\item $\cE$ is stable;
		\item both $\cI$ and $\cJ$ are $\cE$-bireflexive.
	\end{enumerate}
	Then, the comparison functor
	\[ \mu \colon \Loc(\cY;\cE) \otimes_{\Loc(\cX;\cE)} \St_{\cI,\cE} \to \St_{\cJ, \cE} \]
	is an equivalence.
\end{cor}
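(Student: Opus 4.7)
The proof will follow the same pattern as \cref{cor:finite_etale_fibration_cocartesian_relative_tensor}, substituting the monadicity statement for cocartesian functors with its Stokes counterpart \cref{prop:finite_etale_fibration_Stokes_monadicity}. First one has to check that the functor $\mu$ is well defined, which follows directly by combining \cref{cor:categorical_action_Stokes} and \cref{prop:categorical_action_Stokes}: both assert that the $\Loc(\cX;\cE)$-action on the relevant functor categories restricts to the Stokes subcategories, and the comparison map from the relative tensor product construction (see \cref{construction:relative_tensor_of_functor_categories}) is built out of exactly these actions.

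To show that $\mu$ is an equivalence, I would invoke Lurie-Barr-Beck. By assumption $\cI$ and $\cJ$ are $\cE$-bireflexive so, combining \cref{bireflexive_implies_presentable_stable} with the stability hypothesis on $\cE$, both $\St_{\cI,\cE}$ and $\St_{\cJ,\cE}$ are presentable stable. The source of $\mu$ is presentable by general properties of $\PrL$-relative tensor products, and $\mu$ preserves colimits. \Cref{prop:finite_etale_fibration_Stokes_monadicity} identifies $\St_{\cJ,\cE}$ with $\mathrm{LMod}_T(\St_{\cI,\cE})$, where $T \coloneqq u_! u^\ast$ is the induced monad. Both sides of $\mu$ come equipped with a canonical forgetful functor to $\St_{\cI,\cE}$, and the plan is to identify $\mu$ with the canonical comparison $\Loc(\cY;\cE) \otimes_{\Loc(\cX;\cE)} \St_{\cI,\cE} \to \mathrm{LMod}_T(\St_{\cI,\cE})$.

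The heart of the matter is a projection formula of the form
\[ u_! u^\ast(G) \simeq p^\ast( f_! f^\ast(\mathbf 1_\cX) ) \otimes G \qquad \text{for every } G \in \St_{\cI,\cE}, \]
where $\mathbf 1_\cX \in \Loc(\cX; \cE)$ is the monoidal unit. For plain functor categories this is immediate from the explicit fiberwise decomposition $\cJ_x \simeq \coprod_{y \in \cY_x} \cI_x$ supplied by the finite étale hypothesis on $f$ (see \cref{lem:finite_etale_fibrations_pullback} and the proof of \cref{lem:finite_etale_fibration_induction_Stokes}); it then transfers to $\St_{\cI,\cE}$ thanks to \cref{prop:categorical_action_Stokes} and the fact that the inclusion $\St_{\cI,\cE} \hookrightarrow \Fun(\cI,\cE)$ preserves both limits and colimits by bireflexivity. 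With this formula in hand, the monad $T$ is identified with the action of the algebra $A \coloneqq f_! f^\ast(\mathbf 1_\cX)$ in $\Loc(\cX;\cE)$, and one invokes the analogue of \cref{cor:relative_tensor_product} for $\Loc$ — which expresses $\Loc(\cY;\cE)$ as $\mathrm{LMod}_A(\Loc(\cX;\cE))$ — to conclude.

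The main obstacle will be keeping track of the $\Loc(\cY;\cE)$-linear structures throughout the identification and verifying that the projection formula truly lifts from $\Fun(\cI, \cE)$ to $\St_{\cI,\cE}$ in a structured way, rather than merely as an objectwise equivalence; this amounts to checking that the algebra structure on $A$ and the action morphism are compatible with the inclusion of Stokes functors into all functors, which is a routine but slightly delicate check using that $\St_{\cI,\cE}$ is a localization of $\Fun(\cI,\cE)$ by bireflexivity.
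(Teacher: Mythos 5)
Your argument is correct and relies on the same core inputs as the paper — monadicity of $u_!$ from \cref{prop:finite_etale_fibration_Stokes_monadicity} and a projection-formula computation reflecting the fiberwise decomposition $\cJ_x \simeq \coprod_{y \in \cY_x} \cI_x$ — but you take a somewhat longer route to the conclusion. The paper's proof is a one-liner: ``Using \cref{prop:finite_etale_fibration_Stokes_monadicity} as input, the same proof of \cref{cor:relative_tensor_product} applies.'' Unwinding that, the paper forms the commutative triangle with $\St_{\cI,\cE}$ at the bottom vertex, notes that both diagonals $f_! \otimes \St_{\cI,\cE}$ (by universal monadicity, \cref{prop:finite_etale_fibration_monadic}) and $u_!$ (by \cref{prop:finite_etale_fibration_Stokes_monadicity}) are monadic, and invokes \cite[Corollary 4.7.3.16]{Lurie_Higher_algebra}: two monadic functors over the same base are equivalent once the Beck-Chevalley transformation between their left adjoints is verified to be an equivalence. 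That Beck-Chevalley check is precisely your projection formula, so the computational heart is the same. Where you diverge is in the last step: you pass through an explicit identification of the monad $u_! u^\ast$ with the action of the algebra $A = f_! f^\ast(\mathbf 1_\cX)$, and then appeal to an identification $\Loc(\cY;\cE) \simeq \mathrm{LMod}_A(\Loc(\cX;\cE))$. This works, but it introduces an intermediate $\mathrm{LMod}$-identification and a ``$\Loc$-analogue of \cref{cor:relative_tensor_product}'' that the text does not separately establish; the direct monadic-comparison route simply never needs either. Note also that both routes silently require a version of \cref{prop:finite_etale_fibration_monadic} with $\Loc(\cX;\cE)$ replacing $\Fun(\cX,\cE)$ (since $\St_{\cI,\cE}$ is only a $\Loc(\cX;\cE)$-module, not a $\Fun(\cX,\cE)$-module) — this is not a gap in your argument specifically, but it is a point worth flagging explicitly, as the biadjointness and conservativity of \cref{lem:finite_etale_fibration_biadjoint} and \cref{lem:finite_etale_fibration_universally_conservative} must be checked to restrict to the local-system subcategories.
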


\begin{proof}
	Using \cref{prop:finite_etale_fibration_Stokes_monadicity} as input, the same proof of \cref{cor:relative_tensor_product} applies.
\end{proof}

We conclude this section with the following result, which has been inspired by \cite[Lemma 15.5]{Bachmann_Hoyois_Norms} and that will play an important role later on:

\begin{cor}[Retraction lemma]\label{cor:retraction_lemma}
	Let
	\[ \cX_\bullet \colon \mathbf \Delta_{s}\op \to \Cat_\infty \]
	be a semi-simplicial diagram with colimit $\cX$.
	Let $\cI \to \cX$ be a cocartesian fibration in posets and set
	\[ \cI_\bullet \coloneqq \cX_{\bullet} \times_\cX \cI \ . \]
	Let $\cE^\otimes$ be a presentably symmetric monoidal stable $\infty$-category.
	Assume that:
	\begin{enumerate}\itemsep=0.2cm
		\item $\Env(\cX)$ is compact in $\Spc$;
		\item for every $[n] \in \mathbf \Delta_s$, the structural morphism $\cX_n \to \cX$ is a finite étale fibration;
		\item For every $[n] \in \mathbf \Delta_s$, $\cI_n$  is $\cE$-bireflexive;
	\end{enumerate}
	Then there exists an integer $m \geqslant 0$ depending only on $\Env(\cX)$ such that $\St_{\cI,\cE}$ is a retract of
	\[ \lim_{[n] \in \mathbf \Delta_{s,\leq m}} \St_{\cI_n,\cE} \]
	in $\PrL$.
\end{cor}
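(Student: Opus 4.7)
My plan is to combine the Van Kampen theorem for Stokes functors with a compactness argument on $\Env(\cX)$, transporting everything through the finite étale fibration structure via the categorical action of local systems.

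First, I apply \cref{prop:Van_Kampen_Stokes} together with \cref{cor:descending_geometricity_of_Stokes_via_colimits} to obtain a canonical equivalence
\[ \St_{\cI,\cE} \simeq \lim_{[n] \in \mathbf{\Delta}_s} \St_{\cI_n,\cE} \]
in $\PrLR$. Since $\Env \colon \Cat_\infty \to \Spc$ is a left adjoint, I also have $\Env(\cX) \simeq \colim_{[n] \in \mathbf{\Delta}_s^{\op}} \Env(\cX_n)$ in $\Spc$. Writing this semi-simplicial colimit as the filtered colimit $\colim_m |\Env(\cX_\bullet)|_{\leq m}$ of its partial geometric realizations, the compactness hypothesis on $\Env(\cX)$ forces the identity of $\Env(\cX)$ to factor through some $|\Env(\cX_\bullet)|_{\leq m}$. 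This exhibits $\Env(\cX)$ as a retract of $|\Env(\cX_\bullet)|_{\leq m}$ in $\Spc$ for some integer $m$ that depends only on $\Env(\cX)$.

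Next, I transport this retract structure through $\Loc(-;\cE)$: applying the contravariant functor $\Fun(-,\cE) \colon \Spc\op \to \PrL$ to the retraction $\Env(\cX) \rightarrow |\Env(\cX_\bullet)|_{\leq m} \rightarrow \Env(\cX)$ produces a retraction
\[ \Loc(\cX;\cE) \xrightarrow{\sigma} \lim_{[n] \leq m} \Loc(\cX_n;\cE) \xrightarrow{\rho} \Loc(\cX;\cE) \ , \qquad \rho \circ \sigma \simeq \id \ , \]
in the $\infty$-category of $\Loc(\cX;\cE)$-modules in $\PrL$. Tensoring this retraction with $\St_{\cI,\cE}$ over $\Loc(\cX;\cE)$ and using the equivalences $\Loc(\cX_n;\cE) \otimes_{\Loc(\cX;\cE)} \St_{\cI,\cE} \simeq \St_{\cI_n,\cE}$ supplied by \cref{cor:finite_etale_fibration_Stokes_relative_tensor}, one obtains a retraction of $\St_{\cI,\cE}$ sitting inside
\[ \Bigl( \lim_{[n] \leq m} \Loc(\cX_n;\cE) \Bigr) \otimes_{\Loc(\cX;\cE)} \St_{\cI,\cE} \ . \]
The canonical assembly map from this tensor product to $\lim_{[n] \leq m} (\Loc(\cX_n;\cE) \otimes_{\Loc(\cX;\cE)} \St_{\cI,\cE}) \simeq \lim_{[n] \leq m} \St_{\cI_n,\cE}$, combined with the canonical restriction $\St_{\cI,\cE} \to \lim_{[n] \leq m} \St_{\cI_n,\cE}$, fits into a commutative diagram from which I extract the required retraction of $\St_{\cI,\cE}$ by $\lim_{[n] \leq m} \St_{\cI_n,\cE}$.

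\textbf{Main obstacle.} The principal technical difficulty lies in the last step: the relative tensor product $-\otimes_{\Loc(\cX;\cE)} \St_{\cI,\cE}$ is a left adjoint, so it preserves colimits but not finite limits in general, preventing one from simply commuting the finite limit past the tensor product. What rescues the argument is the general fact that retracts are preserved by \emph{every} functor: so the retract structure of $\Loc(\cX;\cE)$ inside $\lim_{[n] \leq m} \Loc(\cX_n;\cE)$ survives after tensoring, even though the tensor product may fail to commute with the limit. The delicate verification is that, after passing through the assembly map to $\lim_{[n] \leq m} \St_{\cI_n,\cE}$, the resulting composite with the canonical restriction $\St_{\cI,\cE} \to \lim_{[n] \leq m} \St_{\cI_n,\cE}$ still recovers the identity of $\St_{\cI,\cE}$; this compatibility has to be tracked by unraveling the naturality of \cref{cor:finite_etale_fibration_Stokes_relative_tensor} along the augmented semi-simplicial diagram $\cX_\bullet^+ \to \cX$, and ultimately rests on the fact that the entire retraction originates as the shadow in $\PrL$ of an honest retraction in $\Spc$.
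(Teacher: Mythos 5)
Your overall route matches the paper's: compactness of $\Env(\cX)$ yields a retract in $\Spc$, which you push through $\Loc(-;\cE) = \Fun(\Env(-),\cE)$ and then tensor with $\St_{\cI,\cE}$ over $\Loc(\cX;\cE)$, using \cref{cor:finite_etale_fibration_Stokes_relative_tensor} to identify each fiberwise tensor factor with $\St_{\cI_n,\cE}$. The difference, and the gap, is in the final step.

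You correctly observe that the general preservation of retracts gives that $\St_{\cI,\cE}$ is a retract of $A \coloneqq \bigl(\lim_{[n]\leq m}\Loc(\cX_n;\cE)\bigr)\otimes_{\Loc(\cX;\cE)}\St_{\cI,\cE}$, but the target you must land on is $B \coloneqq \lim_{[n]\leq m}\St_{\cI_n,\cE}$, and the only map at your disposal goes $\phi\colon A \to B$ (the assembly map), i.e.\ the \emph{wrong} direction. A retraction $\St_{\cI,\cE}\to A \to \St_{\cI,\cE}$ does not descend to a retraction through $B$ unless you produce a map $B \to \St_{\cI,\cE}$ and verify the triangle; you never construct one. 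Your ``delicate verification ... has to be tracked by unraveling the naturality'' is a placeholder, not an argument, and your opening assertion that you need not know whether ``the tensor product may fail to commute with the limit'' cuts against you: that commutation is exactly what would make the argument close. The paper resolves this obstacle head-on. Because each $\cX_n \to \cX$ is a finite étale fibration, the transition functors in $\Loc(\cX_\bullet;\cE)$ are simultaneously left and right adjoints, so the diagram lands in $\PrLR$; then \cref{Peter_lemma} shows the assembly map $\phi$ \emph{is} an equivalence, so the limit passes through the relative tensor product and the retract of $A$ is literally a retract of $B$. Without this step your proof does not deliver the stated conclusion.
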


\begin{proof}
	To begin with, \cref{cor:descending_geometricity_of_Stokes_via_colimits} implies that $\St_{\cI,\cE}$ is presentable, stable and closed under limits and colimits in $\Fun(\cI,\cE)$ and that besides
	\[ \St_{\cI,\cE} \simeq \lim_{i \in I} \St_{\cI_i,\cE} \ , \]
	the limit being computed in $\PrLR$.
	
	\medskip
	
	For any integer $m \geqslant 0$, set
	\[ \cX^{(m)} \coloneqq \colim_{[n] \in \mathbf \Delta_{s,\leq m}\op} \cX_\bullet \ . \]
	It automatically follows that
	\[ \cX \simeq \colim_{m \in \mathbb N\op} \cX^{(m)} \ , \]
	where the colimit is now filtered.
	Since the enveloping $\infty$-groupoid functor $\Env \colon \Cat_\infty \to \Spc$ is a left adjoint, we see that
	\[ \Env(\cX) \simeq \colim_{m \in \mathbb N} \Env(\cX^{(m)}) \ . \]
	Since $\Env(X)$ is compact, there exists an integer $m \geqslant 0$ such that $\Env(\cX)$ is a retract of $\Env(\cX^{(m)})$.
	As a consequence, we see that $\Loc(\cX;\cE)$ is a retract of $\Loc(\cX^{(m)};\cE)$.
	In particular,
	\[ \St_{\cI,\cE} \simeq \Loc(\cX;\cE) \otimes_{\Loc(\cX;\cE)} \St_{\cI,\cE} \]
	is a retract of
	\[ \Loc(\cX^{(m)};\cE) \otimes_{\Loc(\cX;\cE)} \St_{\cI,\cE} \simeq \Big( \lim_{[n] \in \mathbf \Delta_{s,\leq m}} \Loc(\cX_n;\cE) \Big) \otimes_{\Loc(\cX;\cE)} \St_{\cI;\cE} \ . \]
	Notice that the diagram $\Loc(\cX_\bullet;\cE)$ takes values in $\PrLR$.
	Therefore, \cref{Peter_lemma} supplies a canonical equivalence
	\[ \Big( \lim_{[n] \in \mathbf \Delta_{s,\leq m}} \Loc(\cX_n;\cE) \Big) \otimes_{\Loc(\cX;\cE)} \St_{\cI;\cE} \simeq \lim_{[n] \in \mathbf \Delta_{s,\leq m}} \Loc(\cX_n;\cE) \otimes_{\Loc(\cX;\cE)} \St_{\cI;\cE} \ . \]
	Since each $\cX_n \to \cX$ is a finite étale fibration, \cref{cor:finite_etale_fibration_Stokes_relative_tensor} supplies a canonical equivalence
	\[ \Loc(\cX_n;\cE) \otimes_{\Loc(\cX;\cE)} \St_{\cI;\cE} \simeq \St_{\cI_n;\cE} \ . \]
	Thus, the conclusion follows.
\end{proof}

\section{Graduation}\label{sec:graduation}

In this section we keep working with cocartesian fibrations in posets, but we restrict to stable coefficients.
This allows to introduce a new fundamental operation for Stokes functors: \emph{graduation}.
Intuitively, this allows to break a Stokes functor in more elementary pieces, and it will be the key ingredient needed to develop the theory of level morphisms and level induction.

\subsection{Relative graduation}\label{subsec:graduation}

Let $\cX$ be an $\infty$-category.
Starting with a morphism $p \colon \cI \to \cJ$ in $\PosFib_\cX$, we can perform the following two constructions:

\begin{construction}\label{construction:I_p}
	Consider the fiber product
	\[ \begin{tikzcd}
		\cI_p \arrow{r} \arrow{d}{\pi} & \cI \arrow{d}{p} \\
		\cJ^{\ens} \arrow{r}{i_\cJ} & \cJ
	\end{tikzcd} \]
	Notice that $\cI_p^{\ens} \to \cI^{\ens}$ is an equivalence.
	When $\cX$ is reduced to a point, we can identify $\cI_p$ with the poset $(\cI, \le_p)$, where
	\[ a \leq_p a' \stackrel{\text{def.}}{\iff} p(a) = p(a') \text{ and } a \leq a' \ . \]
	In other words, if $p(a) \ne p(a')$, then $a$ and $a'$ are incomparable with respect to $\le_p$.
\end{construction}

\begin{construction}
	Let
	\[ \Upsilon = \Upsilon_\cJ \coloneqq \mathrm{St}_\cX^{\mathrm{co}}(\cJ) \colon \cX \to  \Cat_\infty \]
	be the straightening of $p_{\cJ} \colon \cJ\to \cX$.
	Consider the composition
	\[ \Upsilon^{\Delta^1} \coloneqq \Fun(\Delta^1, \Upsilon(-)) \colon \cX \to  \Cat_\infty , \]
	and write $\cJ^{\Delta^1} \coloneqq \mathrm{Un}_\cX^{\mathrm{co}}(\Upsilon^{\Delta^1})$ for the associated cocartesian fibration. 
	The source  and identity functors 
	\[ \Fun(\Delta^1, \Upsilon(-))  \to  \Upsilon(-) \qquad \text{and} \qquad \Upsilon(-)  \to\Fun(\Delta^1, \Upsilon(-)) \]
	induce morphisms of cocartesian fibration in posets over $\cX$
	\[ s \colon \cJ^{\Delta^1} \to \cJ \qquad \text{and} \qquad \id \colon  \cJ \to  \cJ^{\Delta^1} \]
	Consider the pullback diagram
	\[ \begin{tikzcd}
		\cI_{\leq} \arrow{d} \arrow{r}{\sigma} &\cI \arrow{d}{p} \\
		\cJ^{\Delta^1}\arrow{r}{s} &  \cJ   \ .
	\end{tikzcd} \]
	Objects of $\cI_{\leq}$ are triples $(x,a,b)$ where $a\in \cI_x$, $b\in \cJ_x$ and where $p(a)\leq b$ in $\cJ_x$.
	We also consider the full subcategory $i_{\cI,<} \colon \cI_< \hookrightarrow \cI_\leq$ spanned by objects $(x,a,b)$ with $p(a) < b$.
	When $\cI$ is clear from the context, we simply write $i_<$ instead of $i_{\cI,<}$.
\end{construction}

\begin{rem}\label{description_I_smaller_over_point}
	The  target functor $\Fun(\Delta^1, \Upsilon(-))  \to \Upsilon(-) $ induces a  morphism  of cocartesian fibration in posets $t \colon \cJ^{\Delta^1} \to \cJ$.
	Let $\tau \colon \cI_{\leq} \to \cJ$ be the composition of $\cI_{\leq} \to \cJ^{\Delta^1}$ with $t \colon \cJ^{\Delta^1} \to \cJ$.
	Then, one checks that if $\cX$ is a point, the induced functor $(\sigma, \tau) \colon \cI_{\leq } \to \cI \times \cJ$ is fully-faithful.
\end{rem}

In general, $\cI_<$ is no longer a cocartesian fibration.
To remedy this, we introduce the following:

\begin{defin}\label{J_loc_constant}
Let $\cX$ be an $\infty$-category. 
Let $p \colon \cI\to \cJ$ be a  morphism  in $\PosFib$ over $\cX$.
We say that $p \colon \cI\to \cJ$ is a \textit{graduation morphism} if the cocartesian fibration $ \cJ^{\ens}\to \cX$ is locally constant in the sense of \cref{def:locally_constant}.
\end{defin}

The following lemma is simply a matter of unraveling the definitions:

\begin{lem}
Let $\cX$ be an $\infty$-category. 
Let $p \colon \cI\to \cJ$ be a graduation morphism  over $\cX$.
Then, $i_< \colon \cI_< \to \cI_\leq$ is a cocartesian subfibration of $\cI_\leq$ over $\cX$.
\end{lem}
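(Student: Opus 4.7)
The plan is to prove that cocartesian lifts in $\cI_\leq$ starting at objects of $\cI_<$ land in $\cI_<$, and then invoke the standard criterion for a full subcategory of a cocartesian fibration to be itself a cocartesian subfibration.

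First I would unwind the structure of cocartesian edges in $\cI_\leq$. Since $\cI_\leq = \cI \times_{\cJ} \cJ^{\Delta^1}$ is built as a pullback of cocartesian fibrations over $\cX$, an edge in $\cI_\leq$ over a morphism $\gamma \colon x \to y$ of $\cX$ is $p_{\cI_\leq}$-cocartesian iff both its projections to $\cI$ and to $\cJ^{\Delta^1}$ are cocartesian over $\gamma$. Given $(x,a,b) \in \cI_\leq$ with $a \in \cI_x$ and $b \in \cJ_x$, write $\gamma_\ast^\cI(a)$, $\gamma_\ast^\cJ(b)$ for the results of cocartesian transport along $\gamma$. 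Since $p \colon \cI \to \cJ$ preserves cocartesian edges (as a $1$-morphism in $\PosFib_\cX$), we have $p(\gamma_\ast^\cI(a)) = \gamma_\ast^\cJ(p(a))$, and cocartesian transport in $\cJ^{\Delta^1}$ sends the morphism $p(a) \leq b$ to $\gamma_\ast^\cJ(p(a)) \leq \gamma_\ast^\cJ(b)$ in $\cJ_y$. Hence the cocartesian lift terminates at $(y,\gamma_\ast^\cI(a),\gamma_\ast^\cJ(b)) \in \cI_\leq$.

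Next, I would use the graduation hypothesis to upgrade $\leq$ to $<$ on the target. By \cref{J_loc_constant}, the cocartesian fibration in sets $\cJ^{\ens} \to \cX$ is locally constant, which means that each cocartesian transport map $\gamma_\ast \colon \cJ^{\ens}_x \to \cJ^{\ens}_y$ is a bijection. The canonical map $i_\cJ \colon \cJ^{\ens} \to \cJ$ preserves cocartesian edges and is bijective on fibers, so cocartesian transport in $\cJ$ restricts fiberwise to the transport in $\cJ^{\ens}$; in particular for $u,v \in \cJ_x$ one has $u = v$ iff $\gamma_\ast^\cJ(u) = \gamma_\ast^\cJ(v)$. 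Therefore $p(a) < b$ in $\cJ_x$ (meaning $p(a) \leq b$ and $p(a) \ne b$) implies $\gamma_\ast^\cJ(p(a)) < \gamma_\ast^\cJ(b)$ in $\cJ_y$, i.e.\ the cocartesian lift computed above lies in $\cI_<$.

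Finally I would conclude. The full subcategory $\cI_< \hookrightarrow \cI_\leq$ contains, for every object $(x,a,b) \in \cI_<$ and every morphism $\gamma \colon x \to y$ in $\cX$, a $p_{\cI_\leq}$-cocartesian lift terminating inside $\cI_<$. Since $\cI_<$ is full in $\cI_\leq$ and such a lift is still cocartesian when viewed inside $\cI_<$ (full inclusions detect cocartesian edges over the base), this shows that $\cI_< \to \cX$ is a cocartesian fibration whose cocartesian edges are precisely those that become cocartesian after applying $i_<$. The only subtle step is the passage from $\leq$ to $<$, and that is exactly where the graduation hypothesis on $\cJ^{\ens} \to \cX$ is used; everything else is a routine pullback/fullness argument.
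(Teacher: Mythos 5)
Your proof is correct, and since the paper leaves the lemma as "simply a matter of unraveling the definitions," the argument you give is exactly the expected unraveling: cocartesian transport in the pullback $\cI_\leq$ is computed componentwise, the graduation hypothesis makes the underlying-set transport $\gamma_\ast^\cJ$ injective, and this is precisely what propagates the strict inequality $p(a) < b$ and keeps the cocartesian lift inside the full subcategory $\cI_<$. You also correctly identify this injectivity as the only place the graduation hypothesis is actually used.
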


Consider the following diagram with pull-back squares:
\begin{equation}\label{eq:graduation_diagram}
	\begin{tikzcd}
		& & \cI_{<} \arrow{d}{i_{<}} & \\
		\cI_p \arrow{d} \arrow{rr}{i_p} & & \cI_{\leq} \arrow{d} \arrow{r}{{\sigma}} & \cI  \arrow{d}{p} \\
		\cJ^{\ens}\arrow{r}{i_{\cJ}} & \cJ \arrow{r}{\id} & \cJ^{\Delta^1} \arrow{r}{s} & \cJ   \ .
	\end{tikzcd} 
\end{equation}
We fix a presentable stable $\infty$-category $\cE$ and write
\[ \varepsilon_< \colon i_{<!} i_<^\ast \to  \id_{\Fun(\cI_{\leq }, \cE)} \]
for the counit of the adjunction $i_{<!} \colon \Fun(\cI_<, \cE) \leftrightarrows \Fun(\cI_\leq, \cE) \colon i_<^\ast$.

\begin{defin}\label{defin_Gr}
	The \emph{graduation functor relative to $p \colon \cI \to \cJ$} (or \emph{$p$-graduation functor})
	\[ \Gr_p \colon \Fun(\cI, \cE) \to  \Fun(\cI_{p}, \cE) \]
	is the cofiber
	\[ \Gr_p \coloneqq \cofib\big( i_p^\ast \varepsilon_< \sigma^\ast \colon i_p^\ast \circ i_{<!} \circ i_<^\ast \circ \sigma^\ast \to i_p^{\ast} \circ \sigma^\ast \big) \ . \]
\end{defin}

\begin{notation}
	When $p = \id$, we note $\Gr$ for $\Gr_{\id}$. 
\end{notation}

In the following basic example, we recall that if  $p \colon \cI \to \cJ$ is  a morphism of posets and if  $b\in \cJ$, we put $\cI_{\leq b}=\cI_{/b} \coloneqq \cI \times_{\cJ} \cJ_{/b}$.
Since $\cJ$ is a poset, the canonical morphism $\cJ_{/b}\to \cJ$ is fully-faithful.
Thus, the canonical morphism $\cI_{/b}\to \cI$ identifies $\cI_{/b}$ with the full subcategory of $\cI$ spanned by objects $a\in \cI$ such that $p(a)\leq b$.
Similarly, $\cI_{< b} \coloneqq \cI \times_{\cJ} \cJ_{<b}$ is the full subcategory of $\cI$ spanned by objects $a \in \cI$ such that $p(a) < b$.

\begin{eg}\label{ex_Gr}
	Let $p \colon \cI \to \cJ$ be a morphism of posets.
	Let $\cE$ be a presentable stable $\infty$-category.
	Let $V \colon \cI^{\ens}\to \cE$ be a functor and put $F \coloneqq i_{\cI !}(V)$.
	Let $a \in \cI_p$.
	Then, there is a canonical equivalence
	\[ (\Gr_p(F))_a \simeq \bigoplus_{\substack{a'\leq a \\ p(a')=p(a)}} V_{a'}  \ . \]
\end{eg}

\begin{proof}
	We have
	\[ (i_p^{\ast }\circ i_{<!} \circ i^*_< \circ \sigma^*(F))_a \simeq \colim_{(a',b)\in (\cI_{<})_{/(a,p(a))}} F_{a'} \simeq  \colim_{(a',a'',b)\in \cC}  V_{a''} \]
	where $\cC$ is the full subcategory of $\cI \times \cJ \times \cI^{\ens}$ spanned by triples $(a',b,a'')$ such that $a''\leq a' \leq a$ and $p(a')<b\leq p(a)$.
	Let $\cD$  be the full subcategory of $\cI \times \cI^{\ens}$ spanned by pairs $(a',a'')$ such that $a''\leq a' \leq a$ and $p(a')< p(a)$.
	Let $\cA$  be the subset of $ \cI^{\ens}$ formed by the  $a''$ such that $a'' \leq a$ and $p(a'')< p(a)$.
	Consider the commutative diagram
	\[ \begin{tikzcd}
		  \cD   \arrow{r}{f}  \arrow{d}{p_2}   & \cC \arrow{d}{p_3}     \\
		   \cA   \arrow{r}         &          \cI^{\ens} 
	\end{tikzcd} \] 
	where $f \colon \cD \to \cC$ is given by $(a',a'')\mapsto (a',p(a),a'')$.
	We claim that $f$ is cofinal.
	Indeed, for every triple $(a',b,a'')\in \cC $, $\cD_{(a',b,a'')/} \coloneqq \cD \times_\cC \cC_{(a',b,a'')/}$ is the subposet of $\cD$ spanned by pairs $(\alpha', a'')$ with  $a'\leq \alpha'$.
	Observe that $(a',a'')$ is a minimal element of $\cD_{(a',b,a'')/}$, which is thus weakly contractible.
	Hence, the claimed cofinality follows from \cite[Theorem 4.1.3.1]{HTT}.
	Thus, 
	\[ (i_p^{\ast }\circ i_{<!} \circ i^*_< \circ \sigma^*(F))_a   \simeq  \colim_{\cD}  V|_{\cD} \]
	We also claim that $p_2 \colon \cD \to \cA$ is cofinal.
	Indeed, if $a''\in \cA$, $\cD_{a''/} \coloneqq \cD \times_\cA \cA_{a''/}$ is the subposet of $\cD$ spanned by couples $(\alpha', a'')$.
	Observe that $(a'',a'')$ is a minimal element of $\cD_{a''/}$, which is thus weakly contractible.
	Hence, the claimed cofinality follows from \cite[Theorem 4.1.3.1]{HTT}.
	Thus,
	\[ (i_p^{\ast }\circ i_{<!} \circ i^*_< \circ \sigma^*(F))_a   \simeq  \colim_{\cA}  V|_{\cA}\simeq \bigoplus_{\substack{a'\leq a \\ p(a')< p(a)}} V_{a'} \]
	On the other hand, 
	\[ (i^*_p \circ \sigma^*(F))_a \simeq F_a \simeq \bigoplus_{\substack{a'\leq a}} V_{a'} \]
 \cref{ex_Gr} thus follows.
\end{proof}

In particular when $p \colon \cI \to \cJ$ is the identity of $\cI$, we obtain:

\begin{eg}\label{Gr_p_identity}
	In the setting of \cref{ex_Gr}, let $V \colon \cI^{\ens}\to \cE$ be a functor and put $F \coloneqq i_{\cI !}(V)$.
	Let $a \in \cI^{\ens}$.
	Then, there is a canonical equivalence 
	\[ (\Gr F)_a \simeq V_{a} \]
\end{eg}

\begin{eg}\label{Gr_morphism_to_point}
Let $\cX$ be an $\infty$-category. 
Let $p \colon \cI\to \cJ$ be a graduation morphism over $\cX$ and assume that $\cJ=\cJ^{\ens}$.
Then, $\cI_p=\cI$  and $\Gr_p  \colon  \Fun(\cI_p,\cE)\to \Fun(\cI,\cE)$  is the identity functor.
\end{eg}

\begin{prop}\label{cor:section_Gr_commutes_with_colimits}
	Let $\cX$ be an $\infty$-category. 
	Let $p \colon \cI\to \cJ$ be a graduation morphism over $\cX$.
	Let $\cE$ be a presentable stable $\infty$-category.
	Then $\Gr_p \colon \Fun(\cI,\cE)\to \Fun(\cI_p,\cE)$ commutes with colimits.
	In particular, $\Gr_p$ admits a right adjoint 
	 \[ \Gr_p^* \colon \Fun(\cI_p,\cE)\to \Fun(\cI,\cE) \]
 	that can be explicitly computed as
	 \[ \Gr_p^\ast \simeq \fib\big( \sigma_\ast \circ i_{p,\ast} \stackrel{\eta_<}{\to } \sigma_\ast \circ i_{<,\ast} \circ i_<^\ast \circ i_{p,\ast} \big) \ , \]
	 where $\eta_<$ is the unit of the adjunction $i_{<}^\ast \dashv i_{<,\ast}$.
\end{prop}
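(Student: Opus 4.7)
The plan is to reduce the statement to three essentially formal facts: (i) that each of the four functors appearing in the definition of $\Gr_p$ commutes with colimits, (ii) that cofibers of natural transformations between colimit-preserving functors to a presentable stable $\infty$-category are again colimit-preserving, and (iii) that the right adjoint of a cofiber is the fiber of the dual natural transformation.

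First I would verify that the two functors sitting at the source and target of the natural transformation $i_p^{\ast}\varepsilon_< \sigma^{\ast}$ commute with colimits. The functor $\sigma^{\ast} \colon \Fun(\cI,\cE)\to \Fun(\cI_{\leqslant},\cE)$ and similarly $i_p^{\ast}$, $i_<^{\ast}$ are pullback functors along functors of $\infty$-categories, so they commute with \emph{all} (small) colimits as these are computed pointwise in $\cE$. The functor $i_{<,!} \colon \Fun(\cI_<,\cE) \to \Fun(\cI_{\leqslant},\cE)$ is a left adjoint, hence also commutes with colimits. Therefore both $i_p^{\ast}\circ i_{<,!}\circ i_<^{\ast}\circ \sigma^{\ast}$ and $i_p^{\ast}\circ\sigma^{\ast}$ commute with colimits.

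Next, since $\cE$ is presentable stable, the cofiber functor $\cofib \colon \Fun(\Delta^1,\cE)\to \cE$ preserves all small colimits (cofibers are pushouts, and pushouts distribute over colimits in a stable $\infty$-category). Passing to the functor categories $\Fun(\cI,\cE)$ and $\Fun(\cI_p,\cE)$, where colimits are computed pointwise, the same holds for the cofiber of any natural transformation between colimit-preserving functors. Applying this to $\alpha \coloneqq i_p^{\ast}\varepsilon_<\sigma^{\ast}$ shows that $\Gr_p = \cofib(\alpha)$ commutes with colimits.

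Since $\Fun(\cI,\cE)$ and $\Fun(\cI_p,\cE)$ are presentable (as $\cE$ is), the adjoint functor theorem supplies the right adjoint $\Gr_p^{\ast}$. For the explicit formula, I would use the standard identification of the right adjoint of a cofiber: given any natural transformation $\alpha \colon F \to G$ between colimit-preserving functors with right adjoints $\alpha^R \colon G^R \to F^R$, the Yoneda-style computation
\[ \Map(\cofib(\alpha)(X),Y) \simeq \fib\bigl(\Map(G(X),Y)\to \Map(F(X),Y)\bigr) \simeq \Map(X,\fib(\alpha^R)(Y)) \]
shows that $\cofib(\alpha)^R \simeq \fib(\alpha^R)$. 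In our case, the right adjoints of $i_p^{\ast}$, $\sigma^{\ast}$, $i_<^{\ast}$ are $i_{p,\ast}$, $\sigma_{\ast}$, $i_{<,\ast}$, and the right adjoint of $i_{<,!}$ is $i_<^{\ast}$. The counit $\varepsilon_< \colon i_{<,!}i_<^{\ast}\to \id$ is dual to the unit $\eta_< \colon \id \to i_{<,\ast}i_<^{\ast}$ of $i_<^{\ast} \dashv i_{<,\ast}$, so $\alpha^R$ is $\sigma_{\ast} \eta_< i_{p,\ast}$. This yields the stated formula.

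The only genuine subtlety is bookkeeping the order of the four adjunctions when passing to right adjoints and checking that the dualized natural transformation is indeed induced by $\eta_<$; the rest is a formal consequence of presentability, stability, and the adjoint functor theorem.
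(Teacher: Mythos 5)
Your proposal is correct and follows essentially the same route as the paper's (very terse) proof: the paper also argues that $\Gr_p$ is built from colimit-preserving functors and a cofiber, and invokes the Yoneda lemma for the formula for $\Gr_p^\ast$; your write-up simply spells out the Yoneda computation and the bookkeeping of the adjoint triple $i_{<,!} \dashv i_<^\ast \dashv i_{<,\ast}$ in more detail. Both the identification $\alpha^R = \sigma_\ast\eta_< i_{p,\ast}$ and the conclusion $\Gr_p^\ast \simeq \fib(\alpha^R)$ are exactly right.
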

\begin{proof}
	The first half follows immediately from the fact that $\Gr_p$ is a composition of functors commuting with colimits.
	The second half simply follows from the Yoneda lemma.
\end{proof}

\begin{rem}
For an explicit description of the right adjoint $\Gr_p^*$ in the spirit of \cref{ex_Gr}, see \cref{punctual_right_adjoint}.
\end{rem}

Under extra finitness conditions, \cref{cor:section_Gr_commutes_with_colimits} has  a counterpart for limits.
Before stating it, we introduce the following 

\begin{defin}
	We define $\PosFib^f$ as the full subcategory of $\PosFib$ spanned by cocartesian fibrations in posets $p : \cI \to \cX$  such that for every $x\in \cX$, the  poset $\cI_x$ is finite. 
\end{defin}

\begin{prop}\label{cor:section_Gr_commutes_with_limits}
	Let $\cX$ be an $\infty$-category. 
	Let $p \colon \cI\to \cJ$ be a graduation morphism in $\PosFib^f$ over $\cX$.
	Let $\cE$ be a presentable stable $\infty$-category.
	Then $\Gr_p \colon \Fun(\cI,\cE)\to \Fun(\cI_p,\cE)$ commutes with limits.
\end{prop}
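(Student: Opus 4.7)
The plan is to reduce the statement to showing that a single left Kan extension commutes with limits. The functor $\Gr_p$ is defined as the cofiber of a natural transformation between two functors $\Fun(\cI, \cE) \to \Fun(\cI_p, \cE)$. Since $\cE$ is stable, cofibers are finite colimits, which in a stable $\infty$-category coincide with finite limits and therefore commute with arbitrary limits. Hence it suffices to prove that both $i_p^\ast \circ i_{<!} \circ i_<^\ast \circ \sigma^\ast$ and $i_p^\ast \circ \sigma^\ast$ commute with limits. The restrictions $i_p^\ast$, $\sigma^\ast$, $i_<^\ast$ are right adjoints, hence preserve limits. The only nontrivial point is therefore to prove that the left Kan extension $i_{<!} \colon \Fun(\cI_<, \cE) \to \Fun(\cI_\leq, \cE)$ commutes with limits.

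Since limits in functor categories are computed pointwise, it suffices to check, for every object $y = (x,a,b) \in \cI_\leq$, that the pointwise formula $(i_{<!}G)_y \simeq \colim_{(\cI_<)_{/y}} G$ commutes with limits in $G$. The key step is to show that the inclusion of the finite full subposet
\[ \cF_y \coloneqq \{(x,a',b') \in (\cI_<)_x : a' \leq a \text{ in } \cI_x,\ b' \leq b \text{ in } \cJ_x\} \hookrightarrow (\cI_<)_{/y} \]
is cofinal. By \cite[Proposition 4.1.1.3]{HTT}, this reduces to checking that for every $z = ((x',a',b'), f) \in (\cI_<)_{/y}$ with underlying morphism $\alpha \colon x' \to x$ in $\cX$, the slice $\cF_y \times_{(\cI_<)_{/y}} ((\cI_<)_{/y})_{z/}$ is weakly contractible. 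The cocartesian lift of $\alpha$ at $(x',a',b')$ produces a canonical factorization $(x',a',b') \to (x,\alpha_!(a'),\alpha_!(b')) \to y$ that will exhibit $(x, \alpha_!(a'), \alpha_!(b'))$ as an initial object in the relevant slice.

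The main subtlety is verifying that $(x, \alpha_!(a'), \alpha_!(b'))$ genuinely lies in $\cI_<$, that is, that the strict inequality $p(a') < b'$ holding in $\cJ_{x'}$ is preserved under the cocartesian transport $\alpha_! \colon \cJ_{x'} \to \cJ_x$. This is exactly where the graduation hypothesis on $p$ enters: since $\cJ^{\ens} \to \cX$ is locally constant, the induced map on underlying sets $\cJ^{\ens}_{x'} \to \cJ^{\ens}_x$ is a bijection, so the poset morphism $\alpha_!$ reflects equalities, hence preserves strict inequalities. Once cofinality is established, $(i_{<!}G)_y \simeq \colim_{\cF_y} G$ is a colimit over a finite poset, because $\cI_x$ and $\cJ_x$ are finite by the assumption $p \in \PosFib^f$. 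Such a finite colimit in the stable $\infty$-category $\cE$ commutes with all limits, which completes the argument.
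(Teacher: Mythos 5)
Your proof is correct and follows the same overall strategy as the paper: peel off the cofiber (a finite colimit, hence commuting with limits in the stable setting by \cref{finite_colimit_and_limit_stable}) and the restriction functors (right adjoints), reducing to the single left Kan extension $i_{<!}$. The difference lies in how you handle $i_{<!}$. The paper simply cites \cref{induction_limit_stable}, whose proof proceeds by a Beck--Chevalley reduction to the case $\cX = \ast$ followed by the compactness criterion \cref{compactness_Kan_extension_category}; you instead unroll this by exhibiting by hand a finite cofinal subposet $\cF_y$ of the comma category over each $y$. This is sound and self-contained, but note two small points. First, the ``main subtlety'' you flag --- that cocartesian transport preserves the strict inequality defining $\cI_<$ --- is already resolved by the lemma following \cref{J_loc_constant}, which records that for a graduation morphism $i_< \colon \cI_< \hookrightarrow \cI_\leq$ is a cocartesian subfibration over $\cX$. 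Once that is in place, the cocartesian lift automatically lands in $\cI_<$; re-deriving this from the local constancy of $\cJ^{\ens}$ is redundant. Second, the correct reference for the cofinality criterion via weak contractibility of comma categories is \cite[Theorem 4.1.3.1]{HTT} (Quillen's Theorem A), not Proposition 4.1.1.3, which concerns the invariance of colimits under cofinal functors.
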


\begin{proof}
	Follows immediately from \cref{induction_limit_stable} and the fact that in a stable $\infty$-category, the cofiber functor commutes with limits in virtue of \cref{finite_colimit_and_limit_stable}.
\end{proof}

\begin{prop}\label{cor:section_Gr_conservative}
	Let $\cX$ be an $\infty$-category. 
	Let $p \colon \cI\to \cJ$ be a graduation morphism in $\PosFib^f$ over $\cX$.
	Let $\cE$ be a presentable stable $\infty$-category.
	Then $\Gr_p \colon \Fun(\cI,\cE)\to \Fun(\cI_p,\cE)$ is conservative.
\end{prop}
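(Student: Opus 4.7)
My plan is to reduce to the case where $\cX$ is a point, to derive an explicit pointwise formula for $\Gr_p$, and to conclude by induction on the finite poset $\cJ$.

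First I would reduce to the case $\cX = \ast$. For every $x \in \cX$ the fibre restriction $j_x^\ast \colon \Fun(\cI_p,\cE) \to \Fun((\cI_p)_x, \cE)$ is exact, and the family $\{j_x^\ast\}_{x \in \cX}$ is jointly conservative. Invoking the Beck-Chevalley identities supplied by \cref{cor:induction_specialization_Beck_Chevalley}, applied to the pullback squares that express $(\cI_<)_x \hookrightarrow (\cI_\leq)_x$, $(\cI_p)_x \hookrightarrow (\cI_\leq)_x$ and $(\cI_\leq)_x \to \cI_x$ as restrictions to $x$ of the morphisms $i_<$, $i_p$ and $\sigma$ in $\CoCart$ over $\cX$, one obtains natural equivalences
\[ j_x^\ast \circ \Gr_p \simeq \Gr_{p_x} \circ j_x^\ast \ , \]
where $p_x \colon \cI_x \to \cJ_x$ is the induced morphism on fibres. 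Since $p$ lies in $\PosFib^f$, each such $p_x$ is a morphism between finite posets, so the problem reduces to showing that $\Gr_p$ is conservative when $\cX = \ast$ and $p \colon \cI \to \cJ$ is a morphism of finite posets.

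Next I would compute $(\Gr_p F)_a$ explicitly for every $F \in \Fun(\cI, \cE)$ and every $a \in \cI_p$. The identification $(i_p^\ast \sigma^\ast F)_a \simeq F_a$ is immediate from the definitions, while the pointwise formula for left Kan extension along the fully faithful inclusion $\cI_< \hookrightarrow \cI_\leq$ yields
\[ (i_p^\ast i_{<!} i_<^\ast \sigma^\ast F)_a \simeq \colim_{(c,d) \in \cC_a} F_c \ , \]
where $\cC_a \subset \cI \times \cJ$ denotes the subposet of pairs with $c \leq a$, $d \leq p(a)$ and $p(c) < d$. Arguing exactly as in the proof of \cref{ex_Gr}, the subposet $\cC_a^{\max} \subset \cC_a$ of pairs with $d = p(a)$ is cofinal: for each $(c_0, d_0) \in \cC_a$ the relevant slice identifies with $\{c' \in \cI : c_0 \leq c' \leq a,\ p(c') < p(a)\}$, which admits $c_0$ as a minimum (since $p(c_0) < d_0 \leq p(a)$) and is therefore weakly contractible by \cite[Theorem 4.1.3.1]{HTT}. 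Identifying $\cC_a^{\max}$ with $\{c \in \cI : c \leq a,\ p(c) < p(a)\}$, this produces the desired formula
\[ (\Gr_p F)_a \simeq \cofib\Bigl( \colim_{\substack{c \leq a \\ p(c) < p(a)}} F_c \to F_a \Bigr) \ . \]

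Finally, assuming $\Gr_p F \simeq 0$, I would prove by induction on the height of $p(a)$ in the finite poset $\cJ$ that $F_a \simeq 0$ for every $a \in \cI$. In the base case $p(a)$ is minimal in $\cJ$, so the indexing set of the colimit is empty and the formula forces $F_a \simeq 0$. For the inductive step, if $F_c \simeq 0$ for every $c$ with $p(c) < p(a)$, the colimit vanishes and $F_a \simeq 0$ as well. The only non-formal ingredient is the cofinality verification in Step 2, which is a bookkeeping argument entirely parallel to the one already carried out in the proof of \cref{ex_Gr}; the key conceptual point is precisely that the pointwise formula derived for split functors there extends, via this cofinality argument, to arbitrary (not necessarily split) $F$.
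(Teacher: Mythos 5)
Your proof is correct and follows essentially the same strategy as the paper: reduce to $\cX = \ast$, observe that $(\Gr_p F)_a$ is a cofiber involving only $F_a$ and the values $F_c$ at elements $c \leq a$ with $p(c) < p(a)$, and then conclude by induction on the finite poset. The differences are cosmetic: you first reduce conservativity to detecting zero objects (which tacitly uses that $\Gr_p$ is exact, as supplied by \cref{cor:section_Gr_commutes_with_colimits} and \cref{cor:section_Gr_commutes_with_limits}), you clean up the colimit index via a cofinality argument, and you induct on the height of $p(a)$ in $\cJ$, whereas the paper works directly with a morphism $f\colon F\to G$, uses the raw colimit over $\cC_a$, and argues by taking a minimal element of $\cI$ at which $f$ fails to be an equivalence.
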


\begin{proof}
	From \cref{cor:induction_specialization_Beck_Chevalley}, we can suppose that $\cX$ is a point.
	Let $f  \colon F \to G$ be a morphism in $\Fun(\cI,\cE)$ such that $\Gr_p(f) \colon \Gr_p(F)\to \Gr_p(G)$ is an isomorphism.
	Let $A\subset \cI$ be the subset of elements $a$ such that $f$ is not an isomorphism at $a$.
	We argue by contradiction and assume that $A$ is not empty.
	Since $\cI$ is finite, $A$ admits a minimal element $a$.
	If $\cC_a \coloneqq \cI_{<}\times_{\cI_{\leq }} (\cI_{\leq })_{/(a,p(a))}$, there is a morphism of cofibre sequences
	\[ \begin{tikzcd}
			\colim_{\cC_a} F|_{\cC_a} \arrow{r} \arrow{d} & F_a \arrow{r} \arrow{d}{f_a} & \Gr_{p}(F)_a \arrow{d}{\Gr(f)_a} \\
			\colim_{\cC_a} G|_{\cC_a} \arrow{r} & G_a \arrow{r} & \Gr_p(G)_a
	\end{tikzcd} \]
	By definition, an object of $\cC_a$ is a couple $(b,c)\in \cI\times \cJ$ with $p(b)<c$ such that $b\leq a$ and $c\leq p(a)$.
	In particular $p(b)<p(a)$, so that $b<a$.
	That is, the above colimit over $\cC_a$ only features values of $f$ at elements $b\in \cI$ strictly smaller than $a$.
	Thus, the left vertical arrow is an equivalence by the minimality of $a$.
	The right vertical arrow is an equivalence by assumption.
	Hence $f_a  \colon F_a  \to G_a$ is an equivalence. 
	Contradiction.
\end{proof}

\begin{rem}
	Note that \cref{cor:section_Gr_conservative} fail if the finiteness assumption on $\cI$ is dropped. 
	If $\cI=\mathbb{Z}$ and if $F$ is the functor constant to a non zero object in $\cE$, then we have $F\neq 0$ and $\Gr(F)\simeq 0$. 
\end{rem}

\subsection{Exponential graduation}

The graduation functor introduced in \cref{subsec:graduation} should be understood as the global counterpart of the exponential graduation, which we now discuss.
Fix a presentable stable $\infty$-category $\cE$.
For every $\infty$-category $\cX$ and every graduation morphism $p \colon \cI \to \cJ$ in $\PosFib$ over $\cX$, we can apply $\exp_\cE(-/\cX)$ to the diagram \eqref{eq:graduation_diagram}.
This yields the following commutative diagram
\[ \begin{tikzcd}
	& & \exp_\cE( \cI_< / \cX ) \arrow{d}{\cE^{i_<}_!} \\
	\exp_\cE( \cI_p / \cX ) \arrow{rr}{\cE^{i_p}_!} \arrow{d} & & \exp_\cE( \cI_\leq / \cX ) \arrow{r}{\cE^{\sigma}_!} \arrow{d} & \exp_\cE( \cI / \cX ) \arrow{d} \\
	\exp_\cE( \cJ^{\ens} / \cX ) \arrow{r}{\cE^{i_\cJ}_!} & \exp_\cE( \cJ / \cX ) \arrow{r}{\cE^{\id}_!} & \exp_\cE( \cJ^{\Delta^1} / \cX ) \arrow{r}{\cE^s_!} & \exp_\cE( \cJ / \cX )
\end{tikzcd} \]
in $\PrFibL_\cX$.
Recall from \cref{lem:exponential_induction_adjoint} the existence of right adjoints $\cE^{\sigma,\ast}$, $\cE^{i_<,\ast}$ and $\cE^{i_p,\ast}$ for $\cE^\sigma_!$, $\cE^{i_<}_!$ and $\cE^{i_p}_!$, respectively.

\begin{defin}\label{def:exponential_graduation}
	In the above setup, the \emph{exponential graduation relative to $p$} is the functor
	\[ \expGr_p \coloneqq \cofib( \cE^{i_p,\ast} \circ \cE^{i_<}_! \circ \cE^{i_<,\ast} \circ \cE^{\sigma, \ast} \to \cE^{i_p,\ast} \circ \cE^{\sigma, \ast} ) \ , \]
	where the morphism is induced by the counit of the adjunction $\cE^{i_<}_! \dashv \cE^{i_<,\ast}$.
\end{defin}

The following result summarizes the local and the global behavior of the exponential graduation functor:

\begin{prop}\label{prop:exponential_graduation_local_global_behavior}
	Keep the same notations as above.
	Then:
	\begin{enumerate}\itemsep=0.2cm
		\item for every $x \in \cX$, the diagram
		\[ \begin{tikzcd}
			\Fun(\cI_x, \cE) \arrow{d} \arrow{r}{\Gr_{p_x}} & \Fun((\cI_p)_x, \cE) \arrow{d} \\
			\exp_\cE( \cI / \cX ) \arrow{r}{\expGr_p} & \Fun(\cI_p, \cE)
		\end{tikzcd} \]
		commutes.
		
		\item The diagram
		\[ \begin{tikzcd}[column sep=50pt]
			\Fun_{/\cX}(\cX, \exp_\cE( \cI / \cX )) \arrow{r}{\Sigma_\cX( \expGr_p )} \arrow{d}{\spe_\cI} & \Fun_{/\cX}( \cX, \exp_\cE(\cI_p / \cX) ) \arrow{d}{\spe_{\cI_p}} \\
			\Fun(\cI, \cE) \arrow{r}{\Gr_p} & \Fun(\cI_p, \cE)
		\end{tikzcd} \]
		commutes.
	\end{enumerate}
\end{prop}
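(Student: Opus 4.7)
Both statements reduce to general functoriality properties of the exponential and specialization constructions established in \cref{sec:specialization_equivalence}, combined with the fact that the graduation is built from adjunction data.

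\medskip

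\noindent\emph{Part (1).} The strategy is to show that the entire construction of $\expGr_p$ is compatible with base change to the fiber at $x$. First, pulling back the diagram \eqref{eq:graduation_diagram} to $\{x\} \hookrightarrow \cX$ yields the analogous diagram for the fibers (i.e., the graduation diagram associated to $p_x \colon \cI_x \to \cJ_x$), since the formations $\cI \mapsto \cI_p$, $\cI \mapsto \cI_\leq$, $\cI \mapsto \cI_<$ and $\cJ \mapsto \cJ^{\Delta^1}$ all commute with pullback. Next, by \cref{prop:functoriality_exponential}-(1) (together with \cref{cor:fibers_exponential}), applying $\exp_\cE(-/\cX)$ and restricting to the fiber at $x$ converts this into the diagram of $\Fun(-,\cE)$'s for the fiber posets, and the exponential pullback functors $\cE^{\sigma,\ast}$, $\cE^{i_<,\ast}$, $\cE^{i_p,\ast}$ restrict to $\sigma_x^\ast$, $i_{<,x}^\ast$, $i_{p,x}^\ast$, while $\cE^{i_<}_!$ restricts to $i_{<,x,!}$. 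Since evaluation at $x$ corresponds under the straightening equivalence $\PrFibL_\cX \simeq \Fun(\cX,\PrL)$ to evaluation at $x$, it preserves all colimits (in particular cofibers) and sends the counit $\cE^{i_<}_!\cE^{i_<,\ast}\to\id$ to the counit $i_{<,x,!}i_{<,x}^\ast \to \id$. Therefore the fiber of $\expGr_p$ at $x$ is the cofiber of the corresponding pointwise natural transformation, which is by definition $\Gr_{p_x}$.

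\medskip

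\noindent\emph{Part (2).} The plan is to apply $\Sigma_\cX$ termwise to the cofiber defining $\expGr_p$. Under the specialization equivalence, \cref{prop:global_functoriality}-(\ref{prop:global_functoriality:induction}) identifies $\Sigma_\cX(\cE^{i_<}_!)$ with $i_{<,!}$, and \cref{prop:exponential_vs_global_induction} identifies $\Sigma_\cX(\cE^{i_p,\ast})$, $\Sigma_\cX(\cE^{i_<,\ast})$, $\Sigma_\cX(\cE^{\sigma,\ast})$ with $i_p^\ast$, $i_<^\ast$, $\sigma^\ast$ respectively. By uniqueness of adjoints, $\Sigma_\cX$ sends the counit of $\cE^{i_<}_! \dashv \cE^{i_<,\ast}$ to the counit of $i_{<,!} \dashv i_<^\ast$. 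Thus, under the specialization equivalences, the natural transformation
\[ \cE^{i_p,\ast} \circ \cE^{i_<}_! \circ \cE^{i_<,\ast} \circ \cE^{\sigma,\ast} \to \cE^{i_p,\ast} \circ \cE^{\sigma,\ast} \]
of functors from $\exp_\cE(\cI/\cX)$ to $\exp_\cE(\cI_p/\cX)$ corresponds to the natural transformation
\[ i_p^\ast \circ i_{<,!} \circ i_<^\ast \circ \sigma^\ast \to i_p^\ast \circ \sigma^\ast \]
of functors from $\Fun(\cI,\cE)$ to $\Fun(\cI_p,\cE)$ whose cofiber is $\Gr_p$.

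It therefore remains to argue that $\Sigma_\cX$ preserves the cofiber of such a natural transformation. For this, one checks that evaluation at any section $s \colon \cX \to \exp_\cE(\cI/\cX)$ commutes with the cofiber: indeed, both sides yield, objectwise at each $x \in \cX$, the cofiber of $\alpha_{s(x)}$ in the stable $\infty$-category $\Fun(\cI_{p,x},\cE)$, and under straightening the section category of $\exp_\cE(\cI_p/\cX)$ has its cofibers computed fiberwise. Combined with the identifications above, this shows $\Sigma_\cX(\expGr_p) \simeq \Gr_p$ under specialization, yielding the commutative square.

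\medskip

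The main delicate point is verifying that $\Sigma_\cX$ genuinely commutes with the cofiber in the definition of $\expGr_p$; all the other steps are essentially formal from \cref{sec:specialization_equivalence}.
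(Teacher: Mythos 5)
Your argument is correct and follows the same route as the paper, which dispatches part (1) by citing \cref{cor:induction_specialization_Beck_Chevalley} together with the fact that the adjunctions $\cE^{\sigma}_!\dashv\cE^{\sigma,\ast}$, $\cE^{i_<}_!\dashv\cE^{i_<,\ast}$, $\cE^{i_p}_!\dashv\cE^{i_p,\ast}$ are \emph{relative to $\cX$} (\cref{lem:exponential_induction_adjoint}), and part (2) as a direct consequence of \cref{prop:exponential_vs_global_induction}. The one minor inaccuracy in your write-up is that you credit the compatibility of the right adjoints $\cE^{\sigma,\ast}$, $\cE^{i_<,\ast}$, $\cE^{i_p,\ast}$ with restriction to the fiber to \cref{prop:functoriality_exponential}-(1), which only concerns the pullback functors $\cE^u$; the correct source is the relative-adjunction statement in \cref{lem:exponential_induction_adjoint} (which you invoke implicitly anyway when you say the counit restricts to the fiber counit). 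You also flag the commutation of $\Sigma_\cX$ with the cofiber as the delicate point and resolve it correctly by observing that cofibers in the section category are computed objectwise under specialization; the paper's one-line proof takes this for granted, so your treatment is simply a more explicit version of the same argument.
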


\begin{proof}
	Statement (1) immediately follows from \cref{cor:induction_specialization_Beck_Chevalley} applied to $\cE^{i_<}_!$ and the fact that the adjunctions $\cE^{\sigma}_! \dashv \cE^{\sigma,\ast}$, $\cE^{i_<}_! \dashv \cE^{i_<,\ast}$ and $\cE^{i_p}_! \dashv \cE^{i_p,\ast}$ are relative to $\cX$, see \cref{lem:exponential_induction_adjoint}.
	On the other hand, statement (2) is a direct consequence of \cref{prop:exponential_vs_global_induction}.
\end{proof}

Our next goal is to understand the behavior of $\expGr$ with the exponential functoriality for morphisms in $\PosFib$.
We start analyzing cartesian morphisms.
Consider therefore a diagram
\begin{equation}
	\begin{tikzcd}
		\cI \arrow{d}{p} & \cI_\cX \arrow{d}{q} \arrow{l}[swap]{u} \\
		\cJ \arrow{d} & \cJ_\cX \arrow{d} \arrow{l}[swap]{u'} \\
		\cY & \cX \arrow{l}[swap]{f}
	\end{tikzcd}
\end{equation}
whose squares are pullbacks and where $\cI \to \cY$ and $\cJ \to \cY$ are cocartesian fibrations in posets.
We also assume that $\cJ^{\ens} \to \cX$ is locally constant.
We have:

\begin{prop}
	In the above setting, the diagram
	\begin{equation}\label{eq:exponential_graduation_functoriality_cartesian}
		\begin{tikzcd}
			\exp_\cE( \cI_p / \cY ) & \exp_\cE( (\cI_\cX)_{p_\cX} / \cX ) \arrow{l}[swap]{\cE^{u_p}} \\
			\exp_\cE( \cI / \cY ) \arrow{u}{\expGr_p} & \exp_\cE( \cI_\cX / \cX ) \arrow{l}[swap]{\cE^u} \arrow{u}[swap]{\expGr_{q}}
		\end{tikzcd}
	\end{equation}
	is canonically commutative, and it is therefore a pullback.
\end{prop}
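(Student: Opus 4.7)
The strategy is to descend the result from the level of the combinatorial diagram \eqref{eq:graduation_diagram} to the level of exponential fibrations, and finally to the graduation morphism itself. Throughout, one exploits that the exponential construction is compatible with base change in $\cX$ via \cref{prop:functoriality_exponential} and \cref{pullback_induction_exp}.

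\emph{Step 1 (base change of the graduation diagram).} I would first check that \eqref{eq:graduation_diagram} is stable under pullback along $f$. The construction $\cJ \mapsto \cJ^{\Delta^1}$ is fiberwise induced by $\Fun(\Delta^1,-) \colon \Cat_\infty \to \Cat_\infty$, hence it commutes with pullbacks in the base; and $\cI_p$, $\cI_\leq$, $\cI_<$ are defined as iterated pullbacks built from $\cJ^{\ens}$, $\cJ^{\Delta^1}$ and $\cI$. This produces canonical identifications
\[ (\cI_\cX)_{p_\cX} \simeq f^\ast \cI_p \ , \qquad (\cI_\cX)_\leq \simeq f^\ast \cI_\leq \ , \qquad (\cI_\cX)_< \simeq f^\ast \cI_< \]
compatible with the structural morphisms $i_p$, $i_<$ and $\sigma$. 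Applying $\exp_\cE$ and invoking \cref{prop:functoriality_exponential}-(1) iteratively, these translate into pullback squares
\[ \exp_\cE((\cI_\cX)_\ast / \cX) \simeq f^\ast \exp_\cE(\cI_\ast / \cY) \]
for $\ast \in \{p, \leq, <\}$. Consequently, the square \eqref{eq:exponential_graduation_functoriality_cartesian} is automatically cartesian in $\hCoCart$ as soon as it is commutative, so only its commutativity remains to be established.

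\emph{Step 2 (commutativity of the graduation square).} Unwinding \cref{def:exponential_graduation}, $\expGr_p$ and $\expGr_q$ are cofibers of natural transformations assembled from the left adjoints $\cE^{i_<}_!$, $\cE^{\sigma}_!$, $\cE^{i_p}_!$ and their right adjoints $\cE^{i_<,\ast}$, $\cE^{\sigma,\ast}$, $\cE^{i_p,\ast}$. Commutation of $\cE^u$ with the left adjoints is immediate from \cref{prop:functoriality_exponential}-(2). For the right adjoints, I would invoke \cref{lem:exponential_induction_adjoint}, which ensures that each adjunction $\cE^v_! \dashv \cE^{v,\ast}$ is relative to the base; the Beck-Chevalley transformation associated to base change along $f$ is then an equivalence, as on each fiber over $x \in \cX$ it reduces to the identity of $\Fun((\cI_\cX)_\ast{}_x, \cE)$. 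Finally, cofibers in the exponential fibrations are computed fiberwise in $\Fun(-,\cE)$, and $\cE^u$ is fiberwise an equivalence, so it commutes with the cofiber defining $\expGr$.

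\emph{Main obstacle.} The delicate point is the rigorous treatment of the right adjoints $\cE^{v,\ast}$ together with the Beck-Chevalley comparison in the presentable fibration setting. To bypass possible technicalities, one can alternatively reduce to fibers via \cref{prop:exponential_graduation_local_global_behavior}-(1): the commutativity of \eqref{eq:exponential_graduation_functoriality_cartesian} over each $x \in \cX$ amounts to the tautological equality $\Gr_{p_{f(x)}} = \Gr_{q_x}$ under the identification $(\cI_\cX)_x \simeq \cI_{f(x)}$, which makes the statement essentially formal once the fibrationwise base change of Step~1 is in hand.
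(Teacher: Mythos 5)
Your proposal is correct and takes essentially the same approach as the paper. The paper's own (terse) argument also invokes \cref{prop:functoriality_exponential}-(1) for the pullback and reduces commutativity to the fiberwise identification of \cref{prop:exponential_graduation_local_global_behavior}-(1) via the Beck-Chevalley mechanism of \cref{cor:induction_specialization_Beck_Chevalley}; your Step~1 makes explicit the base-change compatibility of $\cI_p$, $\cI_\leq$, $\cI_<$ that the paper leaves implicit, and your ``Main obstacle'' alternative is precisely the route the paper actually takes.
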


\begin{proof}
	Unraveling the definitions, we see that an object in $\exp_\cE( \cI_\cX / \cX )$ can be identified with a pair $(F, x)$, where $x \in \cX$ and $F \colon (\cI_\cX)_x \to \cE$ is a functor.
	Under the canonical identification $(\cI_\cX)_x \simeq \cI_{f(x)}$, the functor $\cE^u$ sends $(F,x)$ to $(F,f(x))$.
	At this point, the commutativity follows from \cref{cor:induction_specialization_Beck_Chevalley} and \cref{prop:exponential_graduation_local_global_behavior}-(1), while \cref{prop:functoriality_exponential}-(1) immediately implies that the square in consideration is a pullback.
\end{proof}

\begin{cor}\label{lem:Gr_restriction} 
	In the above setting, the natural transformation
	\[ \Gr_q \circ u^\ast \to  u_p^\ast \circ \Gr_p \]
	between functors from $\Fun(\cI, \cE)$ to $\Fun((\cI_\cX)_q, \cE)$ is an equivalence.
\end{cor}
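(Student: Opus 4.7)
The plan is to deduce this statement directly from the pullback square \eqref{eq:exponential_graduation_functoriality_cartesian} established in the preceding proposition, by applying the section functor $\Sigma$ to it and invoking the global functoriality results developed in Section~\ref{sec:specialization_equivalence}.

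First I would apply $\Sigma$ to the commutative square \eqref{eq:exponential_graduation_functoriality_cartesian} in $\PrFibL$ and transport the resulting diagram across the specialization equivalence \eqref{eq:specialization_equivalence}. This produces a canonically commutative diagram
\[ \begin{tikzcd}
\Fun(\cI_p, \cE) & \Fun((\cI_\cX)_{p_\cX}, \cE) \arrow{l}[swap]{\Sigma(\cE^{u_p})} \\
\Fun(\cI, \cE) \arrow{u}{\Sigma(\expGr_p)} & \Fun(\cI_\cX, \cE) \arrow{l}[swap]{\Sigma(\cE^u)} \arrow{u}[swap]{\Sigma(\expGr_q)}
\end{tikzcd} \]
and I now need to identify each edge with its purported counterpart. \cref{prop:global_functoriality}-\eqref{prop:global_functoriality:pullback} supplies canonical identifications $\Sigma(\cE^u) \simeq u^\ast$ and $\Sigma(\cE^{u_p}) \simeq u_p^\ast$. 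On the other hand, \cref{prop:exponential_graduation_local_global_behavior}-(2) supplies the identifications $\Sigma_\cY(\expGr_p) \simeq \Gr_p$ and $\Sigma_\cX(\expGr_q) \simeq \Gr_q$. Substituting these into the diagram yields the commutativity $\Gr_q \circ u^\ast \simeq u_p^\ast \circ \Gr_p$.

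The only step that requires some care is checking that the commutativity extracted from \eqref{eq:exponential_graduation_functoriality_cartesian} after applying $\Sigma$ indeed realizes the \emph{canonical} Beck--Chevalley transformation of the corollary statement, rather than some other a priori equivalence. This is however automatic because both transformations are constructed from the same underlying datum --- the pullback square \eqref{eq:graduation_diagram} pulled back along $f$ --- and the specialization equivalence together with the identifications of \cref{prop:global_functoriality} and \cref{prop:exponential_graduation_local_global_behavior}-(2) are natural with respect to all morphisms of cocartesian fibrations involved. No genuine obstacle is expected, since all the substantive work has been front-loaded into the preceding proposition and the structural results on $\spe$.
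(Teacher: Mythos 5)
Your proposal is correct and follows exactly the paper's own argument: apply $\Sigma$ to the square \eqref{eq:exponential_graduation_functoriality_cartesian}, then use \cref{prop:global_functoriality}-(\ref{prop:global_functoriality:pullback}) to identify $\Sigma(\cE^u)$ and $\Sigma(\cE^{u_p})$ with $u^\ast$ and $u_p^\ast$, and \cref{prop:exponential_graduation_local_global_behavior}-(2) to identify $\Sigma(\expGr_p)$ and $\Sigma(\expGr_q)$ with $\Gr_p$ and $\Gr_q$. Your final paragraph about confirming that the resulting commutativity agrees with the canonical comparison map is a reasonable extra sanity check, but it does not change the substance of the argument.
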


\begin{proof}
	We can see \eqref{eq:exponential_graduation_functoriality_cartesian} as a commutative square in $\PrFibL$.
	The statement then follows applying $\Sigma$ and invoking \cref{prop:global_functoriality}-(\ref{prop:global_functoriality:pullback}) and \cref{prop:exponential_graduation_local_global_behavior}-(2).
\end{proof}
%
%

\begin{rem} \label{lem:Gr_restriction_points} 
As a particular case of  \cref{lem:Gr_restriction}, we see that relative graduation commutes with restriction over an object of $\cX$.
\end{rem}

\subsection{Graduation and induction}\label{subsection_Gr_ind}

Our next task is to understand how graduation behaves with respect to morphisms in $\PosFib_\cX$ for a fixed $\infty$-category $\cX$.
In other words, we are interested in seeing to which extent (exponential) graduation and (exponential) induction intertwine with each other.
Our starting point is the following.
Let $\cX$ be an $\infty$-category and let
\begin{equation}\label{eq:graduation_and_induction_setup}
	\begin{tikzcd}
		\cI   \arrow{d}{p} \arrow{r}{f}&\cK \arrow{d}{q} \\
		\cJ \arrow{r}{g} &  \cL
	\end{tikzcd}
\end{equation}
be a commutative diagram in  $\PosFib$ over $\cX$.
We make the following running

\begin{assumption}\label{g_ff}
	\hfill
	\begin{enumerate}\itemsep=0.2cm
		\item Both $p : \cI \to \cJ$ and $q  : \cK \to \cL$ are graduation morphisms.
		
		\item For every $x \in \cX$, the map $g^{\ens}_x \colon \cJ_x^{\ens} \to \cL_x^{\ens}$ is injective.
	\end{enumerate}
\end{assumption}

The second half of this assumption guarantees that if $a,b \in \cJ_x$ are such that $a < b$, then $g(a) < g(b)$ as well.
\personal{Indeed, since $g_x \colon \cJ_x \to \cL_x$ is a map of posets, $g(a) \leqslant g(b)$. But one cannot have $g(a) = g(b)$ because $g^{\ens}$ is injective. So $g(a) < g(b)$.}
Thus, the above assumption guarantees the existence and commutativity of the following diagram:
\begin{equation}\label{eq:graduation_and_induction_comparison_diagram}
	\begin{tikzcd}
		& & & \cI_{<} \arrow{dl}[swap]{f_<} \arrow{dd}{i_{\cI,<}} & & \\
		& & \cK_{<} & & & \\
		& \cI_p \arrow{rr}[pos=0.3]{i_p} \arrow{dl}[swap]{f_{p,q}}  \arrow{dd} & & \cI_{\leq} \arrow{rr}{\sigma_\cI} \arrow{dl}[swap]{f_\leq} \arrow{dd} & & \cI \arrow{dd}{p} 	\arrow{dl}[swap]{f} \\
		\cK_q \arrow{dd} \arrow[crossing over]{rr}[pos=0.7]{i_q} & & \cK_{\leq} \arrow[leftarrow, crossing over]{uu}[swap,pos=0.7]{i_{\cK,<}} \arrow[crossing 	over]{rr}[pos=0.7]{\sigma_{\cK}} & & \cK \\
		& \cJ^{\ens} \arrow{dl} \arrow{rr} & & \cJ^{\Delta^1} \arrow{rr} \arrow{dl} & & \cJ \arrow{dl}{g} \\
		\cL^{\ens} \arrow{rr} & & \cL^{\Delta^1}\arrow[leftarrow, crossing over]{uu} \arrow{rr} & & \cL \arrow[leftarrow, crossing over]{uu}[pos=0.6,swap]{q}
	\end{tikzcd}
\end{equation}
Fix a stable presentable $\infty$-category $\cE$ and consider the induced natural transformation
\begin{equation}\label{eq:graduation_and_induction}
	f_{p,q !} \circ \Gr_p \to  \Gr_{q} \circ f_{!}
\end{equation}
of functors from $\Fun(\cI, \cE)$ to $\Fun(\cK_q, \cE)$.
The goal of this section is to establish the following:

\begin{prop}\label{prop:Graduation_induction}
	Let $\cE$ be a stable presentable $\infty$-category and let $F \in \Fun(\cI, \cE)$.
	Under \cref{g_ff}, the natural transformation \eqref{eq:graduation_and_induction} is an equivalence and the diagram
	\[ \begin{tikzcd}
		\exp_\cE( \cI / \cX ) \arrow{r}{\cE^f_!} \arrow{d}{\expGr_p} & \exp_\cE( \cK / \cX ) \arrow{d}{\expGr_q} \\
		\exp_\cE( \cI_p / \cX ) \arrow{r}{\cE^{f_{p,q}}_!} & \exp_\cE( \cK_q / \cX )
	\end{tikzcd} \]
	commutes.
\end{prop}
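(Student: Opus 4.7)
The two assertions of the proposition are tautologically equivalent: applying $\Sigma_\cX$ to the square of presentable fibrations one wishes to show commutes, and invoking \cref{prop:exponential_graduation_local_global_behavior}-(2) together with \cref{prop:global_functoriality}-(\ref{prop:global_functoriality:induction}), identifies it with the square expressing that \eqref{eq:graduation_and_induction} is an equivalence. I would therefore concentrate on the natural transformation \eqref{eq:graduation_and_induction}.

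The first step is a reduction to the case where $\cX$ is a single point. For a point $x \in \cX$, the inclusion $\{x\} \hookrightarrow \cX$ together with the various cartesian squares extracted from \eqref{eq:graduation_and_induction_comparison_diagram} by pulling back to $x$ yield, via \cref{cor:induction_specialization_Beck_Chevalley} and \cref{lem:Gr_restriction}, canonical equivalences
\[ j_x^\ast f_{p,q!} \Gr_p(F) \simeq f_{p_x, q_x !} \Gr_{p_x} j_x^\ast(F), \qquad j_x^\ast \Gr_q f_!(F) \simeq \Gr_{q_x} f_{x!} j_x^\ast(F). \]
Since equivalences between cocartesian sections can be detected fiberwise, the reduction is complete and we are left with a commutative square of finite posets satisfying the injectivity condition on $g^{\ens}$.

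Next I would extract three base-change squares from \eqref{eq:graduation_and_induction_comparison_diagram} and show they are pullbacks of posets:
\begin{enumerate}
\item $\cI_p \simeq \cI_\leq \times_{\cK_\leq} \cK_q$;
\item $\cI_\leq \simeq \cI \times_{\cK} \cK_\leq$;
\item $\cI_< \simeq \cI_\leq \times_{\cK_\leq} \cK_<$.
\end{enumerate}
Item (2) is formal from the definition of $\cI_\leq$, $\cK_\leq$ as pullbacks over $\cJ^{\Delta^1}$ and $\cL^{\Delta^1}$, plus the identity $\cJ^{\Delta^1} \simeq \cJ \times_\cL \cL^{\Delta^1}$. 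Items (1) and (3) are where the injectivity of $g^{\ens}$ enters: an object of $\cI_\leq \times_{\cK_\leq} \cK_q$ is a datum $(a, b, b')$ with $a \in \cI$, $b \in \cJ$, $p(a) \leq b$, and $g(b) = b' \in \cL^{\ens}$; injectivity of $g^{\ens}$ forces $b \in \cJ^{\ens}$ and hence $b = p(a)$, realizing the datum uniquely as an element of $\cI_p$. The argument for (3) is parallel.

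From each such pullback square, the induced Beck-Chevalley transformations
\[ f_{p,q!} \circ i_p^\ast \to i_q^\ast \circ f_{\leq!}, \quad f_{\leq!} \circ \sigma_\cI^\ast \to \sigma_\cK^\ast \circ f_!, \quad f_{\leq!} \circ i_{\cI,<!} \to i_{\cK,<!} \circ f_{<!} \]
are equivalences: for each target object, the relevant map between slice categories is a bijection on objects, hence cofinal, and one concludes by the pointwise formula for left Kan extensions along maps of posets. Combining them yields canonical equivalences
\[ f_{p,q!} \bigl( i_p^\ast i_{\cI,<!} i_{\cI,<}^\ast \sigma_\cI^\ast(F)\bigr) \simeq i_q^\ast i_{\cK,<!} i_{\cK,<}^\ast \sigma_\cK^\ast f_!(F), \qquad f_{p,q!}\bigl( i_p^\ast \sigma_\cI^\ast(F)\bigr) \simeq i_q^\ast \sigma_\cK^\ast f_!(F), \]
and naturality of Beck-Chevalley identifies the map induced between them by the counit of $i_{\cI,<!} \dashv i_{\cI,<}^\ast$ with that induced by the counit of $i_{\cK,<!} \dashv i_{\cK,<}^\ast$. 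Passing to cofibers (which $f_{p,q!}$ preserves, being a left adjoint) produces \eqref{eq:graduation_and_induction} as an equivalence, and therefore also the exponential square.

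The main obstacle is the careful verification that the Beck-Chevalley equivalences are compatible with the specific counit transformations appearing in the cofiber definition of $\Gr$; the injectivity of $g^{\ens}$ is the single geometric input making the pullback identifications (1) and (3) hold, but tracking naturality of all comparison maps through the combined Beck-Chevalley identifications demands some patient diagram chasing.
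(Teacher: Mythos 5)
Your proposal takes a genuinely different route from the paper's proof, and unfortunately it contains a gap that I do not see how to close along the lines you suggest. The paper does not prove the three individual Beck--Chevalley transformations are equivalences; instead, after reducing to $\cX = \ast$, it observes that both sides of \eqref{eq:graduation_and_induction} commute with colimits and that $\Fun(\cI,\cE)$ is generated under colimits by split functors $F \simeq i_{\cI!}(V)$ (\cref{prop:generation_by_split_functors}). For split $F$ it then identifies both sides directly using \cref{cor:compatibility_Gr_i_I!} and the functoriality of left Kan extension on the commutative square $\cI^{\ens} \to \cI_p$, $\cK^{\ens} \to \cK_q$.

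The gap in your argument is that a pullback of posets does \emph{not} on its own force the Beck--Chevalley transformation $v_! u^\ast \to w^\ast g_!$ to be an equivalence; some fibration hypothesis on a leg of the square is required, and it is not satisfied here. Your item (1) already fails. Take $\cX = \ast$, $\cI = \cJ = \{x_0 < x_1\}$, $\cK = \cL = \{\ell_0 < \ell_1 < \ell_2\}$, $p = \id_\cJ$, $q = \id_\cL$, and $f = g$ the injective order-preserving map with $g(x_0) = \ell_0$, $g(x_1) = \ell_2$. Then \cref{g_ff} holds, $\cI_p = \cJ^{\ens}$, $\cK_q = \cL^{\ens}$, $\cI_\leq = \cJ^{\Delta^1}$, $\cK_\leq = \cL^{\Delta^1}$, and the pullback identification $\cI_p \simeq \cI_\leq \times_{\cK_\leq} \cK_q$ that you verify is indeed correct. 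But evaluating the Beck--Chevalley comparison for $f_{p,q!} \circ i_p^\ast \to i_q^\ast \circ f_{\leq!}$ at $\ell_1 \in \cK_q$, the slice category on the source side is
\[
\cI_p \times_{\cK_q} (\cK_q)_{/\ell_1} = \{a \in \cI : f(a) = \ell_1\} = \emptyset \ ,
\]
since $f$ takes only the values $\ell_0$ and $\ell_2$, whereas the slice on the target side is
\[
\cI_\leq \times_{\cK_\leq} (\cK_\leq)_{/(\ell_1,\ell_1)} = \{(a,b) : a \leq b,\ f(a) \leq \ell_1,\ g(b) \leq \ell_1\} = \{(x_0, x_0)\} \ne \emptyset \ .
\]
An empty-to-nonempty map is not cofinal, so the Beck--Chevalley is not an equivalence, contradicting your claim that "the relevant map between slice categories is a bijection on objects." Your item (2) similarly fails to be a bijection on objects (there the slice has a minimum, so cofinality can be rescued, but the stated justification is wrong); and item (3) is not a Beck--Chevalley at all but an identity from functoriality of $(-)_!$. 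Note also that $\cI_\leq \simeq \cI \times_\cK \cK_\leq$ in item (2) is false: the right-hand side carries an element of $\cL$ where $\cI_\leq$ carries an element of $\cJ$, and the putative identity $\cJ^{\Delta^1} \simeq \cJ \times_\cL \cL^{\Delta^1}$ does not hold.

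What actually happens in the example above is that the discrepancy appears identically in both terms entering the cofiber defining $\Gr_p$ and $\Gr_q$, so that the \emph{cofiber} comparison map is an equivalence even though neither term satisfies the base change you want. This cancellation cannot be established by treating the two terms independently, which is precisely why the paper works with split generators where $\Gr_p$ admits a closed formula (\cref{cor:compatibility_Gr_i_I!}) and the comparison can be made directly. I would suggest abandoning the term-by-term Beck--Chevalley strategy and following the colimit-generation argument.
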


%
%
%
%
%
%
%

We first deal with the natural transformation \eqref{eq:graduation_and_induction}, and we start by the following particular case:

\begin{lem}\label{cor:compatibility_Gr_i_I!}
	Let $\cX$ be an $\infty$-category. 
	Let $\cE$ be a presentable stable $\infty$-category.
	Let $p \colon \cI \to \cJ$ be a graduation morphism in  $\PosFib$ over $\cX$.
	Consider the commutative diagram 
	\[ \begin{tikzcd}
		\cI^{\ens} \arrow{d}{p \circ i_{\cI}}\arrow{r}{i_{\cI}} & \cI \arrow{d}{p}\\
		\cJ   \arrow{r}{\id} & \cJ
	\end{tikzcd} \]
	Then, the induced natural transformation $i_{\cI_p !} \to\Gr_{p} \circ i_{\cI !}$ is an equivalence.
\end{lem}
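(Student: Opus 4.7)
The plan is to deduce this lemma directly from the preceding \cref{prop:Graduation_induction} applied to the commutative square
\[ \begin{tikzcd}
	\cI^{\ens} \arrow{d}{p \circ i_\cI} \arrow{r}{i_\cI} & \cI \arrow{d}{p} \\
	\cJ \arrow{r}{\id} & \cJ \ ,
\end{tikzcd} \]
viewed as a diagram in $\PosFib$ over $\cX$. First I would verify that \cref{g_ff} holds. The top vertical arrow is a graduation morphism because $\cJ^{\ens} \to \cX$ is locally constant by hypothesis (the condition of being a graduation morphism depends only on the base fibration). The bottom arrow $\id \colon \cJ \to \cJ$ is obviously a graduation morphism, and its induced map on underlying sets $\id^{\ens} \colon \cJ^{\ens} \to \cJ^{\ens}$ is injective. \Cref{prop:Graduation_induction} then supplies a canonical equivalence
\[ (i_\cI)_{p \circ i_\cI,\, p,\, !} \circ \Gr_{p \circ i_\cI} \xrightarrow{\ \sim\ } \Gr_p \circ i_{\cI,\,!} \ . \]

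Next I would identify the left-hand side. The pullback $(\cI^{\ens})_{p \circ i_\cI} = \cI^{\ens} \times_\cJ \cJ^{\ens}$ is canonically $\cI^{\ens}$ itself, since $\cI^{\ens}\to\cJ$ already factors through $\cJ^{\ens}$; under this identification $(\cI^{\ens})^{\ens} = \cI^{\ens} = (\cI_p)^{\ens}$, and the induced functor $(i_\cI)_{p\circ i_\cI,\,p}$ is precisely $i_{\cI_p} \colon (\cI_p)^{\ens} \to \cI_p$. So the whole task reduces to showing that the endofunctor
\[ \Gr_{p \circ i_\cI} \colon \Fun(\cI^{\ens}, \cE) \to \Fun(\cI^{\ens}, \cE) \]
is canonically equivalent to the identity.

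To check this last point, I would reduce to the case where $\cX$ is a point using \cref{lem:Gr_restriction_points}, which tells us that relative graduation commutes with restriction to fibers. In the pointwise situation, every $V \colon \cI^{\ens} \to \cE$ satisfies $V \simeq i_{\cI^{\ens},!}(V)$ because $\cI^{\ens}$ carries the trivial order, so \cref{ex_Gr} applied to $p \circ i_\cI$ gives
\[ (\Gr_{p \circ i_\cI}(V))_a \simeq \bigoplus_{\substack{a' \leq a \\ (p\circ i_\cI)(a') \,=\, (p\circ i_\cI)(a)}} V_{a'} \simeq V_a \ , \]
the sum having a unique term $a' = a$ since the order on $\cI^{\ens}$ is trivial. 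Combining this with the equivalence from \cref{prop:Graduation_induction} yields $i_{\cI_p,!} \xrightarrow{\sim} \Gr_p \circ i_{\cI,!}$, and tracing through the construction of the comparison map in \cref{prop:Graduation_induction} shows that this equivalence is the canonical one. The only mild obstacle is the bookkeeping needed to match the abstract natural transformation of the proposition with the induced transformation mentioned in the statement, but this is immediate from the way the natural transformation \eqref{eq:graduation_and_induction} is constructed out of the diagram \eqref{eq:graduation_and_induction_comparison_diagram}.
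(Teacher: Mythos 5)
The main problem with your argument is that it is circular. You propose to deduce \cref{cor:compatibility_Gr_i_I!} by applying \cref{prop:Graduation_induction} to the square in question; but in the paper's logical order, \cref{cor:compatibility_Gr_i_I!} is the ``particular case'' that gets established first, and the proof of \cref{prop:Graduation_induction} then invokes \cref{cor:compatibility_Gr_i_I!} \emph{twice} in the chain of equivalences reducing to split functors (``By \cref{cor:compatibility_Gr_i_I!}'' appears in the middle of the computation $f_{p,q!}(\Gr_p(i_{\cI!}(V))) \simeq f_{p,q!}\circ i_{\cI_p!}(V) \simeq \cdots$). So the general proposition is not available as an input at the point where this lemma needs to be proved. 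Unless you supply an independent proof of \cref{prop:Graduation_induction}, your argument does not go through.

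The remaining bookkeeping you do is actually correct: the verification of \cref{g_ff} for the square, the identification $(\cI^{\ens})_{p\circ i_\cI} \simeq \cI^{\ens}$ via the monomorphism $\cJ^{\ens}\hookrightarrow\cJ$, and the observation that $\Gr_{p\circ i_\cI}$ is (pointwise, by \cref{ex_Gr}) the identity on $\Fun(\cI^{\ens},\cE)$. But the paper shows that this entire detour is unnecessary. Its proof is direct: after reducing to the point via \cref{cor:induction_specialization_Beck_Chevalley} and \cref{lem:Gr_restriction}, one computes $i_{\cI_p!}(V)_a$ as the sum $\bigoplus_{a'\leq a,\; p(a')=p(a)} V_{a'}$ using the definition of $\cI_p$ and of left Kan extension along $i_{\cI_p}\colon \cI^{\ens}\to\cI_p$, and compares it directly with the formula of \cref{ex_Gr} for $\Gr_p(i_{\cI!}(V))_a$. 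Both sides manifestly agree, and no appeal to the general functoriality statement is needed. You should replace the first step of your argument with this direct computation; the use of \cref{ex_Gr} you already make at the end can do all the work.
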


\begin{proof}
	Combining \cref{cor:induction_specialization_Beck_Chevalley} and \cref{lem:Gr_restriction}, we can suppose that $\cX$ is a point.
	Let $V \colon \cI^{\ens} \to \cE$ be a functor.
	Then, for every $a\in \cI_p$, we have
	\[ (i_{\cI_p !} (V))_a \simeq  \bigoplus_{\substack{a'\leq a \\ p(a')=p(a)}} V_{a'} \]
	Then, \cref{cor:compatibility_Gr_i_I!} follows from the computation performed in \cref{ex_Gr}.
\end{proof}

\begin{cor}\label{Gr_preserves_PS}
	Under the assumptions of \cref{cor:compatibility_Gr_i_I!}, for every punctually split functor $F \colon \cI \to \cE$, the graduation $\Gr_p(F) \colon \cI_p \to \cE$  is punctually split.
\end{cor}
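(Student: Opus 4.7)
The plan is to reduce the statement to a punctual verification and then invoke \cref{cor:compatibility_Gr_i_I!}. First, I would fix an object $x \in \cX$ and use \cref{lem:Gr_restriction}, applied to the pullback square obtained by restricting the graduation morphism $p$ along $\{x\} \hookrightarrow \cX$, to produce a canonical equivalence
\[ j_{p,x}^{\ast} \circ \Gr_p \simeq \Gr_{p_x} \circ j_x^{\ast}, \]
where $j_{p,x} \colon (\cI_p)_x \to \cI_p$ is the fibre inclusion. Here I use the canonical identification $(\cI_p)_x \simeq (\cI_x)_{p_x}$, which follows directly from the definition $\cI_p = \cI \times_\cJ \cJ^{\ens}$ and the fact that pullbacks compose. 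Note that $p_x \colon \cI_x \to \cJ_x$ is a graduation morphism over the terminal $\infty$-category tautologically, since $\cJ_x^{\ens}$ is locally constant over a point.

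Next, since $F$ is punctually split, I can pick a splitting at $x$, i.e., a functor $V \colon \cI_x^{\ens} \to \cE$ and an equivalence $j_x^{\ast}(F) \simeq i_{\cI_x,!}(V)$. Applying \cref{cor:compatibility_Gr_i_I!} to the graduation morphism $p_x$ then yields a canonical equivalence
\[ \Gr_{p_x}(i_{\cI_x,!}(V)) \simeq i_{(\cI_p)_x,!}(V). \]
Combining the two displays gives
\[ j_{p,x}^{\ast}(\Gr_p(F)) \simeq \Gr_{p_x}(j_x^{\ast}(F)) \simeq \Gr_{p_x}(i_{\cI_x,!}(V)) \simeq i_{(\cI_p)_x,!}(V), \]
which is manifestly in the essential image of $i_{(\cI_p)_x,!}$; so $\Gr_p(F)$ is split at $x$. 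Since $x \in \cX$ was arbitrary, $\Gr_p(F)$ is punctually split.

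There is no real obstacle here: the argument is a pure assembly of \cref{lem:Gr_restriction} (which allows me to restrict graduation to a point) and \cref{cor:compatibility_Gr_i_I!} (which computes graduation of an induced functor on the point). The only mildly subtle point is verifying the identification $(\cI_p)_x \simeq (\cI_x)_{p_x}$ under which the two uses of $(-)_{p}$ match up, but this is immediate from the definitions.
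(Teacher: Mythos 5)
Your proof is correct and follows essentially the same approach as the paper: the paper's proof likewise reduces to the point case via \cref{lem:Gr_restriction} (see also \cref{lem:Gr_restriction_points}) and then invokes \cref{cor:compatibility_Gr_i_I!} on a splitting of the restricted functor. You have simply spelled out the bookkeeping — the identification $(\cI_p)_x \simeq (\cI_x)_{p_x}$ and the observation that $p_x$ is tautologically a graduation morphism over a point — that the paper's terse proof leaves implicit.
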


\begin{proof}
	From \cref{lem:Gr_restriction}, we are left to treat the case where $\cX$ is a point.
	In this case, the statement follows from \cref{cor:compatibility_Gr_i_I!}.
\end{proof}

\begin{cor}\label{section_Gr_gives_splitting}
	Let $\cI\to \cX$ be an object of  $\PosFib^f$ such that $\cI^{\ens}\to \cX$ is locally constant.
	Let $\cE$ be a presentable stable $\infty$-category.
	Let $F \colon  \cI \to \cE$ be a functor.
Then, the following are equivalent: 
	\begin{enumerate}\itemsep=0.2cm
	\item the canonical morphism $i_{\cI}^*(F)\to \Gr(F)$ admits a section $\sigma \colon \Gr(F) \to  i_{\cI}^*(F)$;

	\item the functor $F$ split.
	\end{enumerate}
	If these conditions are satisfied, the morphism 
	\[
	\tau \colon i_{\cI !}\Gr(F)\to F
	\] 
	induced by $\sigma$ is an equivalence.
\end{cor}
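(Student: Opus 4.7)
The plan is to prove both implications via the adjunction $i_{\cI!}\dashv i_\cI^\ast$ together with the canonical cofiber map $\pi\colon i_\cI^\ast \to \Gr$ extracted from \cref{defin_Gr}. The assumption that $\cI^{\ens}\to \cX$ is locally constant means exactly that $\id\colon \cI\to \cI$ is a graduation morphism, so \cref{cor:compatibility_Gr_i_I!} applies and yields, for each $V\colon \cI^{\ens}\to\cE$, a natural equivalence $\alpha_V\colon V\xrightarrow{\sim}\Gr(i_{\cI!}V)$. I will work with the specific realization $\alpha_V \coloneqq \pi_{i_{\cI!}V}\circ \eta_V$, where $\eta$ is the unit of $i_{\cI!}\dashv i_\cI^\ast$. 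The identification of this composite with the equivalence of \cref{cor:compatibility_Gr_i_I!} is obtained by reducing to a point of $\cX$ via \cref{lem:Gr_restriction} and observing that at $a\in\cI^{\ens}$ the map $V_a\to i_{\cI!}(V)_a \simeq \bigoplus_{b\leq a}V_b$ induced by $\eta_V$ is the inclusion of the $b=a$ summand, while $\pi$ kills the contributions from $b<a$, recovering the identification $V_a\simeq V_a$ of \cref{Gr_p_identity}.

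For $(2)\Rightarrow(1)$, having fixed a splitting $F\simeq i_{\cI!}(V)$, I will take $\sigma$ to be the composite $\Gr F\xrightarrow{\alpha_V^{-1}} V\xrightarrow{\eta_V} i_\cI^\ast i_{\cI!}(V)\simeq i_\cI^\ast F$. By construction $\pi_F\circ \sigma = \alpha_V\circ \alpha_V^{-1} = \id_{\Gr F}$, so $\sigma$ is a section of $\pi_F$.

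For $(1)\Rightarrow(2)$, given a section $\sigma\colon \Gr F\to i_\cI^\ast F$, I will define $\tau\colon i_{\cI!}\Gr F\to F$ as its adjoint, $\tau \coloneqq \epsilon_F\circ i_{\cI!}(\sigma)$ where $\epsilon$ is the counit, and show $\tau$ is an equivalence; this simultaneously produces a splitting $F\simeq i_{\cI!}\Gr F$ (witnessed by $V=\Gr F$) and verifies the final claim of the statement. Since the fibers of $\cI$ are finite, \cref{cor:section_Gr_conservative} guarantees that $\Gr$ is conservative, so it suffices to show $\Gr(\tau)$ is an equivalence. A naturality chase combining (i) naturality of $\alpha$ applied to $\sigma\colon\Gr F\to i_\cI^\ast F$, (ii) naturality of $\pi$ applied to $\epsilon_F$, and (iii) the triangular identity $i_\cI^\ast(\epsilon_F)\circ \eta_{i_\cI^\ast F}=\id_{i_\cI^\ast F}$, yields the chain of equalities
\[
\Gr(\tau)\circ\alpha_{\Gr F} \;=\; \Gr(\epsilon_F)\circ\alpha_{i_\cI^\ast F}\circ\sigma \;=\; \pi_F\circ i_\cI^\ast(\epsilon_F)\circ\eta_{i_\cI^\ast F}\circ\sigma \;=\; \pi_F\circ \sigma \;=\; \id_{\Gr F},
\]
so $\Gr(\tau)$ is the inverse of $\alpha_{\Gr F}$, hence an equivalence. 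The main obstacle is pinning down the compatibility $\alpha_V = \pi_{i_{\cI!}V}\circ\eta_V$ in the first step, as everything else in the argument is a (somewhat intricate but routine) naturality diagram chase that hinges on this identification.
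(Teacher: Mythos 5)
Your proof is correct and follows essentially the same route as the paper: both directions run through the conservativity of $\Gr$ (\cref{cor:section_Gr_conservative}) and the equivalence $\id \simeq \Gr \circ i_{\cI!}$ on $\Fun(\cI^{\ens},\cE)$ (\cref{cor:compatibility_Gr_i_I!}), with the section in $(2)\Rightarrow(1)$ coming from the unit of $i_{\cI!}\dashv i_\cI^\ast$ and the equivalence of $\tau$ in $(1)\Rightarrow(2)$ verified after applying $\Gr$. The only difference is one of detail: the paper leaves the compatibility $\alpha_V \simeq \pi_{i_{\cI!}V}\circ\eta_V$ and the subsequent naturality chase implicit, whereas you spell them out explicitly, which is sound.
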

\begin{proof}
      Assume that (1) holds.
      To prove (2), it is enough to show that $\tau$ is an equivalence.
	By \cref{cor:section_Gr_conservative}, it is enough to show that 
	\[
	\Gr(\tau) \colon \Gr i_{\cI !}\Gr(F)\to  \Gr(F)
	\]	
is an equivalence.
	Then, \cref{section_Gr_gives_splitting} follows from \cref{cor:compatibility_Gr_i_I!}.
	Assume that $F$ split and let us write $F\simeq i_{\cI,!}(V)$ where $V \colon \cI \to \cE$ is a functor.
	By \cref{cor:compatibility_Gr_i_I!}, the canonical morphism from (1) reads as $i_{\cI}^* i_{\cI,!}(V)\to V$.
	Then, the unit transformation  $V \to i_{\cI}^* i_{\cI,!}(V)$ does the job.
\end{proof}

We are now ready for:

\begin{proof}[Proof of \cref{prop:Graduation_induction}]
	Combining \cref{cor:induction_specialization_Beck_Chevalley} and \cref{lem:Gr_restriction}, we can assume that $\cX$ is a point.
	Recall moreover from \cref{prop:generation_by_split_functors} that $\Fun(\cI,\cE)$ is generated under colimits by punctually split functors.
	Since both source and target of \eqref{eq:graduation_and_induction} commute with colimits, it is enough to check that the canonical morphism
	\[ f_{p,q!}( \Gr_p(F) ) \to  \Gr_q( f_!(F) ) \]
	is an equivalence when $F$ is punctually split.
	We can therefore assume that $F\simeq i_{\cI !}(V)$ for some functor $V \colon \cI^{\ens} \to \cE$.
	Thus, we can compute:
	\begin{align*}
		f_{p,q !} ( \Gr_p(F)) & \simeq f_{p,q !} ( \Gr_p(i_{\cI !}(V)) ) \\
		& \simeq  f_{p,q !}\circ i_{\cI_p !}(V) & \text{By \cref{cor:compatibility_Gr_i_I!}} \\
		& \simeq  i_{\cK_q !}( f^{\ens}_{!}(V) ) \\
		& \simeq \Gr_q ( i_{\cK !} ( f^{\ens}_{!}(V) ) ) & \text{By \cref{cor:compatibility_Gr_i_I!}} \\
		& \simeq \Gr_p\circ f_! \circ i_{\cK ! }(V) \\
		& \simeq \Gr_p\circ f_! (F) \ .
	\end{align*}
	Thus, \eqref{eq:graduation_and_induction} is an equivalence.
	As for the second half of the statement, observe that applying $\exp_\cE(-/\cX)$ to the diagram \eqref{eq:graduation_and_induction} supplies a canonical natural transformation
	\[ \alpha \colon \cE^{f_{p,q}}_! \circ \expGr_p \to  \expGr_q \circ \cE^f_! \ . \]
	To prove that it is an equivalence, it is enough to prove that its restriction $\alpha_x$ is an equivalence for every $x \in \cX$.
	Combining \cref{prop:global_functoriality}-(\ref{prop:global_functoriality:induction}) and \cref{prop:exponential_graduation_local_global_behavior}-(1), we see that $\alpha_x$ coincides with the natural transformation \eqref{eq:graduation_and_induction}, so the conclusion follows from what we have already proven.
\end{proof}

We store the following particular cases of \cref{prop:Graduation_induction} for later use.

\begin{cor}\label{cor:compatibility_Gr_p!}
	Let $\cX$ be an $\infty$-category. 
	Let $p \colon \cI \to \cJ$ be a graduation morphism in $\PosFib$ over $\cX$.
	Consider the commutative square 
	\[ \begin{tikzcd}
		\cI \arrow{d}{p} \arrow{r}{p} & \cJ \arrow{d}{\id} \\
		\cJ \arrow{r}{\id} & \cJ 
	\end{tikzcd} \]
	Let $\pi \colon \cI_p \to \cJ^{\ens}$ be the morphism induced by $p$.
	Let $\cE$ be a presentable stable $\infty$-category.
	Then, for every functor $F \colon \cI\to \cE$, the canonical morphism
	\[ \pi_! ( \Gr_p (F) ) \to  \Gr( p_{!} (F) ) \]
	is an equivalence.
\end{cor}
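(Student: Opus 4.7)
The plan is to observe that this corollary is essentially a direct specialization of \cref{prop:Graduation_induction}. First I would identify the relevant commutative square in the setup of \eqref{eq:graduation_and_induction_setup}: here the top horizontal morphism $f$ is $p \colon \cI \to \cJ$, while the bottom horizontal morphism $g$ is the identity $\id_\cJ$, and the right vertical morphism $q$ is also $\id_\cJ$.

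Next I would verify that \cref{g_ff} is satisfied. Condition (1) asks that both vertical morphisms be graduation morphisms; the morphism $p$ is one by hypothesis, and the identity $\id_\cJ \colon \cJ \to \cJ$ is a graduation morphism precisely because $\cJ^{\ens} \to \cX$ is locally constant, a condition that holds because $p$ is a graduation morphism. Condition (2) requires that $g^{\ens}_x = \id_{\cJ_x^{\ens}}$ be injective, which is trivial.

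Then I would compute the fibration $\cK_q$ associated with $\cK = \cJ$ and $q = \id_\cJ$: by \cref{construction:I_p}, this fiber product is $\cJ^{\ens} \times_\cJ \cJ \simeq \cJ^{\ens}$, and the induced map $f_{p,q} \colon \cI_p \to \cJ^{\ens}$ coincides with $\pi$ by construction. Moreover $\Gr_{\id_\cJ} = \Gr$ by definition.

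Finally, applying \cref{prop:Graduation_induction} produces an equivalence $f_{p,q!} \circ \Gr_p \simeq \Gr_q \circ f_!$, which under these identifications is exactly the asserted equivalence $\pi_! \circ \Gr_p \simeq \Gr \circ p_!$. There is no substantive obstacle here; the only thing to double check is the unraveling of the diagram \eqref{eq:graduation_and_induction_comparison_diagram} in the degenerate case $g = q = \id_\cJ$, which one verifies collapses correctly.
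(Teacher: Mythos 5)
Your proof is correct and is essentially the same as the paper's: the paper states this result, along with \cref{cor:compatibility_Gr_id}, explicitly as a "particular case of \cref{prop:Graduation_induction}" and gives no further argument, so spelling out the specialization as you do (identifying $f = p$, $g = q = \id_\cJ$, verifying \cref{g_ff}, and computing $\cK_q \simeq \cJ^{\ens}$ with $f_{p,q} = \pi$ and $\Gr_q = \Gr$) is exactly what is intended.
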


\begin{cor}\label{cor:compatibility_Gr_id}
	Let $\cX$ be an $\infty$-category. 
		Let $i \colon \cI \hookrightarrow  \cJ$ be a fully faithful functor in  $\PosFib$ over $\cX$.
	Consider the commutative square 
	\[ \begin{tikzcd}
		\cI \arrow{d}{\id} \arrow[hook]{r}{i} & \cJ \arrow{d}{\id} \\
		\cI \arrow[hook]{r}{i} & \cJ  \ .
	\end{tikzcd} \]
	Let $\cE$ be a presentable stable $\infty$-category.
	Then, for every functor $F \colon \cI\to \cE$, the canonical morphism
	\[ i^{\ens}_! ( \Gr (F) ) \to  \Gr( i_{!} (F) ) \]
	is an equivalence.
\end{cor}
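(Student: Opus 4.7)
The plan is to deduce the statement as a direct specialization of \cref{prop:Graduation_induction}, which treats a much more general commutative square. Setting $\cK \coloneqq \cJ$, $\cL \coloneqq \cJ$, $p \coloneqq \id_\cI$, $q \coloneqq \id_\cJ$, and $f \coloneqq g \coloneqq i$, the diagram in the statement fits exactly the shape of \eqref{eq:graduation_and_induction_setup}. Under the identifications $\cI_{\id_\cI} = \cI^{\ens}$ and $\cJ_{\id_\cJ} = \cJ^{\ens}$ supplied by \cref{construction:I_p}, the induced morphism $f_{p,q} \colon \cI_p \to \cK_q$ of \cref{prop:Graduation_induction} becomes exactly $i^{\ens} \colon \cI^{\ens} \to \cJ^{\ens}$. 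Consequently, the equivalence \eqref{eq:graduation_and_induction} furnished by \cref{prop:Graduation_induction} is precisely the canonical comparison map we are asked to show is an equivalence.

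All that remains is to check that \cref{g_ff} holds in this setting. For part (1), the very appearance of $\Gr = \Gr_{\id}$ in the statement tacitly presupposes that $\id_\cI$ and $\id_\cJ$ are graduation morphisms, i.e.\ that $\cI^{\ens} \to \cX$ and $\cJ^{\ens} \to \cX$ are locally constant in the sense of \cref{J_loc_constant}, so this condition is automatic. For part (2), since $i \colon \cI \to \cJ$ is fully faithful and both source and target are cocartesian fibrations over $\cX$, each fiber functor $i_x \colon \cI_x \to \cJ_x$ inherits full faithfulness (mapping spaces in $\cI_x$ and $\cJ_x$ arise as fibers of the corresponding mapping spaces in $\cI$ and $\cJ$ over $\{\id_x\} \hookrightarrow \Map_\cX(x,x)$). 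In particular $i_x$ is injective on objects, so $i_x^{\ens} \colon \cI_x^{\ens} \to \cJ_x^{\ens}$ is injective, which is exactly condition (2). With both conditions verified, \cref{prop:Graduation_induction} closes the argument; no substantive obstacle is expected beyond these routine verifications.
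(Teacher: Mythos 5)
Your proof is correct and takes exactly the paper's intended approach: the paper itself introduces \cref{cor:compatibility_Gr_id} (and the companion \cref{cor:compatibility_Gr_p!}) under the heading ``We store the following particular cases of \cref{prop:Graduation_induction} for later use,'' so no separate proof is given there, and your task was precisely to supply the instantiation and verify the hypotheses. You do both: the substitution $p = \id_\cI$, $q = \id_\cJ$, $f = g = i$ turns \eqref{eq:graduation_and_induction} into the asserted comparison map after the identifications $\cI_{\id} = \cI^{\ens}$ and $\cJ_{\id} = \cJ^{\ens}$ from \cref{construction:I_p}, and your verification of \cref{g_ff} is sound. For part~(1), your reading that the statement tacitly assumes $\cI^{\ens}\to\cX$ and $\cJ^{\ens}\to\cX$ are locally constant is the right one --- otherwise $\Gr$ is undefined on one side or the other --- and worth making explicit, as you do. For part~(2), the fiberwise full faithfulness of $i_x$ follows from the pullback description of $\Map_{\cI_x}$ and $\Map_{\cJ_x}$ over $\{\id_x\}$; since the fibers are posets, fiberwise full faithfulness immediately gives injectivity of $i_x^{\ens}$. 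The argument is complete.
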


\begin{prop}\label{punctual_right_adjoint}
Let $p \colon \cI \to \cJ$ be a morphism of posets.
Let $\cE$ be a presentable stable $\infty$-category.
Then, for every $F\in \Fun(\cI_p,\cE)$ and every $a,b\in \cI$ with $a\leq b$, we have canonical equivalences
\[ (\Gr_p^\ast(F))( a \leq b ) \simeq \begin{cases}
	F( a \leq b ) & \text{if } p(a) = p(b) \\
	0 \colon F_a \to F_b & \text{if } p(a) < p(b)
\end{cases} \]
\end{prop}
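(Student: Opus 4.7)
The strategy is to identify $(\Gr_p^\ast F)(a \leq b)$ by Yoneda, using the adjunction $\Gr_p \dashv \Gr_p^\ast$ from \cref{cor:section_Gr_commutes_with_colimits} and the behaviour of $\Gr_p$ on split generators. For every $c \in \cI$ and $E \in \cE$, \cref{cor:compatibility_Gr_i_I!} applied to $V = \ev_{c,!}^{\cI^{\ens}} E$ yields a canonical equivalence $\Gr_p(\ev_{c,!}^\cI E) \simeq \ev_{c,!}^{\cI_p} E$. By adjunction this immediately gives the pointwise identification $(\Gr_p^\ast F)_c \simeq F_c$. Functoriality in the arrow $c \leq c'$ together with Yoneda on $\Fun(\Delta^1,\cE)$ reduces the computation of $(\Gr_p^\ast F)(a\leq b)$ to identifying, for every $E \in \cE$, the morphism $\Gr_p(\alpha_{a,b}) \colon \ev_{b,!}^{\cI_p}E \to \ev_{a,!}^{\cI_p}E$, where $\alpha_{a,b} \colon \ev_{b,!}^\cI E \to \ev_{a,!}^\cI E$ is the canonical comparison in $\Fun(\cI,\cE)$ corresponding to $\id_E$ under $\Map_{\Fun(\cI,\cE)}(\ev_{b,!}^\cI E, \ev_{a,!}^\cI E) \simeq \Map_\cE(E,(\ev_{a,!}^\cI E)_b) \simeq \Map_\cE(E,E)$.

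Assume first that $p(a) < p(b)$. Then $a \not\leq_p b$, so $(\ev_{a,!}^{\cI_p} E)_b = 0$, and the mapping space $\Map_{\Fun(\cI_p,\cE)}(\ev_{b,!}^{\cI_p} E, \ev_{a,!}^{\cI_p} E) \simeq \Map_\cE(E,0)$ is contractible. Hence $\Gr_p(\alpha_{a,b})$ is the zero morphism, and by the Yoneda reduction above $(\Gr_p^\ast F)(a \leq b)$ induces the zero map on $\Map_\cE(E,F_a) \to \Map_\cE(E,F_b)$ for every $E$, so it vanishes.

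Assume next that $p(a) = p(b)$, so that $a \leq_p b$ in $\cI_p$. The arrow $\phi_{a,b} \colon \Delta^1 \to \cI$ then factors as $\phi_{a,b} = j \circ \psi_{a,b}$, where $j = \sigma \circ i_p \colon \cI_p \to \cI$ is identity on underlying sets and $\psi_{a,b} \colon \Delta^1 \to \cI_p$ is the arrow $a \leq_p b$; passing to left Kan extensions gives $\alpha_{a,b} \simeq j_!(\beta_{a,b})$, where $\beta_{a,b} \colon \ev_{b,!}^{\cI_p}E \to \ev_{a,!}^{\cI_p}E$ is the canonical comparison in $\Fun(\cI_p,\cE)$. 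It remains to verify $\Gr_p(\alpha_{a,b}) \simeq \beta_{a,b}$. Because the mapping space between these two split functors is $\Map_\cE(E,E)$ generated by $\beta_{a,b}$, it is enough to compare them at the object $b \in \cI_p$. Using the defining cofiber sequence of $\Gr_p$ in \cref{defin_Gr}, the term $i_p^\ast \sigma^\ast \alpha_{a,b}$ evaluated at $b$ is the identity $\id_E \colon E \to E$, while the term $i_p^\ast i_{<,!} i_<^\ast \sigma^\ast \alpha_{a,b}$ evaluated at $b$ is the colimit map indexed by the slice $(\cI_<)_{/(b,p(b))}$; monotonicity of $p$ forces every $(c',d')$ in this slice satisfying $a \leq c' \leq b$ to have $p(c') \geq p(a) = p(b)$, rendering the defining inequality $p(c') < d' \leq p(b)$ unsolvable and making both source and target of this colimit vanish. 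Hence the induced cofiber morphism at $b$ is $\id_E$, matching $(\beta_{a,b})_b$, which concludes the identification. The main obstacle is precisely this last pointwise check; but monotonicity of $p \colon \cI \to \cJ$ makes it transparent.
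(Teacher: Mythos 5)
Your proof is correct, and for half of the statement it takes a genuinely different route than the paper. Both arguments begin from the adjunction $\Gr_p \dashv \Gr_p^\ast$ and the identification $\Gr_p(\ev_{c,!}^{\cI}E) \simeq \ev_{c,!}^{\cI_p}E$ supplied by \cref{cor:compatibility_Gr_i_I!}, and for the case $p(a)<p(b)$ both reduce by Yoneda to showing that the comparison arrow between shifted generators in $\Fun(\cI_p,\cE)$ is zero; your observation that the ambient mapping space $\Map(\ev_{b,!}^{\cI_p}E, \ev_{a,!}^{\cI_p}E)\simeq\Map_\cE(E,(\ev_{a,!}^{\cI_p}E)_b)\simeq\Map_\cE(E,0)$ is contractible is marginally slicker than the paper's pointwise vanishing check, but it is the same content. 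The real divergence is in the case $p(a)=p(b)$. The paper handles it at one stroke by applying \cref{prop:Graduation_induction} to the inclusion $p^{-1}(p(a))\hookrightarrow\cI$, obtaining $j_!\simeq\Gr_p\circ i_!$ and, passing to right adjoints, $i^\ast\circ\Gr_p^\ast\simeq j^\ast$; since the arrow $a\leq b$ lives entirely inside the fiber $p^{-1}(p(a))$, this gives $(\Gr_p^\ast F)(a\leq b)\simeq F(a\leq b)$ immediately and functorially. You instead stay at the level of generators and prove $\Gr_p(\alpha_{a,b})\simeq\beta_{a,b}$ by evaluating the defining cofiber sequence of $\Gr_p$ at the object $b$ and showing the correction term $i_p^\ast i_{<!}i_<^\ast\sigma^\ast(-)$ vanishes there; the monotonicity argument you give to kill the indexing slice $(\cI_<)_{/(b,p(b))}$ is correct. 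The paper's route is more economical and reuses established machinery, whereas yours is more self-contained at the cost of a hands-on colimit computation. Two small expository remarks: the phrase \emph{the mapping space ... generated by $\beta_{a,b}$} is imprecise --- what you mean is that evaluation at $b$ induces an equivalence $\Map(\ev_{b,!}^{\cI_p}E,\ev_{a,!}^{\cI_p}E)\simeq\Map_\cE(E,E)$, under which it suffices to compare the images of $\Gr_p(\alpha_{a,b})$ and $\beta_{a,b}$ --- and \emph{source and target of this colimit} should read \emph{both colimits} (the one for $\ev_{b,!}^{\cI}E$ and the one for $\ev_{a,!}^{\cI}E$).
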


\begin{proof}
Let $i \colon p^{-1}(p(a))\to \cI$ and $j \colon  p^{-1}(p(a))\to \cI_p$ be the inclusions.
From \cref{prop:Graduation_induction}  and \cref{Gr_morphism_to_point} applied to the commutative square 
	\[ \begin{tikzcd}
			p^{-1}(p(a))\arrow{d}  \arrow{r}{i}  &\cI \arrow{d}{p} \\
			\ast \arrow{r}{p(a)} & \cJ
	\end{tikzcd} \]
there is a canonical equivalence of functors $j_!\simeq \Gr_p \circ i_!$.
Passing to right adjoints gives a canonical equivalence $i^* \circ \Gr_p^\ast \simeq j^\ast$.
This proves the first claim.
Let $a,b\in \cI$ with $a\leq b$ and $p(a)<p(b)$.
We want to show that 
\[
\alpha \coloneqq (\Gr_p^*(F))(a\leq b) \colon (\Gr_p^*(F))_a \to  (\Gr_p^*(F))_b 
\]
 is the zero morphism.
This amounts to show that for every $V\in \cE$, the morphism
 \[
\Map(V,\alpha) \colon \Map(V, \ev_a^{\cI,*}\Gr_p^*(F)) \to  \Map(V, \ev_b^{\cI,*}\Gr_p^*(F))
\]
is the zero morphism.
By adjunction, this amounts to show that
 \[
\Map(\beta ,F) \colon \Map(\Gr_p \circ \ev_{a,!}^{\cI}(V),F) \to  \Map(\Gr_p \circ \ev_{b,!}^{\cI}(V),F) 
\]
is the zero morphism, where 
\[
\beta : \Gr_p \circ \ev_{b,!}^{\cI}(V)\to  \Gr_p \circ \ev_{a,!}^{\cI}(V)
\] 
is the induced morphism in $\Fun(\cI_p,\cE)$.
We are thus left to show that $\beta$ is the zero morphism.
From \cref{prop:Graduation_induction}, $\beta$ identifies with a morphism of the form $\ev_{b,!}^{\cI_p}(V) \to  \ev_{a,!}^{\cI_p}(V)$.
Let $c\in \cI_p$.
Since $p(a)<p(b)$, then either $p(c)\neq p(a)$ or $p(c)\neq p(b)$.
In the first case,  $a$ and $c$ cannot be compared in $\cI_p$, so that $\ev_{a,!}^{\cI_p}(V)$ sends $c$ to $0$.
In the second case,  $b$ and $c$ cannot be compared in $\cI_p$, so that $\ev_{b,!}^{\cI_p}(V)$ sends $c$ to $0$.
Hence, in both cases $\beta$ is zero  when evaluated at $c$.
Thus, $\beta$ is the zero morphism.
\end{proof}

\begin{prop}\label{right_adjoint_Gr_evaluated}
	Let $\cX$ be an $\infty$-category.
	Let $p \colon \cI \to \cJ$ be a graduation morphism  over $\cX$.
	Let $a\in \cI$.
	Let $\cE$ be a presentable stable $\infty$-category.
	Then the triangle
	\[ \begin{tikzcd}[column sep=small]
		 \Fun(\cI_p,\cE) \arrow{rr}{\Gr_p^\ast } \arrow{dr}[swap]{\ev_a^{\cI_p,*}} & &  \Fun(\cI,\cE)  \arrow{dl}{\ev_a^{\cI,*}} \\
		{} &  \cE
	\end{tikzcd} \]
	is canonically commutative.
\end{prop}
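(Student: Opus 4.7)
The plan is to reduce the desired equivalence of right adjoints to an equivalence of left adjoints, and then to reduce that equivalence to \cref{cor:compatibility_Gr_i_I!}. First I would pass to left adjoints: by \cref{cor:section_Gr_commutes_with_colimits} the functor $\Gr_p$ is left adjoint to $\Gr_p^\ast$, while $\ev_{a,!}^{\cI}$ and $\ev_{a,!}^{\cI_p}$ are left adjoints to $\ev_a^{\cI,\ast}$ and $\ev_a^{\cI_p,\ast}$ by the universal property of Kan extensions. Through the mate correspondence, the canonical commutativity of the triangle in the statement is equivalent to the canonical commutativity of the triangle of left adjoints, so it suffices to produce a canonical equivalence
\[
\Gr_p \circ \ev_{a,!}^{\cI} \simeq \ev_{a,!}^{\cI_p}
\]
of functors $\cE \to \Fun(\cI_p,\cE)$.

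Next I would factor $\ev_a^{\cI}$ through $\cI^{\ens}$. The object $a \in \cI$ has a canonical image $\tilde{a}$ in $\cI^{\ens}$, yielding a factorization $\ev_a^{\cI} = i_{\cI} \circ \ev_{\tilde a}^{\cI^{\ens}}$, and the compositionality of left Kan extensions gives $\ev_{a,!}^{\cI} \simeq i_{\cI,!} \circ \ev_{\tilde a,!}^{\cI^{\ens}}$. The analogous factorization $\ev_{a,!}^{\cI_p} \simeq i_{\cI_p,!} \circ \ev_{\tilde a,!}^{\cI_p^{\ens}}$ holds on the $\cI_p$ side. By the very construction of $\cI_p$ recalled in \cref{construction:I_p}, the canonical morphism $\cI_p^{\ens} \to \cI^{\ens}$ is an equivalence, through which the two functors $\ev_{\tilde a,!}^{\cI^{\ens}}$ and $\ev_{\tilde a,!}^{\cI_p^{\ens}}$ are identified.

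After these reductions, what remains is exactly a canonical equivalence $\Gr_p \circ i_{\cI,!} \simeq i_{\cI_p,!}$, which is precisely the content of \cref{cor:compatibility_Gr_i_I!}. Composing on the right with $\ev_{\tilde a,!}^{\cI^{\ens}}$ produces the required equivalence $\Gr_p \circ \ev_{a,!}^{\cI} \simeq \ev_{a,!}^{\cI_p}$, and passing back to right adjoints yields the commutativity of the original triangle. No step appears to present a genuine obstacle: the whole argument is bookkeeping around adjunctions and the definition of $\cI_p$, once one has the key structural input provided by \cref{cor:compatibility_Gr_i_I!}, whose proof already absorbed the non-trivial computation of \cref{ex_Gr}.
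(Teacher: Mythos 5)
Your proposal is correct and follows essentially the same route as the paper's proof: pass to left adjoints, factor the evaluation functors through $\cI^{\ens} \simeq \cI_p^{\ens}$, and conclude from \cref{cor:compatibility_Gr_i_I!} that $\Gr_p \circ i_{\cI,!} \simeq i_{\cI_p,!}$. The paper states the argument more tersely but the decomposition is the same.
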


\begin{proof}
	Equivalently, it is enough to check that the canonical map
	\[ \ev^{\cI_p}_{a!} \to  \Gr_p \circ \ev^{\cI}_{a!}  \]
	is an equivalence.
	Since $\ev^{\cI_p}_a$ factors through $\cI^{\ens} \to \cI_p$ and $\ev^{\cI}_a$ factors through $\cI^{\ens} \to \cI$, the statement follows directly from \cref{prop:Graduation_induction}, in the form of the special case treated in \cref{cor:compatibility_Gr_i_I!}.
	\personal{Other argument: Let $x$ be the image of $a$ by $\cI \to \cX$.
	Since $\ev_a^{\cI} \colon \{a\}\to \cI$  factors as $\ev_a^{\cI_x} \colon \{a\} \to \cI_x$ followed by $i_x \colon \cI_x \to \cI$, \todo{add ref} reduces the proof of  \cref{right_adjoint_Gr_evaluated} to the case where $\cX$ is a point.
	In that case, we conclude by applying \cref{punctual_right_adjoint} to the identity of $a$.}
\end{proof}

\begin{cor}\label{right_adjoint_Gr_restriction_commutes_colimits}
Let $\cX$ be an $\infty$-category.
Let $p \colon \cI \to \cJ$ be a graduation morphism over $\cX$.
Then, $\Gr_p^\ast \colon  \Fun(\cI_p,\cE)  \to   \Fun(\cI,\cE)$  commutes with colimits.
In particular, $\Gr_p \colon  \Fun(\cI,\cE)  \to   \Fun(\cI_p,\cE)$  preserves compact objects.
\end{cor}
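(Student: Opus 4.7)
The plan is to check cocontinuity of $\Gr_p^\ast$ pointwise on $\cI$. For every $a \in \cI$, the evaluation functor $\ev_a^{\cI,\ast} \colon \Fun(\cI,\cE) \to \cE$ commutes with colimits, since colimits in functor $\infty$-categories are computed pointwise. Moreover, the family $\{ \ev_a^{\cI,\ast} \}_{a \in \cI}$ is jointly conservative and detects colimits. Hence, to show that $\Gr_p^\ast$ commutes with colimits, it suffices to show that $\ev_a^{\cI,\ast} \circ \Gr_p^\ast$ commutes with colimits for every $a \in \cI$. By \cref{right_adjoint_Gr_evaluated}, there is a canonical equivalence
\[
\ev_a^{\cI,\ast} \circ \Gr_p^\ast \simeq \ev_a^{\cI_p,\ast} \ ,
\]
and the right hand side commutes with colimits for the same pointwise reason. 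This proves the first assertion.

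For the second assertion, we recall the standard fact that a left adjoint between presentable $\infty$-categories preserves compact objects if and only if its right adjoint commutes with filtered colimits (see e.g.\ \cite[Proposition 5.5.7.2]{HTT}). Since $\Gr_p^\ast$ is the right adjoint of $\Gr_p$ by \cref{cor:section_Gr_commutes_with_colimits} and since we have just shown that $\Gr_p^\ast$ commutes with all colimits (in particular with filtered ones), we conclude that $\Gr_p$ preserves compact objects.
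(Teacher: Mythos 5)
Your proof is correct and follows the same approach as the paper: it reduces to the equivalence $\ev_a^{\cI,\ast} \circ \Gr_p^\ast \simeq \ev_a^{\cI_p,\ast}$ from \cref{right_adjoint_Gr_evaluated} together with joint conservativity and cocontinuity of the evaluation functors. You spell out the ``in particular'' clause (via the standard adjunction criterion for preservation of compact objects) that the paper leaves implicit, but the argument is the same.
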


\begin{proof}
Immediate from \cref{right_adjoint_Gr_evaluated} and the fact that the functors $\ev_a^{\cI,*}, a\in \cI$ are jointly conservative and commute with colimits.
\end{proof}


\subsection{Graduation and cocartesian functors}\label{subsec:exponential_graduation_vs_induction}

\begin{prop}\label{prop:exponential_graduation_cocartesian_edges}
	Let $\cX$ be an $\infty$-category. 
	Let $p \colon \cI \to \cJ$ be a graduation morphism in $\PosFib$ over $\cX$.
	Let $\cE$ be a presentable stable $\infty$-category. 
	Then, the functor
	\[ \expGr_p \colon \exp_\cE(\cI / \cX) \to  \exp_\cE(\cI_p / \cX) \]
	preserves cocartesian edges.
	If in addition both $\cI$ and $\cJ$ belong to $\PosFib^f$, then $\expGr_p$ reflects cocartesian edges as well.
\end{prop}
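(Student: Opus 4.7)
The strategy is to verify preservation of cocartesian edges fibrewise via a Beck--Chevalley criterion, and then to deduce reflection from the conservativity of graduation on finite posets.

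Since $\expGr_p$ is, by construction, a morphism between cocartesian fibrations over $\cX$, it preserves cocartesian edges if and only if, for every morphism $\gamma \colon x \to y$ in $\cX$, the Beck--Chevalley transformation between the fibre functors at $\gamma$ is an equivalence. By \cref{prop:exponential_graduation_local_global_behavior}-(1) its fibre at $x$ coincides with $\Gr_{p_x}$; moreover the cocartesian transition functors of $\exp_\cE(\cI/\cX)$ and $\exp_\cE(\cI_p/\cX)$ at $\gamma$ coincide with the induction functors $f_{\gamma,!}$ and $(f_p)_{\gamma,!}$, where $f_\gamma \colon \cI_x \to \cI_y$ is any straightening of $\cI \to \cX$ along $\gamma$ (and this also provides a straightening for $\cI_p \to \cX$). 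We must therefore check that the canonical map
\[
(f_p)_{\gamma,!} \circ \Gr_{p_x} \to \Gr_{p_y} \circ f_{\gamma,!}
\]
is an equivalence. Pulling back everything to $\Delta^1$ via $\gamma$ and using \cref{lem:Gr_restriction} to reduce to the case $\cX = \Delta^1$, choosing compatible straightenings $f_\gamma \colon \cI_x \to \cI_y$ and $g_\gamma \colon \cJ_x \to \cJ_y$ of $p$ produces a commutative square of posets
\[
\begin{tikzcd}
\cI_x \arrow{r}{f_\gamma} \arrow{d}{p_x} & \cI_y \arrow{d}{p_y} \\
\cJ_x \arrow{r}{g_\gamma} & \cJ_y
\end{tikzcd}
\]
over a point, and the Beck--Chevalley transformation above is exactly the one appearing in \cref{prop:Graduation_induction}. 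To apply that proposition we verify \cref{g_ff}: condition (1) is vacuous over a point, and for condition (2), since $p$ is a graduation morphism, $\cJ^{\ens} \to \cX$ is locally constant, so $g_\gamma^{\ens}$ is in fact a bijection. This establishes preservation of cocartesian edges.

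For reflection under the assumption $\cI, \cJ \in \PosFib^f$, let $\alpha \colon F \to G$ be an edge of $\exp_\cE(\cI/\cX)$ above $\gamma \colon x \to y$ such that $\expGr_p(\alpha)$ is cocartesian. Factoring $\alpha$ as a cocartesian edge above $\gamma$ followed by a vertical edge above $y$, the vertical part corresponds to a natural transformation $\beta \colon f_{\gamma,!}(F) \to G$ in $\Fun(\cI_y, \cE)$, and $\alpha$ is cocartesian precisely when $\beta$ is an equivalence. Applying $\expGr_p$ and using the preservation result together with the identifications of the fibre functors, $\expGr_p(\alpha)$ is cocartesian if and only if $\Gr_{p_y}(\beta)$ is an equivalence. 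Since $\cI_y$ and $\cJ_y$ are finite posets, $p_y$ is a graduation morphism in $\PosFib^f$ over a point and \cref{cor:section_Gr_conservative} applies, so $\Gr_{p_y}$ is conservative; hence $\beta$ is an equivalence and $\alpha$ is cocartesian.

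The main obstacle is the careful fibrewise identification of the Beck--Chevalley transformation of $\expGr_p$ with the comparison morphism in \cref{prop:Graduation_induction}; once this identification is made, both halves of the statement follow by reducing to a commutative square of posets and invoking the already-established compatibility between graduation and induction, together with the locally constant nature of $\cJ^{\ens} \to \cX$.
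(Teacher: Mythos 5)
Your proof is correct and follows essentially the same route as the paper: reduce the preservation claim to a fibrewise Beck--Chevalley comparison, observe that local constancy of $\cJ^{\ens}$ makes the hypotheses of \cref{g_ff} hold so that \cref{prop:Graduation_induction} applies, and then deduce reflection from the conservativity of $\Gr_{p_y}$ supplied by \cref{cor:section_Gr_conservative}. The only cosmetic difference is that the paper directly ``unravels the definitions'' rather than packaging the fibrewise check as a Beck--Chevalley criterion and invoking \cref{lem:Gr_restriction} to pull back to $\Delta^1$; the substance of both arguments is identical.
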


\begin{warning}
	Since the adjoints $\cE^{\sigma,\ast}$, $\cE^{i_<,\ast}$ and $\cE^{i_p,\ast}$ do \emph{not} preserve cocartesian edges, it is a priori not obvious that $\expGr_p$ defines a morphism of cocartesian fibrations over $\cX$.
\end{warning}

\begin{proof}[Proof of \cref{prop:exponential_graduation_cocartesian_edges}]
	Unraveling the definitions, we have to prove the following statement.
	Let $\gamma \colon x \to y$ be any morphism in $\cX$ and fix compatible straightenings $f_\gamma \colon \cI_x \to \cI_y$, $g_\gamma \colon \cJ_x \to \cJ_y$ making the diagram
	\begin{equation}
		\begin{tikzcd}
			\cI_x \arrow{r}{f_\gamma} \arrow{d}{p_x} & \cI_y \arrow{d}{p_y} \\
			\cJ_x \arrow{r}{g_\gamma} & \cJ_y
		\end{tikzcd}
	\end{equation}
	commutative.
	Then we have to prove that for every pair of functors $F_x \colon \cI_x \to \cE$ and $F_y \colon \cI_y \to \cE$ and every map $\alpha \colon (F_x,x) \to (F_y,y)$ in $\exp_\cE(\cI / \cX)$ lying over $\gamma$, if the canonically induced morphism
	\[ \overline{\alpha} \colon f_{\gamma,!}( F_x ) \to  F_y \]
	is an equivalence then the same goes for the map
	\begin{equation}\label{eq:exponential_graudation_cocartesian_edges}
		\overline{\beta} \colon (f_\gamma)_{p_x,p_y!}( \Gr_{p_x}(F) ) \to  \Gr_{p_y}( F_y )
	\end{equation}
	induced by the morphism $\beta \coloneqq \expGr_p(\alpha) \colon (\Gr_{p_x}(F_x), x) \to (\Gr_{p_y}(F_y),y)$ in $\exp_\cE( \cI_p / \cX )$.
	Notice that, since $\cJ^{\ens} \to \cX$ is locally constant, the underlying map $g_\gamma^{\ens} \colon \cJ_x^{\ens} \to \cJ_y^{\ens}$ is a bijection.
	In particular, \cref{g_ff} is satisfied, and we therefore find a natural transformation $(f_\gamma)_{p_x,p_y!} \circ \Gr_{p_x} \to \Gr_{p_y} \circ f_{\gamma !}$ making the diagram
	\[ \begin{tikzcd}
		(f_\gamma)_{p_x,p_y!}( \Gr_{p_x}( F_x ) ) \arrow{r}{\overline{\beta}} \arrow{d} & \Gr_{p_y}( F_y ) \\
		\Gr_{p_y}( f_{\gamma !}( F_x ) ) \arrow{ur}[swap]{\Gr_{p_y}(\overline{\alpha})} & \phantom{\Gr_{p_y}( F_y )} \ ,
	\end{tikzcd} \]
	commutative.
	Now, \cref{prop:Graduation_induction} guarantees that the vertical arrow is an equivalence, so $\overline{\beta}$ is an equivalence if and only if $\Gr_{p_y}(\overline{\alpha})$ is.
	This immediately proves the first half of the statement, and the second half follows from the conservativity of $\Gr_{p_y}$, that holds when $\cI$ and $\cJ$ are in $\PosFib^f$ thanks to \cref{cor:section_Gr_conservative}.
\end{proof}

\begin{cor} \label{cor:graduation_preserves_cocartesian_functors}
	In the setting of \cref{prop:exponential_graduation_cocartesian_edges}, the functor $\Gr_p \colon \Fun(\cI, \cE) \to \Fun(\cI_p, \cE)$ preserves cocartesian functors.
	If in addition $\cI$ and $\cJ$ belong to $\PosFib^f$, then the resulting commutative square
	\[ \begin{tikzcd}
		\Funcocart(\cI, \cE) \arrow[hook]{r} \arrow{d}{\Gr_p} & \Fun(\cI, \cE) \arrow{d}{\Gr_p} \\
		\Funcocart(\cI_p, \cE) \arrow[hook]{r} & \Fun(\cI_p, \cE)
	\end{tikzcd} \]
	is a pullback.
\end{cor}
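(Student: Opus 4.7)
The plan is to reduce the statement about $\Gr_p$ on ordinary functors to the corresponding statement about $\expGr_p$ on exponential fibrations, which we have already analyzed in \cref{prop:exponential_graduation_cocartesian_edges}. The bridge between the two pictures is provided by \cref{prop:exponential_graduation_local_global_behavior}-(2), which yields a canonical identification
\[ \Gr_p \circ \spe_\cI \simeq \spe_{\cI_p} \circ \Sigma_\cX(\expGr_p) \]
under the specialization equivalence, together with the characterization of cocartesian functors as those sections of $p_\cE \colon \exp_\cE(\cI/\cX) \to \cX$ that are cocartesian (see \cref{prop:cocartesian_functors_reformulation} and \cref{cor:cocart_presentable_stable}).

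For the first assertion, I would argue as follows. Let $F \colon \cI \to \cE$ be a cocartesian functor and let $\gamma \colon x \to y$ be a morphism in $\cX$. By \cref{prop:cocartesian_functors_reformulation}, the section $\spe F \colon \cX \to \exp_\cE(\cI/\cX)$ sends $\gamma$ to a $p_\cE$-cocartesian edge. Since $\expGr_p$ preserves cocartesian edges by \cref{prop:exponential_graduation_cocartesian_edges}, the composite $\expGr_p \circ \spe F$ also sends $\gamma$ to a cocartesian edge in $\exp_\cE(\cI_p/\cX)$. Under the identification above, this composite is $\spe_{\cI_p}(\Gr_p F)$, so \cref{prop:cocartesian_functors_reformulation} applied in the other direction shows that $\Gr_p(F)$ is cocartesian at $\gamma$. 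As $\gamma$ was arbitrary, $\Gr_p(F)$ is cocartesian.

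For the pullback statement under the finiteness hypothesis, it suffices to show the converse: if $\Gr_p(F)$ is cocartesian, then so is $F$. Indeed, fullness and faithfulness in the pullback square follow automatically since both $\Funcocart(\cI,\cE) \hookrightarrow \Fun(\cI,\cE)$ and $\Funcocart(\cI_p,\cE) \hookrightarrow \Fun(\cI_p,\cE)$ are full subcategory inclusions. For the converse, assume $\Gr_p(F)$ is cocartesian and fix $\gamma \colon x \to y$ in $\cX$. Then $\spe_{\cI_p}(\Gr_p F) \simeq \expGr_p(\spe_\cI F)$ sends $\gamma$ to a cocartesian edge. Since $\cI, \cJ \in \PosFib^f$, \cref{prop:exponential_graduation_cocartesian_edges} guarantees that $\expGr_p$ also \emph{reflects} cocartesian edges, so $\spe_\cI(F)$ sends $\gamma$ to a cocartesian edge as well. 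Applying \cref{prop:cocartesian_functors_reformulation} once more, $F$ is cocartesian at $\gamma$, and thus cocartesian.

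I do not anticipate any significant obstacle: the entire argument is a formal unwinding once the local-to-global compatibility of \cref{prop:exponential_graduation_local_global_behavior}-(2) and the cocartesian-edge-preserving/reflecting properties of $\expGr_p$ are in place. The only mild point of care is to make sure that reflection of cocartesian edges for $\expGr_p$ transfers correctly through the specialization equivalence, but this is immediate from the functoriality statement in \cref{prop:cocartesian_functors_reformulation}.
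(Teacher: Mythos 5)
Your proof is correct and follows essentially the same route as the paper: both rely on \cref{prop:exponential_graduation_cocartesian_edges} for preservation/reflection of cocartesian edges by $\expGr_p$ and on \cref{prop:exponential_graduation_local_global_behavior}-(2) to transport this through the specialization equivalence. The paper phrases the first assertion more compactly by applying $\Sigma_\cX^{\cocart}$ to the morphism $\expGr_p$ in $\PrFibL$, whereas you unwind the preservation statement morphism-by-morphism; these are the same argument.
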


\begin{proof}
	Thanks to \cref{prop:exponential_graduation_cocartesian_edges}, we see that $\expGr_p$ is a morphism in $\PrFibL$.
	Applying $\Sigma^{\cocart}_\cX$, we see that $\Gr_p$ preserves cocartesian functors.
	As for the pullback statement, since both horizontal functors are fully faithful, it amounts to check that if $F \colon \cI \to \cE$ is such that $\Gr_p(F)$ is cocartesian, then the same goes for $F$.
	Via the specialization equivalence \eqref{eq:specialization_equivalence}, we can equivalently see $F$ as a section $\spe_\cI(F) \colon \cX \to \exp_\cE(\cI / \cX)$ of the structural map of $\exp_\cE(\cI / \cX)$.
	Using \cref{prop:exponential_graduation_local_global_behavior}-(2), we see that the problem at hand becomes showing that $\spe_\cI(F)$ is a cocartesian section if and only if $\expGr_p \circ \spe_\cI(F)$ is a cocartesian section of $\exp_\cE(\cI_p / \cX)$, and this latter statement follows directly from the second half of \cref{prop:exponential_graduation_cocartesian_edges}.
\end{proof}

In fact, we can extract from the proof of \cref{cor:graduation_preserves_cocartesian_functors} the following more precise statement:

\begin{cor}\label{lem:Gr_and_specialization}
	In the setting of \cref{prop:exponential_graduation_cocartesian_edges}, let $\gamma \colon x \to y$ be a morphism in $\cX$ and let $F \colon \cI \to \cE$ be a functor.
	Then if $F$ is cocartesian at $\gamma$, the same goes for $\Gr_p(F) \colon \cI_p \to \cE$.
	The converse holds provided that both $\cI$ and $\cJ$ belong to $\PosFib^f$.
\end{cor}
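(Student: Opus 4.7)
The plan is to extract this as an immediate refinement of the proof of \cref{cor:graduation_preserves_cocartesian_functors}, by localizing the argument at a single morphism $\gamma$ of $\cX$ rather than running it globally.

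First, I would invoke the characterization of cocartesianness at $\gamma$ given by \cref{prop:cocartesian_functors_reformulation}: a functor $F \colon \cI \to \cE$ is cocartesian at $\gamma$ if and only if its specialization $\spe_\cI(F) \colon \cX \to \exp_\cE(\cI/\cX)$ sends $\gamma$ to a cocartesian edge of the structural map $\exp_\cE(\cI/\cX) \to \cX$. Likewise, $\Gr_p(F) \colon \cI_p \to \cE$ is cocartesian at $\gamma$ if and only if $\spe_{\cI_p}(\Gr_p(F))$ sends $\gamma$ to a cocartesian edge of $\exp_\cE(\cI_p/\cX) \to \cX$.

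Next, I would bring in \cref{prop:exponential_graduation_local_global_behavior}-(2), which provides a canonical identification $\spe_{\cI_p}(\Gr_p(F)) \simeq \expGr_p \circ \spe_\cI(F)$ as sections of $\exp_\cE(\cI_p/\cX) \to \cX$. Thus the question of whether $\Gr_p(F)$ is cocartesian at $\gamma$ is translated into the question of whether $\expGr_p$ sends $\spe_\cI(F)(\gamma)$ to a cocartesian edge of $\exp_\cE(\cI_p/\cX)$.

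At this point, both implications follow directly from \cref{prop:exponential_graduation_cocartesian_edges}. The first half of that proposition says that $\expGr_p$ always preserves cocartesian edges, so if $\spe_\cI(F)(\gamma)$ is cocartesian then so is its image under $\expGr_p$, proving that $F$ cocartesian at $\gamma$ implies $\Gr_p(F)$ cocartesian at $\gamma$. For the converse under the assumption $\cI,\cJ \in \PosFib^f$, the second half of the same proposition says that $\expGr_p$ then also reflects cocartesian edges, which gives the reverse implication. There is no genuine obstacle here beyond carefully keeping track of the identifications supplied by the specialization equivalence and by \cref{prop:exponential_graduation_local_global_behavior}; the substantive content has already been established in \cref{prop:exponential_graduation_cocartesian_edges}.
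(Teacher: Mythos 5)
Your argument is correct and matches the paper's proof essentially verbatim: both pass through the specialization equivalence, invoke \cref{prop:exponential_graduation_local_global_behavior}-(2) to identify $\spe_{\cI_p}(\Gr_p(F))$ with $\expGr_p \circ \spe_\cI(F)$, and then conclude from \cref{prop:exponential_graduation_cocartesian_edges}. The only difference is that you spell out the role of \cref{prop:cocartesian_functors_reformulation} explicitly, which the paper leaves implicit.
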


\begin{proof}
	Passing to the other side of the specialization equivalence \eqref{eq:specialization_equivalence} and invoking \cref{prop:exponential_graduation_local_global_behavior}-(2), we have to prove that a section $s \colon \cX \to \exp_\cE(\cI / \cX)$ takes $\gamma$ to a cocartesian morphism if and only if $\expGr_p \circ s$ takes $\gamma$ to a cocartesian morphism in $\exp_\cE(\cI_p / \cX)$.
	As this statement is obviously implied by \cref{prop:exponential_graduation_cocartesian_edges}, the conclusion follows.
\end{proof}

Combining together \cref{Gr_preserves_PS} and \cref{cor:graduation_preserves_cocartesian_functors} we obtain:

\begin{cor}\label{cor:graded_of_cocart_is_cocart}
	Let $\cX$ be an $\infty$-category. 
	Let $p \colon \cI \to \cJ$ be a graduation morphism in $\PosFib_\cX$.
	Let $\cE$ be a presentable stable $\infty$-category.
	Then, for every cocartesian punctually split functor $F \colon \cI \to \cE$, its $p$-graduation $\Gr_p(F) \colon \cI_{p} \to \cE$ is cocartesian and punctually split.
\end{cor}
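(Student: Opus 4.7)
The statement is a direct combination of two results established in the preceding discussion, so the plan is simply to invoke them in turn and explain why they apply.

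First, the punctually split part of the conclusion follows at once from \cref{Gr_preserves_PS}. That result states that under the hypothesis that $p \colon \cI \to \cJ$ is a graduation morphism in $\PosFib$ over $\cX$ (which is exactly our hypothesis), every punctually split functor $F \colon \cI \to \cE$ has the property that $\Gr_p(F) \colon \cI_p \to \cE$ is punctually split. Since $F$ is assumed punctually split, this immediately yields the first half of the claim.

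Second, for the cocartesian part, I would invoke the first half of \cref{cor:graduation_preserves_cocartesian_functors}, which asserts that $\Gr_p \colon \Fun(\cI,\cE) \to \Fun(\cI_p,\cE)$ preserves cocartesian functors whenever $p$ is a graduation morphism and $\cE$ is presentable stable. This is exactly our setting, so applying it to the cocartesian functor $F$ yields that $\Gr_p(F)$ is cocartesian. Note that we do not need any additional finiteness hypothesis on the fibers of $\cI$ and $\cJ$: finiteness was used only to obtain the converse (pullback) statement in \cref{cor:graduation_preserves_cocartesian_functors}, which is not needed here.

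There is no substantive obstacle: both pieces have been proved earlier via the analysis of the exponential graduation functor (\cref{prop:exponential_graduation_cocartesian_edges}) and the compatibility of $\Gr_p$ with the left Kan extension $i_{\cI,!}$ (\cref{cor:compatibility_Gr_i_I!}). The corollary simply packages these two outputs together, observing that the hypotheses ``cocartesian'' and ``punctually split'' on $F$ are preserved independently by $\Gr_p$, so their conjunction is preserved as well.
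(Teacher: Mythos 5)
Your proof is correct and matches the paper's approach exactly: the paper likewise deduces the corollary by combining \cref{Gr_preserves_PS} for the punctually split part with \cref{cor:graduation_preserves_cocartesian_functors} for the cocartesian part. Your observation that the finiteness hypothesis in \cref{cor:graduation_preserves_cocartesian_functors} is only needed for the converse direction, and hence can be dropped here, is also accurate.
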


We conclude this section with the following handy consequence:

\begin{cor}
	Let $\cX$ be an $\infty$-category. 
	Let $p \colon \cI \to \cJ$ be a graduation morphism in $\PosFib_\cX$.
	Let $\cE$ be a presentable stable $\infty$-category.
	Then the functor
	\[ \expGr_p \colon \exp_\cE(\cI / \cX) \to  \exp_\cE( \cI_p / \cX ) \]
	admits a right adjoint $\expGr_p^\ast$ relative to $\cX$.
	In particular, for every $x \in \cX$, the diagram
	\[ \begin{tikzcd}[column sep=large]
		\Fun( (\cI_x)_{p_x}, \cE) \arrow{r}{\Gr_{p_x}^\ast} \arrow{d} & \Fun( \cI_x, \cE ) \arrow{d} \\
		\exp_\cE( \cI_p / \cX ) \arrow{r}{\expGr_p^\ast} & \exp_\cE( \cI / \cX )
	\end{tikzcd} \]
	commutes.
	In addition, the diagram
	\[ \begin{tikzcd}[column sep=50pt]
		\Fun_{/\cX}( \cX, \exp_\cE( \cI_p / \cX ) ) \arrow{r}{\Sigma_\cX(\expGr_p^\ast)} \arrow{d}{\spe_{\cI_p}} & \Fun_{/\cX}(\cX, \exp_\cE( \cI / \cX ) ) \arrow{d}{\spe_\cI} \\
		\Fun( \cI_p, \cE ) \arrow{r}{\Gr_p^\ast} & \Fun( \cI, \cE )
	\end{tikzcd} \]
	commutes as well.
\end{cor}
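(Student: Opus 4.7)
The plan is to construct the relative right adjoint via a fiberwise criterion and then deduce both commutativity statements by combining this construction with the global-to-local compatibility already established for $\expGr_p$.

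First, I would construct $\expGr_p^\ast$ using \cite[Proposition 7.3.2.6]{Lurie_Higher_algebra}. From \cref{prop:exponential_graduation_cocartesian_edges} we know that $\expGr_p$ is a morphism of cocartesian fibrations over $\cX$ that preserves cocartesian edges. By \cref{prop:exponential_graduation_local_global_behavior}-(1), the fiber at $x \in \cX$ is identified with $\Gr_{p_x} \colon \Fun(\cI_x, \cE) \to \Fun((\cI_p)_x, \cE)$, which commutes with colimits by \cref{cor:section_Gr_commutes_with_colimits} and therefore admits a right adjoint $\Gr_{p_x}^\ast$ by the adjoint functor theorem. The cited criterion then yields a right adjoint $\expGr_p^\ast$ relative to $\cX$, whose fibers are computed as the fiberwise right adjoints. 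This already gives the first commutative square of the statement.

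To obtain the global commutative diagram at the level of sections, I would apply $\Sigma_\cX$ to the relative adjunction $\expGr_p \dashv \expGr_p^\ast$. Since the unit and counit transformations live over the identity of $\cX$, applying $\Fun_{/\cX}(\cX, -)$ produces natural transformations between $\Sigma_\cX(\expGr_p)$ and $\Sigma_\cX(\expGr_p^\ast)$ satisfying the triangle identities, so these functors form an adjoint pair in $\PrL$. I would then pass to right adjoints in the diagram of \cref{prop:exponential_graduation_local_global_behavior}-(2): using that $\spe_\cI$ and $\spe_{\cI_p}$ are equivalences, and hence their inverses are simultaneously both left and right adjoints, the right adjoint of $\Sigma_\cX(\expGr_p)$ is identified with $\Gr_p^\ast$ by the uniqueness of adjoints. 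This yields the required second commutative square.

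The main subtle point is to justify that the relative adjunction $\expGr_p \dashv \expGr_p^\ast$ survives the passage through $\Sigma_\cX$. This is formal but requires that $\Sigma_\cX$ be understood as a functor on $\hCoCart_\cX$ (or equivalently as $\Fun_{/\cX}(\cX, -)$), so that it sends the fiberwise unit and counit to genuine unit and counit transformations of an ambient adjunction; everything else in the argument is a direct combination of the preceding fiberwise and global functoriality results.
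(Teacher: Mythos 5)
Your proof is correct and takes essentially the same route as the paper: invoking \cite[Proposition 7.3.2.6]{Lurie_Higher_algebra} together with \cref{prop:exponential_graduation_cocartesian_edges}, identifying the fibers of $\expGr_p$ with $\Gr_{p_x}$ via \cref{prop:exponential_graduation_local_global_behavior}-(1), using \cref{cor:section_Gr_commutes_with_colimits} for the fiberwise right adjoint, and concluding the second diagram by uniqueness of adjoints, the specialization equivalences, and \cref{prop:exponential_graduation_local_global_behavior}-(2). Your final paragraph spells out in slightly more detail than the paper why the relative adjunction survives passage through $\Sigma_\cX$, but this is a matter of exposition rather than a different argument.
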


\begin{proof}
	Since $\expGr_p$ preserves cocartesian edges by \cref{prop:exponential_graduation_cocartesian_edges}, \cite[Proposition 7.3.2.6]{Lurie_Higher_algebra} shows that it is enough to prove that for every $x$, the induced functor on the fibers at $x$
	\[ (\expGr_p)_x \colon \Fun( (\cI_x)_{p_x}, \cE ) \to  \Fun( \cI_x, \cE ) \]
	admits a right adjoint.
	By \cref{prop:exponential_graduation_local_global_behavior}-(1), we see that $(\expGr_p)_x$ canonically coincides with $\Gr_{p_x}$, so the existence of the right adjoint is guaranteed by \cref{cor:section_Gr_commutes_with_colimits}.
	This proves at the same time the commutativity of the first diagram.
	As for the second, it simply follows from the uniqueness of the adjoints, the fact that the specialization functors are equivalences and \cref{prop:exponential_graduation_local_global_behavior}-(2).
\end{proof}

\begin{prop}\label{Gr_of_Stokes}
	Let $\cX$ be an $\infty$-category. 
	Let $p \colon \cI\to \cJ$ be a graduation morphism over $\cX$.
	Let $\cE$ be a presentable stable $\infty$-category.
	Then, the graduation functor relative to $p$ (\cref{defin_Gr})
	\[ \Gr_p \colon \Fun(\cI, \cE) \to  \Fun(\cI_p, \cE) \]
	preserves the category of Stokes functors.
	In other words, it restricts to a functor
	\[ \Gr_p \colon\St_{\cI,\cE}  \to  \St_{\cI_p,\cE}   . \]
\end{prop}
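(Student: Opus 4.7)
The plan is to observe that the statement is an immediate consequence of the two properties of $\Gr_p$ that have already been established separately in the preceding subsections. Recall that, by definition, a Stokes functor $F \colon \cI \to \cE$ is a functor that is simultaneously cocartesian (in the sense of \cref{def:cocartesian_functor}) and punctually split (in the sense of \cref{defin_split}). So it suffices to verify that $\Gr_p$ preserves each of these two conditions.

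First, I would invoke \cref{Gr_preserves_PS} to conclude that if $F$ is punctually split, then so is $\Gr_p(F)$. This reduces, via \cref{lem:Gr_restriction}, to the case where $\cX$ is a point, where the claim follows from the computation of \cref{cor:compatibility_Gr_i_I!}. Second, I would apply \cref{cor:graduation_preserves_cocartesian_functors} to deduce that $\Gr_p(F)$ is cocartesian whenever $F$ is; this is in turn a direct consequence of \cref{prop:exponential_graduation_cocartesian_edges}, which asserts that the exponential graduation $\expGr_p \colon \exp_\cE(\cI/\cX) \to \exp_\cE(\cI_p/\cX)$ preserves cocartesian edges, combined with the identification of cocartesian functors as cocartesian sections of the exponential fibration supplied by \cref{prop:cocartesian_functors_reformulation} and \cref{prop:exponential_graduation_local_global_behavior}-(2).

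Combining the two, we see that $\Gr_p$ sends Stokes functors to Stokes functors, yielding the desired restricted functor $\Gr_p \colon \St_{\cI,\cE} \to \St_{\cI_p,\cE}$. In fact, this combined statement has already been recorded as \cref{cor:graded_of_cocart_is_cocart}, so the present proposition is nothing but a reformulation of that corollary in terms of the subcategories of Stokes functors. There is no real obstacle: the work has been done in establishing \cref{Gr_preserves_PS} and \cref{prop:exponential_graduation_cocartesian_edges}, the former relying on the explicit description of $\Gr_p$ of a split functor (\cref{ex_Gr}) and the latter on the compatibility of relative graduation with induction established in \cref{prop:Graduation_induction}.
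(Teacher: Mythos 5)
Your proposal is correct and matches the paper's approach exactly: the paper's proof is a one-line citation of \cref{cor:graded_of_cocart_is_cocart}, \cref{lem:Gr_restriction}, and \cref{cor:compatibility_Gr_i_I!}, and you have correctly identified that the statement is really just \cref{cor:graded_of_cocart_is_cocart} restated, itself a combination of \cref{Gr_preserves_PS} (for the punctually split condition) and \cref{cor:graduation_preserves_cocartesian_functors} (for the cocartesian condition). Your unpacking of the dependency chain down to \cref{prop:exponential_graduation_cocartesian_edges}, \cref{prop:Graduation_induction}, and \cref{ex_Gr} is accurate and adds nothing that diverges from the paper's route.
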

\begin{proof}
	This follows from \cref{cor:graded_of_cocart_is_cocart}, \cref{lem:Gr_restriction}  and \cref{cor:compatibility_Gr_i_I!}.
\end{proof}

\begin{cor}\label{cocartesian_splitting}
Let $\cI\to \cX$ be an object of  $\PosFib$ such that $\cI^{\ens}\to \cX$ is locally constant.
	Let $\cE$ be a presentable stable $\infty$-category.
	Then, the following	square
	\[ \begin{tikzcd}
		\St_{\cI^{\ens},\cE}  \arrow[hook]{d} \arrow{r}{i_{\cI,!}} & \St_{\cI,\cE}  \arrow[hook]{d} \\
		\Fun(\cI^{\ens},\cE) \arrow{r}{i_{\cI,!}}  & \Fun(\cI,\cE)
	\end{tikzcd} \]
is a pullback
\end{cor}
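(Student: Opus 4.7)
Since both vertical arrows in the square are fully faithful inclusions, the square is a pullback if and only if, for every functor $V \in \Fun(\cI^{\ens},\cE)$, one has $V \in \St_{\cI^{\ens},\cE}$ precisely when $i_{\cI,!}(V) \in \St_{\cI,\cE}$. The plan is to verify both implications directly, invoking the preservation properties of Stokes functors under induction and graduation established earlier in this part.

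For the forward direction, the hypothesis that $\cI^{\ens} \to \cX$ is locally constant ensures that $\cI^{\ens}$ belongs to $\PosFib$ over $\cX$ and that $i_\cI \colon \cI^{\ens} \to \cI$ defines a morphism in $\PosFib_\cX$. I would then apply \cref{cor:stokes_functoriality}-(\ref{cor:stokes_functoriality:induction}) to $v = i_\cI$ to conclude that $i_{\cI,!}$ sends $\St_{\cI^{\ens},\cE}$ into $\St_{\cI,\cE}$; equivalently, this shows that the square commutes and that $i_{\cI,!}(V)$ is Stokes whenever $V$ is.

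For the converse, the key observation is that, because $\cI^{\ens} \to \cX$ is locally constant, the identity $\id_\cI \colon \cI \to \cI$ is itself a graduation morphism in the sense of \cref{J_loc_constant}, with associated fibration $\cI_{\id_\cI} = \cI^{\ens}$ and with $i_{\cI_{\id_\cI}} = \id_{\cI^{\ens}}$. Specializing \cref{cor:compatibility_Gr_i_I!} to $p = \id_\cI$ then yields a natural equivalence of endofunctors
\[ \id_{\Fun(\cI^{\ens},\cE)} \simeq \Gr \circ i_{\cI,!} \ . \]
Assuming $i_{\cI,!}(V) \in \St_{\cI,\cE}$, \cref{Gr_of_Stokes} applied to $p = \id_\cI$ produces $\Gr(i_{\cI,!}(V)) \in \St_{\cI^{\ens},\cE}$, and the above equivalence identifies this with $V$, giving $V \in \St_{\cI^{\ens},\cE}$.

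No serious obstacle arises in this proof: each step is a direct invocation of a prior result. The only point requiring attention is that the locally constant hypothesis on $\cI^{\ens} \to \cX$ is used twice, once to make $i_\cI$ a morphism in $\PosFib_\cX$ and once to guarantee that $\id_\cI$ is a graduation morphism so that $\Gr$ is well defined and compatible with the induction $i_{\cI,!}$.
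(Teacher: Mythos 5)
Your proof is correct and follows essentially the same route as the paper's: both rely on the identity $\Gr \circ i_{\cI,!} \simeq \id$ from \cref{cor:compatibility_Gr_i_I!} (with $p = \id_\cI$) together with the fact that graduation preserves Stokes/cocartesian functors. The only cosmetic difference is that you invoke the packaged statement \cref{Gr_of_Stokes} to conclude $V = \Gr(i_{\cI,!}(V))$ is Stokes, whereas the paper uses \cref{cor:graduation_preserves_cocartesian_functors} to get cocartesianity and then notes punctual splitness is automatic over the discrete fibration $\cI^{\ens}$.
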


\begin{proof}
Let $F \colon \cI \to \cE$ be a split Stokes functor.
Let $V \colon  \cI^{\ens}\to \cE$ such that $F\simeq i_{\cI , !}(V)$.
By \cref{cor:compatibility_Gr_i_I!}, we have
\[
\Gr(F)\simeq \Gr(i_{\cI , !}(V)) \simeq V \ .
\]
By \cref{cor:graduation_preserves_cocartesian_functors}, we deduce that $V \colon  \cI^{\ens}\to \cE$  is cocartesian. 
Since $V$ is automatically punctually split, \cref{cocartesian_splitting} thus follows.

\end{proof}

\subsection{Essential image of a fully-faithul induction}

The following propositions describe the essential image of a fully-faithul induction in terms of graduation.

\begin{lem}\label{image_fully_faithful_induction_PS}
	Let $\cX$ be an $\infty$-category. 
	Let $i \colon \cI \to \cJ$ be a fully faithful functor in  $\PosFib$ over $\cX$.
	Let $\cE$ be a presentable stable $\infty$-category.
	Then, the functor 
	\[
	i_! \colon \Fun_{\PS}(\cI,\cE)\to  \Fun_{\PS}(\cJ,\cE)
	\]
	is fully faithful.
	Let $F\in \Fun_{\PS}(\cJ,\cE)$. 
	Then, the following statements are equivalent :
	\begin{enumerate}\itemsep=0.2cm
		\item $F$ lies in the essential image of $i_! \colon \Fun_{\PS}(\cI,\cE) \to \Fun_{\PS}(\cJ,\cE)$.
		\item $i^*(F)$ lies in $\Fun_{\PS}(\cI,\cE)$ and the counit map $i_{!}(i^*(F))\to F$ is an equivalence.
		\item  $(\Gr F)_a\simeq 0$  for every  $a\in \cJ^{\ens}$ not  in the essential image of $i^{\ens} \colon \cI^{\ens} \to \cJ^{\ens}$.
	\end{enumerate}
\end{lem}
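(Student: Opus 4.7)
The plan is to first establish the fully-faithfulness statement, then prove the equivalences between (1), (2), (3) in the order (1) $\Leftrightarrow$ (2), (1) $\Rightarrow$ (3), and finally (3) $\Rightarrow$ (2), which is the main content.

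The induction $i_! \colon \Fun(\cI,\cE) \to \Fun(\cJ,\cE)$ takes punctually split functors to punctually split functors by \cref{cor:ps_functoriality}-(2), so it restricts to the claimed functor between $\Fun_{\PS}$'s. Since $i$ is fully-faithful, the unit $\eta \colon \id \to i^* i_!$ is an equivalence, so the full-faithfulness of $i_!$ on $\Fun(\cI,\cE)$ is classical; it then restricts to full-faithfulness on $\Fun_{\PS}(\cI,\cE)$. The implication (2) $\Rightarrow$ (1) is tautological. For (1) $\Rightarrow$ (2), if $F \simeq i_!(G)$ with $G$ punctually split, then $i^*(F) \simeq i^* i_!(G) \simeq G$ via the unit, so $i^*(F) \in \Fun_{\PS}(\cI,\cE)$, and the counit $i_! i^*(F) \to F$ is identified with $i_!(\eta_G)^{-1}$, an equivalence.

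For (1) $\Rightarrow$ (3), applying \cref{cor:compatibility_Gr_id} to the square with $i$ on top and bottom yields a canonical equivalence $i^{\ens}_!(\Gr G) \simeq \Gr(i_!(G)) \simeq \Gr F$. Since $i$ is fully-faithful and the fibers of $\cI$ and $\cJ$ are posets, the map $i^{\ens} \colon \cI^{\ens} \to \cJ^{\ens}$ is fiberwise an injection of sets. After restricting to a point $x \in \cX$ via \cref{lem:Gr_restriction}, the left Kan extension $i^{\ens}_{x,!}$ is computed by colimits over empty slice categories at any $a \in \cJ_x^{\ens}$ outside the image of $i^{\ens}_x$, and these colimits are initial, i.e.\ zero in the stable category $\cE$.

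The main work is (3) $\Rightarrow$ (2). By \cref{cor:ps_functoriality}-(1), $i^*(F)$ is automatically punctually split, so only the counit $\varepsilon \colon i_! i^*(F) \to F$ needs to be shown to be an equivalence. I would check this pointwise on $\cJ$: for $b \in \cJ$ lying above $x \in \cX$, \cref{cor:induction_specialization_Beck_Chevalley} applied to the pullback square $\cI_x \hookrightarrow \cJ_x$ above $\{x\} \hookrightarrow \cX$ identifies $j_x^*(\varepsilon)$ with the counit $i_{x,!} i_x^*(F_x) \to F_x$ of the pointwise adjunction. Thus we reduce to the case $\cX = *$, where $F \in \Fun_{\PS}(\cJ,\cE)$ means $F$ admits a splitting $F \simeq i_{\cJ,!}(V)$ for some $V \colon \cJ^{\ens} \to \cE$. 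By \cref{Gr_p_identity}, $\Gr(F)_a \simeq V_a$, so assumption (3) forces $V_a \simeq 0$ outside $i^{\ens}(\cI^{\ens})$. Writing $W \coloneqq (i^{\ens})^* V$, the injectivity of $i^{\ens}$ combined with the vanishing of $V$ off the image yields the canonical equivalence $V \simeq i^{\ens}_!(W)$; chasing through
\[ F \simeq i_{\cJ,!}(V) \simeq i_{\cJ,!} i^{\ens}_!(W) \simeq i_! i_{\cI,!}(W) \]
exhibits $F$ as $i_!(G)$ for the split (hence punctually split) functor $G \coloneqq i_{\cI,!}(W) \in \Fun_{\PS}(\cI, \cE)$, from which $i^*(F) \simeq G$ and the equivalence of the counit follow.

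The principal obstacle is the Beck-Chevalley reduction to a point in (3) $\Rightarrow$ (2): one must verify carefully that $j_x^* \circ i_! \simeq i_{x,!} \circ j_x^*$ (a consequence of \cref{cor:induction_specialization_Beck_Chevalley}) so that checking the counit fiberwise on $\cX$ is legitimate. Once this is settled, the remaining arguments are formal consequences of \cref{cor:compatibility_Gr_id}, \cref{Gr_p_identity}, and the elementary behavior of left Kan extension along an injective map of sets.
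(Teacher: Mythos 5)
Your proposal follows essentially the same route as the paper's proof: reduce to $\cX$ a point via the Beck--Chevalley result (\cref{cor:induction_specialization_Beck_Chevalley}) together with \cref{lem:Gr_restriction}, write the split functor $F$ as $i_{\cJ,!}(V)$, use \cref{Gr_p_identity} to see that condition (3) forces $V$ to vanish off the image of $i^{\ens}$, factor $V$ through $i^{\ens}_!$, and chase through $i_{\cJ,!}\circ i^{\ens}_! \simeq i_! \circ i_{\cI,!}$. The only surface-level difference is that you prove $(1)\Rightarrow(3)$ where the paper shows $(2)\Rightarrow(3)$, which is immaterial given $(1)\Leftrightarrow(2)$.

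There is however one genuinely incorrect claim in your $(3)\Rightarrow(2)$ step. You assert that ``by \cref{cor:ps_functoriality}-(1), $i^*(F)$ is automatically punctually split.'' This is not what \cref{cor:ps_functoriality}-(1) says: that statement is about $u^\ast$, the pullback along the \emph{cartesian} leg of a morphism in $\PosFib$ (i.e.\ change of base $\cX$), whereas $i^\ast$ is restriction along a fibration morphism $\cI\hookrightarrow\cJ$ over the \emph{same} base, which is the adjoint of the $v_!$-direction in that corollary. In fact $i^\ast$ does not preserve punctually split functors in general. For a concrete counterexample take $\cX = \ast$, $\cJ = \{0<1,\,0<2,\,1<3,\,2<3\}$, $\cI = \{1,2,3\}$ (a full subposet), and $V\colon \cJ^{\ens}\to\cE$ with $V_0 = k$ and $V_1=V_2=V_3 = 0$. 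Then $F\coloneqq i_{\cJ,!}(V)$ is split, yet $(i^\ast F)_1 = (i^\ast F)_2 = (i^\ast F)_3 = k$; a splitting $i_{\cI,!}(W)$ would require $W_1 = W_2 = k$ and $W_1\oplus W_2\oplus W_3 \simeq k$, which is impossible. Here $(\Gr F)_0 = k \neq 0$, so hypothesis (3) fails, which is consistent with the lemma; but the point is that the punctually-split property of $i^\ast(F)$ must be \emph{deduced} from (3), not declared automatic.

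Fortunately this misstatement does not invalidate your argument: once reduced to a point, you end by exhibiting $F \simeq i_!(G)$ with $G = i_{\cI,!}(W)$ split, and from full faithfulness of $i$ you get $i^\ast(F) \simeq G$ directly, so the punctually-split property of $i^\ast(F)$ follows from (3) as a byproduct. The final proof is therefore correct; I would simply delete the appeal to \cref{cor:ps_functoriality}-(1) and let the constructed splitting $F\simeq i_!(G)$ carry both halves of (2).
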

\begin{proof}
	Since $i \colon \cI \to \cJ$  is fully faithful, so is 
	$i_! \colon \Fun(\cI,\cE)\to \Fun(\cJ,\cE)$.
	In particular, the unit of $i_! \dashv i^*$ is an equivalence.
	The fact that $(1)$ implies $(2)$ is then obvious.
	The statement $(2)$ trivially implies $(1)$.
	To show the equivalence with $(3)$, we can suppose from  \cref{cor:induction_specialization_Beck_Chevalley} and \cref{lem:Gr_restriction} that $\cX$ is a point.
	If $(2)$ holds, the sought-after vanishing follows from \cref{Gr_p_identity}.
	Suppose that $(3)$ holds.
	Let us write $F=i_{\cJ !}(V)$ where $V \colon \cJ^{\ens} \to \cE$.
	From \cref{Gr_p_identity}, $V_a\simeq 0$ for every $a\in \cJ^{\ens}\setminus \cI^{\ens}$.
	If $W=V|_{\cI^{\ens}}$, we thus have $V\simeq i_{!}^{\ens}(W)$.
	Hence, 
	\[
	F=i_{\cJ !}(V)\simeq i_{\cJ !}\circ  i_{!}^{\ens}(W)\simeq i_{!} \circ i_{\cI !}(W)
	\]
	which proves $(1)$, thus finishing the proof of \cref{image_fully_faithful_induction_PS}.
\end{proof}

\begin{prop}\label{image_fully_faithful_induction_Stokes}
	Let $\cX$ be an $\infty$-category. 
	Let $i \colon \cI \hookrightarrow  \cJ$ be a fully faithful functor in  $\PosFib$ over $\cX$.
	Let $\cE$ be a presentable stable $\infty$-category.
	Let $F\in \St_{\cJ,\cE}$. 
	Then, the following statements are equivalent :
	\begin{enumerate}\itemsep=0.2cm
		\item $F$ lies in the essential image of $i_! \colon \St_{\cI,\cE} \to \St_{\cJ,\cE}$.
		\item $i^*(F)$ lies in $\St_{\cI,\cE}$ and the counit map $i_{!}(i^*(F))\to F$ is an equivalence.
		\item  $(\Gr F)_a\simeq 0$  for every  $a\in \cJ^{\ens}$ not  in the essential image of $i^{\ens} \colon \cI^{\ens} \to \cJ^{\ens}$.
	\end{enumerate}
\end{prop}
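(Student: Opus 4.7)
The strategy is to reduce the statement to its punctually split counterpart, \cref{image_fully_faithful_induction_PS}, by exploiting the basic functorialities of Stokes functors already established. Since $F \in \St_{\cJ, \cE}$, \cref{cor:stokes_functoriality}-(\ref{cor:stokes_functoriality:pullback}) ensures that $i^*(F)$ lies in $\St_{\cI, \cE}$, and in particular in $\Fun_{\PS}(\cI, \cE)$. Thus the first half of condition (2) is automatic in the Stokes setting, and (2) reduces to asserting that the counit $i_!(i^*(F)) \to F$ is an equivalence. On the other hand, \cref{cor:stokes_functoriality}-(\ref{cor:stokes_functoriality:induction}) ensures that $i_! \colon \St_{\cI,\cE} \to \St_{\cJ,\cE}$ is well defined, and the fully faithfulness of $i$ implies that $i_! \colon \Fun(\cI,\cE) \to \Fun(\cJ,\cE)$ is fully faithful, so the counit in (2) is automatically an equivalence as soon as $F \simeq i_!(G)$ for some $G$.

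The plan is then to apply \cref{image_fully_faithful_induction_PS} to $F$ viewed as an object of $\Fun_{\PS}(\cJ,\cE)$. This immediately provides the equivalences between the punctually split versions of (1), (2), (3). Condition (3) is identical in both versions, and the above observation shows that the Stokes version of (2) coincides with the punctually split version of (2). It remains only to compare the two versions of (1): clearly the essential image of $i_! \colon \St_{\cI,\cE} \to \St_{\cJ,\cE}$ is contained in the essential image of $i_! \colon \Fun_{\PS}(\cI,\cE) \to \Fun_{\PS}(\cJ,\cE)$. Conversely, if $F \simeq i_!(G)$ for some $G \in \Fun_{\PS}(\cI,\cE)$, the fully faithfulness of $i$ yields $G \simeq i^*(i_!(G)) \simeq i^*(F)$, and the right hand side belongs to $\St_{\cI,\cE}$ by \cref{cor:stokes_functoriality}-(\ref{cor:stokes_functoriality:pullback}); hence $F$ lies in the essential image of $i_! \colon \St_{\cI,\cE} \to \St_{\cJ,\cE}$. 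This closes the loop.

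There is no serious obstacle here: all the non-trivial input, namely the splitting analysis via $\Gr$ in condition (3) and the interplay between essential image and restriction, has already been carried out at the punctually split level in \cref{image_fully_faithful_induction_PS}. The role of the present proof is merely to propagate that result through the subcategory $\St_{\cI,\cE} \subset \Fun_{\PS}(\cI,\cE)$, which is automatic once one invokes the pullback- and induction-stability of Stokes functors from \cref{cor:stokes_functoriality}.
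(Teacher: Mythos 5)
Your argument collapses at the very first step. You claim that since $F \in \St_{\cJ,\cE}$, \cref{cor:stokes_functoriality}-(\ref{cor:stokes_functoriality:pullback}) automatically yields $i^*(F) \in \St_{\cI,\cE}$. This is a misreading of that corollary. Item (1) there concerns the functor $u^*$, where $u$ is the leg of a \emph{base change} pullback square (pulling the cocartesian fibration back along a morphism $f \colon \cX \to \cY$ of base $\infty$-categories). Here $i \colon \cI \hookrightarrow \cJ$ is a morphism in $\PosFib$ \emph{over a fixed} $\cX$, so in the notation of \cref{cor:stokes_functoriality} it plays the role of $v$, not $u$. That corollary tells you $v_! = i_!$ preserves Stokes functors; it says nothing about the restriction $v^* = i^*$. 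Indeed the whole reason that condition (2) of the proposition explicitly demands ``$i^*(F)$ lies in $\St_{\cI,\cE}$'' is that this is \emph{not} automatic.

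It is not just unjustified: it is false. Take $\cX = \Delta^1$, let $\cJ$ over $\Delta^1$ have $\cJ_0 = \{a, b\}$ discrete, $\cJ_1 = \{a < b\}$, with the transition the identity on underlying sets, and let $\cI \hookrightarrow \cJ$ be the full subfibration on $\{b\}$. Let $V \colon \cJ_0 \to \cE$ have $V_a = E \neq 0$ and $V_b = 0$, and set $F \coloneqq j_{0,!}(V)$, which is Stokes by \cref{prop:Stokes_functors_in_presence_of_initial_object}. Then $(i^*F)_0(b) = 0$ while $(i^*F)_1(b) = E$, even though the transition $\cI_0 \to \cI_1$ is the identity; so $i^*(F)$ is not cocartesian, hence not Stokes. (Of course condition (3) fails here: $(\Gr F)_a \neq 0$.) So showing that $i^*(F)$ is cocartesian genuinely requires condition (3).

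This means the entire non-trivial content of the proposition — namely the implication $(3) \Rightarrow$ ``$i^*(F)$ is cocartesian'' — is what you have declared to be automatic. The paper's proof handles this by reducing to $\cX = \Delta^1$, writing $F \simeq j_{x,!}(j_x^*F)$ via \cref{prop:Stokes_functors_in_presence_of_initial_object}, using the punctually split lemma (\cref{image_fully_faithful_induction_PS}) and \cref{lem:Gr_restriction} to write $j_x^*F \simeq i_{x,!}(G)$ for some $G$ on $\cI_x$, and then the computation $i^*F \simeq i^* j_{x,!} i_{x,!}(G) \simeq i^* i_! j_{x,!}(G) \simeq j_{x,!}(G)$ (using the full faithfulness of $i$), which exhibits $i^*(F)$ as a left Kan extension from the initial fiber and hence cocartesian. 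Nothing in your sketch substitutes for this step.
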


\begin{proof}
The equivalence between (1) and (2) follows as in  \cref{image_fully_faithful_induction_PS}.
Assume that (1) holds. 
Then (3) holds in virtue of \cref{cor:compatibility_Gr_id}.
Assume that (3) holds.
We are doing to show that (2) holds.
Since $F$ is punctually split, \cref{image_fully_faithful_induction_PS} implies that 
$i^*(F)$ is punctually split and that the counit map $i_{!}(i^*(F))\to F$ is an equivalence.
Hence, we are left to show that $i^*(F)$  is cocartesian.
To do this, we can suppose that $\cX=\Delta^1$.
In that case, consider the commutative square 
\[ \begin{tikzcd}
		\cI_x \arrow[hook]{d}{i_x} \arrow[hook]{r}{j_x} & \cI \arrow[hook]{d}{i} \\
		\cJ_x \arrow[hook]{r}{j_x} & \cJ  \ .
	\end{tikzcd} \]
By \cref{prop:Stokes_functors_in_presence_of_initial_object}, the counit map $j_{x,!} j_x^*(F)\to F$ is an equivalence.
By \cref{lem:Gr_restriction}, the split functor $j_x^*(F) \colon \cJ_x \to \cE$ satisfies the conditions of \cref{image_fully_faithful_induction_PS}-(3).
Thus, there exists $G \colon \cI_x \to \cE$ such that $j_x^*(F)\simeq i_{x,!}(G)$.
Hence, we have
	\[
	i^*(F) \simeq i^* j_{x,!} j_x^*(F) \simeq i^* j_{x,!} i_{x,!}(G) \simeq i^* i_! j_{x,!}(G)\simeq j_{x,!}(G)
	\]
where the last equivalence follows from the fully faithfulness of $i \colon \cI \hookrightarrow  \cJ$.
Then $i^*(F)$ is cocartesian by	\cref{prop:Stokes_functors_in_presence_of_initial_object}.
\end{proof}

\begin{cor}\label{lem:elementarity_criterion}
	Let $\cX$ be an $\infty$-category. 
	Let $i \colon \cI\hookrightarrow \cJ$ be a fully faithful morphism in $\PosFib$ over $\cX$.
	Let $\cE$ be a presentable $\infty$-category.
	Assume that 
	\[i_{\cJ, !} \colon \St_{\cJ^{\ens},\cE} \to  \St_{\cJ,\cE} \] 
	is essentially surjective (resp. fully faithful).
	Then, so is 
	\[ i_{\cI, !} \colon \St_{\cI^{\ens},\cE}  \to  \St_{\cI,\cE}  \ .  \]
\end{cor}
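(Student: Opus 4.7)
The plan is to reduce both statements to \cref{image_fully_faithful_induction_Stokes}, applied simultaneously to $i \colon \cI \hookrightarrow \cJ$ and to its underlying-set counterpart $i^{\ens} \colon \cI^{\ens} \hookrightarrow \cJ^{\ens}$. These sit in a commutative square
\[ \begin{tikzcd}
\cI^{\ens} \arrow[hook]{r}{i^{\ens}} \arrow{d}{i_{\cI}} & \cJ^{\ens} \arrow{d}{i_{\cJ}} \\
\cI \arrow[hook]{r}{i} & \cJ
\end{tikzcd} \]
in $\PosFib_\cX$, and upon applying $(-)_!$ and invoking \cref{cor:stokes_functoriality} for preservation of the Stokes condition, this yields a canonical identification $i_! \circ i_{\cI,!} \simeq i_{\cJ,!} \circ i^{\ens}_!$ of functors between Stokes categories. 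The full faithfulness of $i$ forces $i^{\ens}$ to be injective on each fiber, hence fully faithful in the discrete setting, so \cref{image_fully_faithful_induction_Stokes} ensures that both $i_!$ and $i^{\ens}_!$ are fully faithful when restricted to Stokes categories.

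For the fully faithful case, the composition $i_! \circ i_{\cI,!} = i_{\cJ,!} \circ i^{\ens}_!$ is fully faithful as a composition of fully faithfuls (using the hypothesis on $i_{\cJ,!}$); since $i_!$ is fully faithful, the two-out-of-three property for full faithfulness immediately yields that $i_{\cI,!}$ is fully faithful.

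For the essentially surjective case, I would take $G \in \St_{\cI,\cE}$ and use the hypothesis applied to $i_!(G) \in \St_{\cJ,\cE}$ to produce $W \in \St_{\cJ^{\ens},\cE}$ with $i_{\cJ,!}(W) \simeq i_!(G)$. Since $i_!(G)$ lies in the essential image of $i_!$, criterion (3) of \cref{image_fully_faithful_induction_Stokes} applied to $i$ gives $(\Gr i_!(G))_a \simeq 0$ for every $a \in \cJ^{\ens} \setminus i^{\ens}(\cI^{\ens})$. On the other hand, \cref{Gr_p_identity} computes $\Gr(i_{\cJ,!}(W)) \simeq W$, so $W_a \simeq 0$ outside the image of $i^{\ens}$. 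Applying criterion (3) again, this time to $i^{\ens}$ and noting that for a discrete cocartesian fibration the functor $\Gr$ reduces to the identity (as $\cI_{<}$ is empty), the functor $W$ lies in the essential image of $i^{\ens}_! \colon \St_{\cI^{\ens},\cE} \to \St_{\cJ^{\ens},\cE}$: there exists $V \in \St_{\cI^{\ens},\cE}$ with $i^{\ens}_!(V) \simeq W$. Combining,
\[ i_!\bigl( i_{\cI,!}(V) \bigr) \simeq i_{\cJ,!}\bigl( i^{\ens}_!(V) \bigr) \simeq i_{\cJ,!}(W) \simeq i_!(G), \]
and the full faithfulness of $i_!$ (which makes it conservative on equivalences) descends this to an equivalence $i_{\cI,!}(V) \simeq G$.

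The only potentially delicate step is ensuring that all intermediate manipulations (pullback, induction, graduation) remain compatible with the Stokes condition rather than merely the punctually split or cocartesian ones; this is the purpose of the systematic appeals to \cref{cor:stokes_functoriality} and \cref{Gr_of_Stokes}, and the key computation of graduation on split functors is already encoded in \cref{Gr_p_identity}.
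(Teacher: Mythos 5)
Your proof is correct and follows essentially the same route as the paper's: set up the commutative square of inductions, use full faithfulness of the horizontal arrows together with two-out-of-three for the fully faithful claim, and for essential surjectivity pull back a splitting across $i_!$, detect its vanishing via graduation (your appeal to \cref{Gr_p_identity} plays the role of the paper's \cref{cor:compatibility_Gr_i_I!}, which is its $\cX$-relative form), invoke \cref{image_fully_faithful_induction_Stokes} twice, and descend the resulting identification along the fully faithful $i_!$. The only cosmetic difference is that you spell out the invocation of \cref{cor:stokes_functoriality} to justify that the square of inductions lands in Stokes categories, which the paper leaves implicit.
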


\begin{proof}
Consider the commutative square 
\[
	\begin{tikzcd}
 \St_{\cI^{\ens},\cE}   \arrow[hook]{r}{i_!^{\ens}}     \arrow{d}{i_{\cI, !} }   & \St_{\cJ^{\ens},\cE}   \arrow{d}{i_{\cJ, !} }     \\
     \St_{\cI,\cE}    \arrow[hook]{r}{i_!}   & \St_{\cJ,\cE}   
		\end{tikzcd}
\]
whose horizontal arrows are fully faithful since $i \colon \cI\hookrightarrow \cJ$ is fully faithful.
In particular, if $i_{\cJ, !}$ is fully faithful so is $i_{\cI, !}$.
Assume that  $i_{\cJ, !}$  is essentially surjective.
Let $F \colon \cI \to \cE$ be a Stokes functor.
Write $i_!(F)\simeq i_{\cJ, !}(V)$ where $V \colon \cJ^{\ens} \to \cE$ is Stokes. 
By \cref{cor:compatibility_Gr_i_I!}, we have 
\[
\Gr i_!(F)\simeq \Gr  i_{\cJ, !}(V) \simeq V \simeq  \Gr V \ .
\]
By \cref{image_fully_faithful_induction_Stokes}, we deduce that $(\Gr V)_a \simeq 0$ for every  $a\in \cJ^{\ens}$ not  in the essential image of $i^{\ens} \colon \cI^{\ens} \to \cJ^{\ens}$.
By \cref{image_fully_faithful_induction_Stokes} again, there is a Stokes functor $W \colon \cI^{\ens} \to \cE$ such that $V\simeq i_!^{\ens}(W)$.
Then, 
\[
i_! i_{\cI, !}(W) \simeq i_{\cJ, !} i_!^{\ens}(W) \simeq i_!(F) \ .
\]
Since $i \colon \cI\hookrightarrow \cJ$ is fully faithful, we deduce that $F\simeq i_{\cJ, !}(W)$.
The proof of \cref{lem:elementarity_criterion} is thus complete.
\end{proof}

%


\subsection{Graduation and $t$-structures}

We now explore the properties of the relative graduation with respect to the $t$-structures of \cref{prop:t_structure_Stokes}.

\begin{prop}\label{prop:Gr_t_exact}
	Let $\cX$ be an $\infty$-category and let $p \colon \cI \to \cJ$ be a graduation morphism of cocartesian fibrations in finite posets over $\cX$.
	Let $\cE$ be a presentable stable $\infty$-category equipped with an accessible $t$-structure $\tau = (\cE_{\leqslant 0}, \cE_{\geqslant 0})$.
	If $\cI$, $\cI_p$ are $\cE$-bireflexive, then the relative graduation functor
	\[ \Gr_p \colon \St_{\cI, \cE} \to \St_{\cI_p, \cE} \]
	is $t$-exact.
\end{prop}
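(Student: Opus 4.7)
The plan is to reduce $t$-exactness to a pointwise, fiberwise check, exploiting the fact that Stokes functors are punctually split and that $\Gr_p$ commutes with restriction to fibers.

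First, recall that by \cref{prop:t_structure_Stokes} the $t$-structures on $\St_{\cI,\cE}$ and $\St_{\cI_p,\cE}$ are characterized by the $t$-exactness of the inclusions into $\Fun(\cI,\cE)$ and $\Fun(\cI_p,\cE)$ respectively. Hence, to prove that $\Gr_p$ is $t$-exact it suffices to show that if $F \colon \cI \to \cE$ is a Stokes functor taking values in $\cE_{\geqslant 0}$ (resp.\ $\cE_{\leqslant 0}$), then $\Gr_p(F)$ takes values in $\cE_{\geqslant 0}$ (resp.\ $\cE_{\leqslant 0}$). Since being connective or coconnective is a pointwise condition, we may evaluate at an arbitrary $a \in \cI_p$ lying over some $x \in \cX$.

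By \cref{lem:Gr_restriction_points}, the formation of relative graduation commutes with restriction to a point, so
\[ j_a^{\ast}(\Gr_p(F)) \simeq j_a^{\ast}(\Gr_{p_x}(j_x^{\ast}(F))) \ . \]
Since $F$ is a Stokes functor, $j_x^{\ast}(F) \colon \cI_x \to \cE$ is split. Choose a splitting, i.e.\ a functor $V \colon \cI_x^{\ens} \to \cE$ together with an equivalence $j_x^{\ast}(F) \simeq i_{\cI_x,!}(V)$. By \cref{lem:t_structure_Stokes_splitting}, if $F$ takes values in $\cE_{\geqslant 0}$ (resp.\ $\cE_{\leqslant 0}$) then so does $V$.

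Applying the explicit formula of \cref{ex_Gr}, we obtain a canonical identification
\[ (\Gr_{p_x}(j_x^{\ast}(F)))_a \simeq \bigoplus_{\substack{a' \leqslant a \\ p_x(a') = p_x(a)}} V_{a'} \ . \]
As $\cI_x$ is finite by assumption, this is a finite direct sum of objects of $\cE_{\geqslant 0}$ (resp.\ $\cE_{\leqslant 0}$); since $\cE$ is stable these subcategories are closed under finite direct sums, so the sum lies in $\cE_{\geqslant 0}$ (resp.\ $\cE_{\leqslant 0}$), which concludes the proof. No serious obstacle is expected: all the nontrivial input (pointwise description of $\Gr_p$, behaviour of splittings under truncation, and the pointwise nature of connectivity) has already been established in the preceding subsections.
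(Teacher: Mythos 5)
Your proof is correct and takes essentially the same approach as the paper: reduce to a point via the restriction-compatibility of $\Gr_p$ (\cref{lem:Gr_restriction_points}), use the splitting of $j_x^{\ast}(F)$ together with \cref{lem:t_structure_Stokes_splitting}, and conclude from the explicit formula of \cref{ex_Gr} and finiteness of the fibers. The only difference is cosmetic: the paper handles right $t$-exactness separately by a formal argument (the definition of $\Gr_p$ as a cofiber of compositions of pullbacks and left Kan extensions, invoking \cref{recollection:standard_t_structure}), whereas you treat connectivity and coconnectivity uniformly through the pointwise computation, which works equally well since both $\cE_{\geqslant 0}$ and $\cE_{\leqslant 0}$ are closed under finite direct sums.
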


\begin{proof}
	The very definition of $\Gr_p$ (see \cref{defin_Gr}) and \cref{recollection:standard_t_structure} imply together that $\Gr_p$ is right $t$-exact.
	Let now $F \in ( \St_{\cI,\cE} )_{\leqslant 0}$.
	To check that $\Gr_p(F) \in ( \St_{\cI_p, \cE} )_{\leqslant 0}$, it suffices to show that for every $x \in \cX$ one has
	\[ j_x^\ast( \Gr_p(F) ) \in \Fun((\cI_p)_x, \cE) \ . \]
	By \cref{lem:Gr_restriction} and \cref{lem:Gr_restriction_points} we have a canonical equivalence
	\[ j_x^\ast( \Gr_p(F) ) \simeq \Gr_{p_x}( j_x^\ast(F) ) \ . \]
	We can therefore assume that $\cX$ is reduced to a point.
	Since $F$ is punctually split, we can find a functor $V \colon \cI^{\ens} \to \cE$.
	\Cref{lem:t_structure_Stokes_splitting} guarantees that $V$ takes values in $\cE_{\leqslant 0}$.
	Since $\cI_x$ is finite and $\cE_{\leqslant 0}$ is closed under finite sums, the conclusion follows from the fomula given in \cref{ex_Gr}.
\end{proof}

\begin{cor}\label{prop:Gr_creates_the_t_structures}
	In the setting of \cref{prop:Gr_t_exact}, a Stokes functor $F \colon \cI \to \cE$ is connective (resp.\ coconnective) with respect to the induced $t$-structure on $\St_{\cI,\cE}$ if and only if $\Gr_p(F)$ is connective (resp.\ coconnective).
\end{cor}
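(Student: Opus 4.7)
The plan is to deduce this from \cref{prop:Gr_t_exact} combined with the conservativity of the relative graduation functor established in \cref{cor:section_Gr_conservative}.

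The ``only if'' direction is immediate from \cref{prop:Gr_t_exact}, which asserts that $\Gr_p \colon \St_{\cI,\cE} \to \St_{\cI_p,\cE}$ is $t$-exact. For the converse, suppose first that $\Gr_p(F)$ is connective. By \cref{prop:t_structure_Stokes}, the truncation $\tau_{\leqslant -1}(F)$ is again a Stokes functor, and the inclusion $\St_{\cI,\cE} \hookrightarrow \Fun(\cI,\cE)$ is $t$-exact, so $\tau_{\leqslant -1}(F)$ computes the truncation in both categories. Applying $\Gr_p$ and using its $t$-exactness (\cref{prop:Gr_t_exact}), we obtain a canonical equivalence
\[ \Gr_p(\tau_{\leqslant -1}(F)) \simeq \tau_{\leqslant -1}(\Gr_p(F)) \simeq 0 \ , \]
the last equivalence being our assumption. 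Since $\cI$ and $\cJ$ are in $\PosFib^f$ (the fibers being finite), \cref{cor:section_Gr_conservative} guarantees that $\Gr_p \colon \Fun(\cI,\cE) \to \Fun(\cI_p,\cE)$ is conservative. Therefore $\tau_{\leqslant -1}(F) \simeq 0$, which means $F$ is connective.

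The coconnective case is entirely parallel: if $\Gr_p(F)$ is coconnective, then $\Gr_p(\tau_{\geqslant 1}(F)) \simeq \tau_{\geqslant 1}(\Gr_p(F)) \simeq 0$ by $t$-exactness of $\Gr_p$, and conservativity forces $\tau_{\geqslant 1}(F) \simeq 0$, i.e., $F$ is coconnective. The only subtle point to check is that the conservativity statement of \cref{cor:section_Gr_conservative} applies, which it does because the hypothesis that the fibers of $\cI$ and $\cJ$ are finite is already built into the setup of \cref{prop:Gr_t_exact}.
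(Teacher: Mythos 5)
Your proof is correct and takes exactly the same approach as the paper, which simply combines the $t$-exactness of $\Gr_p$ (\cref{prop:Gr_t_exact}) with its conservativity (\cref{cor:section_Gr_conservative}). You have merely spelled out the truncation argument that the paper leaves implicit.
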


\begin{proof}
	It follows combining $t$-exactness and conservativity of $\Gr_p$, see \cref{prop:Gr_t_exact} and \cref{cor:section_Gr_conservative}.
\end{proof}

\subsection{Splitting criterion}\label{lower}

      The goal of this subsection is to establish a splitting criterion (\cref{fiber_product_is_split}), to be used in  the essential surjectivity part of the proof of  \cite[\cref*{Geometric_Stokes-induction_for_adm_Stokes_triple}]{Geometric_Stokes}.

\begin{construction}  \label{constr:F_minus_I}   
     Let $\cX$ be an $\infty$-category. 
     Let $i \colon \cI\hookrightarrow  \cJ$ and $k \colon \cK\hookrightarrow  \cJ$ be fully faithful functors in $\PosFib^f$ over $\cX$ such that $\cJ^{\ens}= \cI^{\ens} \sqcup \cK^{\ens}$.
     In particular, for every functor $G \colon \cI^{\ens} \to \cE$, we have
     \[
     G\simeq i_{!}^{\ens} i^{\ens \ast}(G) \oplus k_{!}^{\ens} k^{\ens \ast}(G)  \ .
     \]
     We denote by $\Delta(G)$ the split cofiber sequence 
     \begin{equation}\label{stupid_cofiber_sequnce}
     i_{!}^{\ens} i^{\ens \ast}(G)  \to G \to k_{!}^{\ens}  k^{\ens \ast}(G)  \ .
          \end{equation}
      We assume that $\cJ^{\ens}\to \cX$ is locally constant.
      Let $\cE$ be a presentable stable $\infty$-category. 
      Let $F\colon \cJ\to \cE$ be a functor.
     We suppose that the canonical morphism $ i^{\ens, \ast} i^{*}_{\cJ }(F)  \to  i^{\ens \ast}\Gr(F)$ admits a section
\begin{equation}\label{lower_running_assumption}
	\sigma \colon i^{\ens \ast}\Gr (F)    \to  i^{\ens, \ast} i^{*}_{\cJ }(F)   \ .
\end{equation} 
      By adjunction, $\sigma$ yields a morphism
\[
\tau \colon i_{\cJ !} i_{!}^{\ens} i^{\ens \ast} \Gr(F) \to  F
\]
in $\Fun(\cJ, \cE)$.
      We denote by $\Delta(F,\sigma)$ the following cofiber sequence 

\begin{equation}\label{eq:defFminusI}
\begin{tikzcd}
i_{\cJ !} i_{!}^{\ens}  i^{\ens \ast} \Gr(F)   \arrow{r}{\tau } & F  \arrow{r} &  F^{\backslash \cI}   \ .
\end{tikzcd}
\end{equation}

\end{construction}

\begin{rem}\label{lowering_I_and_pullback}
      By \cref{lem:Gr_restriction} and \cref{cor:induction_specialization_Beck_Chevalley}, observe that the formation of $F^{\backslash \cI}$  commutes with pull-back.
\end{rem}

\begin{lem}\label{lower_size_J}
      In the setting of  \cref{constr:F_minus_I}, the canonical morphism
\[
k^{\ens}_! k^{\ens,\ast}\Gr(F) \to \Gr(F^{\backslash \cI})
\]
is an equivalence.

\end{lem}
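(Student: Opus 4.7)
The plan is to apply $\Gr$ to the defining cofiber sequence \eqref{eq:defFminusI} of $F^{\backslash\cI}$ and exploit the special form of the first two terms. Since $\Gr$ commutes with colimits by \cref{cor:section_Gr_commutes_with_colimits}, one obtains a cofiber sequence
\[
\Gr\big(i_{\cJ !} \, i^{\ens}_! \, i^{\ens \ast}\Gr(F)\big) \xrightarrow{\Gr(\tau)} \Gr(F) \longrightarrow \Gr(F^{\backslash \cI})
\]
in $\Fun(\cJ^{\ens}, \cE)$. Applying \cref{cor:compatibility_Gr_i_I!} to the graduation morphism $\id_\cJ \colon \cJ \to \cJ$ yields a natural equivalence $\Gr \circ i_{\cJ !} \simeq \id$ of endofunctors of $\Fun(\cJ^{\ens}, \cE)$, simplifying the first term to $i^{\ens}_! \, i^{\ens \ast}\Gr(F)$.

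The key step is then to identify $\Gr(\tau)$ with the counit $\varepsilon_{i^{\ens}}(\Gr(F)) \colon i^{\ens}_! \, i^{\ens \ast}\Gr(F) \to \Gr(F)$ of the adjunction $i^{\ens}_! \dashv i^{\ens \ast}$. To do so, I would factor $\tau$ through the composite adjunction $i_{\cJ !} \, i^{\ens}_! \dashv i^{\ens \ast} \, i_{\cJ}^{\ast}$ as
\[
\tau \;=\; \varepsilon_{i_{\cJ}}(F) \circ i_{\cJ !}\Big(\varepsilon_{i^{\ens}}(i_{\cJ}^{\ast}(F)) \circ i^{\ens}_!(\sigma)\Big).
\]
Under the identification $\Gr \circ i_{\cJ !} \simeq \id$, the map $\Gr(\varepsilon_{i_{\cJ}}(F))$ becomes precisely the canonical comparison $c \colon i_{\cJ}^{\ast}(F) \to \Gr(F)$ from \cref{constr:F_minus_I}. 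Naturality of $\varepsilon_{i^{\ens}}$ with respect to $c$ combined with the running hypothesis that $\sigma$ is a section of $i^{\ens \ast}(c)$ then collapses the composite to $\varepsilon_{i^{\ens}}(\Gr(F))$.

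Once $\Gr(\tau)$ is identified with this counit, the decomposition $\cJ^{\ens} = \cI^{\ens} \sqcup \cK^{\ens}$ gives, for every $G \in \Fun(\cJ^{\ens}, \cE)$, the canonical split cofiber sequence $\Delta(G)$ of \eqref{stupid_cofiber_sequnce}, in which the first map is exactly $\varepsilon_{i^{\ens}}(G)$ and the cofiber is $k^{\ens}_! \, k^{\ens \ast}(G)$. Applied to $G = \Gr(F)$ and compared with the cofiber sequence above, this yields the sought equivalence $k^{\ens}_! \, k^{\ens \ast}\Gr(F) \simeq \Gr(F^{\backslash \cI})$. I expect the only real content to lie in the diagram chase identifying $\Gr(\tau)$ with a counit; steps one and three are routine bookkeeping around the exactness of $\Gr$ and the formalism of \cref{constr:F_minus_I}.
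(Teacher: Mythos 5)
Your argument is correct and follows the same strategy as the paper's proof: apply $\Gr$ to the cofiber sequence \eqref{eq:defFminusI}, use \cref{cor:compatibility_Gr_i_I!} (the paper also invokes \cref{cor:compatibility_Gr_id}, but via $i_{\cJ!}i^{\ens}_! = i_!\,i_{\cI!}$ the two identifications of the first term agree) and the decomposition $\Gr(F)\simeq i^{\ens}_!i^{\ens,\ast}\Gr(F)\oplus k^{\ens}_!k^{\ens,\ast}\Gr(F)$. The one genuine addition is that you make explicit the identification of $\Gr(\tau)$ with the counit $\varepsilon_{i^{\ens}}(\Gr F)$ — i.e.\ that the first map is the inclusion of the direct summand, which is the step that actually forces the cofiber to be $k^{\ens}_!k^{\ens,\ast}\Gr(F)$ rather than merely something with the same constituents — whereas the paper's proof leaves this compatibility implicit; your diagram chase via the factorization of $\tau$ through the composite adjunction and the section property of $\sigma$ is a sound way to supply it.
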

\begin{proof}
      Since $\Gr$ commutes with colimits, applying $\Gr$ to \eqref{eq:defFminusI} yields a cofiber sequence 
\[
 \Gr i_{!}i_{\cI !} i^{\ens \ast}  \Gr(F)   \to  \Gr(F)  \to  \Gr(F^{\backslash \cI})   \ .
\]
      By \cref{cor:compatibility_Gr_i_I!} and \cref{cor:compatibility_Gr_id}, we have  $\Gr i_{!}i_{\cI !}  i^{\ens \ast} \Gr(F) \simeq i^{\ens}_!i^{\ens,\ast}\Gr(F)$.
      Since we have
\[
\Gr(F) \simeq  i^{\ens}_!i^{\ens,\ast}\Gr(F) \oplus k^{\ens}_! k^{\ens,\ast}\Gr(F)  \ ,
\]
\cref{lower_size_J} thus follows.
\end{proof}

\begin{lem}\label{morphism_comes_from_graded}
      In the setting of  \cref{constr:F_minus_I}, the following hold:
	\begin{enumerate}\itemsep=0.2cm
	\item If $F$ is cocartesian, so is $F^{\backslash \cI}$.
	
	\item If $F$ is punctually split, so is $F^{\backslash \cI}$.
	
	\item If $F$ split,the cofiber sequences $\Delta(F,\sigma)$ and  $i_{\cJ,!}\Delta(\Gr(F))$ are equivalent.
	In particular, $F^{\backslash \cI}$ split.
	\end{enumerate}
\end{lem}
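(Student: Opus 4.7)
\emph{Proof plan.} Claim~(1) will follow by checking that each functor composing the source of $\tau$ in $\Delta(F,\sigma)$ preserves cocartesian functors. Indeed, $\Gr$ does so by \cref{cor:graduation_preserves_cocartesian_functors}, the pullback $i^{\ens,\ast}$ does so by \cref{cor:stokes_functoriality}-(\ref{cor:stokes_functoriality:pullback}), and the Kan extensions $i^{\ens}_!$ and $i_{\cJ,!}$ do so by \cref{cor:stokes_functoriality}-(\ref{cor:stokes_functoriality:induction}). Hence the source $i_{\cJ,!} i^{\ens}_! i^{\ens,\ast} \Gr(F)$ of $\tau$ is cocartesian. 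Since cocartesian functors form a subcategory of $\Fun(\cJ,\cE)$ closed under colimits by \cref{cor:cocartesianization}, the cofiber $F^{\backslash \cI}$ of $\tau$ is then cocartesian.

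For claim~(2), I will reduce pointwise to (3). Fix $x \in \cX$ with inclusion $j_x \colon \{x\} \to \cX$. By \cref{lowering_I_and_pullback}, the formation of $F^{\backslash \cI}$ commutes with $j_x^\ast$, giving $j_x^\ast(F^{\backslash \cI}) \simeq (j_x^\ast F)^{\backslash \cI_x}$. Since $F$ is punctually split, $j_x^\ast F$ is split, and (3) applied over the point $x$ yields that $(j_x^\ast F)^{\backslash \cI_x}$ is split. Thus $F^{\backslash \cI}$ is split at every $x \in \cX$, i.e., punctually split.

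For claim~(3), I will write $F \simeq i_{\cJ,!}(V)$ with $V \simeq \Gr(F)$ using \cref{Gr_p_identity}. Then $\Delta(\Gr F)$ is identified with the split cofiber sequence $i^{\ens}_! i^{\ens,\ast} V \xrightarrow{\epsilon_V} V \to k^{\ens}_! k^{\ens,\ast} V$, where $\epsilon_V$ is the counit of $i^{\ens}_! \dashv i^{\ens,\ast}$. Applying the cocontinuous induction $i_{\cJ,!}$ gives the cofiber sequence $i_{\cJ,!}\Delta(\Gr F)$ whose first two objects coincide with those of $\Delta(F,\sigma)$. I will then compare the two parallel maps $\tau, i_{\cJ,!}(\epsilon_V) \colon i_{\cJ,!} i^{\ens}_! i^{\ens,\ast} V \to F$. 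Using \cref{cor:compatibility_Gr_i_I!} together with the construction of $\tau$ from $\sigma$ via the double adjunction $i_{\cJ,!}\dashv i_\cJ^\ast$ and $i^{\ens}_!\dashv i^{\ens,\ast}$, both maps have graduation $\epsilon_V$, so they differ by a natural transformation factoring through the kernel $i^{\ens,\ast} i_{<,!} i_<^\ast \sigma^\ast F$ of the canonical projection $i^{\ens,\ast} i_\cJ^\ast F \to i^{\ens,\ast} \Gr F$. Being ``nilpotent'' in that it strictly lowers the order, this difference can be integrated into an automorphism $\psi$ of $F$ satisfying $\psi\circ\tau \simeq i_{\cJ,!}(\epsilon_V)$; this yields the equivalence $\Delta(F,\sigma) \simeq i_{\cJ,!}\Delta(\Gr F)$ of cofiber sequences and, in particular, $F^{\backslash \cI} \simeq i_{\cJ,!} k^{\ens}_! k^{\ens,\ast} V$ is split. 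The main obstacle will be the explicit construction of the automorphism $\psi$, which requires carefully unpacking the adjoint calculus on the section side to verify the nilpotency claim and produce the integration.
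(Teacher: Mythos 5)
Your arguments for items~(1) and~(2) are sound and match the paper's strategy: (1) follows because the source of $\tau$ is cocartesian (which you establish functor-by-functor; the paper is terser but uses the same input) and cocartesian functors are closed under colimits, while (2) reduces to (3) pointwise via \cref{lowering_I_and_pullback}, exactly as in the paper.

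The gap is in item~(3), and it is the gap you yourself flag. You fix an arbitrary splitting $V \simeq \Gr(F)$ with $F \simeq i_{\cJ,!}(V)$, identify the two sources, and then try to relate the two parallel maps $\tau$ and $i_{\cJ,!}(\epsilon_V)$ by an integration argument that produces an automorphism $\psi$ of $F$. This is where the argument stalls: the ``difference'' is a map into $F$, not an endomorphism of $F$, so there is no clean nilpotent endomorphism to exponentiate, and even after fixing this, producing a coherent $\infty$-categorical equivalence from a filtration-lowering correction would need a separate argument. The paper avoids the entire issue by exploiting the freedom in the \emph{choice} of splitting. By \cref{section_Gr_gives_splitting}, equivalences $i_{\cJ,!}\Gr(F) \xrightarrow{\sim} F$ correspond to sections of $i_\cJ^{\ast}(F) \to \Gr(F)$, and since $\cJ^{\ens} = \cI^{\ens} \sqcup \cK^{\ens}$, such a section decomposes into an $\cI^{\ens}$-component and a $\cK^{\ens}$-component chosen independently. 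The running datum $\sigma$ of \cref{constr:F_minus_I} \emph{is} a section on the $\cI^{\ens}$-part, and one may take the $\cK^{\ens}$-part $\kappa$ from any splitting of $F$; together $\sigma \oplus \kappa$ is a section of $i_\cJ^{\ast}(F)\to\Gr(F)$. With this choice, the square relating the counit $i^{\ens}_!i^{\ens,\ast}\Gr(F)\to\Gr(F)$ to $\tau$ commutes by adjunction with \emph{no} correction term, and the vertical arrow $i_{\cJ,!}\Gr(F)\to F$ is an equivalence by \cref{section_Gr_gives_splitting} again. In short: rather than comparing $\tau$ against a fixed but incompatible splitting and then trying to integrate the discrepancy, choose the splitting to be compatible with $\sigma$ from the start, and the discrepancy vanishes.
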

\begin{proof} 
      Item (1) follows immediately from the stability of $\Funcocart(\cJ,\cE)$ under colimits (\cref{cor:cocartesianization}).
      By \cref{lowering_I_and_pullback}, the formation of $F^{\backslash \cI}$  commutes with pull-back.
      Hence, (3) implies (2).
      We now prove (3) and assume that $F$ split.
      By \cref{section_Gr_gives_splitting}, the canonical  morphism $i^{*}_{\cJ }(F)  \to  \Gr(F)$ 
admits a section $\iota \oplus \kappa$.
      Then, the vertical arrows of the commutative square 
     	\[
	\begin{tikzcd}
i^{\ens}_! i^{\ens,\ast} 	i^{*}_{\cJ }(F) 	\arrow{r}\arrow{d} &  i^{*}_{\cJ }(F)  \arrow{d}  \\
	i^{\ens}_! i^{\ens,\ast}   \Gr(F)     \arrow{r}  \arrow[u, dashed,bend left = 80, "\sigma  \oplus 0"] &  \Gr(F)    \arrow[u, swap, dashed, bend right = 80, "\sigma \oplus \kappa"] 
	\end{tikzcd} 
	\]
 admit sections represented by dashed arrows.
      By adjunction, there is a commutative square
	\[
	\begin{tikzcd}
	F	\arrow{r}{\id}&F  \\
	i_{\cJ,!} 	 i^{\ens}_! i^{\ens,\ast}   \Gr(F)    \arrow{u}{\tau}  \arrow{r}   & i_{\cJ,!} \Gr(F)  \arrow{u} 
	\end{tikzcd} 
	\]
	whose right vertical arrow is an equivalence in virtue of \cref{section_Gr_gives_splitting}.  
  Item (3) is thus proved.
\end{proof}

\begin{cor}\label{induced_from_K}
      In the setting of  \cref{constr:F_minus_I}, assume that $F \colon \cJ \to \cE$ punctally split.
      Then, $F^{\backslash \cI}$ lies in the essential image of $k_! \colon \Fun(\cK,\cE) \to \Fun(\cJ,\cE)$.
\end{cor}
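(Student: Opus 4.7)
The plan is to prove this by combining the criterion for the essential image of a fully faithful induction established in \cref{image_fully_faithful_induction_PS} (applied with $\cK$ playing the role of $\cI$), together with the explicit computation of $\Gr(F^{\backslash \cI})$ provided by \cref{lower_size_J}. The point is that while we already know that $F^{\backslash \cI}$ is punctually split by \cref{morphism_comes_from_graded}-(2), one still needs to check the vanishing condition on its graduation.

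Concretely, first I would record that since $F$ is punctually split, \cref{morphism_comes_from_graded}-(2) ensures that $F^{\backslash \cI}$ is punctually split as well, so that the claim is equivalent to showing that $F^{\backslash \cI}$ lies in the essential image of
\[
k_! \colon \Fun_{\PS}(\cK,\cE) \to \Fun_{\PS}(\cJ,\cE).
\]
Applying \cref{image_fully_faithful_induction_PS} (to the fully faithful $k \colon \cK \hookrightarrow \cJ$), this is in turn equivalent to the vanishing $(\Gr F^{\backslash \cI})_a \simeq 0$ for every $a \in \cJ^{\ens}$ not in the essential image of $k^{\ens} \colon \cK^{\ens} \to \cJ^{\ens}$. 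Since $\cJ^{\ens} = \cI^{\ens} \sqcup \cK^{\ens}$, such an $a$ is exactly an object of $\cI^{\ens}$.

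Next I would apply \cref{lower_size_J} to obtain the equivalence
\[
\Gr(F^{\backslash \cI}) \simeq k^{\ens}_! k^{\ens,\ast} \Gr(F)
\]
and then compute the stalk at $a \in \cI^{\ens}$ via the pointwise formula for left Kan extension along $k^{\ens}$. The indexing category is $\cK^{\ens} \times_{\cJ^{\ens}} (\cJ^{\ens})_{/a}$, and the key observation is that, since the inclusions $\cI^{\ens}, \cK^{\ens} \hookrightarrow \cJ^{\ens}$ exhibit $\cJ^{\ens}$ as their coproduct in $\PosFib_\cX$ (so in particular no cocartesian edge of $\cJ^{\ens}$ connects $\cK^{\ens}$ to $\cI^{\ens}$), this fiber product is empty when $a \in \cI^{\ens}$. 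Hence the colimit is zero, which provides the required vanishing and completes the proof.

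No real obstacle is expected here: the statement is essentially a bookkeeping consequence of the preceding lemmas. The only subtle point is the disjointness argument in the final step, where one must use that the decomposition $\cJ^{\ens} = \cI^{\ens} \sqcup \cK^{\ens}$ is genuinely a decomposition of cocartesian fibrations over $\cX$ (and not merely a pointwise decomposition of underlying sets), but this is already implicit in the statement of \cref{constr:F_minus_I} since the same decomposition is used to split $\Gr(F)$ in the proof of \cref{lower_size_J}.
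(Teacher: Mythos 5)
Your proof is correct and follows essentially the same route as the paper: use \cref{morphism_comes_from_graded} for punctual splitness, \cref{lower_size_J} to identify $\Gr(F^{\backslash \cI})$ with $k^{\ens}_! k^{\ens,\ast}\Gr(F)$, and \cref{image_fully_faithful_induction_PS} as the recognition criterion. The paper simply asserts the vanishing $(\Gr F^{\backslash \cI})_a \simeq 0$ for $a \in \cI^{\ens}$ without comment, whereas you correctly spell out why: the comma category indexing the pointwise left Kan extension along $k^{\ens}$ is empty because $\cJ^{\ens}=\cI^{\ens}\sqcup\cK^{\ens}$ as cocartesian fibrations over $\cX$, and (since $\cE$ is stable) the empty colimit is zero.
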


\begin{proof}
      By \cref{morphism_comes_from_graded}, we know that $F^{\backslash \cI}$ punctually split.
      By \cref{lower_size_J}, we have $(\Gr F^{\backslash \cI})(a)\simeq 0$ for every $a\in \cI$.
      Then, \cref{induced_from_K} follows from \cref{image_fully_faithful_induction_PS}.
\end{proof}

\begin{construction}\label{constr:F_minus_IL}  
      In the setting of \cref{constr:F_minus_I}, let $l \colon \cL\hookrightarrow  \cK$ and $m \colon \cM\hookrightarrow  \cK$ be fully faithful functors in $\PosFib^f$ over $\cX$ such that $\cK^{\ens}= \cL^{\ens} \sqcup \cM^{\ens}$.
      We suppose that the canonical morphism $ l^{\ens \ast} i^{*}_{\cJ }(F)  \to  l^{\ens \ast}\Gr(F)$ admits a section
     
\begin{equation}\label{lower_running_assumption}
	\lambda \colon l^{\ens \ast}\Gr (F)    \to  l^{\ens \ast} i^{*}_{\cJ }(F)   \ .
\end{equation}

     Let $\iota \colon \cI \cup \cL  \hookrightarrow \cJ$ be the full subcategory of $\cI$ spanned by objects of  $\cI$ and $\cL$.
     Then, the vertical arrows of the commutative square 
     	\[
	\begin{tikzcd}
	\iota^* i^{*}_{\cJ }(F) 	\arrow{r}\arrow{d} & \iota^* i^{*}_{\cJ }(F^{\backslash \cI})  \arrow{d}  \\
	\iota^*  \Gr(F)     \arrow{r}  \arrow[u, dashed,bend left = 80, "\sigma \oplus \lambda"] & \iota^*  \Gr(F^{\backslash \cI})    \arrow[u, swap, dashed, bend right = 80, "0\oplus \lambda"] 
	\end{tikzcd} 
	\]
     admit sections represented by dashed arrows.
By adjunction, we thus deduce a morphism of cofiber sequence 
\begin{equation}\label{eq:constr:F_minus_IL}
\Delta(F,\sigma \oplus \lambda) \to \Delta(F^{\backslash \cI},0\oplus \lambda)  \ .
\end{equation}

\end{construction}

\begin{lem}\label{iterated_lowering}
      In the setting of \cref{constr:F_minus_IL}, the natural transformation 
\[
F^{\backslash \cI\cup \cL}  \to (F^{\backslash \cI})^{\backslash \cI \cup \cL}
\]
deduced from \eqref{eq:constr:F_minus_IL} is an equivalence.
\end{lem}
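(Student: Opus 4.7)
Plan: The strategy is to apply the total-fiber technique in the stable $\infty$-category $\Fun(\cJ,\cE)$ to the commutative square of cofiber sequences \eqref{eq:constr:F_minus_IL}. Writing it as
\[
\begin{tikzcd}
A_1 \arrow{r} \arrow{d}{\alpha} & F \arrow{r} \arrow{d}{\phi} & F^{\backslash \cI\cup\cL} \arrow{d}{\psi} \\
A_2 \arrow{r} & F^{\backslash \cI} \arrow{r} & (F^{\backslash \cI})^{\backslash \cI\cup\cL}
\end{tikzcd}
\]
with $A_1 = i_{\cJ,!}\iota^{\ens}_!\iota^{\ens,\ast}\Gr(F)$ and $A_2 = i_{\cJ,!}\iota^{\ens}_!\iota^{\ens,\ast}\Gr(F^{\backslash\cI})$, taking fibers of the vertical maps yields a fiber sequence
\[ \fib(\alpha) \to \fib(\phi) \to \fib(\psi). \]
It therefore suffices to show that $\fib(\alpha)\to\fib(\phi)$ is an equivalence.

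The identification $\fib(\phi)\simeq A_F \coloneqq i_{\cJ,!}i^{\ens}_!i^{\ens,\ast}\Gr(F)$ is immediate from $\Delta(F,\sigma)$. For $\fib(\alpha)$, I would exploit the disjoint decomposition $(\cI\cup\cL)^{\ens} = \cI^{\ens}\sqcup\cL^{\ens}$, which yields a canonical direct sum
\[ \iota^{\ens,\ast}\Gr(F) \simeq \iota^{\ens}_{\cI,!}i^{\ens,\ast}\Gr(F) \;\oplus\; \iota^{\ens}_{\cL,!}l^{\ens,\ast}\Gr(F), \]
with $\iota^{\ens}_\cI$, $\iota^{\ens}_\cL$ the two summand inclusions into $(\cI\cup\cL)^{\ens}$. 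By \cref{lower_size_J}, $\Gr(F^{\backslash\cI})\simeq k^{\ens}_!k^{\ens,\ast}\Gr(F)$ is supported on $\cK^{\ens}$, so restricting along $\iota^{\ens}$ produces only the $\cL$-summand. Consequently the map $\iota^{\ens,\ast}\Gr(F)\to\iota^{\ens,\ast}\Gr(F^{\backslash\cI})$ is the projection killing the $\cI$-summand, whose fiber is that $\cI$-summand. Applying the colimit-preserving functor $i_{\cJ,!}\iota^{\ens}_!$ and using $\iota^{\ens}\circ\iota^{\ens}_\cI = i^{\ens}$ identifies $\fib(\alpha)\simeq A_F$.

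The final and most delicate step is to verify that, under these identifications, the induced map $\fib(\alpha)\to\fib(\phi)$ is the \emph{identity} of $A_F$, not merely an equivalence. By adjunction, this reduces to the observation that the restriction of $\sigma\oplus\lambda$ along $\iota^{\ens}_\cI\colon \cI^{\ens}\hookrightarrow(\cI\cup\cL)^{\ens}$ is precisely $\sigma$: the composite $A_F\hookrightarrow A_1\to F$ (inclusion of the direct summand followed by the adjoint of $\sigma\oplus\lambda$) then agrees with the structural arrow $A_F\to F$ adjoint to $\sigma$, which is the one whose fiber computes $\fib(\phi)$. The main obstacle is precisely this compatibility, which requires carefully tracking how the direct sum decomposition of $\iota^{\ens,\ast}\Gr(F)$ interacts with the adjunction $i_{\cJ,!}\iota^{\ens}_!\dashv\iota^{\ens,\ast}i_{\cJ}^{\ast}$. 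Once established, $\fib(\psi)\simeq 0$ and the claim follows.
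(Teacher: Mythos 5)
Your argument is correct in strategy, but it takes a genuinely different route from the paper's. The paper's proof is a one-liner: apply $\Gr$ to the morphism of cofiber sequences, note that by \cref{lower_size_J} the map $\Gr(\psi)$ is canonically identified with the identity of $m^{\ens}_!m^{\ens,\ast}\Gr(F)$ (both sides reduce to it, since $m^{\ens,\ast}k^{\ens}_!k^{\ens,\ast}\Gr(F)\simeq m^{\ens,\ast}\Gr(F)$), and conclude by conservativity of $\Gr$ (\cref{cor:section_Gr_conservative}). The advantage of first passing through $\Gr$ is that the direct-sum decompositions and the projection/inclusion maps live in $\Fun(\cJ^{\ens},\cE)$, where $\cJ^{\ens}$ is a set, so the compatibility of the various summand inclusions with the structure maps is transparent. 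Your approach applies the stable $3\times 3$ lemma directly at the level of $\Fun(\cJ,\cE)$, never invoking conservativity. This buys you a conceptually satisfying ``total-fiber'' picture, but at a cost: the key compatibility you correctly flag --- that the inclusion $\fib(\alpha)\hookrightarrow A_1$ followed by the adjoint of $\sigma\oplus\lambda$ agrees with the adjoint of $\sigma$ --- lives one level higher and is less visible.

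The gap in your writeup is precisely that flagged step: you describe the reduction ($\iota^{\ens,\ast}_\cI(\sigma\oplus\lambda)=\sigma$ is indeed definitional) but stop short of carrying it out. It does close: the inclusion $A_F\hookrightarrow A_1$ is $i_{\cJ,!}\iota^{\ens}_!$ applied to the counit of $\iota^{\ens}_{\cI,!}\dashv\iota^{\ens,\ast}_{\cI}$, and the standard fact that the counit of a composite adjunction is the composite of counits shows that post-composing with the adjoint of $\sigma\oplus\lambda$ yields the adjoint of $\iota^{\ens,\ast}_\cI(\sigma\oplus\lambda)=\sigma$. This is exactly the structure map $A_F\to F$ computing $\fib(\phi)$, so $\fib(\alpha)\to\fib(\phi)$ is an equivalence, hence $\fib(\psi)\simeq 0$. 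If you intend to present this route you should spell out that one-paragraph adjunction argument, since as written it is asserted rather than proved; otherwise the paper's route via $\Gr$-conservativity hides the same bookkeeping in \cref{lower_size_J}, where it is already carried out.
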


\begin{proof}
      Immediate from \cref{lower_size_J} and \cref{cor:section_Gr_conservative}.
\end{proof}

\begin{notation}
We denote by $ \alpha_{\cI,\cL} \colon F^{\backslash \cI}\to F^{\backslash \cI\cup \cL} $   the canonical morphism obtained by composing $F^{\backslash \cI} \to (F^{\backslash \cI})^{\backslash \cI \cup \cL}$ with the inverse of $F^{\backslash \cI\cup \cL}  \to (F^{\backslash \cI})^{\backslash \cI \cup \cL}$ supplied by \cref{iterated_lowering}.
\end{notation}

\begin{lem}\label{in_the_essential_image_of_induction}
     In the setting of \cref{constr:F_minus_IL}, assume that $F^{\backslash \cI}$ split.
     Then there is  a commutative square 
\[ \begin{tikzcd}
	i_{\cJ,!}\Gr(F^{\backslash \cI})  \arrow{r} \arrow{d}{i_{\cJ,!}  \Gr(\alpha_{\cI,\cL})  } & F^{\backslash \cI} \arrow{d}{ \alpha_{\cI,\cL} } \\
	i_{\cJ,!}\Gr(F^{\backslash \cI \cup \cL})\arrow{r} & F^{\backslash \cI \cup \cL} 
\end{tikzcd} \]
whose horizontal arrows are equivalences.
\end{lem}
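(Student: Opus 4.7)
The plan is to apply \cref{morphism_comes_from_graded}-(3) to $F^{\backslash \cI}$ — which splits by hypothesis — with respect to the inclusion $\iota \colon \cI \cup \cL \hookrightarrow \cJ$ and the section $0 \oplus \lambda$. This produces an equivalence of cofiber sequences
\[
\Delta(F^{\backslash \cI}, 0 \oplus \lambda) \simeq i_{\cJ,!}\Delta(\Gr(F^{\backslash \cI})) \ .
\]
The left-hand cofiber is, by \cref{iterated_lowering}, canonically equivalent to $F^{\backslash \cI \cup \cL}$, while the right-hand cofiber is $i_{\cJ,!} m_!^{\ens} m^{\ens,\ast}\Gr(F^{\backslash \cI})$. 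The above equivalence therefore simultaneously
\begin{enumerate}\itemsep=0.2cm
    \item identifies $F^{\backslash \cI \cup \cL}$ with the split object $i_{\cJ,!} m_!^{\ens} m^{\ens,\ast}\Gr(F^{\backslash \cI})$;
    \item identifies the middle map of the cofiber sequence, i.e.\ $\alpha_{\cI, \cL} \colon F^{\backslash \cI} \to F^{\backslash \cI \cup \cL}$, with the map $i_{\cJ,!}\Gr(F^{\backslash \cI}) \to i_{\cJ,!} m_!^{\ens} m^{\ens,\ast}\Gr(F^{\backslash \cI})$ induced by the canonical projection $\Gr(F^{\backslash \cI}) \to m_!^{\ens} m^{\ens,\ast}\Gr(F^{\backslash \cI})$;
    \item identifies the canonical map $i_{\cJ,!}\Gr(F^{\backslash \cI}) \to F^{\backslash \cI}$ of \cref{section_Gr_gives_splitting} with the middle term of the trivial cofiber sequence, which is an equivalence.
\end{enumerate}

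Next, I would apply \cref{lower_size_J} to $F^{\backslash \cI}$ with respect to the pair $(\iota, m)$ to obtain a canonical equivalence
\[
m_!^{\ens} m^{\ens,\ast}\Gr(F^{\backslash \cI}) \stackrel{\sim}{\to} \Gr\bigl((F^{\backslash \cI})^{\backslash \cI \cup \cL}\bigr) \simeq \Gr(F^{\backslash \cI \cup \cL}) \ ,
\]
where the second equivalence is \cref{iterated_lowering}. By construction, this equivalence intertwines the canonical projection $\Gr(F^{\backslash \cI}) \to m_!^{\ens} m^{\ens,\ast}\Gr(F^{\backslash \cI})$ with $\Gr(\alpha_{\cI, \cL})$. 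Applying $i_{\cJ,!}$ and combining with item (1), we deduce that $F^{\backslash \cI \cup \cL} \simeq i_{\cJ,!}\Gr(F^{\backslash \cI \cup \cL})$; this is the bottom horizontal arrow of the square.

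Finally, the commutativity of the square is obtained by transporting everything along the previous identifications: under these, the square becomes the naturality square of the canonical projection $\Gr(F^{\backslash \cI}) \to m_!^{\ens} m^{\ens,\ast}\Gr(F^{\backslash \cI})$, after applying $i_{\cJ,!}$ on both sides. The delicate point — and main obstacle — is that \cref{section_Gr_gives_splitting} only produces equivalences $i_{\cJ,!}\Gr(G) \to G$ non-canonically, depending on a choice of section, so one has to make compatible choices for $G = F^{\backslash \cI}$ and $G = F^{\backslash \cI \cup \cL}$. This compatibility is exactly what is extracted from the single equivalence of cofiber sequences provided by \cref{morphism_comes_from_graded}-(3), which is why the argument has to be packaged via cofiber sequences rather than by applying \cref{section_Gr_gives_splitting} twice independently.
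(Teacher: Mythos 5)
Your proof is correct and rests on the same lemmas as the paper's (\cref{morphism_comes_from_graded}-(3), \cref{lower_size_J}, \cref{iterated_lowering}, \cref{section_Gr_gives_splitting}), but you package them differently. The paper argues right-to-left in a three-column diagram: starting from the splitting of $F^{\backslash\cI\cup\cL}$ (which follows from \cref{morphism_comes_from_graded}-(2) and \cref{iterated_lowering}) it produces a section $\mu$, transports it to $\mu'$, then produces a compatible $\lambda$, and finally applies \cref{section_Gr_gives_splitting} to each column. You instead apply \cref{morphism_comes_from_graded}-(3) once, directly to the split functor $F^{\backslash\cI}$ with the decomposition $\cJ^{\ens} = (\cI \cup \cL)^{\ens} \sqcup \cM^{\ens}$ and section $0 \oplus \lambda$, and then observe that the right square of the resulting equivalence of cofiber sequences $\Delta(F^{\backslash\cI}, 0 \oplus \lambda) \simeq i_{\cJ,!}\Delta(\Gr(F^{\backslash\cI}))$ is — after the identifications of the cofibers via \cref{lower_size_J} and \cref{iterated_lowering} — precisely (the transpose of) the square to be constructed. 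What this buys is that the compatibility of the two splittings, which the paper has to verify by hand when constructing $\mu$, $\mu'$, $\lambda$, is delivered automatically by the single morphism of cofiber sequences. Your identification of $\Gr(\alpha_{\cI,\cL})$ with the canonical projection onto the $\cM$-part is also correct: it follows from the proof of \cref{lower_size_J}, which shows that $\Gr$ applied to the cofiber map $F^{\backslash\cI} \to (F^{\backslash\cI})^{\backslash\cI\cup\cL}$ is exactly that projection. So the route is the same, but your reorganization via the cofiber-sequence equivalence makes the key compatibility transparent rather than something to be chased through the diagram.
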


\begin{proof}
Recall that $\cJ^{\ens}= \cI^{\ens} \sqcup  \cL^{\ens} \sqcup \cM^{\ens}$.
Consider the commutative diagram 
\[ \begin{tikzcd}
i_{\cJ}^*(\alpha_{\cI,\cL})  \colon 	i_{\cJ}^*(F^{\backslash \cI})  \arrow{r} \arrow{d} & i_{\cJ}^* (F^{\backslash \cI})^{\backslash \cI \cup \cL} \arrow{d}     \arrow{r}{\sim}  &  i_{\cJ}^* (F^{\backslash \cI \cup \cL})  \arrow{d}   \\
\Gr(\alpha_{\cI,\cL}) \colon 	\Gr (F^{\backslash \cI}) \arrow{r} \arrow[u, swap, dashed, bend right = 60, "0\oplus \lambda \oplus  \mu' "]  & \Gr (F^{\backslash \cI})^{\backslash \cI \cup \cL}  \arrow{r}{\sim} \arrow[u, swap, dashed, bend right = 60, "0\oplus 0\oplus  \mu' "] &     \Gr(F^{\backslash \cI \cup \cL})   \arrow[u, swap, dashed, bend right = 60, "0\oplus 0\oplus  \mu"]  \ .
\end{tikzcd} \]
Since $F^{\backslash \cI}$ split, \cref{morphism_comes_from_graded} ensures that so do 
$(F^{\backslash \cI})^{\backslash \cI \cup \cL}$.
By \cref{iterated_lowering}, the functor $F^{\backslash \cI \cup \cL}$ split as well.
By \cref{section_Gr_gives_splitting}, we thus deduce the existence of the section $\mu$ represented as a dashed arrow.
Since the right horizontal arrows are equivalences, there exists a section $\mu'$ making the right square commutative.
On the other hand, \cref{lower_size_J} implies that $\Gr (F^{\backslash \cI})^{\backslash \cI \cup \cL}$  is a direct factor of $\Gr (F^{\backslash \cI})$.
Since $F^{\backslash \cI}$ split, we deduce from  \cref{morphism_comes_from_graded} the existence of a section $\lambda$ making the left square commutative.
   Then, \cref{in_the_essential_image_of_induction} follows from \cref{section_Gr_gives_splitting}.
\end{proof}

\begin{lem}\label{fiber_product_iso}
	In the setting of \cref{constr:F_minus_IL}, the morphism
\begin{equation}\label{eq:fiber_product_iso}
		F \to  F^{\backslash \cI}\times_ {F^{\backslash \cI \cup \cL}} F^{\backslash \cL}
\end{equation}
	is an equivalence.
\end{lem}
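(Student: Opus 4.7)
The plan is to exploit the stability of $\cE$: in a stable $\infty$-category, a commutative square is a pullback if and only if the morphism induced on the fibers of its horizontal rows is an equivalence. So the issue reduces to computing the two fibers of the square
\[ \begin{tikzcd} F \arrow{r} \arrow{d} & F^{\backslash \cI} \arrow{d} \\ F^{\backslash \cL} \arrow{r} & F^{\backslash \cI \cup \cL} \end{tikzcd} \]
and checking that the canonical comparison map between them is an equivalence.

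By the defining cofiber sequence \eqref{eq:defFminusI}, the fiber of the top row is tautologically $i_{\cJ,!} i_!^{\ens} i^{\ens,\ast} \Gr(F)$. For the bottom row, I would first apply \cref{iterated_lowering} with the roles of $\cI$ and $\cL$ exchanged (which is allowed by the manifest symmetry of the setup of \cref{constr:F_minus_IL}) to produce a canonical equivalence $F^{\backslash \cI \cup \cL} \simeq (F^{\backslash \cL})^{\backslash \cI \cup \cL}$. The defining cofiber sequence for the latter, applied to $F^{\backslash \cL}$ with both sections, exhibits the fiber of $F^{\backslash \cL} \to F^{\backslash \cI \cup \cL}$ as $i_{\cJ,!} (i_!^{\ens} i^{\ens,\ast} \oplus l_!^{\ens} l^{\ens,\ast}) \Gr(F^{\backslash \cL})$.

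Next I would invoke \cref{lower_size_J} with $\cL$ in place of $\cI$ (so that the complement $\cI \cup \cM$ plays the role of $\cK$): this yields $\Gr(F^{\backslash \cL}) \simeq (i_!^{\ens} i^{\ens,\ast} \oplus m_!^{\ens} m^{\ens,\ast}) \Gr(F)$. In particular, $l^{\ens,\ast} \Gr(F^{\backslash \cL}) \simeq 0$ while $i^{\ens,\ast} \Gr(F^{\backslash \cL}) \simeq i^{\ens,\ast} \Gr(F)$, collapsing the previous expression to the expected $i_{\cJ,!} i_!^{\ens} i^{\ens,\ast} \Gr(F)$. So the two fibers are canonically identified.

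The main obstacle will be verifying that the comparison morphism induced by the square is genuinely this canonical identification. This is a naturality check relying on the compatibility of the sections $\sigma$ and $\lambda$ as assembled in \cref{constr:F_minus_IL}: the upper cofiber sequence is built from $\sigma$ alone, the lower one from both $\sigma$ and $\lambda$, and the morphism of cofiber sequences \eqref{eq:constr:F_minus_IL} encodes precisely the required compatibility. Unpacking this carefully should produce the equivalence on fibers, and hence the desired pullback property.
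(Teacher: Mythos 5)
Your overall strategy is a genuinely different route from the paper's. The authors do not look at fibers of the horizontal maps at all: they apply $\Gr$ to the morphism \eqref{eq:fiber_product_iso}, which is legitimate because $\Gr$ commutes with limits (in the finite-poset setting of \cref{constr:F_minus_I}, via \cref{cor:section_Gr_commutes_with_limits}), then use \cref{lower_size_J} to identify $\Gr(F^{\backslash \cI})$, $\Gr(F^{\backslash \cL})$ and $\Gr(F^{\backslash \cI\cup\cL})$ with the direct-sum pieces $(l^{\ens}_! l^{\ens,\ast}\oplus m^{\ens}_!m^{\ens,\ast})\Gr(F)$, $(i^{\ens}_!i^{\ens,\ast}\oplus m^{\ens}_!m^{\ens,\ast})\Gr(F)$ and $m^{\ens}_!m^{\ens,\ast}\Gr(F)$ respectively, with the transition maps being the obvious projections. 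The pullback condition on graded pieces is then a transparent statement about split direct sums, and \cref{cor:section_Gr_conservative} lifts the conclusion back to $F$.

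Your computation of the two fibers is itself correct: the top row's fiber is $i_{\cJ,!}i_!^{\ens}i^{\ens,\ast}\Gr(F)$ by \eqref{eq:defFminusI}, and the application of \cref{lower_size_J} with $\cL$ in the role of $\cI$ does collapse the bottom row's fiber to the same expression. However, the step you flag as ``the main obstacle'' is a genuine gap and not a routine naturality chase. Identifying the two fibers with the same object abstractly does not show the induced map between them is an equivalence; and the tool you point to, \eqref{eq:constr:F_minus_IL}, compares $\Delta(F,\sigma\oplus\lambda)$ with $\Delta(F^{\backslash\cI},0\oplus\lambda)$, i.e.\ it relates the maps $F\to F^{\backslash\cI\cup\cL}$ and $F^{\backslash\cI}\to(F^{\backslash\cI})^{\backslash\cI\cup\cL}$ — not the pair of maps whose fibers you are comparing, namely $F\to F^{\backslash\cI}$ and $F^{\backslash\cL}\to F^{\backslash\cI\cup\cL}$. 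There is no morphism of cofiber sequences in the paper that directly governs that pair. Filling the gap honestly would most naturally proceed by applying $\Gr$ to the map of fibers and invoking conservativity — at which point the detour through fibers is superfluous and you may as well run the paper's argument from the start. So your approach could in principle be repaired, but as written the crucial step is asserted rather than proved, and the repair collapses onto the paper's strategy.
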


\begin{proof}
     Follows immediately from \cref{lower_size_J} and \cref{cor:section_Gr_conservative}.
\end{proof}

\begin{cor}\label{fiber_product_is_split}
	In the setting of \cref{constr:F_minus_IL}, the following are equivalent:
	\begin{enumerate}\itemsep=0.2cm
	\item the functor $F$ split;
	\item the functors $F^{\backslash \cI}$ and $F^{\backslash \cL}$ split. 
	\end{enumerate}
\end{cor}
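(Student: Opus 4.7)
My plan is to prove both implications separately, using the structural results about the lowering construction already established.

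For the direction (1) $\Rightarrow$ (2), the claim that $F^{\backslash \cI}$ splits is exactly the content of \cref{morphism_comes_from_graded}-(3). For $F^{\backslash \cL}$, the very same argument applies: since $\cJ^{\ens} = \cL^{\ens} \sqcup (\cI^{\ens} \sqcup \cM^{\ens})$ still exhibits $\cL$ as one of two complementary pieces of $\cJ$, \cref{constr:F_minus_I} and \cref{morphism_comes_from_graded}-(3) apply verbatim with $\cL$ in place of $\cI$.

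The substantive direction is (2) $\Rightarrow$ (1). First I would observe that $F^{\backslash \cI \cup \cL}$ is also split: indeed, by \cref{iterated_lowering} we have $F^{\backslash \cI \cup \cL} \simeq (F^{\backslash \cI})^{\backslash \cI \cup \cL}$, and since $F^{\backslash \cI}$ is split by hypothesis, a further application of \cref{morphism_comes_from_graded}-(3) to $F^{\backslash \cI}$ (with $\cI \cup \cL$ playing the role of ``$\cI$'') shows that $(F^{\backslash \cI})^{\backslash \cI \cup \cL}$ is split. Then \cref{in_the_essential_image_of_induction} provides canonical equivalences
\[
F^{\backslash \cI} \simeq i_{\cJ,!}\Gr(F^{\backslash \cI}), \qquad F^{\backslash \cL} \simeq i_{\cJ,!}\Gr(F^{\backslash \cL}), \qquad F^{\backslash \cI \cup \cL} \simeq i_{\cJ,!}\Gr(F^{\backslash \cI \cup \cL}),
\]
under which the natural comparison maps $\alpha_{\cI,\cL}$ and $\alpha_{\cL,\cI}$ are identified with $i_{\cJ,!}\Gr(\alpha_{\cI,\cL})$ and $i_{\cJ,!}\Gr(\alpha_{\cL,\cI})$.

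Now \cref{fiber_product_iso} gives an equivalence
\[
F \simeq F^{\backslash \cI} \times_{F^{\backslash \cI \cup \cL}} F^{\backslash \cL} \simeq i_{\cJ,!}\Gr(F^{\backslash \cI}) \times_{i_{\cJ,!}\Gr(F^{\backslash \cI \cup \cL})} i_{\cJ,!}\Gr(F^{\backslash \cL}).
\]
The final step is to commute $i_{\cJ,!}$ past this pullback. Since $\cE$ is presentable stable, both $\Fun(\cJ^{\ens},\cE)$ and $\Fun(\cJ,\cE)$ are presentable stable, and $i_{\cJ,!}$ is a left adjoint between stable $\infty$-categories, hence an exact functor preserving finite limits. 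Therefore
\[
F \simeq i_{\cJ,!}\bigl( \Gr(F^{\backslash \cI}) \times_{\Gr(F^{\backslash \cI \cup \cL})} \Gr(F^{\backslash \cL}) \bigr),
\]
which exhibits $F$ as split. The main (mild) subtlety is precisely this last interchange; everything else is a direct combination of the lemmas in \cref{lower}.
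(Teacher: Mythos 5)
Your proof is correct and follows essentially the same route as the paper: (1)~$\Rightarrow$~(2) via \cref{morphism_comes_from_graded}-(3); for (2)~$\Rightarrow$~(1), reduce via \cref{fiber_product_iso} to the pullback, identify the cospan with $i_{\cJ,!}$ of a cospan of graded pieces using \cref{in_the_essential_image_of_induction}, and conclude by exactness of $i_{\cJ,!}$. The only cosmetic difference is that you spell out the splitting of $F^{\backslash\cI\cup\cL}$ and the exactness argument for $i_{\cJ,!}$ explicitly, whereas the paper leaves those to \cref{in_the_essential_image_of_induction} and to a one-line remark, respectively.
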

\begin{proof}
       If (1) holds, so do (2) in virtue of \cref{morphism_comes_from_graded}.
       Assume that (2) holds.
	From \cref{fiber_product_iso}, we are left to show that 
	$F^{\backslash \cI}\times_ {F^{\backslash \cI \cup \cL}} F^{\backslash \cL}$ split. 
	Since $F^{\backslash \cI}$ and $F^{\backslash \cL}$ split, \cref{in_the_essential_image_of_induction} ensures that the diagram
	\[
	\begin{tikzcd}
F^{\backslash \cI}  \arrow{r}{\alpha_{\cI,\cL} } &  F^{\backslash \cI \cup \cL} &  F^{\backslash \cL}     \arrow[l,swap, "\alpha_{\cL,\cI}" ]
\end{tikzcd}
\]
	is equivalent to 
	\[
	\begin{tikzcd}[column sep=50pt]
	i_{\cJ,!} \Gr(F^{\backslash \cI})  \arrow{r}{i_{\cJ,!} \Gr(\alpha_{\cI,\cL}) } & i_{\cJ,!} \Gr(F^{\backslash \cI \cup \cL}) &  F^{\backslash \cL}     \arrow[l,swap,"i_{\cJ,!}\Gr(\alpha_{\cL,\cI})" ]   \ .
\end{tikzcd}
\]
	Since the induction functor $i_{\cJ,!}$ commutes with finite limits, \cref{fiber_product_is_split}  thus follows.
\end{proof}

\section{Level structures}\label{sec:level_structures}

We now introduce an axiomatization of the notion of level structure from the theory of good meromorphic flat bundles \cite{Mochizuki1}.
The key concept is that of level morphism for a morphism of cocartesian fibrations in posets.

\subsection{Level morphisms}

We start with the following pair of definitions:

\begin{defin}\label{defin_level_morphism_poset}
	A morphism of posets $p \colon \cI \to \cJ$ is a \emph{level morphism} if it is surjective and for every $a, b \in \cI$, we have
	\[ p(a) < p(b) \text{ in } \cJ \Rightarrow a < b \text{ in } \cI \ . \]
	\personal{See \cref{stability_lim_colim_ISt} for the reason surjectivity is required.}
\end{defin}

\begin{defin}\label{defin_level_morphism}
	Let $\cX$ be an $\infty$-category and let $p \colon \cI \to \cJ$ be a morphism in $\PosFib_\cX$.
	We say that $p$ is a \emph{level morphism} if for every $x \in \cX$, the induced morphism $p_x \colon \cI_x \to \cJ_x$ is a level morphism.
\end{defin}

\begin{eg}
Let $\cI \to \cX$ be an object of $\PosFib$.
Then, the morphisms of cocartesian fibrations $\id_\cI  \colon \cI \to \cI$ and $\cI \to \cX \times \ast \simeq \cX$  are level morphisms. 
\end{eg}

\begin{rem}
The class of level morphisms is stable under pullback.
\end{rem}

\begin{construction}\label{Ip}
Fix an $\infty$-category $\cX$ and let $p \colon \cI \to \cJ$ be a level graduation morphism in $\PosFib_\cX$.
Fix also a presentable stable $\infty$-category $\cE$.
Recall from \cref{construction:I_p} the following pullback diagram
\[ \begin{tikzcd}
	\cI_p \arrow{r} \arrow{d}{\pi} & \cI \arrow{d}{p} \\
	\cJ^{\ens} \arrow{r} & \cJ \ ,
\end{tikzcd} \]
as well as the commutative diagram
\[ \begin{tikzcd}
	\exp_\cE(\cI / \cX) \arrow{r}{\cE^p_!} \arrow{d}{\expGr_p} & \exp_\cE( \cJ / \cX ) \arrow{d}{\expGr} \\
	\exp_\cE( \cI_p / \cX ) \arrow{r}{\cE^\pi_!} & \exp_\cE( \cJ^{\ens} / \cX ) \ .
\end{tikzcd} \]
supplied by \cref{prop:Graduation_induction}.
It induces a canonical transformation
\[ \phi_p \colon \exp_\cE( \cI / \cX ) \to  \exp_\cE( \cJ / \cX ) \times_{\exp_\cE(\cI^{\ens} / \cX)} \exp_\cE( \cI_p / \cX ) \]
in $\PrFibL_\cX$.
Observe as well that combining Propositions \ref{prop:functoriality_exponential}-(2) and \cref{prop:exponential_graduation_cocartesian_edges}, we see that all the functors in the above square preserve cocartesian edges.
Thus, the same goes for $\phi_p$.
Since  $\Sigma_\cX \colon \PrFibL_\cX \to \PrL$ is a right adjoint, $\Sigma_\cX(\phi_p)$ is a functor
	\[ \Sigma_\cX(\phi_p) \colon \Fun(\cI, \cE) \to  \Fun( \cJ, \cE ) \times_{\Fun(\cI^{\ens}, \cE)} \Fun(\cI_p, \cE) \ , \]
	and Propositions \ref{prop:exponential_vs_global_induction} and \ref{prop:exponential_graduation_local_global_behavior}-(2) imply that it canonically coincides with the functor induced by the commutative diagram
	\[ \begin{tikzcd}
		\Fun( \cI, \cE ) \arrow{r}{p_!} \arrow{d}{\Gr_p} & \Fun(\cJ, \cE) \arrow{d}{\Gr} \\
		\Fun(\cI_p, \cE) \arrow{r}{\pi_!} & \Fun(\cJ^{\ens}, \cE) \ .
	\end{tikzcd} \]
\end{construction}

\begin{prop}\label{prop:exponential_full_faithfulness_level_induction}
	The functors $\phi_p$ and $\Sigma_\cX(\phi_p)$ are fully faithful.
\end{prop}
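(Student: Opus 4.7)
The plan is to reduce full faithfulness of $\phi_p$ to a fiberwise statement about posets, then translate it via Yoneda into the assertion that a certain square in $\Fun(\cI,\cE)$ is a pullback, and finally verify the pullback by a fiber comparison that hinges on the level hypothesis.

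First, I would note that $\phi_p$ is a morphism in $\PrFibL_\cX$ that preserves cocartesian edges, as recorded in \cref{Ip} (combining \cref{prop:functoriality_exponential}-(2) and \cref{prop:exponential_graduation_cocartesian_edges}). For a morphism of cocartesian fibrations over $\cX$ preserving cocartesian edges, full faithfulness can be checked fiberwise via the standard mapping space decomposition. Moreover $\Sigma_\cX$ is right adjoint to $\Triv_\cX$, hence preserves fully faithful morphisms, so full faithfulness of $\Sigma_\cX(\phi_p)$ will follow from that of $\phi_p$. Using \cref{prop:functoriality_exponential}-(1) and \cref{prop:exponential_graduation_local_global_behavior}-(1) to identify the fiber of $\phi_p$ over $x \in \cX$, we reduce to the following assertion: for a level graduation morphism $p \colon \cI \to \cJ$ of posets, the functor
\[ \Fun(\cI, \cE) \longrightarrow \Fun(\cJ, \cE) \times_{\Fun(\cJ^{\ens}, \cE)} \Fun(\cI_p, \cE) \ , \qquad F \mapsto (p_!(F), \Gr_p(F)) \ , \]
with the coherence supplied by \cref{prop:Graduation_induction}, is fully faithful.

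Passing to right adjoints via $p_! \dashv p^*$, $\Gr_p \dashv \Gr_p^*$ (\cref{cor:section_Gr_commutes_with_colimits}), $\pi_! \dashv \pi^*$ and $\Gr \dashv \Gr^*$, a standard Yoneda argument shows that full faithfulness is equivalent to the assertion that, for every $G \in \Fun(\cI, \cE)$, the natural square
\[ \begin{tikzcd}
    G \arrow{r} \arrow{d} & p^* p_!(G) \arrow{d} \\
    \Gr_p^* \Gr_p(G) \arrow{r} & \Gr_p^* \pi^* \pi_! \Gr_p(G) \simeq p^* \Gr^* \Gr p_!(G)
\end{tikzcd} \]
is a pullback in $\Fun(\cI, \cE)$ (the equivalence on the lower right coming from \cref{prop:Graduation_induction} via the mate correspondence). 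Since $\Fun(\cI,\cE)$ is presentable stable, a square is a pullback if and only if the induced map on vertical fibers is an equivalence, so the problem reduces to a fiber comparison.

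The fibers are computed pointwise at each $a \in \cI$ using the definitions of $\Gr$ and $\Gr_p$ as cofibers, combined with cofinality. Concretely, the fiber $\fib(G(a) \to \Gr_p(G)(a))$ is canonically equivalent to $\colim_{\{a' \leq a \,:\, p(a') < p(a)\}} G(a')$, by the same cofinality argument used in \cref{ex_Gr} applied to the defining diagram of $\Gr_p$, while $\fib(p_!(G)(p(a)) \to \Gr p_!(G)(p(a)))$ is canonically equivalent to $\colim_{\{a' \,:\, p(a') < p(a)\}} G(a')$ by a parallel cofinality applied to the double colimit $\colim_{j < p(a)} p_!(G)(j)$. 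The level hypothesis enters precisely here: it guarantees that $p(a') < p(a) \Rightarrow a' < a$, so the two indexing subposets of $\cI$ coincide. A direct naturality check, tracing back through the two cofinality equivalences, then shows that the induced map between the two fibers is the identity, establishing the pullback and completing the proof.

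The main obstacle is the naturality check in the last paragraph: confirming that the two abstract identifications of the fibers with the same colimit $\colim_{a' : p(a') < p(a)} G(a')$ are compatible with the structure of the square, not merely abstractly equivalent. This reduces to tracing the Beck--Chevalley transformation of \cref{prop:Graduation_induction} together with the units of the relevant adjunctions through the explicit cofinality arguments, which is routine but requires care.
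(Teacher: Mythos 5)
Your argument converges on essentially the same intermediate square as the paper's, namely that for every $G \colon \cI \to \cE$ and every $a \in \cI$ the diagram
\[
\begin{tikzcd}
    G_a \arrow{r} \arrow{d} & (p_!(G))_{p(a)} \arrow{d} \\
    \Gr_p(G)_a \arrow{r} & \Gr(p_!(G))_{p(a)}
\end{tikzcd}
\]
is a pullback in $\cE$, but you reach it by a genuinely different route, and you verify it differently. The paper reduces to $\cX = \ast$ via \cref{lem:full_faithfulness_cocartesian _fibrations}, then to $F = \ev_{a,!}^\cI(E)$ (closure under colimits, \cref{prop:generation_by_split_functors}), which collapses the pullback of mapping spaces to the square above; you instead pass to right adjoints, apply Yoneda to reduce full faithfulness to the unit $G \to R_{\Fil}L_{\Fil}(G)$ being an equivalence, and evaluate pointwise using the formula for $\Gr_p^\ast$ from \cref{punctual_right_adjoint}. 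Both derivations are sound. For the verification, the paper again reduces, now in the $G$ variable, to $G = \ev_{b,!}^\cI(M)$ and concludes with a short case analysis in which the level hypothesis enters through comparisons of $p(a), p(b)$; you instead compute the vertical fibers as $\colim_{\{a' \leq a : p(a') < p(a)\}} G(a')$ and $\colim_{\{a' : p(a') < p(a)\}} G(a')$ (via the cofinality of \cref{ex_Gr} and a collapse of the double colimit for $\Gr \circ p_!$), and observe that the two index posets coincide by the level hypothesis. Your route is more conceptual and avoids case analysis, but, as you yourself flag, it shifts the entire burden onto the naturality of the identification of the two fibers with the same colimit — a coherence check across the Beck--Chevalley data of \cref{prop:Graduation_induction}, the counit of $i_< \dashv i_<^\ast$, and the cofinality equivalences — whereas the paper's reduction to evaluation functors makes such bookkeeping unnecessary (the fibers become either $0$, $M$, or the identity map on $M$, read off directly). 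One imprecision to correct: $\Sigma_\cX$ preserving fully faithful morphisms does not follow merely from being a right adjoint (and in fact \cref{construction_tensor_PrL_ex} states that $\Triv_\cX$ is left adjoint to $\Sigma_\cX^{\cocart}$, not to $\Sigma_\cX$); the correct citation is \cref{lem:full_faithfulness_cocartesian _fibrations}-(2), which is exactly the lemma the paper uses both for that point and for the fiberwise reduction.
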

\begin{proof}
	Thanks to \cref{lem:full_faithfulness_cocartesian _fibrations}, we are immediately reduced to prove the statement when $\cX$ is a point.
	In this case, unraveling the definitions, we have to check that for every pair of functors $F, G \colon \cI \to \cE$ the square
	\begin{equation}\label{eq:full_faithfulness_level_induction}
		\begin{tikzcd}
			\Map_{\Fun(\cI,\cE)}( F, G ) \arrow{r} \arrow{d} & \Map_{\Fun(\cJ,\cE)}( p_!(F), p_!(G) ) \arrow{d} \\
			\Map_{\Fun(\cI_p, \cE)}( \Gr_p(F), \Gr_p(G) ) \arrow{r} & \Map_{\Fun(\cJ^{\ens}, \cE)}( \Gr p_!(F), \Gr p_!(G) )
		\end{tikzcd}
	\end{equation}
	is a pullback.
	Notice that the collection of functors $F$ for which the statement is true is closed under colimits.
	Invoking \cref{prop:generation_by_split_functors} and \cref{eg:split_functor} we can therefore assume without loss of generality that $F \simeq \ev_{a,!}^{\cI}(E)$ for some $a \in \cI$ and some $E \in \cE$.
	Notice that
	\[ p_! \big( \ev_{a,!}^\cI(E) \big) \simeq \ev_{p(a),!}^{\cJ}(E) \]
	and that \cref{cor:compatibility_Gr_i_I!} supplies canonical identifications
	\[ \Gr_p \big( \ev_{a,!}^\cI(E) \big) \simeq \ev_{a,!}^{\cI_p}(E) \qquad \text{and} \qquad \Gr \big( \ev_{p(a),!}^{\cJ}(E) \big) \simeq \ev_{p(a),!}^{\cJ^{\ens}}(E) \ . \]
	Thus \eqref{eq:full_faithfulness_level_induction} can be rewritten as follows:
	\[ \begin{tikzcd}
		\Map_\cE( E, G_a ) \arrow{r} \arrow{d} & \Map_\cE( E, (p_!(G))_a ) \arrow{d} \\
		\Map_\cE( E, \Gr_p(G)_a ) \arrow{r} & \Map_\cE( E, \Gr( p_!(G) )_a ) \ ,
	\end{tikzcd} \]
	and to prove that it is a pullback becomes equivalent to prove that for every $a \in \cI$ and every $G \colon \cI \to \cE$, the square
	\begin{equation}\label{eq:full_faithfulness_level_induction_II}
		\begin{tikzcd}
			G_a \arrow{r} \arrow{d} & (p_!(G))_a \arrow{d} \\
			\Gr_p(G)_a \arrow{r} & \Gr(p_!(G))_a
		\end{tikzcd}
	\end{equation}
	is a pullback in $\cE$.
	Since $\cE$ is stable, we see that the collection of functors $G$ for which the above square is a pullback is closed under colimits.
	Invoking once again \cref{prop:generation_by_split_functors} and \cref{eg:split_functor}, we can suppose that $G \simeq \ev_{b,!}^\cI(M)$, for some $b \in \cI$ and $M \in \cE$.
	We now proceed by analysis case-by-case:
	\begin{enumerate}\itemsep=0.2cm
		\item \emph{Case $p(a) < p(b)$ or $p(a)$ and $p(b)$ incomparable.} Since $p$ is a level morphism, this implies respectively that $a < b$ or that $a$ and $b$ are incomparable.
		In either cases, \eqref{eq:full_faithfulness_level_induction_II} becomes
		\[ \begin{tikzcd}
			0 \arrow{r} \arrow{d} & 0 \arrow{d} \\
			0 \arrow{r} & 0
		\end{tikzcd} \]
		which is indeed a pullback.

		\item \emph{Case $p(a) > p(b)$.} Since $p$ is a level morphism, this implies that $a > b$.
		Then \eqref{eq:full_faithfulness_level_induction_II} becomes
		\[ \begin{tikzcd}
			M \arrow{r}{\id_M} \arrow{d} & M \arrow{d} \\
			0 \arrow{r} & 0
		\end{tikzcd} \]
		which is indeed a pullback.
		
		\item \emph{Case $p(a) = p(b)$.} We then distinguish two further cases:
		\begin{enumerate}[(i)]\itemsep=0.2cm
			\item \emph{Case $a \geq b$.} Then \eqref{eq:full_faithfulness_level_induction_II} becomes
			\[ \begin{tikzcd}
				M \arrow{r}{\id_M} \arrow{d}{\id_M} & M \arrow{d}{\id_M} \\
				M \arrow{r}{\id_M} & M \ ,
			\end{tikzcd} \]
			which is indeed a pullback.
			
			\item \emph{Case $a < b$ or $a$ and $b$ incomparable.} Then \eqref{eq:full_faithfulness_level_induction_II} becomes
			\[ \begin{tikzcd}
				0 \arrow{r} \arrow{d} & M \arrow{d}{\id_M} \\
				0 \arrow{r} & M \ ,
			\end{tikzcd} \]
			which is indeed a pullback.
		\end{enumerate}
	\end{enumerate}
	Thus, the conclusion follows.
\end{proof}

\subsection{Level induction}

The goal of this subsection is to prove the following result:

\begin{thm}\label{prop:Level_induction}
	Let $\cX$ be an $\infty$-category and let $p \colon \cI \to \cJ$ be a level graduation morphism in $\PosFib_\cX$.
	Then the square
	\[ \begin{tikzcd}
		\exp_\cE^{\PS}( \cI / \cX ) \arrow{r}{\cE^p_!} \arrow{d}{\expGr_p} & \exp_\cE^{\PS}( \cJ / \cX ) \arrow{d}{\expGr} \\
		\exp_\cE^{\PS}( \cI_p / \cX ) \arrow{r}{\cE^\pi_!} & \exp_\cE^{\PS}( \cJ^{\ens} / \cX )
	\end{tikzcd} \]
	is a pullback square in $\hCoCart_\cX$.
	In particular, the induced square
	\[ \begin{tikzcd}
		\St_{\cI, \cE} \arrow{r}{p_!} \arrow{d}{\Gr_p} & \St_{\cJ,\cE} \arrow{d}{\Gr} \\
		\St_{\cI_p, \cE} \arrow{r} & \St_{\cJ^{\ens}, \cE}
	\end{tikzcd} \]
	is a pullback square in $\CAT_\infty$.
\end{thm}

\begin{proof}
	The second half follows directly from the first since $\Sigma_\cX^{\cocart} \colon \hCoCart_\cX \to \CAT_\infty$ is a right adjoint.
	Moreover, the straightening/unstraightening equivalence immediately reduces the proof of the first half to the case where $\cX$ is a point.
	In this case, we have to show that the top horizontal arrow of the commutative square
\[ \begin{tikzcd}
	\Fun^{\PS}(\cI, \cE) \arrow{r} \arrow{d} & \Fun^{\PS}( \cJ, \cE ) \times_{\Fun^{\PS}(\cI^{\ens}, \cE)} \Fun^{\PS}(\cI_p, \cE) \arrow{d}\\
	\Fun(\cI, \cE) \arrow{r} & \Fun( \cJ, \cE ) \times_{\Fun(\cI^{\ens}, \cE)} \Fun(\cI_p, \cE)\ .
\end{tikzcd} \]
is an equivalence.
Note that the vertical arrows are fully faithful.
From \cref{prop:exponential_full_faithfulness_level_induction}, the bottom arrow is fully faithful.
Thus, so is the top horizontal arrow.
We are thus left to show essentially surjectivity.
From \cref{cor:compatibility_Gr_i_I!}, the lateral faces of the following cube 
$$
	\begin{tikzcd}[row sep=scriptsize, column sep=scriptsize]
	\Fun(\cI^{\ens}, \cE)\arrow[rr, "p^{\ens}_!" ] \arrow{dd} \arrow{rd}{i_{\cI !}}   &&\Fun(\cJ^{\ens}, \cE)\arrow{dd} \arrow{rd}{i_{\cJ !}}\\
		& \Fun^{\PS}(\cI, \cE)   &&\Fun^{\PS}(\cJ, \cE) \arrow[from=ll, crossing over, "p_!", near start]  \arrow{dd}\\
	\Fun(\cI^{\ens}, \cE)  \arrow{rd}{i_{\cI_p !}}   \arrow[rr, "p^{\ens}_!" , near end]   &&\Fun(\cJ^{\ens}, \cE)  \arrow{rd}{\id}   \\
	&   \Fun^{\PS}(\cI_p, \cE)  \arrow[rr, "\pi_!"]   \arrow[from=uu, crossing over, "\Gr_p", near end]  && \Fun(\cJ^{\ens}, \cE)   \arrow[from=uu, crossing over, "\Gr", near end]
	\end{tikzcd} 
$$
are commutative. 
Hence, all  faces are commutative. 
We thus obtain a commutative square
$$
\begin{tikzcd}
	\Fun(\cI^{\ens}, \cE) \arrow{d}{i_{\cI !}}\arrow{r}& 	\Fun(\cI^{\ens}, \cE)\times_{	\Fun(\cJ^{\ens}, \cE)  } 	\Fun(\cJ^{\ens}, \cE)  \arrow{d} \\
	\Fun^{\PS}(\cI, \cE)  \arrow{r} &	\Fun^{\PS}(\cI_p, \cE)  \times_{	\Fun(\cJ^{\ens}, \cE) } 	\Fun^{\PS}(\cJ, \cE)  
\end{tikzcd} 
$$
Since $i_{\cI_p !} \colon \Fun(\cI^{\ens}, \cE)  \to  \Fun^{\PS}(\cI_p, \cE) $  and  $ i_{\cJ !} \colon \Fun(\cJ^{\ens}, \cE) \to \Fun^{\PS}(\cJ, \cE)  $  are essentially surjective by definition, we deduce that so is the right vertical arrow of the above square. 
Since the top horizontal arrow is an equivalence,   the conclusion  follows.
\end{proof}

\subsection{Level induction and Stokes detection}

\begin{construction}\label{RFil}
Fix an $\infty$-category $\cX$ and let $p \colon \cI \to \cJ$ be a level graduation morphism in $\PosFib_\cX$.
Fix also a presentable stable $\infty$-category $\cE$.
We consider the following commutative cube:
\[ \begin{tikzcd}
	{} & \Fun(\cI, \cE) \arrow{rr}{p_!} \arrow{dd}[pos=0.7]{\Gr_p} & & \Fun(\cJ, \cE) \arrow{dd}{\Gr} \\
	\St_{\cI, \cE} \arrow[crossing over]{rr}[pos=0.6]{p_!} \arrow{dd}{\Gr_p} \arrow[hook]{ur} & & \St_{\cJ, \cE} \arrow[hook]{ur} \\
	{} & \Fun(\cI_p, \cE) \arrow{rr}[pos=0.6]{\pi_!} & & \Fun(\cJ^{\mathrm{set}}, \cE) \\
	\St_{\cI_p, \cE} \arrow[hook]{ur} \arrow{rr}{\pi_!} & & \St_{\cJ^{\mathrm{set}}, \cE} \arrow[hook]{ur} \arrow[crossing over,leftarrow]{uu}[pos=0.4]{\Gr}
\end{tikzcd} \]
Passing to fiber products on the front and back squares, we obtain the following commutative square:
\begin{equation}\label{eq:Beck_Chevalley_zig_zag}
	\begin{tikzcd}
	\St_{\cI, \cE}   \arrow{r}{L_{\St}} \arrow[hook]{d}{i}&  \St_{\cJ, \cE} \times_{\St_{\cI^{\mathrm{set}}, \cE}} \St_{\cI_p, \cE}  \arrow[hook]{d}{j}\\
			\Fun(\cJ, \cE)\arrow{r}{L_{\Fil}} & \Fun(\cJ, \cE) \times_{\Fun(\cJ^{\mathrm{set}}, \cE)} \Fun(\cI_p, \cE) \ .
	\end{tikzcd}
\end{equation}
Since 
\[
\cC \coloneqq \Fun(\cJ, \cE) \times_{\Fun(\cJ^{\mathrm{set}}, \cE)} \Fun(\cI_p, \cE)  
\]
is a finite limit in $\Cat_\infty$ whose transitions functors commute with filtered colimits, filtered colimits in $\cC$ are computed objectwise.
Since $\cE$ is stable, we deduce that colimits in $\cC$ are computed objectwise.
Hence, since 
$p_!  \colon  \Fun(\cI, \cE) \to \Fun(\cJ, \cE)$  and $\Gr_p \colon \Fun(\cI, \cE) \to \Fun(\cI_p, \cE)$ 
commute with colimits, so does $L_{\Fil}$\personal{(Mauro) Assume $\cC_\bullet \colon K \to \Cat_\infty$ is a finite (or compact) diagram and that all transition maps commute with filtered colimits. Then filtered colimits in $\lim_K \cC_\bullet$ are computed objectwise. One cannot drop the filtered assumption, because of the formula for mapping spaces in the limit. However in the current case, everything is stable. So finite colimits are automatic}.
Thus, $L_{\Fil}$ admits a right adjoint 
\[
R_{\Fil} \colon \Fun(\cJ, \cE) \times_{\Fun(\cJ^{\mathrm{set}}, \cE)} \Fun(\cI_p, \cE)\to  \Fun(\cI, \cE)  \ .
\]
\begin{rem}\label{formula_RFil}
By abstract nonsense, $R_{\Fil}$  sends a triple $G= (F_1, F_2, \alpha)$ to the pullback square
\[ \begin{tikzcd}
	R _{\Fil}(G) \arrow{r} \arrow{d} & p^\ast(F_1) \arrow{d} \\
	\Gr_p^\ast(F_2) \arrow{r}{\alpha} & \Gr_p^\ast \pi^\ast \Gr(F_1) 
\end{tikzcd} 
\]
in $\Fun(\cI, \cE)$. 
\end{rem}
From \cref{prop:Level_induction}, the functor $L_{\St}$ in (\ref{eq:Beck_Chevalley_zig_zag}) is an equivalence.
Let $R_{\St}$ be an inverse.
Then for every $G\in  \St_{\cJ, \cE} \times_{\St_{\cI^{\mathrm{set}}, \cE}} \St_{\cI_p, \cE}$, the chain of equivalences
	\begin{align*}
		\Map(R_{\St}(G ), R_{\Fil}(G)) & \simeq \Map(G, L_{\Fil}(R_{\Fil}(G)))& \text{By \cref{prop:exponential_full_faithfulness_level_induction}}  \\
		& \simeq  \Map(R_{\Fil}(G), R_{\Fil}(G))  & 
	\end{align*}
gives rise to a canonical morphism
\begin{equation}\label{RSt_TFil}
R_{\St}(G) \to  R_{\Fil}(G)   \ .
\end{equation}
\end{construction}

\begin{prop}\label{Stokes_detection}
Let $\cX$ be an $\infty$-category. 
Let $p \colon \cI\to \cJ$ be a level graduation morphism in $\PosFib^f$ over $\cX$.
Let $\cE$ be a presentable stable $\infty$-category.
Let $F \colon \cI \to \cE$ be a functor.
Then the following are equivalent : 
\begin{enumerate}\itemsep=0.2cm
\item $F$ is a Stokes functor.
\item $\Gr_p(F) \colon \cI_p \to \cE$ and $p_!(F) \colon \cJ \to \cE$ are Stokes functors.
\end{enumerate}
\end{prop}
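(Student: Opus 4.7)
The equivalence will follow essentially formally from the two preparatory results \cref{prop:Level induction} and \cref{prop:exponential_full_faithfulness_level_induction}. For (1) $\Rightarrow$ (2), I would simply invoke that $\Gr_p$ preserves Stokes functors by \cref{Gr_of_Stokes}, and that $p_!$ preserves Stokes functors by \cref{cor:stokes_functoriality}-(\ref{cor:stokes_functoriality:induction}) applied to $p$ viewed as a morphism in $\PosFib_\cX$. Both ingredients are already in place.

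The substantive direction is (2) $\Rightarrow$ (1), and the strategy is to assemble the two given Stokes functors into a single object of the fiber product appearing in \cref{prop:Level induction}, reconstruct a candidate Stokes functor $\widetilde{F}$ from that, and finally verify that $\widetilde{F}$ coincides with $F$ inside $\Fun(\cI,\cE)$. Concretely, under the hypothesis, \cref{cor:compatibility_Gr_p!} provides a canonical equivalence $\Gr(p_!(F)) \simeq \pi_!(\Gr_p(F))$, so the pair defines an object
\[ G \coloneqq \big(p_!(F),\ \Gr_p(F)\big) \in \St_{\cJ,\cE} \times_{\St_{\cJ^{\ens},\cE}} \St_{\cI_p,\cE}. \]
By \cref{prop:Level induction}, the induced functor
\[ L_{\St} \colon \St_{\cI,\cE} \xrightarrow{\ \sim\ } \St_{\cJ,\cE} \times_{\St_{\cJ^{\ens},\cE}} \St_{\cI_p,\cE} \]
is an equivalence, and I set $\widetilde{F} \coloneqq L_{\St}^{-1}(G) \in \St_{\cI,\cE}$. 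What remains is to exhibit a canonical equivalence $F \simeq \widetilde{F}$ inside $\Fun(\cI,\cE)$, after which $F$ will automatically be Stokes.

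For this last step, I will exploit the commutative square from \cref{RFil}
\[ \begin{tikzcd}
	\St_{\cI,\cE} \arrow{r}{L_{\St}} \arrow[hook]{d}{i} & \St_{\cJ,\cE} \times_{\St_{\cJ^{\ens},\cE}} \St_{\cI_p,\cE} \arrow[hook]{d}{j} \\
	\Fun(\cI,\cE) \arrow{r}{L_{\Fil}} & \Fun(\cJ,\cE) \times_{\Fun(\cJ^{\ens},\cE)} \Fun(\cI_p,\cE)
\end{tikzcd} \]
together with the full faithfulness of $L_{\Fil} = \Sigma_\cX(\phi_p)$ supplied by \cref{prop:exponential_full_faithfulness_level_induction}. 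Commutativity of the square, combined with $L_{\St}(\widetilde{F}) = G$, yields
\[ L_{\Fil}\big(i(\widetilde{F})\big) \simeq j(G) \simeq L_{\Fil}(F), \]
the second equivalence holding because both sides have underlying pair $(p_!(F), \Gr_p(F))$ equipped with the same canonical comparison. Full faithfulness of $L_{\Fil}$ then lifts this to an equivalence $F \simeq i(\widetilde{F})$, as desired. The main obstacle has therefore really been discharged already through the proofs of \cref{prop:Level induction} and \cref{prop:exponential_full_faithfulness_level_induction}; what remains here is essentially only the formal packaging.
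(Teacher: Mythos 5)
Your argument is correct and follows essentially the same route as the paper: both proofs reduce (2)$\Rightarrow$(1) to the pullback square of \cref{prop:Level induction} via the diagram \eqref{eq:Beck_Chevalley_zig_zag} from \cref{RFil}, form the Stokes functor $\widetilde{F} = R_{\St}(L_{\Fil}(F))$, and compare it against $F$. The one point of divergence is your final step: you observe $L_{\Fil}(i(\widetilde{F})) \simeq j(G) \simeq L_{\Fil}(F)$ and conclude by full faithfulness of $L_{\Fil}$ (which also reflects equivalences), whereas the paper instead produces the zig-zag morphism $\alpha \colon R_{\St}(L_{\Fil}(F)) \to F$ explicitly via \eqref{RSt_TFil}, checks that $\Gr_p(\alpha)$ is the identity, and invokes conservativity of $\Gr_p$ from \cref{cor:section_Gr_conservative}. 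Both conclusions are sound, and the inputs are the same; your version trades the conservativity argument for a direct appeal to \cref{prop:exponential_full_faithfulness_level_induction}, which is a legitimate (and arguably slightly cleaner) substitution, though you should make explicit that the comparison data in $j(G)$ and in $L_{\Fil}(F)$ agree because both arise from the same canonical equivalence $\pi_! \circ \Gr_p \simeq \Gr \circ p_!$ of \cref{cor:compatibility_Gr_p!}, rather than merely asserting it.
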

\begin{proof}
That (1) implies (2) follows from \cref{cor:stokes_functoriality} and \cref{Gr_of_Stokes}.
Assume that (2) holds.
Then $L_{\Fil}(F)$ lies in $\St_{\cI, \cE} \times_{\St_{\cI^{\mathrm{set}}, \cE}} \St_{\cI_p, \cE}$.
From (\ref{RSt_TFil})  applied to  $G\coloneqq L_{\Fil}(F)$, there is a zig-zag
\[
R_{\St}(L_{\Fil}(F)) \to  R_{\Fil}(L_{\Fil}(F))   \leftarrow F
\]
whose right arrow is an equivalence in virtue of \cref{prop:exponential_full_faithfulness_level_induction}.
Hence, there is a canonical morphism 
\[ 
\alpha \colon R_{\St}(L_{\Fil}(F))  \to F \ .
\]
Since $R_{\St}(L_{\Fil}(F))$ is a Stokes functor, we are left to show that $\alpha$ is an equivalence.
Since $\Gr_p(\alpha) \colon \Gr_p(R_{\St}(L_{\Fil}(F)))  \to \Gr_p(F)$ identifies canonically with the identity of $\Gr_p(F)$, we conclude from  \cref{cor:section_Gr_conservative} by conservativity of $\Gr_p$.
\end{proof}
\personal{We could axiomatize the situation by replacing poset finite by Grp is conservative}

\newpage

\begin{appendices}

\section{Compactness results for $\infty$-categories}

This part is to be understood as an appendix, collecting auxiliary results needed in the main body, mostly of categorical flavor.
At the same time, we use in a couple of points the language of the specialization equivalence that has been developed in \cref{sec:specialization_equivalence} to obtain important structural results for cocartesian fibrations, that are interesting in their own right.
See in particular \cref{compact_cocartesian_fibration}, \cref{prop:pullback_localization} and \cref{cor:relative_tensor_product}.

\subsection{Compactness in the unstable setting}

Inspired by the usual terminology in non-commutative geometry (see e.g.\ \cite[Chapter 11]{Lurie_SAG}), we introduce:

\begin{defin}\label{proper_category}
	We say that an $\infty$-category $\cC$ is
	\begin{enumerate}\itemsep=0.2cm
		\item \emph{compact} if it is a compact object in $\Cat_\infty$;
		
		\item \emph{proper} if for every $c,c'\in \cC$, the mapping space $\Map_{\cC}(c,c')$ is a compact object in $\Spc$.
	\end{enumerate}
\end{defin}

The first goal of this section is to prove the following:

\begin{thm}\label{compact_cocartesian_fibration}
	Let $\cX$ be an $\infty$-category and let $\cA \to \cX$ be a cocartesian fibration.
	Assume that $\cX$ is compact and that for every $x \in \cX$, the fiber $\cA_x$ is compact in $\Cat_\infty$.
	Then $\cA$ is compact in $\Cat_\infty$ as well.
\end{thm}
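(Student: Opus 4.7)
The strategy is to realize $\cA$ as a colimit of compact $\infty$-categories indexed by a compact $\infty$-category, and then invoke closure properties of $\Cat_\infty^\omega \subset \Cat_\infty$. By the ninja-Yoneda formula of \cref{cor:ninja_Yoneda}, there is a canonical equivalence
\[ \cA \simeq \int^\cX \cA \boxtimes \mathrm F_\cX(\id_\cX), \]
which, being a coend, realizes $\cA$ as the colimit over the twisted arrow category $\Tw(\cX)$ of the diagram $D \colon \Tw(\cX) \to \Cat_\infty$ obtained by pulling back $\Upsilon_\cA \boxtimes \cX_{\bullet/}$ along $\lambda$, so that $D(\gamma \colon x \to y) \simeq \cA_x \times \cX_{y/}$. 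It therefore suffices to check that $\Tw(\cX) \in \Cat_\infty^\omega$ and that $D$ factors through $\Cat_\infty^\omega$.

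For the compactness of $\Tw(\cX)$, one uses that the twisted arrow construction satisfies the simplicial formula $\Tw(\cX)_n \simeq \cX_{2n+1}$, from which it follows that $\Tw \colon \Cat_\infty \to \Cat_\infty$ preserves filtered colimits and sends finite $\infty$-categories to finite $\infty$-categories, and hence preserves the subcategory of compact objects. For the termwise compactness of $D$, the fiber $\cA_x$ is compact by hypothesis, and the slice $\cX_{y/}$ is compact because the formation of slices preserves finiteness of simplicial sets (a non-degenerate $n$-simplex of $\cX_{y/}$ corresponds to a non-degenerate $(n{+}1)$-simplex of $\cX$ starting at $y$); finally, the product of two compact $\infty$-categories is compact.

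Once both inputs are established, the conclusion follows from the general fact that $\Cat_\infty^\omega$ is closed under colimits indexed by compact $\infty$-categories: any compact $\cK$ is a retract of a finite $\infty$-category, and $\Cat_\infty^\omega$ is closed under finite colimits and retracts, so any $\cK$-indexed colimit of compact $\infty$-categories is itself compact. The main technical obstacle is the compactness of the indexing category $\Tw(\cX)$ and of the slice $\cX_{y/}$; both reduce to combinatorial preservation statements for finite simplicial sets. Should these prove delicate in full generality, a fallback approach is to induct on a finite cell decomposition of $\cX$, reducing to $\cX = \Delta^n$, in which case $\cA$ admits an explicit presentation as an iterated mapping cylinder of the functors classifying the cocartesian fibration, manifestly compact under the given hypotheses.
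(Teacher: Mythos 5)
Your strategy of decomposing $\cA$ via the coend formula $\cA \simeq \colim_{\Tw(\cX)}(\cA_x \times \cX_{y/})$ is a genuinely different route from the one taken in the paper, but it breaks down at the termwise compactness step: the slice $\cX_{y/}$ of a compact $\infty$-category is \emph{not} compact in general. For a concrete counterexample, let $\cX = B\bbN$ be the free $\infty$-category on a single non-invertible endomorphism. It is presented by the finite simplicial set $\Delta^1/\partial\Delta^1$ (a pushout of $\Delta^0 \leftarrow \Delta^0 \amalg \Delta^0 \to \Delta^1$), hence is compact in $\Cat_\infty$. Yet $\cX_{*/}$ is equivalent to the poset $(\bbN,\leq)$, which is the filtered colimit of the fully faithful inclusions $[0] \hookrightarrow [1] \hookrightarrow \cdots$ and is therefore not compact, since the identity of $(\bbN,\leq)$ does not factor through any $[n]$. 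Thus the coend formula exhibits the compact object $B\bbN$ as a $\Tw(B\bbN)$-indexed colimit of copies of $(\bbN,\leq)$, and the argument cannot close.

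The flaw in your reasoning is that the slice is computed at the level of a finite simplicial set $K$ presenting $\cX$. Such a $K$ need not be a quasicategory, in which case the simplicial slice $K_{y/}$ does not model the $\infty$-categorical slice: for $K = \Delta^1/\partial\Delta^1$ one finds $K_{*/} \simeq \Delta^1$, far from the correct answer $(\bbN,\leq)$. (Even when $K$ is a quasicategory, the stated bijection between non-degenerate simplices is off, because a non-degenerate $n$-simplex of $K_{y/}$ may well be $s_0$-degenerate as a simplex of $K$.) The proposed fallback via a finite cell decomposition of $\cX$ inherits the same obstruction, since slices do not commute with pushouts of the base. Your reduction of compact-indexed colimits of compacts to finite colimits and retracts is correct, and the twisted-arrow construction does preserve compactness, but neither rescues the argument. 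The paper's proof avoids slices entirely: it packages the fiberwise functor categories into the cartesian fibration $\cE^{\cA}_{\mathrm c}$ of \cref{notation:fake_exponential}, checks that $\colim_I \cE^{\cA}_{i,\mathrm c} \to \cE^{\cA}_{\mathrm c}$ is a fiberwise equivalence using compactness of each $\cA_x$, uses that the forgetful functor $\mathrm U_\cX \colon \Cart_\cX \to \Cat_{\infty/\cX}$ commutes with filtered colimits, and then applies $\Sigma_\cX$, which preserves filtered colimits because $\cX$ is compact.
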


\begin{rem}
	See \cite[Remark 6.5.4]{Hermitian_K_theory_I} for an analogous statement for finite $\infty$-categories instead of compact ones.
\end{rem}

The proof will use the specialization equivalence.
Before giving it, we need a couple of preliminaries.

\begin{lem}\label{lem:compact_categories_product}
	\hfill
	\begin{enumerate}\itemsep=0.2cm
		\item Compact objects in $\Cat_\infty$ are closed under finite products.
		
		\item An $\infty$-category $\cX \in \Cat_\infty$ is compact if and only if for every filtered diagram $\cC_\bullet \colon I \to \Cat_\infty$ with colimit $\cC$, the canonical map
		\begin{equation}\label{eq:strong_compactness}
			\colim_i \Fun(\cX, \cC_i) \to  \Fun(\cX, \cC)
		\end{equation}
		is an equivalence in $\Cat_\infty$.
	\end{enumerate}
\end{lem}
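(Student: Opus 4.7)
The plan is to deduce part (2) from part (1), which I prove directly using the cartesian closed structure on $\Cat_\infty$. For (1), I recall that the compact objects of $\Cat_\infty$ coincide with the retracts of \emph{finite} $\infty$-categories, i.e., finite colimits of representables $\Delta^n$; this is the standard description arising from $\Cat_\infty \simeq \Ind(\Cat_\infty^{\mathrm{fin}})$. Since retracts of compact objects are compact, it suffices to prove that the product of two finite $\infty$-categories is finite. Since $\Cat_\infty$ is cartesian closed, both $\cX \times (-)$ and $(-) \times \cY$ preserve colimits; writing $\cX \simeq \colim_i \Delta^{m_i}$ and $\cY \simeq \colim_j \Delta^{n_j}$ as finite colimit decompositions, we find
\[ \cX \times \cY \simeq \colim_{i,j}\, \Delta^{m_i} \times \Delta^{n_j}. \]
Each $\Delta^m \times \Delta^n$ is finite (its nerve is a finite simplicial set), so this expresses $\cX \times \cY$ as a finite colimit of finite $\infty$-categories, hence finite.

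For the forward direction of (2), fix $\cX$ compact and a filtered diagram $\cC_\bullet \colon I \to \Cat_\infty$ with colimit $\cC$. To show that \eqref{eq:strong_compactness} is an equivalence in $\Cat_\infty$, it suffices to check after applying $\Map_{\Cat_\infty}(\Delta^n, -)$ for each $n \geq 0$. Using that $\Delta^n$ is compact in $\Cat_\infty$ together with the adjunction identification $\Map_{\Cat_\infty}(\Delta^n, \Fun(\cX, \cD)) \simeq \Map_{\Cat_\infty}(\cX \times \Delta^n, \cD)$, the problem reduces to showing that the canonical map $\colim_i \Map_{\Cat_\infty}(\cX \times \Delta^n, \cC_i) \to \Map_{\Cat_\infty}(\cX \times \Delta^n, \cC)$ is an equivalence of spaces. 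This is immediate from part (1), which guarantees that $\cX \times \Delta^n$ is compact.

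For the converse, I would apply the maximal $\infty$-groupoid functor $(-)^\simeq \colon \Cat_\infty \to \Spc$ to both sides of \eqref{eq:strong_compactness}, obtaining the natural map $\colim_i \Map_{\Cat_\infty}(\cX, \cC_i) \to \Map_{\Cat_\infty}(\cX, \cC)$; if \eqref{eq:strong_compactness} is an equivalence then so is this map, which is precisely the compactness of $\cX$. The only subtle point is that $(-)^\simeq$ commutes with filtered colimits, which follows from the fact that filtered colimits in $\Cat_\infty$ are computed levelwise in the complete Segal space model, where $(-)^\simeq$ is evaluation at $[0]$. The main obstacle in the overall argument is a latent circularity between (1) and (2), since (2) is essentially an internalized form of compactness; the line of argument above breaks this circularity cleanly by exploiting the generating family $\{\Delta^n\}$ together with cartesian closedness of $\Cat_\infty$.
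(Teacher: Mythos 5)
Your proof is correct and matches the paper's argument in every essential respect: both establish (1) by reducing to retracts of finite $\infty$-categories and observing that $\Delta^m \times \Delta^n$ is finite, and both establish (2) via the adjunction $-\times\cX \dashv \Fun(\cX,-)$ together with compact generation of $\Cat_\infty$ by the simplices $\Delta^n$, with the converse obtained by applying $\Map_{\Cat_\infty}(\ast,-)$ (your $(-)^\simeq$). One small remark: your justification that $(-)^\simeq$ commutes with filtered colimits via the complete Segal space model works, but the paper's observation that $(-)^\simeq \simeq \Map_{\Cat_\infty}(\ast,-)$ and $\ast=\Delta^0$ is compact is the cleaner route and avoids any model-dependence.
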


\begin{proof}
	First we prove (1).
	Fix therefore two compact $\infty$-categories $\cX$ and $\cY$.
	We can suppose that $\cX$ and $\cY$ are retract of finite $\infty$-categories $\cX'$ and that $\cY'$, respectively.
	Then $\cX \times \cY$ is a retract of $\cX' \times \cY$, which in turn is a retract of $\cX' \times \cY'$.
	It is therefore sufficient to prove that the latter is again a finite $\infty$-category.
	This latter statement follows immediately from the fact that the products $\Delta^n \times \Delta^m$ are again finite.
	
	\medskip
	
	We now prove point (2). Since $\ast$ is compact, we see that the stated condition implies the compactness of $\cX$ by applying $\Map_{\Cat_\infty}(\ast, -)$ to \eqref{eq:strong_compactness}.
	As for the converse, since $\Cat_\infty$ is compactly generated by the standard simplexes and since $- \times \cX \dashv \Fun(\cX, -)$, it is in fact enough to prove that for every $[n] \in \mathbf \Delta$, the canonical map
	\[ \colim_i \Map_{\Cat_\infty}( \Delta^n \times \cX, \cC_i ) \to  \Map_{\Cat_\infty}( \Delta^n \times \cX, \cC ) \]
	is an equivalence.
	Since point (1) guarantees that $\Delta^n \times \cX$ is again compact, the conclusion follows.
\end{proof}

\begin{lem}\label{lem:filtered_colimits_cartesian_fibrations}
	Let $\cX$ be an $\infty$-category.
	Then:
	\begin{enumerate}\itemsep=0.2cm
		\item the forgetful functor
		\[ \mathrm U_{\cX} \colon \Cart_\cX \to \Cat_{\infty / \cX} \]
		commutes with filtered colimits;
		
		\item if $\cX$ is compact in $\Cat_\infty$, then the functor
		\[ \Sigma_\cX \colon \Cat_{\infty / \cX} \to  \Cat_\infty \]
		commutes with filtered colimits.
	\end{enumerate}
\end{lem}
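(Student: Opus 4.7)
I would start here, since it follows formally from compactness of $\cX$. The key observation is that $\Sigma_\cX$ can be written as a pullback
\[ \Sigma_\cX(p \colon \cA \to \cX) \simeq \Fun(\cX, \cA) \times_{\Fun(\cX, \cX)} \{\id_\cX\}. \]
Filtered colimits in $\Cat_{\infty/\cX}$ are created by the forgetful functor to $\Cat_\infty$, so for a filtered diagram $\cA_\bullet \colon I \to \Cat_{\infty/\cX}$ one has $\colim_i \cA_i$ computed in $\Cat_\infty$, equipped with its natural structure map. By compactness of $\cX$ and Lemma~\ref{lem:compact_categories_product}-(2), the functor $\Fun(\cX,-) \colon \Cat_\infty \to \Cat_\infty$ commutes with filtered colimits, so $\Fun(\cX, \colim_i \cA_i) \simeq \colim_i \Fun(\cX, \cA_i)$. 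Combining this with the fact that filtered colimits commute with finite limits in $\Cat_\infty$ (in particular with the fiber product defining $\Sigma_\cX$), one obtains $\Sigma_\cX(\colim_i \cA_i) \simeq \colim_i \Sigma_\cX(\cA_i)$.

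\textbf{Plan for part (1).} Here I would exploit the straightening equivalence $\Cart_\cX \simeq \Fun(\cX\op, \Cat_\infty)$, under which filtered colimits in $\Cart_\cX$ correspond to pointwise filtered colimits in $\Fun(\cX\op, \Cat_\infty)$. Given a filtered diagram $\cA_\bullet \colon I \to \Cart_\cX$ with colimit $\cA \in \Cart_\cX$, I would let $\cB \coloneqq \colim_i \mathrm U_\cX(\cA_i)$ be the colimit computed in $\Cat_{\infty/\cX}$ and produce a canonical comparison morphism $\phi \colon \cB \to \mathrm U_\cX(\cA)$ in $\Cat_{\infty/\cX}$. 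The plan is to check that $\phi$ is an equivalence in two steps.

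First, I would verify that $\phi$ is fiberwise an equivalence. For every $x \in \cX$, the fiber $\mathrm U_\cX(\cA)_x$ coincides with the pointwise straightening value, namely $\colim_i (\cA_i)_x$. On the other hand, since filtered colimits commute with the finite limit defining the fiber of a functor over a point, $\cB_x \simeq (\colim_i \cA_i) \times_\cX \{x\} \simeq \colim_i (\cA_i)_x$. Hence $\phi_x$ is an equivalence.

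The main obstacle is that fiberwise equivalence is not by itself enough to conclude that $\phi$ is an equivalence of $\infty$-categories over $\cX$; one needs both sides to be cartesian fibrations for this inference to go through. The crucial step is therefore to show that $\cB$ is itself a cartesian fibration over $\cX$ and that each structural map $\mathrm U_\cX(\cA_i) \to \cB$ preserves cartesian edges. I would check the cartesian lifting property directly: given $\gamma \colon x \to y$ in $\cX$ and $b \in \cB_y$, the explicit description of filtered colimits in $\Cat_\infty$ shows that $b$ comes from some $b_i \in (\cA_i)_y$; each $\cA_i$ being cartesian produces a $p_i$-cartesian lift $\alpha_i \colon a_i \to b_i$ of $\gamma$, and the image $\alpha$ of $\alpha_i$ in $\cB$ should be $p_\cB$-cartesian because cartesian edges are characterized by a mapping-space criterion that is preserved by filtered colimits (filtered colimits commuting with the relevant finite limits in $\Spc$). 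Once $\cB$ is known to be a cartesian fibration and $\phi$ to be a morphism in $\Cart_\cX$ that is a fiberwise equivalence, $\phi$ is an equivalence in $\Cart_\cX$ and hence in $\Cat_{\infty/\cX}$. As a technical alternative, one may instead invoke the model of marked simplicial sets over $\cX$ (cf.~\cite[\S3.1.3]{HTT}), in which the property of being a fibrant object is preserved under filtered colimits, giving the same conclusion.
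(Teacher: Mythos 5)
Your proposal is correct. For part (2), you take a genuinely different route from the paper: you use the formula $\Sigma_\cX(\cA) \simeq \Fun(\cX,\cA) \times_{\Fun(\cX,\cX)} \{\id_\cX\}$, together with the fact that $\Fun(\cX,-)$ preserves filtered colimits by compactness and that filtered colimits commute with pullbacks in $\Cat_\infty$. The paper instead observes that $\Sigma_\cX$ is right adjoint to $- \times \cX \colon \Cat_\infty \to \Cat_{\infty/\cX}$ and reduces to checking that the left adjoint preserves compact objects, which it gets from \cref{lem:compact_categories_product}-(1) and from the fact that compactness in the slice $\Cat_{\infty/\cX}$ is detected on the underlying $\infty$-category. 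Both are short and valid; yours makes the exactness of filtered colimits in $\Cat_\infty$ the central ingredient, which is true (and also used implicitly by the paper for part (1)), but is worth flagging with a justification since it is not quite textbook material.

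For part (1), your plan is essentially the paper's, recast: you phrase it via a comparison map $\phi$ and a fiberwise-equivalence check, whereas the paper verifies directly that $\cB \coloneqq \colim_i \mathrm U_\cX(\cA_i)$, computed in $\Cat_{\infty/\cX}$, already has the universal property of the colimit in $\Cart_\cX$. Both hinge on the identical core verification: that $\cB$ is a cartesian fibration whose cartesian edges are precisely the images of cartesian edges from the stages $\cA_i$. Your sketch of this step (find $b_i$ lifting $b$, take a $p_i$-cartesian lift, pass to the colimit via a mapping-space criterion using compactness) is the right idea; the paper carries it out by checking that a cube of mapping objects remains a pullback in the colimit, using compactness of $\Delta^1$ and $\Delta^{n+2}$. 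One small omission in your write-up: once $\cB$ is shown to be cartesian with the stated cartesian edges, you should explicitly record that $\phi$ preserves cartesian edges (this follows because the structure maps $\cA_i \to \cA$ live in $\Cart_\cX$ and cartesian edges of $\cB$ come from the $\cA_i$); without this the ``fiberwise equivalence of cartesian fibrations implies equivalence'' criterion does not apply. Your alternative via the marked model structure of \cite[\S3.1.3]{HTT} is correct and would streamline this verification.
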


\begin{proof}
	Notice that $\mathrm U_{\cX}$ is by definition faithful.
	Thus, to prove (1) it is enough to prove that for any filtered diagram $\cC_\bullet \colon I \to \Cart_\cX$, the following two statements hold:
	\begin{enumerate}[(i)]\itemsep=0.2cm
		\item the colimit $p \colon \cC \to \cX$ of $\mathrm U_\cX(\cC_\bullet) \colon I \to \Cat_{\infty / \cX}$ is a cartesian fibration;
		
		\item for every other cartesian fibration $q \colon \cD \to \cX$ equipped with a cone $f_\bullet \colon \cC_\bullet \to \cD$ in $\Cart_\cX$, the induced functor $f \colon \cC \to \cD$ preserves cartesian edges.
	\end{enumerate}
	For (i), it is enough to apply the definition.
	First, since the horns $\Lambda^n_i$ and the simplexes $\Delta^n$ are compact in $\Cat_\infty$, we see that inner fibrations are stable under filtered colimits.
	\personal{(Mauro) Notice that since $\sSet_{\mathrm{Joyal}}$ is a combinatorial and compactly generated model category, filtered colimits are automatically homotopy colimits. So we can reason with colimits in $\sSet$. I insist to leave this as a personal comment, because it's bad taste to recall that $\Cat_\infty$ is modeled over $\sSet$.}
	Second, write $\lambda_i \colon \cC_i \to \cC$ for the canonical maps.
	Since the diagram was filtered, we see that every object $c \in \cC$ is of the form $\lambda_i(c_i)$ for some $i \in I$ and some $c_i \in \cC_i$.
	Let $\alpha \colon x \to p(c)$ be a morphism in $\cX$.
	Since $p(c) \simeq p(\lambda_i(c_i)) \simeq p_i(c_i)$ and since $p_i$ is a cartesian fibration, we can find a $p_i$-cartesian lift $\beta_i \colon d_i \to c_i$ of $\alpha$ inside $\cC_i$.
	Set $d \coloneqq \lambda_i(d_i)$ and $\beta \coloneqq \lambda_i(\beta_i)$.
	We claim that $\beta$ is a $p$-cartesian lift of $\alpha$.
	To see this, for every $(j, u \colon i \to j) \in I_{i/}$, write $\lambda_u \colon \cC_i \to \cC_j$ for the induced functor.
	Consider then the following commutative square:
	\[ \begin{tikzcd}
		\colim_{(j,u) \in I_{i/}} \cC_{j /\lambda_u(\beta_i)} \arrow{r} \arrow{d} & \colim_{(j,u) \in I_{i/}} \big( \cC_{j /\lambda_u(d_i)} \times_{\cX_{/p(c)}} \cX_{/\alpha} \big) \arrow{d} \\
		\cC_{/\beta} \arrow{r} & \cC_{/d} \times_{\cX_{/p(c)}} \cX_{/\alpha} \ ,
	\end{tikzcd} \]
	where the colimits are computed in $\Cat_\infty$.
	Since $\lambda_u$ preserves cartesian edges, we see that the top horizontal map is an equivalence.
	It is therefore enough to prove that the vertical arrows are equivalence.
	Since the colimit is filtered, it commutes with fiber products, and therefore we are reduced to check that the canonical functors
	\[ \colim_{(j,u) \in I_{i/}} \cC_{j /\lambda_u(\beta_i)} \to  \cC_{/\beta} \qquad \text{and} \qquad \colim_{(j,u) \in I_{i/}} \cC_{j /\lambda_u(d_i)} \to  \cC_{/d} \]
	are equivalences.
	We deal with the one on the left, as the other follows by a similar argument.
	Since $\Cat_\infty$ is compactly generated by the standard simplexes, it is enough to prove that for every $\Delta^n$, the canonical map
	\[ \colim_{(j,u) \in I_{i/}} \Map_{\Cat_\infty}( \Delta^n, \cC_{j / \lambda_u(\beta_i)}) \to  \Map_{\Cat_\infty}( \Delta^n, \cC_{/\beta} ) \]
	is an equivalence.
	Unraveling the definition of the comma category and using the identification $\Delta^n \star \Delta^1 \simeq \Delta^{n+2}$, we see that this map is canonically identified with the upper left diagonal map in the following commutative cube:
	\[ \begin{tikzcd}[column sep=tiny]
		{} & \Map_\beta(\Delta^{n+2}, \cC) \arrow{rr} \arrow{dd} & & \Map(\Delta^{n+2}, \cC) \arrow{dd}{\ev_{n+1,n+2}} \\
		\colim_{(j,u) \in I_{i/}} \Map_{\lambda_u(\beta_i)}( \Delta^{n+2}, \cC_j ) \arrow[crossing over]{rr} \arrow{ur} \arrow{dd} & & \colim_{(j,u) \in I_{i/}} \Map(\Delta^{n+2}, \cC_j) \arrow{ur} \\
		{} & \ast \arrow{rr}[pos=0.45]{\beta} & & \Map( \Delta^1, \cC ) \\
		\ast \arrow[equal]{ur} \arrow{rr}{\lambda_u(\beta_j)} & & \colim_{(j,u) \in I_{i/}} \Map(\Delta^1, \cC_j) \arrow{ur} \arrow[leftarrow,crossing over]{uu}[swap,pos=0.65]{\ev_{n+1,n+2}} \ .
	\end{tikzcd} \]
	Notice that the front and the back squares are pullback by definition.
	It is therefore sufficient to check that the other diagonal maps are equivalences, and this follows directly from the fact that both $\Delta^{n+2}$ and $\Delta^1$ are compact in $\Cat_\infty$.
	This proves at the same time that $p \colon \cC \to \cX$ is a cartesian fibration, and that $p$-cartesian edges are exactly the morphisms of the form $\lambda_i(\beta_i)$ for some $p_i$-cartesian edge $\beta_i$ inside $\cC_i$.
	In particular, (ii) follows immediately.
	
	\medskip
	
	We now prove (2).
	Notice that $\Sigma_\cX$ is right adjoint to the functor $- \times \cX \colon \Cat_\infty \to \Cat_{\infty / \cX}$.
	It is therefore enough to verify that $- \times \cX$ commutes with compact objects.
	Recall from \cite[Lemma A.3.10]{Beyond_conicality} that an object in $\Cat_{\infty / \cX}$ is compact if and only if it is compact in $\Cat_\infty$ after forgetting the structural map to $\cX$.
	Since $\cX$ itself is compact, the conclusion follows from \cref{lem:compact_categories_product}-(1).
\end{proof}

We are now ready for:

\begin{proof}[Proof of \cref{compact_cocartesian_fibration}]
	Fix a filtered diagram $\cE_\bullet \colon I \to \Cat_\infty$ with colimit $\cE$.
	In virtue of \cref{lem:compact_categories_product}-(2), we have to prove that the canonical map
	\[ \colim_{I} \Fun(\cA, \cE_i) \to  \Fun(\cA, \cE) \]
	is an equivalence.
	Write $\Upsilon_{\cA}$ for the straightening of $\cA$ and recall from \cref{notation:fake_exponential} that we write $\cE^\cA_{\mathrm c}$ for the \emph{cartesian} fibration classifying the functor
	\[ \Fun(\Upsilon_\cA(-), \cE) \colon \cX\op \to  \Cat_\infty \ . \]
	We similarly define the cartesian fibrations $\cE^\cA_{i, \mathrm c}$.
	Consider the canonical map
	\[ \colim_I \cE^\cA_{i, \mathrm c} \to  \cE^\cA_{\mathrm c} \]
	in $\Cart_\cX$.
	To see that this map is an equivalence, it is enough to test that for each $x \in \cX$, the induced map between the fibers at $x$ is an equivalence.
	However, at the level of fibers at $x$, this map is canonically identified with
	\[ \colim_I \Fun(\cA_x, \cE_i) \to  \Fun(\cA_x, \cE) \ . \]
	Since $\cA_x$ is compact by assumption, we see \cref{lem:compact_categories_product}-(2) guarantees that this map is indeed an equivalence.
	
	\medskip
	
	We can now apply \cref{lem:filtered_colimits_cartesian_fibrations}-(1) to deduce that the canonical map
	\[ \colim_I \cE^\cA_{i, \mathrm c} \to  \cE^\cA_{\mathrm c} \]
	is an equivalence also when the colimit is computed in $\Cat_{\infty / \cX}$.
	At this point, the conclusion follows from the identifications
	\[ \Fun(\cA, \cE_i) \simeq \Sigma_\cX( \cE^\cA_{i, \mathrm c} ) \qquad \text{and} \qquad \Fun(\cA, \cE) \simeq \Sigma_\cX(\cE^\cA_{\mathrm c}) \ , \]
	and \cref{lem:filtered_colimits_cartesian_fibrations}-(2).
\end{proof}

\subsection{Compact and proper (co)limits}

One of the most fundamental results in category theory is the commutation of filtered colimits with finite limits in $\mathbf{Set}$ and in $\Spc$.
In fact, the finiteness condition can be relaxed, using various combinations of compactness and properness.

\begin{lem}\label{finite_colimit_and_limit_stable}
	Let $\cE$ be a stable complete and cocomplete $\infty$-category.
	Let $\cC$ be a compact $\infty$-category.
	Then:
	\begin{enumerate}\itemsep=0.2cm
		\item the functor $\colim_{\cC} \colon \Fun(\cC, \cE)\to \cE$ commutes with limits.
		\item the functor $\lim_{\cC} \colon \Fun(\cC, \cE)\to \cE$ commutes with colimits.
	\end{enumerate}
\end{lem}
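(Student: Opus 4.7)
The plan is to first reduce (2) to (1) by duality: applying (1) to $\cE\op$, which is again stable, complete and cocomplete, yields that $\colim_\cC \colon \Fun(\cC, \cE\op) \to \cE\op$ commutes with limits, and unwinding opposites gives (2) for $\cE$. It then suffices to prove (1), and I would do so by showing that the class of $\infty$-categories $\cC$ for which $\colim_\cC$ commutes with all limits in $\cE$ contains every finite $\infty$-category and is closed under retracts in $\Cat_\infty$. Since compact objects of $\Cat_\infty$ are precisely retracts of finite $\infty$-categories, this completes the reduction.

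For the finite case I would induct on a cellular presentation of a finite $\infty$-category as an iterated pushout of boundary inclusions $\partial \Delta^n \hookrightarrow \Delta^n$ starting from $\emptyset$. The two base cases are immediate: $\colim_{\emptyset}$ is constant with value the zero object of $\cE$, and $\colim_{\Delta^n}$ is evaluation at the terminal vertex $n$, both tautologically commuting with all limits. For the inductive step with $\cC = \cA \sqcup_\cB \cA'$, the colimit $\colim_\cC F$ is computed as the pushout in $\cE$ of $\colim_\cA F|_\cA \leftarrow \colim_\cB F|_\cB \rightarrow \colim_{\cA'} F|_{\cA'}$. Stability of $\cE$ enters here: any pushout square in a stable $\infty$-category is simultaneously a pullback square, so the pushout functor agrees with the suspension of a fiber of the associated matrix map; since suspension is an equivalence and fibers are limits, pushouts in $\cE$ commute with arbitrary limits, propagating the property from $\cA$, $\cB$, $\cA'$ to $\cC$.

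For the retract step, suppose $\cC$ is a retract of a finite $\cC'$ via $i \colon \cC \to \cC'$ and $r \colon \cC' \to \cC$ with $r \circ i \simeq \id_\cC$. I would exhibit $\colim_\cC F$ as a natural retract of $\colim_{\cC'} r^* F$ by constructing maps
\[
\colim_\cC F \xrightarrow{\alpha_F} \colim_{\cC'} r^* F \xrightarrow{\beta_F} \colim_\cC F,
\]
where $\alpha_F$ uses the identification $F \simeq i^* r^* F$ together with restriction of cocones along $i$, while $\beta_F$ is induced by the cocone on $r^* F$ obtained by post-composing the universal cocone of $\colim_\cC F$ with $r$. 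The relation $r \circ i \simeq \id_\cC$ forces $\beta_F \circ \alpha_F \simeq \id_{\colim_\cC F}$, naturally in $F$. The comparison map $\colim_\cC \lim_\cD G \to \lim_\cD \colim_\cC G$ then becomes a natural retract of the corresponding map for $\cC'$ applied to $r^* \circ G$; since equivalences are closed under retracts in any $\infty$-category, the finite case implies the compact one.

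The main obstacle I anticipate is precisely the retract step: exhibiting the homotopy $\beta_F \circ \alpha_F \simeq \id$ coherently in $F$ and in $\cD$-shaped diagrams $G$, so that it intertwines the comparison morphisms for $\cC$ and $\cC'$. Once this coherence is in place, the finite cellular induction and the duality passage to (2) are essentially formal.
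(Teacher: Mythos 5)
Your proof is correct, but it takes a genuinely different route from the paper's. The paper's argument is very short: after the duality reduction it observes that $\lim_\cC$, being a right adjoint, automatically commutes with all limits, hence with finite colimits by stability; since every small colimit is a filtered colimit of finite ones, it suffices to check filtered colimits, and for that it cites an external result (\cite[Lemma 6.7.4]{Exodromy_coefficient}) about $\cC$-shaped limits commuting with filtered colimits when $\cC$ is compact. Stability thus enters only through the finite-colimit/finite-limit identification. Your argument is instead self-contained and attacks $\colim_\cC$ directly: a cellular induction over finite simplicial sets, with stability entering through the fact that pushouts in $\cE$ are pullbacks, followed by the retract step. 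Both approaches are valid; the paper's is shorter at the cost of an external dependency, while yours makes visible exactly which finite colimit constructions rely on stability.

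On the coherence issue you flag in the retract step: it is not actually an obstacle. The maps $\alpha \colon \colim_\cC \Rightarrow \colim_{\cC'} \circ r^\ast$ and $\beta \colon \colim_{\cC'} \circ r^\ast \Rightarrow \colim_\cC$ are genuine natural transformations of functors $\Fun(\cC,\cE) \to \cE$ ($\beta$ is the canonical comparison for the functor $r$, and $\alpha$ is the canonical comparison for $i$ applied to $r^\ast F$ together with the identification $i^\ast r^\ast \simeq \id$), and the equivalence $\beta \circ \alpha \simeq \id$ holds at the level of natural transformations because both sides corepresent the same cocone. The comparison morphism $\theta_\cC \colon \colim_\cC \circ \lim_\cD \Rightarrow \lim_\cD \circ \colim_\cC$ is itself natural in the functor $\colim_\cC$; in particular it is compatible with $\alpha$ and $\beta$, so $\theta_\cC$ is a retract of $\theta_{\cC'}$ in the arrow $\infty$-category, and retracts of equivalences are equivalences. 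One small point worth making explicit in a final write-up: $\partial\Delta^n$ is not itself a quasi-category, so the base case of the cellular induction should be organized as a double induction on dimension so that $\partial\Delta^n$ is known to lie in the good class before attaching the $n$-cell.
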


\begin{proof}
	The two statements are dual to each other.
	It is therefore enough to prove the second.
	Because $\cE$ is stable, it is enough to prove that $\lim_\cC$ commutes with filtered colimits, for which we refer to \cite[Lemma 6.7.4]{Exodromy_coefficient}.
\end{proof}

\begin{lem}\label{compactness_Kan_extension_category}
	Let $f \colon \cA \to \cB$ be a functor between $\infty$-categories.
	Let $b \in \cB$. 
	Assume that $\cA$ is compact and that for every $b' \in \cB$, the mapping space $\Map_\cB(b,b')$ is compact.
	Then both $\cA \times_{\cB} \cB_{b/}$ and $\cA \times_\cB \cB_{/b}$ are compact.
\end{lem}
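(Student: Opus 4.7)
The plan is to realize each of the two fibre products as the total space of a (co)cartesian fibration over $\cA$ whose fibres are compact, and then invoke \cref{compact_cocartesian_fibration}. Recall that the canonical projection $\cB_{b/} \to \cB$ is a left fibration whose straightening is the corepresentable functor $\Map_\cB(b,-) \colon \cB \to \Spc$; pulling back along $f$ exhibits
\[ q \colon \cA \times_\cB \cB_{b/} \to \cA \]
as a left fibration (hence a cocartesian fibration) whose fibre at $a \in \cA$ is $\Map_\cB(b,f(a))$. Dually, $\cB_{/b} \to \cB$ is a right fibration with fibre $\Map_\cB(-,b)$, so after taking opposites the projection $\cA \times_\cB \cB_{/b} \to \cA$ corresponds to a left fibration over $\cA\op$ with fibre $\Map_\cB(f(a),b)\op$ at $a$.

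The next step is to transfer compactness of spaces into compactness of $\infty$-categories. The functor $(-)^{\simeq} \colon \Cat_\infty \to \Spc$ commutes with filtered colimits (as filtered colimits of quasi-categories can be computed in $\sSet$ and then taking the maximal Kan sub-complex preserves such colimits), so its left adjoint $\Spc \hookrightarrow \Cat_\infty$ preserves compact objects. Thus the hypothesis on mapping spaces ensures that each fibre of $q$ is compact in $\Cat_\infty$; the same holds for the opposite fibration after observing that $(-)\op \colon \Cat_\infty \to \Cat_\infty$ is an auto-equivalence and therefore preserves compact objects, and that opposites of compact spaces are compact. Since $\cA$ (hence $\cA\op$) is compact by hypothesis, \cref{compact_cocartesian_fibration} applies and yields the compactness of both $\cA \times_\cB \cB_{b/}$ and (after taking opposites once more) of $\cA \times_\cB \cB_{/b}$.

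The only real obstacle is the setup phase, namely identifying the pulled-back slice categories with unstraightenings of corepresentable/representable functors and hence as left/right fibrations with the expected fibres; the rest is a direct application of \cref{compact_cocartesian_fibration}. A minor bookkeeping point is that the hypothesis on mapping spaces must be read in both directions (i.e.\ as providing compactness of $\Map_\cB(b,b')$ for the $\cB_{b/}$-case and of $\Map_\cB(b',b)$ for the $\cB_{/b}$-case), which is implicit in the symmetric statement of the conclusion; once one grants compact mapping spaces in the relevant direction, no further subtlety appears and the argument is a two-line application of the global compactness theorem for cocartesian fibrations proven earlier in the section.
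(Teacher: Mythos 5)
Your proof is correct and follows essentially the same route as the paper's: realize $\cA \times_\cB \cB_{b/} \to \cA$ as a left (hence cocartesian) fibration with fibre $\Map_\cB(b,f(a))$ over $a$, observe that those fibres are compact $\infty$-categories, invoke \cref{compact_cocartesian_fibration}, and deal with $\cA \times_\cB \cB_{/b}$ by passing to opposites. The paper compresses the passage from ``compact as a space'' to ``compact as an object of $\Cat_\infty$'' into a single phrase, whereas you spell out that the inclusion $\Spc \hookrightarrow \Cat_\infty$ is left adjoint to $(-)^{\simeq}$ and that the latter commutes with filtered colimits; that is exactly the right justification. Your observation that the stated hypothesis literally gives compactness only of $\Map_\cB(b,-)$, while the reduction by opposites for $\cA \times_\cB \cB_{/b}$ silently also uses compactness of $\Map_\cB(-,b)$, is a genuine and worthwhile catch --- the paper implicitly reads the hypothesis two-sided (as is indeed the case in the sole application, \cref{induction_limit_stable}, where $\cB_x$ is assumed proper), and it would be cleaner to state the lemma with $\cB$ ``proper at $b$'' in both directions.
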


\begin{proof}
	Replacing $\cA$ and $\cB$ by $\cA\op$ and $\cB\op$ respectively we see that it is enough to argue that $\cA \times_\cB \cB_{b/}$ is compact.
	For this, observe first that since $\cB_{b/}\to \cB$ is a cocartesian fibration, the pullback $\cA\times_{\cB}\cB_{b/}\to \cA$ is a cocartesian fibration as well.
	Since $\cA$ is compact, we are left from \cref{compact_cocartesian_fibration} to show that the fibers of  $\cA\times_{\cB}\cB_{b/}\to \cA$ are compact, which holds by assumption on the mapping spaces of $\cB$.
\end{proof}

\begin{prop}\label{induction_limit_stable}
	Let $\cX$ be an $\infty$-category and let $p \colon \cA \to \cB$ be a morphism of cocartesian fibrations over $\cX$.
	Assume that for every $x \in \cX$, the $\infty$-category $\cA_x$ is compact and $\cB_x$ is proper.
	Let $\cE$ be a complete, cocomplete and stable $\infty$-category.
	Then the functor
	\[ p_! \colon \Fun(\cA, \cE) \to  \Fun(\cB, \cE) \]
	commutes with limits.
\end{prop}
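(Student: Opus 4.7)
The plan is to compute $p_!$ pointwise and reduce the commutation with limits to a fiberwise statement handled by the preceding lemmas. Since limits in $\Fun(\cB, \cE)$ are computed pointwise, it suffices to show that for every $b \in \cB$ the functor $\ev_b \circ p_!$ commutes with limits. By the pointwise formula for left Kan extensions we have $(p_! F)(b) \simeq \colim_{\cA \times_\cB \cB_{/b}} F \circ \pi$, where $\pi \colon \cA \times_\cB \cB_{/b} \to \cA$ is the projection, so the task reduces to proving that colimits indexed by the $\infty$-category $\cA \times_\cB \cB_{/b}$ commute with limits in $\cE$.

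The key reduction is a cofinality claim. Setting $x \coloneqq p_\cB(b)$, the natural fiber inclusion
\[j \colon \cA_x \times_{\cB_x} (\cB_x)_{/b} \hookrightarrow \cA \times_\cB \cB_{/b}\]
is cofinal. To verify this via Joyal's Theorem A \cite[Theorem 4.1.3.1]{HTT}, fix $(a, f \colon p(a) \to b)$ with $a \in \cA_{x'}$ and $f$ lying over some $\alpha \colon x' \to x$: choose a $p_\cA$-cocartesian lift $a \to \alpha_! a$ of $\alpha$; since $p$ preserves cocartesian edges, $p(a) \to p(\alpha_! a)$ is a $p_\cB$-cocartesian lift of $\alpha$, and the universal property of cocartesian morphisms produces a unique factorization $f \simeq \bar f \circ (p(a) \to p(\alpha_! a))$ with $\bar f \colon p(\alpha_! a) \to b$ lying over $\id_x$. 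The resulting pair $(\alpha_! a, \bar f)$ is an object of $\cA_x \times_{\cB_x} (\cB_x)_{/b}$, and a routine check using cocartesian universality again shows that it is initial in the slice of $j$ under $(a,f)$; this slice is therefore weakly contractible.

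Granted cofinality, $(p_! F)(b) \simeq \colim_{\cA_x \times_{\cB_x} (\cB_x)_{/b}} F$ after restriction along the obvious projection. The hypotheses ensure that $\cA_x$ is compact and that each mapping space $\Map_{\cB_x}(-, b)$ is compact by properness of $\cB_x$, so \cref{compactness_Kan_extension_category} applied to $p_x \colon \cA_x \to \cB_x$ and $b \in \cB_x$ yields that the index $\infty$-category $\cA_x \times_{\cB_x} (\cB_x)_{/b}$ is compact in $\Cat_\infty$. \Cref{finite_colimit_and_limit_stable} applied to this compact $\infty$-category and to the stable cocomplete $\infty$-category $\cE$ then shows that colimits indexed by it commute with arbitrary limits, completing the reduction.

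The main obstacle is the cofinality argument. Conceptually, it reflects that $\cA \times_\cB \cB_{/b} \to \cX_{/x}$ is itself a cocartesian fibration and that, since $\id_x$ is terminal in $\cX_{/x}$, the inclusion of the fiber over $\id_x$ admits a left adjoint given by cocartesian pushforward and is in particular cofinal; carrying out either this recognition or the slice-by-slice verification requires careful bookkeeping with the interaction of the two cocartesian structures on $\cA$ and $\cB$ over $\cX$.
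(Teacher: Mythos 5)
Your proof is correct, and its global skeleton agrees with the paper's: reduce to computing the Kan extension at a single object $b$, observe that the relevant indexing category is compact, and invoke \cref{compactness_Kan_extension_category} together with \cref{finite_colimit_and_limit_stable}. The one place you genuinely diverge is the reduction step. The paper reduces to the case $\cX = \ast$ by citing \cref{cor:induction_specialization_Beck_Chevalley}, which is a Beck--Chevalley statement extracted from the specialization-equivalence machinery of \S\ref{sec:specialization_equivalence}; unwound at an object $b \in \cB_x$, that Beck--Chevalley equivalence $j_x^\ast p_! \simeq p_{x,!} j_x^\ast$ is \emph{exactly} the assertion that the fiber inclusion $\cA_x \times_{\cB_x} (\cB_x)_{/b} \hookrightarrow \cA \times_\cB \cB_{/b}$ is cofinal. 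You prove this cofinality directly: take the $p_\cA$-cocartesian lift $a \to \alpha_! a$ of the underlying morphism $\alpha$ of $\cX$, factor $f$ through the (necessarily cocartesian) image in $\cB$, and check that the resulting pair is initial in the relevant comma category, so Quillen's Theorem A applies. This is correct, and it buys you a self-contained, elementary argument that does not depend on the exponential-fibration formalism; conversely, the paper's route is a one-line citation once that formalism is in place, which is why it packages the reduction that way. Both proofs then conclude identically by applying \cref{compactness_Kan_extension_category} to $p_x \colon \cA_x \to \cB_x$ and \cref{finite_colimit_and_limit_stable} to the resulting compact indexing $\infty$-category.
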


\begin{proof}
	From \cref{cor:induction_specialization_Beck_Chevalley}, it is enough to treat the case where $\cX$ is a point.
	In that case for every $F \colon \cA\to \cE$ and every $b\in \cB$, we have by definition of left Kan extension
	\[ (p_!(F))(b)\simeq \displaystyle{\colim_{ \cA\times_{\cB}\cB_{/b}}} F|_{\cA\times_{\cB}\cB_{/b}} \]
	From \cref{compactness_Kan_extension_category}, the $\infty$-category $\cA\times_{\cB}\cB_{/b}$ is compact.
	Thus, \cref{induction_limit_stable} follows from \cref{finite_colimit_and_limit_stable} applied to $\cC = \cA\times_{\cB}\cB_{/b}$.
\end{proof}

\begin{rem}\label{induction_limit_stable_posets}
	The assumption on $\cB$ is always satisfied when the fibers of $\cB$ are posets.
\end{rem}

\section{Stability of localizations under cocartesian pullback}

In \cite[Proposition 2.1.4]{Hinich_DK_revisited}, Hinich proved that the pullback of a localization functor via a cocartesian fibration is again a localization functor.
The theory surrounding the specialization equivalence and cocartesian functors developed so far allows for a model-independent proof, which we now give.

\subsection{Preliminaries}

\begin{lem}\label{lem:g_local_via_specialization}
	Let $p \colon \cB \to \cY$ be a cocartesian fibration and let $\cE$ be a presentable $\infty$-category.
	Let $\gamma \colon x \to y$ be a morphism in $\cY$.
	Let $F \in \Fun(\cB_x, \cE)$ and $G \in \Fun(\cB_y, \cE)$, and let $\alpha \colon F \to G$ be a morphism in $\exp_\cE(\cB / \cY)$.
	The following statements are equivalent:
	\begin{enumerate}\itemsep=0.2cm
		\item for every $p$-cocartesian lift $\phi \colon a \to b$ of $\gamma$ in $\cB$, the induced morphism (see \cref{notation:specialization_on_morphisms})
		\[ \alpha(\phi) \colon F(a) \to  G(b) \]
		is an equivalence in $\cE$;
		
		\item $\alpha$ is a $p_\cE$-cartesian morphism in $\exp_\cE(\cB / \cY)$.
	\end{enumerate}
	In addition, $\alpha$ is an equivalence in $\exp_\cE(\cB / \cY)$ if and only if $\gamma$ is an equivalence and condition (1) holds.
\end{lem}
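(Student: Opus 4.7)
My plan is to leverage the factorization introduced in \cref{notation:specialization_on_morphisms}. Write $\alpha = \alpha_1 \circ \alpha_0$, where $\alpha_1 \colon G' \to G$ is $p_\cE$-cartesian and $\alpha_0 \colon F \to G'$ lies over $\id_x$, i.e., defines a morphism in the fiber $\Fun(\cB_x, \cE)$; concretely $G' \simeq f_\gamma^\ast(G)$ for any chosen straightening $f_\gamma \colon \cB_x \to \cB_y$. The key observation is the following general fact about (co)cartesian fibrations: since $\alpha_1$ is $p_\cE$-cartesian, $\alpha$ is $p_\cE$-cartesian if and only if $\alpha_0$ is $p_\cE$-cartesian; but $\alpha_0$ lives in a fiber, so this is equivalent to $\alpha_0$ being an equivalence in $\Fun(\cB_x,\cE)$.

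With this in place, the equivalence (1) $\Leftrightarrow$ (2) follows from unpacking the definition of $\alpha(\phi)$. By construction, for every $p$-cocartesian lift $\phi \colon a \to b$ of $\gamma$, one has
\[ \alpha(\phi) = \alpha_1(\phi) \circ \alpha_0(a) \colon F(a) \to G(b) \ , \]
and the cartesianness of $\alpha_1$ guarantees that $\alpha_1(\phi) \colon G'(a) \simeq G(b)$ is already an equivalence (this is precisely the content of the identification $\alpha_1(\phi)$ recorded in \cref{notation:specialization_on_morphisms}). Hence $\alpha(\phi)$ is an equivalence if and only if $\alpha_0(a)$ is. Now any $a \in \cB_x$ arises as the source of some $p$-cocartesian lift of $\gamma$ (take $\phi \colon a \to f_\gamma(a)$), so asking (1) is equivalent to asking that $\alpha_0$ is a pointwise equivalence in $\Fun(\cB_x,\cE)$, hence an equivalence — which by the preceding paragraph is exactly (2).

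For the last assertion, assume $\alpha$ is an equivalence in $\exp_\cE(\cB/\cY)$. Then $\gamma = p_\cE(\alpha)$ is an equivalence in $\cY$, and equivalences are automatically $p_\cE$-cartesian, so (1) holds by the equivalence already established. Conversely, suppose $\gamma$ is an equivalence and (1) holds, so that $\alpha$ is $p_\cE$-cartesian. A cartesian lift of an equivalence is itself an equivalence (this is standard, see e.g.\ \cite[Proposition 2.4.1.5]{HTT}), which concludes.

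The routine verifications above rest only on two ingredients: the $2$-out-of-$3$ property for cartesian morphisms (allowing us to transfer cartesianness between $\alpha$ and $\alpha_0$) and the pointwise characterization of equivalences in a functor category. The only step that requires a small amount of care is checking that the factorization of \cref{notation:specialization_on_morphisms} genuinely yields $\alpha_1(\phi)$ as an equivalence for every $p$-cocartesian $\phi$; this is built into the construction of that notation and does not require additional argument.
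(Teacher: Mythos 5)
Your proof is correct and follows essentially the same argument as the paper's: both factor $\alpha = \alpha_1 \circ \alpha_0$ with $\alpha_1$ cartesian and $\alpha_0$ fiberwise, reduce (1) to $\alpha_0$ being a pointwise (hence genuine) equivalence via the formula $\alpha(\phi) = \alpha_1(\phi) \circ \alpha_0(a)$, and pass to (2) via left cancellation for cartesian edges. You simply make explicit a couple of standard ingredients — the $2$-out-of-$3$ property and \cite[2.4.1.5]{HTT} for the final assertion — that the paper leaves implicit.
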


\begin{proof}
	Since $p_\cE \colon \exp_\cE(\cB / \cY) \to \cY$ is a cartesian fibration, a morphism $\alpha \colon F \to G$ in $\exp_\cE(\cB / \cY)$ is an equivalence if and only if it is $p_\cE$-cartesian and its image in $\cY$ is an equivalence.
	So the second half of the statement follows automatically from the equivalence between statements (1) and (2).
	Choose a factorization of $\alpha$ as
	\[ \begin{tikzcd}[column sep=small]
		F \arrow{rr}{\alpha} \arrow{dr}[swap]{\alpha_0} & & G \\
		& G' \arrow{ur}[swap]{\alpha_1}
	\end{tikzcd} \]
	where $\alpha_1$ is $p_\cE$-cartesian.
	Then as observed in \cref{notation:specialization_on_morphisms}, any $p$-cocartesian lift $\phi \colon a \to b$ of $\gamma$ induces via $\alpha_1$ an equivalence
	\[ \alpha_1(\phi) \colon G'(a) \simeq G(b) \ . \]
	It follows that condition (1) is equivalent to ask that for every $a \in \cB_x$ the morphism
	\[ \alpha_0(a) \colon F(a) \to  G'(a) \]
	is an equivalence in $\cE$.
	In turn, this condition is equivalent to ask that $\alpha_0$ is an equivalence in $\exp_\cE(\cB / \cY)$, and hence to condition (2).
\end{proof}

For later use, let us store the following consequence of \cref{lem:g_local_via_specialization}

\begin{cor}\label{Stokes_when_locally_constant}
	Let $p \colon \cA \to \cX$ be a locally constant cocartesian fibration (see \cref{def:locally_constant}).
	Let $\cE$ be a presentable $\infty$-category and let $F \colon \cA \to \cE$ be a cocartesian functor.
	Let $\sigma \colon \cX \to \cA$ be a cocartesian section.
	Then, $\sigma^*(F) \colon \cX\to \cE$ inverts every arrow of $\cX$.
\end{cor}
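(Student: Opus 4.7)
The plan is to combine the cocartesian property of $F$ with the local constancy hypothesis on $p$ to reduce the statement to the Beck–Chevalley characterization of cocartesianness. The key observation is that when $p$ is locally constant, any cocartesian straightening $f_\gamma \colon \cA_x \to \cA_y$ of a morphism $\gamma \colon x \to y$ in $\cX$ is an equivalence of $\infty$-categories, so $f_{\gamma,!} \colon \Fun(\cA_x, \cE) \to \Fun(\cA_y, \cE)$ is an equivalence as well, with inverse $f_\gamma^\ast$.

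My first step will be to reformulate the cocartesianness of $F$ at $\gamma$ via the equivalence $(1) \Leftrightarrow (3)$ of \cref{prop:cocartesian_functors_reformulation}, producing a Beck–Chevalley equivalence $f_{\gamma,!} j_x^\ast(F) \xrightarrow{\sim} j_y^\ast(F)$. Since $f_{\gamma,!}$ is an equivalence, this Beck–Chevalley equivalence is adjoint to the natural transformation $s^\ast(F) \colon j_x^\ast(F) \to f_\gamma^\ast j_y^\ast(F)$ built out of any $s \colon j_x \to j_y \circ f_\gamma$ whose components are $p$-cocartesian lifts of $\gamma$, as in \cref{eg:specialization_over_Delta1}; hence $s^\ast(F)$ itself is an equivalence in $\Fun(\cA_x, \cE)$. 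Unraveling this, I will conclude that for every $a \in \cA_x$ the morphism $F(s(a)) \colon F(j_x(a)) \to F(j_y(f_\gamma(a)))$ is an equivalence in $\cE$.

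In the second step I will exploit the cocartesian section $\sigma$: since $\sigma(\gamma) \colon \sigma(x) \to \sigma(y)$ is itself a $p$-cocartesian lift of $\gamma$, by the contractibility of the space of $p$-cocartesian lifts with prescribed source I may choose the straightening $f_\gamma$ and the natural transformation $s$ so that $f_\gamma(\sigma(x)) = \sigma(y)$ and $s(\sigma(x)) = \sigma(\gamma)$. Specializing the previous step at $a = \sigma(x)$ then gives that $F(\sigma(\gamma)) = (\sigma^\ast F)(\gamma)$ is an equivalence in $\cE$, which is the desired conclusion.

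The argument is largely a bookkeeping exercise within the specialization formalism developed in \cref{sec:specialization_equivalence}, and I expect no serious obstacle. The only point requiring some care is the compatibility of choices $s(\sigma(x)) = \sigma(\gamma)$ alluded to above; alternatively, one can bypass this by arguing in the exponential fibration directly: local constancy of $p$ forces $p_\cE \colon \exp_\cE(\cA / \cX) \to \cX$ to be locally constant as well, so $\spe F(\gamma)$, being $p_\cE$-cocartesian by \cref{prop:cocartesian_functors_reformulation}, is automatically $p_\cE$-cartesian, and then \cref{lem:g_local_via_specialization} combined with \cref{cor:specialization_on_morphisms} applied to the $p$-cocartesian lift $\sigma(\gamma)$ yields the conclusion.
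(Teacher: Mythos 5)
Your proof is correct. Indeed you give two arguments, and the alternative sketched in your last sentence is precisely the paper's proof: observe that $p_\cE$ is again locally constant, that the specialization $\spe F$ takes $\gamma$ to a $p_\cE$-cocartesian and hence (by \cref{locally_constant}) $p_\cE$-cartesian arrow, and then read off the conclusion from \cref{lem:g_local_via_specialization} and \cref{cor:specialization_on_morphisms} applied to the $p$-cocartesian lift $\sigma(\gamma)$.

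Your main argument is a more hands-on variant that works fiberwise rather than in the exponential fibration. You use the equivalence (1) $\Leftrightarrow$ (3) of \cref{prop:cocartesian_functors_reformulation} to get the Beck--Chevalley equivalence $f_{\gamma,!}\,j_x^\ast(F) \xrightarrow{\sim} j_y^\ast(F)$, then exploit that $f_\gamma$ (and hence $f_{\gamma,!}$) is an equivalence by local constancy, so that the mate $s^\ast(F) \colon j_x^\ast(F) \to f_\gamma^\ast j_y^\ast(F)$ is also an equivalence, giving $F(s(a))$ invertible for every $a \in \cA_x$. This is sound: when the left adjoint $f_{\gamma,!}$ is an equivalence, the mate of an equivalence is an equivalence. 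The one point requiring care, which you flag yourself, is the identification $s(\sigma(x)) \simeq \sigma(\gamma)$: since both are $p$-cocartesian lifts of $\gamma$ with source $\sigma(x)$, they differ by an equivalence $f_\gamma(\sigma(x)) \simeq \sigma(y)$ in $\cA_y$, so $F(\sigma(\gamma))$ is an equivalence because $F(s(\sigma(x)))$ is; you should phrase it this way rather than asserting strict equality of choices. The trade-off: your main route is more elementary and self-contained, relying only on the Beck--Chevalley characterization of cocartesian functors; the paper's route packages the same computation into the exponential-fibration formalism, which makes the bookkeeping about choices of lifts automatic. Both are fine; the paper's is shorter once the formalism is in place.
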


\begin{proof}
	Since $p \colon \cA \to \cX$ is locally constant, the same goes for the associated exponential fibration $p_\cE \colon \exp_\cE(\cA / \cX) \to \cX$.
	Fix a morphism $\gamma \colon x \to y$ in $\cX$, so that $\sigma(\gamma) \colon \sigma(x) \to \sigma(y)$ is a $p$-cocartesian lift of $\gamma$ in $\cA$.
	Choose a specialization morphism
	\[ \begin{tikzcd}[column sep=small]
		(\spe F)_x \arrow{r}{\beta} & G \arrow{r}{\alpha} & (\spe F)_y
	\end{tikzcd} \]
	for $F$ relative to $\gamma$.
	Then \cref{locally_constant} guarantees that $\beta$ is $p_\cE$-cartesian in $\exp_\cE(\cA / \cX)$.
	Thus, the result follows combining \cref{lem:g_local_via_specialization} and \cref{cor:specialization_on_morphisms}.
\end{proof}

\subsection{Hinich's theorem}

We are now ready for:

\begin{thm}[Hinich]\label{prop:pullback_localization}
	Let
	\[ \begin{tikzcd}
		\cA \arrow{r}{u} \arrow{d}{q} & \cB \arrow{d}{p} \\
		\cX \arrow{r}{f} & \cY
	\end{tikzcd} \]
	be a pullback square in $\Cat_\infty$, where $p$ is a cocartesian fibration.
	Assume that $f$ exhibits $\cY$ as a localization of $\cX$ at a collection of morphisms $W$.
	Then $u$ is a localization functor as well, and exhibits $\cB$ as localization of $\cA$ at the collection $W_\cA$ of cocartesian lifts of the arrows of $W$.
\end{thm}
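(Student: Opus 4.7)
The strategy is to verify the universal property of localization by testing against presentable $\infty$-categories $\cE$ (the case of a general target $\cE$ reduces to this one by embedding $\cE$ into $\PSh(\cE)$ via Yoneda). Concretely, I would show that for every presentable $\cE$, the functor $u^\ast \colon \Fun(\cB, \cE) \to \Fun(\cA, \cE)$ is fully faithful with essential image exactly the functors $\cA \to \cE$ that send every arrow in $W_\cA$ to an equivalence.

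Applying the specialization equivalence \eqref{eq:specialization_equivalence} on both sides and using Proposition \ref{prop:functoriality_exponential}-(1) to identify $\exp_\cE(\cA/\cX) \simeq f^\ast \exp_\cE(\cB/\cY)$, Proposition \ref{prop:global_functoriality}-(\ref{prop:global_functoriality:pullback}) recasts $u^\ast$ as the restriction along $f$
\[ f^\ast \colon \Fun_{/\cY}(\cY, \exp_\cE(\cB/\cY)) \to  \Fun_{/\cY}(\cX, \exp_\cE(\cB/\cY)). \]
The universal property of the localization $f$, applied with target the $\infty$-category $\exp_\cE(\cB/\cY)$, asserts that the unrestricted $f^\ast \colon \Fun(\cY, \exp_\cE(\cB/\cY)) \to \Fun(\cX, \exp_\cE(\cB/\cY))$ is fully faithful with essential image the functors inverting $W$. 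To descend to the slice over $\cY$, I would argue that if a functor $s \colon \cX \to \exp_\cE(\cB/\cY)$ over $f$ inverts $W$ and $\tilde s \colon \cY \to \exp_\cE(\cB/\cY)$ is its essentially unique extension with $s \simeq \tilde s \circ f$, then $p_\cE \circ \tilde s \circ f \simeq p_\cE \circ s \simeq f \simeq \id_\cY \circ f$, and full faithfulness of $f^\ast$ forces $p_\cE \circ \tilde s \simeq \id_\cY$; thus $\tilde s$ is itself a section.

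It remains to match the two inversion conditions. Let $\gamma \colon x \to x'$ lie in $W$, so $f(\gamma)$ is an equivalence in $\cY$. For a section $s$ over $\cY$ corresponding under specialization to $F \colon \cA \to \cE$, the requirement that $s$ invert $\gamma$ translates, via Lemma \ref{lem:g_local_via_specialization} applied to the cartesian fibration $\exp_\cE(\cB/\cY) \to \cY$, to the condition that for every $p$-cocartesian lift $\phi \colon b \to b'$ of $f(\gamma)$ in $\cB$ the induced morphism $s(\gamma)(\phi)$ be an equivalence. Because the square defining $\cA$ is a pullback, the $q$-cocartesian lifts of $\gamma$ in $\cA$ are in bijective correspondence with the $p$-cocartesian lifts of $f(\gamma)$ in $\cB$; Corollary \ref{cor:specialization_on_morphisms} then identifies $s(\gamma)(\phi)$ with the value of $F$ on the corresponding cocartesian lift in $\cA$. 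Hence $s$ inverts $W$ if and only if $F$ inverts $W_\cA$, concluding the identification of essential images and the proof.

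The main obstacle, which is essentially bookkeeping rather than a conceptual difficulty, is this last translation step: carefully tracking the bijection between cocartesian lifts across the pullback and then assembling Lemma \ref{lem:g_local_via_specialization} with Corollary \ref{cor:specialization_on_morphisms} to match the inversion conditions. Everything else is a formal consequence of the absolute universal property of $f$ combined with the functoriality results for the specialization equivalence from \cref{sec:specialization_equivalence}.
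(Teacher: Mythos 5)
Your proposal follows essentially the same route as the paper: both pass to the specialization equivalence, identify $u^\ast \colon \Fun(\cB,\cE) \to \Fun(\cA,\cE)$ with restriction along $f$ on sections of $\exp_\cE(\cB/\cY)$ (using \cref{prop:global_functoriality}-(\ref{prop:global_functoriality:pullback}) and the pullback identification $\exp_\cE(\cA/\cX) \simeq f^\ast\exp_\cE(\cB/\cY)$), invoke the universal property of the localization $f$ with target $\exp_\cE(\cB/\cY)$, descend to the slice over $\cY$ by a full-faithfulness argument on $\Fun(-,\cY)$, and then translate the inversion conditions across the pullback of fibres using \cref{lem:g_local_via_specialization} and \cref{cor:specialization_on_morphisms}. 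That core matches the paper.

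There is, however, a real gap in your opening reduction. You assert that it suffices to check full faithfulness plus the image description for \emph{presentable} $\cE$, with the general case following ``by embedding $\cE$ into $\PSh(\cE)$ via Yoneda.'' That reduction does not work as stated: given $F \colon \cA \to \cE$ inverting $W_\cA$, the presentable case yields an extension $\tilde F \colon \cB \to \PSh(\cE)$ with $u^\ast\tilde F \simeq h \circ F$, but there is no reason for $\tilde F$ to factor through the Yoneda embedding $h \colon \cE \hookrightarrow \PSh(\cE)$ unless one already knows that every object of $\cB$ is in the essential image of $u$. In other words, the Yoneda trick presupposes essential surjectivity of $u$, which is precisely one of the things not yet established. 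The paper avoids this circle by first verifying directly that (i) $u$ sends $W_\cA$ to equivalences and (ii) $u$ is essentially surjective — both are immediate from the pullback structure and the corresponding facts about $f$ — and only then invoking the criterion of \cite[Proposition 7.1.11]{Cisinski_Higher_Category}, which reduces the localization property to the $\Spc$-valued functor statement. Your proof should either make those two preliminary checks explicit and cite the same criterion, or argue more carefully that essential surjectivity can be extracted from the presentable-$\cE$ conditions alone before using the Yoneda reduction. As written, the logic has a circularity that needs to be resolved; the fix is short (the essential-surjectivity check is two lines) but it is not optional.
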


\begin{proof}
	We apply the criterion given in \cite[Proposition 7.1.11]{Cisinski_Higher_Category}.
	To begin with, observe that if $\varphi \in W_\cA$ then $\varphi$ is $q$-cocartesian and therefore $u(\varphi)$ is $p$-cocartesian and lies over $f(q(\varphi))$ which is an equivalence in $\cX$ since $q(\varphi) \in W$.
	Thus $u(\varphi)$ must be an equivalence as well, i.e.\ $u$ inverts the arrows in $W_\cA$.
	
	\smallskip
	
	Next, $u$ is essentially surjective: indeed, if $b \in \cB$ is an element, we can find $x \in \cX$ and an equivalence $f(x) \simeq p(b)$, because $f$ is essentially surjective.
	But then $b$ defines an element in $\cB_{f(x)}$ and since the given square is a pullback, we have $\cB_{f(x)} \simeq \cA_x$.
	Thus, we can write $b \simeq u(a)$ for some $a \in \cA$.
	
	\smallskip
	
	Since a functor $g \colon \cC \to \cD$ is a localization if and only if $f\op \colon \cC\op \to \cD\op$ is a localization (see \cite[Proposition 7.1.7]{Cisinski_Higher_Category}), to complete the proof it is enough to prove that
	\[ u^\ast \colon \Fun(\cB, \Spc) \to  \Fun(\cA, \Spc) \]
	is fully faithful and the essential image consists of those functors $F \colon \cA \to \Spc$ that invert the arrows in $W_\cA$.
	We will more generally prove that this is the case for any presentable $\infty$-category $\cE$ in place of $\Spc$.
	\Cref{prop:global_functoriality}-(\ref{prop:global_functoriality:pullback}) allows to rewrite $u^\ast$ as
	\[ \Sigma(\cE^u) \colon \Fun_{/\cY}(\cY, \exp_\cE(\cB / \cY)) \to  \Fun_{/\cX}(\cX, \exp_\cE(\cA / \cY)) \ . \]
	In virtue of \cref{prop:functoriality_exponential}-(1), we can rewrite
	\[ \Fun_{/\cX}(\cX, \exp_\cE(\cA / \cY)) \simeq \Fun_{/\cY}(\cX, \exp_\cE(\cB / \cY)) \ , \]
	and under this identification $\Sigma(\cE^u)$ simply becomes
	\begin{equation}\label{eq:pullback_localization}
		f^\ast \colon \Fun_{/\cY}(\cY, \exp_\cE(\cB / \cY)) \to  \Fun_{/\cY}(\cX, \exp_\cE(\cB / \cY)) \ .
	\end{equation}
	Consider now the following commutative cube:
	\begin{equation}\label{eq:cocartesian_pullback_localization}
		\begin{tikzcd}
			{} & \ast \arrow{rr}{\id_\cY} \arrow[equal]{dd} & & \Fun(\cY, \cY) \arrow{dd}{f^\ast} \\
			\Fun_{/\cY}(\cY, \exp_\cE(\cB / \cY)) \arrow[crossing over]{rr} \arrow{dd}{f^\ast} \arrow{ur} & & \Fun(\cY, \exp_\cE(\cB / \cY)) \arrow{ur} \\
			{} & \ast \arrow{rr}[pos=0.4]{f} & & \Fun(\cX, \cY) \\
			\Fun_{/\cY}(\cX, \exp_\cE(\cB / \cY)) \arrow{rr} \arrow{ur} & & \Fun(\cX, \exp_\cE(\cB / \cY)) \arrow{ur} \arrow[leftarrow, crossing over]{uu}[swap,pos=0.7]{f^\ast} & \phantom{\Fun(\cX, \cY)} \ .
		\end{tikzcd}
	\end{equation}
	The bottom and the top squares are pullbacks by definition.
	Since $f$ is a localization, the functor
	\[ f^\ast \colon \Fun(\cY, \cY) \to \Fun(\cX, \cY) \]
	is fully faithful, which implies that the back square is a pullback as well.
	Thus, the front square is a pullback as well, and therefore the full faithfulness of \eqref{eq:pullback_localization} follows from the full faithfulness of
	\[ f^\ast \colon \Fun(\cY, \exp_\cE( \cB / \cY ) ) \to  \Fun(\cX, \exp_\cE( \cB / \cY ) ) \ , \]
	which holds because $f$ is a localization.
	
	\smallskip
	
	Since the front square is a pullback, we also deduce that a section $s \in \Fun_{/\cY}(\cX, \exp_\cE(\cA / \cY))$ lies in the essential image of $f^\ast$ if and only if it inverts all arrows in $W$.
	Via the specialization equivalence of \cref{prop:sections_exponential}, we deduce that a functor $F \in \Fun(\cB, \cE)$ lies in the essential image of $u^\ast$ if and only if $\cE^u \circ (\spe F) \colon \cX \to \exp_\cE(\cB / \cY)$ inverts all arrows in $W$.
	Fix $\gamma \colon x \to y$ in $W$.
	By assumption $f(\gamma)$ is an equivalence in $\cY$, so \cref{lem:g_local_via_specialization} shows that $\cE^u \circ (\spe F)$ inverts $\gamma$ if and only if
	\[ \big( \cE^u( \spe F ) \big)_\gamma \colon \big(\cE^u (\spe F)\big)_x \to  \big( \cE^u( \spe F ) \big)_y \]
	is $p_\cE$-cartesian in $\exp_\cE(\cB / \cY)$.
	Since $p_\cE \colon \exp_\cE(\cB / \cY) \to \cY$ is a cartesian fibration, it is actually enough to check that the above morphism is locally cartesian.
	Therefore, we can replace $\cB \to \cY$ by $\cB_{f(\gamma)} \to \Delta^1$, and since $\cB_{f(\gamma)} \simeq \cA_\gamma$, \cref{lem:g_local_via_specialization} further shows that it is enough to check that for every $q$-cocartesian lift $\phi \colon a \to a'$ of $\gamma$ in $\cA$, the morphism
	\[ (\spe F)_\gamma(\phi) \colon (\spe F)_x(a) \to  (\spe F)_y(a') \]
	is an equivalence in $\cE$.
	However, \cref{cor:specialization_on_morphisms} provides a canonical identification of this morphism with $F(\phi)$.
	In other words, $\cE^u \circ (\spe F)$ inverts $\gamma$ if and only if $F$ inverts all $q$-cocartesian lifts of $\gamma$.
	The conclusion follows.
\end{proof}

\section{Locally constant and finite étale fibrations}\label{sec:locally_constant_cocartesian_fibrations}

We collect in this section some material on cocartesian fibrations that generalize the idea of local constancy and finite covering in topology.

\subsection{Local constancy}

We start with the following definition:

\begin{defin}\label{def:abstract_local_systems}
	Let $\cX$ be an $\infty$-category and let $\cE$ be a presentable $\infty$-category.
	We write
	\[ \Loc(\cX; \cE) \coloneqq \Fun(\Env(\cX), \cE) \ . \]
\end{defin}

\begin{eg}
	Let $(X,P)$ be an exodromic stratified space.
	Then \cite[Theorem 0.3.1]{Beyond_conicality}
 implies that $\Env(\Pi_\infty(X,P)) \simeq \Pi_\infty(X)$.
	Therefore, $\Loc(\Pi_\infty(X,P);\cE)$ correspond via the exodromy equivalence exactly to $\cE$-valued hyperconstructible hypersheaves on $X$.
\end{eg}

\begin{notation}\label{notation:abstrat_local_systems_as_functors}
	Let $\cX$ be an $\infty$-category and let $\lambda_\cX \colon \cX \to \Env(\cX)$ be the canonical localization morphism.
	Then for every presentable $\infty$-category $\cE$, the functor
	\[ \lambda_\cX^\ast \colon \Loc(\cX;\cE) \to \Fun(\cX, \cE) \]
	is fully faithful.
	Given $L \in \Loc(\cX; \cE)$ we will often consider it implicitly as a functor $L \colon \cX \to \cE$ with the property of inverting every arrow in $\cX$.
\end{notation}

\begin{defin}\label{def:locally_constant}
	We say that a functor $p \colon \cA \to \cX$ of $\infty$-categories is a \emph{locally constant fibration} if it is a cocartesian fibration and its straightening $\Upsilon \colon \cX \to \CAT_\infty$ belongs to $\Loc(\cX;\CAT_\infty)$.
\end{defin}

The following simply follows unraveling the definitions:

\begin{lem}\label{lem:locally_constant_pullback}
	Locally constant fibrations are stable under pullback.
\end{lem}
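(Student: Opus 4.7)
The plan is to combine two standard stability properties: that cocartesian fibrations are stable under pullback, and that the property of inverting morphisms is preserved by precomposition.

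First, fix a pullback square
\[ \begin{tikzcd}
	\cB \arrow{r}{g} \arrow{d}{q} & \cA \arrow{d}{p} \\
	\cY \arrow{r}{f} & \cX
\end{tikzcd} \]
in which $p$ is a locally constant fibration. By \cite[Proposition 2.4.2.3]{HTT}, $q \colon \cB \to \cY$ is again a cocartesian fibration, so it remains only to show that its straightening $\Upsilon_\cB \colon \cY \to \CAT_\infty$ inverts every morphism of $\cY$. The functoriality of the straightening equivalence provides a canonical identification $\Upsilon_\cB \simeq \Upsilon_\cA \circ f$.

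By hypothesis, $\Upsilon_\cA$ inverts every arrow of $\cX$, hence factors (essentially uniquely) through the localization $\lambda_\cX \colon \cX \to \Env(\cX)$ as a functor $\widebar{\Upsilon}_\cA \colon \Env(\cX) \to \CAT_\infty$. Applying the enveloping $\infty$-groupoid functor $\Env \colon \Cat_\infty \to \Spc$, which is a left adjoint and hence natural in $f$, we obtain $\Env(f) \colon \Env(\cY) \to \Env(\cX)$ fitting in a commutative square with $\lambda_\cY$ and $\lambda_\cX$. Consequently
\[ \Upsilon_\cB \simeq \Upsilon_\cA \circ f \simeq \widebar{\Upsilon}_\cA \circ \Env(f) \circ \lambda_\cY \ , \]
so $\Upsilon_\cB$ factors through $\lambda_\cY$, i.e.\ it inverts every arrow of $\cY$. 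Thus $q$ is a locally constant fibration, which is what was to be shown. No genuine obstacle arises; the entire argument is formal manipulation of the straightening/unstraightening equivalence.
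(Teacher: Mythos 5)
Your proof is correct, and it is precisely the argument the paper has in mind; the paper states the lemma "simply follows unraveling the definitions" without writing anything out, and the key point---that the straightening of the pullback is $\Upsilon_\cA \circ f$, which then inverts all arrows of $\cY$ because $\Upsilon_\cA$ inverts all arrows of $\cX$---is exactly what you spelled out. (Your detour through $\Env(f)$ and $\lambda_\cY$ is slightly more than needed, since it already suffices to observe that $\Upsilon_\cA(f(\gamma))$ is an equivalence for every arrow $\gamma$ of $\cY$, but it is not incorrect.)
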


It is possible to give a more intrinsic formulation of locally constant cocartesian fibrations as follows.

\begin{recollection}\label{recollection:unit_counit}
	Let $p \colon \cA \to \Delta^1$ be a cartesian and cocartesian fibration and let
	\[ f \colon \cA_0 \leftrightarrows \cA_1 \colon g \]
	be the induced adjunction.
	Write $\eta \colon \id_{\cA_0} \to g \circ f$ and $\varepsilon \colon f \circ g \to \id_{\cA_1}$ for the unit and the counit of this adjunction.
	It follows from \cite[Proposition 5.2.2.8]{HTT} that for every morphism $\phi \colon a \to b$ in $\cA$ lying over $0 \to 1$ in $\Delta^1$, there is a commutative diagram in $\cA$
	\begin{equation}\label{eq:unit_counit}
		\begin{tikzcd}
			{} & g(f(a)) \arrow{r}{\alpha} & f(a) \arrow{dr} \\
			a \arrow{ur}{\eta_a} \arrow{dr} \arrow{rrr}{\phi} & & & b \\
			{} & g(b) \arrow{r}{\beta} & f(g(b)) \arrow{ur}[swap]{\varepsilon_b}
		\end{tikzcd}
	\end{equation}
	where:
	\begin{enumerate}\itemsep=0.2cm
		\item $\alpha$ and $\varepsilon_b \circ \beta$ are $p$-cartesian;
		
		\item $\beta$ and $\alpha \circ \eta_b$ are $p$-cocartesian.
	\end{enumerate}
\end{recollection}

\begin{prop}\label{locally_constant}
	Let $p \colon \cA \to \cX$ be a cocartesian fibration and let $\Upsilon \colon \cX \to \CAT_\infty$ be its straightening.
	For every morphism $\gamma \colon x \to y$ in $\cX$, the following statements are equivalent:
	\begin{enumerate}\itemsep=0.2cm
		\item $p_{\gamma} \colon \cA_{\gamma}\to \Delta^1$ (see \cref{notation:simplexes_cocartesian_fibration}) is a cartesian fibration and an arrow in $\cA_{\gamma}$ is cocartesian if and only it is cartesian;
		
		\item $\Upsilon(\gamma) \colon \Upsilon(x) \to \Upsilon(y)$ is an equivalence in $\CAT_{\infty}$;
	\end{enumerate}
	In particular, $p$ is locally constant if and only if condition (1) holds for every morphism $\gamma$ in $\cX$.
\end{prop}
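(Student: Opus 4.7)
The plan is to first reduce to the universal case $\cX = \Delta^1$ and $\gamma$ the non-degenerate edge, then analyze what the hypotheses of (1) and (2) concretely say about the adjunction induced on the fibers.

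The reduction step is straightforward: pulling back $p$ along $\gamma \colon \Delta^1 \to \cX$ gives the cocartesian fibration $p_\gamma \colon \cA_\gamma \to \Delta^1$, whose straightening is canonically identified with the functor $\Upsilon(\gamma) \colon \Upsilon(x) \to \Upsilon(y)$. Both conditions (1) and (2) only depend on the data of $p_\gamma$, so we may assume throughout that $\cX = \Delta^1$, $x = 0$, $y = 1$, and write $f \coloneqq \Upsilon(\gamma) \colon \cA_0 \to \cA_1$.

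Next, I would show (2) $\Rightarrow$ (1). If $f$ is an equivalence, pick an inverse $g$; then $g$ is simultaneously a left and right adjoint to $f$, so $p_\gamma$ is automatically also a cartesian fibration with cartesian straightening $g$. For any morphism $\phi \colon a \to b$ over $\gamma$, the mapping space $\Map_\gamma(a,b)$ is identified with both $\Map_{\cA_1}(f(a),b)$ (via the cocartesian lift from $a$) and $\Map_{\cA_0}(a,g(b))$ (via the cartesian lift to $b$); under the equivalence $g \colon \cA_1 \xrightarrow{\sim} \cA_0$ these two identifications agree up to coherent equivalence, so an edge in $\Map_\gamma(a,b)$ is an equivalence in the first presentation iff it is an equivalence in the second. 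Hence cocartesian and cartesian morphisms coincide.

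For (1) $\Rightarrow$ (2), I would use the diagram \eqref{eq:unit_counit} from \cref{recollection:unit_counit}. The hypothesis that $p_\gamma$ is also cartesian gives the adjunction $f \dashv g$ with unit $\eta$ and counit $\varepsilon$. From the diagram, $\alpha \circ \eta_a$ is cocartesian and $\alpha$ is cartesian; by (1), $\alpha$ is then also cocartesian, and cocartesian edges satisfy the two-out-of-three property, so $\eta_a$ is cocartesian in $\cA_\gamma$. But $\eta_a$ lies entirely in the fiber $\cA_0$, where cocartesian morphisms are exactly equivalences; hence $\eta_a$ is an equivalence. A symmetric argument applied to $\varepsilon_b \circ \beta$ shows that $\varepsilon_b$ is an equivalence for every $b \in \cA_1$. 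The adjunction $f \dashv g$ is thus an adjoint equivalence, and in particular $f = \Upsilon(\gamma)$ is an equivalence.

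Finally, the ``in particular'' clause follows formally: $p$ is locally constant if and only if $\Upsilon$ factors through $\Env(\cX)$, i.e.\ inverts every morphism of $\cX$, which by the equivalence just proved is exactly the statement that (1) holds for every $\gamma$. The subtle point I expect to spell out most carefully is the two-out-of-three step and the verification that a morphism inside a fiber is cocartesian iff it is an equivalence, since this is the hinge that converts the coincidence cocartesian $=$ cartesian into the invertibility of $\eta$ and $\varepsilon$.
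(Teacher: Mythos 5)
Your overall strategy matches the paper's: reduce to the universal case $\cX = \Delta^1$, use the diagram of \cref{recollection:unit_counit}, and show that the unit $\eta$ and counit $\varepsilon$ of the adjunction $f \dashv g$ are equivalences. The (2) $\Rightarrow$ (1) direction is fine, and is close to the paper's. But there is a real gap in your (1) $\Rightarrow$ (2) argument.

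You write: ``$\alpha$ is then also cocartesian, and cocartesian edges satisfy the two-out-of-three property, so $\eta_a$ is cocartesian.'' The deduction you need is: if $\alpha$ and $\alpha \circ \eta_a$ are cocartesian, then $\eta_a$ is cocartesian. This is right-cancellation with respect to the \emph{second} leg of the composite, and cocartesian edges do \emph{not} satisfy it in general. The cancellation property in \cite[2.4.1.7]{HTT} is the other one: if $\eta_a$ (the first leg) is cocartesian, then $\alpha$ is cocartesian iff $\alpha \circ \eta_a$ is. For a concrete failure of the version you invoke: take the cocartesian fibration over $\Delta^1$ with $\cA_0 = \{0 \to 1\}$, $\cA_1 = \{*\}$ and straightening the constant functor; then $1 \to *$ and the composite $0 \to 1 \to *$ are both cocartesian lifts, but $0 \to 1$ is not cocartesian. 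So cocartesian edges do not satisfy two-out-of-three, and your stated justification does not go through.

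The paper avoids this by working with \emph{cartesian} edges, for which \cite[2.4.1.7]{HTT} does give exactly the right-cancellation you need: $\alpha$ is $p$-cartesian by the recollection, and hypothesis (1) upgrades the cocartesian $\alpha \circ \eta_a$ to a cartesian edge; then right-cancellation of cartesian edges forces $\eta_a$ to be cartesian, hence (lying over a degenerate edge) an equivalence. Alternatively, you can keep your cocartesian phrasing but you must first observe that under hypothesis (1) the class of cocartesian edges \emph{coincides} with the class of cartesian edges, and the latter does satisfy right-cancellation; only then does ``two-out-of-three'' hold for this class. As written, you assert a false general fact about cocartesian edges, so a one-line repair is needed.
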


\begin{proof}
	Assume first that (1) holds.
	Since $p_\gamma$ is both Cartesian and coCartesian the functor $\Phi(\gamma)$ admits a right adjoint $R(\gamma) \colon \Upsilon(y) \to \Upsilon(x)$.
	Then \cref{recollection:unit_counit} implies that in diagram \eqref{eq:unit_counit} both $\alpha$ and $\alpha \circ \eta_a$ are $p$-cartesian lifts of $\gamma$, so the universal property of $p$-cartesian edges implies that $\eta_a$ must be an equivalence.
	The dual argument shows that $\varepsilon_b$ is an equivalence as well.
	It follows that $\Upsilon(\gamma)$ is an equivalence.
	
	Suppose conversely that $\Phi(\gamma)$ is an equivalence.
	Then it admits a right adjoint, which in turn implies that $p_{\gamma}$ is a cartesian fibration.
	Then in \cref{recollection:unit_counit} both $\eta$ and $\varepsilon$ are equivalences.
	It immediately follows that the cocartesian lift $a \to f(a)$ is also cocartesian, and that the cocartesian lift $g(b) \to b$ is also cartesian, whence the conclusion.
	\personal{(Other argument)
		We prove that unit and counit are equivalences.
		Let $a \in \Phi(x)$ be an object.
		Let $f \colon a \to b$ be a coCartesian lift of $\gamma$ starting at $a$ and let $g \colon a' \to b$ be a Cartesian lift of $\gamma$ ending at $b$.
		Then we have the following commutative diagram:
		\begin{equation} \label{eq:locally_trivial_fibrations}
			\begin{tikzcd}[row sep = small,ampersand replacement=\&]
				a \arrow{dr}{f} \arrow{dd}{\eta} \\
				{} \& b \\
				a' \arrow{ur}[swap]{g} \& \phantom{b} .
			\end{tikzcd}
		\end{equation}
		Then we have a contractible space of homotopies $b \simeq \Phi(\gamma)(a)$ and $a' \simeq R(\gamma)(b)$, and $\eta$ is a unit transformation for the adjunction $\Phi(\gamma) \dashv R(\gamma)$ at $a$.
		Since $f$ is coCartesian, the assumption implies that it is Cartesian as well.
		Therefore, $\eta$ is an equivalence.
		The dual argument shows that the counit must be an equivalence as well, and hence that $\Phi(\gamma)$ is an equivalence.
		Suppose conversely that $\Phi(\gamma)$ is an equivalence.
		Then it admits a right adjoint, which in turn implies that $p_{\gamma}$ is a cartesian fibration.
		Now let $f \colon a \to b$ be a cocartesian morphism in $\cA_{\gamma}$.
		Choose a cartesian lift $g \colon b \to a'$.
		We obtain once more the diagram \eqref{eq:locally_trivial_fibrations}.
		Since $\Phi(\gamma)$ is an equivalence, its unit is an equivalence, and therefore $f$ is equivalent to $g$, showing that $f$ is cartesian as well.
		The dual argument implies that if $f$ is a cartesian arrow, then it must be cocartesian as well.
		The proof is therefore complete.}
\end{proof}

\subsection{Finite étale fibrations}

We now introduce the following abstract formulation of the notion of finite covering in topology:

\begin{defin}\label{def:finite_etale_fibrations}
	We say that a cocartesian fibration between $\infty$-categories $f \colon \cY \to \cX$ is a \emph{finite étale fibration} if:
	\begin{enumerate}\itemsep=0.2cm
		\item it is locally constant;
		\item it is a cartesian fibration;
		\item the fibers of $f$ are finite sets.
	\end{enumerate}
\end{defin}

\begin{lem}\label{lem:finite_etale_fibrations_pullback}
	Finite étale fibrations are closed under pullback.
\end{lem}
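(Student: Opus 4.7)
The plan is to verify that each of the three defining conditions of \cref{def:finite_etale_fibrations} is preserved under pullback. Given a pullback square
\[ \begin{tikzcd}
\cY' \arrow{r}{g'} \arrow{d}{f'} & \cY \arrow{d}{f} \\
\cX' \arrow{r}{g} & \cX
\end{tikzcd} \]
in $\Cat_\infty$ with $f$ a finite étale fibration, I will show that $f'$ is again a finite étale fibration.

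First I would recall the standard fact that cocartesian fibrations and cartesian fibrations are both stable under pullback in $\Cat_\infty$ (see \cite[Remark 2.4.2.4 and Corollary 2.4.7.12]{HTT}). This immediately takes care of conditions (2) and of the cocartesian fibration part of (1): $f'$ is both cartesian and cocartesian. Local constancy of $f'$ is then supplied by \cref{lem:locally_constant_pullback}, which was established precisely for this purpose. Alternatively, one can observe that the straightening of $f'$ is the composite $\Upsilon_\cY \circ g$, and since $\Upsilon_\cY$ inverts all arrows of $\cX$, the composite inverts all arrows of $\cX'$.

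Finally, for condition (3), for every $x' \in \cX'$ the fiber $(\cY')_{x'}$ is canonically identified with $\cY_{g(x')}$ (pullback squares compose), which is a finite set by assumption on $f$. Combining these three observations, $f'$ satisfies all the defining properties of a finite étale fibration. There is no substantial obstacle here; the lemma is essentially a packaging of stability properties that have already been recorded in the excerpt or are standard in \cite{HTT}.
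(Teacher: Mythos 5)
Your proof is correct and matches the paper's implicit reasoning: the paper records this lemma without proof precisely because it follows by checking the three defining conditions against standard stability facts (cocartesian and cartesian fibrations are closed under pullback, the straightening of a pullback is obtained by precomposition so local constancy is inherited, and fibers are preserved). One minor notational quibble: you write $\Upsilon_\cY$ for the straightening of $f\colon\cY\to\cX$, whereas the paper's convention (\cref{construction:exponential}, \cref{notation:fake_exponential}) would use a subscript matching the total space over the base, so calling it $\Upsilon_\cY$ risks suggesting a fibration \emph{over} $\cY$; apart from that the argument is complete.
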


Finite étale fibrations satisfy another important stability property, that we are going to explain now.

\begin{construction}
	Let $f \colon \cX \to \cY$ be a functor of small $\infty$-categories.
	Recall from \cref{recollection:pushforward_fibrations} the adjunction
	\[ \fcocartlowershriek \colon \CoCart_\cX \leftrightarrows \CoCart_\cY \colon f^\ast \ . \]
	Evaluating the unit of this adjunction on a cocartesian fibration $p \colon \cA \to \cX$, we obtain the following commutative square:\\
	\begin{equation}\label{eq:exceptional_pushforward_square}
		\begin{tikzcd}
			\cA \arrow{r}{f_\cA} \arrow{d}{p} & \fcocartlowershriek(\cA) \arrow{d}{q} \\
			\cX \arrow{r}{f} & \cY \ .
		\end{tikzcd}
	\end{equation}
	When $f$ is a localization, $f^\ast \colon \CoCart_\cY \to \CoCart_\cX$ is fully faithful.
	In this case, the counit $\fcocartlowershriek(\cX) \simeq \fcocartlowershriek(f^\ast(\cY)) \to \cY$ is an equivalence.
	Therefore, in this case, the structural map $q \colon \fcocartlowershriek(\cA) \to \cY$ is canonically identified with $\fcocartlowershriek(p)$.
\end{construction}

\begin{lem}\label{exceptional_pushforward_square_lem}
	Assume that $f$ exhibits $\cY$ as the localization of $\cX$ at a class of morphisms $W$.
	Let $\Upsilon_\cA \colon \cX\to \Cat_\infty$ be the straightening of $p \colon \cA \to \cX$.
	Then, the following are equivalent:
	\begin{enumerate}\itemsep=0.2cm
		\item the square \eqref{eq:exceptional_pushforward_square} is a pullback;
		
		\item the functor $\Upsilon_\cA \colon \cX\to \Cat_\infty$ maps $W$ to equivalences;
		
		\item For every $\gamma \in W$, the pullback $p_\gamma \colon \cA_\gamma \to \Delta^1$ (see \cref{notation:simplexes_cocartesian_fibration}) is a cartesian fibration and an arrow in $\cA_{\gamma}$ is cocartesian if and only it is cartesian.
	\end{enumerate}
\end{lem}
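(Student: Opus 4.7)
The equivalence (2) $\Leftrightarrow$ (3) is immediate: condition (3) is precisely condition (1) of \cref{locally_constant} applied to $\gamma$, so by the equivalence (1) $\Leftrightarrow$ (2) of that proposition it amounts to asking $\Upsilon_\cA(\gamma)$ to be an equivalence for every $\gamma \in W$, which is condition (2) above.

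For (1) $\Leftrightarrow$ (2), the plan is to translate everything via the straightening equivalence
\[ \CoCart_\cX \simeq \Fun(\cX, \Cat_\infty) \ , \qquad \CoCart_\cY \simeq \Fun(\cY, \Cat_\infty) \ . \]
Under these equivalences, the adjunction $\fcocartlowershriek \dashv f^\ast$ corresponds to the adjunction given by left Kan extension $f_!$ and restriction $f^\ast$ along $f \colon \cX \to \cY$. In particular, the straightening of $\fcocartlowershriek(\cA)$ is $f_!(\Upsilon_\cA)$, and the square \eqref{eq:exceptional_pushforward_square} corresponds, after straightening, to the unit transformation
\[ \Upsilon_\cA \to f^\ast f_!(\Upsilon_\cA) \ . \]
Thus, (1) holds if and only if this unit is an equivalence.

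Since $f$ exhibits $\cY$ as the localization of $\cX$ at $W$, restriction along $f$ is fully faithful and identifies $\Fun(\cY, \Cat_\infty)$ with the full subcategory of $\Fun(\cX, \Cat_\infty)$ spanned by functors inverting $W$ (see \cite[Proposition 7.1.11]{Cisinski_Higher_Category}). The left adjoint $f_!$ is then the localization functor, and the unit $F \to f^\ast f_!(F)$ is an equivalence precisely when $F$ inverts $W$. Applied to $F = \Upsilon_\cA$, this shows (1) $\Leftrightarrow$ (2), completing the proof. The only mildly delicate step is the identification of the straightening of $\fcocartlowershriek(\cA)$ with $f_!(\Upsilon_\cA)$, but this is built into \cref{recollection:pushforward_fibrations}.
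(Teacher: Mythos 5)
Your proof is correct and takes essentially the same route as the paper's (which is stated very tersely): the equivalence (2) $\Leftrightarrow$ (3) is read off directly from \cref{locally_constant}, and (1) $\Leftrightarrow$ (2) comes from the universal property of the localization via straightening, observing that the pullback condition for \eqref{eq:exceptional_pushforward_square} is the unit $\Upsilon_\cA \to f^\ast f_!(\Upsilon_\cA)$ being an equivalence. Your fleshed-out version simply makes explicit the identification of $\fcocartlowershriek$ with left Kan extension under straightening, which the paper leaves implicit.
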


\begin{proof}
The equivalence between (1) and (2) follows from the universal property of the localization.
The equivalence between (2) and (3) follows from \cref{locally_constant}.
\end{proof}

\begin{cor}\label{finite_etaleness_descent_along_localization}
	Let $p \colon \cA \to \cX$ be a cocartesian fibration between $\infty$-categories.
	Let $f \colon \cX \to \cY$ be a functor exhibiting $\cY$ as the localization of $\cX$ at a class of morphisms $W$.
	Then, the following are equivalent:
	\begin{enumerate}\itemsep=0.2cm
		\item $p \colon \cA \to \cX$ is a finite étale fibration;
		
		\item the square \eqref{eq:exceptional_pushforward_square} is a pullback and $\fcocartlowershriek(p) \colon \fcocartlowershriek(\cA) \to \cY$ is a finite étale fibration.
	\end{enumerate}
	If these conditions are satisfied, the functor $f_\cA \colon \cA \to  \fcocartlowershriek(\cA)$ exhibits $\fcocartlowershriek(\cA)$ as the localization of $\cA$ at every morphism above $W$.
\end{cor}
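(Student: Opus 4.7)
The implication (2)$\Rightarrow$(1) is immediate: the assumption that \eqref{eq:exceptional_pushforward_square} is a pullback identifies $p$ with the pullback of the finite étale fibration $\fcocartlowershriek(p)$ along $f$, and \cref{lem:finite_etale_fibrations_pullback} ensures stability of finite étale fibrations under pullback. I would therefore concentrate on (1)$\Rightarrow$(2) and the concluding localization claim.

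For (1)$\Rightarrow$(2), the first observation is that a finite étale fibration is in particular locally constant, so its straightening $\Upsilon_\cA \colon \cX \to \Cat_\infty$ sends every morphism of $\cX$ to an equivalence. In particular $\Upsilon_\cA$ inverts $W$, so the equivalence (1)$\Leftrightarrow$(2) in \cref{exceptional_pushforward_square_lem} yields that the square \eqref{eq:exceptional_pushforward_square} is a pullback. This pullback identification in turn supplies a canonical equivalence $\Upsilon_\cA \simeq \Upsilon_{\fcocartlowershriek(\cA)} \circ f$ of functors $\cX \to \Cat_\infty$. Since $\Upsilon_\cA$ factors through $\Env(\cX)$ and since $\Env(f) \colon \Env(\cX) \to \Env(\cY)$ is an equivalence (localizations are sent to equivalences by the enveloping $\infty$-groupoid), I can conclude that $\Upsilon_{\fcocartlowershriek(\cA)}$ sends every morphism of $\cY$ to an equivalence. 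By \cref{locally_constant} this makes $\fcocartlowershriek(p)$ locally constant, and since every transition functor of its straightening is invertible it automatically admits a right adjoint, so $\fcocartlowershriek(p)$ is a cartesian fibration as well. Finally, the pullback condition identifies the fiber $\fcocartlowershriek(\cA)_{f(x)}$ with $\cA_x$ for every $x \in \cX$; since the localization $f$ is essentially surjective, every fiber of $\fcocartlowershriek(p)$ arises in this way and is therefore a finite set. Thus $\fcocartlowershriek(p)$ is finite étale.

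For the final localization statement, Hinich's theorem \cref{prop:pullback_localization} applied to the pullback square exhibits $\fcocartlowershriek(\cA)$ as the localization of $\cA$ at the class $W_\cA$ of $p$-cocartesian lifts of arrows in $W$. To upgrade this to \emph{every} morphism above $W$, I would take an arbitrary such morphism $\phi \colon a \to b$ lying over some $\gamma \colon x \to y$ in $W$ and factor it as $a \to c \to b$ with $a \to c$ a $p$-cocartesian lift of $\gamma$ and $c \to b$ a morphism in the fiber $\cA_y$. Since $p$ is finite étale, $\cA_y$ is a finite \emph{set}, so its only morphisms are identities; hence $c = b$ and $\phi$ itself is $p$-cocartesian. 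Thus the class of morphisms above $W$ coincides with $W_\cA$, and the localization claim follows directly from Hinich.

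The main subtlety I expect is the argument that $\fcocartlowershriek(p)$ inherits local constancy from $p$. It is light once one has both \cref{exceptional_pushforward_square_lem} (to secure the pullback) and the triviality of $\Env(f)$ (to transport the inversion of all morphisms across $f$), but these ingredients must be combined in the correct order, because local constancy of $p$ alone is not enough to infer local constancy of $\fcocartlowershriek(p)$ without first establishing that the canonical comparison square is a pullback.
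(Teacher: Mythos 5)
Your proof is correct, and it deviates from the paper's in a way worth noting. The paper first pushes all the way to the universal localization $\lambda_\cX \colon \cX \to \Env(\cX)$ and argues about $\lambda_{\cX,!}^{\mathrm{cc}}(\cA) \to \Env(\cX)$: local constancy is then free because the base is a groupoid, but the price is that cartesianness must be shown indirectly by proving that $\lambda_{\cX,!}^{\mathrm{cc}}(\cA)$ is itself an $\infty$-groupoid, which the paper does by invoking the localization claim at all morphisms and the discreteness of fibers. Once the $\Env(\cX)$ case is settled, the $\cY$ case is obtained by pulling back. Your route instead stays over $\cY$: you deduce local constancy of $\fcocartlowershriek(p)$ from the pullback identification $\Upsilon_\cA \simeq \Upsilon_{\fcocartlowershriek(\cA)} \circ f$, the factorization of $\Upsilon_\cA$ through $\Env(\cX) \simeq \Env(\cY)$, and the full faithfulness of $f^\ast$ to cancel $f$ on the right; and you then get cartesianness immediately from the elementary fact that a cocartesian fibration whose transition functors are all equivalences admits fiberwise right adjoints and is therefore also cartesian. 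This avoids the detour through groupoid-completeness of the total space and is, in my view, the shorter chain of implications. Your treatment of the final localization statement (factor an arbitrary morphism over $W$ through a cocartesian lift followed by a fiber morphism and observe the latter is an identity since fibers are sets) matches the paper's argument that every morphism of $\cA$ is $p$-cocartesian, just packaged a touch differently. The one place you should tighten is the local constancy step: state explicitly that $\Upsilon_\cA$ factors through $\lambda_\cX \colon \cX \to \Env(\cX)$, that $\lambda_\cY \circ f$ also factors through $\lambda_\cX$ via the equivalence $\Env(f)$, and that $f^\ast$ is fully faithful so one can divide out the $f$ on the right; as written, the justification via ``$\Env(f)$ is an equivalence'' compresses these steps.
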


\begin{proof}
	That (2) implies (1) follows from the preservation of finite étale fibrations under pullback from \cref{lem:finite_etale_fibrations_pullback}.
	Assume that (1) holds.
	Let $\lambda_{\cY} \colon \cY \to \Env(\cY)\simeq \Env(\cX)$ be the localization at every morphism.
	Since $p \colon \cA \to \cX$ is locally constant,  \cref{exceptional_pushforward_square_lem}-(2) is satisfied both for $(p,W)$ and $(\fcocartlowershriek(p),\Mor(\cY))$.
	Hence, there is a commutative diagram
	\[ \begin{tikzcd}
		\cA \arrow{r}\arrow{d}{p} & \fcocartlowershriek(\cA)  \arrow{d}{\fcocartlowershriek(p)} \arrow{r} & \lambda_{\cX,!}^{\mathrm{cc}}(\cA) \arrow{d}{\lambda_{\cX,!}^{\mathrm{cc}}(p)} \\
		\cX \arrow{r}{f} & \cY \arrow{r}{\lambda_{\cY}} & \Env(\cX)
	\end{tikzcd} \]
	whose squares are pullback squares.
	By \cref{lem:finite_etale_fibrations_pullback}, we are thus left to show that
	\begin{equation}\label{eq:finite_etaleness_descent_along_localization}
		\lambda_{\cX,!}^{\mathrm{cc}}(p) \colon \lambda_{\cX,!}^{\mathrm{cc}}(\cA) \to \Env(\cX)
	\end{equation} 
	is a finite étale fibration.
	Since the outer square is a pullback, the fibres of \eqref{eq:finite_etaleness_descent_along_localization} are finite sets.
	Local constancy is obvious since $\Env(\cX)$ is an $\infty$-groupoid.
	Note that  \eqref{eq:finite_etaleness_descent_along_localization} is an inner fibration as it is cocartesian. 
	To show that it is cartesian, it is enough to show \cite[Proposition 2.4.1.5]{HTT} that $\lambda_{\cX,!}^{\mathrm{cc}}(\cA)$ is an $\infty$-groupoid.
	To do this, it is enough to show that $\cA  \to \lambda_{\cX,!}^{\mathrm{cc}}(\cA)$ exhibits $\lambda_{\cX,!}^{\mathrm{cc}}(\cA)$ as the localization of $\cA$ at every morphism.
	Hence, we are left to show more generally that $\cA \to  \fcocartlowershriek(\cA)$ exhibits $\fcocartlowershriek(\cA)$ as the localization of $\cA$ at every morphism above a morphism of $W$.
	By \cref{prop:pullback_localization}, it is enough to show that every morphism in $\cA$ is $p$-cocartesian.
	This follows immediately from the fact that the fibers of $p \colon \cA \to \cX$ are discrete.
\end{proof}

\begin{cor}
	Let $f \colon \cX \to \cY$ be a localization functor.
	Then the adjunction
	\[ \fcocartlowershriek \colon \CoCart_\cX \leftrightarrows \CoCart_\cY \colon f^\ast \]
	restricts to an equivalence between the $\infty$-subcategories spanned by finite étale fibrations.
\end{cor}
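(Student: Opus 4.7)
The plan is to deduce this corollary as a direct consequence of \cref{finite_etaleness_descent_along_localization} combined with the fact that $f^\ast$ is fully faithful. First I will verify that both functors in the adjunction restrict to well-defined functors between the $\infty$-subcategories of finite étale fibrations. For $f^\ast$, this follows immediately from \cref{lem:finite_etale_fibrations_pullback}, which asserts that finite étale fibrations are stable under pullback. For $\fcocartlowershriek$, the preservation property is exactly the content of the second assertion in \cref{finite_etaleness_descent_along_localization}-(2): starting from a finite étale fibration $p \colon \cA \to \cX$, the pushforward $\fcocartlowershriek(p) \colon \fcocartlowershriek(\cA) \to \cY$ is again a finite étale fibration.

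Next I will check that unit and counit are equivalences when restricted to finite étale fibrations. For the counit: since $f \colon \cX \to \cY$ is a localization functor, $f^\ast \colon \CoCart_\cY \to \CoCart_\cX$ is fully faithful (as recalled in the discussion preceding \cref{exceptional_pushforward_square_lem}). By the standard general fact that the counit of an adjunction is an equivalence whenever the right adjoint is fully faithful, we deduce that for every $\cB \in \CoCart_\cY$ the counit
\[ \fcocartlowershriek(f^\ast(\cB)) \to \cB \]
is an equivalence; in particular this holds on every finite étale fibration.

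For the unit, I use directly the pullback statement in \cref{finite_etaleness_descent_along_localization}. Given a finite étale fibration $p \colon \cA \to \cX$, condition (1) of that corollary is satisfied, so condition (2) yields that the square \eqref{eq:exceptional_pushforward_square}
\[ \begin{tikzcd}
\cA \arrow{r}{f_\cA} \arrow{d}{p} & \fcocartlowershriek(\cA) \arrow{d}{\fcocartlowershriek(p)} \\
\cX \arrow{r}{f} & \cY
\end{tikzcd} \]
is a pullback. This means precisely that the unit morphism $\cA \to f^\ast(\fcocartlowershriek(\cA))$ is an equivalence in $\CoCart_\cX$.

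Combining the four ingredients above, the adjunction $\fcocartlowershriek \dashv f^\ast$ restricts to an adjoint equivalence on the full subcategories of finite étale fibrations. There is no substantive obstacle: the content is already packaged in \cref{finite_etaleness_descent_along_localization} and in the fully faithfulness of $f^\ast$ along localizations, and the corollary is essentially a bookkeeping exercise assembling these facts into the statement of an adjoint equivalence.
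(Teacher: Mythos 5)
Your argument is correct and matches the paper's own proof in all essentials: both use \cref{lem:finite_etale_fibrations_pullback} to show $f^\ast$ preserves finite étale fibrations, \cref{finite_etaleness_descent_along_localization} to show $\fcocartlowershriek$ does as well and to identify the unit with the pullback square (hence an equivalence), and full faithfulness of $f^\ast$ for a localization $f$ to see the counit is an equivalence. Your write-up is more explicit about the bookkeeping, but there is no substantive difference in approach.
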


\begin{proof}
	If $p \colon \cA \to \cX$ is a finite étale fibration, then so is $\fcocartlowershriek(p) \colon \fcocartlowershriek(\cA) \to \cY$ in virtue \cref{finite_etaleness_descent_along_localization} and the unit of $\fcocartlowershriek \dashv f^\ast$  applied to $p \colon \cA \to \cX$  is an equivalence.
	If $p \colon \cB \to \cY$  is a finite étale fibration, then so is $f^*(p) \colon f^*(\cA) \to \cX$ by \cref{lem:finite_etale_fibrations_pullback}.
	Since $f \colon \cX \to \cY$ is a localization, the counit of $f_!  \dashv f^\ast$ applied to $p \colon \cB \to \cY$  is automatically an equivalence.
\end{proof}

The link with topological covering maps is expressed by the following:

\begin{lem}\label{lem:finite_etale_fibrations_example}
Let $(X,P)$ be a stratified space and let $f \colon Y \to X$ be a continuous morphism.
	Assume that:
	\begin{enumerate}\itemsep=0.2cm
		\item $f \colon Y \to X$ is a finite covering map;
		\item there is a refinement $R\to P$ such that $(X,R)$ is conical with locally weakly contractible strata.
	\end{enumerate}
	Then $(Y,P)$ admits a conical refinement with locally weakly contractible strata and the induced map
	\[ \Pi_\infty(Y,P) \to \Pi_\infty(X,P) \]
	is a finite étale fibration.
\end{lem}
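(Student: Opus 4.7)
The plan is to verify the three conditions of \cref{def:finite_etale_fibrations} together with the conical refineability of $(Y,P)$.

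First, I would establish that $(Y,P)$ is conically refineable. Choosing a refinement $R \to P$ such that $(X,R)$ is conically stratified with locally weakly contractible strata, the pullback stratification $(Y,R)$ inherits these properties since $f$ is a local homeomorphism: conical neighborhoods and weakly contractible strata can be pulled back along each sheet of the covering. By \cref{conically_refineable_implies_exodromic}, both $(X,P)$ and $(Y,P)$ are then exodromic.

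Next, the finite covering $f$ determines a $P$-hyperconstructible hypersheaf $\mathscr{F} = f_\ast(\underline{\ast})$ on $X$ with values in finite sets, which is moreover locally constant as a hypersheaf on $X$. Via the exodromy equivalence \eqref{exodromy_equivalence}, $\mathscr{F}$ corresponds to a functor $F \colon \Pi_\infty(X,P) \to \mathrm{Set}$; by local constancy and \cref{refinement_localization}, $F$ factors through $\Env(\Pi_\infty(X,P)) \simeq \Pi_\infty(X)$. To identify the unstraightening of $F$ with $\Pi_\infty(Y,P) \to \Pi_\infty(X,P)$, I would choose an open cover $\{U_i\}$ of $X$ over which $f$ trivializes and apply Van Kampen \cref{étale_van_kampen}: writing $\mathbf U_\bullet$ for the Čech nerve and $\mathbf V_\bullet \coloneqq f^{-1}(\mathbf U_\bullet)$, we have $\Pi_\infty(\mathbf V_n, P) \simeq \Pi_\infty(\mathbf U_n,P) \times F_n$ over $\Pi_\infty(\mathbf U_n,P)$ for a finite set $F_n$ (depending on the connected components of $\mathbf U_n$), and the colimit of these trivial cocartesian fibrations reproduces the unstraightening of $F$ on the one hand, and $\Pi_\infty(Y,P) \to \Pi_\infty(X,P)$ on the other.

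Finally, by the previous step the map $p \colon \Pi_\infty(Y,P) \to \Pi_\infty(X,P)$ is a cocartesian fibration in finite sets and locally constant in the sense of \cref{def:locally_constant}. Since for every morphism $\gamma$ of $\Pi_\infty(X,P)$ the straightening sends $\gamma$ to a bijection of finite fibers, the equivalence (1) $\Leftrightarrow$ (2) of \cref{locally_constant} shows that every $p$-cocartesian edge above $\gamma$ is also $p$-cartesian, hence the fibration is cartesian as well, completing the verification. The main obstacle in this strategy is the second step: one must carefully check that the trivializations over the Čech nerve are compatible with face maps so that the Van Kampen colimit yields the correct cocartesian fibration, and that this colimit indeed matches the unstraightening of $F$; once this identification is in place, the remaining verifications are formal.
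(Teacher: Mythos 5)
Your proof takes a genuinely different route from the paper's. Your first step (establishing conical refineability of $(Y,P)$ by pulling back the conical refinement $R$ along the local homeomorphism $f$) coincides with the opening of the paper's argument. After that the strategies diverge. The paper first reduces to the case of a conical stratification by means of \cref{finite_etaleness_descent_along_localization}, and then works at the level of the simplicial model: since a covering map is a Serre fibration, $\Sing(Y)\to\Sing(X)$ is a Kan fibration, so the square $\Sing_P(Y)\to\Sing(Y)$, $\Sing_P(X)\to\Sing(X)$ (which is a strict pullback of simplicial sets) is a homotopy pullback, hence a pullback in $\Cat_\infty$. This reduces everything to the trivial stratification $P=\ast$, where the statement becomes a fact about left/right fibrations of Kan complexes. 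You instead work with the hypersheaf $f_\ast(\underline{\ast})$, exodromy, Van Kampen, and (implicitly) descent for cocartesian fibrations over the Čech nerve of a trivializing cover.

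The gap you acknowledge in your own step (4) is the real sticking point, and it is worth being precise about why. Van Kampen gives you $\Pi_\infty(Y,P)\simeq\colim_n\Pi_\infty(\mathbf V_n,P)$, and each $\Pi_\infty(\mathbf V_n,P)\to\Pi_\infty(\mathbf U_n,P)$ is a (locally) trivial cocartesian fibration in finite sets — but identifying the colimit of this simplicial diagram with the unstraightening of $F$ is exactly the content of descent for cocartesian fibrations, i.e.\ the equivalence $\mathrm{CoCart}_{/\Pi_\infty(X,P)} \simeq \lim_n \mathrm{CoCart}_{/\Pi_\infty(\mathbf U_n,P)}$. This is a true theorem, but it is not among the results assembled in the paper (\cref{lem:Van_Kampen_filtered} goes in the opposite direction: it recovers the total space of a fibration you already know to be cocartesian, whereas here you must glue one from local data). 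Moreover, identifying the glued fibration with the honest map $\Pi_\infty(Y,P)\to\Pi_\infty(X,P)$ requires that the face maps act on the trivialized pieces by the correct transition data, which is precisely the bookkeeping you flag as the obstacle. The paper's homotopy-pullback argument sidesteps all of this: it establishes in one stroke that the square of exit-path $\infty$-categories is a pullback, after which local constancy, cartesianness and finiteness of fibers are immediate from \cref{lem:finite_etale_fibrations_pullback} and the fact that $\Pi_\infty(Y)\to\Pi_\infty(X)$ is a left and right fibration with discrete finite fibers. Your approach is in principle viable, but it requires importing descent for cocartesian fibrations and carrying out the coherence check on the cocycle data, so it is substantially longer in the end than what the paper does.
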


\begin{proof}
    Since $f$ is a local homeomorphism,  $(Y,R)$ is also conical with locally weakly contractible strata.
    Therefore, there is a commutative diagram 
    	\[ \begin{tikzcd}
		\Pi_\infty(Y,R) \arrow{r} \arrow{d}& \Pi_\infty(Y,P) \arrow{d}\\
		\Pi_\infty(X,R) \arrow{r}{r^{\mathrm{cc}}_!} & \Pi_\infty(X,P) 
	\end{tikzcd} \]
	in $\Cat_{\infty}$.
    Assume that the left arrow is a finite étale fibration.
    By \cref{finite_etaleness_descent_along_localization} we deduce the existence of a pullback square of finite étale fibrations 
    	\[ \begin{tikzcd}
		\Pi_\infty(Y,R) \arrow{r} \arrow{d}& r^{\mathrm{cc}}_!(\Pi_\infty(Y,R))    \arrow{d} \\
		\Pi_\infty(X,R) \arrow{r}{r^{\mathrm{cc}}_!} & \Pi_\infty(X,P) 
	\end{tikzcd} \]
    such that the top arrow exhibits  $r_!(\Pi_\infty(Y,R))$  as the localization of $\Pi_\infty(Y,R)$ at every arrow above
    an equivalence of $P$. 
    By \cite[Theorem 0.3.1]{Beyond_conicality}, we deduce the existence of a canonical equivalence
\[
r^{\mathrm{cc}}_!(\Pi_\infty(Y,P))\simeq  \Pi_\infty(Y,P) \ .
\]    
    Hence,  $\Pi_\infty(Y,P) \to \Pi_\infty(X,P)$ is a finite étale fibration.
    Thus, we are left to prove \cref{lem:finite_etale_fibrations_example} in the case where $(X,P)$ is conically stratified.
    In that case, so is $(Y,P)$.
	Therefore, we have the following pullback square of simplicial sets
	\[ \begin{tikzcd}
		\Sing_Q(Y) \arrow{r} \arrow{d} & \Sing(Y) \arrow{d} \\
		\Sing_P(X) \arrow{r} & \Sing(X) \ .
	\end{tikzcd} \]
	Since $f$ is a covering map, it is in particular a Serre fibration.
	Therefore, $\Sing(Y) \to \Sing(X)$ is a Kan fibration.
	It follows that the above square is a homotopy pullback, and therefore that
	\[ \begin{tikzcd}
		\Pi_\infty(Y,Q) \arrow{r} \arrow{d} & \Pi_\infty(Y) \arrow{d} \\
		\Pi_\infty(X,P) \arrow{r} & \Pi_\infty(X)
	\end{tikzcd} \]
	is a pullback in $\Cat_\infty$.
	By \cref{lem:finite_etale_fibrations_pullback}, we are left to prove \cref{lem:finite_etale_fibrations_example} when $P = \ast$ is the trivial stratification.
	We know that $\Sing(Y) \to \Sing(X)$ is a Kan fibration, so that $\Pi_\infty(Y) \to \Pi_\infty(X)$ is both a left and a right fibration.
	Since the base is an $\infty$-groupoid, it follows that it is locally constant in the sense of \cref{def:locally_constant}.
	Besides, for $x \in X$ we have a pullback
	\[ \begin{tikzcd}
		\Sing(Y_x) \arrow{r} \arrow{d} & \Sing(Y) \arrow{d} \\
		\{x\} \arrow{r} & \Sing(X)
	\end{tikzcd} \]
	of simplicial sets.
	Since the right vertical map is a Kan fibration, we deduce that it is a homotopy pullback, i.e.\ that
	\[ \{x\} \times_{\Pi_\infty(X)} \Pi_\infty(Y) \simeq \Pi_\infty(Y_x) \ . \]
	Since $f$ is a finite covering map, $Y_x$ is discrete, whence the conclusion.
\end{proof}

\section{Categorical actions}\label{sec:categorical_actions}

We collect some material on $\infty$-categorical actions that is needed throughout the text.

\subsection{Generalities}

We refer to \cite[\S4.8.1]{Lurie_Higher_algebra} for the theory of tensor products of presentable $\infty$-categories, that endows $\PrL$ with a symmetric monoidal structure $\PrLotimes$.
Fix an object $\cE^\otimes \in \CAlg(\PrLotimes)$.
We refer to $\cE^\otimes$ as a \emph{presentably symmetric monoidal $\infty$-category}.
In particular, we have an underlying tensor product
\[ \otimes_\cE \colon \cE \times \cE \to \cE \]
commuting with colimits in both variables and a tensor unit $\bbI_\cE \in \cE$.
We refer to an object in $\PrL_\cE \coloneqq \Mod_{\cE^\otimes}(\PrLotimes)$ as an \emph{$\infty$-categorical module over $\cE^\otimes$}.
Ignoring homotopy coherences, such an object can informally be described as an $\infty$-category $\cD$ equipped with an external tensor product
\[ \otimes \colon \cE \times \cD \to \cD \]
that commutes with colimits in both variables and that satisfies the usual module relations.
In particular, $\bbI_\cE \otimes (-) \colon \cD \to \cD$ comes with an identification with $\id_\cD$.
Similarly, a morphism $f \colon \cD \to \cD'$ of $\infty$-categorical $\cE^\otimes$-modules can informally be described as a functor $f$ equipped with homotopy coherent identifications
\[ f( E \otimes D ) \simeq E \otimes f(D) \ , \]
for $E \in \cE$ and $D \in \cD$.
Finally, \cite[Theorem 4.5.2.1]{Lurie_Higher_algebra} supplies $\PrL_\cE$ with an induced symmetric monoidal structure $\PrLotimes_\cE$.
In particular, given two $\infty$-categorical $\cE^\otimes$-modules $\cD$ and $\cD'$, we can form the relative tensor product
\[ \cD \otimes_\cE \cD' \in \PrLotimes_\cE \ . \]

\begin{recollection}\label{recollection:algebras_to_modules}
	It follows from \cite[Corollary 3.4.1.7]{Lurie_Higher_algebra} that a symmetric monoidal functor $f^\otimes \colon \cE^\otimes \to \cD^\otimes$ allows to see $\cD^\otimes$ as a $\infty$-categorical module over $\cE^\otimes$.
	The underlying tensor product is then informally defined as
	\[ E \otimes D \coloneqq f(E) \otimes_\cD D \ . \]
	Similarly, if
	\[ \begin{tikzcd}[column sep=small]
		{} & \cE^\otimes \arrow{dl}[swap]{f^\otimes} \arrow{dr}{g^\otimes} \\
		\cD^\otimes \arrow{rr}{h^\otimes} & & \cD^{\prime\otimes}
	\end{tikzcd} \]
	is a commutative triangle in $\CAlg(\PrLotimes)$, then $h \colon \cD \to \cD'$ inherits the structure of a $\cE$-linear functor.
\end{recollection}

\begin{recollection}\label{recollection:monoidal_structure_functor_category}
	Let $\cE^\otimes$ be a presentably symmetric monoidal $\infty$-category.
	It follows from \cite[Remark 2.1.3.4]{Lurie_Higher_algebra} that for every (small) $\infty$-category $\cA$, $\Fun(\cA, \cE)$ inherits a symmetric monoidal structure, that we denote $\Fun(\cA, \cE)^\otimes$.
	Informally speaking, given two functors $F, G \colon \cA \to \cE$, their tensor product is defined by the rule
	\[ (F \otimes G)(a) \coloneqq F(a) \otimes_\cE G(a) \ . \]
	Similarly, if $f \colon \cB \to \cA$ is a functor of $\infty$-categories, then
	\[ f^\ast \colon \Fun(\cA, \cE) \to \Fun(\cB, \cE) \]
	inherits a canonical symmetric monoidal structure.
\end{recollection}

\begin{lem}\label{lem:linearity_of_induction}
	Let $\cE^\otimes$ be a presentably symmetric monoidal $\infty$-category and let $f \colon \cA \to \cB$ be a cocartesian fibration.
	Reviewing $\Fun(\cB, \cE)$ as a $\Fun(\cA, \cE)^\otimes$-module via Recollections \ref{recollection:algebras_to_modules} and \ref{recollection:monoidal_structure_functor_category}, the left Kan extension functor
	\[ f_! \colon \Fun(\cB, \cE) \to \Fun(\cA, \cE) \]
	is $\Fun(\cA, \cE)^\otimes$-linear.
\end{lem}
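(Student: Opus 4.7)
The assertion unfolds to a projection formula: for every $F \in \Fun(\cA, \cE)$ and $G \in \Fun(\cB, \cE)$, we must produce a natural equivalence
\[ f_!\bigl( f^*(F) \otimes G \bigr) \simeq F \otimes f_!(G) \]
in $\Fun(\cA, \cE)$ and upgrade the collection of these equivalences to the data of a strong $\Fun(\cA,\cE)^\otimes$-linear structure on $f_!$ (here $f^*$ is symmetric monoidal and the module structure on $\Fun(\cB,\cE)$ is the one induced by it via \cref{recollection:algebras_to_monoidal} and \cref{recollection:monoidal_structure_functor_category}, reading the cocartesian fibration as $f\colon \cB \to \cA$ so that the direction of $f_!$ matches the statement).

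The plan is to first construct the Beck--Chevalley comparison and then verify pointwise that it is an equivalence. Since $f^*$ is strong symmetric monoidal and right adjoint to $f_!$, the general machinery of adjunctions between module objects in $\PrLotimes$ (see \cite[\S4.8]{Lurie_Higher_algebra}) endows $f_!$ with a canonical \emph{lax} $\Fun(\cA,\cE)^\otimes$-linear structure, whose component maps are obtained by adjunction from
\[ f^*(F) \otimes G \xrightarrow{\ \id_{f^*(F)} \otimes \eta_G\ } f^*(F) \otimes f^*f_!(G) \simeq f^*\bigl( F \otimes f_!(G) \bigr) \ , \]
with $\eta$ the unit of $f_! \dashv f^*$. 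Upgrading this lax structure to a strong linear structure is \emph{equivalent} to showing that every such comparison map is an equivalence.

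For the pointwise check, I exploit that $f$ is a cocartesian fibration: for every $a \in \cA$, the inclusion of the fiber $j_a\colon \cB_a \hookrightarrow \cB \times_\cA \cA_{/a}$ admits a left adjoint given by cocartesian pushforward along morphisms in $\cA_{/a}$, hence is cofinal, and by \cite[Prop.\ 4.3.3.10]{HTT} the pointwise formula for left Kan extension simplifies to the fiberwise colimit $f_!(H)(a) \simeq \colim_{b \in \cB_a} H(b)$ for any $H \colon \cB \to \cE$. Applying this to $H = f^*(F) \otimes G$, using that $f(b) = a$ whenever $b \in \cB_a$ and that $\otimes_\cE$ commutes with colimits in each variable, we obtain
\[ f_!\bigl( f^*(F) \otimes G \bigr)(a) \simeq \colim_{b \in \cB_a} F(a) \otimes G(b) \simeq F(a) \otimes \colim_{b \in \cB_a} G(b) \simeq \bigl( F \otimes f_!(G) \bigr)(a) \ . \]
The main obstacle I foresee is not the computation itself but the bookkeeping step of identifying this chain of pointwise equivalences with the abstract Beck--Chevalley map: for this I would trace the unit $\eta$ through the fiberwise colimit formula and observe that at each $b \in \cB_a$ it reduces to the canonical structural morphism into the colimit computing $f_!(G)(a)$, from which the Beck--Chevalley map coincides with the equivalence displayed above.
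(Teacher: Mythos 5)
Your proof is correct and follows essentially the same route as the paper: reduce the strong $\Fun(\cA,\cE)^\otimes$-linearity of $f_!$ to the projection-formula equivalence via the abstract adjunction/(op)lax structure, then verify pointwise using the fiberwise colimit formula for the left Kan extension along a cocartesian fibration together with the fact that $\otimes_\cE$ commutes with colimits in each variable. The only cosmetic difference is that you spell out the cofinality of $\cB_a \hookrightarrow \cB \times_\cA \cA_{/a}$ via the cocartesian-pushforward left adjoint, whereas the paper simply cites the fiberwise Kan-extension formula from the exodromy paper; also, the canonical adjoint structure on $f_!$ is more standardly called \emph{oplax} (since $f_!$ is a left adjoint), which is the terminology the paper uses.
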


\begin{proof}
	It follows from \cite[Proposition 2.5.5.1]{Lurie_SAG} that $f_!$ is an oplax symmetric monoidal functor when we see both $\Fun(\cB, \cE)$ and $\Fun(\cA, \cE)$ as symmetric monoidal $\infty$-categories.
	Using \cite[Corollary 3.4.1.5]{Lurie_Higher_algebra}, we reduce ourselves to check that for every $F \in \Fun(\cA, \cE)$ and every $G \in \Fun(\cB, \cE)$, the canonical map
	\[ f_!( f^\ast(F) \otimes G ) \to F \otimes f_!(G) \]
	is an equivalence.
	\personal{The use of \cite[Corollary 3.4.1.5]{Lurie_Higher_algebra} is a bit rough. One should add some operadic details here.}
	Since the tensor product of $\cE$ commutes with colimits in both variables, this follows from the formula for left Kan extensions provided by the dual of \cite[\cref*{exodromy-lem:RKE}]{Exodromy_coefficient}.
\end{proof}

\subsection{Universal monadicity for finite étale fibrations}

To motivate the results of this section, consider the following:

\begin{construction}\label{construction:relative_tensor_of_functor_categories}
	Fix a presentably symmetric monoidal $\infty$-category $\cE^\otimes$ and let
	\[ \begin{tikzcd}
		\cB \arrow{r}{u} \arrow{d}{q} & \cA \arrow{d}{p} \\
		\cY \arrow{r}{f} & \cX
	\end{tikzcd} \]
	be a pullback square in $\Cat_\infty$.
	Via \cref{recollection:monoidal_structure_functor_category}, we obtain a commutative square
	\[ \begin{tikzcd}
		\Fun(\cX, \cE)^\otimes \arrow{r}{f^\ast} \arrow{d}{p^\ast} & \Fun(\cY,\cE)^\otimes \arrow{d}{q^\ast} \\
		\Fun(\cA, \cE)^\otimes \arrow{r}{u^\ast} & \Fun(\cB, \cE) \ .
	\end{tikzcd} \]
	Combining \cite[Theorem 4.5.2.1 and Proposition 3.2.4.7]{Lurie_Higher_algebra}, we obtain a canonical comparison map
	\begin{equation}\label{eq:relative_tensor_of_functor_categories}
		\mu \colon \Fun(\cY, \cE) \otimes_{\Fun(\cX, \cE)} \Fun(\cA, \cE) \to \Fun(\cB, \cE) \ .
	\end{equation}
\end{construction}

\begin{warning}
	When $\cX = \ast$, the comparison map \eqref{eq:relative_tensor_of_functor_categories} is an equivalence.
	If both $f$ and $p$ are cocartesian fibrations, one can easily prove that inside $\Mod_{\Triv_\cX(\cE^\otimes)}(\PrFibLotimes_\cX)$ there is a canonical equivalence
	\[ \exp_\cE( \cY / \cX ) \otimes_{\Triv_\cX(\cE)} \exp_\cE( \cA / \cX ) \simeq \exp_\cE( \cB / \cX ) \ . \]
	However, the global section functor
	\[ \Sigma_\cX \colon \Mod_{\Triv_\cX(\cE^\otimes)}(\PrFibLotimes_\cX) \to \Mod_{\Fun(\cX, \cE)}( \PrLotimes ) \]
	is only lax monoidal.
	Because of this, the functor \eqref{eq:relative_tensor_of_functor_categories} is typically not an equivalence.
\end{warning}

The goal of this section is to show that the situation gets considerably better if $f$ is assumed to be a finite étale fibration and $\cE$ to be stable.
We start introducing some terminology:

\begin{defin}
	Let $f \colon \cC \to \cD$ and $g \colon \cD \to \cC$ be functors between $\infty$-categories.
	We say that $f$ and $g$ are \emph{biadjoints} if the adjunctions $f \dashv g$ and $g \dashv f$ hold.
\end{defin}

\begin{lem}\label{lem:finite_etale_fibration_biadjoint}
	Let $f \colon \cY \to \cX$ be a finite étale fibration and let $\cE$ be stable presentable $\infty$-category.
	Then the functors
	\[ f_! \colon \Fun(\cY, \cE) \to \Fun(\cX, \cE) \qquad \text{and} \qquad f^\ast \colon \Fun(\cX, \cE) \to \Fun(\cY,\cE) \]
	are biadjoints.
\end{lem}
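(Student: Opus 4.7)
\medskip

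\noindent\textbf{Proof plan.} The adjunction $f_! \dashv f^\ast$ is the standard left Kan extension adjunction and requires no assumption on $f$ nor on $\cE$. The content of the lemma is therefore the second half, namely $f^\ast \dashv f_!$. Equivalently, it is enough to exhibit a natural equivalence $f_! \simeq f_\ast$ between the left and the right Kan extensions along $f$. This reduces to a pointwise verification using the standard formulas
\[
(f_! G)(x) \simeq \colim_{\cY \times_{\cX} \cX_{/x}} G \circ \pi \qquad \text{and} \qquad (f_\ast G)(x) \simeq \lim_{\cY \times_{\cX} \cX_{x/}} G \circ \pi \ .
\]

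Since $f$ is a finite étale fibration (\cref{def:finite_etale_fibrations}), it is in particular a cocartesian fibration. The natural inclusion of the fiber $\cY_x \hookrightarrow \cY \times_{\cX} \cX_{/x}$ sending $y$ to $(y, \id_x)$ is cofinal: for every $(z,\alpha\colon f(z) \to x)$, the category of lifts of $\alpha$ starting at $z$ admits the cocartesian lift as an initial object, hence is weakly contractible. A dual argument using the cartesian structure of $f$ shows that $\cY_x \hookrightarrow \cY \times_{\cX} \cX_{x/}$ is final. Combined with the fact that the fibers $\cY_x$ are finite discrete sets, the pointwise formulas above simplify to
\[
(f_! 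G)(x) \simeq \bigoplus_{y \in \cY_x} G(y) \qquad \text{and} \qquad (f_\ast G)(x) \simeq \prod_{y \in \cY_x} G(y) \ .
\]

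Since $\cE$ is stable, the canonical comparison map from the finite coproduct to the finite product is an equivalence. Assembling these pointwise equivalences into a natural transformation yields the desired equivalence $f_! \simeq f_\ast$, and hence $f^\ast \dashv f_!$ follows from the adjunction $f^\ast \dashv f_\ast$.

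The main subtlety I anticipate is the naturality of the above equivalence in $x \in \cX$: the identifications at each $x$ must be assembled into a genuine natural transformation of functors $\Fun(\cY, \cE) \to \Fun(\cX, \cE)$. This can be handled either by extracting the comparison map from the universal property of finite (co)products inside the stable $\infty$-category $\Fun(\cX, \cE)$ and then verifying it is pointwise an equivalence, or alternatively by appealing to the fact that $f$ is both a left and a right fibration with discrete fibers, so that the straightening takes values in $\Setcat \subseteq \Cat_\infty$, and one can run a standard Beck-Chevalley argument globally. Either route is routine and I would not belabor the details.
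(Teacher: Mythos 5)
Your proposal is correct and follows essentially the same line as the paper: both reduce $f^\ast \dashv f_!$ to the identification $f_! \simeq f_\ast$, both use the pointwise Kan-extension formulas restricted to the fiber $\cY_x$ (the paper cites an external lemma from \cite{Exodromy_coefficient} for this, while you argue the requisite cofinality/finality of $\cY_x \hookrightarrow \cY \times_\cX \cX_{/x}$ and $\cY_x \hookrightarrow \cY \times_\cX \cX_{x/}$ directly from the cocartesian and cartesian structures), and both conclude by the agreement of finite coproducts and products in the stable $\infty$-category $\cE$. The naturality issue you flag is handled in the paper implicitly through the cited lemma, which produces the fiberwise formula as a natural equivalence of functors; either of your two proposed routes would serve as well.
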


\begin{proof}
	Fix a functor $F \colon \cY \to \cE$.
	Since $f \colon \cY \to \cX$ is a cocartesian fibration, the dual of \cite[\cref*{exodromy-lem:RKE}]{Exodromy_coefficient} provides for every $x \in \cX$ a natural equivalence
	\[ f_!(F)(x) \simeq \colim_{y \in \cY_x} F_y \ . \]
	Since $f$ is a finite \'etale fibration, $\cY_x \coloneqq \cY \times_\cX \{x\}$ is a finite set.
	Thus, since $\cE$ is stable, we deduce
	\[ f_!(F)(x) \simeq \bigoplus_{y \in \cY_x} F_y \ . \]
	Since $f$ is a cartesian fibration as well, \cite[\cref*{exodromy-lem:RKE}]{Exodromy_coefficient} yields
	\[ f_\ast(F)(x) \simeq \lim_{y \in \cY_x} F_y \simeq \bigoplus_{y \in \cY_x} F_y \ . \]
	Thus, $f_!$ and $f_\ast$ canonically agree, whence the conclusion.
\end{proof}

\begin{lem}\label{lem:finite_etale_fibration_universally_conservative}
	Let $f \colon \cY \to \cX$ be a finite étale fibration and let $\cE$ be stable presentable $\infty$-category.
	Then the composition
	\[ \id_{\Fun(\cY,\cE)} \to f^\ast \circ f_! \simeq f^\ast \circ f_\ast \to \id_{\Fun(\cY,\cE)} \]
	is an equivalence.
	In particular, $f_!$ is conservative.
\end{lem}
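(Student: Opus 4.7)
\medskip

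The plan is to check the claim pointwise on $\cY$ and then exploit the explicit formulas for $f_!$ and $f_\ast$ obtained in the proof of \cref{lem:finite_etale_fibration_biadjoint}. Fix $y \in \cY$ and set $x \coloneqq f(y)$. For any functor $F \colon \cY \to \cE$, the proof of \cref{lem:finite_etale_fibration_biadjoint} identifies
\[ (f^\ast f_!F)(y) \simeq \bigoplus_{y' \in \cY_x} F(y') \simeq (f^\ast f_\ast F)(y), \]
and the identification $f_! \simeq f_\ast$ coming from that lemma is compatible with the indexing: it is induced by the canonical natural isomorphism between finite coproducts and finite products in the stable $\infty$-category $\cE$, applied termwise to the discrete finite set $\cY_x$.

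Next I would identify the two natural transformations entering the composition under these equivalences. Since $f$ is a cocartesian fibration, the dual of \cite[\cref*{exodromy-lem:RKE}]{Exodromy_coefficient} and the universal property of left Kan extensions exhibit the unit $F(y) \to (f^\ast f_!F)(y)$ as the canonical inclusion of the $y$-indexed summand into $\bigoplus_{y'\in \cY_x}F(y')$. Dually, since $f$ is a cartesian fibration with discrete finite fibers, the counit $(f^\ast f_\ast F)(y) \to F(y)$ identifies with the canonical projection $\prod_{y' \in \cY_x} F(y') \to F(y)$ onto the $y$-indexed factor. Under the product-coproduct identification, the composition
\[ F(y) \longrightarrow \bigoplus_{y' \in \cY_x} F(y') \simeq \prod_{y'\in \cY_x} F(y') \longrightarrow F(y) \]
is the identity, because inserting in slot $y$ and then projecting from slot $y$ is the identity.

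The conservativity of $f_!$ follows formally: suppose $\alpha \colon F \to G$ is a morphism in $\Fun(\cY,\cE)$ such that $f_!(\alpha)$ is an equivalence. Then $f^\ast f_!(\alpha)$ is an equivalence, hence so is $f^\ast f_\ast(\alpha)$ through the identification $f_! \simeq f_\ast$, and applying the natural transformation just proved to be an equivalence gives a commutative square exhibiting $\alpha$ as a retract of $f^\ast f_!(\alpha)$. The two-of-three property for equivalences then forces $\alpha$ itself to be an equivalence.

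The main obstacle is purely bookkeeping: one must verify that the identification $f_! \simeq f_\ast$ supplied by \cref{lem:finite_etale_fibration_biadjoint} really is the one induced by the universal map from coproducts to products indexed by the same discrete set $\cY_x$, so that the $y$-th summand corresponds to the $y$-th factor. Once this compatibility is in hand, the verification that insertion followed by projection in matching slots gives the identity is immediate.
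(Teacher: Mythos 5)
Your argument is essentially the same as the paper's: both compute the unit of $f_! \dashv f^\ast$ as slot-$y$ inclusion into $\bigoplus_{y' \in \cY_{f(y)}} F(y')$ and the counit of $f^\ast \dashv f_\ast$ as slot-$y$ projection, so the composite is the identity. (The paper first reduces to the case $\cX = \ast$ via \cref{cor:induction_specialization_Beck_Chevalley} and then computes; you work directly at each $y \in \cY$, which amounts to the same thing. You are more explicit about the bookkeeping: that the identification $f_! \simeq f_\ast$ from \cref{lem:finite_etale_fibration_biadjoint} matches the $y$-indexed summand to the $y$-indexed factor, which is indeed the one nontrivial point.)

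One small slip in the conservativity paragraph: the ``two-of-three property'' is not the principle you need. From the retract diagram
\[
\begin{tikzcd}
F \arrow{r} \arrow{d}{\alpha} & f^\ast f_! F \arrow{r} \arrow{d}{f^\ast f_! (\alpha)} & F \arrow{d}{\alpha} \\
G \arrow{r} & f^\ast f_! G \arrow{r} & G
\end{tikzcd}
\]
with invertible horizontal composites, what you invoke is that \emph{equivalences are closed under retracts} in the arrow category. Two-of-three applied to any of the triangles here gives nothing, since none of the non-$\alpha$ edges in a given triangle need be equivalences. The conclusion is correct; just replace the cited principle.
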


\begin{proof}
	Write $\alpha$ for the given composition.
	It is enough to prove that for every $x \in \cX$, $j_x^\ast(\alpha)$ is an equivalence in $\Fun(\cY_x, \cE)$.
	Using \cref{cor:induction_specialization_Beck_Chevalley} (applied with $\cA = \cY$, $\cB = \cX$ and $\cY = \{x\}$), we can therefore reduce ourselves to the case where $\cX$ consists of a single point.
	
	\medskip
	
	In this case, $\cY$ is just a set.
	Unraveling the definitions, we see that the unit of $f_! \dashv f^\ast$ evaluated on $F \colon \cY \to \cE$ sends $y \in \cY$ to the canonical inclusion
	\[ i_y \colon F(y) \to \bigoplus_{y' \in \cY_{f(y)}} F(y') \ , \]
	while the counit of $f^\ast \dashv f_\ast$ evaluated on $F$ sends $y \in \cY$ to the canonical projection
	\[ \pi_y \colon \bigoplus_{y' \in \cY_{f(y)}} F_{y'} \to F_y \ , \]
	whence the conclusion.
\end{proof}

The following is the main result concerning finite étale fibrations:

\begin{prop}[Universal monadicity for finite étale fibrations]\label{prop:finite_etale_fibration_monadic}
	Let $f \colon \cY \to \cX$ be a finite étale fibration and let $\cE$ be a stable presentable $\infty$-category.
	For every categorical $\Fun(\cX, \cE)$-module $\cD$, the induced functor
	\[ f_! \otimes \cD \colon \Fun(\cY, \cE) \otimes_{\Fun(\cX, \cE)} \cD \to \cD \]
	is monadic.
\end{prop}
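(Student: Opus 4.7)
The plan is to verify the three hypotheses of the Lurie--Barr--Beck monadicity theorem \cite[Theorem 4.7.3.5]{Lurie_Higher_algebra}: that $f_! \otimes \cD$ admits a right adjoint, is conservative, and preserves colimits of $(f_!\otimes\cD)$-split simplicial objects. The key input is that Lemmas \ref{lem:finite_etale_fibration_biadjoint} and \ref{lem:finite_etale_fibration_universally_conservative} can both be upgraded from the universal module $\Fun(\cX, \cE)$ to an arbitrary $\Fun(\cX,\cE)$-module $\cD$ by tensoring, because both statements live entirely in $\PrL_{\Fun(\cX,\cE)}$.

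First I would observe that \cref{lem:finite_etale_fibration_biadjoint} tells us that $f_!$ and $f^\ast$ form a biadjoint pair, and that by construction both are morphisms in $\PrL_{\Fun(\cX,\cE)}$ (indeed, $f^\ast$ is symmetric monoidal by \cref{recollection:monoidal_structure_functor_category} and $f_!$ is its $\Fun(\cX,\cE)$-linear left adjoint by \cref{lem:linearity_of_induction}). Since the relative tensor product $(-) \otimes_{\Fun(\cX,\cE)} \cD$ is functorial on $\PrL_{\Fun(\cX,\cE)}$ and preserves adjunctions, the induced pair
\[ f_! \otimes \cD \colon \Fun(\cY, \cE) \otimes_{\Fun(\cX, \cE)} \cD \leftrightarrows \cD \colon f^\ast \otimes \cD \]
is again a biadjunction. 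In particular $f_!\otimes\cD$ admits a right adjoint and, being itself a left adjoint, preserves all colimits. This takes care of the first and third conditions of Lurie--Barr--Beck.

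For conservativity, I would use \cref{lem:finite_etale_fibration_universally_conservative}: the composition $\id_{\Fun(\cY,\cE)} \to f^\ast f_! \to \id_{\Fun(\cY,\cE)}$ exhibits the identity as a retract of $f^\ast \circ f_!$ inside $\Fun^L_{\Fun(\cX,\cE)}(\Fun(\cY,\cE), \Fun(\cY,\cE))$. Applying $(-) \otimes_{\Fun(\cX,\cE)} \cD$, which is an additive functor between stable presentable $\infty$-categories and hence preserves retracts, we obtain that the identity of $\Fun(\cY, \cE) \otimes_{\Fun(\cX, \cE)} \cD$ is a retract of $(f^\ast\otimes\cD) \circ (f_!\otimes\cD)$. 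A functor of which the source identity is a retract of the composition with its right adjoint is automatically conservative, so $f_!\otimes\cD$ is conservative.

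The main subtlety to be careful about is that the tensor product $(-)\otimes_{\Fun(\cX,\cE)}\cD$ really preserves both the biadjunction and the retract diagram witnessing the triangular identity--style composition; but this is standard because these assertions live in the symmetric monoidal $\infty$-category $\PrL_{\Fun(\cX,\cE)}$ and $(-)\otimes_{\Fun(\cX,\cE)}\cD$ is symmetric monoidal in the first variable. Once all three hypotheses are verified, Lurie--Barr--Beck \cite[Theorem 4.7.3.5]{Lurie_Higher_algebra} yields the monadicity of $f_!\otimes\cD$, completing the proof.
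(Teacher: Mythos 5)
Your proof is correct and follows essentially the same route as the paper: tensor the biadjunction of \cref{lem:finite_etale_fibration_biadjoint} and the retract diagram of \cref{lem:finite_etale_fibration_universally_conservative} with $\cD$, then invoke Lurie--Barr--Beck. If anything, your write-up is slightly cleaner in two respects: you state explicitly that $f_!\otimes\cD$ preserves all colimits (covering the split-simplicial condition) rather than leaving this implicit, and you write the composition in the correct order $\id \to (f^\ast\otimes\cD)\circ(f_!\otimes\cD) \to \id$ whereas the paper's proof as typeset has the two factors transposed.
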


\begin{proof}
	Using \cref{lem:linearity_of_induction}, we see that both $f^\ast \colon \Fun(\cX, \cE) \to \Fun(\cY,\cE)$ and $f_! \colon \Fun(\cY,\cE) \to \Fun(\cX, \cE)$ are $\Fun(\cX,\cE)$-linear.
	Besides, they are biadjoints to each other thanks to \cref{lem:finite_etale_fibration_biadjoint}.
	Therefore, we obtain well defined functors
	\[ f_! \otimes \cD \colon \Fun(\cY, \cE) \otimes_{\Fun(\cX, \cE)} \cD \to \cD \qquad \text{and} \qquad f^\ast \otimes \cD \colon \cD \to \Fun(\cY,\cE) \otimes_{\Fun(\cX,\cE)} \cD \ , \]
	that are still biadjoints to each other.
	Besides, \cref{lem:finite_etale_fibration_universally_conservative} implies that the composition
	\[ \id \to (f_! \otimes \cD) \circ (f^\ast \otimes \cD) \to \id \]
	is an equivalence, so it follows that $f_! \otimes \id_\cD$ is conservative.
	Therefore, it is monadic thanks to Lurie-Barr-Beck's theorem \cite[Theorem 4.7.3.5]{Lurie_Higher_algebra}.
\end{proof}

\begin{cor}\label{cor:relative_tensor_product}
	In the situation of \cref{construction:relative_tensor_of_functor_categories}, assume that $f \colon \cY \to \cX$ is a finite étale fibration.
	Then the comparison functor
	\[ \mu \colon \Fun(\cY, \cE) \otimes_{\Fun(\cX, \cE)} \Fun(\cA, \cE) \to \Fun(\cB, \cE) \]
	is an equivalence.
\end{cor}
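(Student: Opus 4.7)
The plan is to reduce the statement to Lurie--Barr--Beck by exhibiting both sides as monadic $\Fun(\cA,\cE)$-modules via a common forgetful functor, and then identifying the two monads through a Beck--Chevalley argument.

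First, because finite \'etale fibrations are stable under pullback (\cref{lem:finite_etale_fibrations_pullback}), the functor $u \colon \cB \to \cA$ is itself a finite \'etale fibration. Hence by \cref{lem:finite_etale_fibration_biadjoint} the functors $u_! \dashv u^\ast \dashv u_!$ form a biadjunction, and by \cref{lem:finite_etale_fibration_universally_conservative} the functor $u_!$ is conservative. Since both $u_!$ and $u^\ast$ preserve all colimits, Lurie--Barr--Beck \cite[Theorem 4.7.3.5]{Lurie_Higher_algebra} shows that
\[ u_! \colon \Fun(\cB, \cE) \to \Fun(\cA, \cE) \]
is monadic. On the other hand, \cref{prop:finite_etale_fibration_monadic} applied with $\cD = \Fun(\cA,\cE)$ (seen as a $\Fun(\cX,\cE)$-module via $p^\ast$) shows that
\[ f_! \otimes \id \colon \Fun(\cY,\cE) \otimes_{\Fun(\cX,\cE)} \Fun(\cA,\cE) \longrightarrow \Fun(\cA,\cE) \]
is monadic as well.

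Next, I would check that $\mu$ is a $\Fun(\cA,\cE)$-linear functor intertwining these two forgetful functors, i.e.\ that there is a canonical equivalence $u_! \circ \mu \simeq f_! \otimes \id$. On pure tensors $F \otimes G$ with $F \in \Fun(\cY,\cE)$ and $G \in \Fun(\cA, \cE)$, the functor $\mu$ sends $F \otimes G$ to $u^\ast(F) \otimes_\cE G \in \Fun(\cB,\cE)$, and the required equivalence reduces to the projection formula $u_!(u^\ast(F) \otimes G) \simeq p^\ast f_!(F) \otimes G$, which follows from \cref{lem:linearity_of_induction} applied to $u$.

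By Lurie--Barr--Beck, it then suffices to verify that $\mu$ induces an equivalence of the associated monads on $\Fun(\cA,\cE)$. The monad on the right is $u_! u^\ast$, whose value on $G \in \Fun(\cA,\cE)$ at a point $a \in \cA$ is the finite coproduct $\bigoplus_{y \in \cY_{p(a)}} G(a)$. The monad on the left is $(f_! f^\ast) \otimes \id$, whose value is $p^\ast(f_! f^\ast \bbI_{\Fun(\cX,\cE)}) \otimes_\cE G$, and $f_! f^\ast \bbI_{\Fun(\cX,\cE)}$ is exactly the constant-coefficient sheaf $x \mapsto \bigoplus_{y \in \cY_x} \bbI_\cE$ by the fiberwise description of $f_!$ used in the proof of \cref{lem:finite_etale_fibration_biadjoint}. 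The canonical comparison map between these two monads is then the Beck--Chevalley transformation for the pullback square, which is an equivalence because the fibers of $f$ are finite discrete and $\cE$ (being stable) identifies finite coproducts with finite products, so the fiberwise check is immediate. The main obstacle, and the only step requiring genuine care, is making sure that the comparison map between the monads is itself the one induced by $\mu$; this is formal but requires unwinding the universal property of the relative tensor product, and is where the $\Fun(\cA,\cE)$-linearity established above is used.
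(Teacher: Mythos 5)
Your proposal is correct and takes essentially the same route as the paper: pull back finite étaleness to $u\colon\cB\to\cA$, establish that both sides of $\mu$ are monadic over $\Fun(\cA,\cE)$ (via \cref{prop:finite_etale_fibration_monadic}), and then reduce to an identification of the two monads, where both are computed fiberwise as finite coproducts. The paper invokes \cite[Corollary 4.7.3.16]{Lurie_Higher_algebra} and checks the Beck--Chevalley transformation between the right adjoints $f^\ast\otimes\Fun(\cA,\cE)$ and $u^\ast$, then applies conservativity of $u_!$; you phrase this as a direct monad comparison, which is the same content. One small inaccuracy in your middle paragraph: the formula $u_!(q^\ast F\otimes u^\ast G)\simeq p^\ast f_!(F)\otimes G$ combines \emph{two} ingredients, the projection formula for $u_!$ (\cref{lem:linearity_of_induction}) and proper base change $u_! q^\ast\simeq p^\ast f_!$ along the pullback square (e.g. \cref{cor:induction_specialization_Beck_Chevalley}); you cite only the former. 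This is easily repaired and does not affect the structure of the argument.
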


\begin{proof}
	Notice that $u \colon \cB \to \cA$ is a finite étale fibration thanks to \cref{lem:finite_etale_fibrations_pullback}.
	Consider the following commutative triangle:
	\[ \begin{tikzcd}[column sep=tiny]
		\Fun(\cY, \cE) \otimes_{\Fun(\cX, \cE)} \Fun(\cA, \cE) \arrow{rr}{\mu} \arrow{dr}[swap]{f_! \otimes \Fun(\cA, \cE)} & & \Fun(\cB, \cE) \arrow{dl}{u_!} \\
		{} & \Fun(\cA, \cE) \ .
	\end{tikzcd} \]
	Using \cref{prop:finite_etale_fibration_monadic}, we see that both diagonal morphisms are monadic.
	To conclude that the horizontal arrow is an equivalence, it is enough by \cite[Corollary 4.7.3.16]{Lurie_Higher_algebra} to check that the Beck-Chevalley transformation
	\[ \mu \circ (f^\ast \otimes \Fun(\cA, \cE)) \to u^\ast \]
	is an equivalence.
	Since $u_!$ is conservative, it is enough to prove that the induced transformation
	\[ (f_! \circ f^\ast) \otimes \Fun(\cA, \cE) \simeq (f_! \otimes \Fun(\cA, \cE)) \circ (f^\ast \otimes \Fun(\cA, \cE)) \to u_! \circ u^\ast \]
	is an equivalence.
	Fix a functor $F \colon \cA \to \cE$ and an object $a \in \cA$.
	Set $x \coloneqq p(a)$ and write $\bbI$ for the tensor unit of $\Fun(\cA, \cE)$ (that is, the constant functor associated to the tensor unit $\bbI_\cE$ of $\cE$).
	Evaluating the source of the above transformation at $F$ and at $a$ yields
	\[ \big( f_! f^\ast(\bbI) \otimes F \big)(a) \simeq \Big( \bigoplus_{y \in \cY_x} I \Big) \otimes F(a) \ , \] 
	while
	\[ \big(u_! u^\ast(F) \big)(a) \simeq \bigoplus_{b \in \cB_a} F(a) \ . \]
	Since the square in \cref{construction:relative_tensor_of_functor_categories} is a pullback, $\cB_a \simeq \cY_{p(a)} \simeq \cY_x$, whence the conclusion.
\end{proof}

\section{Additional properties of cocartesian fibrations}

Finally, we collect some auxiliary results on cocartesian fibrations that are occasionally needed throughout the text.

\subsection{Global vs.\ local full faithfulness}

The following results provides a categorical local-to-global principle to test fully faithfulness:

\begin{prop}\label{lem:full_faithfulness_cocartesian _fibrations}
	Let $\cX$ be an $\infty$-category and let $f \colon \cA \to \cB$ be a morphism in $\PrFibL_\cX$.
	Then:
	\begin{enumerate}\itemsep=0.2cm
		\item $f$ is fully faithful if and only if for every $x \in \cX$ the induced functor $f_x \colon \cA_x \to \cB_x$ is fully faithful;
		
		\item if $f$ is fully faithful, then the same goes for
		\[ \Sigma_\cX(f) \colon \Fun_{/\cX}( \cX, \cA ) \to  \Fun_{/\cX}( \cX, \cB ) \ . \]
	\end{enumerate}
\end{prop}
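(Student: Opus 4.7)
The strategy is standard: in a cocartesian fibration, the mapping spaces decompose over the base into mapping spaces in the fibers, so a fiberwise full faithfulness translates into global full faithfulness. For part (2), the key observation is that $\Sigma_\cX(f)$ is essentially post-composition by $f$, restricted to sections over $\cX$, and this preserves full faithfulness.

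For part (1), the ``only if'' direction is immediate: for every $x \in \cX$, the square
\[ \begin{tikzcd}
\cA_x \arrow{r}{f_x} \arrow{d} & \cB_x \arrow{d} \\
\cA \arrow{r}{f} & \cB
\end{tikzcd} \]
is a pullback in $\Cat_\infty$, and the pullback of a fully faithful functor is fully faithful. For the ``if'' direction, fix $a \in \cA_x$ and $a' \in \cA_y$ and write $p \colon \cA \to \cX$ and $q \colon \cB \to \cX$ for the structural maps. Since both $p$ and $q$ are cocartesian fibrations and $f$ preserves cocartesian edges, applying \cite[Proposition 2.4.4.2]{HTT} to the induced map of cocartesian fibrations we obtain a commutative diagram of fiber sequences
\[ \begin{tikzcd}
\Map_{\cA_y}(\gamma_!(a),a') \arrow{r} \arrow{d} & \Map_\cA(a,a') \arrow{d}{f} \arrow{r}{p} & \Map_\cX(x,y) \arrow[equal]{d} \\
\Map_{\cB_y}(\gamma_!(f(a)),f(a')) \arrow{r} & \Map_\cB(f(a),f(a')) \arrow{r}{q} & \Map_\cX(x,y)
\end{tikzcd} \]
where the leftmost columns are the fibers over a chosen $\gamma \colon x \to y$ and we fix, for each such $\gamma$, a straightening $\gamma_! \colon \cA_x \to \cA_y$ and similarly for $\cB$. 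Since $f$ preserves cocartesian edges, there is a canonical identification $\gamma_!(f(a)) \simeq f_y(\gamma_!(a))$, so the left vertical map is identified with
\[ f_y \colon \Map_{\cA_y}(\gamma_!(a),a') \to  \Map_{\cB_y}(f_y(\gamma_!(a)), f_y(a')) \ , \]
which is an equivalence by the fiberwise full faithfulness assumption. Since this holds for every $\gamma$, the middle vertical map is an equivalence as well.

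For part (2), observe that $\Sigma_\cX(f)$ is obtained by post-composition with $f$ from the functor
\[ f \circ (-) \colon \Fun(\cX, \cA) \to  \Fun(\cX, \cB) \ , \]
restricted to the fibers over $\id_\cX$ of the projections $\Fun(\cX,\cA) \to \Fun(\cX, \cX)$ and $\Fun(\cX,\cB) \to \Fun(\cX, \cX)$ (using that $f$ is a morphism over $\cX$). Post-composition with a fully faithful functor is fully faithful on functor categories, so $f \circ (-)$ is fully faithful, and restricting a fully faithful functor to a fiber still yields a fully faithful functor. The main conceptual input, then, is the fiber sequence for mapping spaces in a cocartesian fibration used in part (1); I do not expect any genuine obstacle in either step.
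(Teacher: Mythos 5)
Your proof is correct and follows essentially the same route as the paper: for part (1) you use the HTT 2.4.4.2 fiber sequence of mapping spaces over $\Map_\cX(x,y)$ and the fact that $f$ preserves cocartesian edges to identify the fiber map with $f_y$; for part (2) you factor $\Sigma_\cX(f)$ as a fiber of post-composition by $f$. The paper handles the "only if" direction of (1) by specializing the fiber argument to $\gamma = \id_x$ rather than invoking pullback-stability of full faithfulness directly, but this is a cosmetic difference.
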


\begin{proof}
	First we prove (1).
	Write $p \colon \cA \to \cX$ and $q \colon \cB \to \cX$ for the structural maps.
	Fix $a, a' \in \cA$ and set $x \coloneqq p(a)$ and $x' \coloneqq p(a')$.
	The morphism $f$ induces a canonical commutative triangle
	\[ \begin{tikzcd}[column sep=small]
		\Map_\cA( a, a' ) \arrow{rr}{\omega} \arrow{dr} & & \Map_\cB( f(a), f(a') ) \arrow{dl} \\
		& \Map_\cX(x, x') 
	\end{tikzcd} \]
	in $\Spc$.
	Thus, we see that $\omega$ is an equivalence if and only if for every $\gamma \colon x \to x'$ the fiber $\omega_\gamma$ is an equivalence.
	Let $a \to a_\gamma$ be a cocartesian lift of $\gamma$ inside $\cA$.
	Since $f$ preserves cocartesian edges, we see that $f(a) \to f(a_\gamma)$ is cocartesian in $\cB$.
	Thus, \cite[Proposition 2.4.4.2]{HTT} and the above commutative triangle supply a canonical identification of $\omega_\gamma$ with the map
	\[ \Map_{\cA_{x'}}( a_\gamma, a' ) \to  \Map_{\cB_{x'}}( f(a_\gamma), f(a') ) \]
	induced by $f_{x'} \colon \cA_{x'} \to \cB_{x'}$.
	Thus, if $f_{x'}$ is fully faithful, we deduce that $\omega$ is an equivalence.
	As for the converse, it suffices to observe that with the above notations, the square
	\[ \begin{tikzcd}
		\Map_{\cA_{x'}}( a_\gamma, a' ) \arrow{r} \arrow{d} & \Map_{\cB_{x'}}( f(a_\gamma), f(a') ) \arrow{d} \\
		\Map_{\cA}( a, a' ) \arrow{r}{\omega} & \Map_{\cB}( f(a), f(a') )
	\end{tikzcd} \]
	is a pullback.
	Thus, when $\gamma = \id_x$, we see that the full faithfulness of $f$ implies the full faithfulness of $f_x$.
	
	\medskip
	
	We now prove (2).
	Consider the following commutative diagram
	\[ \begin{tikzcd}
		\Fun_{/\cX}(\cX, \cA) \arrow{r} \arrow{d} & \Fun(\cX, \cA) \arrow{r} \arrow{d} & \Fun(\cX, \cX) \arrow[equal]{d} \\
		\Fun_{/\cX}(\cX, \cB) \arrow{r} & \Fun(\cX, \cB) \arrow{r} & \Fun(\cX, \cX) \ ,
	\end{tikzcd} \]
	whose rows are fibers sequences at $\id_\cX \in \Fun(\cX, \cX)$.
	Since fully faithful functors are stable under pullbacks, it suffices to prove that the middle vertical functor is fully faithful.
	This follows immediately from the assumption and from \cite[Proposition 5.1]{Gepner_Lax_colimits} (see also Lemma 5.2 in \emph{loc.\ cit.}).
\end{proof}

\begin{cor}\label{cor:colimit_PrL_fully_faithful}
	Let $\cC_\bullet \colon I \to \PrL$ be a filtered diagram.
	Let
	\[ \cC \coloneqq \colim_{i \in I} \cC_i \]
	be its colimit computed in $\PrL$ and denote $\iota_i \colon \cC_i \to \cC$ for the canonical maps.
	If all the transition maps in $\cC_\bullet$ are fully faithful, the same goes for each $\iota_i$.
\end{cor}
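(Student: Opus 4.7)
The strategy is to compute mapping spaces in $\cC$ and exploit the filteredness of $I$ together with the fully faithfulness hypothesis. First, I would invoke the standard fact that for a filtered diagram in $\PrL$, the colimit coincides with the colimit in $\CAT_\infty$: passing to right adjoints via $\PrL\simeq (\PrR)\op$ converts the colimit in $\PrL$ into a limit in $\PrR$, and such limits are computed in $\CAT_\infty$ (cf.\ \cref{Peter_lemma}). Unstraightening $\cC_\bullet$ yields a cocartesian fibration $p\colon \cE\to I$ with $\cE_i = \cC_i$, and the colimit is then the Dwyer--Kan localization $\cC\simeq \cE[W^{-1}]$ at the class $W$ of $p$-cocartesian edges.

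Next, I would use the mapping space description for this localization. For fixed $i\in I$ and $x,y\in \cC_i$, morphisms in $\cC$ between $\iota_i(x)$ and $\iota_i(y)$ are represented by zig-zags, which by the filteredness of $I$ can be straightened into cospans. This yields
\[ \Map_\cC(\iota_i(x),\iota_i(y)) \simeq \colim_{(\alpha,\beta)\in I_{i/}\times_I I_{i/}} \Map_{\cC_k}\bigl(f_\alpha(x), f_\beta(y)\bigr) \ , \]
where a point of the indexing $\infty$-category is a cospan $i\xrightarrow{\alpha} k\xleftarrow{\beta} i$ and $f_\alpha,f_\beta$ are the associated structure functors of the diagram. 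Since $I$ is filtered, the slice $I_{i/}$ is filtered (hence weakly contractible), and for any cospan $(\alpha,\beta)$ one can find $\mu\colon k\to k'$ with $\mu\alpha\simeq \mu\beta$; this forces the diagonal functor $I_{i/}\to I_{i/}\times_I I_{i/}$ to be cofinal. Consequently the above colimit may be computed over $I_{i/}$ along the diagonal, giving
\[ \Map_\cC(\iota_i(x),\iota_i(y)) \simeq \colim_{(\alpha\colon i\to k)\in I_{i/}} \Map_{\cC_k}\bigl(f_\alpha(x), f_\alpha(y)\bigr) \ . \]

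Finally, the hypothesis that every transition $f_\alpha$ is fully faithful implies that each term of this colimit is canonically equivalent to $\Map_{\cC_i}(x,y)$, with all transition maps being equivalences. Since $I_{i/}$ is weakly contractible, the colimit of this essentially constant diagram of spaces equals $\Map_{\cC_i}(x,y)$, which yields the sought equivalence and hence the fully faithfulness of $\iota_i$. The main obstacles are rigorously justifying the cospan formula for mapping spaces in $\cE[W^{-1}]$ and the cofinality of the diagonal embedding; both are standard consequences of filteredness but delicate in the $\infty$-categorical setting, and one can alternatively package them via Prop.~\ref{lem:full_faithfulness_cocartesian _fibrations}-(1) applied to the natural morphism $\cE\to \cC\times I$ in $\PrFibL_I$ induced by the colimit cone.
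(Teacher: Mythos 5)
Your proposal has a genuine gap at the very first step. The claim that ``for a filtered diagram in $\PrL$, the colimit coincides with the colimit in $\CAT_\infty$'' is false in general, and the justification you give does not support it: passing to right adjoints and using that limits in $\PrR$ agree with limits in $\CAT_\infty$ (as in \cref{Peter_lemma}) shows only that $\cC$ is the \emph{limit} in $\CAT_\infty$ of the \emph{right adjoint} diagram $\cC_\bullet^R \colon I\op \to \PrR$, which is not the colimit in $\CAT_\infty$ of $\cC_\bullet$. Concretely, for the $\mathbb N$-indexed diagram $\Spc_\ast \xrightarrow{\Sigma} \Spc_\ast \xrightarrow{\Sigma} \cdots$ in $\PrL$, the colimit in $\PrL$ is the presentable stable $\infty$-category $\Sp$ (the limit of the $\Omega$-tower), whereas the sequential colimit in $\CAT_\infty$ is the Spanier--Whitehead localization $\Spc_\ast[\Sigma^{-1}]$, which lacks infinite coproducts and is not presentable; the two simply do not agree. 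Since your entire mapping space computation rests on the Dwyer--Kan description $\cC \simeq \cE[W^{-1}]$, which identifies the colimit in $\CAT_\infty$ and not the colimit in $\PrL$, the remainder of the argument is not available.

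The paper avoids the issue by exploiting precisely the limit description. After replacing $I$ by $I_{i/}$ so that $i$ is initial, one has $\cC \simeq \lim_{j \in I\op} \cC_j \simeq \Fun^{\cart}_{/I}(I, \mathrm{Un}_I(\cC_\bullet))$ and $\cC_i \simeq \Fun^{\cart}_{/I}(I, \cC_i \times I)$, and under these identifications $\iota_i$ is induced by the comparison $f \colon \cC_i \times I \to \mathrm{Un}_I(\cC_\bullet)$ of cocartesian fibrations over $I$. The hypothesis on the transition maps gives fiberwise full faithfulness of $f$, hence full faithfulness by \cref{lem:full_faithfulness_cocartesian _fibrations}-(1); the key remaining point, where the hypothesis is used a second time through a Beck--Chevalley argument, is that $f$ also preserves cartesian edges, so that one may restrict $\Sigma_I(f)$ to cartesian sections and conclude by \cref{lem:full_faithfulness_cocartesian _fibrations}-(2). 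Finally, your closing suggestion of applying part (1) of that proposition to the cone map $\cE \to \cC \times I$ is circular as stated: deducing its fiberwise full faithfulness would require precisely that each $\iota_i \colon \cC_i \to \cC$ is fully faithful, which is the statement under proof.
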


\begin{proof}
	Fix an index $i \in I$.
	Up to replacing $I$ by $I_{i/}$, we can suppose without loss of generality that $i$ is the initial object of $I$.
	Thus, we obtain a transformation $\cC_i \to \cC_\bullet$, where $\cC_i$ is seen as a constant diagram.
	Passing to the cocartesian unstraightenings, we obtain a morphism
	\[ f \colon \cC_i \times I \to  \mathrm{Un}_I(\cC_\bullet) \]
	of cocartesian fibrations over $I$.
	Our assumption implies that this functor is fully faithful fiberwise, and therefore \cref{lem:full_faithfulness_cocartesian _fibrations} guarantees that $f$ is itself fully faithful.
	Notice now that $\cC_i \times I$ and $\mathrm{Un}_I(\cC_\bullet)$ are also cartesian fibrations over $I$ and that
	\[ \cC_i \simeq \lim_{j \in I\op} \cC_i \simeq \Fun_{/I}^{\cart}(I, \cC_i \times I) \qquad \text{and} \qquad \cC \simeq \lim_{j \in I\op} \cC_j \simeq \Fun_{/I}^{\cart}(I, \mathrm{Un}_I(\cC_\bullet)) \ . \]
	Moreover, under these equivalences, $\phi$ induces the functor $\iota_i \colon \cC_i \to \cC$.
	We claim that $\phi$ preserves cartesian edges.
	Assuming this statement, we see that $f$ induces the following commutative diagram:
	\[ \begin{tikzcd}
		\cC_i \arrow{r} \arrow{d}{\iota_i} & \Fun(I, \cC_i) \arrow{d}{\Sigma_I(f)} \\
		\cC \arrow{r} & \Fun_{/I}(I, \mathrm{Un}_I(\cC_\bullet)) \ ,
	\end{tikzcd} \]
	whose horizontal arrows are fully faithful.
	Since $\Sigma_I(f)$ is fully faithful by \cref{lem:full_faithfulness_cocartesian _fibrations}, we conclude that $\iota_i$ is fully faithful as well.
	
	\medskip
	
	We are left to prove the claim.
	Let $j \to \ell$ be a morphism in $I$ and let $f_{j,\ell} \colon \cC_j \to \cC_\ell$ be the induced functor.
	It fits in the following commutative triangle
	\[ \begin{tikzcd}[column sep=small]
		{} & \cC_i \arrow{dl}[swap]{f_j} \arrow{dr}{f_\ell} \\
		\cC_j \arrow{rr}{f_{j,\ell}} & & \cC_\ell \ ,
	\end{tikzcd} \]
	where $f_j$ and $f_\ell$ are the functors induced by $i \to j$ and $i \to \ell$, respectively.
	Write $g_j$, $g_\ell$ and $g_{j,\ell}$ for their right adjoints.
	Unraveling the definitions, we have to check that the Beck-Chevalley transformation
	\[ f_j \to  g_{j,\ell} \circ f_\ell  \]
	is an equivalence.
	However, $f_\ell \simeq f_{j,\ell} \circ f_j$, and the unit $\id_{\cC_j} \to g_{j,\ell} \circ f_{j,\ell}$ is an equivalence because $f_{j,\ell}$ is fully faithful by assumption.
	Thus, the conclusion follows.
\end{proof}

\subsection{Inducing left adjointability from the base}

The following lemma provides a general mechanism to deduce left adjointability involving cocartesian fibrations from the case of trivial fibrations.
It plays an important role in the proof of the spreading out \cite[\cref*{Geometric_Stokes-thm:spreading_out}]{Geometric_Stokes} for Stokes analytic stratified spaces.

\begin{lem}\label{left_adjointability_reduction_trivial_cocart_fibration}
	Consider the commutative cube
	\[ \begin{tikzcd}
		\cD \arrow{rr} \arrow{dr} \arrow{dd} & & \cC \arrow{dr}{j} \arrow{dd}[near end]{p} \\
		{} & \cB \arrow[crossing over]{rr} & & \cA \arrow{dd} \\
		\cT \arrow{rr} \arrow{dr} & & \cZ \arrow{dr}{i} \\
		{} & \cY \arrow{rr} \arrow[leftarrow, crossing over]{uu} & & \cX 
	\end{tikzcd} \]
	whose vertical faces are pull-back diagrams.
	Assume that the vertical arrows are cocartesian fibrations.
	Let $a \in \cC$ and set $x \coloneqq p(a)\in \cZ$.
	Assume that the functor
	$$
	\cT \times_{\cZ} \cZ_{/x} \to  \cY \times_{\cX} \cX_{/i(x)}
	$$	
	is cofinal. 
	Then, the functor	
	$$
	\cD \times_{\cC} \cC_{/a} \to  \cB \times_{\cA} \cA_{/j(a)}
	$$		
	is cofinal.
\end{lem}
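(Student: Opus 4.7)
The plan is to reduce the claim to the general stability of cofinal maps under base change along cocartesian fibrations. Set $x' \coloneqq i(x) \in \cX$ and $a' \coloneqq j(a) \in \cA$; by commutativity of the right face of the cube, the vertical projection sends $a'$ to $x'$.

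The first step is to use that the formation of over-slices commutes with pullbacks in $\Cat_\infty$. Since the right vertical face is the pullback $\cC \simeq \cZ \times_\cX \cA$ and $a \in \cC$ corresponds to the pair $(x, a')$ under this equivalence, one obtains a canonical identification
\[ \cC_{/a} \simeq \cZ_{/x} \times_{\cX_{/x'}} \cA_{/a'}. \]
Combining with the back face $\cD \simeq \cT \times_\cZ \cC$ and rearranging iterated pullbacks yields
\[ \cD \times_\cC \cC_{/a} \simeq \cT \times_\cZ \cC_{/a} \simeq (\cT \times_\cZ \cZ_{/x}) \times_{\cX_{/x'}} \cA_{/a'}. \]
An analogous rewriting using the front face $\cB \simeq \cY \times_\cX \cA$ provides
\[ \cB \times_\cA \cA_{/a'} \simeq \cY \times_\cX \cA_{/a'} \simeq (\cY \times_\cX \cX_{/x'}) \times_{\cX_{/x'}} \cA_{/a'}. \]
Under these identifications, the functor appearing in the statement is precisely the base change along $\cA_{/a'} \to \cX_{/x'}$ of the given cofinal functor $\cT \times_\cZ \cZ_{/x} \to \cY \times_\cX \cX_{/x'}$.

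The second step is to observe that the projection $\cA_{/a'} \to \cX_{/x'}$ inherits the structure of a cocartesian fibration from $\cA \to \cX$: a cocartesian lift of a morphism $\gamma \colon (b_0, \beta) \to (b_0', \beta')$ in $\cX_{/x'}$ starting at an object $(e_0, \phi \colon e_0 \to a')$ is obtained by choosing a cocartesian lift $\delta \colon e_0 \to e_0'$ of $\gamma$ in $\cA$ and then using the universal property of $\delta$ to uniquely factor $\phi$ as $\phi' \circ \delta$ with $\phi' \colon e_0' \to a'$ covering $\beta'$; one then checks directly that $\delta$ is cocartesian over $\gamma$ in $\cA_{/a'}$.

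The last step is to invoke the stability of cofinal maps under base change along cocartesian fibrations (HTT 4.1.2.15), applied to the cofinal map $\cT \times_\cZ \cZ_{/x} \to \cY \times_\cX \cX_{/x'}$ and the cocartesian fibration $(\cY \times_\cX \cX_{/x'}) \times_{\cX_{/x'}} \cA_{/a'} \to \cY \times_\cX \cX_{/x'}$ obtained by base change from $\cA_{/a'} \to \cX_{/x'}$. The main technical subtlety is the verification of the slice/pullback identifications of the first step; once these are established, the rest is formal and no significant obstacle is expected.
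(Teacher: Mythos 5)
Your proof is correct and takes essentially the same route as the paper: both identify the square of over-categories as a pullback along a cocartesian fibration $\cA_{/j(a)}\to\cX_{/i(x)}$ and then conclude via smoothness of cocartesian fibrations and the fact that pullback along a smooth map preserves cofinality; the paper simply asserts the pullback square and cites [HTT, 2.4.3.2] for the vertical arrows being cocartesian, whereas you make the slice/pullback bookkeeping and the cocartesian-lift construction explicit. One small citation slip: the final step requires both [HTT, 4.1.2.15] (cocartesian fibrations are smooth) and [HTT, 4.1.2.10] (cofinality is preserved under pullback along a smooth map), not 4.1.2.15 alone.
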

\begin{proof}
	Since the vertical faces of the above cube are pull-back, the following square  
	$$
	\begin{tikzcd}
		\cD \times_{\cC} \cC_{/a}  \arrow{r} \arrow{d}  &\cB \times_{\cA} \cA_{/j(a)}  \arrow{d}     \\
		\cT \times_{\cZ} \cZ_{/x}  \arrow{r}   &\cY \times_{\cX} \cX_{/i(x)}  
	\end{tikzcd} 
	$$
	is a pull-back.
	From \cite[2.4.3.2]{HTT}, its vertical arrows are cocartesian fibrations.
	Since cocartesian fibrations are smooth \cite[4.1.2.15]{HTT} and since the pull-back along a smooth map preserves cofinality \cite[4.1.2.10]{HTT}, \cref{left_adjointability_reduction_trivial_cocart_fibration} thus follows.
\end{proof}

%
%
%




\end{appendices}

\bibliographystyle{plain}
\bibliography{dahema}

\end{document}